\definecolor{labelkey}{rgb}{0.6,0,0} 
\newtheorem{theorem}{Theorem}[section]
\newtheorem{lemma}[theorem]{Lemma}
\newtheorem{proposition}[theorem]{Proposition}
\newtheorem{corollary}[theorem]{Corollary}
\newtheorem{remark}[theorem]{Remark}
\newtheorem{definition}[theorem]{Definition}
\renewcommand \theequation {%
\ifnum \c@section>\z@ \@arabic\c@section.
\fi\@arabic\c@equation} \@addtoreset{equation}{section}
\DeclareMathOperator*{\esssup}{ess\,sup}
\providecommand{\abs}[1]{\left\vert#1\right\vert}
\providecommand{\babs}[1]{\big\vert#1\big\vert}
\providecommand{\Babs}[1]{\Big\vert#1\Big\vert}
\providecommand{\nm}[1]{\left\Vert#1\right\Vert}
\providecommand{\Bnm}[1]{\Big\Vert#1\Big\Vert}
\providecommand{\BBnm}[1]{\bigg\Vert#1\bigg\Vert}
\providecommand{\nnm}[1]{\left\vert\kern-0.25ex\left\vert\kern-0.25ex\left\vert#1\right\vert\kern-0.25ex\right\vert\kern-0.25ex\right\vert}
\providecommand{\br}[1]{\left\langle #1 \right\rangle}
\providecommand{\bbr}[1]{\Big\langle #1 \Big\rangle}
\providecommand{\brx}[1]{\left\langle #1 \right\rangle_x}
\providecommand{\bbrx}[1]{\Big\langle #1 \Big\rangle_x}
\providecommand{\brv}[1]{\left\langle #1 \right\rangle_v}
\providecommand{\bbrv}[1]{\Big\langle #1 \Big\rangle_v}
\providecommand{\brr}[1]{\left( #1 \right)_{\mathcal{L}}}
\providecommand{\bbrr}[1]{\Big( #1 \Big)_{\mathcal{L}}}
\providecommand{\brb}[2]{\left\langle #1 \right\rangle_{#2}}
\providecommand{\bbrb}[2]{\Big\langle #1 \Big\rangle_{#2}}
\providecommand{\xnm}[1]{\left\Vert#1\right\Vert_{X}}
\providecommand{\xxnm}[1]{\left\Vert#1\right\Vert_{\widetilde{X}}}
\providecommand{\bnm}[1]{\left\Vert#1\right\Vert_{\text{BV}}}
\providecommand{\pnm}[2]{\left\Vert#1\right\Vert_{L^{#2}}}
\providecommand{\tnm}[1]{\left\Vert#1\right\Vert_{L^{2}}}
\providecommand{\lnm}[1]{\left\Vert#1\right\Vert_{L^{\infty}}}
\providecommand{\lnmm}[1]{\left\Vert#1\right\Vert_{L^{\infty}_{\varrho,\vartheta}}}
\providecommand{\um}[1]{\left\Vert#1\right\Vert_{L^2_{\nu}}}
\providecommand{\pnms}[3]{\left\vert#1\right\vert_{L^{#2}_{#3}}}
\providecommand{\tnms}[2]{\left\vert#1\right\vert_{L^{2}_{#2}}}
\providecommand{\lnms}[2]{\left\vert#1\right\vert_{L^{\infty}_{#2}}}
\providecommand{\lnmms}[2]{\left\vert#1\right\vert_{L^{\infty}_{#2,\varrho,\vartheta}}}
\providecommand{\lnmmss}[1]{\left\vert#1\right\vert_{L^{\infty}_{\varrho,\vartheta}}}
\providecommand{\jnm}[1]{\left\Vert#1\right\Vert_{L^{\N}}}
\providecommand{\knm}[1]{\left\Vert#1\right\Vert_{L^{\frac{\N}{\N-1}}}}
\providecommand{\jnms}[2]{\left\vert#1\right\vert_{L^{\frac{2\N}{3}}_{#2}}}
\providecommand{\knms}[2]{\left\vert#1\right\vert_{L^{\frac{2\N}{2\N-3}}_{#2}}}
\def\ud{\mathrm{d}}
\def\p{\partial}
\def\ls{\lesssim}
\def\gs{\gtrsim}
\def\half{\frac{1}{2}}
\def\rt{\rightarrow}
\def\r{\mathbb{R}}
\def\no{\nonumber}
\def\ue{\mathrm{e}}
\def\ds{\displaystyle}
\def\oo{o(1)}
\def\oot{o_{\tq}}
\def\cc{o}
\def\id{\textbf{1}}
\def\od{\textbf{0}}
\def\N{r}
\def\NN{s}
\def\P{p}
\def\fs{\mathfrak{F}}
\def\f{f}
\def\fb{f^{B}}
\def\pp{\mathcal{P}_{\gamma}}
\def\e{\varepsilon}
\def\al{\alpha}
\def\s{\mathbb{S}}
\def\vx{x}
\def\vv{v}
\def\vuu{{\mathfrak{u}}}
\def\vvv{{\mathfrak{v}}}
\def\nx{\nabla_{x}}
\def\dx{\Delta_x}
\def\d{\delta}
\def\vn{n}
\def\k{\kappa}
\def\re{R}
\def\ss{S}
\def\hk{\mathscr{H}}
\def\hh{\mathcal{H}}
\def\si{\sigma}
\def\nk{\mathcal{N}}
\def\nnk{\mathcal{N}^{\perp}}
\def\sb{\overline{\ss}}
\def\sp{\ss_6}
\def\ch{\overline{\chi}}
\def\sss{\mathcal{S}}
\def\sx{\ss_{3a}}
\def\sy{\ss_{3b}}
\def\sz{\ss_{3c}}
\def\ff{\mathfrak{F}_{\text{a}}}
\def\ffe{\widetilde{\mathfrak{F}}_{\text{a}}}
\def\m{\mu}
\def\mh{\m^{\frac{1}{2}}}
\def\mhh{\m^{-\frac{1}{2}}}
\def\mb{\mu_{w}}
\def\mbh{\mb^{\frac{1}{2}}}
\def\ms{M_{w}}
\def\mss{m_w}
\def\rq{\rho}
\def\uq{u}
\def\tq{T}
\def\tb{T_{w}}
\def\lc{\mathcal{L}}
\def\llc{\lc^1}
\def\li{\lc^{-1}}
\def\pk{\textbf{P}}
\def\bpk{\overline{\textbf{P}}}
\def\ik{\textbf{I}}
\def\a{\mathscr{A}}
\def\ab{\overline{\mathscr{A}}}
\def\bb{\textbf{b}}
\def\bd{\textbf{d}}
\def\be{\textbf{e}}
\def\z{Z}
\def\b{\mathscr{B}}
\def\bbb{\overline\b}
\def\nn{\mathcal{V}}
\def\vh{w}
\def\tvh{\widetilde{w}}
\def\vth{\vartheta}
\def\vrh{\varrho}
\def\bv{\br{\vv}^{\vth}\ue^{\varrho\frac{\abs{\vv}^2}{2\tm}}}
\def\vo{\omega}
\def\gb{\mathcal{G}}
\def\gg{g}
\def\gbg{g^B}
\def\fbf{\mathfrak{g}}
\def\va{v_{\eta}}
\def\vb{v_{\phi}}
\def\vc{v_{\psi}}
\def\vr{\mathbf{r}}
\def\vt{\varsigma}
\def\kk{\kappa}
\def\mn{\mathfrak{n}}
\def\tm{T_M}
\def\mm{\mu_M}
\def\mmh{\mm^{\frac{1}{2}}}
\def\mmhh{\mm^{-\frac{1}{2}}}
\def\rem{\re_M}
\def\remm{\overline{\rem}}
\def\mmss{m_{M,w}}
\def\tvhh{\widetilde{w}_M}
\def\bbq{\mathfrak{B}}
\def\ppq{\mathfrak{p}}
\def\blf{\Phi}
\def\blff{\widetilde{\Phi}}
\def\blg{\Psi}
\def\blgg{\widetilde{\Psi}}
\def\to{\mathfrak{T}}
\def\hb{h^B}
\def\zf{\mathfrak{y}}
\def\g{G} 
\def\ggg{\mathfrak{G}}
\def\rr{\mathscr{R}}
\def\pl{L}
\def\hh{\mathfrak{h}}
\def\te{\theta}
\def\ggt{\ggg^{L,\te}}
\def\ggl{\ggg^L}
\def\wwt{\mathfrak{w}^{L,\te}}
\def\qqt{\mathfrak{q}^{L,\te}}
\def\za{z}
\def\hhh{\overline{h}}
\def\as{\abs{\nabla\tb}_{W^{3,\infty}}}
\def\test{\mathfrak{f}}
\def\vxx{\mathfrak{x}}
\begin{document}

\title{Ghost Effect from Boltzmann Theory}

\author[R. Esposito]{Raffaele Esposito}
\address[R. Esposito]{
   \newline\indent M\&MOCS - Universita' dell'Aquila}
\email{raff.esposito@gmail.com}

\author[Y. Guo]{Yan Guo}
\address[Y. Guo]{
   \newline\indent Division of Applied Mathematics, Brown University}
\email{yan\_guo@brown.edu}
\thanks{Y. Guo was supported by NSF Grant DMS-2106650.}

\author[R. Marra]{Rossana Marra}
\address[R. Marra]{
   \newline\indent Dipartimento di Fisica and Unit\`a INFN, Universit\`a di Roma Tor Vergata}
\email{marra@roma2.infn.it}
\thanks{R. Marra is supported by INFN}

\author[L. Wu]{Lei Wu}
\address[L. Wu]{
   \newline\indent Department of Mathematics, Lehigh University}
\email{lew218@lehigh.edu}
\thanks{L. Wu was supported by NSF Grant DMS-2104775.}

\date{}

\subjclass[2020]{Primary 35Q20, 82B40; Secondary 76P05, 35Q35, 35Q70}

\keywords{Boltzmann theory, hydrodynamic limit, ghost effect}

\maketitle

\makeatletter
\renewcommand \theequation {%
0.%
\ifnum \c@section>\z@ \@arabic\c@section.%
\fi
\@arabic\c@equation} \@addtoreset{equation}{section}
\makeatother

\begin{abstract}
    Taking place naturally in a gas subject to a given wall temperature distribution (Maxwell \cite{Maxwell1879}), the ``ghost effect'' exhibits a rare kinetic effect beyond the prediction of classical fluid theory and Fourier law in such a classical problem in physics. As the Knudsen number $\e$ goes to zero, the finite variation of temperature in the bulk is determined by an $\e$ infinitesimal, ghost-like velocity field, created by a given \textit{finite} variation of the tangential wall temperature as predicted by Maxwell's slip boundary condition. Mathematically, such a finite variation leads to the presence of a severe $\e^{-1}$ singularity and a Knudsen layer approximation in the fundamental energy estimate. Neither difficulty is within the reach of any existing PDE theory on the steady Boltzmann equation in a general 3D bounded domain. Consequently, in spite of the discovery of such a ghost effect from temperature variation in as early as 1960’s, its mathematical validity has been a challenging and intriguing open question, causing confusion and suspicion.
    We settle this open question in affirmative if the temperature variation is small but finite, by developing a new $L^2-L^6-L^\infty$ framework with four major innovations: 1) a key $\a$-Hodge decomposition and its corresponding local $\a$-conservation law eliminate the severe $\e^{-1}$ bulk singularity, leading to a reduced energy estimate; 2) A surprising $\e^{\frac{1}{2}}$ gain in $L^2$ via momentum conservation and a dual Stokes solution; 3) the $\a$-conservation, energy conservation and a coupled dual Stokes-Poisson solution reduces to a $\e^{-\frac{1}{2}}$ boundary singularity; 4) a crucial construction of $\e$-cutoff boundary layer eliminates such boundary singularity via new Hardy and BV estimates.
\end{abstract}

\pagestyle{myheadings} \thispagestyle{plain} \markboth{ESPOSITO AND GUO AND MARRA AND WU}{GHOST EFFECT FROM BOLTZMANN THEORY}

\setcounter{tocdepth}{1}
\tableofcontents


\makeatletter
\renewcommand \theequation {%
\ifnum \c@section>\z@ \@arabic\c@section.%
\fi\@arabic\c@equation} \@addtoreset{equation}{section}
\makeatother

\section{Formulation and Introduction}

\subsection{Introduction to Ghost Effect}

One of the important applications of the Boltzmann theory is to derive hydrodynamic (fluid) equations as  $\e\rt 0$, where $\e$ is   a dimensionless number called the Knudsen number (the ratio between the mean free path $\ell$ and the characteristic macroscopic length). There has been explosive literature on such hydrodynamic limits both from physical and mathematical standpoints. Almost all of the fluid equations obtained from such derivations are compatible with standard fluid theory, and the role of such limits can be viewed as further justification
rather than new discovery of fluid equations from the Boltzmann theory.

A rare exception occurs in the study of the fundamental hydrodynamic limit for a steady gas in a motionless bounded domain $\Omega$.
In his seminal paper \cite{Maxwell1879}, J. C. Maxwell introduces his kinetic formulation of boundary conditions to investigate such a fundamental problem in Physics, and proposes a slip boundary condition for a fluid flow, by assuming thermal equilibria (local Maxwellian) for the gas. 
In the case of diffuse-reflection Maxwell BC (with  the accommodation coefficient equal to $1$), his now famous slip boundary condition takes the form of 
\begin{align}\label{maxwell-boundary}
    u_{\iota}-u_w\simeq G\left(\frac{\p u_{\iota}}{\p n}-\frac{3\lambda}{2\rho_w T_w}\frac{\p^2\tq}{\p\iota\p n}\right)+\frac{3\lambda}{4\rho_w T_w}\frac{\p\tq_w}{\p\iota}.
\end{align}
Here $(\rho,u,T)$ is the fluid density, velocity and temperature with $(\rho_w,u_w,T_w)$ being its counterparts at the wall, $n$ is the normal direction and $\iota$ the tangential direction, 
and $\lambda$ the viscosity and $G$ the slip coefficient proportional to the mean free path $\e$. The second term is of order  $\e$ \cite{Sone2002}. 
This boundary condition also predicts and explains that a dilute gas slips from colder to hotter regions (the thermal transpiration or thermal creep phenomenon) at the boundary.
Maxwell's work has inspired further research in physics and engineering ever since, see recent works on micro fluids including constructions of thermal pumps \cite{
Akhlaghi.Roohi.Stefanov2022} and references quoted therein. 

In the case of a motionless boundary $u_w=0$ with Maxwell diffuse BC, Maxwell's kinetic formulation leads to the following stationary Boltzmann equation in a bounded three-dimensional $C^3$ domain $\Omega\ni\vx=(x_1,x_2,x_3)$
with velocity $\vv=(v_1,v_2,v_3)\in\r^3$:
\begin{align}\label{large system-}
\vv\cdot\nx \fs=\e^{-1}Q[\fs,\fs],\quad \fs\big|_{\vv\cdot\vn<0}=\ms\displaystyle\int_{\vuu\cdot\vn>0}
    \fs(\vuu)\abs{\vuu\cdot\vn}\ud{\vuu},
\end{align}
 and its $\e\rt0$ behavior is one of the most classical and basic hydrodynamic problems in the kinetic theory.
Here $\fs$ is the phase space density,  
$Q$ is the hard-sphere collision operator
\begin{align}
Q[F,G]:=&\int_{\r^3}\int_{\s^2}q(\vo,\vv-\vuu)\Big(F(\vuu_{\ast})G(\vv_{\ast})-F(\vuu)G(\vv)\Big)\ud{\vo}\ud{\vuu},\label{QQ}
\end{align}
with $\vo\in\s^2$ a unit vector,  $\vuu_{\ast}:=\vuu+\vo\big((\vv-\vuu)\cdot\vo\big)$, $\vv_{\ast}:=\vv-\vo\big((\vv-\vuu)\cdot\vo\big)$, and the hard-sphere collision kernel $q(\vo,\vv-\vuu):=q_0\abs{\vo\cdot(\vv-\vuu)}$ for a positive constant $q_0$.

Moreover, the wall Maxwellian in the diffuse-reflection boundary condition is 
\begin{align}
    \ds\ms(\vx_0,\vv):=\frac{1}{2\pi\big(\tb(\vx_0)\big)^2}
\exp\left(-\frac{\abs{\vv}^2}{2\tb(\vx_0)}\right)
\end{align}
for $x_0\in\p\Omega$
where $\vn$ is the unit outward normal to the bounded domain $\Omega$
with a wall temperature 
\begin{align}
    \tb=1+O\big(\abs{\nabla\tb}_{L^{\infty}}\big)
\end{align}
satisfying
\begin{align}
\int_{\vv\cdot\vn(\vx_0)>0}\ms(\vx_0,\vv)\abs{\vv\cdot\vn(\vx_0)}\ud{\vv}=1.
\end{align}
In the absence of a velocity field of with non-zero Mach number inside a bounded gas, Sone, among others \cite{Sone2002, Sone2007} establishes a systematic Hilbert expansion 
in the bulk with careful and precise boundary layer corrections.

In the case of $\abs{\nabla\tb}=O(\e)$, 
an $L^6-L^{\infty}$ framework has
been developed in \cite{Esposito.Guo.Kim.Marra2013, Esposito.Guo.Kim.Marra2015} which leads to the validity of
the Hilbert expansion
\begin{align}\label{INSF limit}
    \fs\approx (2\pi)^{-\frac{3}{2}}\ue^{-\frac{\abs{v}^2}{2}}\bigg(1+\e\Big(\rq_1+\tq_1\frac{\abs{v}^2-3}{2}\Big)\bigg)
\end{align}
where $(2\pi)^{-\frac{3}{2}}\ue^{-\frac{\abs{v}^2}{2}}$ is a global Maxwellian and $\tq_1$ satisfies the celebrated Fourier law for the infinitesimal temperature variation $T_1$ of order $\e$: 
\begin{align}
    \Delta_x\tq_1=0
\end{align}
with $u_1=\od$. We note that in this case, Maxwell's slip 
condition \eqref{maxwell-boundary} is trivial at the $\e$ order. 

In contrast, for the more natural and general case of $\abs{\nabla\tb}_{L^{\infty}}=O(1)$, $\e^{0}$ order only results in $\nx\big(\rho T\big)\equiv 0$. Even though $\abs{\nabla\tb}_{L^{\infty}}=O(1)$ is within the regime of the compressible Euler limit, the vanishing of the velocity at $\e^{0}$ order requires further expansion to determine uniquely $\rho $ and $T$. 
This process (see details of our companion paper \cite{AA024}) leads to that
for any constant $P>0$
\begin{align}\label{expansion}
    \fs\approx \m+\e\bigg\{\m\bigg(\rq_1+\uq_1\cdot\vv+\tq_1\frac{\abs{v}^2-3T}{2}\bigg)-\mh\left(\a\cdot\frac{\nx\tq}{2\tq^2}\right)\bigg\}
\end{align}
where $\ds\m(\vx,\vv):=\frac{\rq(\vx)}{\big(2\pi\tq(\vx)\big)^{\frac{3}{2}}}
\exp\bigg(-\frac{\abs{\vv}^2}{2\tq(\vx)}\bigg)$, 
\begin{align}\label{final 22}
    &\ab:=v\cdot\left(\abs{v}^2-5T\right)\mh\in\r^3,\quad \a:=\li\left[\ab\right]\in\r^3,
\end{align}
where $\lc$ is defined in \eqref{att 11},
and $(\rq,\uq_1,\tq,\mathfrak{p})$ is determined by a Navier-Stokes-Fourier system with ``ghost" effect 
\begin{align}\label{fluid system-}
\left\{
\begin{array}{rcl}
    P&=&\rq\tq,\\\rule{0ex}{1.2em}
    \rq(\uq_1\cdot\nx\uq_1)+\nx \mathfrak{p}&=&\nx\cdot\left(\tau^{(1)}-\tau^{(2)}\right),\\\rule{0ex}{1.2em}
    \nx\cdot(\rq\uq_1)&=&0,\\\rule{0ex}{1.8em}
    \nx\cdot\left(\k\dfrac{\nx\tq}{2\tq^2}\right)&=&5P\big(\nx\cdot\uq_1\big),
    \end{array}
    \right.
\end{align}
with the boundary condition 
\begin{align}\label{boundary condition}
    \tq\Big|_{\p\Omega}=\tb,\quad \uq_1\Big|_{\p\Omega}:=\big(u_{1,\iota_1},u_{1,\iota_2},u_{1,n}\big)\Big|_{\p\Omega}=\Big(\beta_0\p_{\iota_1}\tb,\beta_0\p_{\iota_2}\tb,0\Big).
\end{align}
Here $\tau^{(1)}:=\lambda\left(\nx\uq_1+(\nx\uq_1)^t-\frac{2}{3}(\nx\cdot\uq_1)\id\right)$, $\tau^{(2)}:=\frac{\lambda^2}{P}\Big(K_1\big(\nx^2\tq-\frac{1}{3}\dx\tq\id\big)+\frac{K_2}{\tq}\big(\nx\tq\otimes\nx\tq-\frac{1}{3}\abs{\nx\tq}^2\id\big)\Big)$, $K_1$ and $K_2$ are positive constants,  $\lambda[T]>0$ is the viscosity coefficient, $\k[\tq]>0$ is the heat conductivity, $(u_{1,\iota_1},u_{1,\iota_2})$ are two tangential components and $u_{1,n}$ is the normal component of $u_1$, $\beta_0=\beta_0[\tb]$ is a function of $\tb$. 
The system \eqref{fluid system-} has been discovered by various researches as early as in 1960's. 
We refer to \cite{Kogan1958, Kogan.Galkin.Fridlender1976, Sone2002, Sone2007} for the stationary problem and to  \cite{Masi.Esposito.Lebowitz1989, Bobylev1995, Bardos.Levermore.Ukai.Yang2008} for the time dependent one.

Sone’s precise formula \eqref{boundary condition} confirms Maxwell's boundary condition \eqref{maxwell-boundary} at $O(\e)$ since the first term in \eqref{maxwell-boundary} are of the order $O(\e^2)$, and his $\beta_0$ formula matches the second term in \eqref{maxwell-boundary}:
\begin{align}
    \beta_0\simeq \text{const}\times \frac{\sqrt{\tq_w}}{\rq_w \tq_w}
\end{align}
which can be solved from the kinetic boundary layer (the Milne problem) \cite{Sone2007, Sone2002, AA024}. Sone's formula \eqref{boundary condition} serves as a bench mark for many numerical simulations, see \cite{Akhlaghi.Roohi.Stefanov2022}.

Furthermore, Sone \cite{Sone2002, Sone2007}, among others (see also \cite{Kogan.Galkin.Fridlender1976}), makes an important and surprising observation that new limiting system \eqref{fluid system-} cannot be predicted by any classical fluid theory, which can be viewed as an exciting example of a new kinetic effect. Thanks to the mismatch of $\e$
orders, the infinitesimal first-order velocity $\e u_1$ acting like a ``ghost'' (a term introduced by Y. Sone \cite{Sone1972}), has an $O(1)$ impact in determining zeroth order $\rq$ and $\tq$, as long as the tangential
temperature variation is of $O(1)$ in \eqref{boundary condition}. Therefore,
\eqref{fluid system-} is not compatible with any classical continuum fluid theory, in
which all fluid quantities are determined
at the same level of order of $\e$.  In particular, $\tau^{(2)}$ is a new contribution different from the standard fluid theories.
Subsequently, different types of ghost effects have been discovered,
such as the ghost effect from curvature or 
from mixture of gases \cite{Sone.Aoki.Doi1992, Sone.Aoki.Takata.Sugimoto.Bobylev1996}. In \cite{Esposito.Marra2023}, a formal derivation of ghost-like equations is
obtained starting from the Newtonian many particle system.

Due to fundamental mathematical difficulty created by $\abs{\nabla\tb}_{L^{\infty}}=O(1)$, it has remained an intriguing outstanding question to rigorously derive \eqref{fluid system-}. 
Particularly, the presence of new term $\mh\left(\a\cdot\frac{\nx\tq}{2\tq^2}\right)$ in \eqref{expansion} leads to a boundary layer correction of order $\e$, which can not be avoided in mathematical analysis.
The main goal of this paper is to settle this open question in affirmative in a bounded domain under the assumption
\begin{align}\label{assumption:boundary}
    \as=\oo.
\end{align}
Throughout this paper, in the low Mach number regime, we always assume \eqref{assumption:boundary} and the physically important hard-sphere collision kernel, even though our analysis can be easily extended to
kernels of hard potential with an angular cutoff.

\subsection{Asymptotic Expansion and Remainder Equation}\label{sec:geometric-setup}

We follow the approach in \cite{BB002} to define the geometric quantities and more details can be found in our companion paper \cite{AA024}. 
For smooth manifold $\p\Omega$, there exists an orthogonal curvilinear coordinates system $(\iota_1,\iota_2)$ such that the coordinate lines coincide with the principal directions at any $\vx_0\in\p\Omega$ (at least locally).
Assume $\p\Omega$ is parameterized by $\vr=\vr(\iota_1,\iota_2)$. Let the vector length be $\pl_i=\abs{\p_{\iota_i}\vr}$ and unit vector $\vt_i=\pl_i^{-1}\p_{\iota_i}\vr$. Based on sign of the flow direction $\vv\cdot\vn(\vx_0)$, we can divide the boundary $\gamma:=\big\{(\vx_0,\vv):\ \vx_0\in\p\Omega,\vv\in\r^3\big\}$ into the incoming boundary $\gamma_-$, the outgoing boundary $\gamma_+$, and the grazing set $\gamma_0$. In particular, the boundary condition of \eqref{large system-} is only given on $\gamma_{-}$.

Consider the corresponding new coordinate system $\vxx:=(\iota_1,\iota_2,\mn)$, where $\mn$ denotes the normal distance to boundary surface $\p\Omega$, i.e.
\begin{align}
\vx=\vr-\mn\vn.
\end{align}
Define the orthogonal velocity substitution for $\vvv:=(\va,\vb,\vc)$ as
\begin{align}\label{aa 44}
-\vv\cdot\vn:=\va,\quad
-\vv\cdot\vt_1:=\vb,\quad
-\vv\cdot\vt_2:=\vc.
\end{align}
Finally, we define the scaled variable $\eta=\dfrac{\mn}{\e}$, which implies $\dfrac{\p}{\p\mn}=\dfrac{1}{\e}\dfrac{\p}{\p\eta}$.

We seek a solution in the form 
\begin{align}\label{aa 08}
    \fs(x,v)=&\f+\fb+\e\mh\re
    =\m+\mh\Big(\e\f_1+\e^2\f_2\Big)+\mbh\Big(\e\fb_1\Big)+\e\mh\re,
\end{align}
where the interior solution is
\begin{align}\label{expand 1}
\f(x,v):= \m(x,v)+\mh(x,v)\Big(\e\f_1(x,v)+\e^2\f_2(x,v)\Big),
\end{align}
and the new $\e$-cutoff (singularity in $\va$) boundary layer is
\begin{align}\label{expand 2}
\fb(\vxx,\vvv):= \mbh(\iota_1,\iota_2,\vvv)\Big(\e\fb_1(\vxx,\vvv)\Big).
\end{align}
Here $\f_1$, $\f_2$ and $\fb_1$ are constructed precisely in our companion paper \cite{AA024} (see Appendix \ref{sec:asymptotic} for definitions), $\re(x,v)$ is the remainder with the minimum and natural $\e$ pre-factor, $\m(x,v)$ denotes a local Maxwellian and $\mb(\iota_1,\iota_2,\vvv)=\m(x_0,v)$ the boundary Maxwellian. The norms are defined in Appendix \ref{sec:norms}.

In \cite{AA024}, we proved the following
\begin{theorem}\label{thm:ghost}
    Under the assumption \eqref{assumption:boundary}, for any given $P>0$, there exists a unique solution $(\rq,\uq_1,\tq; \mathfrak{p})$ (where $\mathfrak{p}$ has zero average) to the ghost-effect equation \eqref{fluid system-} and \eqref{boundary condition} satisfying for any $\NN\in[2,\infty)$
    \begin{align}
    \nm{u_1}_{W^{3,\NN}}+\nm{\mathfrak{p}}_{W^{2,\NN}}+\nm{T-1}_{W^{4,\NN}}\ls\oot.
    \end{align}
    Also, we can construct $\f_1$, $\f_2$ and $\fb_1$ as in \eqref{final 34}\eqref{final 35}\eqref{final 14} such that 
    \begin{align}
    \nm{f_1}_{W^{3,\NN}L^{\infty}_{\varrho,\vartheta}}+\abs{f_1}_{W^{3-\frac{1}{\NN},\NN}L^{\infty}_{\varrho,\vartheta}}&\ls\oot,\quad
    \nm{f_2}_{W^{2,\NN}L^{\infty}_{\varrho,\vartheta}}+\abs{f_2}_{W^{2-\frac{1}{\NN},\NN}L^{\infty}_{\varrho,\vartheta}}\ls\oot,
    \end{align}
and for some $K_0>0$ and any $0<\N\leq 3$, (uniform in $\e$-cutoff)
\begin{align}
    \lnmm{\ue^{K_0\eta}\fb_1}+\lnmm{\ue^{K_0\eta}\p_{\iota_1}^\N\fb_1}+\lnmm{\ue^{K_0\eta}\p_{\iota_2}^\N\fb_1}&\ls\oot.
\end{align}
\end{theorem}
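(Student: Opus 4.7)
The plan is to establish the three assertions---well-posedness of the fluid system \eqref{fluid system-}--\eqref{boundary condition}, and then existence and regularity of the interior profiles $\f_1,\f_2$ and the boundary layer $\fb_1$---in sequence, since each provides the data for the next. For the fluid system, I would exploit $\as=\oo$ to work perturbatively around the trivial state $(\rq,\uq_1,\tq,\mathfrak{p})=(P,0,1,0)$. Writing $\tq=1+\tilde{\tq}$ and eliminating $\rq=P/\tq$ via the state equation, the four equations collapse to a Stokes-type system for $(\uq_1,\mathfrak{p})$ coupled with a quasilinear elliptic equation for $\tq$. A Banach fixed-point iteration in a small ball of $W^{3,\NN}\times W^{2,\NN}\times W^{4,\NN}$ then closes the argument: given a previous iterate, I would freeze all nonlinearities and solve the linearized Stokes problem for $(\uq_1,\mathfrak{p})$ by Cattabriga-type $L^{\NN}$ theory (lifting the inhomogeneous Dirichlet data $\beta_0\p_{\iota_j}\tb$ from \eqref{boundary condition} via a divergence-preserving extension) and, in parallel, the linear elliptic equation for $\tq$ with Dirichlet datum $\tb$. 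The convective term $(\rq/\tq)\uq_1\cdot\nx\uq_1$ and the quadratic stress $\tau^{(2)}$ containing $(\nx T)(\nx T)$ are controlled as higher-order perturbations; the smallness \eqref{assumption:boundary} delivers both contraction and the announced $\oot$ bounds, as well as uniqueness.

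With $(\rq,\uq_1,\tq)$ in hand, the hydrodynamic part of $\f_1$ is $\rq_1+\uq_1\cdot\vv+\tq_1(|v|^2-3T)/2$ and its microscopic part is $-\lc^{-1}$ applied to the macroscopic driving terms, producing the $\a\cdot\nx\tq/(2\tq^2)$ correction displayed in \eqref{expansion}. The profile $\f_2$ is then obtained by imposing the Fredholm solvability condition on the $\e^2$ equation (which reproduces the fluid system solved in the first step) and inverting $\lc$ on the orthogonal complement of its kernel. The $W^{k,\NN}L^\infty_{\varrho,\vartheta}$ bounds follow from the corresponding Sobolev bounds on the fluid unknowns together with the continuity of $\lc^{-1}$ in the weighted $L^\infty_{\varrho,\vartheta}$-in-$\vv$ space; the trace norms come from the standard trace inequality $W^{k,\NN}\hookrightarrow W^{k-1/\NN,\NN}(\p\Omega)$ applied pointwise in $\vv$.

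The order-$\e$ mismatch between $\m+\e\mh\f_1$ and the diffuse boundary condition is a tangential-gradient quantity, which I would correct by solving a linear Milne half-space problem for $\fb_1$ in the scaled normal variable $\eta$, parametrized by $(\iota_1,\iota_2,\vb,\vc)$; existence and exponential decay $\ue^{-K_0\eta}$ for diffuse data go back to Bardos--Caflisch--Nicolaenko and Coron--Golse--Sulem. Tangential regularity of order $\N\leq 3$ is obtained by differentiating the Milne problem in $\iota_j$ and applying the same decay estimate to each derived equation, whose source inherits the regularity of $\tb$, so that the $\oot$ prefactor is passed down directly from $\as=\oo$. The main obstacle is the ghost coupling itself: the boundary velocity $\uq_1|_{\p\Omega}=\beta_0\p_{\iota_j}\tb$ reenters the equation for $\tq$ through $5P\nx\cdot\uq_1$, preventing any sequential solve of the two elliptic equations, and the $|\nx T|^2$ stress demands a norm strong enough to multiply gradients---both difficulties are precisely what \eqref{assumption:boundary} is designed to overcome, yielding contraction in the chosen Sobolev class.
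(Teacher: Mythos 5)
Your outline matches the paper's treatment: the well-posedness of \eqref{fluid system-}--\eqref{boundary condition} is obtained (in the companion paper \cite{AA024}, to which this paper defers) by a perturbative Stokes/elliptic fixed-point argument around $(\rq,\uq_1,\tq,\mathfrak{p})=(P,0,1,0)$ using the smallness \eqref{assumption:boundary}, the profiles $\f_1,\f_2$ are built exactly as in \eqref{final 34}--\eqref{final 35} via the solvability condition and $\li$, and $\fb_1$ comes from the Milne problem with exponential decay and tangential derivatives estimated by differentiating the Milne problem in $\iota_j$ (Theorem \ref{boundary well-posedness}). So the proposal is correct and takes essentially the same route; the only cosmetic omission is that \eqref{final 14} defines $\fb_1$ as the $\ch(\e^{-1}\va)\chi(\e\eta)$-cutoff of the Milne solution, whose estimates follow from those you prove for the uncut layer.
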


We denote $\oo$ to be a sufficiently small constant independent of the data. Also, let $\oot$ be a small constant depending on $\tb$ satisfying
\begin{align}\label{def:oot}
    \oot=\oo\rt0\ \ \text{as}\ \ \as\rt0.
\end{align}
In principle, while $\oot$ is determined by $\nabla\tb$ a priori, we are free to choose $\oo$ depending on the estimate.
When necessary, we will use $\cc_1,\cc_2,\cc_3\cdots$ to mark different small constants.

Based on the analysis in Section \ref{sec:remainder}, in order to show the validity of \eqref{aa 08}, it suffices to consider the remainder equation for $\re$: 
\begin{align}\label{remainder}
\left\{
\begin{array}{l}
\vv\cdot\nx\left(\mh\re\right)+\e^{-1}\mh\lc[\re]=\mh\ss\ \ \text{in}\ \
\Omega\times\r^3,\\\rule{0ex}{1.2em} \re(\vx_0,\vv)=\pp[\re](\vx_0,\vv)+h(\vx_0,\vv) \ \ \text{for}\ \ \vx_0\in\p\Omega\ \ \text{and}\ \ \vv\cdot\vn(\vx_0)<0.
\end{array}
\right.
\end{align}
Here the linearized Boltzmann operator  $\lc$ is defined in \eqref{att 11}, the boundary and bulk sources $h$ and $\ss$ are defined in \eqref{aa 32} and \eqref{aa 31} (In order to show positivity of $\fs$, we choose to add an artificial term $\sp$ to $\ss$),
\begin{align}
\pp[\re](\vx_0,\vv):=\mss(\vx_0,\vv)\displaystyle\int_{\vuu\cdot\vn(\vx_0)>0}
\mh(\vx_0,\vuu)\re(\vx_0,\vuu)\abs{\vuu\cdot\vn(\vx_0)}\ud{\vuu},
\end{align}
with $\mss(\vx_0,\vv):=\ms\mb^{-\frac{1}{2}}$
satisfying the normalization condition
\begin{align}
    \mb^{\frac{1}{2}}(\vx_0,\vv)=\mss(\vx_0,\vv)\displaystyle\int_{\vuu\cdot\vn(\vx_0)>0}
    \mb(\vx_0,\vuu)\abs{\vuu\cdot\vn(\vx_0)}\ud{\vuu}=\frac{P}{\big(2\pi T_w(x_0)\big)^{\frac{1}{2}}}\mss(\vx_0,\vv).
\end{align}
Note that 
\begin{align}\label{oo 40}
\vv\cdot\nx\left(\mh\re\right)&=\mh(\vv\cdot\nx\re)+\frac{1}{2}\mhh(\vv\cdot\nx\m)\re=\mh(\vv\cdot\nx\re)+\left(\ab\cdot\frac{\nx\tq}{4\tq^2}\right)\re,
\end{align}
where $\ab$ is defined in \eqref{final 22}.
Hence, we can rewrite \eqref{remainder} in the equivalent form
\begin{align}\label{remainder.}
\left\{
\begin{array}{l}
\vv\cdot\nx\re+\left(\mhh\ab\cdot\dfrac{\nx\tq}{4\tq^2}\right)\re+\e^{-1}\lc[\re]=\ss\ \ \text{in}\ \
\Omega\times\r^3,\\\rule{0ex}{1.0em} \re(\vx_0,\vv)=\pp[\re](\vx_0,\vv)+h(\vx_0,\vv) \ \ \text{for}\ \ \vx_0\in\p\Omega\ \ \text{and}\ \ \vv\cdot\vn(\vx_0)<0.
\end{array}
\right.
\end{align}

\subsection{Decomposition and Reformulation}

\subsubsection{$\a$-Hodge Decomposition and Local $\a$-Conservation Law}

The presence of new  contributions involving $\a\cdot\frac{\nx\tq}{2\tq^2}$
in \eqref{expansion} and $\left(\mhh\ab\cdot\frac{\nx\tq}{4\tq^2}\right)\re$ in \eqref{remainder.} creates fundamental mathematical
difficulties, and we define the following projection and $\a$-Hodge
decomposition, as well as corresponding local $\a$-conservation law to capture.

Note that the null space $\nk$ of $\lc$ is a five-dimensional space spanned by the orthogonal basis
\begin{align}\label{att 32}
\mh\Big\{1,\vv,\left(\abs{\vv}^2-3\tq\right)\Big\}.
\end{align}
We denote $\nnk$ the orthogonal complement of $\nk$ in $L^2(\r^3)$.
Define the kernel operator $\pk$
as the orthogonal projection onto $\nk$, and the non-kernel operator $\ik-\pk$.
We decompose
\begin{align}
    \re=&\pk[\re]+(\ik-\pk)[\re]
    :=\mh(\vv)\Big\{\P_{\re}(\vx)+\vv\cdot
    \bb_{\re}(\vx)+\Big(\abs{\vv}^2-5\tq\Big)c_{\re}(\vx)\Big\}+(\ik-\pk)[\re],
\end{align}
Note that we have chosen a different decomposition of $\pk[\re]$ in contrast with the classical $(a_{\re},\bb_{\re},c_{\re})$ decomposition in previous work \cite{Esposito.Guo.Kim.Marra2015} ($\P_{\re}=a_{\re}+2\tq c_{\re}$).
Moreover, we introduce a crucial further orthogonal split
\begin{align}
(\ik-\pk)[{\re}]
&=\a\cdot\bd_{\re}(\vx)+(\ik-\bpk)[\re],
\end{align}
where $\a$ is defined in \eqref{final 22}, $(\ik-\bpk)[{\re}]$ is the orthogonal complement to $\a\cdot\bd_{\re}(\vx)$ in $\nnk$ with respect to $\brr{\ \cdot\ ,\ \cdot\ }:=\bbrv{\ \cdot\ ,\lc[\ \cdot\ ]}$
\begin{align}
\bbrr{\a,(\ik-\bpk)[{\re}]}=\bbrv{\ab,(\ik-\bpk)[{\re}]}=0.
\end{align}
From now on, when there is no confusion, we will omit the subscript ${\re}$ and simply write $\P,\bb,c,\bd$.
In summary, we decompose the remainder as
\begin{align}
\re&=\Big(\P+\bb\cdot v+c\big(\abs{v}^2-5T\big)\Big)\mh+\bd\cdot\a+(\ik-\bpk)[\re],\label{pp 01}
\end{align}
and from \eqref{remainder.}, $\re$ satisfies 
\begin{align}
v\cdot\nx\re+\left(\mhh\ab\cdot\frac{\nx T}{4T^2}\right)\re+\e^{-1}\bd\cdot\ab+\e^{-1}\lc\Big[(\ik-\bpk)[\re]\Big]&=\ss.\label{pp 02}
\end{align}
We can further define the $\a$-Hodge decomposition $\bd=\nx\xi+\be$
with the potential $\xi$ solving the Poisson equation
\begin{align}\label{tt 01}
\nx\cdot\left(\k\nx\xi\right)=\nx\cdot(\k\bd)\ \ \text{in}\ \ \Omega,\quad
\xi=0\ \ \text{on}\ \ \p\Omega.
\end{align}
We can directly compute
\begin{align}\label{tt 13}
\nx\cdot(\k\be)=\nx\cdot(\k\bd)-\nx\cdot(\k\nx\xi)=0.
\end{align}
This implies the crucial orthogonality: for any $\eta(\vx)$ such that $\eta=0$ on $\p\Omega$, we have
\begin{align}\label{final 63}
\int_{\Omega}(\k\be)\cdot\nx\eta\ud{\vx}=-\int_{\Omega}\nx\cdot(\k\be)\eta\ud{\vx}=0.
\end{align}
Thus, taking $\eta=\xi$, we know
\begin{align}\label{energy 11}
\bbrr{\bd\cdot\a,\bd\cdot\a}&=\bbr{\bd\cdot\a,\lc[\bd\cdot\a]}=\bbr{\bd\cdot\a,\bd\cdot\ab}=\int_{\Omega}\k\abs{\nx\xi+\be}^2\ud\vx\\
&=\int_{\Omega}\k\abs{\nx\xi}^2\ud\vx+\int_{\Omega}\k\abs{\be}^2\ud\vx+2\int_{\Omega}\k\left(\nx\xi\cdot\be\right)\ud\vx\no\\
&=\int_{\Omega}\k\abs{\nx\xi}^2\ud\vx+\int_{\Omega}\k\abs{\be}^2\ud\vx-2\int_{\Omega}\nx\cdot(\k\be)\xi\ud\vx=\int_{\Omega}\k\abs{\nx\xi}^2\ud\vx+\int_{\Omega}\k\abs{\be}^2\ud\vx.\no
\end{align} 

We note the local conservation laws of mass, momentum and energy (with test functions $\mh,v\mh,\abs{v}^2\mh$):
\begin{align}
    P\big(\nx\cdot\bb\big)=&\brv{\mh,\ss},\label{conservation law 1}\\
    P\nx\P+\nx\cdot\int_{\r^3}(v\otimes v)\mh(\ik-\bpk)[\re]=&\brv{v\mh,\ss},\label{conservation law 2}\\
    5P\Big(\nx\cdot(\bb T)\Big)+\nx\cdot(\kappa\bd)=&\brv{\abs{v}^2\mh,\ss}.\label{conservation law 3}
\end{align}
Notice that the construction of $\xi$ in \eqref{revise 01} leads to a natural kinetic equation for the combination $\nx\big(c+\e^{-1}\xi\big)$ as
\begin{align}\label{revise 02}
    v\cdot\nx\re+\e^{-1}\big(\nx\xi+\be\big)\cdot\ab
    \approx& \nx\big(c+\e^{-1}\xi\big)\cdot\ab+v\cdot\nx\Big(p\mh+\bb\cdot v\mh+(\ik-\pk)[\re]\Big)+\e^{-1}\be\cdot\ab,
\end{align}
along $\ab$ direction. This allows us to introduce the local $\a$-conservation law (with test function $\a$)
\begin{align}\label{new 3=}
\k\nx\big(c+\e^{-1}\xi\big)+\frac{\nx T}{2T^2}\big(\kappa\P+\sigma c\big)+\e^{-1}\k\be\\
+\brv{v\cdot\nx\big((\ik-\bpk)[\re]\big),\a}+\brv{\frac{\nx T}{4T^2}(\ik-\bpk)[\re],\a}&=\brv{\ss,\a}.\no
\end{align}
This new $\a$-Hodge decomposition and its
$\a$-conservation plays the key role for us to circumvent the analytical difficulty.

\subsubsection{Reformulation with Global Maxwellian}

Due to the cubic velocity growth term in \eqref{remainder.}, we have to reformulate the remainder equation with a global Maxwellian in order to obtain $L^{\infty}$ estimates. Considering $\lnm{\nx\tq}\ls\oot$ for $\oot$ defined in \eqref{def:oot}, choose constant $\ds\tm: \tm<\min_{x\in\Omega}T<\max_{x\in\Omega}T<2\tm$ and $\ds\max_{x\in\Omega}T-\tm=\oot$.
Define a global Maxwellian
\begin{align}\label{final 13}
    \mm:=\frac{P}{(2\pi)^{\frac{3}{2}}\tm^{\frac{5}{2}}}\exp\bigg(-\frac{\abs{v}^2}{2\tm}\bigg).
\end{align}
We can rewrite \eqref{remainder} as
\begin{align}\label{ll 01}
\left\{
\begin{array}{l}
\vv\cdot\nx\rem+\e^{-1}\lc_M[\re]=\ss_M\ \ \text{in}\ \
\Omega\times\r^3,\\\rule{0ex}{1.5em} \rem(\vx_0,\vv)=\mathcal{P}_M[\rem](\vx_0,\vv)+h_M(\vx_0,\vv) \ \ \text{for}\ \ \vx_0\in\p\Omega\ \ \text{and}\ \ \vv\cdot\vn(\vx_0)<0,
\end{array}
\right.
\end{align}
where $\rem=\mmhh\mh\re$, $\ss_M=\mmhh\mh\ss$, $h_M=\mmhh\mh h$ and for $\mmss:=\mmhh\mh(\vx_0,\vv)\mss(\vx_0,\vv)=M_w\mmhh$
\begin{align}
    \lc_M[\rem]:=&-2\mmhh Q^{\ast}\left[\m,\mmh \rem\right]:=\nu_M\rem-K_M[\rem],\\
    \mathcal{P}_M[\re_M](\vx_0,\vv):=&\mmss(\vx_0,\vv)\displaystyle\int_{\vuu\cdot\vn(\vx_0)>0}   \mmh\rem(\vx_0,\vuu)\abs{\vuu\cdot\vn(\vx_0)}\ud{\vuu}.
\end{align}

\subsubsection{Working Space}

Denote the working space $X$ via the norm
\begin{align}\label{ss 00}
\xnm{\re}:=&\e^{-1}\Big(\tnm{\P}+\tnm{\xi}+\tnm{\be}+\um{(\ik-\bpk)[\re]}+\pnm{\xi}{6}\Big)+\e^{-\frac{1}{2}}\Big(\tnm{\bb}+\nm{\xi}_{H^2}+\tnms{(1-\pp)[\re]}{\gamma_+}+\tnms{\nx\xi}{\p\Omega}\Big)\\
&+\Big(\pnm{c}{2}+\pnm{\P}{6}+\pnm{\bb}{6}+\pnm{c}{6}+\nm{\xi}_{W^{2,6}}+\pnm{\be}{6}+\pnm{(\ik-\bpk)[\re]}{6}+\tnms{\pp[\re]}{\gamma}+\pnms{\m^{\frac{1}{4}}(1-\pp)[\re]}{4}{\gamma_+}\Big)\no\\
&+\e^{\frac{1}{2}}\Big(\lnmm{\rem}+\lnmms{\rem}{\gamma}\Big).\no
\end{align}

\subsection{Main Theorem}

\begin{theorem}\label{main}
Assume that $\Omega$ is a bounded $C^3$ domain and \eqref{assumption:boundary} holds. Then for any given $P>0$, there exists $\e_0>0$ such that for any $\e\in(0,\e_0)$, there exists a non negative solution $\fs$ to the equation \eqref{large system-} represented by \eqref{aa 08} satisfying
\begin{align}\label{final 31}
    \int_{\Omega}\P(x)\ud x=0,
\end{align}
and
\begin{align}\label{final 21}
    \xnm{\re}\ls\oot,
\end{align}
where the $X$ norm is defined in \eqref{ss 00}. Such a solution is unique among all solutions satisfying \eqref{final 31} and \eqref{final 21}. This further yields that in the expansion \eqref{aa 08}, $\m+\e\m\big(\uq_1\cdot\vv\big)$ is the leading-order terms in the sense of
\begin{align}
    \Bnm{\mhh\big[\fs-\m\big]}_{L^2_{x,v}}\ls\e
\end{align}
and 
\begin{align}\label{final 211}
    \nm{\int_{\r^3}\Big[\fs-\m-\e \m\big(\uq_1\cdot v\big)\Big]v}_{L^2_{x}}\ls\e^{\frac{3}{2}},
\end{align}
where $(\rq,\uq_1,\tq)$
is determined by the ghost-effect equations \eqref{fluid system-} and \eqref{boundary condition}.
\end{theorem}

\begin{remark}
    There is no restriction on $\nabla T_{w}$ except smallness condition \eqref{assumption:boundary}. Moreover, our result is valid for all general smooth bounded domains (including non-convex domains), despite the presence of boundary-layer approximation.
\end{remark}

\begin{remark}
    We stress that the boundedness of $\xnm{\re}$ only implies that $\e^{-\frac{1}{2}}\nx\xi$ is bounded in $L^2$, not $\e^{-1}\nx\xi$ is bounded in $L^2$. This is in contrast to the full $\e^{-1}\um{(\ik-\pk)[\re]}$ control in $L^2$ in \cite{Esposito.Guo.Kim.Marra2015}, 
\end{remark}

\subsection{Literature Review}

The hydrodynamic limit of the Boltzmann equation has been the subject of many studies since the pioneering work by Hilbert, who introduced his famous expansion in the Knudsen number $\e$ in \cite{Hilbert1916, Hilbert1953} to answer the sixth of his famous questions \cite{Hilbert1900}. 
For dynamic and unsteady problems, mathematical results on the control of the remainder of the Hilbert expansion of the Boltzmann equation to show the convergence to the solutions of the compressible Euler equations for small Knudsen number $\e$, were obtained by Caflisch \cite{Caflisch1980}, and Lachowicz \cite{Lachowicz1987}, while Nishida \cite{Nishida1978}, Asano-Ukai \cite{Asano.Ukai1983} proved the convergence of the Boltzmann equation to the compressible Euler equations in the hydrodynamic limit by different methods. 
We also refer to \cite{Huang.Wang.Yang2013, Huang.Wang.Wang.Yang2016, Guo.Huang.Wang2021}. 
More recently, the convergence in the presence of singularities for the Euler equations have been obtained in \cite{Yu2005} and \cite{Huang.Wang.Yang2010, Huang.Wang.Yang2010(=)}. The relativistic Euler limit has been studied in \cite{Speck.Strain2011}.
We also refer to the recent progress on incompressible Euler limit \cite{Jang.Kim2021, Kim.La2022, Bardos.Golse.Paillard2012, Golse2013, Golse.Saint-Raymond2009, Saint-Raymond2003, Saint-Raymond2008}.
On a longer time scale $\e^{-1}$, where diffusion effects become signiﬁcant,  low Mach numbers regime (Mach number of order $\e$ or smaller) is assumed due to the lack of scaling invariance of the compressible Navier–Stokes equations. 
Hence the Boltzmann solution has been solved close to the incompressible Navier–Stokes–Fourier system. 
Mathematical results were obtained, among
others, in \cite{Bardos.Ukai1991, Masi.Esposito.Lebowitz1989, Guo2006, Guo.Jang2010, Guo.Jang.Jiang2010, Cao.Jang.Kim2021, Gallagher.Tristani2020} for smooth solutions. 
In particular, the convergence of the renormalized solutions to the Navier-Stokes-Fourier system has been obtained by Golse and Saint-Raymond \cite{Golse.Saint-Raymond2004}, see also contributions in \cite{Bardos.Golse.Levermore1991, Bardos.Golse.Levermore1993, Bardos.Golse.Levermore1998, Bardos.Golse.Levermore2000, Lions.Masmoudi2001, Masmoudi.Saint-Raymond2003, Saint-Raymond2009, Arsenio2012, Jiang.Masmoudi2016, Jiang.Levermore.Masmoudi2010}. We also refer to the survey \cite{Mouhot.Villani2015}.
In a pioneering study of the unsteady ghost effect for 1D geometry $-\infty<x<\infty$ \cite{Huang.Wang.Wang.Yang2016}, convergence is established for a temperature variation along the 1D normal direction via delicate analysis in a Sobolev space with a crucial sign condition for $\tq'$.

Much less is known about the hydrodynamic limits for steady Boltzmann solution, due to lack of basic $L^1$ and entropy estimates. 
Only the control of entropy production from $\int Q[\fs,\fs]\ln(\fs)$ is available analytically. 
Despite progress \cite{Villani2002, Desvillettes.Villani2005, Desvillettes.Villani2001, Carlen.Carvalho1994, Aoki.Golse.Kosuge2015} on the control of entropy production in terms of $\fs-\m_{\fs}$ (where $\m_{\fs}$ is the local
Maxwellian with the same mass, momentum and energy), its nonlinear nature prevents useful applications for Boltzmann solutions with large amplitudes so far (see an interesting progress in \cite{Arkeryd.Nouri2000} and also \cite{Liu.Yu2013,Chen.Chen.Liu.Sone2007}).
In \cite{Arkeryd.Esposito.Marra.Nouri2010}, in the Rayleigh-Benard context existence results for small Knudsen and Mach numbers in two dimensional slab are obtained and the first bifurcation is studied. In \cite{Esposito.Lebowitz.Marra1994, Esposito.Lebowitz.Marra1995} the geometry is strictly one-dimensional and the Knudsen number is small, but not the Mach number. 
We refer to the recent review \cite{Esposito.Marra2020} for more details. 
In the above cases no slip boundary conditions are considered. 
Recently an interesting extension to the case of slip boundary condition has been obtained in \cite{Duan.Liu.Yang.Zhang2022}. 
Very few rigorous results, beyond the one-dimensional case, are available for the ghost effect. In \cite{Brull2008,Brull2008(=)}, the ghost effect for a one-dimensional mixture is established, and in \cite{Arkeryd.Esposito.Marra.Nouri2011} the ghost effect is studied for a one-dimensional problem with cylindrical symmetry.  
We remark that classical ghost effect from temperature variation \eqref{large system-} requires non-trivial tangential temperature variation which has a multi-dimensional nature.


\section{Methodology}\label{sec:framework}

\subsection{$L^6-L^{\infty}$ Framework for Fourier Law}

For a general 3D domain, an improved $L^6-L^\infty$ framework is developed in \cite{Esposito.Guo.Kim.Marra2015}, in which steady hydrodynamic limits to the celebrated Fourier law are established, along with their dynamical stability.
For Boltzmann solutions close to a Maxwellian $\m$, the 
entropy production $\int Q[\fs,\fs]\ln(\fs)$ is approximated by the fundamental a priori bound for the microscopic part $\um{(\ik-\pk)[\re]}^2$ associated with the linearized Boltzmann operator $\lc$ around $\m$, where $\mh\re\sim\fs-\m$ instead of the natural
 difference with the local Maxwellian $\fs-\m_{\fs}$.
In such a setup, the fundamental analytic difficulty is to control the missing macroscopic part $\pk[\re]\sim\m-\m_{\fs}$ for the nonlinear closure.
 
In a series of papers \cite{Guo2003, Guo2012}, an elliptic structure is discovered for $\pk[\re]$, and $\pk[\re]$ can be bounded via $(\ik-\pk)[\re]$ through the Boltzmann equation in high order Sobolev norms. 
More
applications can be found in \cite{Gressman.Strain2011, Chaturvedi.Luk.Nguyen2022}. 
Unfortunately, it is well-known that
Boltzmann solutions exhibit only limited regularity (or even singularity) in the presence of physical boundary conditions \cite{Kim2011, Guo.Kim.Tonon.Trescases2013} due to the characteristic nature of the grazing set in kinetic theory. 
To overcome this difficulty, a $L^{2}-L^{\infty }$ framework is established in \cite{Guo2010}, in which $\tnm{\pk[\re]}$ is bounded via $\tnm{(\ik-\pk)[\re]}$ to bootstrap weighted $\lnm{\re}$ bound via double-Duhamel principle along the
characteristics, thanks to the velocity mixing feature of $K$. An important new methodology is developed in \cite{Esposito.Guo.Kim.Marra2013}, where $\tnm{\pk[\re]}$ is estimated quantitatively in terms of $\tnm{(\ik-\pk)[\re]}$ via proper choices of test functions in the weak formulation with boundary effect, by solving dual Poisson equations of the form $\Delta_x\phi\sim a,\bb,c$.
Such a methodology entails a robust and flexible
approach to grasp the ellipticity (positivity) estimates for $\pk[\re]$ in the presence of boundary conditions, reminiscing in spirit
elliptic estimates in weak forms.

In recent papers \cite{Esposito.Guo.Kim.Marra2013, Esposito.Guo.Kim.Marra2015, BB002}, the steady solution to the Boltzmann equation close to Maxwellians was constructed, in 3D smooth domains, for a gas in contact with a boundary with a prescribed temperature profile modeled by the diffuse-reflection boundary condition. 
Also, the Navier-Stokes-Fourier limit was established in \cite{Esposito.Guo.Kim.Marra2015, BB002} for the diffusive scaling of $\abs{\nx\tq}=\oo\e$ based on an improved $L^6-L^{\infty}$ framework. 
In particular, the proof in \cite{Esposito.Guo.Kim.Marra2015} relies on the $L^2$ energy/coercivity estimates combined with $L^6$ kernel estimates (by solving dual Poisson equations of $\dx\phi\sim a\abs{a}^4, \bb\abs{\bb}^4, c\abs{c}^4$)
\begin{align}
    \e^{-1}\um{(\ik-\pk)[\re]}+\pnm{\pk[\re]}{6}+\e^{-\frac{1}{2}}\tnms{(1-\pp)[\re]}{\gamma_+}\ls \oo\e^{\frac{1}{2}}\lnmm{\re}+\text{$\oo$ boundary/source terms},
\end{align}
and the $L^{\infty}$ estimates
\begin{align}
    \lnmm{\re}\ls \e^{-\frac{3}{2}}\um{(\ik-\pk)[\re]}+\e^{-\frac{1}{2}}\pnm{\pk[\re]}{6}+\text{$\oo$ boundary/source terms}.
\end{align}
The $L^6$ bound is crucial to control nonlinearity in 3D and to close the $L^{\infty}$ estimates:  weaker $\tnm{\pk[\re]}$ bound leads to extra $\e^{-1}$ loss (compared with $L^6$ bound) as $\lnmm{\re}\ls \e^{-\frac{3}{2}}\tnm{\pk[\re]}$.

We also refer to the recent papers on the diffusive limit of the Boltzmann equation and related models \cite{Briant.Guo2016, Briant.Merino-Aceituno.Mouhot2019, Jiang.Masmoudi2022, Jang2009}. We list some recent developments along $L^p-L^{\infty}$ framework \cite{Esposito.Guo.Kim.Marra2013, Guo2010, Guo.Jang2010, Guo.Jang.Jiang2010, Guo.Kim.Tonon.Trescases2013, Guo.Kim.Tonon.Trescases2016, Kim2011, Kim2014, Speck.Strain2011, AA003}.

\subsection{New $L^2-L^6-L^{\infty}$ Framework}

For hydrodynamic limits of \eqref{large system-}, the basic energy estimate yields
\begin{align}
    \e^{-2}\um{(\ik-\pk)[\re]}^2=&-\e^{-1}\br{\mhh\ab\cdot\frac{\nx\tq}{4\tq^2},\re^2}+\text{good terms}
    =-\e^{-1}5P\bbr{\nx\tq,\bb c}+\text{good terms}.
\end{align}

In the case of $\lnm{\nx\tq}=o(1)\e$, we have
\begin{align}\label{basicenergy}
    \e^{-1}\um{(\ik-\pk)[\re]}\ls \oo\pnm{\pk[\re]}{6}+1.
\end{align}
With a new $L^{6}$ gain for $\pk[\re]$ in \cite{Esposito.Guo.Kim.Marra2015}, quantitative estimates in \cite{Esposito.Guo.Kim.Marra2013} lead to
\begin{align}\label{6bound}
    \pnm{\pk[\re]}{6}\ls \e^{-1}\um{(\ik-\pk)[\re]}
\end{align}
where $c$ and $\bb$ are estimated via test functions of the form $\nx\phi\cdot\ab$ and $\nx\psi:\bbb$, and the convergence to the incompressible Navier-Stokes-Fourier system as $\e\rt 0$ in \cite{Esposito.Guo.Kim.Marra2015} is established. We remark that thanks to the new $L^6$ estimate and $\lnm{\nx\tq}=o(1)\e$, it is possible to avoid boundary layer approximation
completely.

In the case of $\lnm{\nx\tq}=o(1)$, however,
\begin{align}\label{final 46}
    -\e^{-1}5P\bbr{\nx\tq,\bb c}\approx \oo\e^{-1}\pnm{\pk[\re]}{6}^2
\end{align}
which is impossible to close via \eqref{6bound} with a severe loss of $\e^{-1}$. We note that the term $-\e^{-1}5P\bbr{\nx\tq,\bb c}$ presents a fundamental major difficulty, and the main mathematical achievement of our contribution is to develop a systematic methodology to overcome this loss of $\e$ in the presence of boundary effects. Furthermore, \eqref{assumption:boundary}  forces us to introduce boundary layer approximation at the leading order, in a stark contrast to the case $\lnm{\nx\tq}=o(1)\e$. This presents another major technical challenge due to the well-known singularity at the grazing set: no mathematical theory is available for non-convex sets.\\

\paragraph{\underline{Reduced Energy Estimate of $(\ik-\bpk)[\re]+\be\cdot\a$}}
The first key idea is to use $\a$-Hodge decomposition to split
\begin{align}\label{revise 01}
    (\ik-\pk)[\re]=\bd\cdot\a+(\ik-\bpk)[\re]=\nx\xi\cdot\a+\be\cdot\a+(\ik-\bpk)[\re].
\end{align}
A reduced energy estimate is then established as the building block of our analysis:

\begin{proposition}[Proof in Section \ref{sec:energy-est}]\label{thm:energy}
Let $\re$ be a solution to \eqref{remainder}. Under the assumption \eqref{assumption:boundary}, we have
\begin{align}\label{ss 01}
    \e^{-\frac{1}{2}}\tnms{(1-\pp)[\re]}{\gamma_+}+\e^{-1}\tnm{\be}+\e^{-1}\um{(\ik-\bpk)[\re]}
    \ls&\oot\xnm{\re}+\xnm{\re}^2+\oot.
\end{align}
\end{proposition}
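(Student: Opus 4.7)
The plan is to carry out an $L^2$ energy estimate on the reformulated equation \eqref{pp 02} by testing against $\re$, and then to exploit the Hodge decomposition $\bd=\nx\xi+\be$ together with approximate macroscopic conservation laws to cancel the dangerous term $-\e^{-1}5P\brx{\nx\tq,\bb c}$ against a piece of the collisional coercivity. First I would multiply \eqref{pp 02} by $\re$ and integrate over $\Omega\times\r^3$. The transport term produces a boundary contribution which, after applying the diffuse boundary condition $\re=\pp[\re]+h$ together with the normalization of the wall Maxwellian, yields a gain of $\tnms{(1-\pp)[\re]}{\gamma_+}^2$ modulo $h$-contributions. The collision term yields coercivity $\e^{-1}\brr{\bd\cdot\a,\bd\cdot\a}+\e^{-1}\brr{(\ik-\bpk)[\re],(\ik-\bpk)[\re]}$ by the orthogonality built into the split, and by \eqref{energy 11} the first piece equals $\e^{-1}\int_{\Omega}\k(|\nx\xi|^2+|\be|^2)\ud x$.

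Next I would isolate the dangerous term. The coefficient $\mhh\ab\cdot\frac{\nx \tq}{4\tq^2}\re$ tested against $\re$ produces, after velocity integration and use of the decomposition \eqref{pp 01}, the leading piece $-\e^{-1}\,5P\brx{\nx\tq,\bb c}$ plus cross-terms with $(\ik-\bpk)[\re]$ and $\bd\cdot\a$ that are $\oot$ times the already-captured coercive quantities. The heart of the argument is then the chain \eqref{string}: integration by parts combined with approximate mass conservation $\nx\cdot\bb\approx 0$, approximate energy conservation $\nx\cdot(\tq\bb)+\nx\cdot(\k\bd)\approx 0$ (both obtained by testing \eqref{remainder.} against the hydrodynamic basis \eqref{att 32}), and the identity $\nx\cdot(\k\be)=0$ from \eqref{tt 13}, rewrite this dangerous term as $-\e^{-1}\int_{\Omega}\k|\nx\xi|^2\ud x$ plus errors of size $\oot\xnm{\re}^2$. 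This exactly cancels the $\nx\xi$ portion of the coercivity, leaving on the LHS precisely the combination $\e^{-1}\tnms{(1-\pp)[\re]}{\gamma_+}^2+\e^{-2}\tnm{\be}^2+\e^{-2}\um{(\ik-\bpk)[\re]}^2$ that appears in \eqref{ss 01}.

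It then remains to control the source and lower-order pieces. The term $\br{\re,\bar\ss}$ is bounded by Cauchy--Schwarz against the explicit estimates on $\bar\ss$ produced by the asymptotic construction in Theorem \ref{thm:ghost}: the linear-in-$\re$ part contributes $\oot\e^{1-\al}$, while the quadratic-in-$\re$ part contributes $\e^{\al-1}\xnm{\re}^2$ via the $L^{\infty}_{\varrho,\vartheta}$ component of $\xnm{\re}$. The boundary source $h$ is handled similarly at the trace level, and the lower-order operator $\llc[\re]$ comes with coefficient of size $\oot$ by \eqref{assumption:boundary} and is absorbed into the $\oot\xnm{\re}$ budget.

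The principal obstacle will be executing the chain \eqref{string} while staying within the required error budget. The approximate conservation laws carry residual errors involving $\e^{-1}(\ik-\bpk)[\re]$ and $\e^{-1}\be$, which must be reabsorbed into the coercivity on the LHS with only an $\oot$ prefactor; this forces careful bookkeeping of $\e$-powers and crucially relies on the smallness assumption \eqref{assumption:boundary}. Moreover, the test-function identity \eqref{pp 03} feeds back macroscopic norms of $\P$, $\bb$, $c$ that are controlled only by $\xnm{\re}$, which is precisely why the first term on the right of \eqref{ss 01} takes the form $\oot\xnm{\re}$ and why $\nx\xi$ itself ends up controlled only at the weaker order $\e^{-\frac{1}{2}}$ in the $X$-norm, in contrast with the full $\e^{-1}$ bound available in \cite{Esposito.Guo.Kim.Marra2015}.
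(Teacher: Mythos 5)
Your skeleton is the same as the paper's proof (Lemma \ref{lem:final 9} plus Lemma \ref{lem:final 10}): test with $\e^{-1}\re$, use the orthogonality \eqref{energy 11} to split the coercivity into $\e^{-2}\um{(\ik-\bpk)[\re]}^2+\e^{-2}\int_\Omega\k\abs{\nx\xi}^2+\e^{-2}\int_\Omega\k\abs{\be}^2$, isolate $-\e^{-1}5P\brx{\nx\tq,\bb c}$, and cancel it against the $\nx\xi$ part of the coercivity by combining the macroscopic energy balance \eqref{aa 35} with the weak formulation tested against $\nx\xi\cdot\a$ (i.e.\ \eqref{pp 03} with $\phi=\xi$). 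Up to that point your plan matches the paper.

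The gap is in how you close the source terms. The two identities you invoke are exact, and their whole point is that they \emph{replace} the pairing $\e^{-1}\br{\re,\ss}$ by $\e^{-1}\br{\re-c\mh\big(\abs{v}^2-5\tq\big)-\nx\xi\cdot\a,\ss}=\e^{-1}\br{\P\mh+\bb\cdot v\mh+\be\cdot\a+(\ik-\bpk)[\re],\ss}$: the $c$- and $\nx\xi$-components of $\re$ are removed from the source pairing exactly, not estimated. Your write-up instead treats the conservation laws and the chain \eqref{string} as ``approximate, with errors of size $\oot\xnm{\re}^2$'' and then bounds $\e^{-1}\br{\re,\bar\ss}$ by Cauchy--Schwarz. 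As stated this fails by a power of $\e$: for the boundary-layer source, $\e^{-1}\abs{\br{c\mh\big(\abs{v}^2-5\tq\big),\ss_3}}\ls\e^{-1}\tnm{c}\tnm{\ss_3}\ls\oot\e^{-\al}\xnm{\re}$, which at $\al=1$ loses a full $\e^{-1}$ (and $c$ does not vanish on $\p\Omega$, so no Hardy-type gain is available here); similarly $\e^{-1}\abs{\br{\nx\xi\cdot\a,\ss_1}}\ls\oot\e^{-1}\tnm{\nx\xi}\um{\re}\ls\oot\e^{-\frac{1}{2}}\xnm{\re}^2$, since the $X$-norm only controls $\e^{-\frac{1}{2}}\nm{\xi}_{H^2}$. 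These terms simply must not appear, which is why the exact cancellation structure matters. Moreover, even after the correction, plain Cauchy--Schwarz is not enough for the remaining pairings: the singular part $\sx$ of $\ss_3$ against $\bb\cdot v\mh$ would give $\e^{-1}\tnm{\bb}\tnm{\sx}\sim\oot\e^{\frac{1}{2}-\al}\xnm{\re}$, and one needs the velocity integration by parts \eqref{ss3-estimate6} together with the $L^2_xL^1_v$ bounds \eqref{ss3-estimate5} (plus the orthogonality of $\Gamma$ to kill the $\P\mh+\bb\cdot v\mh$ pairings of the $\Gamma$-type sources, and the $\abs{v}\le\e^{-\frac{1}{4}}$ split with the weighted $L^\infty$ norm to handle the cubic-velocity-growth coefficient against $\big((\ik-\bpk)[\re]\big)^2$). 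These are exactly the ingredients of Lemma \ref{lem:final 10} and the end of Lemma \ref{lem:final 9}, and your proposal needs them to be made explicit for the estimate \eqref{ss 01} to close.
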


Remarkably, this new reduced energy
estimate \eqref{ss 01}, in comparison to \eqref{basicenergy}, sacrifices the control of $\nx\xi\cdot \a$ in $(\ik-\pk)[\re]$ to cancel $-\e^{-1}5P\br{\nx\tq,\bb c}$ via a string of delicate manipulations (ignoring boundary contributions):
\begin{align}\label{string}
    \e^{-1}\bbrx{\nx\tq,\bb c} \approx&-\e^{-1}\bbrx{\nx\cdot\big(\k\bd\big),c}\qquad \big[\text{Mass and Energy Local Conservation Laws \eqref{conservation law 1}\eqref{conservation law 3}}\big]\\
    =&-\e^{-1}\bbrx{\nx\cdot\big(\k\nx\xi\big),c}\qquad \big[\text{$\a$-Hodge \eqref{tt 13}}\big]\no\\
    \approx&\e^{-1}\bbrx{\k\nx\xi,\nx c}\qquad \big[\text{Integration by parts}\big]\no\\
    \approx&-\e^{-1}\bbrx{\nx\xi,\e^{-1}\k\nx\xi}\qquad \big[\text{Local $\a$-Conservation Law \eqref{new 3=} and $\a$-Hodge \eqref{final 63}}\big]\no\\
    =&-\e^{-2}\iint_{\Omega\times\r^3}\k\abs{\nx\xi}^2,\no
\end{align}
which is cancelled with its counterpart in $\ds\e^{-2}\iint_{\Omega\times\r^3}\re(\ik-\pk)[\re]$.
We note that the ignored boundary contribution $\brb{\nx\xi,(1-\pp)[\re]}{\gamma_+}$ is bounded by $o(1)\e\xnm{\re}^2$ thanks to \eqref{ss 00}.

We remark that the a priori control of the full $(\ik-\pk)[\re]$, as entropy production, has been the starting point for PDE study for Boltzmann solutions near Maxwellians in the past.
To our knowledge, this is the first time one needs to study fine structure within $(\ik-\pk)[\re]$ itself.

\paragraph{\underline{Estimate of $\P$ and $\z:=c+\e^{-1}\xi$}}
We can control $\P\approx O(\e )$ directly from local conservation laws \eqref{conservation law 2} as in \eqref{6bound}. 

\begin{proposition}[Proof in Section \ref{sec:p-est}]\label{prop:p-bound}
Let $\re$ be a solution to \eqref{remainder}. Under the assumption \eqref{assumption:boundary} and \eqref{final 31}, we have 
\begin{align}\label{oo 28=}
    \e^{-1}\tnm{\P}+\pnm{\P}{6}\ls \oot\xnm{\re}+\xnm{\re}^2+\oot.
\end{align}
\end{proposition}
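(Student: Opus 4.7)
My plan is to extract $\P$ by testing the remainder equation \eqref{remainder.} against a family of test functions built from the gradient of an auxiliary elliptic potential driven by $\P$ itself, following the quantitative elliptic-type methodology of \cite{Esposito.Guo.Kim.Marra2013, Esposito.Guo.Kim.Marra2015}. Concretely, I would introduce $\phi_\P\in H^2(\Omega)$ as the solution of the Neumann problem
\[
-\Delta_x\phi_\P=\P\text{ in }\Omega,\qquad \p_n\phi_\P=0\text{ on }\p\Omega,\qquad \int_\Omega\phi_\P\,\ud x=0,
\]
whose solvability requires exactly the mean-zero constraint \eqref{final 31} and whose elliptic regularity delivers $\nm{\phi_\P}_{H^2}\ls\tnm{\P}$. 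The test function $\psi_\P$ is of the generic shape $\mh p(v)\,v\cdot\nx\phi_\P$ with $p(v)$ a polynomial tuned for favorable moment properties. After integrating by parts in $x$, the leading velocity moment of $v\cdot\nx\psi_\P$ against $\pk[\re]$ produces a positive multiple of $\int_\Omega\P(-\Delta_x\phi_\P)\,\ud x=\tnm{\P}^2$; any residual cross-coupling with the $c$-channel is handled by running the parallel test function argument for $c$ and inverting the resulting two-by-two elliptic block, in the spirit of \cite{Esposito.Guo.Kim.Marra2015}.

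All remaining volume contributions --- the $\bb$-cross term, the $\bd\cdot\a$ piece, the $(\ik-\bpk)[\re]$ piece, the cubic coefficient $\mhh\ab\cdot\nx\tq/(4\tq^2)$, the pairing $\e^{-1}\br{(\ik-\pk)[\re],\lc[\psi_\P]}$ obtained by self-adjointness of $\lc$, and the source $\bar\ss$ --- are bounded by Cauchy--Schwarz using $\nm{\phi_\P}_{H^2}\ls\tnm{\P}$ and then absorbed on the LHS. The decisive point is that the surviving quantities $\e^{-1}\um{(\ik-\bpk)[\re]}+\e^{-1}\tnm{\be}$ are precisely what Proposition~\ref{thm:energy} already controls by $\oot\xnm{\re}+\e^{\al-1}\xnm{\re}^2+\oot\e^{1-\al}$, so the target right-hand side appears at no extra cost. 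The cubic coefficient's kernel contribution is absorbed by $\lnm{\nx\tq}=\oot$ as $\oot\tnm{\P}$, while the $\bb$-cross term yields $O(\e^{1/2})\tnm{\P}$ using $\e^{-1/2}\tnm{\bb}\ls\xnm{\re}$; both fit within $\oot\xnm{\re}$.

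The principal obstacle I anticipate is the boundary integral $\int_\gamma(v\cdot\vn)\re\,\psi_\P$. The Neumann choice $\p_n\phi_\P=0$ is essential here: it reduces $\nx\phi_\P$ on $\p\Omega$ to purely tangential gradients, which in turn causes the diffuse-reflection piece $\pp[\re]$ of the boundary trace to drop out --- the $v$-integral $\int(v\cdot\vn)\,v_\tau\,\mss(v)\mh(v)p(v)\,\ud v$ vanishes by parity in $v_\tau$ since $\mss$, $\mh$, and isotropic $p(v)$ depend only on $|v|$. What survives couples $\psi_\P$ with $(1-\pp)[\re]$ on $\gamma_+$ and with the inhomogeneous data $h$, both controlled in the $X$-norm by Proposition~\ref{thm:energy}. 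For the $\pnm{\P}{6}$ bound I would rerun the same weak-formulation scheme with the Neumann problem sourced by $|\P|^4\P$ minus its average, then apply the $W^{2,6/5}$ elliptic estimate $\nm{\phi_\P}_{W^{2,6/5}}\ls\pnm{\P}{6}^5$ to absorb the nonlinear weight and recover the target bound for $\pnm{\P}{6}$. Adding the two inequalities yields \eqref{oo 28=}.
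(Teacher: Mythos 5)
Your general scheme (test \eqref{remainder.} with $\mh\,v\cdot\nx\phi$ for an elliptic potential driven by $\P$, use \eqref{final 31} for solvability, and feed the microscopic leftovers into Proposition \ref{thm:energy}) is the right family of ideas, and your parity argument does remove the $\pp[\re]$ part of the boundary trace. But the Neumann construction fails exactly where the $\e^{-1}$ weight in \eqref{oo 28=} must be earned. Since only $\p_n\phi_\P$ vanishes on $\p\Omega$, the tangential part of $\nx\phi_\P$ survives, so you are left with $\bbrb{\psi_\P,(1-\pp)[\re]}{\gamma_+}$. The $X$-norm \eqref{ss 00} only provides $\tnms{(1-\pp)[\re]}{\gamma_+}\ls\e^{\frac12}\xnm{\re}$, and the trace bound $\tnms{\nx\phi_\P}{\p\Omega}\ls\nm{\phi_\P}_{H^2}\ls\tnm{\P}$ cannot be improved, so this term is of size $\e^{\frac12}\xnm{\re}\tnm{\P}$; after absorbing $\tnm{\P}$ you get $\e^{-1}\tnm{\P}\ls\e^{-\frac12}\xnm{\re}+\cdots$, missing the target $\oot\xnm{\re}$ by $\e^{-\frac12}$ (this is precisely the type of boundary loss the paper spends Steps 5--6 avoiding in the $c$-estimate). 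A second, related gap: ``Cauchy--Schwarz on $\bar\ss$'' is too coarse for the boundary-layer source $\ss_3$, since $\tnm{\ss_3}\ls\oot\e^{1-\al}$ only, which again yields $\e^{-1}\tnm{\P}\ls\oot\e^{-\al}$ instead of $\oot\e^{1-\al}$. The paper gains the missing power of $\e$ via Hardy's inequality, writing $\psi=\int_0^{\mn}\p_{\mn}\psi$ and using $\mn=\e\eta$ as in \eqref{oo 30}; that step requires $\psi\big|_{\p\Omega}=0$, which your Neumann potential does not give. Finally, if your weight $p(v)$ is not constant, $\psi_\P\notin\nk$ and the term $\e^{-1}\br{\bd\cdot\a,\lc[\psi_\P]}$ appears; since $\e^{-1}\tnm{\nx\xi}$ is \emph{not} controlled by $\xnm{\re}$, this cannot be absorbed either.

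The paper's proof avoids all of this by not using a Laplacian: $\phi$ solves the Bogovskii problem $-\nx\cdot\phi=\P\abs{\P}^{\N-2}-\frac{1}{\abs{\Omega}}\int_{\Omega}\P\abs{\P}^{\N-2}$ with $\phi=0$ on $\p\Omega$ and $\nm{\phi}_{W^{1,\frac{\N}{\N-1}}}\ls\pnm{\P}{\N}^{\N-1}$, so the test function $\mh(v\cdot\phi)$ vanishes identically on $\gamma$ (no boundary terms at all), lies in $\nk$ (so the $\e^{-1}\lc$ pairing vanishes), and produces $P\pnm{\P}{\N}^{\N}$ directly from the prescribed divergence, while the $c$-channel drops by the moment identity $\int_{\r^3}\m\abs{v}^2\left(\abs{v}^2-5T\right)=0$ rather than by coupling to the $c$-estimate (which is only an $O(1)$ bound and could never deliver $\e^{-1}\tnm{\P}$). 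Running this for all $\N\in[2,6]$ gives $\pnm{\P}{\N}\ls\pnm{(\ik-\bpk)[\re]}{\N}+\oot\e^{1+\frac{2}{\N}-\al}$, and \eqref{oo 28=} follows from Proposition \ref{thm:energy}. As written, your argument proves at best $\tnm{\P}\ls\e^{\frac12}(\cdots)$, so there is a genuine gap.
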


Thanks to the local $\a $-conservation law \eqref{new 3=},  the $L^6$ norm of the natural combination
\begin{align}
    \z=c+\e^{-1}\xi
\end{align}
is bounded by the $L^6$ framework in \cite{Esposito.Guo.Kim.Marra2015}.
\begin{proposition}[Proof in Section \ref{sec:z-est}]\label{prop:z-bound}
    Let $\re$ be a solution to \eqref{remainder}. Under the assumption \eqref{assumption:boundary}, for $\al\geq1$, we have 
    \begin{align}\label{oo 33=}
        \pnm{\z}{6}\ls&\oot\xnm{\re}+\xnm{\re}^2+\oot.
    \end{align}
\end{proposition}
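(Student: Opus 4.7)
The plan is a duality argument built on the test-function identity \eqref{pp 03}. For any $g\in L^{6/5}(\Omega)$ with $\pnm{g}{6/5}\le 1$, I solve the auxiliary elliptic problem
\[
-\nx\cdot(\kappa\nx\phi)=g\ \text{in}\ \Omega,\qquad \phi=0\ \text{on}\ \p\Omega,
\]
and standard $W^{2,p}$ theory delivers $\nm{\phi}_{W^{2,6/5}}\ls 1$. Substituting this $\phi$ into \eqref{pp 03}, the first two terms on the left combine via the Hodge split $\bd=\nx\xi+\be$: integration by parts converts $\e^{-1}\bbrx{\nx\phi,\kappa\nx\xi}$ into $\e^{-1}\bbrx{g,\xi}$ (using $\phi|_{\p\Omega}=0$), while $\e^{-1}\bbrx{\nx\phi,\kappa\be}$ vanishes by \eqref{final 63}. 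Adding these to $\bbrx{g,c}$ yields exactly $\bbrx{g,\z}$, so after taking the supremum over the unit ball of $L^{6/5}$ the left-hand side becomes $\pnm{\z}{6}$.

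Each remaining term in \eqref{pp 03} must then be dominated by a constant multiple of the right-hand side of \eqref{oo 33=}. The correction $\brx{\nx\phi,\tfrac{\nx\tq}{2\tq^2}(\kappa\P+\sigma c)}$ is bounded by $\oot\,\pnm{\nx\phi}{6/5}(\pnm{\P}{6}+\pnm{c}{6})\ls\oot\,\xnm{\re}$ via \eqref{assumption:boundary}. The linearized term $\bbr{\nx\phi\cdot\a,\llc[\re]}$ is handled by splitting $\re=\pk[\re]+\bd\cdot\a+(\ik-\bpk)[\re]$: the macroscopic pieces inherit an $\oot$ factor because the coefficients of $\llc$ vanish when $\nx\tq=0$, while the non-macroscopic pieces are absorbed by $\e^{-1}\tnm{\be}+\e^{-1}\um{(\ik-\bpk)[\re]}$, which Proposition \ref{thm:energy} has already bounded by the target quantity. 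The boundary traces on $\gamma_{\pm}$ are estimated through the trace inequality $\nm{\nx\phi}_{L^{6/5}(\p\Omega)}\ls\nm{\phi}_{W^{2,6/5}}$, together with the size of $h$ from the Hilbert construction and the bound $\e^{-\frac12}\tnms{(1-\pp)[\re]}{\gamma_+}\ls\xnm{\re}$ from Proposition \ref{thm:energy}.

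The principal obstacle is the transport term $\br{v\cdot\nx(\nx\phi\cdot\a),(\ik-\bpk)[\re]}$, which pairs a second spatial derivative of $\phi$, carrying the cubic velocity growth of $\a$, against the microscopic remainder. I would distribute $\nabla^2\phi\in L^{6/5}$ and $(\ik-\bpk)[\re]\in L^6$ by H\"older in $x$, and absorb the polynomial $v$-growth using the weighted $L^{\infty}_{\varrho,\vartheta}$-bound on $\rem$ built into $\xnm{\cdot}$; this produces a factor $\ls\pnm{g}{6/5}\,\xnm{\re}$. The quadratic contribution $\e^{\al-1}\xnm{\re}^2$ enters through $\bbr{\nx\phi\cdot\a,\bar\ss}$, since $\bar\ss$ hides the nonlinear term $\e^{\al-1}\mhh Q[\mh\re,\mh\re]$, treated by H\"older together with the Sobolev embedding $W^{2,6/5}\hookrightarrow L^{\infty}$. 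Summing all pieces and taking the supremum over $\pnm{g}{6/5}\le 1$ yields \eqref{oo 33=}.
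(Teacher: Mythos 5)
Your overall strategy is the same as the paper's: test \eqref{remainder} with $\nx\phi\cdot\a$ for an elliptic dual problem $-\nx\cdot(\kappa\nx\phi)=g$, use the Hodge orthogonality \eqref{final 63} to kill $\e^{-1}\bbrx{\nx\phi,\k\be}$, and let $c+\e^{-1}\xi=\z$ emerge from integrating the $\nx\xi$ part by parts (the paper takes $g=\z\abs{\z}^{\N-2}$ rather than a supremum over the unit ball of $L^{6/5}$, which is an equivalent duality). However, the execution has genuine gaps. First, for the transport term $\br{v\cdot\nx(\nx\phi\cdot\a),(\ik-\bpk)[\re]}$ you only claim a bound $\ls\xnm{\re}$; this is not enough, because \eqref{oo 33=} requires the prefactor $\oot$ (or $\e^{\al-1}\xnm{\re}$), and the later bootstrap in Theorem \ref{thm:apriori} collapses without it. The paper bounds this term by $\pnm{(\ik-\bpk)[\re]}{6}\nm{\phi}_{W^{2,6/5}}$ and then invokes the extended energy estimate (Lemma \ref{lem:final 11}), where the interpolation $\pnm{(\ik-\bpk)[\re]}{6}\ls\tnm{(\ik-\bpk)[\re]}^{1/3}\lnmm{(\ik-\bpk)[\re]}^{2/3}$ together with the $\e$-weights in $\xnm{\cdot}$ produces the needed smallness; your argument never gains it. Incidentally, your appeal to the weighted $L^\infty$ bound on $\rem$ here is both unnecessary (since $\a$ has Gaussian decay, Cauchy--Schwarz in $v$ suffices) and dangerous, as $\lnmm{\rem}$ enters $\xnm{\cdot}$ only with weight $\e^{1/2}$.

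Second, the boundary term on $\gamma_+$ is not controlled by what you quote. From $\phi\in W^{2,6/5}(\Omega)$ the trace of $\nx\phi$ lands in $L^{4/3}(\p\Omega)$ (not $L^{6/5}$, and certainly not $L^2$), so it cannot be H\"older-paired with the $L^2_{\gamma_+}$ quantity $\e^{-\frac12}\tnms{(1-\pp)[\re]}{\gamma_+}$ from Proposition \ref{thm:energy}: the exponents $\frac34+\frac12>1$ do not match. The correct pairing — and the reason the $X$-norm carries the entry $\pnms{\m^{\frac14}(1-\pp)[\re]}{4}{\gamma_+}$ — is $L^{4/3}(\p\Omega)$ against the weighted $L^4_{\gamma_+}$ control, which Lemma \ref{lem:final 11} bounds (again with the $\oot$ smallness) by interpolating the $L^2_{\gamma_+}$ energy bound with the $\e^{\frac12}L^\infty_{\gamma}$ bound. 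Finally, your treatment of the nonlinear contribution uses the embedding $W^{2,6/5}(\Omega)\hookrightarrow L^{\infty}$, which is false in three dimensions ($2\cdot\frac65<3$); the term should instead be handled as in Lemma \ref{ss2-estimate}, pairing $\e^{\al-1}\xnm{\re}^2$ with $\um{\nx\phi\cdot\a}\ls\tnm{\nx\phi}\ls\nm{\phi}_{W^{2,6/5}}$ via $W^{1,6/5}\hookrightarrow L^2$. With these three repairs your argument coincides with the paper's proof.
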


In order to cope with the new $\e$-cutoff boundary layer interaction, we
must further split $\z$ as regular part $\z^R$ and
small singular part $\z^S$: $\z=\z^{S}+\z^{R}$ so that

\begin{proposition}[Proof in Section \ref{sec:z-split}]\label{prop:z-splitting}
Let $\re$ be a solution to \eqref{remainder}. Under the assumption \eqref{assumption:boundary}, we have $\z=\z^R+\z^S$ where 
\begin{align}
    \nm{\z^R}_{H^1_0}\ls&\oot\xnm{\re}+\xnm{\re}^2+\oot,\label{qq 06}\\
    \tnm{\z^S}\ls& \oot\e^{\frac{1}{2}}\xnm{\re}+\e^{\frac{1}{2}}\xnm{\re}^2+\oot\e.\label{qq 07}
\end{align}
\end{proposition}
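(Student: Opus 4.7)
The plan is to exploit that $\xi|_{\p\Omega}=0$ by construction \eqref{tt 01}, so the obstruction to $H^1_0$-control of $\z = c + \e^{-1}\xi$ lives entirely in the trace $c|_{\p\Omega}$. This trace in turn feeds into $\z$ only through a Knudsen-type boundary layer of width $O(\e)$, which is the source of the $\e^{1/2}$ smallness in \eqref{qq 07}. The argument then proceeds in two phases: construct a boundary-layer corrector $\z^S$ whose trace matches $\z$ on $\p\Omega$, and close an $H^1_0$ estimate on the remaining regular part $\z^R:=\z-\z^S$ via the same test-function machinery used for Proposition~\ref{prop:z-bound}.

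For the singular part, I would first extract a quantitative $L^2(\p\Omega)$ bound on $c|_{\p\Omega}$. Inserting \eqref{pp 01} into the diffuse-reflection identity $\re=\pp[\re]+h$ on $\gamma_-$ and taking the velocity moment against $\mh(|v|^2-5T)$ expresses $c|_{\p\Omega}$ through $\pp[\re]$ on $\gamma$, the outgoing trace $(1-\pp)[\re]$ on $\gamma_+$, and the source $h$; the $X$-norm controls on $\tnms{\pp[\re]}{\gamma}$ and $\e^{-1/2}\tnms{(1-\pp)[\re]}{\gamma_+}$ then yield $\nm{c|_{\p\Omega}}_{L^2(\p\Omega)} \ls \oot\xnm{\re} + \e^{\al-1}\xnm{\re}^2 + \oot\e^{1-\al}$. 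In the normal chart of Section~\ref{sec:geometric-setup}, fix a smooth cutoff $\chi$ with $\chi(0)=1$ and $\chi\equiv 0$ on $[1,\infty)$, let $\tilde c$ be the extension of $c|_{\p\Omega}$ that is constant along normals in a tubular neighborhood, and set $\z^S(x):=\chi(\mn(x)/\e)\,\tilde c(\iota_1(x),\iota_2(x))$, extended by zero outside the tube. Then $\z^S|_{\p\Omega}=c|_{\p\Omega}$ and a direct change of variables gives $\tnm{\z^S}^2 \ls \e\,\nm{c|_{\p\Omega}}_{L^2(\p\Omega)}^2$, producing \eqref{qq 07}.

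For the regular part $\z^R$, by construction $\z^R\in H^1_0$. One cannot simply split $\nm{\nx\z^R}_{L^2}\le\nm{\nx\z}_{L^2}+\nm{\nx\z^S}_{L^2}$, because $\e^{-1}\nx\xi$ is not uniformly $L^2$-bounded — indeed this is exactly the loss that forced the splitting in the first place. Instead, I would test the remainder equation \eqref{remainder.} against $\nx\phi\cdot\a$ with $\phi\in H^1_0(\Omega)$ solving an auxiliary Poisson problem whose right-hand side is tailored to recover $\nm{\nx\z^R}_{L^2}^2$ on the coercive side of \eqref{pp 03}. The key identity isolates the term $\e^{-1}\bbrx{\nx\phi,\k\bd}$, and running the cancellation chain \eqref{string} with $\bd=\nx\xi+\be$, together with the orthogonality \eqref{tt 13} of $\be$, converts this into a positive quadratic form in $\nx\z^R$. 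Remaining right-hand-side contributions are then controlled via Propositions~\ref{thm:energy}, \ref{prop:p-bound}, and \ref{prop:z-bound}, yielding \eqref{qq 06}.

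The decisive difficulty, and the main obstacle, is closing the $H^1_0$ estimate on $\z^R$ in Step 3 without incurring any $\e^{-1/2}$ loss. Normal derivatives of the boundary-layer corrector $\z^S$ scale like $\e^{-1/2}$ in $L^2$, so every cross-term of the form $\bbrx{\nx\z^R,\nx\z^S}$ arising from the test-function manipulation must be rerouted by an integration by parts exploiting $\z^R|_{\p\Omega}=0$, trading a derivative loss for an $L^2$ quantity that Poincaré absorbs back into $\nm{\nx\z^R}_{L^2}$. Verifying that this bookkeeping — together with the already-established mass, momentum, and energy cancellations underlying \eqref{string} — produces exactly the powers of $\e$ demanded by \eqref{qq 06}, and that no stray boundary contribution from $\z^S$ escapes the analysis, is the most delicate part of the argument.
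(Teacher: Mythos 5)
Your construction runs the decomposition in the opposite direction from what the available estimates can support, and this creates two gaps I do not see how to close. First, the object $c|_{\p\Omega}$ on which you base $\z^S$ is not available: $c$ is only controlled in $L^2\cap L^6(\Omega)$ (it is a velocity moment of $\re$ with no Sobolev regularity in $x$), so it has no trace on $\p\Omega$; the kinetic trace of $\re$ on $\gamma$ does exist and its moments are bounded through $\tnms{\pp[\re]}{\gamma}$ and $(1-\pp)[\re]|_{\gamma_+}$, but identifying such a moment with the boundary value of the interior field $c(x)$ requires exactly the interior regularity that is missing. Second, and decisively, the assertion ``by construction $\z^R\in H^1_0$'' is unjustified: $\z-\z^S=c+\e^{-1}\xi-\z^S$ contains $c$, whose gradient is controlled nowhere in this framework, and your corrector $\chi(\mn/\e)\,\tilde c$ is itself not in $H^1$ unless $c|_{\p\Omega}$ has tangential $H^{1/2}$ regularity. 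The weak formulation \eqref{pp 03.} only pairs $\z$ against $\nx\cdot\big(\kappa\nx\phi\big)$, i.e.\ it gives dual (negative-order) information, never coercivity in $\nx\z^R$, so no choice of auxiliary Poisson problem can ``recover $\tnm{\nx\z^R}^2$ on the coercive side''; the cross-term difficulty you flag at the end is a symptom of the fact that your route effectively needs $\nx c\in L^2$, which is not true here. (Even granting your trace bound, your scaling $\tnm{\z^S}^2\ls\e\,\nm{c|_{\p\Omega}}_{L^2(\p\Omega)}^2$ would give a tail $\oot\e^{3/2-\al}$, not the $\oot\e^{2-\al}$ of \eqref{qq 07}.)

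The paper's proof avoids both issues by defining $\z^R$ abstractly rather than as $\z$ minus a corrector. From \eqref{pp 03.} and \eqref{final 63} one obtains, for every $\phi\in H^2_0(\Omega)$, the identity \eqref{final 52} for $\brx{\nx\cdot(\kappa\nx\phi),\z}$. Then $\z^R\in H^1_0$ is produced by Lax--Milgram as the solution of $\br{\kappa\nx\psi,\nx\z^R}=$ (the volume terms of that identity), so it lies in $H^1_0$ by fiat and \eqref{qq 06} follows by testing with $\psi=\z^R$ and Poincar\'e. The leftover $\z^S:=\z-\z^R$ is merely an $L^2$ function satisfying, against all $\phi\in H^2_0$, an identity whose right-hand side consists only of the boundary terms in $h$ and $(1-\pp)[\re]|_{\gamma_+}$ and the transport term in $(\ik-\bpk)[\re]$; choosing $\phi$ with $\nx\cdot(\kappa\nx\phi)=\z^S$, $\phi|_{\p\Omega}=0$, so that $\nm{\phi}_{H^2}\ls\tnm{\z^S}$, yields $\tnm{\z^S}\ls\um{(\ik-\bpk)[\re]}+\tnms{(1-\pp)[\re]}{\gamma_+}+\oot\e^{2-\al}$. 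The $\e^{1/2}$ gain in \eqref{qq 07} therefore comes from the reduced energy estimate of Proposition \ref{thm:energy}, not from a boundary-layer width: no trace of $c$, no cutoff layer, and no $H^1$ control of $c$ enter at any point.
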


\paragraph{\underline{Dual Stokes Problem and Improved Estimate $\tnm{\bb}\ls \e^{\frac{1}{2}}$}}
We now obtain a surprising gain of $\e^{\frac{1}{2}}$ for $\bb$, which is necessary to justify the ghost effect contribution $\e u_{1}\cdot v\mh$ beyond the first order of $\e$. 

\begin{proposition}[Proof in Section \ref{sec:b-est}]\label{prop:b-bound}
    Let $\re$ be a solution to \eqref{remainder}. Under the assumption \eqref{assumption:boundary}, we have
    \begin{align}\label{oo 34=}
        \e^{-\frac{1}{2}}\tnm{\bb}+\pnm{\bb}{6}\ls&\oot\xnm{\re}+\xnm{\re}^2+\oot.
    \end{align}
\end{proposition}

Such a gain $\e^{\frac{1}{2}}$ is crucial to justify the $\e$ order ghost effect $\e u_1$ with a key removal of $\e^{-1}(\ik-\bpk)[\re]$ term via compensating with an extra local conservation law. Such a removal also plays the key role in the subsequent estimate for $c$.

Notice that the test function $\nx\psi:\b$, where
\begin{align}\label{final 22=}
    &\bbb=\bigg(v\otimes v-\frac{\abs{v}^2}{3}\mathbf{1}\bigg)\mh\in\r^{3\times3},\quad \b=\lc^{-1}\left[\bbb\right]\in\r^{3\times3},
\end{align} 
yields 
\begin{align}\label{pp 04=}
    &\br{v\cdot\nx\re+\e^{-1}\bd\cdot\ab+\e^{-1}\lc\Big[(\ik-\bpk)[\re]\Big],\nx\psi:\b}\\
    =&\br{v\cdot\nx\re+\e^{-1}\lc\Big[(\ik-\bpk)[\re]\Big],\nx\psi:\b}\no\\
    \approx& \br{\abs{v}^2\mh\bb,\nx\Big(\nx\psi:\b\Big)}+\br{\e^{-1}(\ik-\bpk)[\re],\nx\psi:\bbb},\no
\end{align}
and thus the combination of \eqref{pp 04=} and the local momentum conservation law \eqref{conservation law 2}
\begin{align}
    \e^{-1}\br{(\ik-\bpk)[\re],\nx\psi:\bbb}\approx -\e^{-1}\br{\nx\P,\psi}\approx \e^{-1}\br{\P,\nx\cdot\psi}.
\end{align}
Hence, the choice of the \textit{new} test function $\nx\psi:\b+\e^{-1}\psi\cdot v\mh$ with a smooth function $\psi(x)$ satisfying $\nx\cdot\psi=0$, $\psi\big|_{\p\Omega}=0$ exactly eliminates this most singular term for \eqref{remainder} and leads to 
\begin{align}\label{pp 16=}
&-\bbrx{\lambda\Delta_x\psi,\bb}=\text{good terms}.
\end{align}
The key feature in \eqref{pp 16=} is the absence of the $O(1)$ term $\e^{-1}(\ik-\bpk)[\re]$, so that a gain of $\e^{\frac{1}{2}}$ is possible by constructing a
solution to the dual Stokes problem for $(\psi, q)$ (with an artificial pressure $q$)
\begin{align}\label{pp 17=}
    -\lambda\Delta_x\psi+\nx q\approx\bb\abs{\bb}^{\N-2}\ \ \text{in}\ \ \Omega,\qquad
    \nx\cdot\psi=0\ \ \text{in}\ \ \Omega,\qquad\psi=0\ \ \text{on}\ \ \p\Omega.
\end{align}

\paragraph{\underline{Dual Stokes-Poisson Problem and Estimate of $c$}}
The estimate of $c$ is
the most delicate part of the paper.

\begin{proposition}[Proof in Section \ref{sec:c-est}]\label{prop:c-bound}
    Let $\re$ be a solution to \eqref{remainder}. Under the assumption \eqref{assumption:boundary}, we have
    \begin{align}\label{oo 35=}
        \pnm{c}{6}\ls&\oot\xnm{\re}+\xnm{\re}^{2}+\xnm{\re}^{3}+\oot.
    \end{align}
\end{proposition}

Because of \eqref{revise 02}, it seems impossible to split $\z=c+\e^{-1}\xi$ and to obtain $c$ estimate independent of $\e^{-1}\xi$
via any test functions. The key new idea is to recombine $\e^{-1}\nx\xi+\e^{-1}\be=\e^{-1}\bd$ in the local $\a$-conservation law \eqref{new 3=} with a \textit{new} test function $\nx\phi\cdot\a$:
\begin{align}
\label{pp 03=}
&\br{v\cdot\nx\re+\e^{-1}\bd\cdot\a,\nx\phi\cdot\ab}=\br{v\cdot\nx\re,\nx\phi\cdot\a}+\br{\e^{-1}\bd\cdot\ab,\nx\phi\cdot\a}\\
\approx&\br{\ab\cdot\nx c,\nx\phi\cdot\a}+\e^{-1}\brx{\nx\cdot(\k\bd),\phi},\no
\end{align}
and thus the combination of \eqref{pp 03=} and the mass/energy conservation \eqref{conservation law 1}\eqref{conservation law 3} yields
\begin{align}
    \e^{-1}\brx{\nx\cdot(\k\bd),\phi}\approx -\e^{-1}\bbrx{\nx\tq,\bb\phi}.
\end{align}
Hence, the choice of a \textit{new pair} of test functions $\nx\phi\cdot\a+\e^{-1}\phi\big(\abs{v}^2-5T\big)\mh$ where $\phi(x)$ is a smooth function satisfying $\phi\big|_{\p\Omega}=0$ for \eqref{remainder} leads to 
\begin{align}\label{pp 10=}
&-\bbrx{\nx\cdot(\kappa\nx\phi),c}+\e^{-1}5P\bbrx{\phi,\nx T\cdot\bb}=\text{good terms},
\end{align}
where the most singular term is in terms of $\bb$.
Then we can choose choose $\psi$ 
in \eqref{pp 16=} coupled with \eqref{pp 10=} to kill $\e^{-1}5P\bbrx{\phi,\nx T\cdot\bb}$. 
To this end, we solve
a \textit{coupled} dual Stokes-Poisson system for the triple $(\psi, q,\phi)$
\begin{align}\label{pp 20=}
    -\lambda\Delta_x\psi+\nx q\approx-5P\phi\nx T\ \ \text{in}\ \ \Omega,\qquad
    \nx\cdot\psi=0\ \ \text{in}\ \ \Omega,\qquad
    \psi=0\ \ \text{on}\ \ \p\Omega,
\end{align}
\begin{align}\label{pp 23=}
    -\nx\cdot\big(\kappa\nx\phi\big)=c\abs{c}^{\N-2}\ \ \text{in}\ \ \Omega,\qquad
    \phi=0\ \ \text{on}\ \ \p\Omega.
\end{align}
Thanks to the precise cancellation of \eqref{final 05},  all but the boundary contribution
\begin{align}\label{final 75}
-\e^{-1}\bbrb{\nx\psi:\b,(1-\pp)[\re]}{\gamma_+}
\end{align}
in $\e^{-1}\times\text{\eqref{pp 16=}}+\text{\eqref{pp 10=}}$ are under control. We note that $(1-\pp)[\re]$ is of $
O(\e^{\frac{1}{2}})$ and $\nx\psi:\b$ is of $O(1)$, so there is still a loss of $\e^{\frac{1}{2}}$ here for closure.\\

\paragraph{\underline{$\e$-cutoff boundary layer $\gbg$ and Interior $\gg$ Compensation}}
Through an extensive effort, the original loss of $\e^{-1}$ in $-\e^{-1}5P\br{\nx\tq,\bb c}$ is now transferred to a
boundary loss of $\e^{-\frac{1}{2}}$ in $-\e^{-1}\bbrb{\nx\psi:\b,(1-\pp)[\re]}{\gamma_+}$. Motivated by the gain of $%
\e^{\frac{1}{2}}$ in $L^{2}$ for any boundary layers in the bulk, we carefully design an $\e$-cutoff boundary layer $\gbg$ and its interior counterpart $\gg$ to compensate such a loss with 
\begin{align}
\big(\gbg+\gg\big)\big|_{\gamma _{+}}=-\big(\nx\psi :\b\big)\big|_{\gamma _{+}}.
\end{align}
Thanks to the fact that $\p_n\psi_n\big|_{\p\Omega}=0$ as well as the parity of $\b$, we can
construct 
\begin{align}
    \gg=\mh(\vv)\big(\vv\cdot\bbq \big),\quad \nx\cdot\bbq=0.
\end{align}
This crucial and precise
structure and parity of $\gbg$ lead to two crucial cancellations in \eqref{final 04} and \eqref{final 06},
which ensure the final closure of estimates with no singular power of
$\e$: 
\begin{align}
    \e^{-1}\br{\mh v\cdot\nx\Big(\mhh\gg\Big),\re}=\e^{-1}\br{\mh v\cdot\nx\big(\bbq\cdot v\big),\re}=&0,\\
    \e^{-1}\br{\gbg,\Gamma\Big[\P\mh+c\left(\abs{v}^2-5T\right)\mh,\P\mh+c\left(\abs{v}^2-5T\right)\mh\Big]}=&0.
\end{align}

Even though boundary layer
approximations have been established in kinetic theory for matching \textit{given} boundary data, to our knowledge, this is the first time
boundary layer construction is based on (unknown) $\psi$ from \eqref{pp 20=} and \eqref{pp 23=} to estimate the remainder $R$ itself.\\

\paragraph{\underline{New $\e$-Cutoff Boundary Layer Estimates for Non-Convex Domain}}

\subparagraph{$\bullet$ Hardy Inequality with $\e$ Gain}
One of the important challenges in the hydrodynamic limit of \eqref{large system-} is the necessary inclusion of
$\e$-cutoff boundary layers $\fb_1$ and $\gbg$. In fact, the determination of the ghost-effect equations depends on 
$\fb_1$ implicitly. Unfortunately, in \cite{AA003,AA007,AA009}, it is discovered that for non-flat domains the classical boundary 
layer theory in kinetic theory breaks down, due again to the characteristic and singular nature of the grazing set. Even though an alternative satisfactory theory has been established in convex domains \cite{AA004,AA013, BB002}, the non-convex case is completely open. In the proof of Lemma \ref{lemma:k1}, via the Hardy's inequality, the most difficult contribution in $\e^{-1}\br{\hb,\re}$ (where $\hb$ denotes a generic quantity related to $\e$-cutoff boundary layer) is treated as
\begin{align}
\e^{-1}\br{\hb,c\left(\abs{v}^{2}-5T\right)\mh}
=&\e^{-1}\br{\hb,\z\left(\abs{v}^{2}-5T\right)\mh}-\e^{-1}\br{\hb,\e^{-1}\xi\left(\abs{v}^{2}-5T\right)\mh}  \\
=&\e^{-1}\br{\hb,\z^R\left(\abs{v}^{2}-5T\right)\mh}
+\e^{-1}\br{\hb,\z^S\left(\abs{v}^{2}-5T\right)\mh}-\e^{-2}\br{\hb,\xi\left(\abs{v}^{2}-5T\right)\mh}.\no
\end{align}
Thanks to the fact that $\z^R\in H_0^1$ and $\xi\in H_{0}^{2}$, we express
\begin{align}
    \frac{\z^R}{\mn}=\frac{1}{\mn}\int_0^{\mn}\p_{\mn}\z^R,\quad \frac{\xi}{\mn}=\frac{1}{\mn}\int_0^{\mn}\p_{\mn}\xi,
\end{align}
and apply Hardy's inequality \cite{Hardy1920, Masmoudi2011} along the normal $\mn$ direction to obtain
\begin{align}
    \nm{\frac{\z^R}{\mn}}_{L^2_{\mn}}\ls\nm{\p_{\mn}\z^R}_{L^2_{\mn}},\quad \nm{\frac{\xi}{\mn}}_{L^2_{\mn}}\ls\nm{\p_{\mn}\xi}_{L^2_{\mn}}.
\end{align}
Note that $\mn=\e\eta$ in the $\e$-cutoff boundary layer scaling can be
absorbed by $\hb$ to produce extra $\e$ in $\mn\hb=\e\eta\hb$:
\begin{align}
    \abs{\e^{-2}\br{\hb,\xi\left(\abs{v}^{2}-5T\right)\mh}}\ls \Big(\e^{-\frac{1}{2}}\nm{\eta\hb}_{L^2_{\mn}}\Big)\Big(\e^{-\frac{1}{2}}\nm{\p_{\mn}\xi}_{L^2_{\mn}}\Big)
\end{align}
where $\e^{-\frac{1}{2}}\nm{\eta\hb}_{L^2_{\eta}}$ is bounded thanks to the change-of-variable $\frac{\mn}{\e}=\eta$ which yields $\nm{\cdot}_{L^2_{\mn}}\ls\e^{\frac{1}{2}}\nm{\cdot}_{L^2_{\eta}}$, and $\e^{-\frac{1}{2}}\nm{\p_{\mn}\xi}_{L^2_{\mn}}$ is bounded via an interpolation of
$\a$-Hodge decomposition \eqref{tt 01} with $\e^{-1}\tnm{\xi}$.

\subparagraph{$\bullet$ BV Estimate to Bound $\varepsilon ^{-1}\br{\p_{\va}\fb_1,\gbg}$}
It is well known from the boundary layer theory that the sharp pointwise bound near the grazing set holds $\abs{\p_{\va}\fb_1}\approx \abs{\va}^{-1}$.
With a cutoff $\abs{\va}\geq\e$, $\nm{\p_{\va}\fb_1}_{L_{\va}^{1}}\approx \abs{\ln \e}$, which creates a fatal logarithmic loss for closure, even with $\varepsilon $ in $\eta $ integration. To overcome such a $\abs{\ln \e}$ loss, we
establish a new BV estimate for $\fb_1$ in Theorem \ref{boundary regularity}, which amounts to a
subtle but crucial gain in joint $(\eta, \va)$ integration with no
loss of $\abs{\ln \e}$ for the cutoff $\nm{\p_{\va}\fb_1}_{L_{\va}^{1}}\ls 1$. For example, we can bound \eqref{final 36}-type term
\begin{align}
    \e^{-1}\br{\p_{\va}\fb_1,\gbg}\ls \e^{-1}\bnm{\fb_1}\lnm{\gbg}\ls\lnm{\gbg}^2+\e^{-2}\bnm{\fb_1}^2\ls \oot\tnm{c}^2+\oot.
\end{align}
We design an $\e$-cutoff of the standard boundary-layer Milne solution to achieve this goal. Such a cutoff leads to a non-local commutator of $K$ as in \eqref{mm 00}, which needs to be carefully controlled (see Lemma \ref{ss3-estimate}). Moreover, since it is extremely difficult to go beyond the first order boundary layer approximation in general domains, the presence of boundary layer dictates $\e\mh\re$ in \eqref{aa 08}. 

We can bootstrap $L^2-L^6$ estimates to $L^{\infty}$ bound by the method developed in \cite{Esposito.Guo.Kim.Marra2013, Esposito.Guo.Kim.Marra2015, Guo.Jang.Jiang2010, Guo.Jang.Jiang2009}. We note that the singular boundary layer term $\p_{\va}\fb_1$ in $\ss$ (see \eqref{mm 00}) is bounded by $\oot\e^{-1}$ in $L^{\infty}$ norm, which is under control based on the mild formulation.  
\\

\paragraph{\underline{Well-Posedness and Positivity}} 
The main effort of our paper is to obtain an a priori estimate for $\xnm{\re}$. For the actual construction, we need to ensure positivity. Our method is different from those in \cite{Esposito.Guo.Marra2018} and \cite{Arkeryd.Nouri2007}. We prove it via a new artificial compensation \eqref{auxiliary system'}. The existence relies on the Fredholm alternative, which is subtle in $L^{\infty}$ space as in Proposition \ref{prop:linear}. We switch to the remainder around a suitable global Maxwellian to avoid the cubic velocity
growth from $\mhh\ab\cdot\frac{\nx\tq}{4\tq^2}$ in \eqref{pp 02}, and design a
huge artificial $\e^{-1}\pk_M[\rem]$ in \eqref{linear remainder'} to overcome a severe boundary error terms, whose
$\e^{-1}$ dependence does not affect the compactness property. Finally, the nonlinearity $\Gamma[\re,\re]$ has a strong effect, so we must employ the crucial gain of $L^6$ estimates in \cite{Esposito.Guo.Kim.Marra2015} to close the estimate.

In summary, we develop a systematic approach to study the steady Boltzmann equation near a {\it local Maxwellian} in a 3D bounded domain. Our analysis relies on elaborate and integrated schemes with several exact cancellations and sharp estimates with no room to spare. These new techniques have led to the final resolution to the diffusive limit of the
neutron transport equation in \cite{AA020}, an important open question for one of the most classical problems in the kinetic theory. Moreover, dynamical stability of new steady Boltzmann solution is being studied now with similar methods \cite{AA029}.

\subsection{Appendix and Notation}

Throughout this paper, $C>0$ denotes a constant that only depends on
the domain $\Omega$, but does not depend on the data or $\e$. It is
referred as universal and can change from one inequality to another.
When we write $C(z)$, it means a certain positive constant depending
on the quantity $z$. We write $a\ls b$ to denote $a\leq Cb$ and $a\gs b$ to denote $a\geq Cb$. Also, we write $a\simeq b$ if both $a\ls b$ and $a\gs b$ are valid.
Due to the complexity of the problem, we put all the notation, including definitions of $\k$, $\si$, $\cdots$ etc, in Appendix for the convenience of
the reader.

\section{Boundary Layer Analysis}\label{sec: boundary-analysis}

In this section, we consider the Milne problem for $\g(\eta,\iota_1,\iota_2,\vvv)$ (see Section \ref{sec:geometric-setup}):
\begin{align}\label{Milne}
    \left\{
    \begin{array}{l}
    \va\dfrac{\p\g }{\p\eta}+\nu_w\g- K_w\left[\g\right]=0,\\\rule{0ex}{1.2em}
    \g(0,\iota_1,\iota_2,\vvv)=\hhh(\iota_1,\iota_2,\vvv)\ \ \text{for}\ \ \va>0,\quad
    \ds\int_{\r^3}\va\mbh(\iota_1,\iota_2,\vvv)\g(0,\iota_1,\iota_2,\vvv)\ud \vvv=0.
    \end{array}
    \right.
\end{align}
Here $\nu_w$ and $K_w$ are actually determined by the boundary Maxwellian $\mb$. In this section, we temporarily ignore the subscript $w$ for convenience. Also, when there is no confusion, we will not write $(\iota_1,\iota_2)$ explicitly.

\begin{theorem}\label{boundary well-posedness}
Assume $\hhh\in W^{k,\infty}_{\iota_1,\iota_2}$ for some $k\in\mathbb{N}$. Then there exists $\g_{\infty}(\vvv)\in\nk$ and a unique solution $\g(\eta,\vvv)$ to \eqref{Milne} such that $\ggg:=\g-\g_{\infty}$ satisfies 
\begin{align}\label{Milne.}
        \va\dfrac{\p\ggg }{\p\eta}+\nu \ggg-K\left[\ggg\right]=0,\quad
        \ggg(0,\vvv)=\hhh(\vvv)-   \g_{\infty}(\vvv):=\hh(\vvv)\ \text{for}\ \va>0,\quad
        \ds\int_{\r^3}\va\mh(\vvv)\ggg(0,\vvv)\ud \vvv=0.
\end{align}
and for some $K_0>0$ and any $0<\N\leq k$
\begin{align}
    \abs{\g_{\infty}}+\lnmm{\ue^{K_0\eta}\ggg}\ls& \lnmms{\hhh}{\gamma_-},\label{final 41}\\ \lnmm{\ue^{K_0\eta}\va\p_{\eta}\ggg}+\lnmm{\ue^{K_0\eta}\va\p_{\va}\ggg}\ls& \lnmms{\hhh}{\gamma_-}+\lnmms{\nabla_{\vvv}\hhh}{\gamma_-},\label{final 42}\\
    \lnmm{\ue^{K_0\eta}\p_{\vb}\ggg}+\lnmm{\ue^{K_0\eta}\p_{\vc}\ggg}\ls& \lnmms{\hhh}{\gamma_-}+\lnmms{\nabla_{\vvv}\hhh}{\gamma_-},\label{final 43}\\
    \lnmm{\ue^{K_0\eta}\p_{\iota_1}^{\N}\ggg}+\lnmm{\ue^{K_0\eta}\p_{\iota_2}^{\N}\ggg}\ls& \lnmms{\hhh}{\gamma_-}+\sum_{j=1}^{\N}\lnmms{\p_{\iota_1}^j\hhh}{\gamma_-}+\sum_{j=1}^{\N}\lnmms{\p_{\iota_2}^j\hhh}{\gamma_-}.\label{final 44}
\end{align}
Also, if $\hhh$ is odd in $\vb$ and $\vc$, then $\g$, $\ggg$ and $\g_{\infty}$ are also odd in $\vb$ and $\vc$.
\end{theorem}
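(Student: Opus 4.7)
The plan is to construct $\g$ by a truncation-and-limit argument, extract the asymptotic state $\g_\infty\in\nk$ from five moment conservation identities, and then establish the weighted-in-$\eta$ decay of $\ggg=\g-\g_\infty$ by combining the spectral gap of $\lc$ on $\nnk$ with the orthogonality of $\ggg$ to the conserved fluxes; estimates on velocity and tangential derivatives will follow by differentiating and invoking the same framework, while parity is immediate from uniqueness.

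For existence I would approximate \eqref{Milne} on a finite slab $\eta\in[0,L]$ by adding a small damping $\delta\g$ to the left-hand side and imposing a specular-type condition at $\eta=L$. The resulting BVP is solvable via the Fredholm alternative using coercivity of $\lc$ on $\nnk$, and uniform $L^\infty$ bounds obtained from Duhamel along characteristics $\eta(t)=\eta_0+\va t$ together with the velocity-smoothing of $K$ allow passage $(\delta,L^{-1})\to 0$. Testing \eqref{Milne} against $\mh\psi(\vvv)$ for $\psi\in\{1,v_i,\abs{v}^2\}$ shows that the five fluxes $\mathcal{F}_\psi(\eta):=\int_{\r^3}\va\mh\psi\,\g(\eta,\vvv)\ud\vvv$ are $\eta$-independent. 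The normalization kills $\mathcal{F}_1$; the remaining four, together with the requirement $\g-\g_\infty\to 0$ at $\infty$, uniquely determine the coefficients of $\g_\infty$ in the basis \eqref{att 32} as linear functionals of $\hhh$, which yields the pointwise estimate on $\g_\infty$ in \eqref{final 41}.

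For $\ggg$ all five fluxes vanish identically in $\eta$. Multiplying \eqref{Milne.} by $\ue^{2K_0\eta}\ggg$, integrating over $[0,\infty)\times\r^3$, and using $\brv{\ggg,\lc[\ggg]}\gtrsim\um{(\ik-\pk)[\ggg]}^2$ together with a weak-form elliptic-type bound that converts the five orthogonality relations into an $L^2_\eta$ control of $\pk[\ggg]$ by $(\ik-\pk)[\ggg]$ would give $L^2$-exponential decay of $\ggg$ for any $K_0$ strictly smaller than the spectral gap of $\lc$. The $L^\infty$ decay in \eqref{final 41} then follows by inserting this $L^2$ estimate into the Duhamel representation of $\ggg$ and exploiting the smoothing of $K$ in $\vvv$. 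The bounds \eqref{final 42}--\eqref{final 43} come from rewriting the equation as $\va\p_\eta\ggg=K[\ggg]-\nu\ggg$, so that the explicit factor of $\va$ absorbs the transport-induced singularity at $\va=0$, and from differentiating once in $\eta$ or $\vvv$. The tangential bounds \eqref{final 44} are proved by induction on $\N$: applying $\p_{\iota_j}^{\N}$ to \eqref{Milne.} yields an inhomogeneous Milne problem whose source is a sum of $\p_{\iota_j}^{\N-k}\nu$ and $\p_{\iota_j}^{\N-k}K$ acting on lower-order derivatives of $\ggg$, bounded via smoothness of $\mb$ and the inductive hypothesis, and controlled by the same $L^2$-$L^\infty$ scheme.

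The parity claim is immediate from uniqueness: the reflection $(\vb,\vc)\mapsto(-\vb,-\vc)$ is a symmetry of \eqref{Milne} that maps odd data to odd data, so $\g$, $\g_\infty$ and $\ggg$ are all odd in $(\vb,\vc)$ whenever $\hhh$ is. The main obstacle is the $L^2$ exponential decay: since $\dim\nk=5$ coincides exactly with the number of moment conservation laws, the vanishing of the five fluxes $\mathcal{F}_\psi$ does not immediately yield a Poincar\'e-type estimate for $\pk[\ggg]$ in $L^2_\eta$, and extracting one requires a careful test-function construction in the spirit of the classical half-space Milne-problem analysis.
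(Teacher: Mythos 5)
Your overall route---finite-slab (or damped) approximation plus Fredholm, energy estimate with the spectral gap, $L^2$-to-$L^\infty$ bootstrap along characteristics, derivative bounds by differentiating the equation, tangential bounds by induction in $\iota_i$, oddness by symmetry plus uniqueness---is the classical Bardos--Caflisch--Nicolaenko scheme, which is exactly what the paper invokes: its own proof is essentially a citation to the companion paper and to \cite{Bardos.Caflisch.Nicolaenko1986, BB002}, plus the two short arguments for \eqref{final 44} and for the parity claim that you also give (and which are fine). The difficulty is that the two steps you treat as routine are precisely the ones that carry the content of the theorem.

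First, your determination of $\g_\infty$ does not work as described. The flux pairing $(\chi_i,\chi_j)\mapsto\int_{\r^3}\va\,\chi_i\chi_j\,\m\,\ud\vvv$ degenerates on the five-dimensional null space: by oddness in $\va$ it couples only the normal-velocity direction with the constant and $\abs{\vvv}^2$ directions (it has rank two), while the tangential directions $\vb\mh,\vc\mh$ and one density/temperature combination lie in its kernel. Hence the five conserved fluxes, even together with $\g-\g_\infty\to0$, do not determine the tangential components $b_{\infty,\phi},b_{\infty,\psi}$ or the temperature-jump component of $\g_\infty$ as explicit linear functionals of $\hhh$; those components are produced only by solving the half-space problem itself (this is exactly how slip coefficients such as $\beta_0$ in \eqref{boundary condition} arise). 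The bound $\abs{\g_\infty}\ls\lnmms{\hhh}{\gamma_-}$ is true, but it has to be extracted from uniform estimates on the slab approximations in the limit $L\to\infty$, not from moment identities. Second, you explicitly leave open the central analytic point, the exponential decay of the macroscopic part $\pk[\ggg]$. In the BCN argument this is not obtained from a Poincar\'e-type inequality based on the vanishing fluxes; rather, one first gets weighted decay of $(\ik-\pk)[\ggg]$ from the spectral-gap energy estimate, then uses the equation to derive first-order ODEs in $\eta$ for the macroscopic moments whose right-hand sides involve only the microscopic part, and integrates from $\eta=\infty$ using that all conserved fluxes of $\ggg$ vanish. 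Without carrying out these two steps, \eqref{final 41}---and therefore \eqref{final 42}--\eqref{final 44}, which are built on top of it---is not established.
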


\begin{proof}
    This is essentially Theorem 2.6 in the companion paper \cite{AA024}. Based on \cite{Bardos.Caflisch.Nicolaenko1986} and \cite{BB002}, we have the well-posedness of \eqref{Milne} as well as estimates \eqref{final 41}\eqref{final 42} and \eqref{final 43}. Then \eqref{final 44} follows from taking $\iota_i$ derivatives for $i=1,2$ on both sides of \eqref{Milne.} and study the resulting Milne problem.
    The oddness in Lemma \ref{lemma:m6} comes from the invariance of the collision kernel and the uniqueness. When we make the transformation $\vb\rt-\vb$ or $\vc\rt-\vc$, the equation in \eqref{Milne=} remains unchanged but the sign of the boundary data is flipped \cite{Glassey1996, Cercignani.Illner.Pulvirenti1994}.
\end{proof}

Note that $\p_{\eta}\ggg$ and $\p_{\va}\ggg$ estimates require the extra weight $\va$. In this section, we will focus on the BV regularity of $\ggg$ without the weight.

For $f(\eta,\iota_1,\iota_2,\vvv)$, denote the semi-norm
\begin{align}
    \nm{f}_{\widetilde{\text{BV}}}:=\sup\bigg\{\iint_{\eta,\vvv}f(\nabla_{\eta,\vvv}\cdot\psi)\ud\eta\ud\vvv:\ \psi\in C^1_c\ \text{and}\ \nm{\psi}_{L^{\infty}}\leq 1\bigg\},
\end{align}
and thus the BV norm can be defined as
\begin{align}
    \bnm{f}:=\pnm{f}{1}+\nm{f}_{\widetilde{\text{BV}}}.
\end{align}
It is classical that $W^{1,1}\hookrightarrow \text{BV}$, and Theorem \ref{boundary well-posedness} helps bound $L^1$ norm of $\ggg$, $\ds\frac{\p\ggg}{\p\iota_1}$, $\ds\frac{\p\ggg}{\p\iota_2}$, $\ds\frac{\p\ggg}{\p\vb}$, $\ds\frac{\p\ggg}{\p\vc}$. Hence, in the following, we will focus on $L^1$ estimate of $\ds\frac{\p\ggg}{\p\eta}$ and $\ds\frac{\p\ggg}{\p\va}$.

\subsection{$L$-Approximate Milne Problem}

In order to study the regularity of \eqref{Milne.}, we first consider an approximate Milne problem for $\ggl(\eta,\vvv)$ with $\eta\in[0,L]$ and $\vvv\in\r^3$:
\begin{align}\label{Milne=}
    \va\dfrac{\p\ggl }{\p\eta}+\nu \ggl-K\left[\ggl\right]=0,\quad
    \ggl(0,\vvv)=\hh(\vvv)\ \text{for}\ \va>0,\quad
    \ggl(L,\vvv)=\ggl(L,\rr[\vvv]),
\end{align}
Here $\rr[\vvv]=(-\va,\vb,\vc)$ and $L>0$. 
This is justified since $\g_{\infty}\in\nk$ with no $\va$ component, then $\g_{\infty}$ satisfies both \eqref{Milne=} and specular-reflection boundary condition at $L$. Therefore, we deduce from the constructions of the $L$- approximate solutions in \cite{Bardos.Caflisch.Nicolaenko1986} and \cite{BB002} that: 
\begin{proposition}\label{lemma:m6}
There exists $\ggl_{L}(\vvv)\in\nk$ and a unique solution $\ggl(\eta,\vvv)$ to \eqref{Milne=} such that
\begin{align}
    \abs{\ggl_{L}}+\lnmm{\ue^{K_0\eta}\big(\ggl-\ggl_L\big)}\ls \lnmms{\hh}{\gamma_-}.
\end{align}
Also, if $\hh$ is odd in $\vb$ and $\vc$, then $\ggl$ and $\ggl_L$ are also odd in $\vb$ and $\vc$.
\end{proposition}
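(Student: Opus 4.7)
The plan is to follow the scheme developed by Bardos--Caflisch--Nicolaenko \cite{Bardos.Caflisch.Nicolaenko1986} and refined in \cite{BB002}, adapted to the specular cut-off at $\eta=L$. Write $\lc=\nu-K$ for the boundary linearized operator and recall the coercivity $\br{\lc g,g}\gs\um{(\ik-\pk)[g]}^2$. First I would establish existence through a penalized problem: for small $\lambda>0$, consider
\begin{equation*}
\va\p_\eta g + (\nu+\lambda)g - K[g] = 0,
\end{equation*}
with the same boundary data. The damping $\lambda$ removes the null-space obstruction of $\lc$ and makes $\nu+\lambda-K$ invertible on $L^2_\vvv$, so a solution $\ggl^\lambda$ is produced by iterating along the characteristics of $\va\p_\eta$ between $\eta=0$ (diffuse data) and $\eta=L$ (specular closure), which is a contraction for $\lambda>0$.

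The central a priori bound comes from multiplying by $\ggl^\lambda$ and integrating over $[0,L]\times\r^3$. The transport term produces boundary integrals at $\eta=0$ and $\eta=L$. The crucial cancellation occurs at $\eta=L$: since $\ggl^\lambda(L,\vvv)=\ggl^\lambda(L,\rr[\vvv])$ and $\va$ is odd under $\rr$, the integral $\int_{\r^3}\va\abs{\ggl^\lambda(L,\vvv)}^2\ud\vvv$ vanishes identically. At $\eta=0$, the incoming part is controlled by $\lnmms{\hh}{\gamma_-}$ and the outgoing part contributes nonnegatively. Together with coercivity, this yields a $\lambda$-uniform $L^2$ bound on $(\ik-\pk)[\ggl^\lambda]$; the macroscopic part is controlled through a boundary trace argument as in \cite{BB002}. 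Passing $\lambda\to0$ via weak compactness provides a solution $\ggl$ to \eqref{Milne=}.

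To identify $\ggl_L$ and derive exponential decay, I would exploit moment conservation. Testing the equation against the null-space elements $\mbh\{1,\vvv,\abs{\vvv}^2-3T_w\}$ and integrating in $\vvv$ shows that the fluxes $\int_{\r^3}\va\mbh\{1,\vvv,\abs{\vvv}^2-3T_w\}\ggl\,\ud\vvv$ are constant in $\eta$. The specular condition at $\eta=L$ forces the four fluxes that are odd under $\rr$ (mass, tangential momenta, energy) to vanish there, hence throughout $[0,L]$. This places the asymptotic profile $\ggl_L$ in the $\va$-even four-dimensional subspace of $\nk$ spanned by $\mbh\{1,\vb,\vc,\abs{\vvv}^2-3T_w\}$, with coefficients fixed by compatibility with $\hh$. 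Since $\lc\ggl_L=0$ and $\va\p_\eta\ggl_L=0$, the difference $\ggl-\ggl_L$ solves the same homogeneous Milne equation, and testing against $\ue^{2K_0\eta}(\ggl-\ggl_L)$ for small $K_0>0$ --- chosen so that the drift $K_0\va$ is absorbed by the coercivity --- yields the weighted $L^2$ decay. The $L^\infty$ upgrade with weights in $(\varrho,\vartheta)$ then follows from the standard $L^2$--$L^\infty$ bootstrap along characteristics and the velocity-smoothing of $K$, exactly as in \cite{BB002}; letting $\eta\uparrow L$ gives $\abs{\ggl_L}\ls\lnmms{\hh}{\gamma_-}$.

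Uniqueness follows from the same energy identity applied to the difference of two solutions with zero data: the specular cancellation and positivity at $\eta=0$ force the microscopic part to vanish, the specular-killed fluxes force the $\va$-odd macroscopic part to vanish, and the zero diffuse data pin the remaining $\va$-even constant to zero. Oddness in $(\vb,\vc)$ follows from the invariance of the equation and of both boundary conditions under $(\vb,\vc)\mapsto(-\vb,-\vc)$ --- using rotational invariance of the hard-sphere kernel and the commutation of $\rr$ with this reflection --- together with uniqueness: if $\hh$ is odd, then $\ggl(\eta,\iota_1,\iota_2,\va,-\vb,-\vc)$ solves the problem with data $-\hh$, hence equals $-\ggl$, and the same is inherited by $\ggl_L$. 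The main obstacle I expect is verifying that the $\va$-even macroscopic components of $\ggl$ are truly constant in $\eta$ rather than affinely drifting, and that the compatibility with the diffuse data $\hh$ uniquely fixes $\ggl_L$; this is the technical heart of the $L$-approximate cancellation and requires careful bookkeeping of which moments are conserved by the transport equation and which are killed by the specular condition at $\eta=L$.
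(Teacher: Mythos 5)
Your proposal is correct and follows essentially the same route as the paper: the paper proves this proposition simply by invoking the $L$-approximate constructions of Bardos--Caflisch--Nicolaenko and \cite{BB002} (penalized/approximating transport problem, energy identity with the specular cancellation at $\eta=L$, conserved fluxes, weighted decay, and the $L^2$--$L^\infty$ bootstrap), and obtains the oddness exactly as you do, from invariance of the collision kernel under $(\vb,\vc)\mapsto(-\vb,-\vc)$ together with uniqueness. The only caveat is that your weighted energy test alone controls the microscopic part, so the decay and identification of the macroscopic component (the point you yourself flag as the technical heart) is precisely what the cited constructions supply.
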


We can define an extension of $\ggl$ by letting $\ggl=0$ for $\eta>L$ (for the convenience of assigning norms). Based on \cite{Bardos.Caflisch.Nicolaenko1986}, \cite{BB002} and \cite[Lemma 4.4]{AA004}, we have 
\begin{lemma}\label{lemma:m8}
We have $\lnmmss{\ue^{K_0L}\ggl_L}\rt0$ as $L\rt\infty$, and $\ggl\rt\ggg$ weakly in $L^2([0,\infty)\times\r^3)$ as $L\rt\infty$.
\end{lemma}

\subsection{$(L,\te)$-Approximate Milne Problem}

In order to study \eqref{Milne=}, we further consider a truncated approximate Milne problem for $\ggt(\eta,\vvv)$ with $\eta\in[0,L]$ and $\vvv\in\r^3$:
\begin{align}\label{Milne approximate}
    \left\{
    \begin{array}{l}
    \va\dfrac{\p\ggt }{\p\eta}+\nu \ggt-K\left[\ggt\right]=-\chi\left(\te^{-1}\va\right)K\left[\ggt\right],\\\rule{0ex}{1.0em}
    \ggt(0,\vvv)=\ch\left(\te^{-1}\va\right)\hh(\vvv)\ \ \text{for}\ \ \va>0,\quad
    \ggt(L,\vvv)=\ggt(L,\rr[\vvv]).
    \end{array}
    \right.
\end{align}
Here $0<\te\ll1$. Clearly, the first equation of \eqref{Milne approximate} is equivalent to
\begin{align}\label{mm 04}
    \va\dfrac{\p\ggt }{\p\eta}+\nu \ggt-\ch\left(\te^{-1}\va\right)K\left[\ggt\right]=0.
\end{align}
Compared with \eqref{Milne.}, the key difference lies in the cutoff $\ch\left(\te^{-1}\va\right)$. This term breaks the orthogonality, and thus we do not need to prescribe the mass-flux at $\eta=0$. Instead since $\ds\int_{\r^3}\va\mh(\vvv)\ggt(L,\vvv)\ud\vvv=0$,
by integrating $\vvv\in\r^3$ on both sides of \eqref{mm 04}, we have
\begin{align}
    \int_{\r^3}\va\mh(\vvv)\ggt(0,\vvv)\ud\vvv=-\int_0^L\int_{\r^3}\mh(\vvv)\chi\left(\te^{-1}\va\right)K\left[\ggt\right](\eta,\vvv)\ud\eta\ud\vvv.
\end{align}

\begin{lemma}\label{remark boundary}
    We have
    $\ggt(\eta,\vvv)=0$ for any $\eta\in[0,L]$ and $-\te\leq\va<\te$.
\end{lemma}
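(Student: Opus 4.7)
The plan is to exploit the precise structure of the cutoff $\ch(\te^{-1}\vv_{\eta})$: wherever $|\vv_{\eta}|<\te$, the cutoff vanishes identically, so the truncated Milne equation \eqref{mm 04} decouples into a pure transport equation with no mixing through $K$, and both the inflow data at $\eta=0$ and the specularly reflected data at $\eta=L$ can be shown to vanish in this velocity strip.

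First, I would isolate the decisive observation: by the definition of $\chi$ (a standard cutoff with $\chi(\te^{-1}\vv_{\eta})=1$ for $|\vv_{\eta}|<\te$, so $\ch(\te^{-1}\vv_{\eta})=0$ there), equation \eqref{mm 04} reduces on the set $\{|\vv_{\eta}|<\te\}$ to the scalar ODE along the $\eta$ direction
\begin{align*}
\vv_{\eta}\,\partial_{\eta}\ggt+\nu\,\ggt=0.
\end{align*}
Next, for $0<\vv_{\eta}<\te$, the inflow condition reads $\ggt(0,\vvv)=\ch(\te^{-1}\vv_{\eta})\hh(\vvv)=0$. Integrating the ODE forward along the characteristic starting at $\eta=0$ gives $\ggt(\eta,\vvv)=\ggt(0,\vvv)\exp(-\nu\eta/\vv_{\eta})=0$ for every $\eta\in[0,L]$.

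Second, I would transport this vanishing to the $\vv_{\eta}<0$ side through the specular reflection at $\eta=L$. For $\vv_{\eta}\in[-\te,0)$, the reflected velocity $\rr[\vvv]=(-\vv_{\eta},\vv_{\phi},\vv_{\psi})$ has normal component in $(0,\te]$, and the first step already forces $\ggt(L,\rr[\vvv])=0$ (the endpoint $-\vv_{\eta}=\te$ is handled by continuity or by shrinking the cutoff width by an infinitesimal). Hence the boundary condition $\ggt(L,\vvv)=\ggt(L,\rr[\vvv])=0$ holds on the negative-$\vv_{\eta}$ trace. Solving the same ODE $\vv_{\eta}\,\partial_{\eta}\ggt+\nu\,\ggt=0$ backward from $\eta=L$ with zero terminal data then yields $\ggt\equiv 0$ throughout $[0,L]\times\{-\te\leq\vv_{\eta}<0\}$. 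Finally, at $\vv_{\eta}=0$ the equation degenerates to $\nu\,\ggt=0$, forcing $\ggt=0$ there as well.

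The argument is essentially a characteristic/ODE calculation, so the main subtlety is really bookkeeping: one must verify that the cutoff convention for $\chi$ makes $\ch(\te^{-1}\vv_{\eta})$ vanish on an interval containing $[-\te,\te)$ (consistent with the stated range), and one must be a little careful at $\vv_{\eta}=0$, where the transport degenerates; but this degeneracy actually helps, since the reduced equation $\nu\ggt=0$ is the easiest of all to solve. No compactness, fixed-point, or functional-analytic input is needed — the conclusion is forced by the structural vanishing of $\ch$ combined with the homogeneous inflow and reflective conditions.
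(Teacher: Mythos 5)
Your proposal is correct and follows essentially the same route as the paper: on the strip $|\va|\leq\te$ the cutoff $\ch(\te^{-1}\va)$ kills both the boundary datum at $\eta=0$ and the collision term in \eqref{mm 04}, so integration along the straight characteristics gives $\ggt=0$ for $0<\va\leq\te$, and the specular condition at $\eta=L$ transfers this vanishing to $-\te\leq\va<0$ by backward integration. Your explicit treatment of the degenerate case $\va=0$ (where the equation reduces to $\nu\ggt=0$) is a minor refinement of the paper's argument, not a departure from it.
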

\begin{proof}
Note that the characteristics of the approximate Milne problem \eqref{mm 04} are straight lines $\va=\text{const}$.
\begin{itemize}
    \item 
    For any $\eta\in[0,L]$ and $0<\va\leq \te$, we have
    \begin{align}
        \ggt(\eta,\vvv)=\ue^{-\frac{\nu}{\va}\eta}\ggt(0,\vvv)+\int_0^{\eta}\ue^{-\frac{\nu}{\va}(\eta-y)}\ch\left(\te^{-1}\va\right)K\left[\ggt\right](y,\vvv)\ud y.
    \end{align}
    Considering $\ggt(0,\vvv)=\ch\left(\te^{-1}\va\right)\hh(\vvv)=0$ and $\ch\left(\te^{-1}\va\right)K\left[\ggt\right]=0$ for $0\leq\va\leq \te$, we have $\ggt(\eta,\vvv)=0$ for any $\eta\in[0,L]$ and $0\leq\va\leq \te$. 
    \item
    For any $\eta\in[0,L]$ and $-\te\leq\va< 0$, we have
    \begin{align}
        \ggt(\eta,\vvv)=\ue^{\frac{\nu}{\va}(L-\eta)}\ggt(L,\vvv)+\int_{\eta}^L\ue^{\frac{\nu}{\va}(y-\eta)}\ch\left(\te^{-1}\va\right)K\left[\ggt\right](y,\vvv)\ud y.
    \end{align}
    Due to the specular reflection boundary condition at $\eta=L$, we have $\ggt(L,\vvv)=0$ for $-\te\leq\va< 0$. Also, considering $\ch\left(\te^{-1}\va\right)K\left[\ggt\right]=0$ for $0\leq\va\leq \te$, we have
    $\ggt(\eta,\vvv)=0$ for any $\eta\in[0,L]$ and $-\te\leq\va< 0$.
\end{itemize}
\end{proof}

\begin{lemma}[$L^2$ Bound]\label{lemma:m1'}
Assume $\te\ll L^{-8}$. Then there exists $\ggt_{L}(\vvv)\in\nk$ and a unique solution $\ggt(\eta,\vvv)$ to \eqref{Milne approximate} satisfying (uniformly in $\te$ and $L$)
\begin{align}
    \lnmmss{\ggt_L}+\tnm{\ggt-\ggt_L}\ls \tnms{\hh}{\gamma_-}.
\end{align}
\end{lemma}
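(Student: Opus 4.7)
The plan is to follow the classical two-step strategy for approximate Milne problems from \cite{Bardos.Caflisch.Nicolaenko1986} and \cite{BB002}: first construct $\ggt$ together with its asymptotic constant $\ggt_L\in\nk$ via a Fredholm-type argument, then establish the $L^2$ bound by testing against $\ggt$ and exploiting Lemma \ref{remark boundary} crucially. For existence, rewriting \eqref{Milne approximate} in the form \eqref{mm 04} and iterating along characteristics of $\va\p_\eta+\nu$ produces an approximating sequence; the cutoff $\ch(\te^{-1}\va)K$ remains a compact perturbation on the slab, and the specular reflection at $\eta=L$ forces $\ggt_L$ to lie in the $\va$-symmetric subspace of $\nk$ spanned by $\mh$, $\vb\mh$, $\vc\mh$, $\abs{\vvv}^2\mh$, so the Fredholm alternative yields $\ggt$ with $\ggt_L$ fixed by the boundary data $\hh$.

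For the $L^2$ energy estimate, I would multiply \eqref{mm 04} by $\ggt$ and integrate over $(\eta,\vvv)\in[0,L]\times\r^3$. The transport term $\iint\va\p_\eta\ggt\cdot\ggt$ contributes a boundary term at $\eta=L$ that vanishes by specular reflection, an incoming piece at $\eta=0$ bounded by $\tnms{\hh}{\gamma_-}^2$, and an outgoing positive piece $\tfrac12\int_{\va<0}\abs{\va}\ggt(0,\vvv)^2\ud\vvv$ that goes to the good side. The key observation is that the RHS perturbation $-\chi(\te^{-1}\va)K[\ggt]$ tested against $\ggt$ \emph{vanishes identically}: by Lemma \ref{remark boundary}, $\ggt\equiv 0$ on $\abs{\va}<\te$, exactly where $\chi(\te^{-1}\va)$ is supported, so the product is zero. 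Combined with the coercivity of $\nu-K$ on $\nnk$, this gives
\[
\tnm{(\ik-\pk)[\ggt]}^{2}+\int_{\va<0}\abs{\va}\ggt(0,\vvv)^2\ud\vvv\ls\tnms{\hh}{\gamma_-}^2,
\]
with a constant independent of $\te$ and $L$.

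To extract $\ggt_L$ and control $\pk[\ggt]$, I would test \eqref{mm 04} against each basis element $\psi\in\{\mh,v_i\mh,\abs{\vvv}^{2}\mh\}$ of $\nk$ and derive perturbed conservation laws
\[
\p_\eta\int_{\r^3}\va\psi\ggt\ud\vvv=-\int_{\r^3}\psi\chi(\te^{-1}\va)K[\ggt]\ud\vvv,
\]
whose RHS is supported in $\abs{\va}\le\te$ and controlled via Cauchy--Schwarz by a positive power of $\te$ times $\tnm{(\ik-\pk)[\ggt]}$. Integrating in $\eta\in[0,L]$ and combining the outgoing bound from the energy estimate with the specular compatibility at $\eta=L$ yields a finite-dimensional linear system for the components of $\ggt_L$, with constant invertible matrix, forcing of size $\ls\tnms{\hh}{\gamma_-}$, and cutoff errors of size $\ls\te^{1/2}L^{C}\tnms{\hh}{\gamma_-}$ for some polynomial power $C$. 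The hypothesis $\te\ll L^{-8}$ is precisely what is needed to absorb these errors and conclude $\lnmmss{\ggt_L}\ls\tnms{\hh}{\gamma_-}$; feeding $\ggt_L$ back into \eqref{mm 04} then produces $\tnm{\ggt-\ggt_L}\ls\tnms{\hh}{\gamma_-}$ via a standard Grad-type iteration.

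The main obstacle is not the energy estimate itself, which is essentially free thanks to the support cancellation granted by Lemma \ref{remark boundary}, but the macroscopic recovery of $\ggt_L$: since the cutoff destroys exact conservation, the polynomial-in-$L$ losses accumulated through repeated moment manipulations have to be strictly dominated by the smallness coming from the support size $\te$. Tracking the precise power of $L$ that appears in the inversion of the moment system, and verifying that $\te\ll L^{-8}$ is indeed tight enough to swallow every such loss uniformly, will be the most delicate bookkeeping step of the argument.
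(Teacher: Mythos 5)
There is a genuine gap, and it sits exactly where you declare the problem to be ``essentially free.'' Your key claim that $\int\!\!\int \chi(\te^{-1}\va)K[\ggt]\,\ggt$ vanishes identically is false. First, the support of $\chi(\te^{-1}\va)$ is $\abs{\va}\leq 2\te$ (it equals $1$ only on $\abs{\va}\leq\te$), while Lemma \ref{remark boundary} gives $\ggt=0$ only on $-\te\leq\va<\te$; on the annulus $\te\leq\abs{\va}\leq 2\te$ both factors are generically nonzero. Second, and more importantly, even where $\chi(\te^{-1}\va)\neq 0$ the quantity $K[\ggt]$ is a velocity-nonlocal integral of the \emph{full} $\ggt$, so the cross term cannot be killed or controlled by the microscopic part alone. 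The paper does not claim any such cancellation: it bounds the source $\sss^{L,\te}=-\chi(\te^{-1}\va)K[\ggt]$ by $\te^{\frac12}\tnm{\ggt}$ and then must carry the terms $\te^{\frac14}\tnm{\wwt}$, $\te^{\frac14}\tnm{\qqt}$ in \eqref{mm 05'}, i.e.\ the energy estimate itself is contaminated by the uncontrolled hydrodynamic part $\qqt$, which coercivity of $\lc$ does not see. Dropping these terms, as your argument does, removes the actual difficulty of the lemma.

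The same defect reappears in your macroscopic step: the right-hand side of your perturbed conservation laws is $-\int\psi\,\chi(\te^{-1}\va)K[\ggt]$, and Cauchy--Schwarz over the small $\va$-support gives a bound by $\te^{\frac12}\tnm{\ggt}$, \emph{not} by $\te^{\frac12}\tnm{(\ik-\pk)[\ggt]}$, since $K[\ggt]$ contains $K[\pk\ggt]$. This is precisely why the paper's versions of the moment estimates (\eqref{mm 06}, \eqref{mm 07'}, following \cite[Lemmas 4.1.5, 4.1.7, 4.1.10]{BB002}) produce the coupled terms $\te^{\frac14}L\tnm{\ggt-\ggt_L}$ and $\te^{\frac14}L^2\abs{\ggt_L}$ on the right of \eqref{mm 08'}, and why $\te\ll L^{-8}$ is needed to absorb them \emph{simultaneously} in the microscopic and macroscopic parts; stating the errors as already of size $\te^{\frac12}L^{C}\tnms{\hh}{\gamma_-}$ presupposes the conclusion. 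Your plan can be repaired, but only by reinstating these $\tnm{\qqt}$ and $\abs{\ggt_L}$ couplings in both the energy and the moment estimates and then running exactly the absorption argument the paper performs; as written, the proposal's central cancellations do not hold.
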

\begin{proof}
This proof follows from a similar argument as in \cite[Section 4.1.1]{BB002} with the source term $\sss^{L,\te}:=-\chi\left(\te^{-1}\va\right)K\left[\ggt\right]$.
Based on \cite[(3.101)]{Glassey1996}, we have
\begin{align}
    \tnm{\sss^{L,\te}}\ls\te^{\frac{1}{2}}\tnm{K\left[\ggt\right]}\ls\te^{\frac{1}{2}}\tnm{\ggt}.
\end{align}
We split $\ggt=:\wwt+\qqt$, where $\qqt\in\nk$ and $\wwt\in\nnk$.
Following the similar argument as the proof of \cite[Lemma 4.1.5]{BB002} (and noticing that $\sss^{L,\te}$ is not necessarily in $\nnk$), we have
\begin{align}\label{mm 05'}
    \um{\wwt}^2\ls& \tnms{\hh}{\gamma_-}^2+\int_0^L\ggt\sss^{L,\te}
    \ls\tnms{\hh}{\gamma_-}^2+\te^{-\frac{1}{2}}\tnm{\sss^{L,\te}}^2+\te^{\frac{1}{2}}\tnm{\wwt}^2+\te^{\frac{1}{2}}\tnm{\qqt}^2,
\end{align}
which yields for $\te\ll1$ 
\begin{align}
    \um{\wwt}\ls \tnms{\hh}{\gamma_-}+\te^{\frac{1}{4}}\tnm{\ggt}+\te^{\frac{1}{4}}\tnm{\qqt}.
\end{align}
Following the similar argument as the proof of \cite[Lemma 4.1.7, Lemma 4.1.10]{BB002} and considering that $L$ is fixed, we obtain that there exists $q_L^{\te}(\vvv)\in\nk$ such that
\begin{align}\label{mm 06}
    \abs{q_L^{\te}}\ls&\tnms{\hh}{\gamma_-}+L \tnm{\sss^{L,\te}}+L\tnm{\wwt}
    \ls\tnms{\hh}{\gamma_-}+\te^{\frac{1}{4}} L\tnm{\ggt}+\te^{\frac{1}{4}}L\tnm{\qqt},
\end{align}
and
\begin{align}\label{mm 07'}
    \tnm{\qqt-q_L^{\te}}\ls& \tnms{\hh}{\gamma_-}+\bigg[\int_0^L\bigg(\int_{\eta}^L\sss^{L,\te}(y)\bigg)^2\ud\eta\bigg]^{\frac{1}{2}}+\bigg[\int_0^L\bigg(\int_{\eta}^L\wwt(y)\bigg)^2\ud\eta\bigg]^{\frac{1}{2}}\\
    \ls& \tnms{\hh}{\gamma_-}+ L\tnm{\sss^{L,\te}}+ L\tnm{\wwt}
    \ls\tnms{\hh}{\gamma_-}+\te^{\frac{1}{4}} L\tnm{\ggt}+\te^{\frac{1}{4}}L\tnm{\qqt}.\no
\end{align}
Hence, summarizing \eqref{mm 05'}, \eqref{mm 06} and \eqref{mm 07'}, letting $\ggt_L=q_L^{\te}$, we have 
\begin{align}\label{mm 08'}
    \abs{\ggt_L}+\tnm{\ggt-\ggt_L}\ls
    \ls&\tnms{\hh}{\gamma_-}+\te^{\frac{1}{4}}L^2\abs{\ggt_L}+\te^{\frac{1}{4}}L\tnm{\ggt-\ggt_L},
\end{align}
which, when $\te\ll L^{-8}$, yields the desired result.
\end{proof}

\begin{lemma}[$L^2$ Decay]\label{lemma:m1}
Assume $\te\ll L^{-8}\ue^{-4K_0L}$ with $0\leq K_0<1$. Then the unique solution $\ggt(\eta,\vvv)$ to \eqref{Milne approximate} satisfies (uniformly in $\te$ and $L$)
\begin{align}
    \tnm{\ue^{K_0\eta}\big(\ggt-\ggt_L\big)}\ls \tnms{\hh}{\gamma_-}.
\end{align}
\end{lemma}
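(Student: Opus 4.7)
The plan is to adapt the two-step energy estimate of Lemma \ref{lemma:m1'} to the exponentially weighted deviation $\tilde h := \ue^{K_0\eta}(\ggt - \ggt_L)$. Since $\ggt_L \in \nk$ satisfies $\nu\ggt_L - K[\ggt_L] = 0$, subtracting from \eqref{mm 04} and multiplying by $\ue^{K_0\eta}$ yields an equation of the same Milne type,
\begin{equation}
\va\,\p_\eta\tilde h + \nu\tilde h - \ch(\te^{-1}\va)K[\tilde h] = K_0\va\tilde h - \chi(\te^{-1}\va)\nu\ggt_L\ue^{K_0\eta},
\end{equation}
with a modified source on the right. The cutoff forcing is localized in the band $|\va|\ls\te$ and amplified by $\ue^{K_0\eta}$, so by Lemma \ref{lemma:m1'} its $L^2$-norm over $[0,L]\times\r^3$ is bounded by $\te^{1/2}\ue^{K_0L}\lnmmss{\ggt_L}\ls \te^{1/2}\ue^{K_0L}\tnms{\hh}{\gamma_-}$. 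The drift $K_0\va\tilde h$ plays the role of a subcritical perturbation of the spectral gap of $\nu I - K$ on $\nnk$, and this is exactly where the structural hypothesis $K_0<1$ enters.

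I would then repeat the splitting $\tilde h = \tilde w_h + \tilde q_h$ with $\tilde w_h\in\nnk$ and $\tilde q_h\in\nk$. The $\nnk$-coercivity step from \cite[Lemma 4.1.5]{BB002}, applied to the weighted equation, gives
\begin{equation}
\um{\tilde w_h}^2 \ls \tnms{\hh}{\gamma_-}^2 + \left|\iint_{[0,L]\times\r^3} \tilde h\,\bigl(K_0\va\tilde h - \chi(\te^{-1}\va)\nu\ggt_L\ue^{K_0\eta}\bigr)\right| + \text{(boundary contributions)},
\end{equation}
where the boundary contribution at $\eta=0$ is dominated by $\tnms{\hh}{\gamma_-}^2$ since $\tilde h(0,\vvv) = \ch(\te^{-1}\va)\hh(\vvv) - \ggt_L(\vvv)$ for $\va>0$, while the $\eta=L$ flux is harmless via the specular symmetry of $\ggt$ together with the flux identity already used to pin $\ggt_L$ in Lemma \ref{lemma:m1'}. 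For the $\nk$ component, I would rerun the Case-type test-function construction of \cite[Lemmas 4.1.7, 4.1.10]{BB002}; because $\ggt_L$ already carries the full constant mode of $\ggt$, the associated constant for $\tilde h$ must vanish, leaving $\tnm{\tilde q_h}\ls \tnms{\hh}{\gamma_-} + L\bigl(\te^{1/2}\ue^{K_0L}\tnms{\hh}{\gamma_-}+\tnm{\tilde w_h}\bigr)$.

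Combining the two pieces and applying Young's inequality to the source integral produces a self-improving inequality of the schematic form
\begin{equation}
\tnm{\tilde h}^2 \ls \tnms{\hh}{\gamma_-}^2 + \bigl(K_0 + \te^{1/2}L^4 + \te\,\ue^{2K_0L}\bigr)\tnm{\tilde h}^2,
\end{equation}
mirroring \eqref{mm 08'}. Under the hypothesis $\te\ll L^{-8}\ue^{-4K_0L}$ one has $\te^{1/2}L^4\ll \ue^{-2K_0L}\leq 1$ and $\te\,\ue^{2K_0L}\ll L^{-8}\ue^{-2K_0L}\ll 1$; combined with the margin afforded by $K_0<1$, which keeps the drift strictly subcritical relative to the spectral gap, this lets $\tnm{\tilde h}^2$ be absorbed into the left-hand side, yielding the claim. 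The main obstacle I anticipate is exactly this tension between the exponentially growing weight $\ue^{K_0\eta}$ on the truncated interval $[0,L]$ and the $\te^{1/2}$-smallness of the cutoff source: the quantitative calibration $\te\ll L^{-8}\ue^{-4K_0L}$ is designed to buy both the Case test-function step (requiring $\te\ll L^{-8}$, inherited from Lemma \ref{lemma:m1'}) and the weighted source control (requiring $\te\,\ue^{2K_0L}\ll 1$) at the same time.
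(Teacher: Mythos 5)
Your skeleton matches the paper's: split into the $\nnk$ part $\wwt$ and the $\nk$ part $\qqt$, exploit the $\te$-smallness of the cutoff forcing, close by absorption under $\te\ll L^{-8}\ue^{-4K_0L}$, and use Lemma \ref{lemma:m1'} to control $\ggt_L$. Your derived equation for $\tilde h:=\ue^{K_0\eta}\big(\ggt-\ggt_L\big)$ is also correct. The gap is in the mechanism you propose for the weight: by folding $\ue^{K_0\eta}$ into the unknown you create the drift $K_0\va\tilde h$, and you then claim it is a ``subcritical perturbation of the spectral gap'' absorbable because $K_0<1$. That step does not work. First, the coercivity of $\nu-K$ (the analogue of \cite[Lemma 4.1.5]{BB002}) controls only $\um{\tilde w_h}^2$, whereas $K_0\iint\va\,\tilde h^2$ contains kernel--kernel contributions (cross products of the $1$, $\va$, $\abs{\vvv}^2$ modes of $\tilde q_h$) of size comparable to $\tnm{\tilde q_h}^2$, carrying no $\te$-smallness, no sign, and no $\nnk$ projection; the spectral gap simply does not see them. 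Second, even the $\nnk$ contribution of the drift would need $K_0$ small relative to the (unspecified) coercivity constant, not merely $K_0<1$: your schematic closing inequality $\tnm{\tilde h}^2\ls\tnms{\hh}{\gamma_-}^2+\big(K_0+\te^{1/2}L^4+\te\,\ue^{2K_0L}\big)\tnm{\tilde h}^2$ cannot be closed for $K_0$ near $1$ once the implicit constant exceeds $1$. Note that nowhere in the paper's proof does an $O(K_0)\tnm{\cdot}^2$ term appear; the only absorbed terms carry the factor $\te^{\frac14}L\,\ue^{K_0L}$, which is exactly what the hypothesis $\te\ll L^{-8}\ue^{-4K_0L}$ is calibrated to kill.

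The paper avoids the drift altogether by keeping the weight inside the cited weighted estimates applied to $\ggt$ itself with source $\sss^{L,\te}=-\chi(\te^{-1}\va)K[\ggt]$: the weighted bound $\tnm{\ue^{K_0\eta}\sss^{L,\te}}\ls\te^{1/2}\tnm{\ue^{K_0\eta}\ggt}$, the weighted non-kernel estimate of \cite[Lemma 4.1.9]{BB002} giving $\um{\ue^{K_0\eta}\wwt}\ls\tnms{\hh}{\gamma_-}+\te^{1/4}\tnm{\ue^{K_0\eta}\ggt}+\te^{1/4}\tnm{\ue^{K_0\eta}\qqt}$, and the kernel estimate of \cite[Lemma 4.1.10]{BB002}, where $\qqt-q_L^{\te}$ is recovered by integrating the flux relations from $\eta$ to $L$ (producing the $\int_{\eta}^{L}\ue^{K_0y}(\cdots)$ structure and the factor $L$), not by an energy estimate with a drift; this is also where the restriction on $K_0$ is genuinely used. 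To repair your route you would have to re-prove these weighted statements for the cutoff equation — i.e., essentially redo the BCN/\cite{BB002} weighted analysis — rather than invoke the unweighted coercivity plus absorption. The rest of your outline (boundary terms at $\eta=0$ and $\eta=L$, the bound $\te^{1/2}\ue^{K_0L}\abs{\ggt_L}\ls\te^{1/2}\ue^{K_0L}\tnms{\hh}{\gamma_-}$ for the cutoff source, and the final absorption) is consistent with the paper.
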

\begin{proof}
Based on \cite[(3.101)]{Glassey1996}, we have
\begin{align}
    \tnm{\ue^{K_0\eta}\sss^{L,\te}}\ls\te^{\frac{1}{2}}\tnm{K\left[\ue^{K_0\eta}\ggt\right]}\ls\te^{\frac{1}{2}}\tnm{\ue^{K_0\eta}\ggt}.
\end{align}
Following the similar argument as the proof of \cite[Lemma 4.1.9]{BB002}, we have
\begin{align}\label{mm 05}
    \um{\ue^{K_0\eta}\wwt}\ls& \tnms{\hh}{\gamma_-}+\te^{-\frac{1}{4}}\tnm{\ue^{K_0\eta}\sss^{L,\te}}+\te^{\frac{1}{4}}\tnm{\ue^{K_0\eta}\qqt}
    \ls \tnms{\hh}{\gamma_-}+\te^{\frac{1}{4}}\tnm{\ue^{K_0\eta}\ggt}+\te^{\frac{1}{4}}\tnm{\ue^{K_0\eta}\qqt}.
\end{align}
Following the similar argument as the proof of \cite[Lemma 4.1.10]{BB002}, we have
\begin{align}\label{mm 07}
    \tnm{\ue^{K_0\eta}\left(\qqt-q_L^{\te}\right)}\ls& \tnms{\hh}{\gamma_-}+\bigg[\int_0^L\bigg(\int_{\eta}^L\ue^{K_0y}\sss^{L,\te}(y)\bigg)^2\ud\eta\bigg]^{\frac{1}{2}}+\bigg[\int_0^L\bigg(\int_{\eta}^L\ue^{K_0y}\wwt(y)\bigg)^2\ud\eta\bigg]^{\frac{1}{2}}\\
    \ls& \tnms{\hh}{\gamma_-}+L\tnm{\ue^{K_0\eta}\sss^{L,\te}}+ L\tnm{\ue^{K_0\eta}\wwt}
    \ls\tnms{\hh}{\gamma_-}+\te^{\frac{1}{4}} L\tnm{\ue^{K_0\eta}\ggt}+\te^{\frac{1}{4}}L\tnm{\ue^{K_0\eta}\qqt}.\no
\end{align}
Hence, summarizing \eqref{mm 05} and \eqref{mm 07}, we have 
\begin{align}\label{mm 08}
    \tnm{\ue^{K_0\eta}\big(\ggt-\ggt_L\big)}
    \ls&\tnms{\hh}{\gamma_-}+\te^{\frac{1}{4}}L^2\ue^{K_0L}\abs{\ggt_L}+\te^{\frac{1}{4}}L\tnm{\ue^{K_0\eta}\big(\ggt-\ggt_L\big)},
\end{align}
which, when $\te^{\frac{1}{4}}\ll L^{-2}\ue^{-K_0L}$, yields the desired result.
\end{proof}

\begin{lemma}[$L^{\infty}$ Bound and Decay]\label{lemma:m2}
There exists $\ggt_{L}(\vvv)\in\nk$ and a unique solution $\ggt(\eta,\vvv)$ to \eqref{Milne approximate} satisfying (uniformly in $\te$ and $L$)
\begin{align}
    \lnmm{\ue^{K_0\eta}\big(\ggt-\ggt_L\big)}\ls \lnmms{\hh}{\gamma_-},
\end{align}
\end{lemma}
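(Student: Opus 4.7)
The plan is to upgrade the $L^{2}$ decay estimate of Lemma \ref{lemma:m1} to the weighted $L^{\infty}$ estimate by running the standard $L^2$--$L^\infty$ mild-solution argument with one iteration of Duhamel's principle, carefully tracking the cutoff $\ch(\te^{-1}\va)$ so that the resulting constants are independent of $\te$ and $L$.

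First, I would note that since $\ggt_L\in\nk$ has no $\eta$-dependence and consists of polynomial-in-$\vvv$ multiples of $\mh$, the $L^\infty_{\varrho,\vartheta}$ bound $\lnmmss{\ggt_L}\ls\lnmms{\hh}{\gamma_-}$ follows immediately from the coefficient bound $|\ggt_L|\ls\tnms{\hh}{\gamma_-}\ls\lnmms{\hh}{\gamma_-}$ in Lemma \ref{lemma:m1'}. The heart of the matter is therefore the bound on $\widetilde\ggg:=\ggt-\ggt_L$. Using $\lc[\ggt_L]=0$, the difference satisfies
\begin{align}
\va\p_\eta\widetilde\ggg+\nu\widetilde\ggg-\ch(\te^{-1}\va)K[\widetilde\ggg]=\chi(\te^{-1}\va)K[\ggt_L]=\chi(\te^{-1}\va)\nu\ggt_L,
\end{align}
with boundary data $\widetilde\ggg(0,\vvv)=\ch(\te^{-1}\va)\hh(\vvv)-\ggt_L(\vvv)$ for $\va>0$ and specular reflection at $\eta=L$. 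By Lemma \ref{remark boundary}, $\widetilde\ggg$ is supported in $|\va|\ge\te$, so on this support the cutoff equals $1$ and the forcing $\chi(\te^{-1}\va)\nu\ggt_L$ vanishes.

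Next, I would integrate along the characteristics $\va=\text{const}$: for $\va\ge\te$ the characteristic starts at $\eta=0$, and for $\va\le-\te$ it starts at $\eta=L$ (with specular reflection) and mirrors the outgoing case through Proposition \ref{lemma:m6}. Multiplying by $\ue^{K_0\eta}$, the boundary contribution is controlled by $\lnmms{\hh}{\gamma_-}+|\ggt_L|\ls\lnmms{\hh}{\gamma_-}$ whenever $K_0<\nu_0:=\inf\nu$, because $\ue^{(K_0-\nu/\va)\eta}\le 1$. This produces a representation
\begin{align}
\ue^{K_0\eta}\widetilde\ggg(\eta,\vvv)=(\text{boundary})+\int_0^\eta \ue^{-(\nu/\va-K_0)(\eta-y)}\frac{\ue^{K_0 y}K[\widetilde\ggg](y,\vvv)}{\va}\,\ud y,
\end{align}
and I would then substitute this representation into $K[\widetilde\ggg]$ itself to produce the double-integral iteration
\begin{align}
\iint k(\vvv,\vvv')\,k(\vvv',\vvv'')\,\ue^{K_0 y'}\widetilde\ggg(y',\vvv'')\,\ud\vvv'\,\ud\vvv''.
\end{align}

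The decisive step is to absorb the double integral into the $L^2$ norm via the velocity-mixing property of the collision kernel $k$, exactly as in \cite{Guo2010,Esposito.Guo.Kim.Marra2013,BB002}: splitting the $\vvv''$-integration into a large-velocity piece $|\vvv''|\ge N$ (where the Gaussian weight provides smallness) and a compact piece $|\vvv''|\le N$ (where the change-of-variables $\vvv''\mapsto(y',\vvv'')$ along the characteristic yields a Jacobian $\ls\va^{-1}$ that is tamed by the gain $k(\vvv,\vvv')k(\vvv',\vvv'')\in L^1_{\vvv'}$ with extra smoothing), one obtains
\begin{align}
\lnmm{\ue^{K_0\eta}\widetilde\ggg}\ls\lnmms{\hh}{\gamma_-}+o(1)\lnmm{\ue^{K_0\eta}\widetilde\ggg}+C\tnm{\ue^{K_0\eta}\widetilde\ggg}.
\end{align}
Invoking Lemma \ref{lemma:m1} to bound the $L^2$ term by $\tnms{\hh}{\gamma_-}\ls\lnmms{\hh}{\gamma_-}$ and absorbing the $o(1)$ term closes the estimate.

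The main technical obstacle I anticipate is ensuring that the implicit constant is genuinely independent of $\te$ and $L$. The danger is the factor $\va^{-1}$ in the Duhamel formula, which blows up as $\va\to0$; the cutoff $|\va|\ge\te$ could introduce a $\te$-dependent loss. This must be handled, as in \cite{BB002}, by exploiting the smoothing structure of $K$ (the compact piece of $k(\vvv,\vvv')$ vanishes fast enough in $|\va'|$) together with the change-of-variable from $y$ to the position along the characteristic, which converts the $\va^{-1}$ into a harmless Jacobian for the $L^2$-to-$L^\infty$ transfer. The specular-reflection contribution at $\eta=L$ similarly needs the exponential decay factor $\ue^{K_0L}\cdot\ue^{-\nu(L-\eta)/|\va|}$, which remains bounded uniformly once $K_0<\nu_0$. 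Oddness in $(\vb,\vc)$ is preserved throughout the iteration thanks to the invariance under $\vvv\mapsto\rr[\vvv]$ and uniqueness.
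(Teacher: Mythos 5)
Your overall strategy (double Duhamel along characteristics, velocity mixing of $K$, change of variables, then closing with the weighted $L^2$ decay of Lemma \ref{lemma:m1}) is exactly the machinery behind this type of result; note, though, that the paper itself does not reprove it — its proof of Lemma \ref{lemma:m2} is a one-line citation of \cite[Theorem 4.1.24]{BB002}, so you are supplying an argument the authors delegate to a reference. Within that argument, however, there is a concrete gap. You claim, citing Lemma \ref{remark boundary}, that $\ggt-\ggt_L$ is supported in $\{\abs{\va}\geq\te\}$, so that the forcing term may be dropped. Lemma \ref{remark boundary} only says that $\ggt$ vanishes on the strip $\{-\te\leq\va<\te\}$; the limit $\ggt_L\in\nk$ is a polynomial multiple of $\mh$ and certainly does not vanish there, so on the strip $\ggt-\ggt_L=-\ggt_L\neq0$ and the forcing $-\chi\left(\te^{-1}\va\right)\nu\,\ggt_L$ (note also the sign, which should be negative) does not disappear. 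As a consequence, on the strip the quantity you must bound is $\ue^{K_0\eta}\abs{\ggt_L}$, which at $\eta=L$ is of size $\ue^{K_0L}\abs{\ggt_L}$; the only bound your argument provides for $\ggt_L$ is $\abs{\ggt_L}\ls\tnms{\hh}{\gamma_-}$ from Lemma \ref{lemma:m1'}, which does not carry the needed factor $\ue^{-K_0L}$. So the Duhamel/bootstrap part of your proof controls the region $\abs{\va}\geq\te$, but the estimate on the cutoff strip — equivalently, the smallness of $\ue^{K_0L}\ggt_L$ relative to $\lnmms{\hh}{\gamma_-}$ — is never established, and it is precisely the delicate point (in the paper's framework it is tied to comparison with the uncut problem, cf.\ Lemmas \ref{lemma:m8} and \ref{lem:m6}, or is absorbed into the statement of the cited theorem).

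Two smaller points: your closing step invokes Lemma \ref{lemma:m1}, whose weighted $L^2$ decay requires $\te\ll L^{-8}\ue^{-4K_0L}$, so your constants are uniform only in that regime, whereas the lemma as stated carries no such hypothesis (it inherits uniformity from \cite[Theorem 4.1.24]{BB002}); and the uniqueness of $\ggt$ is not produced by your $L^\infty$ bootstrap but comes from the well-posedness already recorded in Lemma \ref{lemma:m1'}, which you should cite rather than fold into the a priori estimate.
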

\begin{proof}
This follows from \cite[Theorem 4.1.24]{BB002}.
\end{proof}

Summarizing Lemma \ref{lemma:m1'}, Lemma \ref{lemma:m1} and Lemma \ref{lemma:m2}, we arrive at the following:

\begin{proposition}\label{prop:Milne-approximate}
Assume $\te\ll L^{-8}\ue^{-4K_0L}$ with $0\leq K_0<1$. Then the unique solution $\ggt(\eta,\vvv)$ to \eqref{Milne approximate} satisfying
\begin{align}
    \lnmm{\ggt}\ls \lnmms{\hh}{\gamma_-}.
\end{align}
\end{proposition}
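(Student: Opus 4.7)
The plan is to recognize that Proposition \ref{prop:Milne-approximate} is essentially a bookkeeping synthesis of the three preceding lemmas rather than a genuinely new estimate, and to proceed by a decomposition into the hydrodynamic tail and the decaying remainder.

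First I would split $\ggt = \ggt_L + (\ggt - \ggt_L)$ and apply the triangle inequality in the weighted $L^\infty$ norm, giving
\begin{align*}
\lnmm{\ggt} \;\leq\; \lnmm{\ggt - \ggt_L} + \lnmm{\ggt_L}.
\end{align*}
The key observation is that $\ggt_L \in \nk$ depends only on $\vvv$ (not on $\eta$), so $\lnmm{\ggt_L} = \lnmmss{\ggt_L}$, and this is directly estimated by Lemma \ref{lemma:m1'} via
\begin{align*}
\lnmmss{\ggt_L} \;\ls\; \tnms{\hh}{\gamma_-} \;\ls\; \lnmms{\hh}{\gamma_-},
\end{align*}
where the last inequality uses the standard fact that the weighted $L^\infty$ norm of the boundary data controls its weighted $L^2$ norm, thanks to the Maxwellian-type decay built into the weight $\bv$. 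For the remainder $\ggt - \ggt_L$, I would simply drop the $\ue^{K_0\eta}$ weight (which is $\geq 1$) to obtain
\begin{align*}
\lnmm{\ggt - \ggt_L} \;\leq\; \lnmm{\ue^{K_0\eta}(\ggt - \ggt_L)} \;\ls\; \lnmms{\hh}{\gamma_-}
\end{align*}
directly from Lemma \ref{lemma:m2}. Adding the two bounds closes the estimate.

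I do not anticipate any real obstacle here — the hard work sits upstream. In particular, Lemma \ref{lemma:m2} (the $L^\infty$ control of the decaying part) depends on the $L^2$ decay of Lemma \ref{lemma:m1}, which is why the stronger smallness hypothesis $\te \ll L^{-8}\ue^{-4K_0 L}$ (inherited from Lemma \ref{lemma:m1}) appears in the proposition, even though Lemma \ref{lemma:m1'} alone only demands $\te \ll L^{-8}$. Once one accepts the three lemmas, the only subtlety is keeping track of the weights so that the constants in the final bound are truly independent of $L$ and $\te$; this is automatic because both Lemma \ref{lemma:m1'} and Lemma \ref{lemma:m2} already deliver $L$- and $\te$-independent constants.
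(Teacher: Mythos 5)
Your proposal is correct and matches the paper's own argument, which simply combines Lemma \ref{lemma:m1'}, Lemma \ref{lemma:m1} and Lemma \ref{lemma:m2}: split $\ggt=\ggt_L+(\ggt-\ggt_L)$, bound the decaying part by Lemma \ref{lemma:m2} after dropping the weight $\ue^{K_0\eta}\geq1$, and bound $\ggt_L$ (a function of $\vvv$ alone) by Lemma \ref{lemma:m1'} together with $\tnms{\hh}{\gamma_-}\ls\lnmms{\hh}{\gamma_-}$. Your remarks on where the hypothesis $\te\ll L^{-8}\ue^{-4K_0L}$ enters and on the $L$- and $\te$-independence of the constants are consistent with the paper.
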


\begin{remark}\label{rem:boundary}
Due to the presence of $\ggt_L$, we cannot simply deduce that 
\begin{align}
    \tnm{\ue^{K_0\eta}\ggt}+\lnmm{\ue^{K_0\eta}\ggt}\ls \lnmms{\hh}{\gamma_-}.
\end{align}
\end{remark}

\subsection{$L^1$ Estimate for $\p_{\eta}\ggt$}

\begin{lemma}\label{lemma:m3}
$\ggt$ satisfies (uniformly in $\te$ and $L$)
\begin{align}
    \pnm{\nu\p_{\eta}\ggt }{1}\ls\pnm{\nu^2\ggt}{1}+\nm{\nu\ggt}_{L^1_{\eta}L^{2}_{\vvv}}+\lnmm{\ggt}.
\end{align}
\end{lemma}

\begin{proof}
For some $0<\d\ll1$, considering \eqref{mm 04}, we have
\begin{align}\label{mm 11}
    \pnm{\nu\p_{\eta}\ggt \id_{\{\abs{\va}\geq\d\}}}{1}\ls& \d^{-1}\pnm{\nu\va\p_{\eta}\ggt }{1}
    \ls\d^{-1}\pnm{\nu^2\ggt}{1}+\d^{-1}\pnm{\nu K\left[\ggt\right]}{1}\ls\d^{-1}\pnm{\nu^2\ggt}{1},
\end{align}
and thus it suffices to bound $\pnm{\nu\p_{\eta}\ggt \id_{\{\abs{\va}\leq\d\}}}{1}$.
Taking $\eta$ derivative in \eqref{mm 04}, we obtain
\begin{align}\label{mm 09}
    \va\p_{\eta}\left(\p_{\eta}\ggt \right)+\nu \left(\p_{\eta}\ggt \right)=\ch\left(\te^{-1}\va\right)K\left[\p_{\eta}\ggt \right].
\end{align}
This equation holds for any $(\eta,\vvv)$, so we focus on the $\abs{\va}\leq\d$ part:
\begin{align}\label{mm 02}
    &\va\p_{\eta}\left(\p_{\eta}\ggt \id_{\{\abs{\va}\leq\d\}}\right)+\nu \left(\p_{\eta}\ggt \id_{\{\abs{\va}\leq\d\}}\right)=\ch\left(\te^{-1}\va\right)\id_{\{\abs{\va}\leq\d\}}K\left[\p_{\eta}\ggt \right]\\
    =&\ch\left(\te^{-1}\va\right)\id_{\{\abs{\va}\leq\d\}}\int_{\abs{\vuu_{\eta}}\geq\d}k(\vvv,\vuu)\p_{\eta}\ggt (\vuu)\ud \vuu+\ch\left(\te^{-1}\va\right)\id_{\{\abs{\va}\leq\d\}}\int_{\abs{\vuu_{\eta}}\leq\d}k(\vvv,\vuu)\p_{\eta}\ggt (\vuu)\ud \vuu.\no
\end{align}
Here, $\vuu=(\vuu_{\eta},\vuu_{\phi},\vuu_{\psi})$. By multiplying $\text{sgn}\left[\p_{\eta}\ggt \id_{\{\abs{\va}\leq\d\}}\right]$ and integrating over $(\eta,\vvv)$ in \eqref{mm 02}, we obtain
\begin{align}
    \pnm{\nu\p_{\eta}\ggt \id_{\{\abs{\va}\leq\d\}}}{1}\ls&\int_{\r^3}\abs{\p_{\eta}\ggt(0)\id_{\{\abs{\va}\leq\d\}}}\va\ud\vvv-\int_{\r^3}\abs{\p_{\eta}\ggt(L)\id_{\{\abs{\va}\leq\d\}}}\va\ud\vvv\\
    &+\BBnm{\ch\left(\te^{-1}\va\right)\id_{\{\abs{\va}\leq\d\}}\int_{\abs{\vuu_{\eta}}\geq\d}k(\vvv,\vuu)\p_{\eta}\ggt (\vuu)\ud \vuu}_{L^1}\no\\
    &+\BBnm{\ch\left(\te^{-1}\va\right)\id_{\{\abs{\va}\leq\d\}}\int_{\abs{\vuu_{\eta}}\leq\d}k(\vvv,\vuu)\p_{\eta}\ggt (\vuu)\ud\vuu}_{L^1}.\no
\end{align}
Plugging in \eqref{Milne approximate} yields
\begin{align}\label{mm 10}
    \int_{\r^3}\abs{\p_{\eta}\ggt(0)\id_{\{\abs{\va}\leq\d\}}}\va\ud\vvv\ls& \lnmms{\nu\ggt(0)}{\gamma}+\lnmms{\chi\left(\te^{-1}\va\right)K\big[\ggt\big](0)}{\gamma}
    \ls\lnmms{\ggt(0)}{\gamma}\ls\lnmm{\ggt},
\end{align}
and similarly
\begin{align}
    \int_{\r^3}\abs{\p_{\eta}\ggt(L)\id_{\{\abs{\va}\leq\d\}}}\va\ud\vvv\ls\lnmm{\ggt}.
\end{align}
Also, we know
\begin{align}
    &\BBnm{\ch\left(\te^{-1}\va\right)\id_{\{\abs{\va}\leq\d\}}\int_{\abs{\vuu_{\eta}}\geq\d}k(\vvv,\vuu)\p_{\eta}\ggt (\vuu)\ud \vuu}_{L^1}\\
    \ls&\BBnm{\int_{\abs{\vuu_{\eta}}\geq\d}k(\vvv,\vuu)\ud \vuu}_{L^2_{\vvv}}\nm{\p_{\eta}\ggt \id_{\{\abs{\va}\geq\d\}}}_{L^1_{\eta}L^{2}_{\vvv}}
    \ls \d^{-1}\nm{\va\p_{\eta}\ggt }_{L^1_{\eta}L^{2}_{\vvv}}\ls\d^{-1}\nm{\nu\ggt}_{L^1_{\eta}L^{2}_{\vvv}}.\no
\end{align}
The remaining principal term is
\begin{align}\label{final 24}
    &\BBnm{\ch\left(\te^{-1}\va\right)\id_{\{\abs{\va}\leq\d\}}\int_{\abs{\vuu_{\eta}}\leq\d}k(\vvv,\vuu)\p_{\eta}\ggt (\vuu)\ud \vuu}_{L^1}\\
    \ls&\left(\sup_{\vuu}\int_{\r^3}\id_{\{\abs{\va}\leq\d\}}k(\vvv,\vuu)\ud \vvv\right)\pnm{\id_{\{\abs{\vuu_{\eta}}\leq\d\}}\p_{\eta}\ggt(\vuu)}{1}
    \ls\d\pnm{\p_{\eta}\ggt \id_{\{\abs{\va}\leq\d\}}}{1},\no
\end{align}
where for $\vvv'=(\vb,\vc)$ and $\vuu'=(u_{\phi},u_{\psi})$, due to \cite[Lemma 3]{Guo2010}
\begin{align}\label{final 24'}
    \sup_{\vuu}\int_{\r^3}\id_{\{\abs{\va}\leq\d\}}k(\vvv,\vuu)\ud \vvv\ls&\sup_{\vuu}\int_{\r^3}\id_{\{\abs{\va}\leq\d\}}\abs{\vvv-\vuu}^{-1}\ue^{-\frac{1}{8}\abs{\vvv-\vuu}^2}\ud \vvv\\
    \ls&\sup_{\vuu}\int_{\r}\id_{\{\abs{\va}\leq\d\}}\bigg(\int_{\r^2}\abs{\vvv'-\vuu'}^{-1}\ue^{-\frac{1}{8}\abs{\vvv-\vuu}^2}\ud \vvv'\bigg)\ud\va
    \ls\sup_{\vuu}\int_{\r}\id_{\{\abs{\va}\leq\d\}}\ud\va \ls\d.\no
\end{align}
In summary, we have shown that
\begin{align}
    \pnm{\nu\p_{\eta}\ggt \id_{\{\abs{\va}\leq\d\}}}{1}\ls \d^{-1}\pnm{\nu\ggt}{1}+\d^{-1}\nm{\nu\ggt}_{L^1_{\eta}L^{2}_{\vvv}}+\d\pnm{\p_{\eta}\ggt \id_{\{\abs{\va}\leq\d\}}}{1}+\lnmm{\ggt},
\end{align}
which yields
\begin{align}\label{mm 12}
    \pnm{\nu\p_{\eta}\ggt \id_{\{\abs{\va}\leq\d\}}}{1}\ls \d^{-1}\pnm{\nu\ggt}{1}+\d^{-1}\nm{\nu\ggt}_{L^1_{\eta}L^{2}_{\vvv}}.
\end{align}
Combining \eqref{mm 11} and \eqref{mm 12}, we have
\begin{align}
    \pnm{\nu\p_{\eta}\ggt }{1}\ls \d^{-1}\pnm{\nu^2\ggt}{1}+\d^{-1}\nm{\nu\ggt}_{L^1_{\eta}L^{2}_{\vvv}}+\lnmm{\ggt}.
\end{align}
\end{proof}

\subsection{$L^1$ Estimate for $\p_{\va}\ggt$}

\begin{lemma}\label{lemma:m4}
$\ggt$ satisfies (uniformly in $\te$ and $L$)
\begin{align}
    \pnm{\nu\p_{\va}\ggt }{1}\ls&\pnm{\nu^2\ggt}{1}+\nm{\nu\ggt}_{L^1_{\eta}L^{2}_{\vvv}}+\lnmm{\ggt}
    +\int_{\va>0}\abs{\p_{\va}\hh }\va\ud\vvv+\int_{\va>0}\abs{\hh}\ud\vvv.
\end{align}
\end{lemma}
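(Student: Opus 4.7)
The plan is to mimic the proof of Lemma \ref{lemma:m3} by differentiating the equation \eqref{mm 04} in $\va$ instead of $\eta$. First I would apply $\p_{\va}$ to obtain a transport equation for $H := \p_{\va}\ggt$:
\begin{align*}
    \va\p_{\eta}H + \nu H = -\p_{\eta}\ggt - (\p_{\va}\nu)\ggt + \te^{-1}\ch'(\te^{-1}\va)K[\ggt] + \ch(\te^{-1}\va)\p_{\va}K[\ggt].
\end{align*}
Multiplying by $\text{sgn}(H)$ and integrating over $[0,L]\times\r^3$ produces the basic identity
\begin{align*}
    \int_{\r^3}\va\big(\abs{H(L,\vvv)}-\abs{H(0,\vvv)}\big)\ud\vvv + \int_0^L\int_{\r^3}\nu\abs{H}\ud\vvv\ud\eta = \int_0^L\int_{\r^3}\text{sgn}(H)\cdot(\text{RHS})\ud\vvv\ud\eta.
\end{align*}

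Next I would extract the boundary contributions. The specular-reflection condition $\ggt(L,\va,\vb,\vc)=\ggt(L,-\va,\vb,\vc)$ implies, after differentiation in $\va$, that $H(L,\cdot,\vb,\vc)$ is odd in $\va$; hence $\abs{H(L)}$ is even in $\va$ and $\int_{\r^3}\va\abs{H(L)}\ud\vvv=0$. At $\eta=0$ for $\va>0$, the prescribed data $\ggt(0,\vvv)=\ch(\te^{-1}\va)\hh(\vvv)$ gives $H(0,\vvv)=\te^{-1}\ch'(\te^{-1}\va)\hh+\ch(\te^{-1}\va)\p_{\va}\hh$, so
\begin{align*}
    \int_{\va>0}\va\abs{H(0)}\ud\vvv \ls \int_{\va>0}\abs{\hh}\ud\vvv+\int_{\va>0}\va\abs{\p_{\va}\hh}\ud\vvv,
\end{align*}
since $\ch'(\te^{-1}\va)$ is supported on $\va\sim\te$ where $\va\te^{-1}$ is bounded; this matches the last two terms of the claimed estimate. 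For $\va<0$, the quantity $-\int_{\va<0}\va\abs{H(0)}\ud\vvv\ge 0$ absorbs into the LHS with a favorable sign and is simply dropped.

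For the source terms on the right-hand side, $\pnm{\p_{\eta}\ggt}{1}$ is controlled via Lemma \ref{lemma:m3} (using the uniform lower bound $\nu\gtrsim 1$); next $\pnm{(\p_{\va}\nu)\ggt}{1}\ls\pnm{\ggt}{1}\ls\pnm{\nu^2\ggt}{1}$ since $\p_{\va}\nu$ is uniformly bounded for hard spheres; and the cutoff term $\te^{-1}\ch'(\te^{-1}\va)K[\ggt]$ is handled exactly as in the computation at \eqref{final 24'}, using $\sup_{\vuu}\int_{\va\sim\te}k(\vvv,\vuu)\ud\vvv\ls\te$ to cancel the $\te^{-1}$.

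The main obstacle will be the non-commuting term $\ch(\te^{-1}\va)\p_{\va}K[\ggt]$, where $\p_{\va}$ falls on the outer variable of the integral kernel rather than on $\ggt$. I would control it via the explicit hard-sphere structure: starting from the kernel bound $k(\vvv,\vuu)\ls\abs{\vvv-\vuu}^{-1}\ue^{-\frac{1}{8}\abs{\vvv-\vuu}^2}$ used in \eqref{final 24'}, a direct differentiation yields $\abs{\p_{\va}k(\vvv,\vuu)}\ls\abs{\vvv-\vuu}^{-2}\ue^{-c\abs{\vvv-\vuu}^2}$, which despite the stronger singularity remains integrable in three dimensions, giving the uniform estimate $\sup_{\vuu}\int_{\r^3}\abs{\p_{\va}k(\vvv,\vuu)}\ud\vvv\ls 1$. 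An application of Fubini then produces $\int_0^L\int_{\r^3}\abs{\p_{\va}K[\ggt]}\ud\vvv\ud\eta\ls\pnm{\ggt}{1}\ls\pnm{\nu^2\ggt}{1}$, so this term does not require any splitting and is controlled directly. Collecting all the estimates gives the stated bound on $\pnm{\nu\p_{\va}\ggt}{1}$.
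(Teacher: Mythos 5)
Your proposal follows essentially the same route as the paper's proof: differentiate \eqref{mm 04} in $\va$, multiply by $\mathrm{sgn}\big[\p_{\va}\ggt\big]$ and integrate over $(\eta,\vvv)$; kill the $\eta=L$ boundary term by the oddness in $\va$ of $\p_{\va}\ggt(L,\cdot)$ coming from specular reflection; bound the $\eta=0$ term by differentiating the cutoff data $\ch(\te^{-1}\va)\hh$ (dropping the favorably signed $\va<0$ part), which yields exactly the two boundary-data terms in the statement; control $\pnm{\p_{\eta}\ggt}{1}$ by Lemma \ref{lemma:m3}, the $(\p_{\va}\nu)\ggt$ term directly, and the $\te^{-1}\ch'(\te^{-1}\va)K[\ggt]$ term via the small-$\va$ kernel estimate \eqref{final 24'}. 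All of this matches the paper.

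The one place to tighten is the term $\ch(\te^{-1}\va)\p_{\va}K[\ggt]$. Your claimed pointwise bound $\abs{\p_{\va}k(\vvv,\vuu)}\ls\abs{\vvv-\vuu}^{-2}\ue^{-c\abs{\vvv-\vuu}^2}$, and hence the uniform bound $\sup_{\vuu}\int_{\r^3}\abs{\p_{\va}k(\vvv,\vuu)}\ud\vvv\ls1$, is too optimistic: when $\p_{\va}$ hits the exponential factor $\exp\big(-\tfrac18\abs{\vvv-\vuu}^2-\tfrac18\tfrac{(\abs{\vvv}^2-\abs{\vuu}^2)^2}{\abs{\vvv-\vuu}^2}\big)$ in $k_2$ (or the Maxwellian in $k_1$), it produces factors growing polynomially in $\abs{\vvv}+\abs{\vuu}$ which are not absorbed near the diagonal $\vvv\approx\vuu$ (there the cross-term exponential is of order one), so the $\vvv$-integral of $\abs{\p_{\va}k}$ grows like a power of $\br{\vuu}$ rather than being uniformly bounded. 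This is precisely why the paper records the weaker bound $\pnm{\ch(\te^{-1}\va)\p_{\va}K[\ggt]}{1}\ls\pnm{\nu\ggt}{1}$, keeping a velocity weight. Since $\br{\vuu}\ls\nu(\vuu)$ and the right-hand side of the lemma already contains $\pnm{\nu^2\ggt}{1}$, this loss is harmless and your argument closes verbatim once the kernel-derivative estimate is stated with the weight; it is a fixable inaccuracy, not a structural gap.
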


\begin{proof}
Taking $\va$ derivative in \eqref{Milne approximate}, we obtain
\begin{align}\label{mm 03}
    \va\p_{\eta}\left(\p_{\va}\ggt \right)+\p_{\eta}\ggt +\nu\p_{\va}\ggt +\p_{\va}\nu\ggt=\ch\left(\te^{-1}\va\right)\p_{\va}K\left[\ggt\right]+\te^{-1}\ch'\left(\te^{-1}\va\right)K\left[\ggt\right].
\end{align}
By multiplying $\text{sgn}\left[\p_{\va}\ggt \right]$ and integrating over $(\eta,\vv)$ in \eqref{mm 03}, 
we obtain
\begin{align}
    \pnm{\nu\p_{\va}\ggt }{1}\ls&\abs{\int_{\r^3}\abs{\p_{\va}\ggt(0)}\va\ud\vvv-\int_{\r^3}\abs{\p_{\va}\ggt(L)}\va\ud\vvv}+\pnm{\p_{\eta}\ggt}{1}+\pnm{\p_{\va}\nu\ggt}{1}\\
    &+\pnm{\ch\left(\te^{-1}\va\right)\p_{\va}K\left[\ggt\right]}{1}+\pnm{\te^{-1}\ch'\left(\te^{-1}\va\right)K\left[\ggt\right]}{1}.\no
\end{align}
Using Lemma \ref{remark boundary}, 
we have
\begin{align}
    \int_{\r^3}\abs{\p_{\va}\ggt(0)}\va\ud\vvv=&\int_{\va>0}\abs{\p_{\va}\ggt(0)}\va\ud\vvv+\int_{\va<0}\abs{\p_{\va}\ggt(0)}\va\ud\vvv\\
    \leq&\int_{\va>0}\abs{\p_{\va}\ggt(0)}\va\ud\vvv=\int_{\va>0}\abs{\p_{\va}\Big(\ch\left(\te^{-1}\va\right)\hh(\vvv)\Big)}\va\ud\vvv\no\\
    \leq&\int_{\va>0}\abs{\ch\left(\te^{-1}\va\right)\p_{\va}\hh }\va\ud\vvv+\int_{\va>0}\abs{\te^{-1}\ch'\left(\te^{-1}\va\right)\hh}\va\ud\vvv
    \ls\int_{\va>0}\abs{\p_{\va}\hh }\va\ud\vvv+\int_{\va>0}\abs{\hh}\ud\vvv.\no
\end{align}
Due to specular-reflection boundary at $\eta=L$, we have $\ds\int_{\r^3}\abs{\p_{\va}\ggt(L)}\va\ud\vvv=0$.
Lemma \ref{lemma:m3} yields
\begin{align}
    \pnm{\p_{\eta}\ggt}{1}\ls \pnm{\nu^2\ggt}{1}+\nm{\nu\ggt}_{L^1_{\eta}L^{2}_{\vvv}}+\lnmm{\ggt}.
\end{align}
Direct estimate implies $\pnm{\p_{\va}\nu\ggt}{1}\ls\pnm{\ggt}{1}$ and $\pnm{\ch\left(\te^{-1}\va\right)\p_{\va}K\left[\ggt\right]}{1}\ls\pnm{\nu\ggt}{1}$.
The remaining principal term is
\begin{align}
    \pnm{\te^{-1}\ch'\left(\te^{-1}\va\right)K\left[\ggt\right]}{1}\ls&\te^{-1}\pnm{\ch'\left(\te^{-1}\va\right)\int_{\r^3}k(\vvv,\vuu)\ggt(\vuu)\ud \vuu}{1}\\
    \ls&\te^{-1}\left(\sup_{\vuu}\int_{\r^3}\id_{\{\abs{\va}\leq\te\}}k(\vvv,\vuu)\ud \vvv\right)\pnm{\ggt}{1}
    \ls\pnm{\ggt}{1},\no
\end{align}
based on \eqref{final 24'} $\ds\sup_{\vuu}\int_{\r^3}\id_{\{\abs{\va}\leq\te\}}k(\vvv,\vuu)\ud \vvv\ls\te$.
In summary, we have shown that
\begin{align}
    \pnm{\nu\p_{\va}\ggt }{1}\ls\pnm{\nu^2\ggt}{1}+\nm{\nu\ggt}_{L^1_{\eta}L^{2}_{\vvv}}+\lnmm{\ggt}+\int_{\va>0}\abs{\p_{\va}\hh }\va\ud\vvv+\int_{\va>0}\abs{\hh}\ud\vvv.
\end{align}
\end{proof}

\subsection{Limit $\te\rt0$}

We formally take $\te\rt0$ in \eqref{Milne approximate} and recover \eqref{Milne=}.\\

\begin{lemma}\label{lem:m6}
We have
\begin{align}
    \lnmmss{\ggt_L-\ggl_L}+\tnm{\ue^{K_0\eta}\big(\ggt-\ggl\big)}\rt0\ \ \text{as}\ \ \te\rt0.
\end{align}
\end{lemma}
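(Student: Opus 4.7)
The plan is to study the difference $D := \ggt - \ggl$ directly by recasting it as a Milne-type problem with a small source and small inflow data, then apply the same orthogonal-decomposition machinery developed in Lemma \ref{lemma:m1'}, Lemma \ref{lemma:m1}, Lemma \ref{lemma:m2}. Subtracting the $L$-approximate equation in \eqref{Milne=} from the $(L,\te)$-approximate equation in \eqref{Milne approximate} yields
\begin{align*}
\va\p_\eta D + \nu D - K[D] = -\chi(\te^{-1}\va) K[\ggt] \quad\text{in}\ [0,L]\times\r^3,
\end{align*}
with inflow datum $D(0,\vvv) = -\chi(\te^{-1}\va)\hh(\vvv)$ for $\va>0$ and specular reflection $D(L,\vvv)=D(L,\rr[\vvv])$ at $\eta=L$.

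The crucial observation is that both the source and the inflow datum are supported on the thin slab $\{\abs{\va}\leq\te\}$ and therefore carry intrinsic $\te$-smallness:
\begin{align*}
\tnms{\chi(\te^{-1}\va)\hh}{\gamma_-}\ls\te\lnmms{\hh}{\gamma_-},\qquad \tnm{\ue^{K_0\eta}\chi(\te^{-1}\va)K[\ggt]}\ls\te^{\frac{1}{2}}\tnm{\ue^{K_0\eta}\ggt}.
\end{align*}
The factor $\tnm{\ue^{K_0\eta}\ggt}$ is controlled uniformly in $\te$ (with a finite $L$-dependent constant) by decomposing $\ggt=(\ggt-\ggt_L)+\ggt_L$ and combining Lemma \ref{lemma:m1'} with Lemma \ref{lemma:m1}, giving $\tnm{\ue^{K_0\eta}\ggt}\ls \ue^{K_0 L}L^{\frac{1}{2}}\lnmms{\hh}{\gamma_-}$.

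Next, set $D_L:=\ggt_L-\ggl_L\in\nk$ and decompose $D=D_L+(D-D_L)$, and further split $D-D_L=\wwt_D+\qqt_D$ with $\qqt_D\in\nk$ and $\wwt_D\in\nnk$. Running verbatim the Green's-identity arguments of Lemma \ref{lemma:m1'} and Lemma \ref{lemma:m1} applied to $D$, but with the new source $-\chi(\te^{-1}\va)K[\ggt]$ and new inflow $-\chi(\te^{-1}\va)\hh$ replacing the original data, produces
\begin{align*}
\abs{D_L}+\tnm{\ue^{K_0\eta}(D-D_L)}\ls\te\lnmms{\hh}{\gamma_-}+\te^{\frac{1}{2}}\ue^{K_0L}L^{\frac{1}{2}}\lnmms{\hh}{\gamma_-}+\te^{\frac{1}{4}}L^2\ue^{K_0L}\abs{D_L},
\end{align*}
after which the last term is absorbed by taking $\te$ small relative to the fixed $L$. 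The right-hand side then tends to zero as $\te\to 0$. Since $D_L\in\nk$ is a five-dimensional object, $\lnmmss{D_L}\ls\abs{D_L}$, giving both claimed convergences.

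The main obstacle is that the new source $-\chi(\te^{-1}\va)K[\ggt]$ is not orthogonal to $\nk$, so the orthogonal-splitting argument of Lemma \ref{lemma:m1'} does not apply cleanly; one must carry through the macroscopic projection of the source as in \eqref{mm 05'} and \eqref{mm 07'}. The saving grace is that the $\te^{\frac{1}{2}}$-smallness inherited from the thin support $\{\abs{\va}\leq\te\}$ always beats the loss from non-orthogonality, as long as $L$ is held fixed and $\te$ is taken last.
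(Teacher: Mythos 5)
Your proposal is correct and follows essentially the same route as the paper: write the difference equation for $\ggt-\ggl$, observe that both the source and the inflow datum are supported on $\{\abs{\va}\ls\te\}$ and hence carry $\te$-smallness, rerun the estimates of Lemma \ref{lemma:m1'}, Lemma \ref{lemma:m1} and Lemma \ref{lemma:m2} with these data, and let $\te\rt0$ for fixed $L$. The only (harmless) difference is bookkeeping: you keep the full operator $\nu-K$ on the left with source $-\chi\left(\te^{-1}\va\right)K\left[\ggt\right]$, while the paper keeps the cutoff operator $\nu-\ch\left(\te^{-1}\va\right)K$ on the left with source $\chi\left(\te^{-1}\va\right)K\left[\ggl\right]$; the two formulations are equivalent and yield the same estimate.
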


\begin{proof}
This can be easily shown from the difference equation of $\ggt-\ggl$:
\begin{align}\label{Milne approximate difference}
    \left\{
    \begin{array}{l}
    \va\dfrac{\p\big(\ggt-\ggl\big) }{\p\eta}+\nu\big(\ggt-\ggl\big)-\ch\left(\te^{-1}\va\right)K\left[\ggt-\ggl\right]=\chi\left(\te^{-1}\va\right)K\left[\ggl\right],\\\rule{0ex}{1.2em}
    \big(\ggt-\ggl\big)(0,\vvv)=-\chi\left(\te^{-1}\va\right)\hh(\vvv)\ \ \text{for}\ \ \va>0,\\\rule{0ex}{1.5em}
    \big(\ggt-\ggl\big)(L,\vvv)=\big(\ggt-\ggl\big)(L,\rr[\vvv]).
    \end{array}
    \right.
\end{align}
Using the proof of Lemma \ref{lemma:m1'}, Lemma \ref{lemma:m1} and Lemma \ref{lemma:m2}, we have
\begin{align}
    \lnmmss{\ggt_L-\ggl_L}+\tnm{\ue^{K_0\eta}\big(\ggt-\ggl\big)}\ls\te\tnms{\hh}{\gamma_-}+\te^{\frac{1}{4}}L\tnm{\ue^{K_0\eta}\big(\ggt-\ggt_L\big)}+\te^{\frac{1}{4}}\ue^{K_0L}\tnm{\ggt_L}.
\end{align}
Then for fixed $L$, the result naturally follows as $\te\rt0$.
\end{proof}

\begin{corollary}\label{prop:Milne-approximate-improve}
Assume $\te\ll L^{-8}\ue^{-4K_0L}$ with $0\leq K_0<1$. Then the unique solution $\ggt(\eta,\vvv)$ to \eqref{Milne approximate} satisfies
\begin{align}
    \tnm{\ue^{K_0\eta}\ggt}+\lnmm{\ggt}\ls \lnmms{\hh}{\gamma_-}.
\end{align}
\end{corollary}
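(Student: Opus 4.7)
The plan is to derive this corollary by assembling the previous lemmas of the subsection via a triangle-inequality argument. Since $\lnmm{\ggt}\ls \lnmms{\hh}{\gamma_-}$ is already the content of Proposition \ref{prop:Milne-approximate}, the real work lies entirely in establishing $\tnm{\ue^{K_0\eta}\ggt}\ls \lnmms{\hh}{\gamma_-}$. My first move is the split $\ggt = (\ggt-\ggt_L) + \ggt_L$, which separates the exponentially decaying correction from the $\eta$-independent constant-mode piece.

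The difference $\ggt-\ggt_L$ is exactly what Lemma \ref{lemma:m1} controls with an $\ue^{K_0\eta}$ weight, giving
\begin{align*}
\tnm{\ue^{K_0\eta}(\ggt-\ggt_L)}\ls \tnms{\hh}{\gamma_-}\ls \lnmms{\hh}{\gamma_-},
\end{align*}
which is the easy half. The main content is then to show $\tnm{\ue^{K_0\eta}\ggt_L}\ls \lnmms{\hh}{\gamma_-}$. Since $\ggt_L\in\nk$ depends only on $\vvv$, a direct $\eta$-integration yields $\tnm{\ue^{K_0\eta}\ggt_L}\ls L^{1/2}\ue^{K_0L}\lnmmss{\ggt_L}$, so the task reduces to extracting a matching $L^{-1/2}\ue^{-K_0L}$ factor from $\lnmmss{\ggt_L}$.

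To produce this decay I would transport the estimate through the $L$-approximate Milne solution $\ggl$ by writing
\begin{align*}
\lnmmss{\ggt_L}\leq \lnmmss{\ggt_L-\ggl_L}+\lnmmss{\ggl_L}.
\end{align*}
The second term is furnished by Lemma \ref{lemma:m8}, which gives $\ue^{K_0L}\lnmmss{\ggl_L}\to 0$ as $L\to\infty$, with rate at least $\ue^{-K_0L}$. For the first term I would unpack the quantitative bound inside the proof of Lemma \ref{lem:m6}: it produces three error contributions of the form $\te\tnms{\hh}{\gamma_-}$, $\te^{1/4}L\lnmms{\hh}{\gamma_-}$, and $\te^{1/4}\ue^{K_0L}L^{1/2}\lnmms{\hh}{\gamma_-}$, and the scheduling $\te\ll L^{-8}\ue^{-4K_0L}$ is calibrated precisely so that each of these, after being multiplied by the geometric factor $L^{1/2}\ue^{K_0L}$ from the norm conversion, stays bounded by $\lnmms{\hh}{\gamma_-}$.

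The delicate point—and the reason for the rather baroque exponents $-8$ and $-4K_0L$ in the hypothesis on $\te$—is that the proof of Lemma \ref{lem:m6} itself already carries a latent $\ue^{K_0L}$ blow-up inside the difference estimate, so the final combination has to absorb two cascading exponential factors simultaneously. The specific choice $\te^{1/4}\ll L^{-2}\ue^{-K_0L}$ is engineered for exactly this. No new PDE estimate for the Milne problem is needed; the argument is essentially bookkeeping, combining Lemmas \ref{lemma:m1'}--\ref{lemma:m2}, Lemma \ref{lem:m6}, and Lemma \ref{lemma:m8} with the triangle-inequality decomposition above.
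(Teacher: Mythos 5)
Your $L^\infty$ step and the splitting $\ggt=(\ggt-\ggt_L)+\ggt_L$ handled by Lemma \ref{lemma:m1} are fine, and in spirit this is the same ``transfer through the $L$-problem'' argument as the paper, whose proof simply combines Lemma \ref{lem:m6} with Proposition \ref{lemma:m6} for the $L^2$ part and quotes Proposition \ref{prop:Milne-approximate} for the $L^\infty$ part. The genuine gap is in your treatment of the constant mode $\ggt_L$. First, Lemma \ref{lemma:m8} is purely qualitative: it states $\lnmmss{\ue^{K_0L}\ggl_L}\rt0$ as $L\rt\infty$, with no rate and no quantitative dependence on $\lnmms{\hh}{\gamma_-}$; your claim that it gives decay ``with rate at least $\ue^{-K_0L}$'' (let alone the $L^{-1/2}\ue^{-K_0L}$ that your own reduction demands) is not contained in that lemma and is precisely the quantitative input you would have to prove. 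Second, your exponent bookkeeping does not close: the third error term in the proof of Lemma \ref{lem:m6} is $\te^{1/4}\ue^{K_0L}\tnm{\ggt_L}$, and after multiplying by your conversion factor $L^{1/2}\ue^{K_0L}$ you are left with a quantity of size $\te^{1/4}L\,\ue^{2K_0L}$ (or $\te^{1/4}L^{1/2}\ue^{2K_0L}$ under the most favorable reading of $\tnm{\ggt_L}$), whereas the hypothesis $\te\ll L^{-8}\ue^{-4K_0L}$ only yields $\te^{1/4}\ll L^{-2}\ue^{-K_0L}$ and thus cancels a single factor of $\ue^{K_0L}$; an uncancelled $\ue^{K_0L}$ survives, so the assumption is not ``calibrated precisely'' for the absorption you describe. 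It only works in the weaker sense of taking $\te$ sufficiently small for each fixed $L$, which defeats the point of your careful cancellation.

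Note also that this heavy cancellation is not what the paper does. Unlike the preceding lemmas, the corollary does not assert a constant independent of $L$, and it is invoked (in the proof of Proposition \ref{lemma:m7}) to obtain bounds uniform in $\te$ at fixed $L$; the paper's route is to write $\ggt=(\ggt-\ggl)+\ggl$, make $\tnm{\ue^{K_0\eta}(\ggt-\ggl)}$ small via Lemma \ref{lem:m6}, and control $\ggl$ through Proposition \ref{lemma:m6} (where the far-field part $\ggl_L$ is merely bounded by $\lnmms{\hh}{\gamma_-}$), rather than extracting exponential decay of $\ggt_L$ in $L$. If you keep your decomposition, the simple repair is to bound $\ggt_L$ by Lemma \ref{lemma:m1'} directly and not attempt to trade $\ue^{K_0L}$ factors; if you insist on an $L$-uniform constant, you must first upgrade Lemma \ref{lemma:m8} to a quantitative statement of the form $\abs{\ggl_L}\ls\ue^{-\sigma L}\lnmms{\hh}{\gamma_-}$ with $\sigma>K_0$, which is not available from the statements you cite.
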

\begin{proof}
    Combining Lemma \ref{lem:m6} and Proposition \ref{lemma:m6}, we obtain the $L^2$ bound. Then $L^{\infty}$ bound is from Proposition \ref{prop:Milne-approximate}.
\end{proof}

\begin{remark}\label{rem:boundary-uniform}
    Using H\"older's inequality and the utilizing the $L^2$ decay, the similar results as in Corollary \ref{prop:Milne-approximate-improve} also holds for any $\N\in[1,\infty)$:
    \begin{align}
    \pnm{\ue^{K_0\eta}\ggt}{\N}\ls \lnmms{\hh}{\gamma_-}.
\end{align}
\end{remark}

Next we turn to the BV estimates.
\begin{proposition}\label{lemma:m7}
We have
\begin{align}
    \bnm{\nu\ggl}\ls\lnmms{\hh}{\gamma_-}+\int_{\va>0}\abs{\p_{\va}\hh }\va\ud\vvv+\int_{\va>0}\abs{\hh}\ud\vvv.
\end{align}
\end{proposition}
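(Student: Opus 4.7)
The plan is to pass to the limit $\te \to 0$ in the uniform-in-$\te$ $L^1$ estimates on the normal-direction derivatives of the approximate solution $\ggt$ from Lemmas \ref{lemma:m3} and \ref{lemma:m4}, combined with the convergence $\ggt \to \ggl$ from Lemma \ref{lem:m6}, and to handle the tangential velocity derivatives required by the $\widetilde{\text{BV}}$ semi-norm through a parallel argument exploiting the velocity-mixing of the collision kernel.

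First I check that every term on the right-hand sides of Lemmas \ref{lemma:m3} and \ref{lemma:m4} is controlled uniformly in $\te$ by the quantities in the statement. Corollary \ref{prop:Milne-approximate-improve} yields $\lnmm{\ggt} \ls \lnmms{\hh}{\gamma_-}$, and since the $L^\infty_{\vrh,\vth}$ norm carries exponential $\abs{\vvv}$-decay that absorbs polynomial $\nu$-weights, the quantities $\pnm{\nu^2 \ggt}{1}$ and $\nm{\nu \ggt}_{L^1_\eta L^2_\vvv}$ are both dominated by $\lnmms{\hh}{\gamma_-}$ (using also Remark \ref{rem:boundary-uniform} for the mixed norm). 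Substituting into Lemmas \ref{lemma:m3} and \ref{lemma:m4}, I obtain $\pnm{\nu\p_\eta\ggt}{1} + \pnm{\nu\p_{\va}\ggt}{1} \ls \lnmms{\hh}{\gamma_-} + \int_{\va>0}\abs{\p_{\va}\hh}\va\ud\vvv + \int_{\va>0}\abs{\hh}\ud\vvv$ uniformly in $\te$. By Lemma \ref{lem:m6}, $\ggt \to \ggl$ in $L^2$ on $[0,L]\times\mathbb{R}^3$, which combined with the uniform $L^\infty$ bound upgrades to $L^1_{\text{loc}}$ convergence via interpolation. Since the $\widetilde{\text{BV}}$ semi-norm is defined by duality against smooth compactly supported test fields, it is lower semicontinuous under $L^1_{\text{loc}}$ convergence, so the normal-direction $L^1$ bounds descend to the corresponding distributional derivatives of $\nu\ggl$.

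For the tangential velocity derivatives in $\vb$ and $\vc$ required by the full $\nabla_{\eta,\vvv}$ in the $\widetilde{\text{BV}}$ semi-norm, I repeat the scheme of Lemma \ref{lemma:m4}: differentiate \eqref{Milne approximate} in $\vb$ or $\vc$, multiply by the sign of the resulting derivative, and integrate in $(\eta,\vvv)$. A favorable structural feature is that the cutoff $\ch(\te^{-1}\va)$ depends only on $\va$, so tangential differentiation produces no singular $\te^{-1}$ terms. The main obstacle --- and the only delicate point in the proof --- is the boundary contribution at $\eta = 0$, which na\"ively produces $\int_{\va>0}\abs{\p_{\vb}\hh}\va\ud\vvv$, a quantity absent from the proposition's right-hand side. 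To avoid it, I exploit the smoothing of $K$ in tangential directions by transferring the $\p_{\vb}$ (resp.\ $\p_{\vc}$) derivative onto the kernel $k(\vvv,\vuu)$ inside the integral defining $K[\ggt]$, and using the pointwise bound $k(\vvv,\vuu) \ls \abs{\vvv-\vuu}^{-1}\ue^{-\abs{\vvv-\vuu}^2/8}$ together with a $\d$-cutoff in the tangential velocity analogous to \eqref{mm 11}-\eqref{mm 12}. The resulting error absorbs into $\pnm{\nu\ggt}{1}$, $\nm{\nu\ggt}_{L^1_\eta L^2_\vvv}$ and $\lnmm{\ggt}$, all of which were controlled in the first step; passing $\te \to 0$ as before then closes the full BV estimate for $\nu\ggl$.
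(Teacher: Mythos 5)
Your treatment of the $\p_{\eta}$ and $\p_{\va}$ derivatives and of the limit $\te\rt0$ is correct and is essentially the paper's argument: uniform-in-$\te$ control of the right-hand sides of Lemmas \ref{lemma:m3}--\ref{lemma:m4} via Corollary \ref{prop:Milne-approximate-improve} and Remark \ref{rem:boundary-uniform}, then strong $L^2$ convergence from Lemma \ref{lem:m6} plus lower semicontinuity of the BV seminorm. The gap is in your third paragraph, on the $\vb,\vc$ derivatives. The boundary contribution $\int_{\va>0}\abs{\p_{\vb}\hh}\va\ud\vvv$ you identify comes from integrating the transport term $\va\p_{\eta}\abs{\p_{\vb}\ggt}$ in $\eta$ and from the prescribed incoming trace $\p_{\vb}\ggt(0,\cdot)=\ch(\te^{-1}\va)\p_{\vb}\hh$ on $\{\va>0\}$; it has nothing to do with the collision operator. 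Transferring $\p_{\vb}$ onto the kernel $k(\vvv,\vuu)$ only serves to estimate $\p_{\vb}K[\ggt]$ without invoking $\p_{\vb}\ggt$ (a useful but separate point), and no manipulation of $K$, nor a $\d$-cutoff in tangential velocity, can remove the $\eta=0$ flux term. So as written, your argument either fails or produces extra data terms ($\int_{\va>0}\abs{\p_{\vb}\hh}\va$, $\int_{\va>0}\abs{\p_{\vc}\hh}\va$) that are not on the proposition's right-hand side, i.e.\ the stated estimate is not established.

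The paper sidesteps this entirely: no new $L^1$ argument is made for $\p_{\vb},\p_{\vc}$. Those derivatives are already controlled pointwise, with exponential decay in $\eta$ and Gaussian-type decay in $\vvv$, by the well-posedness theory (estimate \eqref{final 43} of Theorem \ref{boundary well-posedness} and its analogue for the approximate problems; the cutoff $\ch(\te^{-1}\va)$ depends only on $\va$, so tangential differentiation is harmless, as you note), and such weighted $L^{\infty}$ bounds dominate the needed $L^1_{\eta,\vvv}$ norms. This is precisely why Section 2 states at the outset that only $\p_{\eta}\ggg$ and $\p_{\va}\ggg$ require new work: the weighted estimates for those two derivatives carry the degenerate weight $\va$, which is what Lemmas \ref{lemma:m3}--\ref{lemma:m4} are designed to remove. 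To repair your proof, replace the third paragraph by an appeal to these existing $\vb,\vc$-derivative bounds (which do involve $\nabla_{\vvv}\hh$ data, finite in all applications), rather than attempting a new sign-multiplier estimate in the tangential velocity variables.
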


\begin{proof}
Using Corollary \ref{prop:Milne-approximate-improve} and Remark \ref{rem:boundary-uniform}, we conclude that the estimates in Lemma \ref{lemma:m3} and Lemma \ref{lemma:m4} with $K_0=0$ are uniform in $\te$. In combination with Theorem \ref{boundary well-posedness}, we have shown that 
\begin{align}\label{final 45}
    \nm{\ggt}_{W^{1,1}}\ls\lnmms{\hh}{\gamma_-}+\int_{\va>0}\abs{\p_{\va}\hh }\va\ud\vvv+\int_{\va>0}\abs{\hh}\ud\vvv,
\end{align}
uniformly in $\te$, which yields a weakly $W^{1,1}$ convergent subsequence as $\te\rt0$. 
Also, Proposition \ref{lem:m6} yields strong $L^{2}$ convergence as $\te\rt0$.

Combining both of the convergence, we obtain the BV convergence of the subsequence $\ggt\rt\ggl$ as $\te\rt0$, and thus, with the help of weak lower semi-continuity, we can pass to limit in \eqref{final 45} to obtain the desired result.
\end{proof}

\subsection{Limit $L\rt\infty$}

Based on Lemma \ref{lemma:m8}, we take $L\rt\infty$ in \eqref{Milne=} and recover the equation \eqref{Milne.}.
\begin{theorem}\label{boundary regularity}
We have
\begin{align}
    \bnm{\nu\ggg}\ls\lnmms{\hh}{\gamma_-}+\int_{\va>0}\abs{\p_{\va}\hh }\va\ud\vvv+\int_{\va>0}\abs{\hh}\ud\vvv.
\end{align}
\end{theorem}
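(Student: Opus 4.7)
The plan is to derive Theorem \ref{boundary regularity} by passing to the limit $L\to\infty$ in the BV estimate of Proposition \ref{lemma:m7}, mirroring the strategy used there to pass the $\te\to 0$ limit from $\ggt$ to $\ggl$.

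First, I would verify that the bound established in Proposition \ref{lemma:m7},
\[
\bnm{\nu\ggl}\ls\lnmms{\hh}{\gamma_-}+\int_{\va>0}\abs{\p_{\va}\hh }\va\ud\vvv+\int_{\va>0}\abs{\hh}\ud\vvv,
\]
holds with a constant uniform in $L$. This is the case because Lemma \ref{lemma:m3} and Lemma \ref{lemma:m4} (after passing $\te\to 0$, which is what yields Proposition \ref{lemma:m7}) control $\pnm{\nu\p_{\eta}\ggl}{1}$ and $\pnm{\nu\p_{\va}\ggl}{1}$ by $\pnm{\nu^2\ggl}{1}$, $\nm{\nu\ggl}_{L^1_{\eta}L^{2}_{\vvv}}$, and $\lnmm{\ggl}$, together with boundary data terms, none of which introduce $L$-dependence when Theorem \ref{boundary well-posedness}/Proposition \ref{lemma:m6} (after extending $\ggl$ by zero past $\eta=L$) provide $L$-uniform exponential decay controls.

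Next, from the uniform $W^{1,1}$ control of $\ggl$ one extracts a subsequence converging weakly-$\star$ in BV (and strongly in $L^1_{\mathrm{loc}}$ by Rellich--Kondrachov). Lemma \ref{lemma:m8} already identifies the weak $L^2$ limit of $\ggl$ as $\ggg$, so any BV-weak limit must coincide with $\ggg$.

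Finally, appealing to the weak lower semi-continuity of the BV semi-norm, I would conclude
\[
\bnm{\nu\ggg}\leq \liminf_{L\rt\infty}\bnm{\nu\ggl}\ls\lnmms{\hh}{\gamma_-}+\int_{\va>0}\abs{\p_{\va}\hh }\va\ud\vvv+\int_{\va>0}\abs{\hh}\ud\vvv,
\]
which is the desired inequality. The only delicate point is to check the uniformity of the constants in Lemma \ref{lemma:m3}--\ref{lemma:m4} as $L\rt\infty$: the dangerous term is $\nm{\nu\ggl}_{L^1_{\eta}L^2_{\vvv}}$, whose $L$-uniformity is guaranteed by the exponential decay $\tnm{\ue^{K_0\eta}(\ggl-\ggl_L)}\ls\lnmms{\hh}{\gamma_-}$ from Proposition \ref{lemma:m6} together with $\lnmmss{\ue^{K_0 L}\ggl_L}\rt 0$ from Lemma \ref{lemma:m8}. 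This I expect to be the main (though routine) obstacle; everything else is a clean semi-continuity argument.
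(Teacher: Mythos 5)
Your proposal is correct and follows essentially the same route as the paper: the paper likewise takes the $L$-uniform BV bound of Proposition \ref{lemma:m7}, uses the weak $L^2$ convergence $\ggl\rt\ggg$ from Lemma \ref{lemma:m8} to pass the weak derivatives $\p_\eta\ggl,\p_{\va}\ggl$ to the limit (in the sense of measures, via $C_c^\infty$ test functions), and concludes by lower semi-continuity of the BV semi-norm. Your remark on the uniformity in $L$ (controlled by the exponential decay of $\ggl-\ggl_L$ and the smallness $\lnmmss{\ue^{K_0L}\ggl_L}\rt0$) is exactly the point that makes the constants $L$-independent, as the paper asserts in Lemmas \ref{lemma:m3}--\ref{lemma:m4}.
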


\begin{proof}
    For any $C_c^{\infty}$ test function $\phi(\eta,\vvv)$ with compact support contained in $(\eta,\vvv)\in [0,L']\times B_r$, choosing $L>L'$ and using Lemma \ref{lemma:m8}, we have as $L\rt\infty$
    \begin{align}
        \iint_{[0,L]\times\r^3}\frac{\p\ggl}{\p\eta}\phi=-\iint_{[0,L]\times\r^3}\ggl\frac{\p\phi}{\p\eta}\rt \iint_{[0,L]\times\r^3}\ggg\frac{\p\phi}{\p\eta}=\iint_{[0,L]\times\r^3}\frac{\p\ggg}{\p\eta}\phi.
    \end{align}
    Hence, we know that the weak derivative $\dfrac{\p\ggl}{\p\eta}\rt \dfrac{\p\ggg}{\p\eta}$ in the sense of measure. Similarly, we can show that the weak derivative $\dfrac{\p\ggl}{\p\va}\rt \dfrac{\p\ggg}{\p\va}$ in the sense of measure. In addition, we know that
    \begin{align}
        \nm{\ggg}_{\widetilde{\text{BV}}}\ls\nm{\ggl}_{\widetilde{\text{BV}}}.
    \end{align}
    Hence, using Proposition  \ref{lemma:m7}, our result follows.
\end{proof}

\section{Setup of Remainder Estimates}\label{sec:remainder}

Now we begin to estimate the remainder equation for $\re$ in \eqref{aa 08}, or equivalently the nonlinear Boltzmann equation \eqref{large system-}. 
Denote 
\begin{align}
    Q[F,F]=&Q_{\text{gain}}[F,F]-Q_{\text{loss}}[F,F]\\
    :=&\int_{\r^3}\int_{\s^2}q(\vo,\abs{\vuu-\vv})F(\vuu_{\ast})F(\vv_{\ast})\ud{\vo}\ud{\vuu}-F(\vv)\int_{\r^3}\int_{\s^2}q(\vo,\abs{\vuu-\vv})F(\vuu)\ud{\vo}\ud{\vuu}.\no
\end{align}
Denote $\fs=\ff+\e\mh\re$,
where $\ff:=\m+\mh\big(\e\f_1+\e^2\f_2\big)+\mbh\big(\e\fb_1\big)$.
We can split $\fs=\fs_+-\fs_-$ where $\fs_+=\max\{\fs,0\}$ and $\fs_-=\max\{-\fs,0\}$ denote the positive and negative parts, and the similar notation also applies to $\ff$ and $\re$. 

In order to study \eqref{large system-}, we first consider an auxiliary equation
\begin{align}\label{auxiliary system}
    \vv\cdot\nx \fs+\e^{-1}\Big(Q_{\text{loss}}[\fs,\fs]-Q_{\text{gain}}[\fs_+,\fs_+]\Big)=\mathfrak{z}\ds\iint_{\Omega\times\r^3}\e^{-1}\Big(Q_{\text{loss}}[\fs,\fs]-Q_{\text{gain}}[\fs_+,\fs_+]\Big),
\end{align}
with diffuse-reflection boundary condition
\begin{align}
    \fs(\vx_0,\vv)=\ms(\vx_0,\vv)\displaystyle\int_{\vuu\cdot\vn(\vx_0)>0}
    \fs(\vx_0,\vuu)\abs{\vuu\cdot\vn(\vx_0)}\ud{\vuu} \ \ \text{for}\ \ \vx_0\in\p\Omega\ \ \text{and}\ \ \vv\cdot\vn(\vx_0)<0.
\end{align}
Here $\mathfrak{z}=\mathfrak{z}(v)>0$ is a smooth function with support contained in $\{\abs{v}\leq 1\}$ such that $\ds\iint_{\Omega\times\r^3}\mathfrak{z}=1$.
Due to orthogonality of $Q$, the auxiliary system \eqref{auxiliary system} is equivalent to
\begin{align}\label{auxiliary system'}
\vv\cdot\nx \fs-\e^{-1}Q[\fs,\fs]
=&-\e^{-1}\Big(Q_{\text{gain}}[\fs,\fs]-Q_{\text{gain}}[\fs_+,\fs_+]\Big)+\mathfrak{z}\ds\iint_{\Omega\times\r^3}\e^{-1}\Big(Q_{\text{gain}}[\fs,\fs]-Q_{\text{gain}}[\fs_+,\fs_+]\Big).
\end{align}

\begin{remark}
The extra terms
\begin{align}
    -\e^{-1}\Big(Q_{\text{gain}}[\fs,\fs]-Q_{\text{gain}}[\fs_+,\fs_+]\Big)+\mathfrak{z}\ds\iint_{\Omega\times\r^3}\e^{-1}\Big(Q_{\text{gain}}[\fs,\fs]-Q_{\text{gain}}[\fs_+,\fs_+]\Big).
\end{align}
on the RHS of \eqref{auxiliary system'} plays a significant role in justifying the positivity of $\fs$ (see Section \ref{sec:well-posedness}).
Clearly, when $\fs\geq0$, i.e. $\fs=\fs_+$, the above extra terms vanish and the auxiliary equation \eqref{auxiliary system'} reduces to \eqref{large system-}.
\end{remark}

Denote $\ff=\m+\ffe$ where $\ffe:=\mh\big(\e\f_1+\e^2\f_2\big)+\mbh\big(\e\fb_1\big)$.
Inserting $\fs=\ff+\e\mh\re=\m+\ffe+\e\mh\re$ into \eqref{auxiliary system'}, we have
\begin{align}\label{wt 02}
    &\vv\cdot\nx \left(\mh\re\right)+\e^{-1}\mh\lc[\re]\\
    =&\ \mathscr{\ss}+\e^{-1}\bigg(2Q^{\ast}\left[\ffe,\mh\re\right]+\e Q^{\ast}\left[\mh\re,\mh\re\right]\bigg)\no\\
    &-\e^{-1}\bigg(\e^{-1}Q_{\text{gain}}\left[\ff+\e\mh\re,\ff+\e\mh\re\right]-\e^{-1}Q_{\text{gain}}\left[\left(\ff+\e\mh\re\right)_+,\left(\ff+\e\mh\re\right)_+\right]\bigg)\no\\
    &+\e^{-1}\mathfrak{z}\ds\iint_{\Omega\times\r^3}\bigg(\e^{-1}Q_{\text{gain}}\left[\ff+\e\mh\re,\ff+\e\mh\re\right]-\e^{-1}Q_{\text{gain}}\left[\left(\ff+\e\mh\re\right)_+,\left(\ff+\e\mh\re\right)_+\right]\bigg),\no
\end{align}
where $\mathscr{\ss}:=-\e^{-1}v\cdot\nx\ff+\e^{-2}Q^{\ast}\left[\ff,\ff\right]$.
Then we arrive at \eqref{remainder} with $h$ and $\ss$ defined in \eqref{aa 32} and \eqref{aa 31}.

\begin{lemma}\label{lemma:final 1}
    For $h$ and $\ss$ defined in \eqref{aa 32} and \eqref{aa 31}, we have the compatibility condition
    \begin{align}\label{compatibility}
    \iint_{\Omega\times\r^3}\mh S+\int_{\gamma_-}\mh h\ud{\gamma}=0.
    \end{align}
\end{lemma}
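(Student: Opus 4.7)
The plan is to verify \eqref{compatibility} as a direct algebraic identity between the explicit formulas for $\ss$ in \eqref{aa 31} and $h$ in \eqref{aa 32}, without invoking the existence of any solution $\re$. Three ingredients drive the argument: mass conservation of the collision operator, $\int_{\r^3}Q^{\ast}[F,G]\,\ud \vv=0$; the divergence theorem applied to the transport residual in $\ss$; and the fact that $h$ is constructed precisely by substituting $\fs=\ff+\e^{\al}\mh\re$ into the diffuse-reflection boundary condition and subtracting off $\pp[\re]$, so it encodes exactly the boundary flux deficit of $\ff$.

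The main work is unpacking $\mh\ss$ from \eqref{wt 02} and integrating over $\Omega\times\r^3$. Every bilinear collision term present in $\mh\ss$ -- namely $\e^{-\al-1}Q^{\ast}[\ff,\ff]$, the linear coupling $2\e^{-1}Q^{\ast}[\ffe,\mh\re]$, the self-interaction $\e^{\al-1}Q^{\ast}[\mh\re,\mh\re]$, and the gain-only residues $-\e^{-\al-1}(Q_{\text{gain}}[\fs,\fs]-Q_{\text{gain}}[\fs_+,\fs_+])$ -- integrates to zero in $\vv$ at each fixed $\vx$ because $\mh\cdot\mhh=1$ and $Q^{\ast}$ conserves mass. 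The transport residual $-\e^{-\al}\mhh\vv\cdot\nx\ff$, when paired with $\mh$, reduces via the divergence theorem to the pure boundary flux
\[
-\e^{-\al}\int_{\p\Omega}\int_{\r^3}(\vv\cdot\vn)\ff\,\ud\vv\,\ud\sigma.
\]
The positivity compensator $\e^{-\al-1}\mathfrak{z}\iint_{\Omega\times\r^3}(Q_{\text{gain}}[\fs,\fs]-Q_{\text{gain}}[\fs_+,\fs_+])$ built into \eqref{auxiliary system'} contributes $\iint\mh\mathfrak{z}$ times a constant in $\vx$; by the construction of $\mathfrak{z}$ and the artificial term $\sp$ in \eqref{aa 31}, these two pieces are arranged precisely so that their contribution to $\iint\mh\ss$ vanishes (using that the constant coefficient is the spatial integral the compensator is meant to cancel).

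On the boundary, inserting $\fs=\ff+\e^{\al}\mh\re$ into the diffuse-reflection condition forces the explicit form of $h$ in \eqref{aa 32} as $\e^{-\al}\mhh\bigl(\ms\int_{\vuu\cdot\vn>0}\ff\abs{\vuu\cdot\vn}\ud\vuu-\ff\bigr)\big|_{\gamma_-}$. Multiplying by $\mh$, integrating over $\gamma_-$ against $\abs{\vv\cdot\vn}\ud\vv\ud\sigma$, and using the normalization $\int_{\vv\cdot\vn<0}\ms\abs{\vv\cdot\vn}\ud\vv=1$ collapses the two pieces of $h$ into the single flux $+\e^{-\al}\int_{\p\Omega}\int_{\r^3}(\vv\cdot\vn)\ff\,\ud\vv\,\ud\sigma$, which is exactly the negative of the transport contribution to $\iint\mh\ss$. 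Adding the two identities yields \eqref{compatibility}. The main obstacle is careful bookkeeping: tracking which pieces of \eqref{aa 31} carry $\re$-dependence and checking that they all fall in the collisional group that vanishes term-by-term; matching the wall-Maxwellian normalization of $\mss$ in $\pp[\re]$ with the factor $\mhh\ms$ produced by the substitution at the boundary; and confirming that the compensator $\mathfrak{z}$ and the artificial $\sp$ are indeed chosen in \eqref{aa 31} so that neither contributes spuriously to $\iint\mh\ss$. Once these identifications are made, the identity follows immediately.
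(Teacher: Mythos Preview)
Your approach is essentially the paper's: kill every $Q^{\ast}$ term by mass conservation, reduce the transport residual to the boundary flux of $\ff$ by the divergence theorem, and compute $\int_{\gamma_-}\mh h\,\ud\gamma$ directly from \eqref{aa 32} using the wall normalization to obtain the opposite flux. The paper's proof is exactly this, stated in two displayed lines.

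There is one concrete slip in your bookkeeping. You list ``the gain-only residues $-\e^{-\al-1}\bigl(Q_{\text{gain}}[\fs,\fs]-Q_{\text{gain}}[\fs_+,\fs_+]\bigr)$'' among the terms that integrate to zero in $\vv$ at each fixed $\vx$ because $Q^{\ast}$ conserves mass. That is false: $Q_{\text{gain}}$ by itself does \emph{not} conserve mass, and a difference of two gain terms has no reason to have vanishing $\vv$-integral pointwise in $\vx$. The correct mechanism is the one you invoke in the very next sentence: the two pieces of $\sp$ in \eqref{def-sp} (the gain-only residue and the $\mathfrak{z}$-compensator) cancel only after integration over all of $\Omega\times\r^3$, using $\iint_{\Omega\times\r^3}\mathfrak{z}=1$. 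The paper phrases this as ``the designed zero-mass condition of $\sp$.'' So delete the gain-only residue from your list of pointwise-vanishing collision terms and handle both halves of $\sp$ together via that global cancellation; otherwise you are double-counting and relying on a false identity.
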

\begin{proof}
We can directly verify that
\begin{align}\label{aa 36'}
    &\int_{\gamma_-}h(\vx,\vv)\mh(x,v)\ud{\gamma}=\e^{-1}\left(\int_{\gamma_-}\pp\left[\mhh\ff\right]\mh\ud{\gamma}-\int_{\gamma_-}\ff\ud{\gamma}\right)\\
    =&\e^{-1}\left(\int_{\gamma_+}\pp\left[\mhh\ff\right]\mh\ud{\gamma}-\int_{\gamma_-}\ff\ud{\gamma}\right)\no
    =\e^{-1}\left(\int_{\gamma_+}\ff\ud{\gamma}-\int_{\gamma_-}\ff\ud{\gamma}\right)=\e^{-1}\int_{\gamma}\ff(v\cdot n)\ud\vv\ud S_x.\no
\end{align}
Using the orthogonality of $Q^{\ast}$ (see \eqref{qstar}) and the designed zero-mass condition of $\sp$, we verify that
\begin{align}
    \iint_{\Omega\times\r^3}S(\vx,\vv)\mh(x,v)\ud{\vv}\ud{\vx}
    =&-\e^{-1}\iint_{\Omega\times\r^3}\Big(v\cdot\nx\ff\Big)\ud{\vv}\ud{\vx}
    =-\e^{-1}\int_{\gamma}\ff(v\cdot n)\ud\vv\ud S_x.
\end{align}
Hence, \eqref{compatibility} is always guaranteed.
\end{proof}

\begin{proposition}\label{lemma:final 2}
    For $h$ and $\ss$ defined in \eqref{aa 32} and \eqref{aa 31}, we have for each $\vx_0\in\p\Omega$
    \begin{align}\label{aa 36}
    \int_{\vv\cdot\vn<0}\mh h(\vx_0)(\vv\cdot\vn)\ud{\vv}=0,\quad \iint_{\Omega\times\r^3}\mh\ss=0.
    \end{align}
\end{proposition}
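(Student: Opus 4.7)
My plan starts from the observation that the computation in the proof of Lemma \ref{lemma:final 1} actually delivers the stronger pair of identities
\begin{align*}
\int_{\gamma_-}\mh h\,\ud\gamma \;=\; \e^{-\al}\int_{\gamma}\ff\,(v\cdot n)\,\ud\vv\,\ud S_x,
\qquad
\iint_{\Omega\times\r^3}\mh\ss \;=\; -\e^{-\al}\int_{\gamma}\ff\,(v\cdot n)\,\ud\vv\,\ud S_x,
\end{align*}
and not merely that their sum vanishes. Consequently, both claims in \eqref{aa 36} reduce to the single assertion $\int_{\gamma}\ff(v\cdot n)\,\ud\vv\,\ud S_x = 0$, and the task is to verify this by decomposing $\ff = \m + \e\mh\f_1 + \e^2\mh\f_2 + \e\mbh\fb_1$ and checking each piece at its own order in $\e$.

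At order $\e^0$, the local Maxwellian $\m(x,v)$ is even in $v$, so $\int_{\r^3}\m(v\cdot n)\,\ud\vv = n\cdot\int v\m\,\ud\vv = 0$ pointwise on $\p\Omega$. For the Hilbert corrections $\mh\f_i$ ($i=1,2$), I split $\f_i = \pk\f_i + (\ik-\pk)\f_i$ along the basis \eqref{att 32}. The microscopic remainder contributes nothing, because $\mh v \in \nk$ and $(\ik-\pk)\f_i \in \nnk$ imply $\brv{\mh v,(\ik-\pk)\f_i}=0$. Writing the hydrodynamic part as $\pk\f_i = \mh(a_i + b_i\cdot v + c_i(|v|^2-3T))$, only the drift survives the normal-flux integral and yields $P\,b_i\cdot n$; the boundary condition \eqref{boundary condition} pins $b_1\cdot n = u_{1,n}|_{\p\Omega}=0$, and the analogous identity $u_{2,n}|_{\p\Omega}=0$ is built into the second-order construction of $\f_2$ in the companion paper \cite{AA024}.

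For the boundary-layer term $\mbh\fb_1$, I pass to the local coordinates of Section \ref{sec:geometric-setup}, where \eqref{aa 44} gives $v\cdot n = -\va$, and invoke the zero mass-flux constraint
\begin{align*}
\int_{\r^3}\va\,\mbh(\iota_1,\iota_2,\vvv)\,\fb_1(\eta,\iota_1,\iota_2,\vvv)\,\ud\vvv = 0
\end{align*}
which is intrinsic to the Milne problem \eqref{Milne.} defining $\fb_1$. This is exactly $-\int_{\r^3}\mbh\fb_1(v\cdot n)\,\ud\vv = 0$ at each boundary point, and integrating over $\p\Omega$ closes this case.

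The hardest step is the $\f_2$ normal-trace condition, since only the $\uq_1$ boundary data is written out explicitly in \eqref{boundary condition}. I would extract the identity $u_{2,n}|_{\p\Omega}=0$ from the ghost-effect construction in \cite{AA024}, where the hydrodynamic data at each order of the Hilbert expansion is fixed by the solvability conditions together with boundary compatibility, precisely so that the total mass flux of $\ff$ across $\p\Omega$ cancels at every power of $\e$.
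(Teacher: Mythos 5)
Your reduction of both identities to the single statement $\int_{\gamma}\ff(v\cdot n)\,\ud\vv\,\ud S_x=0$ is exactly what the paper does: the computation \eqref{aa 36'} gives $\int_{\gamma_-}\mh h\,\ud\gamma=\e^{-\al}\int_{\gamma}\ff(v\cdot n)$, the orthogonality of $Q^{\ast}$ and the zero-mass design of $\sp$ give the matching identity for $\iint\mh\ss$, and the contributions of $\m$ (oddness) and of $\e\mh\f_1$ (the $\li$-part is in $\nnk$, the drift part gives $P u_1\cdot n=0$ by \eqref{boundary condition}) are handled just as you describe.

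The genuine gap is in how you dispose of the last two terms. You claim the boundary-layer flux vanishes because of the zero mass-flux constraint of the Milne problem, and that $u_{2,n}|_{\p\Omega}=0$ is built into the construction of $\f_2$. Neither is true here. The boundary layer actually used in the expansion is not the Milne solution itself but its cutoff $\fb_1=\ch(\e^{-1}\va)\chi(\e\eta)\blff$ (see \eqref{final 14}), and the velocity cutoff $\ch(\e^{-1}\va)$ destroys the exact zero-flux property: $\int_{\r^3}\va\,\mbh\,\fb_1(0,\vvv)\,\ud\vvv=-\int_{\r^3}\va\,\mbh\,\chi(\e^{-1}\va)\blff(0,\vvv)\,\ud\vvv\neq0$ in general (it is only $O(\e^2)$ small). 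Correspondingly, the paper does \emph{not} impose $u_{2,n}=0$; it defines $u_2\cdot n=-\e^{-1}P^{-1}\int_{\r^3}\va\,\mbh\,\fb_1(0,\vvv)\,\ud\vvv$ in \eqref{aa 38}, precisely so that the $\e^2$ hydrodynamic flux of $\f_2$ cancels the residual $\e$-order flux of the cut-off layer, yielding the combined pointwise identity \eqref{final 08}. So the total flux of $\ff$ vanishes by a joint cancellation between the $\f_2$ and $\fb_1$ contributions, not by each term vanishing separately; as written, your term-by-term argument would leave an uncancelled $O(\e^2)$ boundary flux and the conclusion \eqref{aa 36} would not follow.
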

\begin{proof}
    This is guaranteed by \eqref{aa 36'} for each $\vx_0\in\p\Omega$, $u_1\cdot n=0$ and the choice of $u_2$ (see \eqref{aa 38}). Then the result follows from Lemma \ref{lemma:final 1}.
\end{proof}

\begin{remark}\label{remark 02}
Using Proposition \ref{lemma:final 2}, we can directly compute that for $x_0\in\p\Omega$
\begin{align}\label{final 53}
    \bb(x_0)\cdot n=&\int_{\r^3}\re(\vx_0)\mh(v\cdot n)\ud v=\int_{v\cdot n<0}h(\vx_0)\mh(v\cdot n)\ud v=0.
\end{align}
\end{remark}

\begin{lemma}[Green's Identity, Lemma 2.2 of \cite{Esposito.Guo.Kim.Marra2013}]\label{remainder lemma 2}
Assume $f(\vx,\vv),\ g(\vx,\vv)\in L^2_{\nu}(\Omega\times\r^3)$ and
$\vv\cdot\nx f,\ \vv\cdot\nx g\in L^2(\Omega\times\r^3)$ with $f,\
g\in L^2_{\gamma}$. Then
\begin{align}
\iint_{\Omega\times\r^3}\Big(\left(\vv\cdot\nx f\right)g+(\vv\cdot\nx
g)f\Big)\ud{\vx}\ud{\vv}=\int_{\gamma}fg(v\cdot n)=\int_{\gamma_+}fg\ud{\gamma}-\int_{\gamma_-}fg\ud{\gamma}.
\end{align}
\end{lemma}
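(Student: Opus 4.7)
The plan is to reduce the identity to a standard divergence-theorem statement for each fixed velocity $\vv$, then integrate out $\vv$, and finally extend from a dense smooth class to the full hypothesis by approximation.

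For the smooth case, I would fix $\vv\in\r^3$ and consider the vector field $\vv\, f(\cdot,\vv)g(\cdot,\vv)$ on the $C^3$ domain $\Omega$. The Leibniz rule gives pointwise in $\vx$ the identity $\nabla_\vx\cdot\bigl(\vv fg\bigr)=(\vv\cdot\nabla_\vx f)g+(\vv\cdot\nabla_\vx g)f$. Applying the classical divergence theorem on $\Omega$ yields
\begin{align}
\int_{\Omega}\Bigl((\vv\cdot\nabla_\vx f)g+(\vv\cdot\nabla_\vx g)f\Bigr)\ud\vx=\int_{\p\Omega}fg\,(\vv\cdot\vn)\ud S_\vx,
\end{align}
and integrating in $\vv$, together with the splitting of $\gamma=\gamma_+\cup\gamma_-\cup\gamma_0$ according to the sign of $\vv\cdot\vn(\vx_0)$ (with the grazing set $\gamma_0$ of measure zero under the surface element $\ud\gamma=\abs{\vv\cdot\vn}\ud\vv\ud S_\vx$), gives the desired identity.

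For the general case, I would use a density argument. Given $f,g$ satisfying the hypotheses, I mollify in $\vx$ (after a suitable $C^3$ extension across $\p\Omega$) and in $\vv$ to produce $f_\k,g_\k\in C_c^\infty(\overline{\Omega}\times\r^3)$ with $f_\k\to f$, $g_\k\to g$ in $L^2_\nu(\Omega\times\r^3)$, $\vv\cdot\nabla_\vx f_\k\to\vv\cdot\nabla_\vx f$ and $\vv\cdot\nabla_\vx g_\k\to\vv\cdot\nabla_\vx g$ in $L^2(\Omega\times\r^3)$, and whose traces $f_\k,g_\k\rt f,g$ in $L^2(\gamma)$. The smooth identity applies to each $(f_\k,g_\k)$; passing to the limit on the left uses Cauchy--Schwarz in $L^2_\nu\times L^2$, and on the right uses $L^2(\gamma)$ convergence against the bounded measure $(\vv\cdot\vn)\ud\vv\ud S_\vx$ on compact velocity sets, with the tails handled by the uniform integrability furnished by the $L^2(\gamma)$ bound.

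The main obstacle is the trace convergence: functions in $L^2_\nu$ with $\vv\cdot\nabla_\vx(\cdot)\in L^2$ do not in general possess $L^2(\gamma)$ traces, and the existence of traces near the grazing set $\gamma_0$ is delicate. However, this difficulty is absorbed into the hypothesis $f,g\in L^2(\gamma)$, and the standard mollification-plus-trace-lemma machinery (as in the cited Esposito--Guo--Kim--Marra work, see also Ukai's classical trace theorem) ensures that the approximating sequence can be chosen with convergent traces. Once this approximation is in hand, the identity transfers to the limit without further issue, completing the proof.
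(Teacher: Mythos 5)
Note first that the paper does not prove this lemma at all: it is quoted verbatim as Lemma 2.2 of \cite{Esposito.Guo.Kim.Marra2013}, so the comparison can only be with the standard argument behind that citation. Your overall strategy (divergence theorem at fixed $v$, then approximation) is the textbook route, and the smooth case is fine. The problem is that the approximation step, which is where all the content of the lemma lives, is asserted rather than proved, and as written it has two concrete gaps. First, the construction of $f_\kappa,g_\kappa$: a ``suitable $C^3$ extension across $\p\Omega$'' is not available, because $f$ is only known to have the directional derivative $v\cdot\nabla_x f$ in $L^2$, not a full $x$-gradient, so no Sobolev extension theorem applies; moreover mollification in $v$ does not commute with $v\cdot\nabla_x$ (the commutator involves $w\cdot\nabla_x f$ integrated against the mollifier, i.e.\ it needs control of $\nabla_x f$, which you do not have). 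Second, and more seriously, the claim that the approximating sequence can be chosen so that the traces converge in $L^2(\gamma)$ is exactly the delicate point: the trace map is not continuous with respect to the graph norm $\|f\|_{L^2}+\|v\cdot\nabla_x f\|_{L^2}$ near the grazing set $\gamma_0$, so graph-norm convergence gives you nothing on $\gamma$, and invoking ``standard mollification-plus-trace-lemma machinery'' at this point is circular -- a density theorem in the graph norm \emph{plus} the trace norm is essentially equivalent in difficulty to the Green identity you are trying to prove.

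A clean way to close the argument, and the one that underlies the cited lemma, avoids mollifying near the boundary altogether. Set $h:=fg$; by Cauchy--Schwarz $h\in L^1(\Omega\times\r^3)$, $v\cdot\nabla_x h=(v\cdot\nabla_x f)g+f(v\cdot\nabla_x g)\in L^1(\Omega\times\r^3)$, and $h|_\gamma\in L^1(\gamma)$ by the hypothesis $f,g\in L^2(\gamma)$. One then proves the single-function identity $\iint_{\Omega\times\r^3}v\cdot\nabla_x h=\int_\gamma h\,(v\cdot n)$ by parametrizing $\Omega\times\r^3$ (up to the null set $\gamma_0$) by characteristics, $(x,v)\mapsto(x_b,v,t)$ with $\ud x\,\ud v=\ud\gamma\,\ud t$: on almost every characteristic segment $h$ is absolutely continuous because its derivative along the segment is integrable there (Fubini), so the fundamental theorem of calculus on each segment plus integration over $\gamma_-$ produces exactly the outgoing-minus-incoming boundary terms, which are identified with the assumed $L^1(\gamma)$ traces. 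If you prefer to keep a density argument, you must cite or reproduce a genuine trace/density theorem for the transport graph space (Cessenat, Ukai, or Mischler), not mollification alone.
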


Using Lemma \ref{remainder lemma 2}, we can derive the weak formulation of \eqref{remainder.}. For any test function $\test(x,v)\in L^2_{\nu}(\Omega\times\r^3)$ with $\vv\cdot\nx\test\in L^2(\Omega\times\r^3)$ with $\test\in L^2_{\gamma}$, we have
\begin{align}\label{weak formulation}
    \int_{\gamma}\re\test(v\cdot n)-\iint_{\Omega\times\r^3}\re\big(v\cdot\nx \test\big)+\iint_{\Omega\times\r^3}\left(\mhh\ab\cdot\dfrac{\nx\tq}{4\tq^2}\right)\re \test+\e^{-1}\iint_{\Omega\times\r^3}\lc[\re]\test=\iint_{\Omega\times\r^3}\ss\test.
\end{align}

\subsection{Preliminary Estimates}

\begin{lemma}[Lemma 2.3 of \cite{Guo2003}]\label{lemma:a1}
Let $\Gamma(f,g)$ be given by \eqref{gamma}. We have
\begin{align}
    \abs{\brv{\Gamma[f_1,f_2],g}}\ls&\Bigg\{\left(\int_{\r^3}\nu\abs{f_1}^2\right)^{\frac{1}{2}}\left(\int_{\r^3}\abs{f_2}^2\right)^{\frac{1}{2}}+\left(\int_{\r^3}\nu\abs{f_2}^2\right)^{\frac{1}{2}}\left(\int_{\r^3}\abs{f_1}^2\right)^{\frac{1}{2}}\Bigg\}\left(\int_{\r^3}\nu\abs{g}^2\right)^{\frac{1}{2}},\\
    \tnm{\brv{\Gamma[f_1,f_2],g}}\ls& \Big(\sup_{x,v}\abs{\nu^3g}\Big)\min\Bigg\{\sup_x\left(\int_{\r^3}\abs{f_1}^2\right)^{\frac{1}{2}}\tnm{f_2}, \sup_x\left(\int_{\r^3}\abs{f_2}^2\right)^{\frac{1}{2}}\tnm{f_1}\Bigg\},\\
    \tnm{\Gamma[f_1,f_2]g}\ls& \Big(\sup_{x,v}\abs{\nu g}\Big)\min\Bigg\{\sup_x\left(\int_{\r^3}\abs{f_1}^2\right)^{\frac{1}{2}}\tnm{f_2}, \sup_x\left(\int_{\r^3}\abs{f_2}^2\right)^{\frac{1}{2}}\tnm{f_1}\Bigg\}.
\end{align}
\end{lemma}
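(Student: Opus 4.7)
The natural strategy is the standard gain/loss decomposition of the bilinear collision operator together with careful Cauchy--Schwarz against the weighted collision measure. Write $\Gamma[f_1,f_2]=\Gamma_{\text{gain}}[f_1,f_2]-\Gamma_{\text{loss}}[f_1,f_2]$, where after symmetrization $\Gamma_{\text{loss}}[f_1,f_2](v)$ is a sum of terms of the form $\nu(v)f_1(v)f_2(v)$ (and a companion piece in which the Maxwellian weight sits on the ``other'' factor), while $\Gamma_{\text{gain}}[f_1,f_2](v)$ is the integral over $(\omega,\vuu)$ of $q(\omega,|\vuu-v|)\,\m^{1/2}(\vuu)\bigl[f_1(\vuu_{\ast})f_2(v_{\ast})+f_1(v_{\ast})f_2(\vuu_{\ast})\bigr]$.

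For the loss contribution all three estimates are immediate: apply Cauchy--Schwarz (or H\"older) in $v$, distribute $\nu$ appropriately, and in the $L^2_x$ bounds pull the $g$-factor out in $L^{\infty}_{x,v}$ with the prescribed $\nu$-weight. The main work is the gain piece. The key pointwise estimate comes from Cauchy--Schwarz in the $(\omega,\vuu)$-integration against the measure $q\,\m^{1/2}(\vuu)\,d\omega\,d\vuu$, yielding
\begin{align*}
\bigl|\Gamma_{\text{gain}}[f_1,f_2](v)\bigr|^2\ls \nu(v)\iint q(\omega,|\vuu-v|)\m^{1/2}(\vuu)\Bigl[|f_1(\vuu_{\ast})|^2|f_2(v_{\ast})|^2+|f_1(v_{\ast})|^2|f_2(\vuu_{\ast})|^2\Bigr]d\omega\,d\vuu,
\end{align*}
since $\iint q\,\m^{1/2}(\vuu)\,d\omega\,d\vuu\ls \nu(v)$ for the hard-sphere kernel.

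From here the first inequality is obtained by multiplying by $|g(v)|^2\nu(v)$, integrating in $v$, and using the Carleman-type change of variables $(\vuu,v)\mapsto(\vuu_{\ast},v_{\ast})$ (a measure-preserving involution that absorbs $q$) to untangle the composition. A final Cauchy--Schwarz separates $f_1$ and $f_2$ and produces the symmetric sum of $\bigl(\int\nu|f_1|^2\bigr)^{1/2}\bigl(\int|f_2|^2\bigr)^{1/2}$ and its swap, each paired with $\bigl(\int\nu|g|^2\bigr)^{1/2}$. For the two $L^2_x$ estimates, I pull $g$ out in $L^{\infty}_{x,v}$ with weight $\nu^{3}$ (for the inner-product version) or $\nu$ (for the pointwise-product version); the stronger weight for $\brv{\Gamma,g}$ compensates for the additional $v$-integration. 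The residual factor $\Gamma[f_1,f_2]$ in $L^2_x$ is then bounded by H\"older in $x$, placing one of $f_1,f_2$ in $L^{\infty}_x L^2_v$ and the other in $L^2_{x,v}$, which generates the asymmetric minimum appearing on the right-hand side.

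The main technical obstacle is the bookkeeping of velocity weights through the change of variables: one must verify that the $\nu$-growth on each of the three output positions and the $\m^{1/2}(\vuu)$ decay interact correctly after $(\vuu,v)\leftrightarrow(\vuu_{\ast},v_{\ast})$, and that the factor $\m^{1/2}(\vuu)$ can be redistributed onto $\m^{1/4}(\vuu_{\ast})\m^{1/4}(v_{\ast})$ via the collisional energy identity. Once these are in place the three bounds follow from the same Cauchy--Schwarz mechanism, differing only in where one places the $L^{\infty}$ and $L^2$ norms.
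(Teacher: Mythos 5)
The paper offers no proof of this lemma: it is quoted as Lemma 2.3 of \cite{Guo2003}, and your plan reproduces that standard argument — gain/loss splitting, Cauchy--Schwarz against the measure $q\,\m^{1/2}(\vuu)\,\ud\vo\,\ud\vuu$ combined with $\iint q\,\m^{1/2}(\vuu)\,\ud\vo\,\ud\vuu\ls\nu(\vv)$, the pre--post collisional change of variables, and H\"older in $x$ (with the weighted $L^{\infty}$ pull-out of $g$) for the two $L^2_x$ bounds. So the route is essentially the same as the cited proof.

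One step you flag as the "main technical obstacle" is, as stated, false — but also unnecessary. You propose to redistribute the Maxwellian via $\m^{1/2}(\vuu)\ls\m^{1/4}(\vuu_{\ast})\,\m^{1/4}(\vv_{\ast})$ "by the collisional energy identity". Since $\abs{\vuu}^2+\abs{\vv}^2=\abs{\vuu_{\ast}}^2+\abs{\vv_{\ast}}^2$, that inequality is equivalent to $\abs{\vuu}\geq\abs{\vv}$ and fails on half of the $(\vuu,\vv)$ domain. The correct bookkeeping avoids it: after the pointwise Cauchy--Schwarz you face $\iint q(\vo,\abs{\vuu-\vv})\,\m^{1/2}(\vuu)\,\abs{f_1(\vuu_{\ast})}^2\abs{f_2(\vv_{\ast})}^2\,\ud\vo\,\ud\vuu\,\ud\vv$; perform the involutive change of variables $(\vuu,\vv)\mapsto(\vuu_{\ast},\vv_{\ast})$ (unit Jacobian, $q$ invariant because $\abs{\vuu-\vv}=\abs{\vuu_{\ast}-\vv_{\ast}}$), bound the transported Maxwellian factor simply by a constant, and then use $\int_{\s^2}q\,\ud\vo\ls\abs{\vuu-\vv}\ls\nu(\vuu)+\nu(\vv)$ to split the collision-frequency weight between the $f_1$- and $f_2$-slots; this produces exactly the symmetric sum $\big(\int\nu\abs{f_1}^2\big)\big(\int\abs{f_2}^2\big)+\big(\int\abs{f_1}^2\big)\big(\int\nu\abs{f_2}^2\big)$ appearing in the first estimate. (Alternatively, keep a fraction of $\m(\vuu)$ before changing variables to absorb $\abs{\vuu-\vv}$ into $\nu(\vv)$.) With that correction, and with the routine check that the extra $\nu$-powers in the $\sup\abs{\nu^{3}g}$ bound make the leftover velocity integration convergent, your outline is sound.
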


\begin{lemma}[Lemma 5 of \cite{Guo2010}]\label{lemma:a2}
We have
\begin{align}
    \lnmm{\nu^{-1}\Gamma[f_1,f_2]}\ls\lnmm{f_1}\lnmm{f_2}.
\end{align}
\end{lemma}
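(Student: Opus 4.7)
The plan is to bound the gain and loss halves of $\Gamma$ separately, pushing the $L^\infty_{\varrho,\vartheta}$ weight $\br{v}^{\vartheta}\ue^{\varrho|v|^2/(2\tm)}$ onto the collisional integrands by means of the hard-sphere conservation laws. Writing $\Gamma[f_1,f_2](v)=\Gamma_{\text{gain}}[f_1,f_2](v)-\Gamma_{\text{loss}}[f_1,f_2](v)$, both pieces are bilinear integrals built from $q(\omega,|u-v|)\approx |u-v|$ and $\mm^{1/2}(u)$, and the goal is to show that $\nu(v)^{-1}\br{v}^{\vartheta}\ue^{\varrho|v|^2/(2\tm)}|\Gamma[f_1,f_2](v)|$ is uniformly bounded by $\lnmm{f_1}\lnmm{f_2}$.

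For the loss piece $f_1(v)\int q\,\mm^{1/2}(u)f_2(u)\,\ud\omega\,\ud u$, the weight on $v$ multiplies $f_1(v)$ directly, so $|f_1(v)|\br{v}^{\vartheta}\ue^{\varrho|v|^2/(2\tm)}\leq\lnmm{f_1}$. Bounding $|f_2(u)|\leq \lnmm{f_2}(\br{u}^{\vartheta}\ue^{\varrho|u|^2/(2\tm)})^{-1}$ leaves $\mm^{1/2}(u)\ue^{-\varrho|u|^2/(2\tm)}$, still Gaussian since $\varrho$ is small relative to $(2\tm)^{-1}$ (implicit in the definition of the weighted space). Then $\int q(\omega,|u-v|)\,\ud\omega\lesssim |u-v|$ yields a first-velocity moment $\lesssim 1+|v|\approx \nu(v)$, and dividing by $\nu(v)$ closes this half.

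For the gain piece $\int q\,\mm^{1/2}(u) f_1(u_*) f_2(v_*)\,\ud\omega\,\ud u$, the standard collisional change of variables is decisive. Energy conservation $|u_*|^2+|v_*|^2=|u|^2+|v|^2$ gives the identity
\[
\ue^{\varrho|v|^2/(2\tm)}\Big(\ue^{\varrho|u_*|^2/(2\tm)}\ue^{\varrho|v_*|^2/(2\tm)}\Big)^{-1}=\ue^{-\varrho|u|^2/(2\tm)},
\]
while $\br{v}\leq \br{u_*}\br{v_*}$ (from $\br{v}^2\leq 1+|u_*|^2+|v_*|^2\leq (\br{u_*}\br{v_*})^2$) absorbs the polynomial factor $\br{v}^{\vartheta}$ into $(\br{u_*}\br{v_*})^{\vartheta}$ for $\vartheta\geq 0$. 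What survives is $\lnmm{f_1}\lnmm{f_2}\int q(\omega,|u-v|)\mm^{1/2}(u)\ue^{-\varrho|u|^2/(2\tm)}\,\ud\omega\,\ud u$, a Gaussian moment bounded by $C\nu(v)$, and division by $\nu(v)$ finishes the gain estimate.

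The main (and mild) obstacle is bookkeeping: verifying that the tail $\mm^{1/2}(u)\ue^{-\varrho|u|^2/(2\tm)}$ is still integrable after the weight is subtracted (which enforces the usual smallness assumption on $\varrho$ built into the working space), and handling both orderings in the symmetric bilinear $\Gamma$. Once the energy-conservation identity and the elementary inequality $\br{v}\leq\br{u_*}\br{v_*}$ are in hand, the remainder reduces to routine Gaussian moment computations.
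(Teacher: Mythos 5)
Your proof is correct and is essentially the standard argument behind the cited result (Lemma 5 of Guo 2010, which the paper invokes without reproving): split $\Gamma$ into gain and loss parts, transfer the weight through the gain term via $|u_*|^2+|v_*|^2=|u|^2+|v|^2$ together with $\br{v}\le\br{u_*}\br{v_*}$, and bound the remaining Gaussian moments of the hard-sphere kernel by $\nu(v)$ before dividing by $\nu$. The only cosmetic slip is writing the global Maxwellian $\mm^{1/2}(u)$ where the definition of $\Gamma$ uses the local $\m^{1/2}(u)$, which is immaterial since $\tm<T<2\tm$ and in every term the surviving exponential in $u$ has negative exponent (so the smallness of $\varrho$ you worry about is not actually needed at this step).
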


\begin{lemma}\label{m-estimate}
    If $\abs{\tq-\tq_w}\ls\za$ for some $0<\za\ll1$, then we have
    \begin{align}
        \abs{\m-\m_w}\ls \za\left(\abs{\vv}^2\ue^{\za\abs{\vv}^2}\right)\m.
    \end{align}
\end{lemma}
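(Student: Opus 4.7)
The plan is to reduce the estimate to a bound on the ratio $\m_w/\m$ and then apply the elementary inequality $\abs{\ue^y-1}\le \abs{y}\ue^{\abs{y}}$.

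First, invoke the pressure constraint $P=\rq\tq$ from \eqref{fluid system-}, which at the boundary (where $\tq=\tq_w$) forces $\rq_w=P/\tq_w$. Both Maxwellians then depend on temperature alone:
\begin{align*}
\m=\frac{P}{(2\pi)^{3/2}}\tq^{-5/2}\ue^{-\abs{v}^2/(2\tq)},\qquad
\m_w=\frac{P}{(2\pi)^{3/2}}\tq_w^{-5/2}\ue^{-\abs{v}^2/(2\tq_w)},
\end{align*}
so that
\begin{align*}
\frac{\m_w}{\m}=\Big(\frac{\tq}{\tq_w}\Big)^{5/2}\exp\!\left(\frac{\abs{v}^2(\tq_w-\tq)}{2\tq\tq_w}\right).
\end{align*}

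Second, bound each factor separately. The paper's normalization together with $\abs{\tq-\tq_w}\ls\za$ gives $\tq,\tq_w\sim 1$, whence $(\tq/\tq_w)^{5/2}-1=O(\za)$. Setting $y:=\abs{v}^2(\tq_w-\tq)/(2\tq\tq_w)$, we have $\abs{y}\ls\za\abs{v}^2$, and the elementary inequality produces $\abs{\ue^y-1}\ls\za\abs{v}^2\ue^{C\za\abs{v}^2}$. Splitting
\begin{align*}
\frac{\m_w}{\m}-1=\left[\Big(\frac{\tq}{\tq_w}\Big)^{5/2}-1\right]\ue^y+(\ue^y-1)
\end{align*}
and combining the two pieces yields $\abs{\m_w/\m-1}\ls\za(1+\abs{v}^2)\ue^{C\za\abs{v}^2}$; multiplying by $\m$ then gives the claim, after the customary adjustment of the constant in the exponent (matching the statement's $\za$ on the right-hand side) and the mild absorption of the constant term into a multiple of $\abs{v}^2\ue^{\za\abs{v}^2}$.

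No step presents a real obstacle here: the pressure identity makes the Maxwellians explicit functions of temperature alone, after which the bound reduces to a one-variable Taylor-type estimate with the elementary inequality $\abs{\ue^y-1}\le\abs{y}\ue^{\abs{y}}$. The only small bookkeeping is tracking the constant inside the exponential so that the final velocity weight carries exactly the small parameter $\za$ appearing in the statement.
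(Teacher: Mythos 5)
Your proof is correct and follows essentially the same route as the paper: the paper uses $\nabla_x P=0$ to write both Maxwellians with the common constant $P$ and splits $\m-\m_w$ additively into a prefactor piece (of size $\za\m$) and an exponential piece bounded via the mean-value theorem, which is exactly your ratio decomposition $\m_w/\m-1=\big[(\tq/\tq_w)^{5/2}-1\big]\ue^{y}+(\ue^{y}-1)$ with the elementary bound $\abs{\ue^{y}-1}\le\abs{y}\ue^{\abs{y}}$. The small bookkeeping you flag at the end (the $O(\za)$ constant term and the constant inside the exponential not literally fitting under $\za\abs{\vv}^2\ue^{\za\abs{\vv}^2}$) is present in the paper's own proof as well and is harmless in the way the lemma is applied.
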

\begin{proof}
    Notice that, since $P=\rq\tq$ is a constant,
    \begin{align}
        \m(\vx,\vv)=\frac{P}{\big(2\pi\big)^{\frac{3}{2}}\big(\tq(\vx)\big)^{\frac{5}{2}}}
        \exp\bigg(-\frac{\abs{\vv}^2}{2\tq(\vx)}\bigg),\quad \m_w(\vx,\vv)=\frac{P}{\big(2\pi\big)^{\frac{3}{2}}\big(\tq_w(\vx)\big)^{\frac{5}{2}}}
        \exp\bigg(-\frac{\abs{\vv}^2}{2\tq_w(\vx)}\bigg).
    \end{align}
    Then we have $\m-\m_w=I+II$,
    where
    \begin{align}
        I:=\frac{P}{\big(2\pi\big)^{\frac{3}{2}}}\bigg\{\frac{1}{\tq^{\frac{5}{2}}}-\frac{1}{\tq_w^{\frac{5}{2}}}\bigg\}\exp\bigg(-\frac{\abs{\vv}^2}{2\tq}\bigg),\quad
        II:=\frac{P}{\big(2\pi\big)^{\frac{3}{2}}}\frac{1}{\tq_w^{\frac{5}{2}}}\bigg\{\exp\bigg(-\frac{\abs{\vv}^2}{2\tq}\bigg)-\exp\bigg(-\frac{\abs{\vv}^2}{2\tq_w}\bigg)\bigg\}.
    \end{align}
    Since $\abs{\tq-\tq_w}\ls\za$, we have
    \begin{align}
        \abs{I}=\frac{P}{\big(2\pi\big)^{\frac{3}{2}}}\frac{\abs{\tq_w^{\frac{5}{2}}-\tq^{\frac{5}{2}}}}{\tq^{\frac{5}{2}}\tq_w^{\frac{5}{2}}}\exp\left(-\frac{\abs{\vv}^2}{2\tq}\right)\ls \abs{\tq_w^{\frac{5}{2}}-\tq^{\frac{5}{2}}}\m\ls\za\m.
    \end{align}
    Also, using mean-value theorem, for some $\d\in(0,\za)$, we know
    \begin{align}
        \abs{II}\ls& \m\bigg\{1-\exp\bigg(\frac{\abs{\vv}^2}{2\tq}-\frac{\abs{\vv}^2}{2\tq_w}\bigg)\bigg\}\ls\m\left\{1-\ue^{\za\abs{\vv}^2}\right\}
        \leq\m\left(\za\abs{\vv}^2\right)\ue^{\d\abs{\vv}^2}\ls \za\left(\abs{\vv}^2\ue^{\za\abs{\vv}^2}\right)\m.
    \end{align}
    Then our result naturally follows.
\end{proof}

Using a similar argument, we can derive the following estimates.
\begin{corollary}\label{m-estimate.}
    If $\abs{\tq-\tq_w}\ls\za$ for some $0<\za\ll1$, then we have
    \begin{align}
        \abs{\mh-\mh_w}\ls \za\left(\abs{\vv}^2\ue^{\za\abs{\vv}^2}\right)\mh,\quad
        \abs{\mhh-\mhh_w}\ls \za\left(\abs{\vv}^2\ue^{\za\abs{\vv}^2}\right)\mhh.
    \end{align}
\end{corollary}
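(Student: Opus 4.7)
The plan is to mimic the proof of Lemma \ref{m-estimate} verbatim, replacing the exponents $5/2$ and $1/(2T)$ by their halved counterparts for $\mh$ and by their negated halved counterparts for $\mhh$. The key observation is that both $\mh$ and $\mhh$ factor as a temperature-dependent polynomial prefactor times a Gaussian in $|v|^2/T$, so the same split into prefactor-difference and exponential-difference applies.

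First I would write down the explicit formulas
\begin{align*}
\mh = \frac{P^{1/2}}{(2\pi)^{3/4} T^{5/4}} \exp\!\Big(-\frac{|v|^2}{4T}\Big),\qquad \mhh = \frac{(2\pi)^{3/4} T^{5/4}}{P^{1/2}} \exp\!\Big(\frac{|v|^2}{4T}\Big),
\end{align*}
and the analogous ones with $T$ replaced by $T_w$. Writing $\mh - \mh_w = I + II$ where $I$ isolates the prefactor discrepancy and $II$ isolates the exponential discrepancy, exactly as in Lemma \ref{m-estimate}, one bounds
\begin{align*}
|I| = \frac{P^{1/2}}{(2\pi)^{3/4}}\,\Big|T^{-5/4}-T_w^{-5/4}\Big|\,\exp\!\Big(-\frac{|v|^2}{4T}\Big) \ls \za\,\mh,
\end{align*}
using $|T^{-5/4}-T_w^{-5/4}|\ls|T-T_w|\ls\za$ since $T,T_w$ are bounded away from $0$.

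For $II$, factor out $\mh$ and apply the mean-value estimate $|1-e^{x}|\ls |x|e^{|x|}$ with $x = |v|^2(T_w-T)/(4TT_w)$, which satisfies $|x|\ls\za|v|^2$ under $|T-T_w|\ls\za$. This yields $|II|\ls \za(|v|^2 e^{\za|v|^2})\mh$, completing the first inequality.

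For the $\mhh$ estimate, the argument is identical but I must be careful that the exponential now grows in $|v|$. Splitting $\mhh-\mhh_w = I' + II'$ the same way, the prefactor term $I'$ again satisfies $|I'|\ls\za\mhh$. For $II'$ write
\begin{align*}
\exp\!\Big(\frac{|v|^2}{4T}\Big) - \exp\!\Big(\frac{|v|^2}{4T_w}\Big) = \exp\!\Big(\frac{|v|^2}{4T_w}\Big)\Bigg\{\exp\!\Big(\frac{|v|^2(T_w-T)}{4TT_w}\Big) - 1\Bigg\},
\end{align*}
factor $\mhh$ out of the prefactor and the $T_w$-Gaussian, and apply the same mean-value estimate to the residual exponential. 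This again produces a bound of the form $\za(|v|^2 e^{\za|v|^2})\mhh$, completing the second inequality. No step presents a genuine obstacle; the only minor care needed is in the $\mhh$ case to ensure the remaining Gaussian growth is absorbed into the $e^{\za|v|^2}$ weight allowed on the right-hand side, which indeed it is since $\za\ll 1$.
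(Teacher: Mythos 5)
Your proposal is correct and follows essentially the same route as the paper, which proves the corollary by repeating the argument of Lemma \ref{m-estimate} with the exponents halved (and negated for $\mhh$), splitting into a prefactor difference and an exponential difference and applying the mean-value estimate. The only cosmetic remark is that in the $\mhh$ case you could factor out the $T$-Gaussian (the one belonging to $\mhh$) instead of the $T_w$-Gaussian, which removes the extra absorption step and gives the stated weight $\za\abs{\vv}^2\ue^{\za\abs{\vv}^2}$ directly; as written your argument still works, at the harmless cost of a constant in the exponent.
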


\begin{corollary}\label{m-estimate..}
    Let $\m_{\#}$ be $\m$ or $\m_w$. If $\abs{\tq-\tq_w}\ls\za$ for some $0<\za\ll1$, then we have
    \begin{align}
        \tnm{\m_{\#}^{-\frac{1}{2}} Q^{\ast}\Big[\m-\m_w,\m_{\#}^{\frac{1}{2}}g\Big]}&=\tnm{\mh\m_{\#}^{-\frac{1}{2}} \Gamma\Big[\mh-\mhh\m_w,\m_{\#}^{\frac{1}{2}}\mhh g\Big]}\ls \za\tnm{\nu g},\\
        \lnmm{\m_{\#}^{-\frac{1}{2}} Q^{\ast}\Big[\m-\m_w,\m_{\#}^{\frac{1}{2}}g\Big]}&=\lnmm{\mh\m_{\#}^{-\frac{1}{2}} \Gamma\Big[\mh-\mhh\m_w,\m_{\#}^{\frac{1}{2}}\mhh g\Big]}\ls \za\lnmm{g}.
    \end{align}
\end{corollary}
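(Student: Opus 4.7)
The strategy is to first verify the stated equality by unwinding the definition $\Gamma[f,g] := \mhh Q^{\ast}[\mh f, \mh g]$, and then to transfer the pointwise smallness of $\m - \m_w$ through the bilinear bounds of Lemma \ref{lemma:a1} and Lemma \ref{lemma:a2}. The equality is immediate: since $\mhh(\m-\m_w) = \mh - \mhh\m_w$, one has $Q^{\ast}[\m-\m_w, \m_{\#}^{1/2}g] = \mh\,\Gamma[\mh - \mhh\m_w, \m_{\#}^{1/2}\mhh g]$, and dividing by $\m_{\#}^{1/2}$ produces the identity in both lines.

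The key input is the pointwise bound
\begin{align*}
|f_1(v)| := |\mh(v) - \mhh(v)\m_w(v)| = \mhh(v)|\m(v) - \m_w(v)| \ls \za\,\big(|v|^2\,\ue^{\za|v|^2}\big)\,\mh(v),
\end{align*}
which follows from Lemma \ref{m-estimate}. For $\za \ll 1$, the sub-Gaussian correction $\ue^{\za|v|^2}$ is absorbed by the Maxwellian decay of $\mh$, so $f_1$ retains full Gaussian decay with a multiplicative factor of $\za$. In particular $\lnmm{f_1} \ls \za$, and $\int_{\r^3}\nu|f_1|^2\,\ud v \ls \za^2$ pointwise in $x$. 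Moreover, $\mh\m_{\#}^{-1/2}$ and $\m_{\#}^{1/2}\mhh$ are uniformly bounded, because $\m_{\#}$ is either $\m$ or $\m_w$ and all three Maxwellians differ by $O(\za)$.

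Setting $f_2 := \m_{\#}^{1/2}\mhh g$, the $L^2$ bound follows by combining Lemma \ref{lemma:a1} with the smallness of $f_1$: the standard splitting of $\Gamma$ into its multiplicative ($-\nu f_1 f_2$) and kernel parts, together with the Gaussian decay of $f_1$, yields $\tnm{\Gamma[f_1, f_2]} \ls \za\tnm{f_2} \ls \za\tnm{g} \leq \za\tnm{\nu g}$, and multiplying by the bounded prefactor $\mh\m_{\#}^{-1/2}$ concludes. The $L^{\infty}$ bound is a direct application of Lemma \ref{lemma:a2}, which gives $\lnmm{\nu^{-1}\Gamma[f_1, f_2]} \ls \lnmm{f_1}\lnmm{f_2} \ls \za\lnmm{g}$; the factor $\nu \sim \br{v}$ is reabsorbed into the weights thanks to the strictly stronger Gaussian decay of $f_1$, producing $\lnmm{\Gamma[f_1, f_2]} \ls \za\lnmm{g}$. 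The main obstacle is the weight compatibility between the sub-Gaussian correction $\ue^{\za|v|^2}$ of Lemma \ref{m-estimate}, the working-norm weight $\ue^{\varrho|v|^2/(2\tm)}$, the polynomial $\br{v}^{\vartheta}$, and the Gaussian decay of $\mh$; all these fit together provided $\varrho + C\za < 1/(2\tm)$, which is guaranteed by the smallness $\za \ll 1$ and the fixed choice of $(\varrho, \vartheta)$ in the definition \eqref{ss 00} of the working space.
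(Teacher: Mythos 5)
Your proposal is correct and follows essentially the route the paper intends (the corollary is stated without a written proof, as a direct consequence of Lemma \ref{m-estimate} combined with the product estimates of Lemma \ref{lemma:a1} and Lemma \ref{lemma:a2}): the identity is just the definition of $\Gamma$ via $\mh\big(\mh-\mhh\m_w\big)=\m-\m_w$, and the smallness is transferred through $\abs{\mh-\mhh\m_w}\ls\za\abs{\vv}^2\ue^{\za\abs{\vv}^2}\mh$. The only overstatement is the claim that $\mh\m_{\#}^{-\frac{1}{2}}$ and $\m_{\#}^{\frac{1}{2}}\mhh$ are uniformly bounded—when $\m_{\#}=\m_w$ these ratios are only $O\big(\ue^{C\za\abs{\vv}^2}\big)$—but this is harmless since, exactly as your closing remark on weight compatibility indicates, such sub-Gaussian factors are absorbed by the Gaussian decay of $\mh$ (and by the $L^{\infty}_{\varrho,\vartheta}$ weight) once $\za\ll1$, which is the same absorption used wherever the corollary is invoked (e.g., in Lemma \ref{ss5-estimate}).
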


\subsection{Estimates of Boundary Terms and Source Terms}\label{sec:bs}

In this section, we will give several types of estimates regarding $h$ and $\ss$. In the following, we will assume that the constant $\N\in[2,6]$ and $g(x,v)$ is a given function.

\paragraph{\underline{Estimates of $h$}}

\begin{lemma}\label{h-estimate}
Under the assumption \eqref{assumption:boundary}, for $h$ defined in \eqref{aa 32}, we have
\begin{align}
    \tnms{h}{\gamma_-}&\ls\oot\e,\quad
    \jnms{h}{\gamma_-}\ls\oot\e^{\frac{3}{\N}},\quad
    \lnmms{h}{\gamma_-}\ls\oot,\quad \sup_{\iota_1,\iota_2}\int_{v\cdot n<0}\abs{h}\abs{v\cdot n}\ud v\ls\oot\e.
\end{align}
\end{lemma}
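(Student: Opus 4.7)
Start from the diffuse-reflection boundary condition on $\fs$ and the ansatz $\fs=\ff+\e^{\al}\mh\re$. Substituting and dividing by $\e^{\al}\mh$ yields the explicit formula
\begin{align}
h(\vx_0,\vv)=\e^{-\al}\mhh(\vx_0,\vv)\Big(\ms(\vx_0,\vv)\int_{\vuu\cdot\vn>0}\ff(\vx_0,\vuu)\abs{\vuu\cdot\vn}\ud\vuu-\ff(\vx_0,\vv)\Big)
\end{align}
on $\gamma_-$. The plan is to split $h=h_{(0)}+h_{(1)}+h_{(2)}+h_{(B)}$ according to the four components of $\ff=\m+\mh(\e\f_1+\e^2\f_2)+\mbh(\e\fb_1)$ and bound each piece in the four required norms.

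For the leading piece $h_{(0)}$ coming from $\m$: on $\p\Omega$, \eqref{boundary condition} gives $\tq=\tb$, so $\m(\vx_0,\vv)$ is a Maxwellian at wall temperature. A direct flux computation using the normalization of $\ms$ shows that $\m$ reproduces itself under diffuse reflection up to a factor fixed by $P$. The remaining zeroth-order mismatch is absorbed in the construction of $\f_1$, which carries the tangential ghost velocity $\uq_1|_{\p\Omega}=\beta_0\nabla_{\iota}\tb$; by the design of the Hilbert expansion in the companion paper \cite{AA024}, the sum $\m+\e\mh\f_1+\mbh\e\fb_1$ satisfies diffuse reflection up to an error of size $o_T\cdot\e^2$. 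This cancellation is precisely what defines $\fb_1$, and the $o_T$ prefactor arises from the smallness assumption \eqref{assumption:boundary} on $|\nabla\tb|$.

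For $h_{(2)}$ from the genuinely second-order $\e^2\mh\f_2$ correction, the mismatch is intrinsically $O(\e^2)$, giving the correct $\e^{2-\al}$ scaling. The four norm estimates then follow from Theorem~\ref{thm:ghost}: the $L^\infty_{\vrh,\vth}$ bound on $\gamma_-$ uses the $W^{2-1/r,r}L^\infty_{\vrh,\vth}$ trace bound on $\f_2$ combined with Sobolev embedding $W^{2,r}\hookrightarrow L^\infty$ on the two-dimensional surface $\p\Omega$ for $r>2$; the $L^{2r/3}(\gamma_-)$ bound uses the trace inequality $W^{2-1/r,r}(\Omega)\hookrightarrow L^r(\p\Omega)$ combined with the Gaussian velocity weight to integrate $|v\cdot n|$ measure; the $L^2(\gamma_-)$ and $\sup_{\iota_1,\iota_2}\int|h||v\cdot n|$ bounds are then straightforward consequences via H\"older and the exponential weight. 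The factor $o_T$ is inherited because $\f_2$ itself is bounded by $o_T$ in Theorem~\ref{thm:ghost}. For the boundary-layer piece $h_{(B)}$, evaluation at $\eta=0$ together with the $L^\infty_{\vrh,\vth}\ue^{K_0\eta}$ bound on $\fb_1$ in Theorem~\ref{thm:ghost} and Theorem~\ref{boundary well-posedness} immediately yield all four norms with the desired scaling, after noting that $\mbh/\mh$ at $\vx_0\in\p\Omega$ is exactly $1$ since $\mb(\vx_0,\vv)=\m(\vx_0,\vv)$.

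The main obstacle is the bookkeeping in the cancellation between $\m$, $\e\mh\f_1$, and $\mbh\e\fb_1$ on $\gamma_-$: naive termwise estimates give $O(\e^{1-\al})$ rather than $o_T\cdot\e^{2-\al}$, so one must use the explicit boundary-matching identity built into the construction of $\fb_1$ in \cite{AA024} to cancel the leading $\e$-order and extract both the extra $\e$ and the $o_T$ factor from $|\nabla\tb|_{L^\infty}$. Once this cancellation is written down, the four target inequalities are routine consequences of the trace inequality, Sobolev embedding on $\p\Omega$, and the quantitative bounds of Theorem~\ref{thm:ghost}.
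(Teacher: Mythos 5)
There is a genuine gap, and it sits exactly at the point your proposal leans on most heavily. You assert that "the sum $\m+\e\mh\f_1+\mbh\e\fb_1$ satisfies diffuse reflection up to an error of size $o_{\tq}\cdot\e^2$." This is false as a pointwise statement, because $\fb_1$ is not the Milne solution itself but its cutoff $\fb_1=\ch\left(\e^{-1}\va\right)\chi(\e\eta)\blff$ (see \eqref{final 14}). The matching built into \cite{AA024} cancels the order-$\e$ boundary mismatch only where $\ch\left(\e^{-1}\va\right)=1$; on the grazing strip $\abs{\va}\ls\e$ the cutoff leaves a residual, and this is precisely the second term in the explicit formula \eqref{aa 32}, namely $\e^{1-\al}\big(\mss\int_{\vuu\cdot\vn>0}\mbh\chi\left(\e^{-1}\va\right)\blff\abs{\vuu\cdot\vn}\ud\vuu-\mbh\chi\left(\e^{-1}\va\right)\blff\big)$. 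This residual is genuinely of size $\oot\e^{1-\al}$ in $L^{\infty}$ — which is why the lemma's $L^{\infty}$ bound is only $\e^{1-\al}$ and cannot be improved to the $\e^{2-\al}$ your claimed cancellation would give. So your argument, taken at face value, proves something false in $L^{\infty}$; and if instead you estimate the boundary-layer piece "termwise" as in your last paragraph, you only get $\e^{1-\al}$ in every norm, which contradicts the claimed $\e^{2-\al}$ bound in $L^2_{\gamma_-}$.

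What is missing is the actual mechanism that produces the four different exponents: the uncancelled residual is supported in $\{\abs{\va}\ls\e\}$, and the boundary measure carries the weight $\abs{v\cdot n}=\abs{\va}$, so integration in $\va$ over this thin strip gains extra powers of $\e$ in every integrated norm — $\big(\int_{\abs{\va}\ls\e}\abs{\va}\ud\va\big)^{3/(2\N)}\ls\e^{3/\N}$ for the $L^{2\N/3}_{\gamma_-}$ bound (hence $\e^{1+\frac{3}{\N}-\al}$, and $\e^{2-\al}$ for $L^2$ at $\N=3$), and $\int_{\abs{\va}\ls\e}\abs{\va}\ud\va\ls\e^2$ for the $\sup_{\iota_1,\iota_2}$ estimate — while no gain occurs in $L^{\infty}$. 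The remaining term in \eqref{aa 32}, involving $\f_2$, carries the prefactor $\e^{2-\al}$ and is handled by the trace bound $\lnmms{\f_2}{\gamma}\ls\nm{\f_2}_{W^{1,\NN}L^{\infty}_{\varrho,\vartheta}}\ls\oot$ from Theorem \ref{thm:ghost}, as you sketched. Note also that the $\oot$ factor is simply inherited from the $\oot$-bounds on $\f_2$ and $\blff$ (whose data is proportional to $\nx\tb$), not "extracted" through an additional cancellation step. Your proposal has the right skeleton (reduce $h$ to the $\f_2$ mismatch plus a boundary-layer remainder, then use Theorem \ref{thm:ghost}), but without the cutoff structure and the thin-support/$\abs{v\cdot n}$-weight argument you cannot obtain the stated, norm-dependent scalings.
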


\begin{proof}
Note that from Theorem \ref{thm:ghost}, it holds that $\lnmms{\f_2}{\gamma}\ls\nm{\f_2}_{W^{1,\NN}L^{\infty}_{\varrho,\vartheta}}\ls\oot$.
Then we know
\begin{align}
    \lnmms{\e\left(\mss\displaystyle\int_{\vuu\cdot\vn>0}
    \mbh(v')\f_2(v')\abs{\vuu\cdot\vn}\ud{\vuu}-\f_2\Big|_{v\cdot n<0}\right)}{\gamma_-}&\ls\e\lnmms{\f_2}{\gamma}\ls\oot\e.
\end{align}
Then we obtain the similar estimates for $L^2_{\gamma_-}$ and $L^{\frac{2\N}{3}}_{\gamma_-}$ norms. 

On the other hand, noticing that $\lnmms{\blff}{\gamma}\ls\oot$,
we have
\begin{align}
    &\lnmms{\left(\ms\displaystyle\int_{\vuu\cdot\vn>0}
    \mbh\chi\left(\e^{-1}\va\right)\blff\abs{\vuu\cdot\vn}\ud{\vuu}-\mbh\chi\left(\e^{-1}\va\right)\blff\Big|_{v\cdot n<0}\right)}{\gamma_-}
    \ls\lnmms{\blff}{\gamma_-}\ls\oot.
\end{align}
Also, noticing that the cutoff $\chi\left(\e^{-1}\va\right)$ implies a restriction to the domain $\abs{\va}\leq\e$ and the $\gamma$ norm has an extra $\va$, we have
\begin{align}
    &\jnms{\left(\ms\displaystyle\int_{\vuu\cdot\vn>0}
    \mbh\chi\left(\e^{-1}\va\right)\blff\abs{\vuu\cdot\vn}\ud{\vuu}-\mbh\chi\left(\e^{-1}\va\right)\blff\Big|_{v\cdot n<0}\right)}{\gamma_-}
    \ls\jnms{\chi\left(\e^{-1}\va\right)\blff}{\gamma_-}\ls\oot\e^{\frac{3}{\N}},
\end{align}
and the $L^2_{\gamma_-}$ estimate follows when $\N=3$, and
\begin{align}
    &\int_{v\cdot n<0}\abs{\left(\ms\displaystyle\int_{\vuu\cdot\vn>0}
    \mbh\chi\left(\e^{-1}\va\right)\blff\abs{\vuu\cdot\vn}\ud{\vuu}-\mbh\chi\left(\e^{-1}\va\right)\blff\Big|_{v\cdot n<0}\right)}\abs{v\cdot n}\ud v
    \ls\int_{v\cdot n<0}\chi\left(\e^{-1}\va\right)\blff\abs{\va}\ud v\ls\e.
\end{align}
Then our estimates follow.
\end{proof}

\paragraph{\underline{Estimates of $\llc[\re]$}}

\begin{lemma}\label{ssl-estimate}
Under the assumption \eqref{assumption:boundary}, for $\llc[\re]$ defined in \eqref{final 51}, we have
\begin{align}
    \abs{\brv{\llc[\re],g}}\ls \oot\left(\int_{\r^3}\nu\abs{g}^2\right)^{\frac{1}{2}}\left(\int_{\r^3}\nu\abs{\re}^2\right)^{\frac{1}{2}},
\end{align}
and thus
\begin{align}
    \abs{\br{\llc[\re],g}}\ls \oot\um{g}\um{\re}\ls\oot\um{g}\left(\tnm{\pk[\re]}+\um{(\ik-\pk)[\re]}\right).
\end{align}
Also, we have
\begin{align}
    \tnm{\llc[\re]}&\ls\oot\um{\re},\quad
    \lnmm{\nu^{-1}\llc[\re]}\ls\oot\lnmm{\re}.
\end{align}
\end{lemma}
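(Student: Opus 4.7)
The plan is to begin by unpacking the explicit definition of $\llc[\re]$ in \eqref{final 51}, which I expect to decompose as a finite sum of perturbative correction terms that could not be absorbed into the main $\lc$-piece in passing from \eqref{remainder.} to \eqref{pp 02}. Each summand should carry a built-in smallness factor: either a scalar coefficient of the form $\nx\tq/(4\tq^2)$ whose $L^{\infty}$-norm is $\oot$ by \eqref{assumption:boundary} together with Theorem \ref{thm:ghost}, or a Maxwellian difference $\m-\mm$ of size $\oot$ that is controlled through Lemma \ref{m-estimate} and Corollaries \ref{m-estimate.}--\ref{m-estimate..}. This uniform $\oot$ prefactor is the common source of the smallness in all four inequalities.

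For the pointwise-in-$x$ weighted bilinear bound I would split the summands of $\llc[\re]$ into two categories. The $\mhh\,Q^{\ast}[\cdot,\cdot]$-type (equivalently $\Gamma$-type) pieces fall directly under Lemma \ref{lemma:a1}, which produces exactly the $\nu$-weighted Cauchy--Schwarz shape on the right with the small coefficient pulled out in front. The direct-multiplication pieces of the form $a(x,v)\re$ with $\lnm{\nu^{-1}a}=\oot$ are handled by ordinary Cauchy--Schwarz after redistributing the $\nu$-weight between $\re$ and $g$. Integrating in $x$ and applying Cauchy--Schwarz then yields $\abs{\br{\llc[\re],g}}\ls\oot\,\um{g}\,\um{\re}$; the further split $\um{\re}\ls\tnm{\pk[\re]}+\um{(\ik-\pk)[\re]}$ follows since $\pk$ is bounded on $L^2_v$ and $\nk$ is five-dimensional with smooth $v$-weights, so the $\nu$-weight on $\pk[\re]$ reduces to the unweighted $L^2$-norm of its macroscopic components.

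The $L^2$ bound $\tnm{\llc[\re]}\ls\oot\,\um{\re}$ is obtained by dualizing the previous step: $\tnm{\llc[\re]}=\sup_{\tnm{g}=1}\br{\llc[\re],g}$, and for the $\Gamma$-type summands a direct application of the second/third inequality of Lemma \ref{lemma:a1} (or Corollary \ref{m-estimate..} when the smallness comes from $\m-\mm$) gives the required bound, while for the multiplication summands the small coefficient makes $\tnm{a\re}\ls\oot\,\um{\re}$ immediate. The $L^{\infty}$ bound $\lnmm{\nu^{-1}\llc[\re]}\ls\oot\,\lnmm{\re}$ is treated analogously, using Lemma \ref{lemma:a2} on the $\Gamma$-type pieces and a pointwise estimate on the multiplication pieces, noting that the $\lnmm{\cdot}$-weight $\br{v}^{\vth}\ue^{\vrh\abs{v}^2/(2\tm)}$ is precisely designed with enough room to absorb any ambient Gaussian factors.

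The principal technical obstacle is the cubic velocity growth carried by $\ab$ in the representative summand $\mhh\ab\cdot\tfrac{\nx\tq}{4\tq^2}\re$: paired naively with $\re$ and $g$ this produces a $\br{v}^3$-like integrand that a bare $\nu$-weighted $L^2$ norm cannot accommodate. The resolution is to exploit the $\mhh$ factor that always accompanies $\ab$, so that $\mhh\ab$ retains super-polynomial Gaussian decay; splitting off a small power of $\m$ from this product then produces tails compatible with the $\nu$-weighted Cauchy--Schwarz bound in the bilinear estimate, and compatible with the $\lnmm{\cdot}$-weight in the $L^{\infty}$ argument. Once this weight bookkeeping is performed on the one dangerous summand, the remaining terms are routine applications of Lemmas \ref{lemma:a1}--\ref{lemma:a2}.
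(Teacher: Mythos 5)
Your proposal is built on a guess about the structure of $\llc[\re]$ that does not match the definition \eqref{final 51}: there, $\llc[\re]:=-2\e^{-1}\mhh Q^{\ast}[\mh(\e\f_1),\mh\re]=-2\Gamma[\f_1,\re]$ is a \emph{single} $\Gamma$-type term, with no direct-multiplication summands and no Maxwellian-difference summands. The term $\left(\mhh\ab\cdot\frac{\nx\tq}{4\tq^2}\right)\re$, which you treat as the ``principal technical obstacle'' inside $\llc$, is deliberately kept \emph{outside} $\llc$ as a separate term in \eqref{remainder.} and \eqref{pp 02}, and the difference $\m-\mm$ only enters later, in the operator $\mathfrak{A}$ of \eqref{oo 31} in Section \ref{sec:linear remainder}; neither belongs to the object of this lemma. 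Once the definition is used, the proof is exactly the $\Gamma$-branch of your argument and nothing more: apply Lemma \ref{lemma:a1} for the velocity-weighted bilinear bound and the $L^2$ bound, Lemma \ref{lemma:a2} for the weighted $L^{\infty}$ bound, and supply the smallness from $\lnmm{\f_1}\ls\nm{\f_1}_{W^{1,\NN}L^{\infty}_{\varrho,\vartheta}}\ls\oot$ (Theorem \ref{thm:ghost}), together with $\um{\re}\ls\tnm{\pk[\re]}+\um{(\ik-\pk)[\re]}$ since $\pk$ projects onto the five-dimensional space \eqref{att 32}.

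Beyond the misidentification, your proposed resolution of the (nonexistent, for this lemma) cubic-growth term is mathematically wrong: since $\ab=v\left(\abs{v}^2-5T\right)\mh$, one has $\mhh\ab=v\left(\abs{v}^2-5T\right)$, which is purely polynomial and has \emph{no} Gaussian decay, so you cannot ``split off a small power of $\m$'' from it. This is precisely why such a term cannot be estimated by a $\nu$-weighted Cauchy--Schwarz argument of the type asserted in this lemma, and why the paper handles it separately (e.g.\ in the proof of Lemma \ref{lem:final 9}, by splitting $\abs{v}\leq\e^{-\frac{1}{4}}$ from $\abs{v}\geq\e^{-\frac{1}{4}}$ and invoking the weighted $L^{\infty}$ control $\e^{\frac{1}{2}}\lnmm{\re}$). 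So while your plan contains the correct one-line mechanism for the actual statement, as written it does not prove the lemma about the actual $\llc$, and the extra machinery you add would fail where you claim it succeeds.
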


\begin{proof}
    The desired estimates follow from Lemma \ref{lemma:a1} and Lemma \ref{lemma:a2} since, by Theorem \ref{thm:ghost}, we have
    \begin{align}
        \lnmm{f_1}\ls\nm{f_1}_{W^{1,\NN}}\ls\oot.
    \end{align}
\end{proof}

\paragraph{\underline{Estimates of $\ss_0$}}

\begin{lemma}\label{ss0-estimate}
Under the assumption \eqref{assumption:boundary}, for $\ss_0$ defined in \eqref{def-s0}, we have
\begin{align}
    \abs{\brv{\ss_0,g}}\ls \oot\e\left(\int_{\r^3}\nu\abs{g}^2\right)^{\frac{1}{2}}\left(\int_{\r^3}\nu\abs{\re}^2\right)^{\frac{1}{2}},
\end{align}
and thus
\begin{align}
    \abs{\br{\ss_0,g}}\ls \oot\e\um{g}\um{\re}\ls\oot\e\um{g}\left(\tnm{\pk[\re]}+\um{(\ik-\bpk)[\re]}\right).
\end{align}
Also, we have
\begin{align}
    \tnm{\ss_0}&\ls\oot\e\um{\re},\quad
    \lnmm{\nu^{-1}\ss_0}\ls\oot\e\lnmm{\re}.
\end{align}
\end{lemma}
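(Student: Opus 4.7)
The structure of the desired estimates is identical to that of Lemma \ref{ssl-estimate} for $\llc[\re]$, except that every bound gains an additional factor of $\e$. This strongly suggests that the proof should mirror the one for $\llc[\re]$, and that the extra $\e$ is traceable to an explicit $\e$ factor baked into the definition of $\ss_0$ in \eqref{def-s0}. Accordingly, the plan is to recast $\ss_0$ as a nonlinear Boltzmann bilinear form $\mh\,\Gamma[\phi_0,\mhh\re]$ for an appropriate coefficient $\phi_0$, then invoke Lemmas \ref{lemma:a1} and \ref{lemma:a2} directly.

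First, I would unpack $\ss_0$ from \eqref{def-s0}. Based on the source term structure in \eqref{wt 02}, the natural candidate is the piece built from the higher-order interior correction $\e^2\mh\f_2$ or from a temperature-difference contribution of the form $Q^{\ast}[\m-\m_w,\,\mh\re]$ with $|\tq-\tq_w|=O(\e)$ in the relevant region. In either case, the coefficient $\phi_0$ satisfies a bound of the form
\begin{align}
\nm{\phi_0}_{W^{1,\NN}L^{\infty}_{\vrh,\vth}}+\lnmm{\phi_0}\ls\oot\,\e,
\end{align}
using the regularity of $\f_2$ from Theorem \ref{thm:ghost}, or alternatively Corollary \ref{m-estimate..} with smallness $\za=O(\e)$.

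Once this reduction is in place, the three asserted bounds follow mechanically. For the first inequality, I apply Lemma \ref{lemma:a1} with $f_1=\phi_0$, $f_2=\re$, which produces the product $\nm{\phi_0}_{L^\infty}\,(\int\nu|\re|^2)^{1/2}(\int\nu|g|^2)^{1/2}$ and hence the claimed $\oot\,\e$-gain. Integrating in $x$ and applying Cauchy-Schwarz gives the $\br{\ss_0,g}$ estimate, with the splitting $\re=\pk[\re]+(\ik-\bpk)[\re]$ controlled by the equivalence of $\um{\,\cdot\,}$ and $\tnm{\,\cdot\,}$ on $\nk$. The $L^2$ estimate $\tnm{\ss_0}\ls\oot\,\e\,\um{\re}$ again follows from Lemma \ref{lemma:a1}, while the weighted $L^\infty$ bound $\lnmm{\nu^{-1}\ss_0}\ls\oot\,\e\,\lnmm{\re}$ is a direct application of Lemma \ref{lemma:a2} combined with the $L^\infty$ bound on $\phi_0$.

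The only delicate point is verifying that the specific form of $\ss_0$ in \eqref{def-s0} actually produces the claimed uniform $\e$-gain in $\phi_0$; this is where one must be careful to use the proper smallness from \eqref{assumption:boundary} (i.e.\ $\as=\oo$) rather than $O(1)$ temperature variation. Once the $\e$ factor is cleanly extracted into $\phi_0$, the rest is a verbatim replay of the proof of Lemma \ref{ssl-estimate}.
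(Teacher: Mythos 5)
Your proposal is correct and follows essentially the same route as the paper: since by \eqref{def-s0} we have $\ss_0=2\e^{-1}\mhh Q^{\ast}[\mh(\e^2\f_2),\mh\re]=2\e\,\Gamma[\f_2,\re]$, the explicit factor $\e$ comes from the $\e^2\f_2$ piece (your first candidate; the temperature-difference alternative is not needed here), and the bounds then follow from $\lnmm{\f_2}\ls\nm{\f_2}_{W^{2,\NN}L^{\infty}_{\varrho,\vartheta}}\ls\oot$ via Theorem \ref{thm:ghost} together with Lemma \ref{lemma:a1} and Lemma \ref{lemma:a2}, exactly as in the paper's proof.
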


\begin{proof}
    The desired estimates follow from Lemma \ref{lemma:a1} and Lemma \ref{lemma:a2} since, by  Theorem \ref{thm:ghost}, we have 
    \begin{align}
        \lnmms{\f_2}{\gamma}\ls\nm{\f_2}_{W^{1,\NN}L^{\infty}_{\varrho,\vartheta}}\ls&\oot.
    \end{align}
\end{proof}

\paragraph{\underline{Estimates of $\ss_1$}}

\begin{lemma}\label{ss1-estimate}
Under the assumption \eqref{assumption:boundary}, for $\ss_1$ defined in \eqref{def-s1}, we have
\begin{align}
    \abs{\brv{\ss_1,g}}\ls \left(\int_{\r^3}\nu\abs{g}^2\right)^{\frac{1}{2}}\left(\int_{\r^3}\nu\abs{\fb_1}^2\right)^{\frac{1}{2}}\left(\int_{\r^3}\nu\abs{\re}^2\right)^{\frac{1}{2}},
\end{align}
and thus
\begin{align}
    \abs{\br{\ss_1,g}}\ls& \oot\um{g}\um{\re}\ls\oot\um{g}\left(\tnm{\pk[\re]}+\um{(\ik-\pk)[\re]}\right),\\
    \abs{\br{\ss_1,g}}\ls& \oot\um{\fb_1}\lnmm{g}\um{\re}\ls\oot\e^{\frac{1}{2}}\lnmm{g}\left(\tnm{\pk[\re]}+\um{(\ik-\pk)[\re]}\right).
\end{align}
Also, we have
\begin{align}
    \tnm{\ss_1}&\ls\oot\um{\re},\quad
    \lnmm{\nu^{-1}\ss_1}\ls\oot\lnmm{\re}.
\end{align}
\end{lemma}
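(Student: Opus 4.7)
The plan is to treat $\ss_1$ as a bilinear $\Gamma$-type interaction between the boundary layer $\fb_1$ and the remainder $\re$, and then exploit the boundary-layer concentration $\mn=\e\eta$ to convert the exponential decay in $\eta$ into an $\e^{1/2}$-gain after integration in the normal variable $\mn$. The bounds then follow directly from Lemma \ref{lemma:a1} (for $L^2$) and Lemma \ref{lemma:a2} (for $L^{\infty}$), combined with Theorem \ref{thm:ghost}.

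First I would apply Lemma \ref{lemma:a1} pointwise in $x$ to $\brv{\ss_1,g}$, regarding $\ss_1$ as (up to the Maxwellian normalization from $\mbh$) an expression of the form $\Gamma[\fb_1,\re]$ or $\Gamma[\re,\fb_1]$ with possible coefficients of the form $\mbh\mhh$ that are bounded uniformly. This immediately yields the pointwise-in-$x$ trilinear bound
\begin{align}
\abs{\brv{\ss_1,g}}\ls \left(\int_{\r^3}\nu\abs{g}^2\right)^{\frac{1}{2}}\left(\int_{\r^3}\nu\abs{\fb_1}^2\right)^{\frac{1}{2}}\left(\int_{\r^3}\nu\abs{\re}^2\right)^{\frac{1}{2}},
\end{align}
by placing $\fb_1$ in the middle slot via the minimum in Lemma \ref{lemma:a1}. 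Integrating in $x$ then gives $\abs{\br{\ss_1,g}}\ls \um{g}\um{\fb_1}\lnm{\cdot}\um{\re}$ or similar combinations depending on which factor carries the $L^\infty_x$ norm.

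For the first of the two $\br{\ss_1,g}$ bounds I would use Cauchy--Schwarz in $x$ to place $\fb_1$ in $L^\infty_{x,v}$, obtaining a factor $\lnmm{\fb_1}\ls\oot$ from Theorem \ref{thm:ghost}; this absorbs into the $\oot$ prefactor. For the second, sharper bound I put $g$ in $L^\infty$ and keep $\fb_1$ in $L^2_{x,v}$ (weighted by $\nu$), then pass to the boundary-adapted coordinates of Section \ref{sec:geometric-setup} and change variables $\mn=\e\eta$, so that
\begin{align}
\um{\fb_1}^2 \ls \int_0^{\oo}\!\!\int_{\r^3}\nu\abs{\fb_1(\eta,\iota_1,\iota_2,\vvv)}^2\,\ud\mn\,\ud\vvv \ls \e\int_0^{\infty}\!\!\int_{\r^3}\nu\ue^{-2K_0\eta}\ud\eta\,\ud\vvv\,\lnmm{\ue^{K_0\eta}\fb_1}^2 \ls \oot\,\e,
\end{align}
where the last step uses the exponential decay estimate from Theorem \ref{thm:ghost}. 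This is the source of the gain $\um{\fb_1}\ls\oot\,\e^{1/2}$, which yields the second estimate.

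The remaining $\tnm{\ss_1}\ls\oot\um{\re}$ and $\lnmm{\nu^{-1}\ss_1}\ls\oot\lnmm{\re}$ bounds follow by the same dichotomy: for $L^2$, apply Lemma \ref{lemma:a1} with $g=\ss_1$ in its dual formulation, placing $\fb_1$ in $L^\infty$ to produce $\lnmm{\fb_1}\ls\oot$; for $L^\infty$, directly invoke Lemma \ref{lemma:a2}. The main (very mild) obstacle is bookkeeping: one must check that the coefficient factors $\mbh\mhh$ appearing in $\ss_1$ relative to the global Maxwellian are uniformly controlled on the support of the exponentially decaying boundary-layer profile, which is ensured by the normalization $\tb=1+O(\oot)$ in \eqref{assumption:boundary} together with the weight estimates $\lnmm{\ue^{K_0\eta}\fb_1}\ls\oot$ built into Theorem \ref{thm:ghost}.
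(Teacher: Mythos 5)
Your proposal is correct and follows essentially the same route as the paper: the paper's proof simply invokes Lemma \ref{lemma:a1} and Lemma \ref{lemma:a2} together with the facts $\lnmm{\fb_1}\ls\oot$ and $\um{\fb_1}\ls\oot\e^{\frac{1}{2}}$, the latter coming exactly from the rescaling $\mn=\e\eta$ and the exponential decay in $\eta$ that you spell out. The only cosmetic point is that in your displayed change-of-variables estimate the velocity integrand should carry the weight $\br{\vvv}^{-2\vth}\ue^{-\varrho\abs{\vvv}^2/\tm}$ extracted from the $L^{\infty}_{\varrho,\vartheta}$ norm (so that the $\vvv$-integral converges), which is clearly what you intend.
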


\begin{proof}
    The desired estimates from Lemma \ref{lemma:a1} and Lemma \ref{lemma:a2} using Theorem \ref{thm:ghost}
    \begin{align}
        \lnmm{\fb_1}\ls&\oot,\quad
        \um{\fb_1}\ls\oot\e^{\frac{1}{2}}.
    \end{align}
\end{proof}

\paragraph{\underline{Estimates of $\ss_2$}}

\begin{lemma}\label{ss2-estimate}
Under the assumption \eqref{assumption:boundary}, for $\ss_2$ defined in \eqref{def-s2}, we have
\begin{align}\label{aa 39}
    \abs{\brv{\ss_2,g}}\ls \left(\int_{\r^3}\nu\abs{g}^2\right)^{\frac{1}{2}}\left(\int_{\r^3}\nu\abs{\re}^2\right),
\end{align}
and thus
\begin{align}\label{aa 40}
    \abs{\br{\ss_2,g}}\ls& \um{g}\Big(\um{(\ik-\bpk)\re}\lnmm{\re}+\pnm{\pk[\re]+\bd\cdot\a}{3}\pnm{\pk[\re]+\bd\cdot\a}{6}\Big)
    \ls\um{g}\xnm{\re}^2.
\end{align}
Also, we have
\begin{align}\label{aa 41}
    &\tnm{\ss_2}\ls\Big(\um{(\ik-\bpk)\re}\lnmm{\re}+\pnm{\pk[\re]+\bd\cdot\a}{3}\pnm{\pk[\re]+\bd\cdot\a}{6}\Big)
    \ls\xnm{\re}^2,\\
    &\lnmm{\nu^{-1}\ss_2}\ls\lnmm{\re}^2.\label{aa 42}
\end{align}
\end{lemma}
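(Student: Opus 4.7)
The term $\ss_2$ in \eqref{wt 02} is the quadratic self-interaction $\ss_2 = \e^{\al-1}\Gamma[\re,\re]$ coming from $\e^{\al-1}\mhh Q^{\ast}[\mh\re,\mh\re]$ after dividing the remainder equation by $\mh$. All four estimates therefore reduce to trilinear/bilinear bounds on $\Gamma$, for which Lemma \ref{lemma:a1} and Lemma \ref{lemma:a2} are tailor-made. The pointwise-in-$x$ bound \eqref{aa 39} is a direct application of the first inequality of Lemma \ref{lemma:a1} with $f_1=f_2=\re$, using that for hard-sphere collisions $\nu \gs 1$ so that $\bigl(\int |\re|^2\bigr)^{1/2} \ls \bigl(\int \nu|\re|^2\bigr)^{1/2}$.

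For the velocity-integrated inner-product bound \eqref{aa 40}, the plan is to decompose $\re = \bigl(\pk[\re]+\bd\cdot\a\bigr) + (\ik-\bpk)[\re]$ and bilinearly expand $\Gamma[\re,\re]$ into a macroscopic-macroscopic piece, two mixed pieces, and a purely microscopic piece. For the three terms containing at least one factor of $(\ik-\bpk)[\re]$, I apply Lemma \ref{lemma:a1} in its asymmetric form, placing the $(\ik-\bpk)[\re]$ factor under the $\um$-norm and absorbing the other $\re$ factor pointwise into $\lnmm{\re}$ (since $\bigl(\int|\re|^2\bigr)^{1/2}$ against the Gaussian weight is controlled by $\lnmm{\re}$). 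For the remaining macroscopic-macroscopic term, Lemma \ref{lemma:a1} gives pointwise-in-$x$ control by $\bigl(\int\nu|\pk[\re]+\bd\cdot\a|^2\bigr)^{1/2}\bigl(\int|\pk[\re]+\bd\cdot\a|^2\bigr)^{1/2}\bigl(\int\nu|g|^2\bigr)^{1/2}$; integrating in $x$ via Hölder $L^3_x \cdot L^6_x \cdot L^2_x$ yields $\pnm{\pk[\re]+\bd\cdot\a}{3}\pnm{\pk[\re]+\bd\cdot\a}{6}\um{g}$. The final reduction to $\xnm{\re}^2$ uses that $\bd = \nx\xi + \be$ by the Hodge decomposition \eqref{tt 01}, so $\pnm{\bd}{6} \ls \nm{\xi}_{W^{1,6}} + \pnm{\be}{6}$, and the intermediate $L^3_x$ norm is handled by interpolation $\pnm{\cdot}{3} \leq \tnm{\cdot}^{1/2}\pnm{\cdot}{6}^{1/2}$; every resulting norm is encoded in the working-space definition \eqref{ss 00}.

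Estimate \eqref{aa 41} follows from \eqref{aa 40} by testing against $g$ ranging over the unit ball of $L^2_\nu$ and using duality, or equivalently by integrating the pointwise-in-$x$ bound of \eqref{aa 39} after the same macro/micro split. The $L^\infty$ estimate \eqref{aa 42} is an immediate consequence of Lemma \ref{lemma:a2} applied with $f_1=f_2=\re$. The only delicate point I anticipate is that the coefficient $\a$ appearing in $\bd\cdot\a$ has polynomial velocity growth from its definition in \eqref{final 22}, so one must check that the Gaussian decay built into $\Gamma$ absorbs it cleanly when passing to the $\nu$-weighted norms; this is exactly what Lemma \ref{lemma:a1} and Lemma \ref{lemma:a2} are designed to accommodate because $\a \in \nnk$ carries a full $\mh$ factor, and the $\nu$-weighted $L^2(v)$ norm of $\bd\cdot\a$ is comparable to $|\bd(x)|$ uniformly in $x$.
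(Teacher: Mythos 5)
Your proposal is correct and follows essentially the same route as the paper: \eqref{aa 39} from Lemma \ref{lemma:a1} with $f_1=f_2=\re$ (using $\nu\gs1$), \eqref{aa 40} via the macroscopic/microscopic split of $\re$ with the microscopic factor in the $\um$-norm against $\lnmm{\re}$ and the macroscopic square handled by $L^3_x$--$L^6_x$--$L^2_x$ H\"older, \eqref{aa 41} by duality, and \eqref{aa 42} from Lemma \ref{lemma:a2}. The only cosmetic difference is that you expand $\Gamma[\re,\re]$ bilinearly before estimating, while the paper applies the symmetric bound first and then splits the resulting quadratic velocity integral; the two are equivalent.
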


\begin{proof}
    Estimate \eqref{aa 39} follows from Lemma \ref{lemma:a1}. Then utilizing H\"older's inequality
    \begin{align}
        \abs{\int_{\r^3}\nu\re(\ik-\bpk)[\re]}\ls&\um{(\ik-\bpk)\re}\lnmm{\re},\\
        \int_{\r^3}\nu\abs{\pk[\re]+\bd\cdot\a}^2\ls&\pnm{\pk[\re]+\bd\cdot\a}{3}\pnm{\pk[\re]+\bd\cdot\a}{6},
    \end{align}
    we obtain \eqref{aa 40}. Then \eqref{aa 41} follows from duality. Finally, \eqref{aa 42} holds due to Lemma \ref{lemma:a2}.
\end{proof}

\paragraph{\underline{Estimates of $\ss_3$}}

\begin{lemma}\label{ss3-estimate}
Under the assumption \eqref{assumption:boundary}, for $\ss_3$ defined in \eqref{mm 00}, we have
\begin{align}
    \pnm{\ss_3}{1}+\pnm{\eta\left(\sy+\sz\right)}{1}+\pnm{\eta^2\left(\sy+\sz\right)}{1}&\ls \oot \e,\label{ss3-estimate1}\\
    \tnm{\br{v}^2\ss_3}+\tnm{\eta\left(\sy+\sz\right)}+\tnm{\eta^2\left(\sy+\sz\right)}&\ls \oot,\label{ss3-estimate2}\\
    \pnm{\ss_3}{\N}+\pnm{\eta\left(\sy+\sz\right)}{\N}+\pnm{\eta^2\left(\sy+\sz\right)}{\N}&\ls\oot\e^{\frac{2}{\N}-1},\label{ss3-estimate3}\\
    \nm{\ss_3}_{L^{\N}_{\iota_1\iota_2}L^1_{\mn}L^1_v}+\nm{\eta\left(\sy+\sz\right)}_{L^{\N}_{\iota_1\iota_2}L^1_{\mn}L^1_v}&\ls\oot\e,\label{ss3-estimate4},
\end{align}
and
\begin{align}
    \nm{\sy+\sz}_{L^{\N}_{x}L^1_v}+\nm{\eta\left(\sy+\sz\right)}_{L^{\N}_{x}L^1_v}&\ls\oot\e^{\frac{1}{\N}},\label{ss3-estimate5}\\
    \abs{\br{\sx,g}}+\abs{\br{\eta\sx,g}}+\abs{\br{\eta^2\sx,g}}&\ls\knm{\br{v}^2\fb_1}\jnm{\nabla_v g}\ls\oot\e^{1-\frac{1}{\N}}\jnm{\nabla_v g}.\label{ss3-estimate6}
\end{align}
Also, we have
\begin{align}\label{final 15}
    \lnmm{\ss_3}&\ls\oot\e^{-1}.
\end{align}
\end{lemma}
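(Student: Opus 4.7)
The plan is to decompose $\ss_3 = \sx + \sy + \sz$ as in \eqref{mm 00}, where $\sx$ carries the singular normal-velocity derivative $\p_{\va}\fb_1$ produced by the $\e$-cutoff of the Milne boundary layer, while $\sy$ and $\sz$ are smoother pieces (Milne-level interior residuals and the non-local cutoff commutator of $K$). Each bound will then follow by combining four ingredients: the exponential $\eta$-decay of $\fb_1$ from Theorem \ref{boundary well-posedness}, the BV estimate $\bnm{\nu\fb_1}\ls\oot$ from Theorem \ref{boundary regularity}, the boundary-layer rescaling $\mn=\e\eta$ (which yields $\nm{\cdot}_{L^p_{\mn}}\ls\e^{1/p}\nm{\cdot}_{L^p_{\eta}}$), and integration by parts in $v$ for the singular term $\sx$.

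I would first treat $\sy$ and $\sz$ in \eqref{ss3-estimate1}--\eqref{ss3-estimate3}: the pointwise $L^{\infty}_{\varrho,\vartheta}$ bound on $\ue^{K_0\eta}\fb_1$ lets the $\eta$, $\eta^2$ and velocity weights be absorbed, and the conversion $\nm{\cdot}_{L^p_\mn}\ls\e^{1/p}\nm{\cdot}_{L^p_\eta}$ produces the right power of $\e$ after accounting for the $\e^{-\al}$ prefactor of $\ss_3$ and the support $\abs{\va}\lesssim\e$ of the cutoff in $\sx$. For the $\sx$ contribution in the $L^1$-type bound \eqref{ss3-estimate1}, rather than the naive pointwise estimate $\abs{\p_{\va}\fb_1}\ls\abs{\va}^{-1}$ (which would produce a fatal $\abs{\ln\e}$ against the cutoff), I would invoke $\pnm{\p_{\va}\fb_1}{1}\ls\bnm{\fb_1}\ls\oot$ from Theorem \ref{boundary regularity}.

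Next, I would turn to the mixed-norm bounds \eqref{ss3-estimate4}--\eqref{ss3-estimate5}: Minkowski-integrating first in $\mn$ (gaining one full power of $\e$ from $\mn=\e\eta$), then in $v$, and finally in the tangential variables, where the higher-regularity Milne estimate \eqref{final 44} provides the needed $L^{\N}_{\iota_1,\iota_2}$ control. The duality estimate \eqref{ss3-estimate6} for $\sx$ would be obtained by integrating by parts in $\va$ to move $\p_{\va}$ off $\fb_1$ onto $g$, and then applying H\"older with the conjugate pair $L^{\frac{2\N}{2\N-3}}\times L^{\frac{2\N}{3}}$ (the $\knm$--$\jnm$ duality), after extracting the polynomial weight $\br{v}^2$ to secure integrability in $v$. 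Finally, the $L^{\infty}$ bound \eqref{final 15} would come from the uniform pointwise estimate $\abs{\p_{\va}\fb_1}\ls\abs{\va}^{-1}\oot$ on the cutoff support $\abs{\va}\gtrsim\e$, combined with the $\e^{-\al}$ prefactor and the $\e$-width of the cutoff derivative, which gives the announced $\oot\e^{-\al}$.

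The main obstacle will be the $\sx$ piece of \eqref{ss3-estimate1}: this is exactly where the classical pointwise boundary-layer theory breaks down with a $\abs{\ln\e}$ loss, and the whole point of developing the BV machinery in Section \ref{sec: boundary-analysis} is to bypass that loss via Theorem \ref{boundary regularity}. Once this step is secured, all remaining estimates reduce to careful bookkeeping of $\e$-powers coming from the rescaling $\mn=\e\eta$, from the $\e$-width of the velocity cutoff $\chi(\e^{-1}\va)$, and from the exponential $\eta$-decay of $\fb_1$.
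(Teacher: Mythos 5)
Your route is essentially the paper's: the same splitting $\ss_3=\sx+\sy+\sz$, the BV estimate of Theorem \ref{boundary regularity} replacing the fatal pointwise bound $\abs{\p_{\va}\blff}\sim\abs{\va}^{-1}$ in the $L^1$ control of $\sx$, the rescaling $\mn=\e\eta$, integration by parts in $\va$ for \eqref{ss3-estimate6}, and the cutoff-based pointwise count for \eqref{final 15}. Two small points of bookkeeping you should make explicit: (i) the BV theorem is stated for $\blff$, so in $\p_{\va}\fb_1$ the cutoff-derivative piece $\e^{-1}\ch'(\e^{-1}\va)\chi(\e\eta)\blff$ must be handled separately (trivially, by its $O(\e)$-wide support in $\va$ and $\lnmm{\ue^{K_0\eta}\blff}\ls\oot$); (ii) for the $\sx$ parts of \eqref{ss3-estimate2}--\eqref{ss3-estimate3} you need to interpolate the BV-based $L^1$ bound (which gives $O(\e)$ after rescaling to $\mn$) against the $L^{\infty}$ bound $\ls\e^{-1}$ coming from $\lnmm{\va\,\p_{\va}\blff}\ls1$, exactly as in \eqref{aa 43}; pointwise bounds alone do not give $\e^{2/\N-\al}$ there.

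The one genuine gap is your treatment of $\sz$. You lump it with $\sy$ as a ``smoother piece'' to be handled by the pointwise decay of $\fb_1$ plus the $\mn=\e\eta$ rescaling, crediting the thin $\va$-support only to $\sx$. But $\sz$ carries the prefactor $\e^{-\al}$ (one power of $\e$ less than $\sy$), and its second half, $\e^{-\al}\mhh\mbh\chi(\e\eta)\,K_w\big[\chi(\e^{-1}\va)\blff\big]$, has \emph{no} thin support in $\va$: the operator $K_w$ smears the $\e$-slab out over all velocities. Pointwise decay plus rescaling then yields only $\oot\e^{\frac{1}{\N}-\al}$, which falls short of \eqref{ss3-estimate3} by a factor $\e^{\frac{1}{\N}}$. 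The missing observation (which is how the paper proceeds) is that applying $K_w$ to a function supported in $\{\abs{\va}\ls\e\}$ gains a full power of $\e$ in $L^1_v$: since $\sup_{\vuu}\int_{\r^3}\abs{k_w(\vuu,\vv)}\ud\vv\ls1$, one has $\int_{\r^3}\abs{K_w[\chi(\e^{-1}\va)\blff]}\ud\vv\ls\abs{\int_{\r^3}\chi(\e^{-1}\vuu_{\eta})\blff(\vuu)\ud\vuu}\ls\e$; interpolating this $L^1$ smallness with the obvious $L^{\infty}$ bound, as in \eqref{aa 43}, restores the stated $\e^{\frac{2}{\N}-\al}$ (and the corresponding $L^1$, weighted $L^2$, and mixed-norm bounds with the $\eta$, $\eta^2$ weights absorbed by the exponential decay). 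The companion piece $\chi(\e^{-1}\va)K_w[\blff]$ is fine with your argument, since it does retain the $\e$-thin $\va$-support. With this addition your plan reproduces the paper's proof.
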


\begin{proof} 
We start from the $L^{\N}$ estimate \eqref{ss3-estimate3}, and then \eqref{ss3-estimate1} and \eqref{ss3-estimate2} will naturally follow. We first focus on $\sx$. Notice that
    \begin{align}
    \frac{\p\fb_1}{\p\va}(\eta,\vvv)=\e^{-1}\ch'\left(\e^{-1}\va\right)\chi(\e\eta)\blff(\eta,\vvv)+\ch\left(\e^{-1}\va\right)\chi(\e\eta)\frac{\p\blff(\eta,\vvv)}{\p\va}.
\end{align}
From Theorem \ref{boundary well-posedness} and Theorem \ref{boundary regularity}, we have
\begin{align}
    \lnmm{\ch\left(\e^{-1}\va\right)\chi(\e\eta)\frac{\p\blff(\eta,\vvv)}{\p\va}}\ls\e^{-1}\lnmm{\va\frac{\p\blff(\eta,\vvv)}{\p\va}}\ls\e^{-1},\quad \pnm{\ch\left(\e^{-1}\va\right)\chi(\e\eta)\frac{\p\blff(\eta,\vvv)}{\p\va}}{1}\ls 1.
\end{align}
Then by change of variable $\eta=\e^{-1}\mn$, we have
\begin{align}
    \pnm{\e^{-1}\ch'\left(\e^{-1}\va\right)\chi(\e\eta)\blff(\eta,\vvv)}{\N}
    =&\left(\iint_{\Omega\times\r^3}\abs{\e^{-1}\ch'\left(\e^{-1}\va\right)\chi(\e\eta)\blff(\eta,\vvv)}^{\N}\right)^{\frac{1}{\N}}\\
    \ls&\e^{-1}\left(\iint_{\Omega\times\r^3}\abs{\ch'\left(\e^{-1}\va\right)\chi(\e\eta)\blff(\eta,\vvv)}^{\N}\right)^{\frac{1}{\N}}\ls\e^{\frac{2}{\N}-1},\no
\end{align}
and
\begin{align}\label{aa 43}
    &\pnm{\ch\left(\e^{-1}\va\right)\chi(\e\eta)\frac{\p\blff(\eta,\vvv)}{\p\va}}{\N}
    =\left(\iint_{\Omega\times\r^3}\abs{\ch\left(\e^{-1}\va\right)\chi(\e\eta)\frac{\p\blff(\eta,\vvv)}{\p\va}}^{\N}\right)^{\frac{1}{\N}}\\
    \ls&\left(\lnmm{\ch\left(\e^{-1}\va\right)\frac{\p\blff(\eta,\vvv)}{\p\va}}^{\N-1}\iint_{\Omega\times\r^3}\abs{\ch\left(\e^{-1}\va\right)\chi(\e\eta)\frac{\p\blff(\eta,\vvv)}{\p\va}}\right)^{\frac{1}{\N}}
    \ls\big(\e^{-(\N-1)}\e\big)^{\frac{1}{\N}}\ls\e^{\frac{2}{\N}-1}.\no
\end{align}
Hence, we know $\pnm{\sx}{\N}\ls \e^{\frac{2}{\N}-1}$.
Similarly $\sy$ estimates follow from Theorem \ref{boundary well-posedness} and Theorem \ref{boundary regularity}. 

Noticing that
\begin{align}
    \sz=&\e^{-1}\mhh\mbh\chi(\e\eta)\bigg(\chi(\e^{-1}\va)K_w\Big[\blff\Big]-K_w\Big[\chi(\e^{-1}\va)\blff\Big]\bigg),
\end{align}
which has one less $\e$-power but produces an $\e$-size cutoff $\chi(\e^{-1}\va)$. Clearly, the $\chi(\e^{-1}\va)K_w\Big[\blff\Big]$ can be estimated by a similar argument as \eqref{aa 43}. Then noticing that by the change of variable $w_{\eta}=\e^{-1}\vuu_{\eta}$
\begin{align}
    \int_{\r^3}\abs{K_w\Big[\chi(\e^{-1}\va)\blff\Big]}\ud v=&\int_{\r^3}\abs{\int_{\r^3}k_w(\vuu,v)\chi(\e^{-1}\vuu_{\eta})\blff(\vuu)\ud\vuu}\ud v\\
    \ls&\sup_{\vuu}\int_{\r^3}\abs{k_w(\vuu,v)}\ud v\abs{\int_{\r^3}\chi(\e^{-1}\vuu_{\eta})\blff(\vuu) \ud\vuu}
    \ls\abs{\int_{\r^3}\chi(\e^{-1}\vuu_{\eta})\blff(\vuu)\ud\vuu}\ls\e,\no
\end{align}
we can bound $K_w\Big[\chi(\e^{-1}\va)\blff\Big]$ by a similar argument as \eqref{aa 43}. Hence, we complete the proof of \eqref{ss3-estimate3}.

Noting the rescaling $\eta=\e^{-1}\mn$ and the cutoff $\chi(\e^{-1}\va)$, using Theorem \ref{boundary well-posedness} and Theorem \ref{boundary regularity}, \eqref{ss3-estimate4} and \eqref{ss3-estimate5} follow from substitution in the integral.

Then we turn to \eqref{ss3-estimate6}. The most difficult term in $\abs{\br{\sx,g}}$ is essentially $\abs{\br{\frac{\p\fb_1}{\p\va},g}}$. Note that $\frac{\p\fb_1}{\p\va}=0$ for $\abs{\va}\leq \e$ due to the cutoff in $\fb_1$.
Integration by parts with respect to $\va$ implies
\begin{align}
   \abs{\br{\frac{\p\fb_1}{\p\va},g}}\ls& \abs{\br{\fb_1, \frac{\p g}{\p\va}}}
   \ls\jnm{\fb_1}\knm{\frac{\p g}{\p\va}}.
\end{align}
From \eqref{aa 44} and $\dfrac{\p\vx}{\p\va}\equiv\od$, we know the substitution $(\mn,\iota_1,\iota_2,\vv)\rt (\mn,\iota_1,\iota_2,\vvv)$ implies $-\frac{\p\vv}{\p\va}\cdot\vn=1$,
$-\frac{\p\vv}{\p\va}\cdot\vt_1=0$, $
-\frac{\p\vv}{\p\va}\cdot\vt_2=0$.
Hence, we know $\abs{\frac{\p\vv}{\p\va}}\ls 1$,
and thus $\abs{\frac{\p g}{\p\va}}\ls \abs{\nabla_v g}\abs{\frac{\p \vv}{\p\va}}\ls\abs{\nabla_v g}$.
Hence, we know that 
\begin{align}
   \abs{\br{\frac{\p\fb_1}{\p\va},g}}\ls&\jnm{\fb_1}\knm{\nabla_v g}\ls\oot\e^{\frac{1}{\N}}\knm{\nabla_v g}.
\end{align}
Finally, \eqref{final 15} holds due to the cutoff $\ch$.
\end{proof}

\begin{remark}\label{ss3-remark}
    Notice that the BV estimate in Theorem \ref{boundary regularity} does not contain exponential decay in $\eta$, and thus we cannot directly bound $\eta\sx$ and $\eta^2\sx$. Instead, we should first integrate by parts with respect to $\va$ as in \eqref{ss3-estimate6} to study $\fb_1$ instead:
    \begin{align}
        \pnm{\fb_1}{\N}+\pnm{\eta\fb_1}{\N}+\pnm{\eta^2\fb_1}{\N}&\ls\oot\e^{\frac{2}{\N}-1},\label{ss3-estimate7}\\
        \nm{\fb_1}_{L^{\N}_{\iota_1\iota_2}L^1_{\mn}L^1_v}+\nm{\eta\fb_1}_{L^{\N}_{\iota_1\iota_2}L^1_{\mn}L^1_v}&\ls\oot\e,\label{ss3-estimate8}\\
        \nm{\fb_1}_{L^{\N}_{x}L^1_v}+\nm{\eta\fb_1}_{L^{\N}_{x}L^1_v}&\ls\oot\e^{\frac{1}{\N}}.
    \end{align}
\end{remark}

\paragraph{\underline{Estimates of $\ss_4$}}

\begin{lemma}\label{ss4-estimate}
Under the assumption \eqref{assumption:boundary}, for $\ss_4$ defined in \eqref{def-s4}, we have
\begin{align}
    \tnm{\br{v}^2\ss_4}&\ls \oot\e,\quad
    \pnm{\ss_4}{\N}\ls\oot\e,\quad
    \lnmm{\ss_4}\ls\oot\e.
\end{align}
Also, we have
    \begin{align}\label{final 25}
    \br{v\mh,\ss_4}=\e\br{v, v\cdot\nx\left(\mh\f_2\right)}=0.
    \end{align}
\end{lemma}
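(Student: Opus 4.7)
The plan is as follows. Recall that $\ss_4$, as defined in \eqref{def-s4}, is the piece of the source arising from the streaming of the second-order interior Hilbert correction $\e^2\mh\f_2$; up to sign it equals $\e^{2-\al}\mhh\big(v\cdot\nx(\mh\f_2)\big)$. Since this is a fully explicit expression in $\f_2$ and its spatial derivatives, all three norm bounds reduce to regularity of $\f_2$ from Theorem \ref{thm:ghost}, while the orthogonality identity \eqref{final 25} reduces to a moment identity built into the construction of $\f_2$ in the companion paper \cite{AA024}.

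For the three norm bounds, I would expand
\[
\mhh v\cdot\nx(\mh\f_2)=v\cdot\nx\f_2+\tfrac{1}{2}\big(v\cdot\nx(\ln\m)\big)\f_2,
\]
so each summand is at most a polynomial in $v$ times a factor pointwise controlled by $|\f_2|+|\nx\f_2|$, with the second summand additionally carrying a factor of $\nx T$ that is $\oot$ under \eqref{assumption:boundary}. Theorem \ref{thm:ghost} supplies $\nm{\f_2}_{W^{2,\NN}L^{\infty}_{\vrh,\vth}}\ls\oot$, and the polynomial $v$-growth (including the $\br{v}^2$ weight on the $L^2$ estimate) is absorbed by the exponential weight implicit in the $L^{\infty}_{\vrh,\vth}$ norm. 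Combining with Sobolev embedding $W^{1,\NN}\hookrightarrow L^{\infty}_x$ for $\NN>3$ yields $\lnmm{\ss_4}\ls\oot\e^{2-\al}$, and direct $x$-integration of the pointwise bound yields $\pnm{\ss_4}{\N}\ls\oot\e^{2-\al}$ and $\tnm{\br{v}^2\ss_4}\ls\oot\e^{2-\al}$.

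The main step is \eqref{final 25}. The first equality is immediate after cancelling $\mh\cdot\mhh=1$. For the vanishing, the plan is to pull the $x$-derivative outside and apply the divergence theorem,
\[
\br{v,v\cdot\nx(\mh\f_2)}=\int_{\Omega}\nx\cdot\bigg(\int_{\r^3}v\otimes v\,\mh\f_2\ud v\bigg)\ud x=\int_{\p\Omega}\bigg(\int_{\r^3}(v\cdot n)v\,\mh\f_2\ud v\bigg)\ud S_x,
\]
reducing the claim to vanishing of the boundary momentum flux of $\mh\f_2$ componentwise on $\p\Omega$. This cancellation is precisely what is arranged by the construction of $\f_2$ in \cite{AA024}: the boundary choice of $\uq_2$ imposed through \eqref{aa 38}, the same device that was invoked in the proof of Proposition \ref{lemma:final 2} to secure the compatibility condition, is designed so that each component of $\int_{\r^3}(v\cdot n)v\,\mh\f_2\ud v$ vanishes on $\p\Omega$.

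The main obstacle is this orthogonality step, not the norm bounds. The estimates on $\tnm{\br{v}^2\ss_4}$, $\pnm{\ss_4}{\N}$ and $\lnmm{\ss_4}$ are a direct application of Theorem \ref{thm:ghost} and Sobolev embedding, whereas verifying the moment cancellation requires tracking the detailed structure of the macroscopic and microscopic pieces of $\f_2$ (in particular the boundary behavior of $\uq_2$) constructed in \cite{AA024}; in the actual write-up this amounts to quoting the relevant normalization from the companion paper rather than redoing the construction here.
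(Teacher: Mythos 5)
Your norm bounds are fine and essentially the paper's argument: everything reduces to the regularity $\nm{\f_2}_{W^{2,\NN}L^{\infty}_{\varrho,\vartheta}}\ls\oot$ from Theorem \ref{thm:ghost} (plus the smallness of $\nx T$ under \eqref{assumption:boundary}), and the velocity growth is absorbed by the Gaussian weight.

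The gap is in your proof of \eqref{final 25}. You integrate by parts in $x$ and reduce the claim to the pointwise vanishing on $\p\Omega$ of the momentum-flux tensor $\int_{\r^3}(v\cdot n)\,v\,\mh\f_2\,\ud v$, attributing this to the boundary choice of $u_2$ in \eqref{aa 38}. That attribution is not correct: \eqref{aa 38} prescribes only the scalar normal component $u_2\cdot n$ and is designed to kill the \emph{mass} flux of the full approximation at the boundary, i.e.\ it yields \eqref{final 08}, $\int_{\r^3}\big(\mh+\e\f_1+\e^2\f_2+\e\fb_1\big)\mbh(v\cdot n)=0$, which is what Proposition \ref{lemma:final 2} needs. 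It imposes nothing on the first $v$-moment of $(v\cdot n)\mh\f_2$; in particular the normal--normal entry $\int_{\r^3}(v\cdot n)^2\mh\f_2\,\ud v$ involves pressure-type moments of $\f_2$ (which contains $\li[\Gamma[\f_1,\f_1]]$, $\li[\mhh(v\cdot\nx(\mh\f_1))]$ and a hydrodynamic part) and has no reason to vanish on $\p\Omega$. So the boundary term you are left with after the divergence theorem does not vanish pointwise, and your justification of \eqref{final 25} fails as written.

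The actual mechanism is interior, not boundary: by the construction of $\f_2$ in the companion paper (its condition (2.8)), one has the pointwise-in-$x$ orthogonality $\int_{\r^3} v\,\big(v\cdot\nx(\mh\f_2)\big)\,\ud v=0$ for every $x\in\Omega$, which is nothing but the divergence-free property of the momentum-flux tensor of $\mh\f_2$ and is equivalent to the second (momentum) equation of the ghost-effect system \eqref{fluid system-}. Integrating this over $\Omega$ gives \eqref{final 25} immediately, with no boundary consideration. Note also that it is this pointwise version, not the integrated identity, that is used later (Lemma \ref{lem:conservation law}, where $\ss_4$ drops out of the momentum conservation law \eqref{conservation law 2}), so even a repaired boundary argument for the integrated identity would not supply what is needed downstream; the correct fix is simply to quote the interior orthogonality built into the construction of $\f_2$.
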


\begin{proof}
    The desired estimates follow from Lemma \ref{lemma:a1} and Lemma \ref{lemma:a2} using Theorem \ref{thm:ghost}
    \begin{align}
        \lnmm{\nx f_2}\ls\nm{f_2}_{W^{2,\NN}}\ls\oot.
    \end{align}
    Also, note that
    \begin{align}
    \br{v\mh,\ss_4}=\e\br{v, v\cdot\nx\left(\mh\f_2\right)}=0,
    \end{align}
    due to the crucial orthogonality condition $\mhh\left(v\cdot\nx\left(\mh\f_2\right)\right)\perp v\mh$ in the companion paper \cite[(2.8)]{AA024}, which is equivalent to the second equation for the ghost-effect equations \eqref{fluid system-}.
\end{proof}

\paragraph{\underline{Estimates of $\ss_5$}}

\begin{lemma}\label{ss5-estimate}
Under the assumption \eqref{assumption:boundary}, for $\ss_5$ defined in \eqref{def-s5}, we have
\begin{align}
    \abs{\brv{\ss_5,g}}\ls \oot\left(\int_{\r^3}\nu\abs{g}^2\right)^{\frac{1}{2}},
\end{align}
and thus
\begin{align}
    \abs{\br{\ss_5,g}}\ls&\oot\e^{\frac{1}{2}}\um{g},\quad
    \abs{\br{\ss_5,g}}\ls\oot\e\lnmm{g}.
\end{align}
Also, we have
\begin{align}
    \tnm{\ss_5}&\ls\oot\e^{\frac{1}{2}},\quad
    \lnmm{\nu^{-1}\ss_5}\ls\oot.
\end{align}
\end{lemma}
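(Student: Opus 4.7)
The plan is to follow the template of the preceding lemmas in this section, in particular the treatment of $\ss_1$ (Lemma \ref{ss1-estimate}) and $\ss_4$ (Lemma \ref{ss4-estimate}). Based on its placement in the sequence and the announced $\e$-powers, $\ss_5$ is a bilinear coupling term of schematic form $\e^{1-\al}\,\mhh Q^{\ast}[\Phi_{\text{int}},\mbh\fb_1]$ (or a symmetric variant), where $\Phi_{\text{int}}$ is an interior quantity built from $\f_1,\f_2$ or derivatives of the Hilbert-expansion profiles (controlled in $L^{\infty}_{\varrho,\vartheta}$ via Theorem \ref{thm:ghost}), while $\fb_1$ is the boundary-layer profile. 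The explicit prefactor $\e^{1-\al}$ is where the announced scaling originates; every further gain of $\e^{1/2}$ is extracted from the boundary-layer change of variable $\eta=\mn/\e$.

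First, I would apply Lemma \ref{lemma:a1} in the velocity variable. Since $\lnmm{\f_1}+\lnmm{\f_2}+\lnmm{\fb_1}\ls\oot$ by Theorem \ref{thm:ghost}, one of the two entries of the bilinear $\Gamma$-form can be pulled out as an $L^{\infty}_{v,\varrho,\vartheta}$ factor of size $\oot$, and the remaining factor is paired with $g$ in the $L^2_\nu$ sense. This yields directly the pointwise-in-$x$ estimate $|\brv{\ss_5,g}|\ls\oot\,\e^{1-\al}\bigl(\int_{\r^3}\nu|g|^2\bigr)^{1/2}$, which is the first line of the statement.

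Second, to pass from $\brv{\ss_5,g}$ to $\br{\ss_5,g}$, I would integrate in $x\in\Omega$ and exploit the rescaling $\eta=\mn/\e$. The crucial gain is $\um{\fb_1}\ls\oot\,\e^{1/2}$, coming from $\lnmm{\ue^{K_0\eta}\fb_1}\ls\oot$ in Theorem \ref{thm:ghost} combined with the Jacobian $\mathrm{d}\mn=\e\,\mathrm{d}\eta$. Cauchy--Schwarz in $x$ then yields $|\br{\ss_5,g}|\ls\oot\,\e^{3/2-\al}\um{g}$. For the $\lnmm{g}$ variant, I would instead pull out $\lnmm{g}$ and use $\pnm{\fb_1}{1}\ls\oot\,\e$, obtained by the same rescaling together with the exponential decay weight. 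The $L^2$ estimate $\tnm{\ss_5}\ls\oot\,\e^{3/2-\al}$ follows by duality from the $\um{g}$ bound (any residual $\nu^{-1/2}$ weight being absorbed by the Maxwellian factors built into $\ss_5$), and the pointwise $\lnmm{\nu^{-1}\ss_5}\ls\oot\,\e^{1-\al}$ bound is a direct application of Lemma \ref{lemma:a2} using $\lnmm{\fb_1}\ls\oot$ and the explicit prefactor $\e^{1-\al}$.

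The only non-routine point I anticipate is bookkeeping: one must verify that $\ss_5$ does \emph{not} carry a sharp cutoff $\chi(\e^{-1}\va)$ or a derivative $\p_{\va}\fb_1$ of the kind present in $\ss_3$. If it did, a naive $L^p$ estimate would produce a $|\ln\e|$ loss, and one would have to invoke the BV machinery of Theorem \ref{boundary regularity} to close the argument. However, the clean exponents $\e^{1-\al}$, $\e^{3/2-\al}$, $\e^{2-\al}$ advertised in the statement match precisely the smooth boundary-layer scaling without such a cutoff, so I expect the proof to remain at the level of direct applications of Lemma \ref{lemma:a1} and Lemma \ref{lemma:a2}, with no need to revisit the BV theory.
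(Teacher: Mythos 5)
Your overall strategy (apply Lemma \ref{lemma:a1} and Lemma \ref{lemma:a2} with the weighted $L^{\infty}$ bounds $\lnmm{\f_1}+\lnmm{\f_2}+\lnmm{\fb_1}\ls\oot$ from Theorem \ref{thm:ghost}, then harvest the extra $\e^{\frac12}$ and $\e$ in the $x$-integrated bounds from the boundary-layer rescaling $\mn=\e\eta$) is exactly the paper's method, and it does handle all terms of $\ss_5$ that have the schematic form you guessed. You are also right that $\ss_5$ carries no cutoff $\chi(\e^{-1}\va)$ and no $\p_{\va}\fb_1$, so no BV machinery is needed.

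However, there is a genuine gap: your reconstruction of $\ss_5$ is incomplete. Besides the couplings $\e^{3-\al}\Gamma[\f_2,\f_2]$, $\e^{2-\al}\Gamma[\f_2,\f_1]$, $\e^{2-\al}\Gamma[\f_2,\mhh\mbh\fb_1]$, $\e^{1-\al}\Gamma[\mhh\mbh\fb_1,\f_1]$ and $\e^{1-\al}\Gamma[\mhh\mbh\fb_1,\mhh\mbh\fb_1]$, the definition \eqref{def-s5} contains the Maxwellian-mismatch term $\e^{-\al}\,\mhh Q^{\ast}\big[\m-\mb,\mbh\fb_1\big]$, whose explicit prefactor is $\e^{-\al}$, one full power of $\e$ worse than the $\e^{1-\al}$ you took as the worst case. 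A direct application of Lemma \ref{lemma:a1}/\ref{lemma:a2} with only $L^{\infty}$ bounds would give $\oot\e^{-\al}$ for this term (e.g.\ $\lnmm{\nu^{-1}\ss_5}\ls\oot\e^{-\al}$), missing the claimed $\oot\e^{1-\al}$. The paper closes this by the additional ingredient you did not invoke: the Maxwellian comparison Lemma \ref{m-estimate} and Corollary \ref{m-estimate..}, which give $\abs{\m-\mb}\ls \za\big(\abs{v}^2\ue^{\za\abs{v}^2}\big)\m$ and the corresponding $\Gamma$-bounds with a factor $\za$. On the support of $\fb_1$ one has $\abs{\tq-\tq_w}\ls\lnm{\nx\tq}\,\mn$ with $\mn=\e\eta$, and the linear growth in $\eta$ is absorbed by the exponential decay $\ue^{-K_0\eta}$ of $\blff$, so effectively $\za\simeq\e$; this restores the missing power of $\e$ and yields $\oot\e^{1-\al}$ for this term, after which your argument for the remaining estimates goes through unchanged.
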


\begin{proof}
    We obtain the desired estimates from Lemma \ref{lemma:a1} and Lemma \ref{lemma:a2} using Theorem \ref{thm:ghost}
    \begin{align}
        \lnmm{f_1}+\lnmm{f_2}\ls\nm{f_1}_{W^{2,\NN}}+\nm{f_2}_{W^{2,\NN}}\ls&\oot,\quad
        \lnmm{\fb_1}\ls\oot.
    \end{align} 
    In particular, the estimate of $\m-\mb$ follows from Lemma \ref{m-estimate} and Corollary \ref{m-estimate..} with $z=\e$, which provides an extra power of $\e$.
\end{proof}

\paragraph{\underline{Estimates of $\sp$}}

\begin{lemma}\label{ssp-estimate}
Under the assumption \eqref{assumption:boundary},  for $\sp$ defined in \eqref{def-sp} and any $m\in\mathbb{N}$, we have
\begin{align}
    \tnm{\br{v}^2\sp}&\ls \oot\e^{m-2},\quad
    \pnm{\sp}{\N}\ls\oot\e^{m-2},\quad
    \lnmm{\sp}\ls\oot\e^{m-2}.
\end{align}
\end{lemma}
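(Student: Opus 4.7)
\medskip

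\textbf{Proof proposal for Lemma \ref{ssp-estimate}.}
This lemma should be essentially true by construction: $\sp$ is the artificial compensation term introduced (cf. the parenthetical remark after \eqref{remainder}) precisely so as to (i) enforce the zero-mass orthogonality exploited in Proposition \ref{lemma:final 2} and Lemma \ref{lemma:final 1}, and (ii) be \emph{as small as we want} in $\e$, in contrast with the genuine Hilbert-remainder sources $\ss_0,\ldots,\ss_5$ whose $\e$-size is dictated by the expansion. In other words, the parameter $m\in\mathbb{N}$ appearing in the statement is a free design parameter: inspecting \eqref{def-sp}, $\sp$ will have the schematic form $\sp=\e^{m-1-\al}\,\widetilde{\sp}(\vx,\vv)$, where the overall $\e^{-1-\al}$ is the universal prefactor shared by every piece of the source $\ss$, and the profile $\widetilde{\sp}$ is a fixed smooth function on $\Omega\times\r^3$, built from the already-constructed Hilbert data $(\rq,\uq_1,\tq;\f_1,\f_2,\fb_1)$ together with the compactly supported velocity cutoff $\mathfrak{z}(v)$ from \eqref{auxiliary system}.

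The plan is therefore as follows. First, I would unpack \eqref{def-sp} and explicitly isolate the scalar factor $\e^m$, leaving behind an $\e$-independent profile $\widetilde{\sp}$. Second, I would observe that $\widetilde{\sp}$ is a finite sum of products of $\mh$ (or $\mbh$), the Hilbert coefficients, and smooth $x$-cutoffs or the $v$-cutoff $\mathfrak{z}$; by Theorem \ref{thm:ghost} together with \eqref{assumption:boundary}, each such building block is bounded in $W^{k,\NN}L^\infty_{\varrho,\vartheta}$ by $\oot$, and the Gaussian factor absorbs any polynomial $v$-weight. Third, the three announced estimates follow by inspection:
\begin{itemize}[leftmargin=1.5em]
\item[] $\tnm{\br{v}^2\sp}\ls \e^{m-1-\al}\,\tnm{\br{v}^2\widetilde{\sp}}\ls \oot\,\e^{m-1-\al}$;
\item[] $\pnm{\sp}{\N}\ls \e^{m-1-\al}\,\pnm{\widetilde{\sp}}{\N}\ls \oot\,\e^{m-1-\al}$ using boundedness of $\Omega$ and the Gaussian tail;
\item[] $\lnmm{\sp}\ls \e^{m-1-\al}\,\lnmm{\widetilde{\sp}}\ls \oot\,\e^{m-1-\al}$, since the $(\varrho,\vartheta)$-weight is dominated by the Gaussian in $\mh$.
\end{itemize}

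The $\oot$ factor is propagated from the Hilbert-data smallness in $\as=\oot$, exactly as in the preceding Lemmas \ref{h-estimate}–\ref{ss5-estimate}. There is no genuine analytic obstacle here; the only nontrivial point—taken care of in the design of $\sp$ itself and not in this lemma—is the compatibility of $\widetilde{\sp}$ with the zero-mass orthogonality of Proposition \ref{lemma:final 2}, which constrains the shape of $\widetilde{\sp}$ but does not interfere with the $\e^m$ scaling. Consequently, in all the subsequent a~priori estimates we can and will choose $m$ large enough (e.g.\ $m\geq\al+2$) so that every contribution from $\sp$ is automatically absorbed into the right-hand side with a spare power of $\e$.
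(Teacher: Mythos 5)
Your proposal is based on a misreading of what $\sp$ actually is, and the proof you sketch would not go through. Looking at \eqref{def-sp}, $\sp$ is not a free ``designer'' term carrying an explicit factor $\e^{m}$: it is $\e^{-\al-1}\mhh$ times the difference $Q_{\text{gain}}\left[\ff+\e^{\al}\mh\re,\ff+\e^{\al}\mh\re\right]-Q_{\text{gain}}\left[\left(\ff+\e^{\al}\mh\re\right)_+,\left(\ff+\e^{\al}\mh\re\right)_+\right]$ (plus the corresponding $\mathfrak{z}$-weighted global average), so it depends on the unknown remainder $\re$ itself and carries only the fixed prefactor $\e^{-\al-1}$. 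There is no scalar $\e^{m}$ to ``isolate,'' $m$ is not a parameter in the construction, and the profile is not a fixed smooth function built solely from the Hilbert data. Consequently your three one-line estimates have nothing to rest on, and your claim that ``there is no genuine analytic obstacle'' misses the entire content of the lemma.

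The actual argument earns the arbitrary power $\e^{m}$ from positivity and the far velocity tail: writing $\abs{\e^{\al}\mh\re}\ls\e^{\al-\frac{1}{2}}\m^{\frac12+\vrh}\xnm{\re}$ via the weighted $L^{\infty}$ norm with weight $\ue^{\vrh\abs{v}^2/(2\tm)}$, one sees that $\ff+\e^{\al}\mh\re\geq0$ (hence the gain-difference vanishes identically) except on the set $D$ where $\m^{\frac12-\vrh}\ls\e^{\al-\frac12}\xnm{\re}$, i.e.\ only for $\abs{v}\gs\abs{\ln\e}^{1/2}$. On $D$ the Maxwellian is super-algebraically small, and by choosing $\vrh$ with $\frac12-\vrh\ll1$ (depending on the desired $m$) one gets $\m\ls\e^{2m}\xnm{\re}^{4m}$ there, so that $\abs{\ff+\e^{\al}\mh\re}\ls\e^{m}\xnm{\re}^{2m}$ on the effective region of the gain integrals; this is what produces $\lnmm{\sp}\ls\e^{m-\al-1}\xnm{\re}^{2m}$ and then the $L^2$ and $L^{\N}$ bounds. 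So the free parameter is the weight exponent $\vrh$ in the norm, not an exponent placed by hand in the definition of $\sp$, and the bound is conditional on control of $\xnm{\re}$ rather than following from Hilbert-data smallness alone. You would need to rebuild your proof along these lines; as written it proves nothing about the term defined in \eqref{def-sp}.
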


\begin{proof}
Note that $\ff=\m+O(\e)$, and thus $\ff\approx\m$ when $\e$ is small. Also, for $w(v)=\ue^{\vrh\frac{\abs{v}^2}{2\tm}}$ with $0\leq\vrh<\frac{1}{2}$, we have
\begin{align}
    \abs{\e\mh\re}= \abs{\e\mh w^{-1}\big(w\re\big)}\ls\e\mh \ue^{-\vrh\frac{\abs{v}^2}{2\tm}}\lnmm{\re} \ls \e^{\frac{1}{2}}\m^{\frac{1}{2}+\vrh}\xnm{\re}.
\end{align}
Hence, for $\e$ sufficiently small, as long as $\e^{\frac{1}{2}}\m^{\frac{1}{2}+\vrh}\xnm{\re}\ls\m$
we will have $\ff+\e\mh\re\approx \m+\e\mh\re\geq0$,
and thus 
\begin{align}
    \overline{Q}:=Q_{\text{gain}}\left[\ff+\e\mh\re,\ff+\e\mh\re\right]- Q_{\text{gain}}\left[\left(\ff+\e\mh\re\right)_+,\left(\ff+\e\mh\re\right)_+\right]=0.
\end{align}
In other words, $\overline{Q}$ vanishes except in the region 
\begin{align}\label{oo 01}
    D:=\bigg\{(x,v): \e^{\frac{1}{2}}\m^{\frac{1}{2}+\vrh}\xnm{\re}\gs\m\bigg\}.
\end{align}
The requirement in \eqref{oo 01} is equivalent to $\m^{\frac{1}{2}-\vrh}\ls\e^{\frac{1}{2}}\xnm{\re}$, 
which indicates 
\begin{align}
    \abs{v}\gs \frac{\tq}{\frac{1}{2}-\varrho}\left(\abs{\ln(\e)}^{\frac{1}{2}}+\abs{\ln\xnm{\re}}^{\frac{1}{2}}\right).
\end{align}
In addition, in the region $D$, we have $\m\ls\e^{\frac{\frac{1}{2}}{\frac{1}{2}-\vrh}}\xnm{\re}^{\frac{1}{\frac{1}{2}-\vrh}}$.
Hence, for any $m\in\mathbb{N}$, we can choose $\vrh$ with $\frac{1}{2}-\vrh\ll 1$ such that in the region $D$, we have $\m\ls\e^{2m}\xnm{\re}^{4m}$,
and thus
\begin{align}\label{oo 02}
    \abs{\ff+\e\mh\re}\ls \e^{m}\xnm{\re}^{2m}.
\end{align}

For the two groups of terms in $\sp$:
the effective region of $Q_{\text{gain}}$ integral is actually only on 
\begin{align}
    \abs{\vuu_{\ast}}\gs\abs{\ln(\e)}^{\frac{1}{2}}\abs{\ln\xnm{\re}}^{\frac{1}{2}}\ \ \text{and}\ \  \abs{\vv_{\ast}}\gs\abs{\ln(\e)}^{\frac{1}{2}}\abs{\ln\xnm{\re}}^{\frac{1}{2}}.
\end{align}
Therefore \eqref{oo 02} is valid in both sets above so that
\begin{align}
    \lnmm{\sp}
    \ls&\e^{m-2}\xnm{\re}^{2m}.
\end{align}
\end{proof}

\subsection{Local Conservation Laws and Weak Formulation}

\subsubsection{Consequences of Local Conservation Laws \eqref{conservation law 1}\eqref{conservation law 2}\eqref{conservation law 3}}

Now we investigate the consequences of local conservation laws \eqref{conservation law 1}\eqref{conservation law 2}\eqref{conservation law 3}.
\begin{lemma}\label{remark 01}
    Under the assumption \eqref{assumption:boundary}, we have for $\N\in[2,6]$
    \begin{align}\label{tt 03}
    \nm{\xi}_{W^{2,\N}}\ls& \oot\jnm{\bb}+\oot\e^{\frac{1}{\N}},\\ \jnm{\bd}\ls&\oot\jnm{\bb}+\jnm{\be}+\oot\e^{\frac{1}{\N}},\label{tt 03'}\\
    \label{qq 33}
    \tnms{\nx\xi}{\p\Omega}\ls&\nm{\xi}_{H^2}\ls\oot\tnm{\bb}+\oot\e^{\frac{1}{2}}.
    \end{align}
    In particular, we have
    \begin{align}
        \e^{-\frac{1}{2}}\nm{\xi}_{H^2}+\nm{\xi}_{W^{2,6}}+\e^{-\frac{1}{2}}\tnms{\nx\xi}{\p\Omega}\ls\oot\xnm{\re}+\oot.
    \end{align}
\end{lemma}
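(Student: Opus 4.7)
\textbf{Proof proposal for Lemma \ref{remark 01}.}

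The plan is to eliminate $\bd$ on the right-hand side of the Hodge Poisson equation \eqref{tt 01} by using the energy conservation law \eqref{conservation law 3}, thereby reducing the estimate to a standard $W^{2,\N}$ elliptic regularity question whose source is either controlled linearly by $\bb$ (with an $\oot$ prefactor) or by explicit moments of the boundary-layer and interior sources.

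First, substituting the mass conservation identity \eqref{conservation law 1} into \eqref{conservation law 3} via $\nx\cdot(\bb\tq)=\tq\,\nx\cdot\bb+\nx\tq\cdot\bb$ gives
\begin{align*}
\nx\cdot(\kappa\bd)=-5P\,\nx\tq\cdot\bb-5\tq\brv{\mh,\ss_3+\ss_4}+\brv{\abs{v}^2\mh,\ss_3+\ss_4+\sp}.
\end{align*}
Inserting into \eqref{tt 01} converts the Hodge Poisson problem into the Dirichlet problem
$\nx\cdot(\kappa\nx\xi)=-5P\,\nx\tq\cdot\bb+\mathfrak{M}$, $\xi|_{\p\Omega}=0$, where
\begin{align*}
\mathfrak{M}:=\brv{\abs{v}^2\mh,\ss_3+\ss_4+\sp}-5\tq\brv{\mh,\ss_3+\ss_4}.
\end{align*}
Since $\kappa=\kappa(\tq)$ is uniformly positive, $\Omega$ is $C^3$, and $\tq\in W^{4,\NN}$ by Theorem \ref{thm:ghost}, the operator $\nx\cdot(\kappa\nx\cdot)$ is uniformly elliptic with $C^1$ coefficients, so classical $L^\N$ elliptic regularity yields
\begin{align*}
\nm{\xi}_{W^{2,\N}}\ls\pnm{\nx\tq\cdot\bb}{\N}+\pnm{\mathfrak{M}}{\N}\ls\oot\pnm{\bb}{\N}+\pnm{\mathfrak{M}}{\N},
\end{align*}
using $\lnm{\nx\tq}=\oot$ from \eqref{assumption:boundary} and Theorem \ref{thm:ghost}.

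The remaining task is to show $\pnm{\mathfrak{M}}{\N}\ls\oot\e^{1+1/\N-\al}$. For the contributions from $\sy,\sz$, the $L^\N_x L^1_v$ estimate \eqref{ss3-estimate5} contracted against the $v$-bounded weights $\abs{v}^2\mh,\mh$ yields directly the desired order. For the singular piece $\sx$, a direct use of \eqref{ss3-estimate3} would give only $\oot\e^{2/\N-\al}$, which is strictly too weak at $\N=2$; instead I will integrate by parts in $\va$ exactly in the spirit of \eqref{ss3-estimate6}, transferring $\p_\va$ onto the smooth weights $\abs{v}^2\mh,\mh$ and collapsing the $\sx$-moments to $\fb_1$-moments, and then invoke the BV-based control $\nm{\fb_1}_{L^\N_x L^1_v}\ls\oot\e^{1+1/\N-\al}$ of Remark \ref{ss3-remark}. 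The $\ss_4$ and $\sp$ contributions are absorbed via Lemma \ref{ss4-estimate} and Lemma \ref{ssp-estimate}: their $L^\N$ bounds of order $\oot\e^{2-\al}$ (and arbitrarily small for $\sp$) are majorized by $\oot\e^{1+1/\N-\al}$ for $\N\in[2,6]$. This proves \eqref{tt 03}. The bound \eqref{tt 03'} is immediate from the Hodge decomposition $\bd=\nx\xi+\be$ and $\pnm{\nx\xi}{\N}\leq\nm{\xi}_{W^{2,\N}}$, while \eqref{qq 33} follows from the trace embedding $H^2(\Omega)\hookrightarrow H^{3/2}(\p\Omega)\hookrightarrow H^1(\p\Omega)$ applied to the $\N=2$ case of \eqref{tt 03}. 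The final ``in particular'' statement then results from plugging into the $X$-norm \eqref{ss 00}, which already contains $\e^{-1/2}\tnm{\bb}$ and $\pnm{\bb}{6}$: thus $\tnm{\bb}\leq\e^{1/2}\xnm{\re}$, $\pnm{\bb}{6}\leq\xnm{\re}$, and multiplying the $\N=2$ bound by $\e^{-1/2}$ turns $\oot\e^{3/2-\al}$ into $\oot\e^{1-\al}$, while the $\N=6$ bound only generates $\oot\e^{7/6-\al}\ls\oot\e^{1-\al}$.

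The main obstacle is the loss-of-$\e$ issue in the control of $\sx$: the naive $L^\N$ bound is half a power of $\e$ short at $\N=2$, giving $\oot\e^{1-\al}$ where $\oot\e^{3/2-\al}$ is required. The rescue comes from the $\va$-integration by parts against the smooth velocity weights, made rigorous by the new BV estimate for $\fb_1$ from Theorem \ref{boundary regularity}; without this joint gain the $\e^{-1/2}$ rescaling inherent in the $X$-norm could not be carried out and the entire closure of the ghost-effect remainder estimate would fail at this step.
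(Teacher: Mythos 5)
Your proposal is correct and follows essentially the same route as the paper: it combines the mass and energy conservation laws \eqref{conservation law 1} and \eqref{conservation law 3} to rewrite $\nx\cdot(\k\bd)$ as $-5P\,\nx\tq\cdot\bb$ plus source moments (the paper's \eqref{aa 35}), applies standard elliptic regularity to \eqref{tt 01}, bounds the $\ss_3$ contribution via \eqref{ss3-estimate5} and the $\va$-integration-by-parts/BV mechanism of \eqref{ss3-estimate6} and Remark \ref{ss3-remark}, handles $\ss_4,\sp$ by Lemmas \ref{ss4-estimate} and \ref{ssp-estimate}, and then deduces \eqref{tt 03'}, \eqref{qq 33}, and the final bound exactly as the paper does via the Hodge decomposition, the trace theorem, and the $X$-norm components.
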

\begin{proof}
\eqref{conservation 
 law 1} and \eqref{conservation law 3} as well as \eqref{tt 01}, imply
\begin{align}\label{aa 35}
    \nx\cdot(\k\nx\xi)=&\nx\cdot(\k\bd)=\brv{\abs{v}^2\mh,\ss_3+\ss_4+\sp}-5P\Big(\nx\cdot(\bb\tq)\Big)\\
    =&\brv{\abs{v}^2\mh,\ss_3+\ss_4+\sp}-5P\Big((\nx\cdot\bb)\tq+\bb\cdot\nx\tq\Big)\no\\
    =&\brv{\left(\abs{v}^2-5\tq\right)\mh,\ss_3+\ss_4}+\brv{\abs{v}^2\mh,\sp}-5P(\bb\cdot\nx\tq).\no
\end{align}
By the standard elliptic estimate and Lemma \ref{ss4-estimate} and Lemma \ref{ssp-estimate}, we have for $1<r<\infty$ 
\begin{align}
    \nm{\xi}_{W^{2,\N}}\ls&\oot\jnm{\bb}+\jnm{\brv{\left(\abs{v}^2-5\tq\right)\mh,\ss_3}}+\jnm{\br{v}^2\ss_4}+\jnm{\sp}\\
    \ls& \oot\jnm{\bb}+\oot\e^{\frac{1}{\N}}+\jnm{\brv{\left(\abs{v}^2-5\tq\right)\mh,\ss_3}}.\no
\end{align}
Using \eqref{ss3-estimate5} and \eqref{ss3-estimate6}, we know
\begin{align}
    \jnm{\brv{\left(\abs{v}^2-5\tq\right)\mh,\ss_3}}\ls\oot\e^{\frac{1}{\N}}. 
\end{align}
Then \eqref{tt 03} is verified. Hence, we have
\begin{align}
    \jnm{\bd}\ls\jnm{\nx\xi}+\jnm{\be}\ls\oot\jnm{\bb}+\jnm{\be}+\oot\e^{\frac{1}{\N}}.
\end{align}
Then by Sobolev embedding and trace theorem
\begin{align}\label{qq 33,}
    \tnms{\nx\xi}{\p\Omega}\ls\nm{\xi}_{H^2}\ls\oot\tnm{\bb}+\oot\e^{\frac{1}{2}}.
\end{align}

\end{proof}

\subsubsection{Weak Formulation with Test Function $\nx\phi\cdot\a$}

\begin{lemma}
    Under the assumption \eqref{assumption:boundary}, for any smooth function $\phi(x)$, we have 
\begin{align}
\label{pp 03}
&-\bbrx{\nx\cdot\big(\kappa\nx\phi\big),c}+\e^{-1}\bbrx{\nx\phi,\k\bd}+\brx{\nx\phi,\frac{\nx T}{2T^2}\big(\kappa\P+\sigma c\big)}+\bbr{\nx\phi\cdot\a,\llc[\re]}\\
=&\bbrb{\nx\phi\cdot\a,h}{\gamma_-}-\bbrb{\nx\phi\cdot\a,(1-\pp)[\re]}{\gamma_+}+\br{v\cdot\nx\Big(\nx\phi\cdot\a\Big),(\ik-\bpk)[\re]}\no\\
&-\br{\nx\phi\cdot\a,\left(\mhh\ab\cdot\frac{\nx T}{2T^2}\right)(\ik-\bpk)[\re]}+\bbr{\nx\phi\cdot\a,\bar\ss}.\no
\end{align}
\end{lemma}
\begin{proof}
Letting test function $\test=\nx\phi\cdot\a$ in \eqref{weak formulation}, we obtain
\begin{align}
    &-\br{v\cdot\nx \Big(\nx\phi\cdot\a\Big),\re}+\e^{-1}\bbr{ \nx\phi\cdot\a, \lc[\re]}+\iint_{\Omega\times\r^3}\left(\mhh\ab\cdot\dfrac{\nx\tq}{4\tq^2}\right)\re\Big(\nx\phi\cdot\a\Big)\\
    =&-\int_{\gamma}\re \Big(\nx\phi\cdot\a\Big)\ud\gamma+\bbr{\nx\phi\cdot\a,\ss}.\no
\end{align}
Using \eqref{pp 01}, oddness and the orthogonality of $\a$ with $\pk[\re]$ and $(\ik-\bpk)[\re]$, we simplify
\begin{align}
&-\bbrx{\nx\cdot\big(\kappa\nx\phi\big),c}+\e^{-1}\bbrx{\nx\phi,\k\bd}+\brx{\nx\phi,\frac{\nx T}{2T^2}\big(\kappa\P+\sigma c\big)}+\bbr{\nx\phi\cdot\a,\llc[\re]}\\
=&-\int_{\p\Omega\times\r^3}\Big(\nx\phi\cdot\a\Big)\re (v\cdot n)+\br{v\cdot\nx\Big(\nx\phi\cdot\a\Big),(\ik-\bpk)[\re]}\no\\
&-\br{\nx\phi\cdot\a,\left(\mhh\ab\cdot\frac{\nx T}{2T^2}\right)(\ik-\bpk)[\re]}+\bbr{\nx\phi\cdot\a,\bar\ss}.\no
\end{align}
We rewrite the boundary condition in \eqref{remainder} as
\begin{align}\label{qq 109}
    \re\id_{\gamma}=\pp[\re]\id_{\gamma}+(1-\pp)[\re]\id_{\gamma_+}+h\id_{\gamma_-}.
\end{align}
Using orthogonality of $\a$ and $\vv\mh$, i.e. $\ds\int_{\r^3}\Big(\nx\phi\cdot\a\Big)\mh(v\cdot n)=0$,
we deduce \eqref{pp 03}.
\end{proof}

\begin{remark}
    Using \eqref{tt 01} and by integration by parts for $\e^{-1}\br{\nx\phi,\k\bd}$, we can further rewrite for $\z=c+\e^{-1}\xi$
    \begin{align}\label{pp 03.}
    &-\bbrx{\nx\cdot\big(\kappa\nx\phi\big),\z}+\e^{-1}\bbrx{\nx\phi,\k\be}+\brx{\nx\phi,\frac{\nx T}{2T^2}\big(\kappa\P+\sigma c\big)}+\bbr{\nx\phi\cdot\a,\llc[\re]}\\
    =&\bbrb{\nx\phi\cdot\a,h}{\gamma_-}-\bbrb{\nx\phi\cdot\a,(1-\pp)[\re]}{\gamma_+}+\br{v\cdot\nx\Big(\nx\phi\cdot\a\Big),(\ik-\bpk)[\re]}\no\\
    &-\br{\nx\phi\cdot\a,\left(\mhh\ab\cdot\frac{\nx T}{2T^2}\right)(\ik-\bpk)[\re]}+\bbr{\nx\phi\cdot\a,\bar\ss}.\no
    \end{align}
\end{remark}

\subsubsection{Weak Formulation with Test Function $\nx\psi:\b$}

\begin{lemma}
    Under the assumption \eqref{assumption:boundary}, for any smooth function $\psi(x)$, we have 
    \begin{align}\label{pp 04}
    &-\br{\nx\cdot\Big(\bbb(\nx\psi:\b)\Big),\bb}+\e^{-1}\bbrx{\nx\psi,\varpi}+\br{\nx\psi:\b,(\bb\cdot v)\left(\ab\cdot\frac{\nx T}{2T^2}\right)}+\bbr{\nx\psi:\b,\llc[\re]}\\
    =&\bbrb{\nx\psi:\b,h}{\gamma_-}-\bbrb{\nx\psi:\b,(1-\pp)[\re]}{\gamma_+}+\br{v\cdot\nx\Big(\nx\psi:\b\Big),\bd\cdot\a+(\ik-\bpk)[\re]}\no\\
    &-\br{\nx\psi:\b,\left(\mhh\ab\cdot\frac{\nx T}{2T^2}\right)\Big(\bd\cdot\a+(\ik-\bpk)[\re]\Big)}+\bbrv{\nx\psi:\b,\bar\ss}.\no
    \end{align}
    where $\ds\varpi:=\int_{\r^3}(v\otimes v)\mh(\ik-\bpk)[\re]$.
\end{lemma}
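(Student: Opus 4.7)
The plan is to substitute $\test=\nx\psi:\b$ into the weak formulation \eqref{weak formulation} and simplify each of the resulting integrals, exactly mirroring the proof of \eqref{pp 03}. The three guiding ingredients are: the orthogonal decomposition \eqref{pp 01} of $\re$ into $\pk[\re]$, $\bd\cdot\a$, and $(\ik-\bpk)[\re]$; the orthogonalities of the Burnett tensor $\b$ (namely $\b\in\nnk$ in $L^2_v$ and $\brr{\b,\a}=0$); and the self-adjointness of $\lc$. The boundary integral is handled via the decomposition \eqref{qq 109}.

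First I would handle the transport term $-\iint\re\,(v\cdot\nx\test)$. By the chain rule $v\cdot\nx(\nx\psi:\b)=v\cdot(\nx^2\psi:\b)+v\cdot(\nx\psi:\nx\b)$, with $\nx\b$ accounting for the $x$-dependence of $\b$ through $\tq(x)$. Substituting \eqref{pp 01}, the $\bd\cdot\a$ and $(\ik-\bpk)[\re]$ components directly yield the two transport-type terms on the RHS of \eqref{pp 04}. For the macroscopic piece $\pk[\re]$, parity in $v$ kills the $\P\mh$ and $c(|v|^2-5T)\mh$ contributions (odd moments of $\mh$ against $v\otimes\b$ vanish), leaving only the $\bb\cdot v$ piece; computing the surviving third-order velocity moments of $\b\mh$ and integrating by parts in $x$ produces the first LHS term $-\br{\nx\cdot(\bbb(\nx\psi:\b)),\bb}$, where the identity $\nx\cdot(\bbb(\nx\psi:\b))=\bbb\cdot(\nx^2\psi:\b)+\bbb\cdot(\nx\psi:\nx\b)+(\nx\cdot\bbb)(\nx\psi:\b)$ merges the two chain-rule contributions. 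Next, for $\e^{-1}\br{\lc[\re],\nx\psi:\b}$ use self-adjointness and the defining relation of the Burnett function, which couples $\lc[\b]$ to the traceless part of $v\otimes v$ times $\mh$; by the orthogonalities above only the $(\ik-\bpk)[\re]$ component survives and produces $\e^{-1}\brx{\nx\psi,\varpi}$, with the residual bilinear error identified with $\br{\nx\psi:\b,\llc[\re]}$. The weighted term $(\mhh\ab\cdot\nx\tq/4\tq^2)\re\test$ pairs with $\bb\cdot v$ to give the third LHS term, while the $\bd\cdot\a$ and $(\ik-\bpk)[\re]$ remnants are placed into the explicit weighted error on the RHS.

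Finally, for the boundary integral $\int_\gamma\re\,\test\,(v\cdot n)$, apply \eqref{qq 109}: the $h$ piece yields the $\gamma_-$ contribution, the $(1-\pp)[\re]$ piece yields the $\gamma_+$ contribution, and the $\pp[\re]$ piece vanishes because $\pp[\re]$ is proportional to $\mss$, which lies in $\nk$ pointwise on $\p\Omega$, so $\int(\nx\psi:\b)\mss(v\cdot n)\,dv=0$ by $\b\perp\nk$ in $L^2_v$. The source side gives $\bbr{\nx\psi:\b,\bar\ss}$ directly. The main technical obstacle is the careful algebraic bookkeeping of the third-order $v$-moments of $\b\mh$ and the identification of the tensor $\bbb=\overline{\b}$ such that the combination of $v\cdot(\nx^2\psi:\b)$ and $v\cdot(\nx\psi:\nx\b)$ contributions collapse into the single divergence form $\nx\cdot(\bbb(\nx\psi:\b))$; this is the nontrivial structural identity, while the rest of the derivation is a systematic application of orthogonality and integration by parts.
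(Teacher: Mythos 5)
Your overall route is exactly the paper's: take $\test=\nx\psi:\b$ in \eqref{weak formulation}, decompose $\re$ by \eqref{pp 01}, kill the $\P$ and $c$ pieces by oddness and the $\bd\cdot\a$ piece by orthogonality, and split the boundary integral with \eqref{qq 109}. (For the boundary piece, the precise reason is that $(v\cdot n)\mh\in\nk$ while $\b\in\nnk$, so $\int_{\r^3}(\nx\psi:\b)\,\mh\,(v\cdot n)\ud v=0$; saying ``$\mss\in\nk$ and $\b\perp\nk$'' omits the weight $v\cdot n$, though the fix is immediate.) However, two of the mechanisms you describe are not right as stated and would derail the bookkeeping if executed literally.

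First, $\br{\nx\psi:\b,\llc[\re]}$ is not a ``residual bilinear error'' of $\e^{-1}\br{\lc[\re],\nx\psi:\b}$. That term is exact: $\lc[\bd\cdot\a]=\bd\cdot\ab$ pairs to zero with $\nx\psi:\b$ by oddness, and by self-adjointness with $\lc[\b]=\bbb$ the $(\ik-\bpk)[\re]$ part gives precisely $\e^{-1}\brx{\nx\psi,\varpi}$, with nothing left over. The $\llc[\re]$ term comes instead from the source decomposition $\ss=-\llc[\re]+\bar\ss$ in \eqref{aa 31} and \eqref{final 51}: the weak formulation produces $\br{\nx\psi:\b,\ss}$, and one moves $-\br{\nx\psi:\b,\llc[\re]}$ to the left, leaving $\br{\nx\psi:\b,\bar\ss}$ on the right. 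Your accounting (``the source side gives $\br{\nx\psi:\b,\bar\ss}$ directly'') only balances because these two misstatements cancel. Second, the first LHS term is not produced by integrating by parts in $x$ — that would put derivatives on $\bb$ and create a $\p\Omega$ boundary term, and $\psi$ is not assumed to vanish there. The correct manipulation on the $(\bb\cdot v)\mh$ component of the transport term is to write $\mh\,v\cdot\nx(\nx\psi:\b)=v\cdot\nx\big(\mh(\nx\psi:\b)\big)-(v\cdot\nx\mh)(\nx\psi:\b)$ and use $\int_{\r^3}(v\otimes v)\mh(\nx\psi:\b)\ud v=\int_{\r^3}\bbb(\nx\psi:\b)\ud v$ (valid since $\nx\psi:\b\perp\{\mh,\abs{v}^2\mh\}$), so the divergence lands on $\bbb(\nx\psi:\b)$ with $\bb$ untouched and no boundary term. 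The correction term, via $\nx\mh=\frac{\nx T}{4T^2}\left(\abs{v}^2-5T\right)\mh$ (using $\nx P=0$), contributes an extra $\br{\nx\psi:\b,(\bb\cdot v)\left(\ab\cdot\frac{\nx T}{4T^2}\right)}$, which combines with the weighted term of \eqref{weak formulation} (also carrying $\frac{\nx T}{4T^2}$) to give the coefficient $\frac{\nx T}{2T^2}$ in the third LHS term; attributing that term to the weighted term alone, as you do, comes out short by a factor of two.
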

\begin{proof}
Let the test function in \eqref{weak formulation} be chosen as $\test=\nx\psi:\b$ . We obtain
\begin{align}
    &-\br{v\cdot\nx \Big(\nx\psi:\b\Big),\re}+\e^{-1}\bbr{ \nx\psi:\b, \lc[\re]}+\iint_{\Omega\times\r^3}\left(\mhh\ab\cdot\dfrac{\nx\tq}{4\tq^2}\right)\re\Big(\nx\psi:\b\Big)\\
    =&-\int_{\gamma}\re \Big(\nx\psi:\b\Big)\ud\gamma+\bbr{\nx\psi:\b,\ss}.\no
\end{align}
Using \eqref{pp 01}, oddness and orthogonality of $\b$ with $\pk[\re]+\bd\cdot\a$,
we obtain
\begin{align}
&-\br{\nx\cdot\Big(\bbb(\nx\psi:\b)\Big),\bb}+\e^{-1}\bbrx{\nx\psi,\varpi}+\br{\nx\psi:\b,(\bb\cdot v)\left(\ab\cdot\frac{\nx T}{2T^2}\right)}+\bbr{\nx\psi:\b,\llc[\re]}\\
=&-\int_{\p\Omega\times\r^3}\Big(\nx\psi:\b\Big)\re (v\cdot n)+\br{v\cdot\nx\Big(\nx\psi:\b\Big),\bd\cdot\a+(\ik-\bpk)[\re]}\no\\
&-\br{\nx\psi:\b,\left(\mhh\ab\cdot\frac{\nx T}{2T^2}\right)\Big(\bd\cdot\a+(\ik-\bpk)[\re]\Big)}+\bbrv{\nx\psi:\b,\bar\ss}.\no
\end{align}
Using \eqref{qq 109} and orthogonality of $\b$ with $\pp[\re]$, i.e. $\ds\int_{\r^3}\Big(\nx\psi:\b\Big)\mh(v\cdot n)=0$,
we obtain \eqref{pp 04}.
\end{proof}

\subsubsection{Weak Formulation with Test Function $\nx\phi\cdot\a+\e^{-1}\phi\big(\abs{v}^2-5T\big)\mh$}

\begin{lemma}
    Under the assumption \eqref{assumption:boundary}, for any smooth test function $\phi(x)$ satisfying $\phi\big|_{\p\Omega}=0$, we have 
\begin{align}\label{pp 10}
&-\bbrx{\nx\cdot(\kappa\nx\phi),c}+\e^{-1}5P\bbrx{\phi,\nx T\cdot\bb}+\brx{\nx\phi,\frac{\nx T}{2T^2}\big(\kappa\P+\sigma c\big)}+\br{\nx\phi\cdot\a,\llc[\re]}\\
=&\bbrb{\nx\phi\cdot\a,h}{\gamma_-}-\bbrb{\nx\phi\cdot\a,(1-\pp)[\re]}{\gamma_+}\no\\
&+\br{v\cdot\nx\Big(\nx\phi\cdot\a\Big),(\ik-\bpk)[\re]}-\br{\nx\phi\cdot\a,\left(\mhh\ab\cdot\frac{\nx T}{2T^2}\right)(\ik-\bpk)[\re]}\no\\
&+\e^{-1}\br{\phi\big(\abs{v}^2-5T\big)\mh,\ss_3+\ss_4+\sp}+\bbr{\nx\phi\cdot\a,\bar\ss}.\no
\end{align}
\end{lemma}
\begin{proof}
Multiplying $\phi(x)\in\r$ on both sides of \eqref{aa 35} and integrating over $x\in\Omega$, we obtain
\begin{align}\label{pp 06}
5P\bbrx{\phi,\nx T\cdot\bb}-\bbrx{\nx\phi,\kappa\bd}+\int_{\p\Omega}\phi(\kappa\bd)\cdot n=&\br{\phi\left(\abs{v}^2-5T\right)\mh,\ss_3+\ss_4+\sp}.
\end{align}
Hence, adding $\e^{-1}\times$\eqref{pp 06} and \eqref{pp 03} to eliminate $\e^{-1}\brx{\nx\phi,\kappa\bd}$ yields
\begin{align}\label{pp 08}
&-\bbrx{\nx\cdot\big(\kappa\nx\phi\big),c}+\e^{-1}5P\bbrx{\phi,\nx T\cdot\bb}+\e^{-1}\int_{\p\Omega}\phi(\kappa\bd)\cdot n\\
&+\brx{\nx\phi,\frac{\nx T}{2T^2}\big(\kappa\P+\sigma c\big)}+\bbr{\nx\phi\cdot\a,\llc[\re]}\no\\
=&\bbrb{\nx\phi\cdot\a,h}{\gamma_-}-\bbrb{\nx\phi\cdot\a,(1-\pp)[\re]}{\gamma_+}\no\\
&+\br{v\cdot\nx\Big(\nx\phi\cdot\a\Big),(\ik-\bpk)[\re]}-\br{\nx\phi\cdot\a,\left(\mhh\ab\cdot\frac{\nx T}{2T^2}\right)(\ik-\bpk)[\re]}\no\\
&+\e^{-1}\br{\phi\left(\abs{v}^2-5T\right)\mh,\ss_3+\ss_4+\sp}+\bbr{\nx\phi\cdot\a,\bar\ss}.\no
\end{align}
The assumption $\phi\big|_{\p\Omega}=0$ completely eliminates the boundary term $\ds\e^{-1}\int_{\p\Omega}\phi(\kappa\bd)\cdot n$ in \eqref{pp 08}. Hence, we have \eqref{pp 10}.
\end{proof}

\subsubsection{Weak Formulation with Test Function $\nx\psi:\b+\e^{-1}\psi\cdot v\mh$}

\begin{lemma}
    Under the assumption \eqref{assumption:boundary}, for any smooth function $\psi(x)$ satisfying $\nx\cdot\psi=0$, $\psi\big|_{\p\Omega}=0$, we have 
\begin{align}\label{pp 16}
&-\bbrx{\lambda\Delta_x\psi,\bb}-\bbr{\bbb\cdot\big(\nx\psi:\nx\b\big)+\big(\nx\cdot\bbb\big)\big(\nx\psi:\b\big),\bb}\\
&+\br{\nx\psi:\b,(\bb\cdot v)\left(\ab\cdot\frac{\nx T}{2T^2}\right)}+\bbr{\nx\psi:\b,\llc[\re]}\no\\
=&\bbrb{\nx\psi:\b,h}{\gamma_-}-\bbrb{\nx\psi:\b,(1-\pp)[\re]}{\gamma_+}\no\\
&+\br{v\cdot\nx\Big(\nx\psi:\b\Big),\bd\cdot\a+(\ik-\bpk)[\re]}-\br{\nx\psi:\b,\left(\mhh\ab\cdot\frac{\nx T}{4T^2}\right)\Big(\bd\cdot\a+(\ik-\bpk)[\re]\Big)}\no\\
&+\e^{-1}\br{\psi\cdot v\mh,\ss_3+\sp}+\bbr{\nx\psi:\b,\bar\ss}.\no
\end{align}
\end{lemma}
\begin{proof}
Multiplying $\psi(x)\in\r^3$ on both sides of \eqref{conservation law 2} and integrating over $x\in\Omega$, we obtain
\begin{align}\label{pp 12}
-P\bbrx{\nx\cdot\psi, \P}-\bbrx{\nx\psi,\varpi}+\int_{\p\Omega}\Big(P\P\psi+\psi\cdot\varpi\Big)\cdot n= &\br{\psi\cdot v\mh,\ss_3+\sp}.
\end{align}
Hence, adding $\e^{-1}\times$\eqref{pp 12} and \eqref{pp 04} to eliminate $\e^{-1}\brx{\nx\psi,\varpi}$ yields
\begin{align}\label{pp 14}
&-\br{\nx\cdot\Big(\bbb(\nx\psi:\b)\Big),\bb}-\e^{-1}P\bbrx{\nx\cdot\psi, \P}+\e^{-1}\int_{\p\Omega}\Big(P\P\psi+\psi\cdot\varpi\Big)\cdot n\\
&+\br{\nx\psi:\b,(\bb\cdot v)\left(\ab\cdot\frac{\nx T}{2T^2}\right)}+\bbr{\nx\psi:\b,\llc[\re]}\no\\
=&\bbrb{\nx\psi:\b,h}{\gamma_-}-\bbrb{\nx\psi:\b,(1-\pp)[\re]}{\gamma_+}\no\\
&+\br{v\cdot\nx\Big(\nx\psi:\b\Big),\bd\cdot\a+(\ik-\bpk)[\re]}-\br{\nx\psi:\b,\left(\mhh\ab\cdot\frac{\nx T}{4T^2}\right)\Big(\bd\cdot\a+(\ik-\bpk)[\re]\Big)}\no\\
&+\e^{-1}\br{\psi\cdot v\mh,\ss_3+\sp}+\bbr{\nx\psi:\b,\bar\ss}.\no
\end{align}
The assumptions $\nx\cdot\psi=0$ and $\psi\big|_{\p\Omega}=0$ eliminates $\e^{-1}P\bbrx{\nx\cdot\psi, \P}$ and $\e^{-1}\ds\int_{\p\Omega}\Big(P\P\psi+\psi\cdot\varpi\Big)\cdot n$ in \eqref{pp 14}. Hence, we have
\begin{align}\label{pp 15}
&-\br{\nx\cdot\Big(\bbb(\nx\psi:\b)\Big),\bb}+\br{\nx\psi:\b,(\bb\cdot v)\left(\ab\cdot\frac{\nx T}{2T^2}\right)}+\br{\nx\psi:\b,\llc[\re]}\\
=&\bbrb{\nx\psi:\b,h}{\gamma_-}-\bbrb{\nx\psi:\b,(1-\pp)[\re]}{\gamma_+}\no\\
&+\br{v\cdot\nx\Big(\nx\psi:\b\Big),\bd\cdot\a+(\ik-\bpk)[\re]}-\br{\nx\psi:\b,\left(\mhh\ab\cdot\frac{\nx T}{4T^2}\right)\Big(\bd\cdot\a+(\ik-\bpk)[\re]\Big)}\no\\
&+\e^{-1}\br{\psi\cdot v\mh,\ss_3+\sp}+\br{\nx\psi:\b,\bar\ss}.\no
\end{align}
Using $\nx\cdot\psi=0$, a standard simplification
\begin{align}
\\
    \nx\cdot\Big(\bbb(\nx\psi:\b)\Big)=&\bbb\cdot\nx\big(\nx\psi:\b\big)+\big(\nx\cdot\bbb\big)\big(\nx\psi:\b\big)
    =\lambda\dx\psi+\bbb\cdot\big(\nx\psi:\nx\b\big)+\big(\nx\cdot\bbb\big)\big(\nx\psi:\b\big),\no
\end{align}
further yields \eqref{pp 16}.
\end{proof}

\section{Basic Energy Estimates}\label{sec:energy-est}

In this section, we plan to prove Proposition \ref{thm:energy}.

\subsection{Energy Structure}

\begin{lemma}\label{lem:final 9}
Under the assumption \eqref{assumption:boundary}, we have
\begin{align}\label{qq 34}
    \e^{-1}\tnms{(1-\pp)[\re]}{\gamma_+}^2+\e^{-2}\tnm{\be}^2+\e^{-2}\um{(\ik-\bpk)[\re]}^2
    \ls&\e^{-1}\br{\re-c\mh\left(\abs{\vv}^2-5\tq\right)-\Big(\nx\xi\cdot\a\Big),\ss}+\oot\xnm{\re}^2+\oot.
\end{align}
\end{lemma}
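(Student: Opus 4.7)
The plan is to test the remainder equation \eqref{remainder.} directly against $\re$, harvest coercivity of $\lc$ via the Hodge split \eqref{energy 11}, and then execute the chain of identities \eqref{string} so that the cubic-velocity obstruction cancels exactly against the $\e^{-1}\int\k\abs{\nx\xi}^{2}$ piece retained from the coercivity. Concretely, multiplying \eqref{remainder.} by $\re$ and applying Green's identity (Lemma \ref{remainder lemma 2}) gives
\begin{equation*}
\tfrac{1}{2}\int_{\gamma}\re^{2}(v\cdot n) + \brv{\mhh\ab\cdot\tfrac{\nx\tq}{4\tq^{2}}\re,\re} + \e^{-1}\br{\re,\lc[\re]} = \br{\re,\ss}.
\end{equation*}
For the boundary term, splitting $\re\vert_{\gamma_-}=\pp[\re]+h$ and $\re\vert_{\gamma_+}=\pp[\re]+(1-\pp)[\re]$, and using that $\mss(v)$ is even in $v$ so $\tnms{\pp[\re]}{\gamma_+}=\tnms{\pp[\re]}{\gamma_-}$, Cauchy--Schwarz together with Lemma \ref{h-estimate} bounds the boundary integral from below by $c\,\tnms{(1-\pp)[\re]}{\gamma_+}^{2}$ up to an $\oot\e^{4-2\al}$ error and a piece absorbable into $\oot\xnm{\re}^{2}$. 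For the collision term, the orthogonality $\brr{\bd\cdot\a,(\ik-\bpk)[\re]}=0$ together with \eqref{energy 11} and the standard coercivity of $\lc$ yields
\begin{equation*}
\br{\re,\lc[\re]} \gs \int_{\Omega}\k\abs{\nx\xi}^{2} + \int_{\Omega}\k\abs{\be}^{2} + \um{(\ik-\bpk)[\re]}^{2}.
\end{equation*}

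The heart of the proof is the cubic-velocity term $\brv{\mhh\ab\cdot\nx\tq/(4\tq^{2})\re,\re}$. Expanding $\re$ via \eqref{pp 01}, parity in $v$ kills most pieces, leaving as the leading contribution the cross $\bb\cdot v\mh$ against $c(\abs{\vv}^{2}-5\tq)\mh$, which evaluates (by the definition of $\k$) to $5P\bbrx{\nx\tq\cdot\bb,c}$ modulo remainders of size $\oot\xnm{\re}^{2}$. I would then execute the chain \eqref{string} as genuine equalities modulo source corrections: mass conservation \eqref{conservation law 1} gives $\bbrx{\nx\tq\cdot\bb,c}=\bbrx{\nx\cdot(\tq\bb),c}+O\!\big(\br{c\mh(\abs{\vv}^{2}-5\tq),\ss}\big)$; energy conservation \eqref{conservation law 3} replaces $5P\nx\cdot(\tq\bb)$ by $-\nx\cdot(\k\bd)$ up to a further such source correction; the Poisson relation \eqref{tt 01} identifies $\nx\cdot(\k\bd)=\nx\cdot(\k\nx\xi)$; and integration by parts combined with \eqref{final 63} (with test function $\eta=\xi$) cleans out the $\be$-piece. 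The net outcome is
\begin{equation*}
5P\bbrx{\nx\tq\cdot\bb,c} = -\e^{-1}\!\int_{\Omega}\k\abs{\nx\xi}^{2} + \br{c\mh(\abs{\vv}^{2}-5\tq)+\nx\xi\cdot\a,\ss} + O\!\big(\oot\xnm{\re}^{2}\big),
\end{equation*}
so the $\e^{-1}\int\k\abs{\nx\xi}^{2}$ piece in the coercivity is annihilated, while the two source pairings recombine with $\br{\re,\ss}$ to produce exactly $\br{\re-c\mh(\abs{\vv}^{2}-5\tq)-\nx\xi\cdot\a,\ss}$. Multiplying the resulting inequality by $\e^{-1}$ and collecting boundary and quadratic errors yields \eqref{qq 34}.

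The main obstacle is the exact bookkeeping of these cancellations: every ``$\approx$'' in \eqref{string} must be expanded as an equality modulo a precisely identified source contribution, and these contributions must assemble collectively into $\br{c\mh(\abs{\vv}^{2}-5\tq)+\nx\xi\cdot\a,\ss}$ and nothing else. Any mismatch would either leave an uncancelled residue of $\bbrx{\nx\tq,\bb c}$ (re-introducing the fatal $\e^{-1}$ loss which motivates this entire reformulation in the compressible regime), or produce an extra boundary trace from integration by parts that cannot be absorbed at this stage. Hence one must track, with full precision, every conservation-law source correction from \eqref{conservation law 1}--\eqref{conservation law 3} and every boundary contribution from the Poisson problem \eqref{tt 01}, which is what forces the peculiar form of the RHS of \eqref{qq 34}.
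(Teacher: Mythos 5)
Your overall architecture (test with $\re$, use Green's identity, coercivity plus the orthogonality \eqref{energy 11}, isolate $5P\bbrx{\nx\tq,\bb c}$ by parity, then cancel it against $\e^{-1}\int_{\Omega}\k\abs{\nx\xi}^2$) matches the paper, and your treatment of the boundary integral and of the collision term is fine. However, there is a genuine gap at the decisive step. You claim the chain \eqref{string} can be realized ``as genuine equalities modulo source corrections'' using only (i) the conservation laws \eqref{conservation law 1}, \eqref{conservation law 3}, (ii) the Poisson relation \eqref{tt 01}, and (iii) integration by parts together with \eqref{final 63}. These tools do produce the exact identity $5P\bbrx{\nx\tq,\bb c}=-\bbrx{c,\nx\cdot(\k\bd)}+\br{c\mh(\abs{\vv}^2-5\tq),\ss}$ (this is \eqref{aa 35} paired with $c$), but none of them can convert $-\bbrx{c,\nx\cdot(\k\nx\xi)}$ into $-\e^{-1}\int_{\Omega}\k\abs{\nx\xi}^2+\bbr{\nx\xi\cdot\a,\ss}+O(\oot\xnm{\re}^2)$: your listed steps never pair $\ss$ against $\nx\xi\cdot\a$, so the term $\bbr{\nx\xi\cdot\a,\ss}$ in your ``net outcome'' is asserted rather than derived. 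In the paper this step is a second use of the kinetic equation: one takes the weak formulation \eqref{pp 03} with the test function $\nx\phi\cdot\a$ and $\phi=\xi$, obtaining \eqref{new 3}, in which $-\bbrx{c,\nx\cdot(\k\bd)}$ and $+\e^{-1}\int_{\Omega}\k\abs{\nx\xi}^2$ appear together (the identity $\e^{-1}\bbrx{\nx\xi,\k\bd}=\e^{-1}\int\k\abs{\nx\xi}^2$ being exactly where \eqref{final 63} enters); subtracting $\e^{-1}\times$\eqref{new 3} from the energy inequality is what cancels $\e^{-2}\int\k\abs{\nx\xi}^2$ and assembles the combined source $\e^{-1}\br{\re-c\mh(\abs{\vv}^2-5\tq)-\nx\xi\cdot\a,\ss}$.

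This omission is not cosmetic. The natural attempt to complete your chain without the extra test function, namely writing $c=\z-\e^{-1}\xi$ so that $-\e^{-1}\bbrx{c,\nx\cdot(\k\nx\xi)}=-\e^{-1}\bbrx{\z,\nx\cdot(\k\bd)}-\e^{-2}\int\k\abs{\nx\xi}^2$, fails for two reasons: bounds on $\z$ are not available at this stage (Propositions \ref{prop:z-bound} and \ref{prop:z-splitting} are proved \emph{using} this reduced energy estimate), and even formally the leftover term $\e^{-1}5P\bbrx{\z,\bb\cdot\nx\tq}$ is only of size $\oot\e^{-1/2}\xnm{\re}^2$, which is too large for \eqref{qq 34}. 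Moreover, the second use of the equation generates error terms that your $O(\oot\xnm{\re}^2)$ does not account for explicitly: the boundary traces $\e^{-1}\bbrb{\nx\xi\cdot\a,(1-\pp)[\re]}{\gamma_+}$ and $\e^{-1}\bbrb{\nx\xi\cdot\a,h}{\gamma_-}$ and the commutator $\e^{-1}\br{v\cdot\nx(\nx\xi\cdot\a),(\ik-\bpk)[\re]}$, whose absorption requires the elliptic and trace bounds of Lemma \ref{remark 01} ($\nm{\xi}_{H^2}+\tnms{\nx\xi}{\p\Omega}\ls\oot\tnm{\bb}+\oot\e^{3/2-\al}$) together with small-constant absorption into the left-hand side. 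You need to add this test-function step (and the control of the resulting terms) to make the cancellation and the precise right-hand side of \eqref{qq 34} legitimate.
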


\begin{proof}
Taking $\test=\e^{-1}\re$ in the weak formulation \eqref{weak formulation}, we have
\begin{align}
    \e^{-1}\bbr{\vv\cdot\nx\re,\re}+\e^{-1}\br{\mhh\ab\cdot\frac{\nx\tq}{4\tq^2},\re^2}+\e^{-2}\bbr{\re,\lc[\re]}
    =\e^{-1}\bbr{\re,\ss}.
\end{align}

\paragraph{\underline{Step 1: Weak Formulation}}
Using Green's identity in Lemma \ref{remainder lemma 2}, we obtain
\begin{align}
&\frac{\e^{-1}}{2}\iint_{\Omega\times\r^3}\vv\cdot\nx(\re^2)=\frac{\e^{-1}}{2}\int_{\p\Omega\times\r^3}\re^2(\vv\cdot\vn)=\frac{\e^{-1}}{2}\left(\int_{\gamma_+}\babs{\re}^2\ud\gamma-\int_{\gamma_-}\babs{\pp[\re]+h}^2\ud\gamma\right)\\
=&\frac{\e^{-1}}{2}\left(\int_{\gamma_+}\abs{\re}^2\ud\gamma-\int_{\gamma_-}\babs{\pp[\re]}^2\ud\gamma-\int_{\gamma_-}\babs{h}^2\ud\gamma-2\int_{\gamma_-}\pp[\re]h\ud\gamma\right)
=\frac{\e^{-1}}{2}\left(\tnms{(1-\pp)[\re]}{\gamma_+}^2-\tnms{h}{\gamma_-}^2\right).\no
\end{align}
Here, we use Proposition \ref{lemma:final 2} to conclude that
\begin{align}
    \int_{\gamma_-}\pp[\re]h\ud\gamma=0.
\end{align}
Also, we use the decomposition of $(\ik-\pk)[\re]=\bd\cdot\a+(\ik-\bpk)[\re]$ and orthogonality to obtain
\begin{align}
\br{\re,\lc[\re]}
=&\br{(\ik-\bpk)[\re],\lc\Big[(\ik-\bpk)[\re]\Big]}+\bbr{\bd\cdot\a,\lc[\bd\cdot\a]}
\end{align}
where the middle term vanishes due to the orthogonality of $\a$ and $(\ik-\bpk)[\re]$. On the other hand, using the designed orthogonality \eqref{energy 11}, we have
\begin{align}
\bbr{\bd\cdot\a,\lc[\bd\cdot\a]}=\bbr{\bd\cdot\a,\bd\cdot\ab}=\int_{\Omega}\k\abs{\nx\xi}^2+\int_{\Omega}\k\abs{\be}^2.
\end{align}
Noticing the coercivity property of $\lc$ as \eqref{coercivity}, there exists a uniform $\d_0>0$ such that
\begin{align}
    \br{(\ik-\bpk)[\re],\lc\Big[(\ik-\bpk)[\re]\Big]}\geq \d_0\um{(\ik-\bpk)[\re]}^2.
\end{align}
Hence, we have
\begin{align}
    \iint_{\Omega\times\r^3}{\re\lc[\re]}\geq\d_0\um{(\ik-\bpk)[\re]}^2+\int_{\Omega}\k\abs{\nx\xi}^2+\int_{\Omega}\k\abs{\be}^2.
\end{align}
In summary, using Lemma \ref{h-estimate}, we have shown that
\begin{align}\label{energy 01}
    &\frac{\e^{-1}}{2}\tnms{(1-\pp)[\re]}{\gamma_+}^2+\e^{-2}\d_0\um{(\ik-\bpk)[\re]}^2+\e^{-2}\int_{\Omega}\k\abs{\nx\xi}^2+\e^{-2}\int_{\Omega}\k\abs{\be}^2\\
    \leq&-\e^{-1}\br{\mhh\ab\cdot\frac{\nx\tq}{4\tq^2},\re^2}+ \frac{\e^{-1}}{2}\bbr{\re,\ss}
    +\e^{-1}\tnms{h}{\gamma_-}^2
    \leq-\e^{-1}\br{\mhh\ab\cdot\frac{\nx\tq}{4\tq^2},\re^2}+ \e^{-1}\bbr{\re,\ss}
    +\oot\e.\no
\end{align}

\paragraph{\underline{Step 2: Estimates of $\ds\br{\mhh\ab\cdot\frac{\nx\tq}{4\tq^2},\re^2}$}}
We decompose
\begin{align}
\br{\mhh\ab\cdot\frac{\nx\tq}{4\tq^2},\re^2}&=\br{\mhh\ab\cdot\frac{\nx\tq}{4\tq^2},\Big(\pk[\re]+\bd\cdot\a+(\ik-\bpk)[\re]\Big)^2}
=\hk_1+\hk_2+\hk_3+\hk_4,
\end{align}
where
\begin{align}
\hk_1&:=\br{\mhh\ab\cdot\frac{\nx\tq}{4\tq^2},\Big(\pk[\re]\Big)^2}=\br{\mh\ab\cdot\frac{\nx\tq}{4\tq^2},\bigg(\P+\vv\cdot
\bb+\Big(\abs{\vv}^2-5\tq\Big)c\bigg)^2}\\
\hk_2&:=\br{\mhh\ab\cdot\frac{\nx\tq}{4\tq^2},\Big(\bd\cdot\a\Big)^2},\\
\hk_3&:=2\br{\mhh\ab\cdot\frac{\nx\tq}{4\tq^2},\pk[\re](\bd\cdot\a)}=\br{\ab\cdot\frac{\nx\tq}{2\tq^2},\bigg(\P +\vv\cdot
\bb+\Big(\abs{\vv}^2-5\tq\Big)c\bigg)(\bd\cdot\a)},\\
\hk_4&:=\br{\mhh\ab\cdot\frac{\nx\tq}{4\tq^2},\Big(2\pk[\re]+2(\bd\cdot\a)+(\ik-\bpk)[\re]\Big)(\ik-\bpk)[\re]}.
\end{align}
Due to symmetry/oddness, $\P^2$, $\bb\otimes\bb$, $c^2$ and $\P c$ terms vanish. Due to orthogonality, $\P \bb$ term also vanishes. Hence, recalling the precise computation $\ds\int_{\r^3}\m\abs{\vv}^2\Big(\abs{\vv}^2-5\tq\Big)^2=30PT^2$, we are left with
\begin{align}
\hk_1&=2\br{\mh\ab\cdot\frac{\nx\tq}{4\tq^2},(\vv\cdot\bb)\Big(\abs{\vv}^2-5\tq\Big)c}
=2\int_{\Omega}\frac{\nx\tq}{4\tq^2}\cdot\int_{\r^3}\mh\ab(\vv\cdot\bb)\Big(\abs{\vv}^2-5\tq\Big)c\\
&=\frac{2}{3}\int_{\Omega}\frac{\nx\tq}{4\tq^2}\cdot(\bb c)\int_{\r^3}\m\abs{\vv}^2\Big(\abs{\vv}^2-5\tq\Big)^2
=5P\bbrx{\nx\tq,\bb c}.\no
\end{align}
Due to symmetry/oddness, we know $\hk_2=0$. On the other hand, for $\hk_3$,
due to symmetry, the $\bb$ term vanishes. Hence, we have
\begin{align}
\hk_3&=\br{\ab\cdot\frac{\nx\tq}{2\tq^2},\bigg(\P +\Big(\abs{\vv}^2-5\tq\Big)c\bigg)(\bd\cdot\a)}\\
&=\br{\ab\cdot\frac{\nx\tq}{2\tq^2},\P (\bd\cdot\a)}
+\br{\ab\cdot\frac{\nx\tq}{2\tq^2},\Big(\abs{\vv}^2-5\tq\Big)c(\bd\cdot\a)}=\brx{\bd\cdot\frac{\nx\tq}{2\tq^2},\P \k+c\si}.\no
\end{align}
In summary, we have
\begin{align}\label{energy 02}
\br{\mhh\ab\cdot\frac{\nx\tq}{4\tq^2},\re^2}=&5P\bbrx{\nx\tq,\bb c}+\brx{\bd\cdot\frac{\nx\tq}{2\tq^2},\k\P+\si c}\\
&+\br{\mhh\ab\cdot\frac{\nx\tq}{4\tq^2},\Big(2\pk[\re]+2(\bd\cdot\a)+(\ik-\bpk)[\re]\Big)(\ik-\bpk)[\re]}.\no
\end{align}
Inserting \eqref{energy 02} into \eqref{energy 01}, we obtain
\begin{align}\label{final 37}
    &\frac{\e^{-1}}{2}\tnms{(1-\pp)[\re]}{\gamma_+}^2+\e^{-2}\d_0\um{(\ik-\bpk)[\re]}^2+\e^{-2}\int_{\Omega}\k\abs{\nx\xi}^2+\e^{-2}\int_{\Omega}\k\abs{\be}^2\\
    \leq&-\e^{-1}5P\bbrx{\nx\tq,\bb c}-\e^{-1}\brx{\bd\cdot\frac{\nx\tq}{2\tq^2},\k\P+\si c}\no\\
    &-\e^{-1}\br{\mhh\ab\cdot\frac{\nx\tq}{4\tq^2},\Big(2\pk[\re]+2(\bd\cdot\a)+(\ik-\bpk)[\re]\Big)(\ik-\bpk)[\re]}+ \e^{-1}\bbr{\re,\ss}+\oot\e.\no
\end{align}
Taking inner product of $c$ and \eqref{aa 35}, we have
\begin{align}
5P\bbrx{\nx\tq,\bb c}&=-\bbrx{c,\nx\cdot(\k\bd)}+\br{c\mh\left(\abs{\vv}^2-5\tq\right),\ss}.
\end{align}
Hence, we have
\begin{align}\label{energy 03}
    &\frac{\e^{-1}}{2}\tnms{(1-\pp)[\re]}{\gamma_+}^2+\e^{-2}\d_0\um{(\ik-\bpk)[\re]}^2+\e^{-2}\int_{\Omega}\k\abs{\nx\xi}^2+\e^{-2}\int_{\Omega}\k\abs{\be}^2\\
    \leq&\e^{-1}\bbrx{c,\nx\cdot(\k\bd)}-\e^{-1}\brx{\bd\cdot\frac{\nx\tq}{2\tq^2},\k\P+\si c}\no\\
    &-\e^{-1}\br{\mhh\ab\cdot\frac{\nx\tq}{4\tq^2},\Big(2\pk[\re]+2(\bd\cdot\a)+(\ik-\bpk)[\re]\Big)(\ik-\bpk)[\re]}+\e^{-1}\br{\re-c\mh\left(\abs{\vv}^2-5\tq\right),\ss}
    +\oot\e.\no
\end{align}

\paragraph{\underline{Step 3: Estimates of $\ds\bbrx{c,\nx\cdot(\k\bd)}$ and $\ds\brx{\nx\xi\cdot\frac{\nx\tq}{2\tq^2},\k\P+\si c}$}}
Letting $\phi=\xi$ in \eqref{pp 03} and using \eqref{tt 01}, we obtain
\begin{align}\label{new 3}
&-\bbrx{c,\nx\cdot(\k\bd)}+\brx{\frac{\nx T}{2T^2}\cdot\nx\xi,\kappa\P+\sigma c}+\e^{-1}\int_{\Omega}\kappa\abs{\nx\xi}^2\\
=&\bbrb{\nx\xi\cdot\a,h}{\gamma_-}-\bbrb{\nx\xi\cdot\a,(1-\pp)[\re]}{\gamma_+}\no\\
&+\br{v\cdot\nx\Big(\nx\xi\cdot\a\Big),(\ik-\bpk)[\re]}-\br{\Big(\nx\xi\cdot\a\Big)\left(\mhh\ab\cdot\frac{\nx T}{2T^2}\right),(\ik-\bpk)[\re]}+\bbr{\nx\xi\cdot\a,\ss}.\no
\end{align}
Subtracting $\e^{-1}\times$ \eqref{new 3} from \eqref{energy 03}, we arrive at
\begin{align}\label{energy 04}
    &\frac{\e^{-1}}{2}\tnms{(1-\pp)[\re]}{\gamma_+}^2+\e^{-2}\tnm{\be}^2+\e^{-2}\d_0\um{(\ik-\bpk)[\re]}^2\\
    \leq&-\e^{-1}\brx{\be\cdot\frac{\nx\tq}{2\tq^2},\k\P+\si c}-\e^{-1}\br{\mhh\ab\cdot\frac{\nx\tq}{4\tq^2},\Big(2\pk[\re]+2(\bd\cdot\a)+(\ik-\bpk)[\re]\Big)(\ik-\bpk)[\re]}\no\\
    &-\e^{-1}\br{v\cdot\nx\Big(\nx\xi\cdot\a\Big),(\ik-\bpk)[\re]}+\e^{-1}\br{(\nx\xi\cdot\a)\left(\mhh\ab\cdot\frac{\nx T}{2T^2}\right),(\ik-\bpk)[\re]}\no\\
    &+\e^{-1}\bbrb{\nx\xi\cdot\a,(1-\pp)[\re]}{\gamma_+}-\e^{-1}\bbrb{\nx\xi\cdot\a,h}{\gamma_-}+\e^{-1}\br{\re-c\mh\left(\abs{\vv}^2-5\tq\right)-\Big(\nx\xi\cdot\a\Big),\ss}+\oot\e.\no
\end{align}
Note that the crucial term $\e^{-1}\ds\int_{\Omega}\kappa\abs{\nx\xi}^2$ on the LHS of \eqref{energy 03} is cancelled at this step. This is the cost paid to remove the difficult term $-\e^{-1}5P\bbrx{\nx\tq,\bb c}$ from \eqref{final 37}.

We can further estimate each term on the RHS using H\"older's inequality, Lemma \ref{h-estimate} and Lemma \ref{remark 01}: 
\begin{align}
    &\abs{\e^{-1}\brx{\be\cdot\frac{\nx\tq}{2\tq^2},\k\P+\si c}}\ls \oot\tnm{\P}^2+\oot\tnm{c}^2+\oot\e^{-2}\tnm{\be}^2\ls\oot\xnm{\re}^2,
\end{align}
\begin{align}
    &\abs{\e^{-1}\br{\mhh\ab\cdot\frac{\nx\tq}{4\tq^2},\Big(2\pk[\re]+2(\bd\cdot\a)\Big)(\ik-\bpk)[\re]}}\\
    \ls&\ \oot\tnm{\pk[\re]}^2+\oot\tnm{\nx\xi}^2+\oot\tnm{\be}^2+\oot\e^{-2}\um{(\ik-\bpk)[\re]}^2\ls\oot\xnm{\re}^2,\no
\end{align}
\begin{align}
    \abs{\e^{-1}\br{v\cdot\nx\Big(\nx\xi\cdot\a\Big),(\ik-\bpk)[\re]}}&\ls\cc_1^{-1}\nm{\xi}^2_{H^2}+\cc_1\e^{-2}\um{(\ik-\bpk)[\re]}^2\\
    &\ls\cc_1\e^{-2}\um{(\ik-\bpk)[\re]}^2+\oot\xnm{\re}^2+\oot\e,\no
\end{align}
\begin{align}
    \abs{\e^{-1}\br{(\nx\xi\cdot\a)\left(\mhh\ab\cdot\frac{\nx T}{2T^2}\right),(\ik-\bpk)[\re]}}&\ls \nm{\xi}^2_{H^1}+\oot\e^{-2}\um{(\ik-\bpk)[\re]}^2\ls\oot\xnm{\re}^2+\oot\e,
\end{align}
\begin{align}
    \abs{\e^{-1}\bbrb{\nx\xi\cdot\a,(1-\pp)[\re]}{\gamma_+}}&\ls \cc_2^{-1}\e^{-1}\tnms{\nx\xi}{\p\Omega}^2+\cc_2\e^{-1}\tnms{(1-\pp)[\re]}{\gamma_+}^2\\
    &\ls\cc_2\e^{-1}\tnms{(1-\pp)[\re]}{\gamma_+}^2+\oot\xnm{\re}^2+\oot,\no
\end{align}
\begin{align}
    \abs{\e^{-1}\bbrb{\nx\xi\cdot\a,h}{\gamma_-}}\ls \e^{-1}\tnms{\nx\xi}{\p\Omega}^2+\e^{-1}\tnms{h}{\gamma_-}^2\ls \oot\xnm{\re}^2+\oot.
\end{align}
Also, using the weighted $L^{\infty}$ estimate to control the large velocity, we have
\begin{align}
    &\abs{\e^{-1}\br{\mhh\ab\cdot\frac{\nx\tq}{4\tq^2},\Big((\ik-\bpk)[\re]\Big)^2}}\\
    \ls&\abs{\e^{-1}\int_{\Omega}\int_{\abs{v}\leq\e^{-\frac{1}{4}}}\left(\mhh\ab\cdot\frac{\nx\tq}{4\tq^2}\right)\Big((\ik-\bpk)[\re]\Big)^2}+\abs{\e^{-1}\int_{\Omega}\int_{\abs{v}\geq\e^{-\frac{1}{4}}}\left(\mhh\ab\cdot\frac{\nx\tq}{4\tq^2}\right)\Big((\ik-\bpk)[\re]\Big)^2}\no\\
    \ls&\oot\e^{-2}\um{(\ik-\bpk)[\re]}^2+\oot\e^3\lnmm{\re}^2\ls\oot\xnm{\re}^2,\no
\end{align}
Collecting all above, for $\cc_1,\cc_2\ll1$, we have \eqref{qq 34}.
\end{proof}

\subsection{Estimates of Source Terms}

In this section, we consider the source term in \eqref{qq 34}:
\begin{align}
    \e^{-1}\br{\re-c\mh\left(\abs{\vv}^2-5\tq\right)-\Big(\nx\xi\cdot\a\Big),\ss}
    =&\e^{-1}\br{\P\mh+\bb\cdot v\mh+\be\cdot\a+(\ik-\bpk)[\re],\ss}.
\end{align}

\begin{lemma}\label{lem:final 10}
Under the assumption \eqref{assumption:boundary}, we have
\begin{align}\label{qq 36}
    \abs{\e^{-1}\br{\re-c\mh\Big(\abs{\vv}^2-5\tq\Big)-\Big(\nx\xi\cdot\a\Big),\ss}}
    \ls&\oo\Big(\e^{-2}\tnm{\be}^2+\e^{-2}\tnm{(\ik-\bpk)[\re]}^2\Big)\\
    &+\oot\xnm{\re}^2+\xnm{\re}^4+\oot.\no
\end{align}
\end{lemma}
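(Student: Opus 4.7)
The plan is to decompose the source $\ss$ into its constituent pieces as catalogued in Section~\ref{sec:bs},
\begin{align*}
\ss = \llc[\re] + \ss_0 + \ss_1 + \ss_2 + \ss_3 + \ss_4 + \ss_5 + \sp,
\end{align*}
and to write the test function as $g := \P\mh + \bb\cdot v\mh + \be\cdot\a + (\ik-\bpk)[\re]$. One then estimates each of the inner products $\e^{-1}\br{g_i, \ss_j}$ by applying the bounds from Lemmas~\ref{ssl-estimate}--\ref{ssp-estimate} together with H\"older's and Young's inequalities. The key $X$-norm inputs on the test-function factors are $\tnm{\P}\ls\e\xnm{\re}$, $\tnm{\bb}\ls\e^{\frac{1}{2}}\xnm{\re}$, $\tnm{\be}\ls\e\xnm{\re}$, $\um{(\ik-\bpk)[\re]}\ls\e\xnm{\re}$, and the $L^6$-controls $\pnm{\P}{6},\pnm{\bb}{6}\ls\xnm{\re}$; the contributions paired with $\be$ and $(\ik-\bpk)[\re]$ will ultimately be absorbed into the reserve $\oo\e^{-2}\tnm{\be}^2 + \oo\e^{-2}\um{(\ik-\bpk)[\re]}^2$ furnished by the left-hand side of \eqref{qq 34}.

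For the benign pieces $\llc[\re],\ss_0,\ss_1,\ss_5$, the $L^2_\nu$-estimates paired with Cauchy--Schwarz and Young produce clean contributions of the form
\begin{align*}
\oo\e^{-2}\tnm{\be}^2 + \oo\e^{-2}\um{(\ik-\bpk)[\re]}^2 + \oot\xnm{\re}^2 + \oot\e^{2-2\al}.
\end{align*}
For the nonlinear term $\ss_2$, Lemma~\ref{ss2-estimate} combined with $\um{g}\ls\xnm{\re}$ yields a cubic contribution of size $\e^{\al-2}\xnm{\re}^3$, which Young's inequality splits into the targeted $\oot\xnm{\re}^2 + \e^{2\al-2}\xnm{\re}^4$. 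The crucial orthogonality \eqref{final 25}, namely $\br{v\mh,\ss_4}=0$, removes the otherwise dangerous pairing $\e^{-1}\br{\bb\cdot v\mh,\ss_4}$, after which the remaining pairings of $\ss_4$ are controlled by $\tnm{\br{v}^2\ss_4}\ls\oot\e^{2-\al}$ and contribute only to the $\oot\e^{2-2\al}$ error. The artificial term $\sp$ is rendered negligible by choosing $m$ arbitrarily large in Lemma~\ref{ssp-estimate}.

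The main obstacle is the boundary-layer source $\ss_3$, whose bounds carry unfavorable $\e^{-\al}$ weights. For each pairing with $\ss_3$ I would select the H\"older exponent $\N$ carefully, trading the $L^\N$-bound $\pnm{\ss_3}{\N}\ls\oot\e^{\frac{2}{\N}-\al}$ and the weighted $L^2$-bound $\tnm{\br{v}^2\ss_3}\ls\oot\e^{1-\al}$ of Lemma~\ref{ss3-estimate} against the $L^2$- and $L^6$-controls of $\P,\bb,\be,(\ik-\bpk)[\re]$ from $\xnm{\re}$; these source-bounds themselves rely on the BV regularity of $\fb_1$ from Theorem~\ref{boundary regularity}, which is essential to avoid an $\abs{\ln\e}$ loss from the grazing-set cutoff. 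The pairings with $\be\cdot\a$ and $(\ik-\bpk)[\re]$ are absorbed into the $\oo\e^{-2}(\cdot)$ reserves on the left, while those with $\P\mh$ and $\bb\cdot v\mh$, using the $\e$- and $\e^{1/2}$-gains from the $X$-norm bounds, yield after Young contributions of the form $\oot\xnm{\re}^2 + \oot\e^{2-2\al}$. Summing all contributions across $g$ and $\ss$ completes the estimate.
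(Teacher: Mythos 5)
Your overall reduction (pair $\ss$ against $g=\P\mh+\bb\cdot v\mh+\be\cdot\a+(\ik-\bpk)[\re]$ and absorb the $\be$ and $(\ik-\bpk)[\re]$ pairings into the $\oo\e^{-2}$ reserves) is the paper's reduction, but there is a genuine gap in how you treat the $\Gamma$-type sources $\llc[\re],\ss_0,\ss_1,\ss_2,\ss_5$. The paper's proof hinges on the fact that these terms lie in $\nnk$ pointwise in $x$, so their pairings with the hydrodynamic pieces $\P\mh$ and $\bb\cdot v\mh$ vanish \emph{identically}, and only the pairings with $\be\cdot\a+(\ik-\bpk)[\re]$ remain. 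You never invoke this. Without it the $\bb$-pairing is fatal: e.g.\ $\e^{-1}\abs{\br{\bb\cdot v\mh,\llc[\re]+\ss_1}}\ls\oot\e^{-1}\tnm{\bb}\um{\re}\ls\oot\e^{-\frac12}\xnm{\re}^2$, which overshoots the admissible $\oot\xnm{\re}^2$ by $\e^{-\frac12}$ and cannot be absorbed. For $\ss_2$ your specific claim is arithmetically false: Lemma \ref{ss2-estimate} with $\um{g}\ls\e^{\frac12}\xnm{\re}$ gives $\e^{-1}\abs{\br{g,\ss_2}}\ls\e^{\al-\frac32}\xnm{\re}^3$, and Young's inequality splits this into $\oot\xnm{\re}^2+C\e^{2\al-3}\xnm{\re}^4$ (not $\e^{2\al-2}\xnm{\re}^4$); at $\al=1$ the extra $\e^{-1}$ destroys the closure of Proposition \ref{thm:energy} and Theorem \ref{thm:apriori}. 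The correct route, as in the paper, is: the $\pk$-part of the pairing is zero by orthogonality, and the remaining $\e^{-1}\abs{\br{\be\cdot\a+(\ik-\bpk)[\re],\ss_2}}\ls\oo\e^{-2}\tnm{\be}^2+\oo\e^{-2}\tnm{(\ik-\bpk)[\re]}^2+\e^{2\al-2}\xnm{\re}^4$ via the $\tnm{\ss_2}\ls\e^{\al-1}\xnm{\re}^2$ bound.

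A secondary issue is $\ss_3$. The two bounds you name, $\pnm{\ss_3}{\N}\ls\oot\e^{\frac2\N-\al}$ and $\tnm{\br{v}^2\ss_3}\ls\oot\e^{1-\al}$, are not enough for the pairing with $\bb\cdot v\mh$: plain H\"older gives $\e^{-1}\tnm{\bb}\tnm{\ss_3}\ls\oot\e^{\frac12-\al}\xnm{\re}$, whose Young split yields $\oot\e^{1-2\al}$ rather than the needed $\oot\e^{2-2\al}$. What the paper actually uses is the refined structure of Lemma \ref{ss3-estimate}: the integration-by-parts bound \eqref{ss3-estimate6} for $\sx$ (moving $\p_{\va}$ onto the velocity-smooth test function $\P\mh+\bb\cdot v\mh$), and the $L^2_xL^1_v$ bound \eqref{ss3-estimate5} for $\sy+\sz$ paired against $\nm{\P\mh+\bb\cdot v\mh}_{L^2_xL^\infty_v}$, each producing $\oot\e^{\frac12-\al}(\tnm{\P}+\tnm{\bb})$ and hence $\oot\e^{-1}(\tnm{\P}^2+\tnm{\bb}^2)+\oot\e^{2-2\al}\ls\oot\xnm{\re}^2+\oot\e^{2-2\al}$. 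Your treatment of $\ss_4$ via \eqref{final 25} is fine (the paper instead bounds it directly), and the remaining pieces are handled as you describe, but the missing orthogonality argument and the $\ss_2$ arithmetic must be repaired for the lemma to hold as stated.
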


\begin{proof}
Due to the orthogonality of $\Gamma$, using Lemma \ref{ssl-estimate}, Lemma \ref{ss0-estimate}, Lemma \ref{ss1-estimate}, Lemma \ref{ss5-estimate}, we know
\begin{align}
    &\abs{\e^{-1}\br{\P\mh+\bb\cdot v\mh+\be\cdot\a+(\ik-\bpk)[\re],\llc[\re]+\ss_0+\ss_1+\ss_5}}\\
    =&\e^{-1}\abs{\bbr{\be\cdot\a+(\ik-\bpk)[\re],\llc[\re]+\ss_0+\ss_1+\ss_5}}
    \ls\oot\e^{-2}\tnm{\be}^2+\oot\e^{-2}\tnm{(\ik-\bpk)[\re]}^2+\oot\um{\re}^2+\oot\e.\no
\end{align}
Also, using Lemma \ref{ss4-estimate} and Lemma \ref{ssp-estimate}, we have
\begin{align}
    &\abs{\e^{-1}\br{\P\mh+\bb\cdot v\mh+\be\cdot\a+(\ik-\bpk)[\re],\ss_4+\ss_6}}\\
    \ls&\oot\e^{-2}\tnm{\be}^2+\oot\e^{-2}\um{(\ik-\bpk)[\re]}^2+\oot\e^{-1}\tnm{\P}^2+\oot\e^{-1}\tnm{\bb}^2+\e^{-1}\tnm{\ss_4}^2+\e^{-1}\tnm{\ss_6}^2\no\\
    \ls&\oot\e^{-2}\tnm{\be}^2+\oot\e^{-2}\um{(\ik-\bpk)[\re]}^2+\oot\e^{-1}\tnm{\P}^2+\oot\e^{-1}\tnm{\bb}^2+\oot\e.\no
\end{align}
Also, we know
\begin{align}\label{final 09}
    \abs{\e^{-1}\br{\P\mh+\bb\cdot v\mh+\be\cdot\a+(\ik-\bpk)[\re],\ss_3}}
    \ls&\abs{\e^{-1}\br{\P\mh+\bb\cdot v\mh,\ss_3}}+\abs{\e^{-1}\br{\be\cdot\a+(\ik-\bpk)[\re],\ss_3}}.
\end{align}
In the first term of \eqref{final 09}, using \eqref{ss3-estimate5} and \eqref{ss3-estimate6} with $\N=2$, we obtain
\begin{align}
    &\abs{\e^{-1}\br{\P\mh+\bb\cdot v\mh,\ss_3}}\ls \abs{\e^{-1}\br{\P\mh+\bb\cdot v\mh,\sx}}+\abs{\e^{-1}\br{\P\mh+\bb\cdot v\mh,\sy+\sz}}\\
    \ls&\oot\e^{-\frac{1}{2}}\tnm{\nabla_v\Big(\P\mh+\bb\cdot v\mh\Big)}+\e^{-1}\nm{\P\mh+\bb\cdot v\mh}_{L^2_xL^{\infty}_v}\nm{\sy+\sz}_{L^2_xL^1_v}\no\\
    \ls&\oot\e^{-1}\tnm{\P}^2+\oot\e^{-1}\tnm{\bb}^2+\oot.\no
\end{align}
For the second term of \eqref{final 09}, we can directly bound
\begin{align}
    \abs{\e^{-1}\br{\be\cdot\a+(\ik-\bpk)[\re],\ss_3}}
    \ls&\e^{-1}\Big(\tnm{\be}^2+\tnm{(\ik-\bpk)[\re]}\Big)\tnm{\ss_3}\\
    \ls&\oot\e^{-2}\tnm{\be}^2+\oot\e^{-2}\tnm{(\ik-\bpk)[\re]}^2+\oot.\no
\end{align}
In addition, due to the orthogonality of $\Gamma$, using Lemma \ref{ss2-estimate}, we know
\begin{align}
    &\abs{\e^{-1}\br{\P\mh+\bb\cdot v\mh+\be\cdot\a+(\ik-\bpk)[\re],\ss_2}}=\e^{-1}\abs{\bbr{\be\cdot\a+(\ik-\bpk)[\re],\ss_2}}\\
    \ls& \oo\e^{-2}\tnm{\be}^2+\oo\e^{-2}\tnm{(\ik-\bpk)[\re]}^2+\xnm{\re}^4.\no
\end{align}
In summary, we have
\begin{align}\label{qq 36,}
    \abs{\e^{-1}\br{\re-c\Big(\abs{\vv}^2-5\tq\Big)-\Big(\nx\xi\cdot\a\Big),\ss}}
    \ls&\oo\Big(\e^{-2}\tnm{\be}^2+\e^{-2}\tnm{(\ik-\bpk)[\re]}^2\Big)\\
    &+\oot\xnm{\re}^2+\xnm{\re}^4+\oot.\no
\end{align}
\end{proof}

\begin{proof}[Proof of Proposition \ref{thm:energy}]
Inserting \eqref{qq 36} into \eqref{qq 34}, our desired result follows.
\end{proof}

\subsection{Extended Energy Estimates}

\begin{lemma}\label{lem:final 11}
    Let $\re$ be a solution to \eqref{remainder}. Under the assumption \eqref{assumption:boundary}, we have
    \begin{align}\label{final 57}
        \pnm{\be}{6}+\pnm{(\ik-\bpk)[\re]}{6}+\pnms{\m^{\frac{1}{4}}(1-\pp)[\re]}{4}{\gamma_+}
        \ls&\oot\xnm{\re}+\xnm{\re}^2+\oot.
    \end{align}
\end{lemma}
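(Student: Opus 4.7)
The plan is to extend the $L^2$ energy estimate of Proposition \ref{thm:energy} to its $L^6$ (interior) and $L^4_{\gamma_+}$ (boundary) analogue, following the $L^2$-$L^6$-$L^\infty$ bootstrap philosophy of \cite{Esposito.Guo.Kim.Marra2015} outlined in Step 7 of the methodology. The central idea is to test the weak formulation \eqref{weak formulation} against the nonlinear weight $\re|\re|^4$ (with a suitable velocity truncation to ensure integrability) and run an argument parallel to Lemma \ref{lem:final 9}, now at the $L^6$ level.

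After Green's identity, the boundary term $\frac{1}{6}\int_{\gamma}\re^6(v\cdot n)$ decomposes, via $\re=\pp[\re]+(1-\pp)[\re]+h$ on $\gamma$ and the wall-Maxwellian structure of $\pp[\re]$, into the boundary-data contribution controlled by Lemma \ref{h-estimate} and the target quantity $\pnms{\m^{1/4}(1-\pp)[\re]}{4}{\gamma_+}^4$ (the $\m^{1/4}$ weight arising from the $\mh$ implicit in $\re$). The coercivity term $\br{\re|\re|^4,\lc[\re]}$, using the $L^p$ coercivity of $\lc$ on $\nnk$ together with the orthogonality split \eqref{pp 01}, produces $\pnm{(\ik-\bpk)[\re]}{6}^6$ plus, through the Hodge identity \eqref{energy 11} adapted to $L^6$, a contribution proportional to $\pnm{\nx\xi}{6}^6+\pnm{\be}{6}^6$; the $\nx\xi$ piece is then absorbed using the $W^{2,6}$ bound of Lemma \ref{remark 01}. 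The source contributions $\br{\re|\re|^4,\llc[\re]+\ss_i}$ are estimated using Lemmas \ref{ssl-estimate}--\ref{ssp-estimate} and \ref{h-estimate} in the exact pattern of Lemma \ref{lem:final 10}, paired against $\re|\re|^4$ rather than $\re$, so that the RHS picks up powers of $\xnm{\re}$ matching \eqref{final 57} after taking sixth or fourth roots.

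The main obstacle is the cubic velocity term $\e^{-1}\br{\mhh\ab\cdot\nx T/4T^2,\re^2|\re|^4}$. Decomposing by \eqref{pp 01}, as in the proof of Lemma \ref{lem:final 9}, exposes a dangerous cross-term of $\bbrx{\nx T,\bb c|\re|^4}$-type that must be eliminated by the $\xi$-Hodge cancellation: one pairs it with the energy conservation law \eqref{aa 35} tested against an $|\re|^4$-weighted $c$, and cancels the resulting $\nx\xi$ piece by subtracting $\e^{-1}\times$\eqref{pp 03.} with a matching nonlinear choice of $\phi$. This nonlinear version of the linear cancellation \eqref{string} introduces commutator errors when $\nabla_x$ hits the $|\re|^4$ weight, which must be controlled purely by quantities already present in $\xnm{\re}$ without any new $\e^{-1}$ loss. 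The high-velocity contributions are handled by the splitting $\{\abs{v}\le\e^{-1/4}\}$ vs.\ $\{\abs{v}\ge\e^{-1/4}\}$ and the weighted $L^\infty$ component $\e^{1/2}\lnmm{\re}$ encoded in $\xnm{\re}$, exactly as at the end of the proof of Lemma \ref{lem:final 9}. After Young's inequality to absorb the LHS-type terms and a final collection, \eqref{final 57} follows.
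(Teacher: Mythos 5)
Your proposal is not the paper's route, and as written it has genuine gaps that would be hard to repair. The central one is the claimed ``$L^p$ coercivity of $\lc$'' against the nonlinear weight: testing with $\re\abs{\re}^4$ does \emph{not} produce $\pnm{(\ik-\bpk)[\re]}{6}^6$, because the coercivity \eqref{coercivity} and the identity \eqref{energy 11} are genuinely $L^2$ statements, and the $v$-dependent weight $\abs{\re}^4$ destroys every orthogonality you invoke: the cross terms between $\pk[\re]$, $\bd\cdot\a$ and $(\ik-\bpk)[\re]$ in $\br{\re\abs{\re}^4,\lc[\re]}$ no longer vanish, and there is no known analogue of $\int f\abs{f}^{4}\lc[f]\gs \|(\ik-\pk)[f]\|_{L^6_\nu}^6$ for the nonlocal operator $K$. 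The same problem hits the boundary term: the $L^2$ argument uses the exact expansion of squares plus $\int_{\gamma_-}\pp[\re]\,h\,\ud\gamma=0$, while at sixth powers the cross terms between $\pp[\re]$, $(1-\pp)[\re]$ and $h$ survive, and in any case a $p=6$ boundary bookkeeping would not yield the weighted $L^4_{\gamma_+}$ quantity $\pnms{\m^{\frac14}(1-\pp)[\re]}{4}{\gamma_+}$ that the lemma asserts. Finally, the ``nonlinear Hodge cancellation'' you sketch to kill the $\e^{-1}\br{\nx T,\bb\, c\,\abs{\re}^4}$-type term is not constructible in this framework: the chain \eqref{string} rests on the conservation laws and on test functions that are independent of $v$ (times kernel elements), neither of which survives multiplication by $\abs{\re}^4$, so the commutator terms you wave at are exactly of the uncontrolled $O(\e^{-1})$ size the whole scheme is designed to avoid.

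The paper's proof is far simpler and does not revisit the energy estimate at all: it is pure interpolation. One bounds $\pnm{\be}{6}\ls\tnm{\be}^{\frac13}\lnmm{\be}^{\frac23}$ and $\pnm{(\ik-\bpk)[\re]}{6}\ls\tnm{(\ik-\bpk)[\re]}^{\frac13}\lnmm{(\ik-\bpk)[\re]}^{\frac23}$, and for the boundary $\pnms{\m^{\frac14}(1-\pp)[\re]}{4}{\gamma_+}\ls\tnms{(1-\pp)[\re]}{\gamma_+}^{\frac12}\lnmms{(1-\pp)[\re]}{\gamma_+}^{\frac12}$ (the $\m^{\frac14}$ weight compensating for the weighted $L^\infty$ norm). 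The $L^2$ factors carry the $\e$- and $\e^{\frac12}$-gains already established in Proposition \ref{thm:energy}, and the $L^\infty$ factors are exactly the $\e^{\frac12}\lnmm{\rem}$ and $\e^{\frac12}\lnmms{\rem}{\gamma}$ entries of the $X$-norm \eqref{ss 00}; Young's inequality then gives \eqref{final 57}. If you want to salvage your approach, note that the genuine $L^6$ gains in this paper are obtained only for the \emph{macroscopic} components ($\P$, $\z$, $\bb$, $c$) via $v$-independent dual test functions solving elliptic/Stokes problems with data like $\P\abs{\P}^{\N-2}$, never by testing the kinetic equation with $\re\abs{\re}^4$.
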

\begin{proof}
By interpolation and Proposition \ref{thm:energy}, we obtain
\begin{align}
    \pnm{\be}{6}\ls&\tnm{\be}^{\frac{1}{3}}\lnmm{\be}^{\frac{2}{3}}\ls \Big(\oot\xnm{\re}^{\frac{1}{3}}+\xnm{\re}^{\frac{1}{3}}+\oot\Big)\left(\e^{\frac{1}{2}}\lnmm{\re}\right)^{\frac{2}{3}}\label{tt 17}
    \ls \oot\xnm{\re}+\xnm{\re}^2+\oot,\\
    \pnm{(\ik-\bpk)[\re]}{6}\ls&\tnm{(\ik-\bpk)[\re]}^{\frac{1}{3}}\lnmm{(\ik-\bpk)[\re]}^{\frac{2}{3}}\label{tt 18}\\
    \ls& \Big(\oot\xnm{\re}^{\frac{1}{3}}+\xnm{\re}^{\frac{1}{3}}+\oot\Big)\left(\e^{\frac{1}{2}}\lnmm{\re}\right)^{\frac{2}{3}}\ls\oot\xnm{\re}+\xnm{\re}^2+\oot,\no\\
    \pnms{\m^{\frac{1}{4}}(1-\pp)[\re]}{4}{\gamma_+}\ls&\tnms{(1-\pp)[\re]}{\gamma_+}^{\frac{1}{2}}\lnmms{(1-\pp)[\re]}{\gamma_+}^{\frac{1}{2}}\label{tt 19}\\
    \ls& \Big(\oot\xnm{\re}^{\frac{1}{2}}+\xnm{\re}^{\frac{1}{2}}+\oot\Big)\Big(\e^{\frac{1}{2}}\lnmms{\re}{\gamma_+}\Big)^{\frac{1}{2}}
    \ls\oot\xnm{\re}+\xnm{\re}^2+\oot.\no
\end{align}
Then the desired result follows from \eqref{tt 17}\eqref{tt 18}\eqref{tt 19}.
\end{proof}

\begin{lemma}[Ukai's Trace Theorem, Lemma 2.1 of \cite{Esposito.Guo.Kim.Marra2013} ]\label{remainder lemma 1}
Define the near-grazing set of $\gamma_+$ or $\gamma_-$ as
\begin{align}
\gamma_{\pm}^{\d}=\left\{(\vx,\vv)\in\gamma_{\pm}:
\abs{\vn(\vx)\cdot\vv}\leq\d\ \text{or}\ \abs{\vv}\geq\frac{1}{\d}\ \text{or}\
\abs{\vv}\leq\d\right\}.
\end{align}
Then for $p\in[1,\infty)$
\begin{align}
\abs{f\id_{\gamma_{\pm}\backslash\gamma_{\pm}^{\d}}}_{L^p}\leq
C(\delta)\bigg(\nm{f}_{L^p}+\nm{\vv\cdot\nx f}_{L^p}\bigg).
\end{align}
\end{lemma}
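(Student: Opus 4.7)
The plan is to exploit the fact that on the restricted set $\gamma_{\pm}\backslash\gamma_{\pm}^{\d}$, every trajectory is uniformly transversal to $\p\Omega$: we have $|\vv\cdot\vn|>\d$, $\d\leq|\vv|\leq\d^{-1}$, so the characteristic $s\mapsto \vx_0-s\vv$ stays inside $\Omega$ for a positive time depending only on $\d$ and the $C^1$-geometry of $\p\Omega$. The strategy is then the classical averaging-along-characteristics argument: integrate the tangential trace of $f$ against an interior slab of positive thickness via the fundamental theorem of calculus, bound the result by an interior $L^p$ norm via change of variables, and pick up $\vv\cdot\nx f$ as the error.

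First I would fix $\d>0$ and, using $C^1$ regularity of $\p\Omega$ and compactness of the set $\{\d\leq|\vv|\leq\d^{-1},\ |\vv\cdot\vn|\geq\d\}$, produce $\tau_0=\tau_0(\d,\Omega)>0$ such that $\vx_0-s\vv\in\Omega$ for all $(\vx_0,\vv)\in\gamma_{+}\backslash\gamma_{+}^{\d}$ and $s\in(0,\tau_0)$ (the case $\gamma_-$ is symmetric by reversing time). Next, for smooth $f$ I would write
\begin{align}
f(\vx_0,\vv)=f(\vx_0-t\vv,\vv)+\int_0^t (\vv\cdot\nx f)(\vx_0-s\vv,\vv)\,\ud s,\qquad t\in(0,\tau_0),
\end{align}
average in $t$ over $(0,\tau_0)$, and apply Jensen's inequality to obtain
\begin{align}
|f(\vx_0,\vv)|^p\ls \tau_0^{-1}\int_0^{\tau_0}|f(\vx_0-t\vv,\vv)|^p\ud t+\tau_0^{p-1}\int_0^{\tau_0}|(\vv\cdot\nx f)(\vx_0-s\vv,\vv)|^p\ud s.
\end{align}

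The third step is the change of variables $(\vx_0,t)\mapsto \vy=\vx_0-t\vv$ for fixed $\vv$ with $|\vv\cdot\vn|>\d$: the Jacobian is exactly $|\vv\cdot\vn(\vx_0)|$, so $|\vv\cdot\vn(\vx_0)|\ud S_{\vx_0}\ud t=\ud \vy$ on the image inside $\Omega$. Integrating the pointwise inequality against $|\vv\cdot\vn|\ud S_{\vx_0}\ud\vv$ (the natural trace measure making $\id_{\gamma_+\setminus\gamma_+^{\d}}$ bounded), and using that $|\vv\cdot\vn|\leq|\vv|\leq\d^{-1}$ on the admissible set, I get
\begin{align}
\int_{\gamma_+\setminus\gamma_+^{\d}}|f|^p|\vv\cdot\vn|\,\ud S_{\vx_0}\ud\vv\ls C(\d)\bigg(\iint_{\Omega\times\r^3}|f|^p\,\ud\vx\ud\vv+\iint_{\Omega\times\r^3}|\vv\cdot\nx f|^p\,\ud\vx\ud\vv\bigg),
\end{align}
which is the claim. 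Density of smooth functions in the graph norm $\{f\in L^p:\vv\cdot\nx f\in L^p\}$ then extends the inequality to the whole class.

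The main technical obstacle is the change of variables near the boundary for a general $C^1$ (possibly non-convex) domain: one must verify that for $(\vx_0,\vv)\in\gamma_+\setminus\gamma_+^{\d}$ the backward segment of length $\tau_0$ actually lies in $\Omega$, which may fail globally in non-convex geometries and requires either shrinking $\tau_0$ locally via a finite cover of $\p\Omega$ or truncating each integral at the first backward exit time. A partition of unity on $\p\Omega$ subordinate to normal charts from Section \ref{sec:geometric-setup}, combined with the uniform transversality $|\vv\cdot\vn|>\d$, allows the problem to be reduced to the flat half-space where the Jacobian computation is straightforward; the constant $C(\d)$ is then allowed to absorb the geometric factors and must blow up as $\d\rt 0$, which is consistent with the singular behavior of the trace on the grazing set.
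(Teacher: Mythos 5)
Your argument is correct and is essentially the standard proof of Ukai's trace lemma: uniform transversality on $\gamma_{\pm}\backslash\gamma_{\pm}^{\d}$ giving a lower bound on the backward (or forward) travel time, the fundamental theorem of calculus along characteristics, averaging in $t$ with Jensen, and the change of variables whose Jacobian is exactly $\abs{\vv\cdot\vn}$, with the non-convexity handled by truncating at the exit time. The paper itself does not reprove this lemma but cites Lemma 2.1 of \cite{Esposito.Guo.Kim.Marra2013}, whose proof follows the same route you describe.
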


\begin{proposition}\label{lem:final 12}
Under the assumption \eqref{assumption:boundary}, we have
\begin{align}\label{qq 32}
    \tnms{\pp[\re]}{\gamma}
    \ls&\tnm{\P}+\tnm{\bb}+\tnm{c}+\oot\xnm{\re}+\xnm{\re}+\oot.
\end{align}
\end{proposition}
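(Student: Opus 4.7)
The plan is to reduce $\tnms{\pp[\re]}{\gamma}$ to the outgoing trace $\tnms{\re}{\gamma_+}$ and then apply Ukai's trace theorem together with the Boltzmann equation. The key observation is that $\pp[\re](x_0,v) = \mss(x_0,v)A(x_0)$, where $A(x_0) := \int_{u\cdot n(x_0)>0}\mh(u)\re(x_0,u)|u\cdot n|\,du$ depends on $x_0\in\p\Omega$ only. Since $\int_{\r^3}\mss^2|v\cdot n|\,dv$ is uniformly bounded on $\p\Omega$ by the normalization of the wall Maxwellian, we have $\tnms{\pp[\re]}{\gamma}^2 \sim \int_{\p\Omega}|A(x_0)|^2\,dS_x$, and a pointwise Cauchy--Schwarz inside the $u$-integral yields $\tnms{\pp[\re]}{\gamma} \ls \tnms{\re}{\gamma_+}$.

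I will then control $\tnms{\re}{\gamma_+}$ by splitting $\gamma_+ = (\gamma_+\setminus\gamma_+^\delta) \cup \gamma_+^\delta$ for a small parameter $\delta$ and handling the two pieces by complementary techniques. On the near-grazing set, the Gaussian weight embedded in $\lnmm{\cdot}$ forces $\int_{\gamma_+^\delta}\ue^{-\vrh|v|^2}|v\cdot n|\,dv\,dS_x \ls \delta^2$, so $\tnms{\re\id_{\gamma_+^\delta}}{\gamma_+} \ls \delta\,\lnmms{\re}{\gamma_+} \ls \delta\e^{-1/2}\xnm{\re}$, where the last step uses the $L^\infty_{\vrh,\vth}$ entry of the $X$-norm \eqref{ss 00}. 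On the complement, Ukai's trace theorem (Lemma~\ref{remainder lemma 1}) gives $\tnms{\re\id_{\gamma_+\setminus\gamma_+^\delta}}{\gamma_+} \ls C(\delta)\bigl(\tnm{\re} + \tnm{v\cdot\nx\re}\bigr)$.

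Bounding the interior norms is straightforward. The decomposition \eqref{pp 01} combined with the $X$-norm produces $\tnm{\re} \ls \tnm{\P}+\tnm{\bb}+\tnm{c}+\e\,\xnm{\re}$, already delivering the three macroscopic terms in \eqref{qq 32}. For the streaming derivative, I use \eqref{remainder.} in the form $v\cdot\nx\re = -\bigl(\mhh\ab\cdot\tfrac{\nx\tq}{4\tq^2}\bigr)\re - \e^{-1}\lc[\re] + \ss$: the coefficient piece contributes $\oot\tnm{\re}$ (using \eqref{assumption:boundary}); the coercivity of $\lc$ gives $\e^{-1}\tnm{\lc[\re]} \ls \e^{-1}\um{(\ik-\pk)[\re]} \ls \e^{\al-1}\xnm{\re}$ through the $X$-norm; and $\tnm{\ss}$ is bounded term-by-term via Lemmas~\ref{ssl-estimate}--\ref{ssp-estimate}, producing $\oot\xnm{\re}$, $\e^{\al-1}\xnm{\re}^2$, and $\oot\e^{1-\al}$ contributions.

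The main technical obstacle is the balance between Ukai's $\delta$-dependent constant $C(\delta)$, which grows as $\delta\downarrow 0$, and the grazing contribution $\delta\e^{-1/2}\xnm{\re}$, which requires small $\delta$ in order to be absorbed. The resolution hinges on the $\e^{1/2}$-gain $\e^{1/2}\lnmm{\re}\ls\xnm{\re}$ hard-wired into the $X$-norm: by choosing $\delta$ at the critical scale balancing $C(\delta)$ against the grazing absorption (effectively $\delta \sim \e^{1/2}\oot^{1/2}$), the two errors combine into the $\oot\xnm{\re}$ term in \eqref{qq 32}. The quadratic $\e^{\al-1}\xnm{\re}^2$ piece from $\tnm{\ss_2}$ is then absorbed into the linear $\e^{\al-1}\xnm{\re}$ via the a priori smallness $\xnm{\re}\ls 1$ that is maintained throughout the continuity/iteration scheme for the main theorem.
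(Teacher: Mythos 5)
Your reduction to the outgoing trace and the subsequent grazing/non-grazing balancing contain a genuine gap, and it is fatal at the stated order. First, the bound $\e^{-1}\tnm{\lc[\re]}\ls\e^{\al-1}\xnm{\re}$ is not available: $\lc[\re]=\lc\big[(\ik-\pk)[\re]\big]$ with $(\ik-\pk)[\re]=\nx\xi\cdot\a+\be\cdot\a+(\ik-\bpk)[\re]$, but the $X$-norm \eqref{ss 00} only contains $\e^{-\frac12}\nm{\xi}_{H^2}$ (as stressed in the remark following Theorem \ref{main}, $\e^{-1}\nx\xi$ is \emph{not} known to be bounded in $L^2$), so the best available bound for your streaming term in $L^2$ is $\e^{-1}\tnm{\nx\xi}\ls\e^{-\frac12}\xnm{\re}$, a loss of $\e^{-\frac12}$; in addition, coercivity is a lower bound and does not control $\tnm{\lc[\re]}$ by $\um{(\ik-\pk)[\re]}$ (one needs the full $\nu$-weight), and the coefficient $\mhh\ab\cdot\nx\tq$ has cubic velocity growth, so it is not bounded by $\oot\tnm{\re}$ without a large-velocity splitting. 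Second, this $\e^{-\frac12}$ loss wrecks the $\delta$-balancing: the grazing piece forces $\delta\ls\oot\e^{\frac12}$ (since $\lnmms{\re}{\gamma_+}\ls\e^{-\frac12}\xnm{\re}$), while on the complement Ukai's constant $C(\delta)$ then blows up as $\e\rt0$ and multiplies the already-too-large $\e^{-\frac12}\xnm{\re}$; the two requirements are incompatible, so the scheme cannot close to give \eqref{qq 32}.

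The paper avoids both obstructions by a different use of the same two ingredients. It applies Lemma \ref{remainder lemma 1} with $p=1$ to $\re^2$, so only $\pnm{v\cdot\nx(\re^2)}{1}$ is needed; this is estimated from the quadratic form $\br{\re,\,\cdot\,}$, where the dangerous $\e^{-1}\nx\xi\cdot\ab$ can be traded (integration by parts) for $\e^{-1}\xi(\nx\cdot\ab)$, and $\e^{-1}\tnm{\xi}$ \emph{is} part of the $X$-norm, while $\e^{-1}\lc\big[(\ik-\bpk)[\re]\big]$ pairs against $\re$ using $\e^{-1}\um{(\ik-\bpk)[\re]}\ls\xnm{\re}$. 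Moreover, the grazing set is never estimated through $L^\infty$ at all: since $\pp[\re]=z_{\gamma}(\vx)\mh$ is a pure Maxwellian in $v$, for a \emph{fixed} small $\delta$ (independent of $\e$) one has $\tnms{\pp\big[\id_{\gamma\backslash\gamma^{\d}}\pp[\re]\big]}{\gamma}^2\geq\frac12\tnms{\pp[\re]}{\gamma}^2$, so the non-grazing trace together with $\tnms{(1-\pp)[\re]}{\gamma_+}$ and $\tnms{h}{\gamma_-}$ already controls the full norm of $\pp[\re]$ and $C(\delta)$ stays harmless. To repair your argument you would need to (i) keep $\delta$ fixed by exploiting this structure of $\pp[\re]$ instead of an $L^\infty$ bound on the grazing set, and (ii) estimate the transport term only at the level of $\pnm{v\cdot\nx(\re^2)}{1}$ so that the uncontrolled $\e^{-1}\nx\xi$ can be removed by integration by parts.
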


\begin{proof}
From Proposition \ref{thm:energy} and Lemma \ref{remark 01}, we have
\begin{align}
\tnm{\re}\ls& \tnm{\P}+\tnm{\bb}+\tnm{c}+\nm{\xi}_{H^1}+\tnm{\be}+\tnm{(\ik-\bpk)[\re]}\\
\ls&\tnm{\P}+\tnm{\bb}+\tnm{c}+\oot\xnm{\re}+\xnm{\re}^2+\oot.\no
\end{align}
Multiplying $\re$ on both sides of \eqref{remainder.}, we know
\begin{align}
\frac{1}{2}v\cdot\nx\big(\re\big)^2
&+\re\bigg(\mhh\left(\ab\cdot\frac{\nx \tq}{4\tq^2}\right)\re-\e^{-1}\xi(\nx\cdot\ab)+\e^{-1}\be\cdot\ab+\e^{-1}\lc\Big[(\ik-\bpk)[\re]\Big]-\ss\bigg)=0.
\end{align}
Taking absolute value and integrating over $\Omega\times\r^3$, with the help of Cauchy's inequality and Lemma \ref{remark 01}, we deduce
\begin{align}
\pnm{v\cdot\nx\big(\re\big)^2}{1}
\ls&\abs{\br{\re, \mhh\left(\ab\cdot\frac{\nx \tq}{4\tq^2}\right)\re-\e^{-1}\xi(\nx\cdot\ab)+\e^{-1}\be\cdot\ab+\e^{-1}\lc\Big[(\ik-\bpk)[\re]\Big]-\ss}}\\
\ls&\tnm{\P}^2+\tnm{\bb}^2+\tnm{c}^2+\oot\xnm{\re}^2+\xnm{\re}^2+\oot+\abs{\bbr{\re,\ss}}\no\\
\ls&\tnm{\P}^2+\tnm{\bb}^2+\tnm{c}^2+\oot\xnm{\re}^2+\xnm{\re}^2+\oot,\no
\end{align}
where we utilize the source term estimates in Section \ref{sec:bs}
\begin{align}
    \bbr{\re,\ss}
    \ls& \oot\xnm{\re}^2+\xnm{\re}^4+\oot\e.
\end{align}
Then by Lemma \ref{remainder lemma 1}, we have
\begin{align}
\tnms{\id_{\gamma\backslash\gamma^{\d}}\re}{\gamma}^2
\leq&
C(\d)\left(\tnm{\re}^2+\pnm{\vv\cdot\nx\big(\re\big)^2}{1}\right)
\ls\tnm{\P}^2+\tnm{\bb}^2+\tnm{c}^2+\oot\xnm{\re}^2+\xnm{\re}^2+\oot.
\end{align}
Notice that
\begin{align}
    \pp\Big[\id_{\gamma\backslash\gamma^{\d}}\re\Big]=&\pp\Big[\id_{\gamma\backslash\gamma^{\d}}\Big(\pp[\re]+\id_{\gamma_+}(1-\pp)[\re]+\id_{\gamma_-}h\Big)\Big]
\end{align}
Based on the definition, we can rewrite $\pp[\re]=z_{\gamma}(\vx)\mh$ for a suitable function
$z_{\gamma}(\vx)$ and
for $\d$ small, we deduce
\begin{align}
\tnms{\pp\Big[\id_{\gamma\backslash\gamma^{\d}}\pp[\re]\Big]}{\gamma}^2
=&\int_{\p\Omega}\abs{z_{\gamma}(\vx)}^2
\left(\int_{\abs{\vv\cdot\vn(\vx)}\geq\d,\d\leq\abs{\vv}\leq\frac{1}{\d}}\m(\vv)\abs{\vv\cdot\vn(\vx)}\ud{\vv}\right)\ud{\vx}\\
\geq&\half\left(\int_{\p\Omega}\abs{z_{\gamma}(\vx)}^2\ud{\vx}\right)\left(\int_{\r^2}\m(\vv)\abs{\vv\cdot\vn(\vx)}\ud{\vv}\right)=\half\tnms{\pp[\re]}{\gamma}^2,\no
\end{align}
where we utilized the fact that
\begin{align}
\int_{\abs{\vv\cdot\vn(\vx)}\leq\d}\m(\vv)\abs{\vv\cdot\vn(\vx)}\ud{\vv}&\leq C\d,\quad
\int_{\abs{\vv}\leq\d\ or\
\abs{\vv}\geq\frac{1}{\d}}\m(\vv)\abs{\vv\cdot\vn(\vx)}\ud{\vv}\leq C\d.
\end{align}
Also, we have
\begin{align}
    \tnms{\pp\Big[\id_{\gamma\backslash\gamma^{\d}}\Big(\id_{\gamma_+}(1-\pp)[\re]+\id_{\gamma_-}h\Big)\Big]}{\gamma}\ls\tnms{(1-\pp)[\re]}{\gamma_+}+\tnms{h}{\gamma_-}.
\end{align}
Therefore, from $\ds\tnms{\pp\Big[\id_{\gamma\backslash\gamma^{\d}}\re\Big]}{\gamma}\leq C
\tnms{\id_{\gamma\backslash\gamma^{\d}}\re}{\gamma}$,
we conclude
\begin{align}
\tnms{\pp[\re]}{\gamma}
\leq&\tnms{\pp\Big[\id_{\gamma\backslash\gamma^{\d}}\pp[\re]\Big]}{\gamma}
\leq\tnms{\pp\Big[\id_{\gamma\backslash\gamma^{\d}}\re\Big]}{\gamma}+\tnms{\pp\Big[\id_{\gamma\backslash\gamma^{\d}}\Big(\id_{\gamma_+}(1-\pp)[\re]+\id_{\gamma_-}h\Big)\Big]}{\gamma}\\
\leq& C\tnms{\id_{\gamma\backslash\gamma^{\d}}\re}{\gamma}+C\tnms{(1-\pp)[\re]}{\gamma_+}+C\tnms{h}{\gamma_-}.\no
\end{align}
Hence, using Lemma \ref{h-estimate} and Proposition \ref{thm:energy}, we conclude
\begin{align}\label{qq 32,}
\tnms{\pp[\re]}{\gamma}
\ls&\tnm{\P}+\tnm{\bb}+\tnm{c}+\oot\xnm{\re}+\xnm{\re}+\oot.
\end{align}
\end{proof}

\section{\texorpdfstring{$\P$}{} and \texorpdfstring{$\z$}{} Estimates}

\subsection{\texorpdfstring{$L^{\N}$}{} Estimates of \texorpdfstring{$\P$}{}}\label{sec:p-est}

\begin{proof}[Proof of Proposition \ref{prop:p-bound}]
To establish \eqref{oo 28=}, thanks to energy estimates, it suffices to establish for $2\leq\N\leq 6$
\begin{align}\label{oo 28}
    \pnm{\P}{\N}\ls&\pnm{(\ik-\bpk)[\re]}{\N}+\oot\e^{\frac{2}{\N}}.
\end{align}
Denote a test function $\psi=\mh(\vv)\Big(\vv\cdot\phi(x)\Big)$,
where $\phi$ is defined via solving the Bogovskii problem
\begin{align}
\left\{
\begin{array}{l}
-\nx\cdot\phi=\ds \P\abs{\P}^{\N-2}-\frac{1}{\abs{\Omega}}\int_{\Omega}\P(x)\abs{\P(x)}^{\N-2}\ud x\ \ \text{in}\ \ \Omega,\\\rule{0ex}{1.0em}
\phi=0\ \ \text{on}\ \ \p\Omega.
\end{array}
\right.
\end{align}
Based on \cite{Bogovskii1979} and \cite[Theorem 2.4]{Acosta.Duran2017}, there exists a solution $\phi$ satisfying
\begin{align}
\nm{\psi}_{W^{1,\frac{\N}{\N-1}}L^{\infty}_{\varrho,\vartheta}}\ls\nm{\phi}_{W^{1,\frac{\N}{\N-1}}}\ls\pnm{\P}{\N}^{\N-1}.
\end{align}
Taking $\test=\psi $ in \eqref{weak formulation}, considering $\psi\big|_{\gamma}=0$, we obtain
\begin{align}
    -\bbr{\re,\vv\cdot\nx\psi }+\br{\mhh\ab\cdot\frac{\nx\tq}{4\tq^2}\re,\psi }+\e^{-1}\bbr{\lc[\re],\psi}
    &=\bbr{\ss,\psi }.
\end{align}
Direct computation reveals that
\begin{align}
-\bbr{\re,\vv\cdot\nx\psi }+\br{\mhh\ab\cdot\frac{\nx\tq}{4\tq^2}\re,\psi }=-\br{\mh\re,\vv\cdot\nx\left(\mhh\psi \right)}.
\end{align}
We will split $\re$ according to \eqref{pp 01} and analyze each term.
Due to oddness, we have
\begin{align}
-\br{(\bb\cdot\vv)\m+\mh(\bd\cdot\a),\vv\cdot\nx\left(\mhh\psi \right)} =0.
\end{align}
On the other hand, using H\"{o}lder's inequality, we have
\begin{align}
\abs{\br{\mh(\ik-\bpk)[\re],\vv\cdot\nx\left(\mhh\psi \right)}}
&\ls\pnm{(\ik-\bpk)[\re]}{\N}\nm{\phi }_{W^{1,\frac{\N}{\N-1}}}\ls\pnm{(\ik-\bpk)[\re]}{\N}\pnm{\P}{\N}^{\N-1}.
\end{align}
For $c\left(\abs{\vv}^2-5\tq\right)\mh$ part, due to orthogonality, we know
\begin{align}\label{kernel 8}
&-\br{\m c\left(\abs{\vv}^2-5\tq\right),\vv\cdot\nx\left(\mhh\psi \right)}
=-\br{\mh c,\ab\cdot\nx\left(\mhh\psi \right)}=0.
\end{align}
For $\P\mh$ part, considering \eqref{final 31} and $\ds\int_{\r^3}\abs{\vv}^2\m=3P$, we have
\begin{align}
-\br{\P\m ,\vv\cdot\nx\left(\mhh\psi \right)}
=&-\br{\P\m,\vv\cdot\nx\Big(\vv\cdot\phi \Big)}
=-\frac{1}{3}\int_{\Omega}p\big(\nx\cdot\phi \big)\int_{\r^3}\abs{\vv}^2\m
=P\pnm{\P}{\N}^{\N}.
\end{align}
Also, due to orthogonality, we have
\begin{align}\label{kernel 12}
\bbr{\lc[\re],\psi}&=\br{\lc\Big[\bd\cdot\a+(\ik-\bpk)[\re]\Big],\psi}
=0 .
\end{align}
Collecting all above, we have shown that
\begin{align}
   \pnm{\P}{\N}^{\N}\ls \pnm{(\ik-\bpk)[\re]}{\N}\pnm{\P}{\N}^{\N-1}+\abs{\bbr{\ss,\psi}}.
\end{align}
Due to orthogonality and oddness, we have
\begin{align}
    \bbr{\ss,\psi}=\bbr{\ss_3,\psi}+\bbr{\sp,\psi}.
\end{align}
Notice that $\ss_3$ contains derivatives of $\fb_1$ in the rescaled variable $\eta=\e^{-1}\mn$. We define a natural extension of $\p_{\mn}\psi$ with zero value for $\mn>1$ (for the convenience of Hardy's inequality). Recall that $\psi=0$ on $\p\Omega$. Using Hardy's inequality, Lemma \ref{ss3-estimate} for $\sy,\sz$ and Remark \ref{ss3-remark} for $\sx$, we have
\begin{align}\label{oo 30}
    &\abs{\bbr{\ss_3,\psi}}=\abs{\br{\ss_3,\int_0^{\mn}\p_{\mn}\psi}}=\e\abs{\br{\eta\ss_3,\frac{1}{\mn}\int_0^{\mn}\p_{\mn}\psi}}\\
    \ls&\e\Big(\jnm{\eta\left(\sy+\sz\right)}+\jnm{\eta\fb_1}\Big)\knm{\frac{1}{\mn}\int_0^{\mn}\p_{\mn}\psi}\ls\e\Big(\jnm{\eta\left(\sy+\sz\right)}+\jnm{\eta\fb_1}\Big)\knm{\p_{\mn}\psi}\no\\
    \ls&\e\Big(\jnm{\eta\left(\sy+\sz\right)}+\jnm{\eta\fb_1}\Big)\pnm{\P}{\N}^{\N-1}\ls\oot\e^{\frac{2}{\N}}\pnm{\P}{\N}^{\N-1}.\no
\end{align}
Also, using Lemma \ref{ssp-estimate}, we know
\begin{align}
    \abs{\bbr{\sp,\psi}}\ls \pnm{\sp}{\N}\pnm{\psi}{\frac{\N}{\N-1}}\ls\oot\e\pnm{\P}{\N}^{\N-1}.
\end{align}
In summary, we have shown
\begin{align}
    \pnm{\P}{\N}^{\N}\ls&\pnm{(\ik-\bpk)[\re]}{\N}\pnm{\P}{\N}^{\N-1}+\oot\e^{\frac{2}{\N}}\pnm{\P}{\N}^{\N-1}+\oot\e\pnm{\P}{\N}^{\N-1}.
\end{align}
Hence, we have
\begin{align}\label{oo 28,}
    \pnm{\P}{\N}\ls&\pnm{(\ik-\bpk)[\re]}{\N}+\oot\e^{\frac{2}{\N}}.
\end{align}
Then the desired estimate \eqref{oo 28} follows from Proposition \ref{thm:energy} and Lemma \ref{final 11}.
\end{proof}

\subsection{$L^{\N}$ Estimates of $\z$}

\subsubsection{Estimates of $\z$}\label{sec:z-est}

\begin{proof}[Proof of Proposition \ref{prop:z-bound}]
To establish \eqref{oo 33=}, thanks to energy estimates, it suffices to establish for $2\leq\N\leq6$
\begin{align}\label{oo 33}
    \pnm{\z}{\N}\ls&\pnm{(\ik-\bpk)[\re]}{\N}+\jnms{\m^{\frac{1}{4}}(1-\pp)[\re]}{\gamma_+}+\oot\xnm{\re}+\xnm{\re}^2+\oot.
\end{align}
In \eqref{pp 03.}, consider $\phi(x)$ satisfying
\begin{align}
\left\{
\begin{array}{l}
-\nx\cdot\big(\k\nx\phi \big)=\z\abs{\z}^{\N-2}\ \ \text{in}\ \
\Omega,\\\rule{0ex}{1.2em}
\phi =0\ \ \text{on}\ \ \p\Omega.
\end{array}
\right.
\end{align}
Then we have
\begin{align}
    &\jnm{\z}^{\N}+\e^{-1}\bbrx{\nx\phi,\k\be}+\brx{\nx\phi,\frac{\nx T}{2T^2}\big(\kappa\P+\sigma c\big)}\\
    =&\bbrb{\nx\phi\cdot\a,h}{\gamma_-}-\bbrb{\nx\phi\cdot\a,(1-\pp)[\re]}{\gamma_+}+\br{v\cdot\nx\Big(\nx\phi\cdot\a\Big),(\ik-\bpk)[\re]}\no\\
    &-\br{\nx\phi\cdot\a,\left(\mhh\ab\cdot\frac{\nx T}{2T^2}\right)(\ik-\bpk)[\re]}+\bbr{\nx\phi\cdot\a,\ss}.\no
\end{align}
Using H\"{o}lder's inequality, we have
\begin{align}
    &\abs{\br{v\cdot\nx\Big(\nx\phi\cdot\a\Big),(\ik-\bpk)[\re]}}+\abs{\br{\nx\phi\cdot\a,\left(\mhh\ab\cdot\frac{\nx T}{2T^2}\right)(\ik-\bpk)[\re]}}\\
    \ls&\pnm{(\ik-\bpk)[\re]}{\N}\nm{\phi }_{W^{2,\frac{\N}{\N-1}}}\ls\pnm{(\ik-\bpk)[\re]}{\N}\pnm{\z}{\N}^{\N-1},\no
\end{align}
and
\begin{align}
    \abs{\brx{\nx\phi,\frac{\nx T}{2T^2}\big(\kappa\P+\sigma c\big)}}\ls&\oot \pnm{\k\P+\sigma c}{\N}\pnm{\phi }{\frac{\N}{\N-1}}\ls\oot\Big(\pnm{\P}{\N}+\pnm{c}{\N}\Big)\pnm{\z}{\N}^{\N-1}.
\end{align}
Due to \eqref{final 63}, we have
\begin{align}
    \e^{-1}\bbrx{\nx\phi,\k\be}=0.
\end{align}
By Sobolev embedding and trace theorem in 3D, we have
\begin{align}
\abs{\bbrb{h,\a\cdot\nx\phi}{\gamma_-}}&\ls\jnms{h}{\gamma_-}\knms{\nx\phi }{\gamma_-}\ls\jnms{h}{\gamma_-}\pnm{\z}{\N}^{\N-1},\\
\abs{\bbrb{(1-\pp)[\re],\a\cdot\nx\phi}{\gamma_+}}&\ls\jnms{\m^{\frac{1}{4}}(1-\pp)[\re]}{\gamma_+}\knms{\nx\phi }{\gamma_+}\ls\jnms{\m^{\frac{1}{4}}(1-\pp)[\re]}{\gamma_+}\pnm{\z}{\N}^{\N-1}.
\end{align}
Finally, using Section \ref{sec:bs}, we have the source term estimates
\begin{align}
    \abs{\bbr{\nx\phi\cdot\a,\ss-\ss_2}}\ls&\tnm{\nx\phi}\Big(\oot\um{\re}+\oot\Big)\ls \pnm{\z}{\N}^{\N-1}\Big(\oot\um{\re}+\oot\Big),\\
    \abs{\bbr{\nx\phi\cdot\a,\ss_2}}\ls&\tnm{\nx\phi}\xnm{\re}^2\ls \pnm{\z}{\N}^{\N-1}\xnm{\re}^2.
\end{align}
In summary, we have shown 
\begin{align}
\pnm{\z}{\N}^{\N}
\ls& \oot\Big(\pnm{\P}{\N}+\pnm{c}{\N}\Big)\pnm{\z}{\N}^{\N-1}+\pnm{(\ik-\bpk)[\re]}{\N}\pnm{\z}{\N}^{\N-1}\\
&+\jnms{h}{\gamma_-}\pnm{\z}{\N}^{\N-1}+\jnms{\m^{\frac{1}{4}}(1-\pp)[\re]}{\gamma_+}\pnm{\z}{\N}^{\N-1}+\pnm{\z}{\N}^{\N-1}\Big(\oot\xnm{\re}+\xnm{\re}^2+\oot\Big).\no
\end{align}
Then the desired estimate \eqref{oo 33} follows from Proposition \ref{thm:energy}.
\end{proof}

\subsubsection{Decomposition of $\z$}\label{sec:z-split}

We will introduce a decomposition $\z:=\z^R+\z^S$ such that that $\nm{\z^R}_{H^1_0}\ls\xnm{\re}$ and $\tnm{\z^S}\ls\e^{\frac{1}{2}}\xnm{\re}$. In other words, $\z^R$ is smoother and thus we can apply techniques like integration by parts or rewriting it with the fundamental theorem of calculus, while $\z^S$ is rougher but relatively small.

\begin{proof}[Proof of Proposition \ref{prop:z-splitting}]
The decomposition is rooted from the weak formulation. Equation \eqref{pp 03.} and \eqref{final 63} yield that for any test function $\phi(x)\in H^2_0(\Omega)$
\begin{align}\label{final 52}
\bbrx{\nx\cdot\big(\kappa\nx\phi\big),\z}
=&\brx{\nx\phi,\frac{\nx T}{2T^2}\big(\kappa\P+\sigma c\big)}-\bbrb{\nx\phi\cdot\a,h}{\gamma_-}+\bbrb{\nx\phi\cdot\a,(1-\pp)[\re]}{\gamma_+}\\
&-\br{v\cdot\nx\Big(\nx\phi\cdot\a\Big),(\ik-\bpk)[\re]}+\br{\nx\phi\cdot\a,\left(\mhh\ab\cdot\frac{\nx T}{2T^2}\right)(\ik-\bpk)[\re]}-\bbr{\nx\phi\cdot\a,\ss}.\no
\end{align}
In the following, we will split $\z$ and \eqref{final 52}.
\\
\paragraph{\underline{Construction of $\z^R$}}
By Lax-Milgram theorem, there exists a unique $\z^R\in H^1_0(\Omega)$ satisfying for any test function $\psi(x)\in H_0^1(\Omega)$,
\begin{align}\label{pp 125}
    \bbr{\k\nx\psi,\nx \z^R}
    =&-\br{\nx\psi,\frac{\nx T}{2\tq^2}(\k\P+\sigma c)}-\br{\a\cdot\nx\psi,\left(\mhh\ab\cdot\frac{\nx T}{2T^2}\right)(\ik-\bpk)[\re]}+\bbr{\a\cdot\nx\psi,\ss}.
\end{align}
Taking $\psi=\z^R$, using Poincar\'e's inequality, we obtain
\begin{align}
\nm{\z^R}_{H^1_0}\ls& \tnm{\frac{\nx T}{2\tq^2}(\k\P+\sigma c)}+\tnm{\br{\a,\mhh\ab\cdot\frac{\nx T}{4T^2}(\ik-\bpk)[\re]}}+\tnm{\brv{\a,\ss}}\\
\ls&\oot\tnm{\P}+\oot\tnm{c}+\oot\tnm{(\ik-\bpk)[\re]}+\oot\xnm{\re}+\xnm{\re}^2+\oot.\no
\end{align}
Then the desired estimate \eqref{qq 06} follows from Proposition \ref{thm:energy}.
\\
\paragraph{\underline{Construction of $\z^S$}} Denote $\z^S=\z-\z^R$. Let $\psi(x)=\phi(x)\in H^2_0(\Omega)$ in \eqref{pp 125}. Integration by parts yields 
\begin{align}\label{qq 110.}
    \bbrx{\nx\cdot\big(\kappa\nx\phi\big),\z^R}
    =&\br{\nx\phi,\frac{\nx T}{2\tq^2}(\k\P+\sigma c)}+\br{\a\cdot\nx\phi,\left(\mhh\ab\cdot\frac{\nx T}{2T^2}\right)(\ik-\bpk)[\re]}-\bbr{\a\cdot\nx\phi,\ss},
\end{align}
and thus $\z^S\in L^2(\Omega)$ satisfies
\begin{align}\label{qq 110..}
    \bbrx{\nx\cdot\big(\kappa\nx\phi\big),\z^S}
    =&-\bbrb{\nx\phi\cdot\a,h}{\gamma_-}+\bbrb{\nx\phi\cdot\a,(1-\pp)[\re]}{\gamma_+}-\br{v\cdot\nx\Big(\nx\phi\cdot\a\Big),(\ik-\bpk)[\re]}.
\end{align}
We choose the test function $\phi(x)$ satisfying
\begin{align}
\nx\cdot(\k\nx\phi) =\z^S,\quad \phi\Big|_{\p\Omega}=0.
\end{align}
The standard elliptic estimate implies
\begin{align}
    \nm{\phi}_{H^2}\ls\tnm{\z^S}.
\end{align}
It is obvious that
\begin{align}
    \bbrx{\nx\cdot\big(\kappa\nx\phi\big),\z^S}=\tnm{\z^S}^2.
\end{align}
On the other hand, we know
\begin{align}
    \abs{\br{v\cdot\nx\Big(\nx\phi\cdot\a\Big),(\ik-\bpk)[\re]}}
    \ls&\tnm{(\ik-\bpk)[\re]}\nm{\phi}_{H^2}\ls \tnm{(\ik-\bpk)[\re]}^2+\oo\tnm{\z^S}^2.
\end{align}
The remaining boundary terms can be bounded as
\begin{align}
    \abs{\bbrb{\nx\phi\cdot\a,h}{\gamma_-}}
    \ls&\tnms{\nx\phi}{\p\Omega}\tnms{\bbrv{(v\cdot n)\a, h\id_{\gamma_-}}}{\p\Omega}
    \ls\oo\tnm{\z^S}^2+\tnms{h}{\gamma_-}^2,
\end{align}
and
\begin{align}
    \abs{\bbrb{\nx\phi\cdot\a,(1-\pp)[\re]}{\gamma_+}}
    \ls&\tnms{\nx\phi}{\p\Omega}\tnms{\bbrv{(v\cdot n)\a,(1-\pp)[\re]\id_{\gamma_+}}}{\p\Omega}
    \ls\oo\tnm{\z^S}^2+\tnms{(1-\pp)[\re]}{\gamma_+}^2.
\end{align}
In total, using Lemma \ref{h-estimate}, we have
\begin{align}
    \tnm{\z^S}\ls \um{(\ik-\bpk)[\re]}+\tnms{(1-\pp)[\re]}{\gamma_+}+\oot\e.
\end{align}
Then the desired estimate \eqref{qq 06} follows from Proposition \ref{thm:energy}.
\end{proof}

\section{Dual Stokes Problem and \texorpdfstring{$\bb$}{} Estimates}\label{sec:b-est}

In this section, we plan to prove Proposition \ref{prop:b-bound}. To establish \eqref{oo 34=}, thanks to energy estimates and \eqref{ss 00}, it suffices to establish for $2\leq\N\leq6$
    \begin{align}\label{oo 34}
    \jnm{\bb}^{\N}\ls& \xnm{\re}\jnm{\bb}^{\N}+ \Big(\xnm{\re}+1\Big)\Big(\jnm{\be}^{\N}+\jnm{(\ik-\bpk)[\re]}^{\N}+\jnms{\m^{\frac{1}{4}}(1-\pp)[\re]}{\gamma_+}^{\N}\Big)\\
    &+\oot\e^{\frac{\N}{2}}\xnm{\re}^{\N}+\oot\e^{\frac{\N}{2}}\xnm{\re}^{2\N}+\oot\e.\no
    \end{align}

\subsection{Combined Weak Formulation}

Assume $(\psi,q)\in\r^3\times\r$ (where $q$ has zero average) is the unique strong solution to the perturbed Stokes problem 
\begin{align}\label{pp 17}
\left\{
    \begin{array}{r}
    -\lambda\Delta_x\psi-\bbb\cdot\big(\nx\psi:\nx\b\big)-\big(\nx\cdot\bbb\big)\big(\nx\psi:\b\big)+\bbrv{\nx\psi:\b,v\left(\ab\cdot\dfrac{\nx T}{2T^2}\right)}+\nx q=\bb\abs{\bb}^{\N-2}\ \ \text{in}\ \ \Omega,\\\rule{0ex}{1.2em}
    \nx\cdot\psi=0\ \ \text{in}\ \ \Omega,\qquad\psi=0\ \ \text{on}\ \ \p\Omega.
    \end{array}
\right.
\end{align}
Considering that the perturbation terms $\big(\nx\cdot\bbb\big)\big(\nx\psi:\b\big)$ and $\bbrv{\nx\psi:\b,v\left(\ab\cdot\frac{\nx T}{2T^2}\right)}$ are small due to the smallness of $\nx T$, we have the standard estimate \cite{Cattabriga1961}, Sobolev embedding for $2\leq\N\leq 6$ and trace estimates
\begin{align}\label{qq 15}
    \nm{\psi}_{W^{2,\frac{\N}{\N-1}}}+\nm{q}_{W^{1,\frac{\N}{\N-1}}}+\nm{\psi}_{H^1}+\tnm{q}+\abs{\psi}_{W^{1,\frac{2\N}{2\N-3}}_{\p\Omega}}+\knms{q}{\p\Omega}\ls\knm{\bb\abs{\bb}^{\N-2}}\ls \jnm{\bb}^{\N-1}.
\end{align}
Multiplying $\bb$ on both sides of \eqref{pp 17} and integrating by parts for $\brx{\nx q,\bb}$, we have
\begin{align} 
    -\bbrx{\lambda\Delta_x\psi,\bb}-\bbr{\bbb\cdot\big(\nx\psi:\nx\b\big)+\big(\nx\cdot\bbb\big)\big(\nx\psi:\b\big),\bb}+\br{\nx\psi:\b,(\bb\cdot v)\left(\ab\cdot\frac{\nx T}{2T^2}\right)}\\
    -\bbrx{ q,\nx\cdot\bb}+\int_{\p\Omega}q(\bb\cdot n)&=\jnm{\bb}^{\N},\no
\end{align}
which, by combining \eqref{conservation law 1} and Remark \ref{remark 02}, implies
\begin{align}\label{pp 18}
    -\bbrx{\lambda\Delta_x\psi,\bb}-\bbr{\bbb\cdot\big(\nx\psi:\nx\b\big)+(\nx\cdot\bbb)(\nx\psi:\b),\bb}+\br{\nx\psi:\b,(\bb\cdot v)\left(\ab\cdot\frac{\nx T}{2T^2}\right)}\\
    -P^{-1}\br{q,\mh\ss_3+\mh\ss_4}&=\jnm{\bb}^{\N}.\no
\end{align}
Inserting \eqref{pp 18} into \eqref{pp 16} to replace $-\bbrx{\lambda\Delta_x\psi,\bb}-\bbr{\bbb\cdot\big(\nx\psi:\nx\b\big)+\big(\nx\cdot\bbb\big)\big(\nx\psi:\b\big),\bb}+\bbr{\nx\psi:\b,(\bb\cdot v)\left(\ab\cdot\frac{\nx T}{2T^2}\right)}$, we obtain
\begin{align}\label{pp 19}
&\jnm{\bb}^{\N}+P^{-1}\br{q,\mh\ss_3+\mh\ss_4}+\bbr{\nx\psi:\b,\llc[\re]}\\
=&\bbrb{\nx\psi:\b,h}{\gamma_-}-\bbrb{\nx\psi:\b,(1-\pp)[\re]}{\gamma_+}\no\\
&+\br{v\cdot\nx\Big(\nx\psi:\b\Big),\bd\cdot\a+(\ik-\bpk)[\re]}-\br{\nx\psi:\b,\left(\mhh\ab\cdot\frac{\nx T}{4T^2}\right)\Big(\bd\cdot\a+(\ik-\bpk)[\re]\Big)}\no\\
&+\e^{-1}\br{\psi\cdot v\mh,\ss_3+\sp}+\bbr{\nx\psi:\b,\bar\ss}.\no
\end{align} 

\begin{lemma}
Under the assumption \eqref{assumption:boundary}, we have
\begin{align}\label{final 12}
  \jnm{\bb}^{\N}\ls& \jnm{\be}^{\N}+\jnm{(\ik-\bpk)[\re]}^{\N}+\jnms{\m^{\frac{1}{4}}(1-\pp)[\re]}{\gamma_+}^{\N}+\oot\e\\
  &-P^{-1}\br{q,\mh\ss_3+\mh\ss_4}-\bbr{\nx\psi:\b,\llc[\re]}+\e^{-1}\br{\psi\cdot v\mh,\ss_3+\sp}+\bbr{\nx\psi:\b,\bar\ss}.\no
\end{align}
\end{lemma}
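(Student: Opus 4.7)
The plan is to start from the identity \eqref{pp 19}, retain the four source-type quantities $-P^{-1}\br{q,\mh\ss_3+\mh\ss_4}$, $-\bbr{\nx\psi:\b,\llc[\re]}$, $\e^{-1}\br{\psi\cdot v\mh,\ss_3+\sp}$, $\bbr{\nx\psi:\b,\bar\ss}$ on the right-hand side (they are kept for later treatment), and estimate every other contribution via a combination of Hölder's inequality, the Cattabriga-type a priori bound \eqref{qq 15} on the dual Stokes pair $(\psi,q)$, and the Hodge-decomposition control of $\bd$ from Lemma \ref{remark 01}. Young's inequality will then absorb any resulting $\jnm{\bb}^{\N}$ contributions into the left-hand side, leaving the announced three norms and the error $\oot\e^{\N+1-\N\al}$.

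For the boundary term $\bbrb{\nx\psi:\b,h}{\gamma_-}$, I would apply Hölder on $\gamma_-$ with conjugate exponents $\tfrac{2\N}{2\N-3}$ and $\tfrac{2\N}{3}$, bound $\knms{\nx\psi:\b}{\gamma_-}\ls\abs{\psi}_{W^{1,\frac{2\N}{2\N-3}}_{\p\Omega}}\ls\jnm{\bb}^{\N-1}$ using \eqref{qq 15}, and invoke Lemma \ref{h-estimate} for $\jnms{h}{\gamma_-}\ls\oot\e^{1+3/\N-\al}$. A Young inequality with exponents $\tfrac{\N}{\N-1}$ and $\N$ then produces $\oo\jnm{\bb}^{\N}+\oot\e^{\N+3-\N\al}$, which is absorbed respectively into the left-hand side and into $\oot\e^{\N+1-\N\al}$ since $\e\le 1$. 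The parallel estimate on $\gamma_+$ pairs $(\nx\psi:\b)$ with $(1-\pp)[\re]$, yielding the $\jnms{\m^{1/4}(1-\pp)[\re]}{\gamma_+}^{\N}$ piece on the right-hand side after Young's inequality.

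For the interior integrals $\br{v\cdot\nx(\nx\psi:\b),\bd\cdot\a+(\ik-\bpk)[\re]}$ and $\br{\nx\psi:\b,(\mhh\ab\cdot\tfrac{\nx T}{4T^2})(\bd\cdot\a+(\ik-\bpk)[\re])}$, I would use Hölder in $(x,v)$ with exponents $\tfrac{\N}{\N-1}$ and $\N$, together with $\nm{\psi}_{W^{2,\N/(\N-1)}}\ls\jnm{\bb}^{\N-1}$ from \eqref{qq 15}, giving a bound by $\jnm{\bb}^{\N-1}\bigl(\jnm{\bd}+\jnm{(\ik-\bpk)[\re]}\bigr)$. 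The estimate \eqref{tt 03'} of Lemma \ref{remark 01} then yields $\jnm{\bd}\ls\oot\jnm{\bb}+\jnm{\be}+\oot\e^{1+1/\N-\al}$, and Young's inequality absorbs $\oo\jnm{\bb}^{\N}$ on the left, produces the desired $\jnm{\be}^{\N}+\jnm{(\ik-\bpk)[\re]}^{\N}$ summands, and leaves an error bounded by $\oot\e^{\N+1-\N\al}$; the second interior integral is strictly smaller because of the prefactor $\nx T=\oot$.

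The only delicate point is the exponent bookkeeping in Young's inequality: the boundary contribution from $\jnms{h}{\gamma_-}\ls\oot\e^{1+3/\N-\al}$ produces the exponent $\N+3-\N\al$, which is dominated by $\N+1-\N\al$, and the interior contribution from the Hodge decomposition produces exactly $\N+1-\N\al$, so both fit into the target error. All other pieces either yield one of the three structural norms on the right-hand side of \eqref{final 12} or carry a prefactor $\oot$ that can be hidden inside $\jnm{\bb}^{\N}$ on the left. Collecting all bounds in \eqref{pp 19} and absorbing the accumulated $\oo\jnm{\bb}^{\N}$ terms into the left-hand side gives exactly \eqref{final 12}.
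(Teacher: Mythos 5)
Your proposal is correct and follows essentially the same route as the paper: both start from \eqref{pp 19}, keep the four source-type pairings untouched, estimate the boundary terms via H\"older/trace bounds from \eqref{qq 15} together with Lemma \ref{h-estimate} and Young's inequality, and estimate the interior terms via H\"older with \eqref{qq 15} before using \eqref{tt 03'} of Lemma \ref{remark 01} to trade $\jnm{\bd}$ for $\jnm{\be}$ plus the $\oot\e^{1+\frac{1}{\N}-\al}$ error, finally absorbing the accumulated $\oo\jnm{\bb}^{\N}$ into the left-hand side. The exponent bookkeeping ($\N+3-\N\al$ dominated by $\N+1-\N\al$, and the Hodge error contributing exactly $\N+1-\N\al$) matches the paper's computation.
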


\begin{proof}
We need to estimate each term in \eqref{pp 19} except the source terms. Using Lemma \ref{h-estimate}, we have
\begin{align}
&\abs{\bbrb{\nx\psi:\b,h}{\gamma_-}}+\abs{\bbrb{\nx\psi:\b,(1-\pp)[\re]}{\gamma_+}}\\
\ls& \oo\knms{\nx\psi}{\p\Omega}^{\frac{\N}{\N-1}}+\jnms{\int_{v\cdot n>0}(1-\pp)[\re]\b(v\cdot n)}{\p\Omega}^{\N}+\jnms{\int_{v\cdot n<0}h\b(v\cdot n)}{\p\Omega}^{\N}\no\\
\ls& \oo\jnm{\bb}^{\N}+\jnms{\m^{\frac{1}{4}}(1-\pp)[\re]}{\gamma_+}^{\N}+\jnms{h}{\gamma_-}^{\N}\ls \oo\jnm{\bb}^{\N}+\jnms{\m^{\frac{1}{4}}(1-\pp)[\re]}{\gamma_+}^{\N}+\oot\e^{3}.\no
\end{align}
In addition, \eqref{qq 15}, H\"older's inequality and Young's inequality yield
\begin{align}
\abs{\bbr{v\cdot\nx(\nx\psi:\b),\bd\cdot\a+(\ik-\bpk)[\re]}}
\ls&\oo\knm{\nx^2\psi}^{\frac{\N}{\N-1}}+\jnm{\bd}^{\N}+\jnm{(\ik-\bpk)[\re]}^{\N}\\
\ls&\oo\jnm{\bb}^{\N}+\jnm{\bd}^{\N}+\jnm{(\ik-\bpk)[\re]}^{\N},\no
\end{align}
and
\begin{align}
    &\abs{\br{\nx\psi:\b,\left(\mhh\ab\cdot\frac{\nx T}{4T^2}\right)\Big(\bd\cdot\a+(\ik-\bpk)[\re]\Big)}}\\
    \ls&\oot\knm{\nx\psi}^{\frac{\N}{\N-1}}+\oot\jnm{\bd}^{\N}+\oot\jnm{(\ik-\bpk)[\re]}^{\N}
    \ls\oot\jnm{\bb}^{\N}+\oot\jnm{\bd}^{\N}+\oot\jnm{(\ik-\bpk)[\re]}^{\N}.\no
\end{align}
Collecting all above and using Lemma \ref{remark 01}, we have the desired result.
\end{proof}

\subsection{Estimates of Source Terms}

In this section, we will estimate the source term in \eqref{final 12}:
\begin{align}\label{final 58}
  -P^{-1}\br{q,\mh\ss_3+\mh\ss_4}-\bbr{\nx\psi:\b,\llc[\re]}+\e^{-1}\br{\psi\cdot v\mh,\ss_3+\sp}+\bbr{\nx\psi:\b,\bar\ss}.
\end{align}

\begin{lemma}
Under the assumption \eqref{assumption:boundary}, we have
\begin{align}\label{final 11}
    &\abs{P^{-1}\br{q,\mh\ss_3+\mh\ss_4}}+\abs{\bbr{\nx\psi:\b,\llc[\re]}}+\abs{\e^{-1}\br{\psi\cdot v\mh,\ss_3+\sp}}+\abs{\bbr{\nx\psi:\b,\bar\ss}}\\
    \ls&\oot\jnm{\bb}^{\N}+\xnm{\re}\jnm{\bb}^{\N}+ \Big(\xnm{\re}+1\Big)\Big(\jnm{\be}^{\N}+\jnm{(\ik-\bpk)[\re]}^{\N}+\jnms{\m^{\frac{1}{4}}(1-\pp)[\re]}{\gamma_+}^{\N}\Big)\no\\
    &+\oot\e^{\frac{\N}{2}}\xnm{\re}^{\N}+\e^{\frac{\N}{2}}\xnm{\re}^{2\N}+\oot\e.\no
\end{align}
\end{lemma}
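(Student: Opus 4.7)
The plan is to estimate each of the four source terms in \eqref{final 58} independently, using the Stokes regularity \eqref{qq 15} for $(\psi,q)$, the source-term bounds collected in Section \ref{sec:bs}, and the Hardy-type trick permitted by $\psi|_{\p\Omega}=0$. Throughout, Young's inequality with the conjugate pair $(\N,\N/(\N-1))$ converts a bound of the form $\jnm{\bb}^{\N-1}\cdot X$ into $\oot\jnm{\bb}^\N + \oot X^\N$, which is how the $\e^{\N+1-\N\al}$ and $\xnm{\re}^{2\N}$ terms on the RHS arise.

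For $P^{-1}\br{q,\mh(\ss_3+\ss_4)}$, the $\ss_4$ piece follows from $\tnm{q}\ls\jnm{\bb}^{\N-1}$ (via Sobolev on \eqref{qq 15}) together with Lemma \ref{ss4-estimate}. For $\ss_3$, I decompose $\ss_3=\sx+\sy+\sz$; the non-singular pieces are controlled by \eqref{ss3-estimate5} combined with $\knm{q}\ls\jnm{\bb}^{\N-1}$, while $\sx$ is handled through the duality estimate \eqref{ss3-estimate6}, where the singular $\p_{\va}\fb_1$ is absorbed by integration by parts in $\va$ onto the smooth factor $q\mh$. Similarly, for $\bbr{\nx\psi:\b,\llc[\re]}$, Lemma \ref{ssl-estimate} together with the Sobolev embedding $W^{2,\N/(\N-1)}\hookrightarrow H^1$ (valid exactly for $2\leq\N\leq 6$) yields the bound $\oot\jnm{\bb}^{\N-1}\um{\re}$. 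Splitting $\um{\re}\ls\tnm{\pk[\re]}+\um{(\ik-\bpk)[\re]}$ and applying Young's inequality, using $\tnm{\bb}\leq|\Omega|^{1/2-1/\N}\jnm{\bb}$ and the $X$-norm control of the remaining kernel components, accounts for the $\oot\jnm{\bb}^\N$ and $\xnm{\re}\jnm{\bb}^\N$ contributions on the RHS.

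For the delicate $\e^{-1}\br{\psi\cdot v\mh,\ss_3+\sp}$ term I exploit $\psi|_{\p\Omega}=0$ via the Hardy maneuver: since $\e^{-1}=\eta/\mn$, one writes $\e^{-1}\br{\psi\cdot v\mh,\ss_3}=\br{\eta\,\ss_3,(\psi/\mn)\cdot v\mh}$ and uses $\knm{\psi/\mn}\ls\knm{\p_{\mn}\psi}\ls\jnm{\bb}^{\N-1}$. The $\sy+\sz$ part pairs with $\eta$ via \eqref{ss3-estimate5} applied to $\eta(\sy+\sz)$; for $\sx$ I first integrate by parts in $\va$ (as in Remark \ref{ss3-remark}) to trade the singular $\p_{\va}\fb_1$ for $\fb_1$, whose $L^{\N}_{x}L^{1}_{v}$ norm is controlled, and then pair it with the smooth quantity $(\psi/\mn)\cdot\nabla_v(v\mh)$. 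The $\sp$ contribution is negligible via Lemma \ref{ssp-estimate} by taking $m$ large. Finally, for $\bbr{\nx\psi:\b,\bar\ss}$ (with $\bar\ss$ collecting $\ss_0,\ss_1,\ss_2,\ss_5$), the linear pieces $\ss_0,\ss_1,\ss_5$ from Lemmas \ref{ss0-estimate}, \ref{ss1-estimate}, \ref{ss5-estimate} yield contributions $\oot\xnm{\re}\jnm{\bb}^{\N-1}$ and $\oot\e^{?}\jnm{\bb}^{\N-1}$ that produce the desired RHS after Young's. The quadratic $\ss_2$ term is the most interesting: applying the dual form of Lemma \ref{ss2-estimate}, the interpolation $\um{(\ik-\bpk)[\re]}\lnmm{\re}\ls\e\,\xnm{\re}\cdot\e^{-1/2}\xnm{\re}=\e^{1/2}\xnm{\re}^2$ comes directly from the definition of $\xnm{\cdot}$, and Young's inequality with conjugate pair $(\N,\N/(\N-1))$ then yields precisely $\oot\jnm{\bb}^\N+\e^{\al\N-\N/2}\xnm{\re}^{2\N}$.

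The main obstacle is the $\sx$ piece of both $\br{q,\mh\ss_3}$ and $\e^{-1}\br{\psi\cdot v\mh,\ss_3}$: the pointwise singularity $|\p_{\va}\fb_1|\sim|\va|^{-1}$ near the grazing set would create a logarithmic $|\ln\e|$ loss if estimated crudely, which is incompatible with the target exponent $\e^{\N+1-\N\al}$. This is precisely where the BV theory for $\fb_1$ developed in Theorem \ref{boundary regularity} becomes indispensable: by integrating by parts in $\va$ the $\va$-derivative is transferred off $\fb_1$, and the BV control then replaces the fatal $|\ln\e|$ by a genuine gain of $\e^{1/\N}$ through the $L^{\N}_{x}L^{1}_{v}$ estimate \eqref{ss3-estimate5}--\eqref{ss3-estimate6}. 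Without this maneuver the exponent $\e^{\N+1-\N\al}$ in the conclusion is unattainable.
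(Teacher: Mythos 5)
Your handling of the $\ss_3$, $\ss_4$ and $\sp$ contributions (Hardy's inequality through $\psi\big|_{\p\Omega}=0$, integration by parts in $\va$ to trade $\p_{\va}\fb_1$ for $\fb_1$, and the BV estimate of Theorem \ref{boundary regularity} to avoid the $\abs{\ln\e}$ loss) is exactly the paper's argument. The gap is in the remaining three contributions, $\bbr{\nx\psi:\b,\llc[\re]}$, $\bbr{\nx\psi:\b,\ss_1}$ and $\bbr{\nx\psi:\b,\ss_2}$, which you propose to close with the generic bounds of Section \ref{sec:bs} plus Young. Those generic bounds involve $\um{\re}$ (or, in \eqref{aa 40}, $\pnm{\pk[\re]+\bd\cdot\a}{3}\pnm{\pk[\re]+\bd\cdot\a}{6}$), and these quantities contain the temperature component $c$, which carries \emph{no} $\e$-gain in the $X$-norm. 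Concretely, Lemma \ref{ssl-estimate} applied to $\llc[\re]$ gives $\oot\jnm{\bb}^{\N-1}\tnm{c}$, hence after Young a term $\oot\xnm{\re}^{\N}$ with no $\e^{\N/2}$; and Lemma \ref{ss2-estimate} applied to $\ss_2$ gives $\e^{\al-1}\jnm{\bb}^{\N-1}\xnm{\re}^{2}$ (the $c\!\times\!c$ part of the kernel-quadratic term has no $\e^{1/2}$, so your claimed interpolation only covers the $(\ik-\bpk)$ piece), hence $\e^{(\al-1)\N}\xnm{\re}^{2\N}$ rather than the stated $\e^{\al\N-\frac{\N}{2}}\xnm{\re}^{2\N}$. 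Neither of these is controlled by the right-hand side of \eqref{final 11}, and the loss is not cosmetic: at $\N=2$, $\al=1$ the stated bound is what produces $\tnm{\bb}\ls\e^{\frac12}(\oot\xnm{\re}+\cdots)$, i.e.\ the gain $\e^{-\frac12}\tnm{\bb}\ls\oot\xnm{\re}+\cdots$ needed because $\e^{-\frac12}\tnm{\bb}$ sits inside the $X$-norm; with your terms one only gets $\e^{-\frac12}\tnm{\bb}\ls\e^{-\frac12}\xnm{\re}^{2}+\oot^{\frac12}\xnm{\re}$, which is not uniform in $\e$ and breaks the closure in Theorem \ref{thm:apriori}.

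The paper removes these dangerous $c$- (and $\P$-) contributions by exact structure, not by size: for $\llc[\re]$ it computes $\mhh Q^{\ast}[\mh f_1,\P\m]=\P\lc[f_1]$ and kills the $\P$ part by oddness, and kills the $c\big(\abs{v}^2-5T\big)\mh$ part through the trace-free identity \eqref{qq 16} (the $K_2=0$ cancellation, using $\sum_i(v_i^2-\tfrac13\abs{v}^2)=0$ and $\nx\cdot\psi=0$); for $\ss_2$ the analogous oddness argument in \eqref{pp 104''} makes the whole $\Gamma\big[\P\mh+c(\abs{v}^2-5T)\mh,\P\mh+c(\abs{v}^2-5T)\mh\big]$ pairing vanish, leaving only terms with at least one factor of $\bb$, $\bd$ or $(\ik-\bpk)[\re]$, all of which carry $\e^{\frac12}$ or better; and for $\ss_1=2\Gamma[\fb_1,\re]$ the $c$-component is treated by the splitting $c=\big(\z^R-\e^{-1}\xi\big)+\z^S$ of Proposition \ref{prop:z-splitting} combined with Hardy's inequality and the $\eta$-decay of $\fb_1$ (so that $\eta\fb_1$ converts $\mn^{-1}$ into an extra $\e$), rather than by Lemma \ref{ss1-estimate} (whose $\e^{\frac12}$-gain version needs $\lnm{\nx\psi:\b}$, which the Stokes regularity \eqref{qq 15} does not provide). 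Without these three cancellations/splittings your scheme cannot reach the exponents $\oot\e^{\frac{\N}{2}}\xnm{\re}^{\N}$ and $\e^{\al\N-\frac{\N}{2}}\xnm{\re}^{2\N}$ in \eqref{final 11}, so the proof as proposed does not establish the lemma.
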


\begin{proof}\ 
\paragraph{\underline{$\llc[\re]$ Contribution}}
We need more detailed analysis than the estimates in Lemma \ref{ssl-estimate}. We split
\begin{align}
    \bbr{\nx\psi:\b,\llc[\re]}=\br{\nx\psi:\b,2\mhh Q^{\ast}\left[\mh f_1,\re\right]}=K_1+K_2,
\end{align}
where
\begin{align}
    K_1:=&\br{\nx\psi:\b,\mhh Q^{\ast}\left[\mh f_1,\P\m+(\bb\cdot v)\m+\bd\cdot\a\mh+\mh(\ik-\bpk)[\re]\right]},\\
    K_2:=&\br{\nx\psi:\b,\mhh Q^{\ast}\left[\mh f_1,c\left(\abs{v}^2-5T\right)\m\right]}.
\end{align}
We can check that
\begin{align}
    \mhh Q^{\ast}\left[\mh f_1,\P\m\right]=\P\lc[f_1]=-\P\left(\ab\cdot\frac{\nx\tq}{2\tq^2}\right),
\end{align}
and thus by oddness of $\ab$ and $\b$, we have
\begin{align}
    \br{\nx\psi:\b,\mhh Q^{\ast}\left[\mh f_1,\P\m\right]}=\brx{\bbrv{\nx\psi:\b,\ab},\P\frac{\nx\tq}{2\tq^2}}=0.
\end{align}
Also, considering that $\nm{f_1}_{\infty}\ls\oot$, H\"older's inequality and Young's inequality imply
\begin{align}
    &\abs{\br{\nx\psi:\b,\mhh Q^{\ast}\left[\mh f_1,(\bb\cdot v)\m+(\bd\cdot\a)\mh+\mh(\ik-\bpk)[\re]\right]}}\\
    \ls& \oot\nm{\psi}_{W^{2,\frac{\N}{\N-1}}}\Big(\jnm{\bb}+\jnm{\bd}+\jnm{(\ik-\bpk)[\re]}\Big)
    \ls\oot\jnm{\bb}^{\N}+\oot\jnm{\bd}^{\N}+\oot\um{(\ik-\bpk)[\re]}^{\N}.\no
\end{align}
Hence, we have shown that
\begin{align}
    K_1\ls \oot\jnm{\bb}^{\N}+\oot\jnm{\bd}^{\N}+\oot\jnm{(\ik-\bpk)[\re]}^{\N}.
\end{align}
In addition, due to oddness, we have
\begin{align}\label{pp 81}
    K_2=&\br{\nx\psi:\b,\mhh Q^{\ast}\bigg[-\frac{\a\cdot\nx\tq}{2\tq^2}\mh +\m\bigg(\frac{\rq_1}{\rq}+\frac{\uq_1\cdot v}{\tq}+\frac{\tq_1(\abs{v}^2-3\tq)}{2\tq^2}\bigg),c\left(\abs{v}^2-5T\right)\m\bigg]}\\
    =&\br{\nx\psi:\b,\mhh Q^{\ast}\bigg[\m\bigg(\frac{\rq_1}{\rq}+\frac{\tq_1(\abs{v}^2-3\tq)}{2\tq^2}\bigg),c\left(\abs{v}^2-5T\right)\m\bigg]}\no\\
    =&\br{\p_{x_1}\psi_1\b_{11}+\p_{x_2}\psi_2\b_{22}+\p_{x_3}\psi_3\b_{33},\mhh Q^{\ast}\bigg[\m\bigg(\frac{\rq_1}{\rq}+\frac{\tq_1(\abs{v}^2-3\tq)}{2\tq^2}\bigg),c\left(\abs{v}^2-5T\right)\m\bigg]}.\no
\end{align}
Since $\mhh Q^{\ast}\left[\m\left(\frac{\rq_1}{\rq}+\frac{\tq_1(\abs{v}^2-3\tq)}{2\tq^2}\right),c\big(\abs{v}^2-5T\big)\m\right]$ only depends on $\abs{v}^2$, for any $i=1,2,3$, using the definition of $\b_{ii}$, we have
\begin{align}\label{qq 16}
    &\brv{\b_{ii},\mhh Q^{\ast}\bigg[\m\bigg(\frac{\rq_1}{\rq}+\frac{\tq_1(\abs{v}^2-3\tq)}{2\tq^2}\bigg),c\big(\abs{v}^2-5T\big)\m\bigg]}\\
    =&\brv{\li\left[\left(v_i^2-\frac{1}{3}\abs{v}^2\right)\mh\right],\mhh Q^{\ast}\bigg[\m\bigg(\frac{\rq_1}{\rq}+\frac{\tq_1(\abs{v}^2-3\tq)}{2\tq^2}\bigg),c\big(\abs{v}^2-5T\big)\m\bigg]}\no\\
    =&\frac{1}{3}\brv{\li\left[\sum_{i=1,2,3}\left(v_i^2-\frac{1}{3}\abs{v}^2\right)\mh\right],\mhh Q^{\ast}\bigg[\m\bigg(\frac{\rq_1}{\rq}+\frac{\tq_1(\abs{v}^2-3\tq)}{2\tq^2}\bigg),c\big(\abs{v}^2-5T\big)\m\bigg]}=0,\no
\end{align}
since $\sum_{i=1,2,3}\left(v_i^2-\frac{1}{3}\abs{v}^2\right)=0$. 
Hence, we have $K_2=0$.

In total, using Lemma \ref{remark 01}, we have the $\llc$ contribution in \eqref{final 58}
\begin{align}
    \abs{\bbr{\nx\psi:\b,\llc[\re]}}\ls& \oot\jnm{\bb}^{\N}+\oot\jnm{\be}^{\N}+\oot\jnm{(\ik-\bpk)[\re]}^{\N}+\oot\e.
\end{align}

\paragraph{\underline{$\ss_3$ Contribution}}
We will apply a similar argument as \eqref{oo 30}.
Using $\psi\big|_{\p\Omega}=0$ and Hardy's inequality, combined with \eqref{ss3-estimate5} and \eqref{ss3-estimate6}, we have
\begin{align}\label{pp 124}
    &\abs{\br{\e^{-1}\psi\cdot v\mh,\ss_3}}\\
    \leq&\e^{-1}\abs{\br{\left(\int_0^{\mn}\p_{\mn}\psi(y)\ud y\right)\cdot v\mh,\sy+\sz}}+\e^{-1}\abs{\br{\left(\int_0^{\mn}\p_{\mn}\psi(y)\ud y\right)\cdot \nabla_v(v\mh),\fb_1}}\no\\
    =&\e^{-1}\abs{\br{\left(\frac{1}{\mn}\int_0^{\mn}\p_{\mn}\psi(y)\ud y\right)\cdot v\mh,\mn\big(\sy+\sz\big)}}+\e^{-1}\abs{\br{\left(\frac{1}{\mn}\int_0^{\mn}\p_{\mn}\psi(y)\ud y\right)\cdot \nabla_v(v\mh),\mn\fb_1}}\no\\
    \leq&\knm{\frac{1}{\mn}\int_0^{\mn}\p_{\mn}\psi(y)\ud y}\nm{\eta\big(\sy+\sz\big)}_{L^r_xL^1_v}+\knm{\frac{1}{\mn}\int_0^{\mn}\p_{\mn}\psi(y)\ud y}\nm{\eta\fb_1}_{L^r_xL^1_v}\no\\
    \leq&\knm{\p_{\mn}\psi}\nm{\eta\big(\sy+\sz\big)}_{L^r_xL^1_v}+\knm{\p_{\mn}\psi}\nm{\eta\fb_1}_{L^r_xL^1_v}
    \ls \oo \jnm{\bb}^{\N}+\nm{\eta\big(\sy+\sz\big)}_{L^r_xL^1_v}^{\N}+\nm{\eta\fb_1}_{L^r_xL^1_v}^{\N}.\no
\end{align}
Similarly, we have
\begin{align}
    \abs{\bbr{\nx\psi:\b,\ss_3}}\ls&\knm{\nx\psi}\jnm{\sy+\sz}+\knm{\nx\psi}\jnm{\fb_1}\\
    \ls& \oo\jnm{\bb}^{\N}+\nm{\sy+\sz}_{L^r_xL^1_v}^{\N}+\nm{\fb_1}_{L^r_xL^1_v}^{\N},\no
\end{align}
\begin{align}
    P^{-1}\abs{\bbr{q,\mh\ss_3}}\ls \oo\jnm{\bb}^{\N}+\nm{\sy+\sz}_{L^r_xL^1_v}^{\N}+\nm{\fb_1}_{L^r_xL^1_v}^{\N}.
\end{align}
In total, using Lemma \ref{ss3-estimate}, the $\ss_3$ contribution in \eqref{final 58} is bounded by
\begin{align}
    \oo\jnm{\bb}^{\N}+\oot\e.
\end{align}

\paragraph{\underline{$\ss_1$ Contribution}}
Notice that
\begin{align}\label{pp 123}
    \abs{\bbr{\nx\psi:\b,\ss_1}}\ls&\abs{\br{\nx\psi:\b,\Gamma\left[\fb_1,\P\mh+\bb\cdot\vv\mh+\bd\cdot\a+(\ik-\bpk)[\re]\right]}}\\
    &+\abs{\br{\nx\psi:\b,\Gamma\left[\fb_1,c\left(\abs{v}^2-5T\right)\mh\right]}}.\no
\end{align}
Using \eqref{qq 15}, we have
\begin{align}
    \text{First Term in \eqref{pp 123}}\ls& \tnm{\nx\psi}\lnmm{\fb_1}\Big(\tnm{p}+\tnm{\bb}+\tnm{\bd}+\um{(\ik-\bpk)[\re]}\Big)\\
    \ls&\oot\jnm{\bb}^{\N}+\oot\tnm{p}^{\N}+\oot\tnm{\bd}^{\N}+\oot\um{(\ik-\bpk)[\re]}^{\N}.\no
\end{align}
On the other hand, we know
\begin{align}
    \text{Second Term in \eqref{pp 123}}\ls&\abs{\br{\nx\psi:\b,\Gamma\left[\fb_1,\big(\z^R-\e^{-1}\xi\big)\left(\abs{v}^2-5T\right)\mh\right]}}\\
    &+\abs{\br{\nx\psi:\b,\Gamma\left[\fb_1,\z^S\left(\abs{v}^2-5T\right)\mh\right]}}.\no
\end{align}
Similar to \eqref{pp 124}, using \eqref{qq 06}, we have
\begin{align}
    &\abs{\br{\nx\psi:\b,\Gamma\left[\fb_1,\big(\z^R-\e^{-1}\xi\big)\left(\abs{v}^2-5T\right)\mh\right]}}\\
    =&\abs{\br{\nx\psi:\b,\Gamma\left[\fb_1,\int_0^{\mn}\p_{\mn}\big(\z^R-\e^{-1}\xi\big)\left(\abs{v}^2-5T\right)\mh\right]}}\no\\
    \ls&\e\tnm{\nx\psi}\lnmm{\eta\fb_1}\tnm{\nx\z^R-\e^{-1}\nx\xi}\ls \oot\jnm{\bb}^{\N}+\oot\Big(\e^{\N}\nm{\z^R}_{H^1}^{\N}+\nm{\xi}_{H^1}^{\N}\Big)\no\\
    \ls&\oot\jnm{\bb}^{\N}+\oot\e^{\N}\xnm{\re}^{\N}+\oot\e^{\N}\xnm{\re}^{2\N}+\oot\e^{\frac{\N}{2}}.\no
\end{align}
Also, using \eqref{qq 07}, we have
\begin{align}
    \abs{\br{\nx\psi:\b,\Gamma\left[\fb_1,\z^S\left(\abs{v}^2-5T\right)\mh\right]}}
    \ls&\tnm{\nx\psi}\lnmm{\fb_1}\tnm{\z^S}\\
    \ls&\oot\jnm{\bb}^{\N}+\oot\e^{\frac{\N}{2}}\xnm{\re}^{\N}+\oot\e^{\frac{\N}{2}}\xnm{\re}^{2\N}+\oot\e^{\frac{\N}{2}}.\no
\end{align}
In total, for $2\leq\N\leq6$, the $\ss_1$ contribution in \eqref{final 58}
\begin{align}
    \abs{\bbr{\nx\psi:\b,\ss_1}}\ls&\oot\e^{\frac{\N}{2}}\xnm{\re}^{\N}+\oot\e^{\frac{\N}{2}}\xnm{\re}^{2\N}+\oot\e^{\frac{\N}{2}}.
\end{align}

\paragraph{\underline{$\ss_2$ Contribution}}
We will need more detailed analysis than the estimates in Lemma \ref{ss2-estimate}. Notice that
\begin{align}\label{pp 103''}
    \bbr{\nx\psi:\b,\ss_2}=&\br{\nx\psi:\b,\Gamma\Big[\pk[\re]+\bd\cdot\a, \pk[\re]+\bd\cdot\a\Big]}\\
    &+\br{\nx\psi:\b,\Gamma\Big[\pk[\re]+\bd\cdot\a, (\ik-\bpk)[\re]\Big]}+\br{\nx\psi:\b,\Gamma\Big[(\ik-\bpk)[\re], (\ik-\bpk)[\re]\Big]}.\no
\end{align}
For the first term in \eqref{pp 103''}, by oddness, we have
\begin{align}\label{pp 104''}
    &\br{\nx\psi:\b,\Gamma\Big[\pk[\re]+\bd\cdot\a,\pk[\re]+\bd\cdot\a\Big]}\\
    =&\br{\nx\psi:\b,\Gamma\Big[\P\mh+c\left(\abs{v}^2-5T\right)\mh,\P\mh+c\left(\abs{v}^2-5T\right)\mh\Big]}\no\\
    &+\br{\nx\psi:\b,\Gamma\Big[\bb\cdot v\mh,\bb\cdot v\mh\Big]}+\br{\nx\psi:\b,\Gamma\Big[\bd\cdot \a,\bb\cdot v\mh\Big]}+\br{\nx\psi:\b,\Gamma\Big[\bd\cdot \a,\bd\cdot \a\Big]}.\no
\end{align}
Using oddness, similar to \eqref{qq 16}, we know 
\begin{align}
    \br{\nx\psi:\b,\Gamma\Big[\P\mh+c\left(\abs{v}^2-5T\right)\mh,\P\mh+c\left(\abs{v}^2-5T\right)\mh\Big]}=0.
\end{align}
Also, due to $\frac{2\N-3}{3\N}+\frac{1}{3}+\frac{1}{\N}=1$, considering the Sobolev embedding $\pnm{\nx\psi}{\frac{3\N}{2\N-3}}\ls\nm{\psi}_{W^{2,\frac{\N}{\N-1}}}$, and interpolation $\pnm{\bb}{3}\leq\tnm{\bb}+\pnm{\bb}{6}\ls\xnm{\re}$, we know
\begin{align}
    \abs{\br{\nx\psi:\b,\Gamma\Big[\bb\cdot v\mh,\bb\cdot v\mh\Big]}}\ls& \nm{\nx\psi}_{L^{\frac{3\N}{2\N-3}}}\pnm{\bb}{3}\jnm{\bb}\\
    \ls&\nm{\psi}_{W^{2,\frac{\N}{\N-1}}}\xnm{\re} \jnm{\bb}\ls \xnm{\re}\jnm{\bb}^{\N}.\no
\end{align}
Similarly, using $\pnm{\bd}{3}\ls\xnm{\re}$ due to interpolation, using \eqref{qq 15}, we have
\begin{align}
    \abs{\br{\nx\psi:\b,\Gamma\Big[\bd\cdot \a,\bb\cdot v\mh\Big]}}\ls \pnm{\bd}{3}\jnm{\bb}^{\N}\ls \xnm{\re}\jnm{\bb}^{\N},
\end{align}
and
\begin{align}
    \abs{\br{\nx\psi:\b,\Gamma\Big[\bd\cdot \a,\bd\cdot \a\Big]}}\ls&\jnm{\bb}^{\N-1}\pnm{\bd}{3}\pnm{\bd}{\N}\ls \jnm{\bb}^{\N-1}\xnm{\re}\pnm{\bd}{\N}
    \ls  \xnm{\re}\Big(\jnm{\bb}^{\N}+ \jnm{\bd}^{\N}\Big).
\end{align}
Hence, we know
\begin{align}
    \Babs{\text{First Term in \eqref{pp 103''}}}\ls& \xnm{\re}\Big(\jnm{\bb}^{\N}+ \jnm{\bd}^{\N}\Big).
\end{align}
On the other hand, noticing that $\lnmm{\pk[\re]+\bd\cdot\a}\ls\lnmm{\re}\ls \e^{-\frac{1}{2}}\xnm{\re}$, we obtain
\begin{align}
    \Babs{\text{Second Term in \eqref{pp 103''}}}\ls&\tnm{\nx\psi}\lnmm{\re}\um{(\ik-\bpk)[\re]}\\
    \ls&\e^{-\frac{1}{2}}\tnm{\nx\psi}\xnm{\re}\um{(\ik-\bpk)[\re]}
    \ls\e^{-\frac{1}{2}}\pnm{\bb}{\N}^{\N-1}\xnm{\re}\um{(\ik-\bpk)[\re]}\no\\
    \ls&\xnm{\re}\jnm{\bb}^{\N}+\e^{-\frac{\N}{2}}\xnm{\re}\um{(\ik-\bpk)[\re]}^{\N}.\no
\end{align}
Similarly,  noticing that $\lnmm{(\ik-\bpk)[\re]}\ls\lnmm{\re}\ls \e^{-\frac{1}{2}}\xnm{\re}$, we have
\begin{align}
    \Babs{\text{Third Term in \eqref{pp 103''}}}\ls \xnm{\re}\jnm{\bb}^{\N}+\e^{-\frac{\N}{2}}\xnm{\re}\um{(\ik-\bpk)[\re]}^{\N}.
\end{align}
In total, using Lemma \ref{remark 01}, the $\ss_2$ contribution in \eqref{final 58} 
\begin{align}
    \abs{\bbr{\nx\psi:\b,\ss_2}}&\ls  \xnm{\re}\Big(\jnm{\bb}^{\N}+ \jnm{\bd}^{\N}+\e^{-\frac{\N}{2}}\um{(\ik-\bpk)[\re]}^{\N}\Big)\\
    &\ls \xnm{\re}\Big(\jnm{\bb}^{\N}+ \jnm{\be}^{\N}+\e^{-\frac{\N}{2}}\um{(\ik-\bpk)[\re]}^{\N}+\oot\e\Big).\no
\end{align}

\paragraph{\underline{Other Source Contribution}}
Using \eqref{qq 15}, Lemma \ref{ss4-estimate} and Lemma \ref{ssp-estimate}, we have
\begin{align}
    \abs{\e^{-1}\br{\psi\cdot v\mh,\sp}}+\abs{\bbr{\nx\psi:\b,\sp}}\ls&\e^{-1}\nm{\psi}_{W^{2,\frac{\N}{\N-1}}}\tnm{\sp}
    \ls \oot\jnm{\bb}^{\N}+\oot\e^{\N},
\end{align}
and
\begin{align}
    P^{-1}\abs{\bbr{q,\mh\ss_4}}+\abs{\bbr{\nx\psi:\b,\ss_4}}&\ls \Big(\nm{q}_{L^{\frac{\N}{\N-1}}}+\nm{\psi}_{W^{2,\frac{\N}{\N-1}}}\Big)\tnm{\ss_4}
    \ls  \oot\jnm{\bb}^{\N}+\oot\e^{\N}.
\end{align}
Also, based on Lemma \ref{ss0-estimate} and Lemma \ref{ss5-estimate}, we know that
\begin{align}
    \abs{\bbr{\nx\psi:\b,\ss_0+\ss_5}}\ls&\nm{\psi}_{W^{2,\frac{\N}{\N-1}}}\Big(\tnm{\ss_0}+\tnm{\ss_5}\Big)
    \ls\oot\jnm{\bb}^{\N}+\oot\e^{\N}\xnm{\re}^{\N}+\oot\e^{\frac{\N}{2}}.
\end{align}
In total, the $\ss_0$, $\ss_4$, $\ss_5$ and $\sp$ contributions in \eqref{final 58} are bounded by
\begin{align}
    \oot\jnm{\bb}^{\N}+\oot\e^{\N}\xnm{\re}^{\N}+\oot\e^{\frac{\N}{2}}.
\end{align}
Collecting above estimates, we deduce \eqref{final 11}.
\end{proof}

\begin{proof}[Proof of Proposition \ref{prop:b-bound}]
Inserting \eqref{final 11} into \eqref{final 12},
we obtain the desired result \eqref{oo 34}.
\end{proof}

\section{Dual Stokes-Poisson Problem, \texorpdfstring{$\e$}{}-Cutoff Boundary Layer, and \texorpdfstring{$c$}{} Estimates}\label{sec:c-est}

In this section, we plan to prove Proposition \ref{prop:c-bound}. To establish \eqref{oo 35=}, thanks to energy estimates, it suffices to establish for $2\leq\N\leq6$
    \begin{align}\label{oo 35}
    \jnm{c}\ls&\jnm{(\ik-\bpk)[\re]}+\jnms{\m^{\frac{1}{4}}(1-\pp)[\re]}{\gamma_+}+\oot\xnm{\re}+\xnm{\re}^{2}+\xnm{\re}^{3}+\oot.
    \end{align}

\subsection{Combined Weak Formulation}

Adding $\e^{-1}\times$\eqref{pp 16} and \eqref{pp 10} yields
\begin{align}\label{pp 112}
    &5\e^{-1}P\bbrx{\phi,\nx T\cdot\bb}-\e^{-1}\bbrx{\lambda\Delta_x\psi,\bb}-\e^{-1}\bbr{\bbb\cdot\big(\nx\psi:\nx\b\big)+\big(\nx\cdot\bbb\big)\big(\nx\psi:\b\big),\bb}\\
    &+\e^{-1}\br{\nx\psi:\b,(\bb\cdot v)\left(\ab\cdot\frac{\nx T}{2T^2}\right)}-\bbrx{\nx\cdot(\kappa\nx\phi),c}+\brx{\nx\phi,\frac{\nx T}{2T^2}\big(\kappa\P+\sigma c\big)}\no\\
    &+\bbr{\e^{-1}\nx\psi:\b+\nx\phi\cdot\a,\llc[\re]}\no\\
    =&\bbrb{\nx\phi\cdot\a,h}{\gamma_-}-\bbrb{\nx\phi\cdot\a,(1-\pp)[\re]}{\gamma_+}-\e^{-1}\bbrb{\nx\psi:\b,h}{\gamma_-}-\e^{-1}\bbrb{\nx\psi:\b,(1-\pp)[\re]}{\gamma_+}\no\\
    &+\br{v\cdot\nx\Big(\nx\phi\cdot\a\Big),(\ik-\bpk)[\re]}+\e^{-1}\br{v\cdot\nx\Big(\nx\psi:\b\Big),\bd\cdot\a+(\ik-\bpk)[\re]}\no\\
    &-\br{\nx\phi\cdot\a,\left(\mhh\ab\cdot\frac{\nx T}{2T^2}\right)(\ik-\bpk)[\re]}-\e^{-1}\br{\nx\psi:\b,\left(\mhh\ab\cdot\frac{\nx T}{4T^2}\right)\Big(\bd\cdot\a+(\ik-\bpk)[\re]\Big)}\no\\
    &+\br{\e^{-2}\psi\cdot v\mh,\ss_3+\sp}+\br{\e^{-1}\phi\left(\abs{v}^2-5T\right)\mh,\ss_3+\ss_4+\sp}+\bbr{\nx\phi\cdot\a+\e^{-1}\nx\psi:\b,\bar\ss}.\no
\end{align}
Using oddness and $\nx\cdot\psi=0$, similarly to \eqref{qq 16}, we can further simplify the $\pk[\re]$ part in $\e^{-1}\bbr{\nx\psi:\b,\llc[\re]}$:
\begin{align}\label{pp 113}
    &\e^{-1}\br{\nx\psi:\b,\llc\Big[\pk[\re]\Big]}\\
    =&-\e^{-1}\br{\nx\psi:\b,2\mhh Q^{\ast}\bigg[-\mh\a\cdot\frac{\nx\tq}{2\tq^2}+\m\bigg(\frac{\rq_1}{\rq}+\frac{\uq_1\cdot v}{\tq}+\frac{\tq_1(\abs{v}^2-3\tq)}{2\tq^2}\bigg),\P\m+(\bb\cdot v)\m+c\left(\abs{v}^2-5T\right)\m\bigg]}\no\\
    =&-\e^{-1}\br{\nx\psi:\b,2\mhh Q^{\ast}\left[-\mh\a\cdot\frac{\nx\tq}{2\tq^2}+\m\frac{\uq\cdot v}{\tq},(\bb\cdot v)\m\right]}.\no
\end{align}
We extract all $\e^{-1}$-order terms involving $\bb$ in \eqref{pp 112}:
\begin{align}
    5P\bbrx{\phi,\nx T\cdot\bb}-\bbrx{\lambda\Delta_x\psi,\bb}-\bbr{\bbb\cdot\big(\nx\psi:\nx\b\big)+\big(\nx\cdot\bbb\big)\big(\nx\psi:\b\big),\bb}+\bbr{\nx\psi\cdot \mathfrak{M},\bb}
\end{align}
for 
\begin{align}\label{pp 114}
    \mathfrak{M}:=v\left(\ab\cdot\dfrac{\nx T}{2T^2}\right)-2\mhh Q^{\ast}\left[-\mh\a\cdot\frac{\nx\tq}{2\tq^2}+\m\frac{\uq_1\cdot v}{\tq},v\m\right].
\end{align}
which identifies the coefficients in front
of $\bb$ by combining the first term on the second line of \eqref{pp 112} and the third line of \eqref{pp 113}
\begin{align}
    \br{\nx\psi:\b,(\bb\cdot v)\left(\ab\cdot\frac{\nx T}{2T^2}\right)}-\br{\nx\psi:\b,2\mhh Q^{\ast}\left[-\mh\a\cdot\frac{\nx\tq}{2\tq^2}+\m\frac{\uq\cdot v}{\tq},(\bb\cdot v)\m\right]}.
\end{align}
Assume $(\psi,q,\phi)\in\r^3\times\r\times\r$ (where $q$ has zero average) is the unique strong solution to the perturbed Stokes-Poisson problem
\begin{align}\label{pp 20}
\left\{
    \begin{array}{r}
    -\lambda\Delta_x\psi-\bbb\cdot\big(\nx\psi:\nx\b\big)-\big(\nx\cdot\bbb\big)\big(\nx\psi:\b\big)+\bbrv{\nx\psi:\b,\mathfrak{M}}+\nx q=-5P\phi\nx T\ \ \text{in}\ \ \Omega,\\\rule{0ex}{1.0em}
    \nx\cdot\psi=0\ \ \text{in}\ \ \Omega,\qquad
    \psi=0\ \ \text{on}\ \ \p\Omega,
    \end{array}
    \right.
\end{align}
\begin{align}\label{pp 23}
    -\nx\cdot\big(\kappa\nx\phi\big)=c\abs{c}^{\N-2}\ \ \text{in}\ \ \Omega,\qquad
    \phi=0\ \ \text{on}\ \ \p\Omega.
\end{align}
Since the perturbation terms $\bbb\cdot\big(\nx\psi:\nx\b\big)+\big(\nx\cdot\bbb\big)\big(\nx\psi:\b\big)$ and $\bbrv{\nx\psi:\b,\mathfrak{M}}$ are small considering the smallness of $\nx T$ and $u_1$, we have the standard estimate \cite{Cattabriga1961}, Sobolev embedding for $2\leq\N\leq 6$ and trace estimate
\begin{align}\label{qq 17}
    &\nm{\psi}_{W^{4,\frac{\N}{\N-1}}}+\nm{q}_{W^{3,\frac{\N}{\N-1}}}+\nm{\psi}_{H^3}+\nm{q}_{H^2}+\abs{\psi}_{W^{3,\frac{2\N}{2\N-3}}_{\p\Omega}}+\abs{q}_{W^{2,\frac{2\N}{2\N-3}}_{\p\Omega}}+\abs{\psi}_{H^1_{\p\Omega}}\ls \oot \nm{\phi}_{W^{2,\frac{\N}{\N-1}}},\\\label{qq 18}
    &\nm{\phi}_{W^{2,\frac{\N}{\N-1}}}+\nm{\phi}_{H^1}+\knms{\phi}{\p\Omega}\ls\knm{c\abs{c}^{\N-2}}\ls\jnm{c}^{\N-1}.
\end{align}
Multiplying $\bb$ on both sides of \eqref{pp 20} and integrating by parts in $\br{\nx q,\bb}$, we have
\begin{align}
    -\bbrx{\lambda\Delta_x\psi,\bb}-\bbr{\bbb\cdot\big(\nx\psi:\nx\b\big)+\big(\nx\cdot\bbb\big)\big(\nx\psi:\b\big),\bb}+\bbr{\big(\nx\psi:\b\big)\mathfrak{M},\bb}\\
    -\bbrx{q,\nx\cdot\bb}+\int_{\p\Omega}q(\bb\cdot n)&=-5P\bbrx{\phi,\nx T\cdot\bb},\no
\end{align}
which, by combining \eqref{conservation law 1} and \eqref{final 53}, implies
\begin{align}\label{pp 21}
    -\bbrx{\lambda\Delta_x\psi,\bb}-\bbr{\bbb\cdot\big(\nx\psi:\nx\b\big)+\big(\nx\cdot\bbb\big)\big(\nx\psi:\b\big),\bb}+\bbr{\big(\nx\psi:\b\big) \mathfrak{M},\bb}\\
    -P^{-1}\br{q,\mh\ss_3+\mh\ss_4}&=-5P\bbrx{\phi,\nx T\cdot\bb}.\no
\end{align}
Multiplying $c$ on both sides of \eqref{pp 23}, we have
\begin{align}\label{pp 24}
    -\bbrx{\nx\cdot\big(\kappa\nx\phi\big),c}=\jnm{c}^{\N}.
\end{align}
Inserting \eqref{pp 21}\eqref{pp 24} into \eqref{pp 112} and using \eqref{pp 114} to replace $5P\bbrx{\phi,\nx T\cdot\bb}-\bbrx{\lambda\Delta_x\psi,\bb}-\bbr{\bbb\cdot\big(\nx\psi:\nx\b\big)+\big(\nx\cdot\bbb\big)\big(\nx\psi:\b\big),\bb}+\bbr{\big(\nx\psi:\b\big)\mathfrak{M},\bb}$ and  $-\bbrx{\nx\cdot(\kappa\nx\phi),c}$, we obtain
\begin{align}\label{pp 25}
    &\e^{-1}P^{-1}\br{q,\mh\ss_3+\mh\ss_4}\\
    &+\jnm{c}^{\N}+\brx{\nx\phi,\frac{\nx T}{2T^2}\big(\kappa\P+\sigma c\big)}+\e^{-1}\br{\nx\psi:\b,\llc\Big[(\ik-\pk)[\re]\Big]}+\bbr{\nx\phi\cdot\a,\llc[\re]}\no\\
    =&\bbrb{\nx\phi\cdot\a,h}{\gamma_-}-\bbrb{\nx\phi\cdot\a,(1-\pp)[\re]}{\gamma_+}-\e^{-1}\bbrb{\nx\psi:\b,h}{\gamma_-}-\e^{-1}\bbrb{\nx\psi:\b,(1-\pp)[\re]}{\gamma_+}\no\\
    &+\br{v\cdot\nx\Big(\nx\phi\cdot\a\Big),(\ik-\bpk)[\re]}+\e^{-1}\br{v\cdot\nx\Big(\nx\psi:\b\Big),\bd\cdot\a+(\ik-\bpk)[\re]}\no\\
    &-\br{\nx\phi\cdot\a,\left(\mhh\ab\cdot\frac{\nx T}{2T^2}\right)(\ik-\bpk)[\re]}-\e^{-1}\br{\nx\psi:\b,\left(\mhh\ab\cdot\frac{\nx T}{4T^2}\right)\Big(\bd\cdot\a+(\ik-\bpk)[\re]\Big)}\no\\
    &+\br{\e^{-2}\psi\cdot v\mh,\ss_3+\sp}+\br{\e^{-1}\phi\left(\abs{v}^2-5T\right)\mh,\ss_3+\ss_4+\sp}+\bbr{\nx\phi\cdot\a+\e^{-1}\nx\psi:\b,\bar\ss}.\no
\end{align}
Note that $\e^{-1}\br{\nx\psi:\b,\llc\Big[\pk[\re]\Big]}$ has been included in the definition of $\mathfrak{M}$ in \eqref{pp 114}, so we are left with $\e^{-1}\br{\nx\psi:\b,\llc\Big[(\ik-\pk)[\re]\Big]}$ in \eqref{pp 25}.

\begin{lemma}\label{lem:final 4}
Under the assumption \eqref{assumption:boundary}, we have
\begin{align}\label{pp 28'}
\jnm{c}^{\N}\leq&-\e^{-1}\bbrb{\nx\psi:\b,(1-\pp)[\re]}{\gamma_+}\\
&+\jnm{(\ik-\bpk)[\re]}^{\N}+\jnms{\m^{\frac{1}{4}}(1-\pp)[\re]}{\gamma_+}^{\N}+\oot\xnm{\re}^{\N}+\xnm{\re}^{2\N}+\oot\no\\
&-\e^{-1}P^{-1}\br{q,\mh\ss_3+\mh\ss_4}+\bbr{\nx\phi\cdot\a,\llc[\re]}-\e^{-1}\br{\nx\psi:\b,\llc\Big[(\ik-\pk)[\re]\Big]}\no\\
&+\br{\e^{-2}\psi\cdot v\mh,\ss_3+\sp}+\br{\e^{-1}\phi\left(\abs{v}^2-5T\right)\mh,\ss_3+\ss_4+\sp}+\bbr{\nx\phi\cdot\a+\e^{-1}\nx\psi:\b,\bar\ss}.\no
\end{align}
\end{lemma}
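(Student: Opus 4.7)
The plan is to start from the combined weak formulation \eqref{pp 25}, move $\jnm{c}^{\N}$ to the left, transfer all other contributions to the right, and then distinguish the terms that can be absorbed into the desired bound from those that must be kept for later treatment. Specifically, I would rearrange so that the left of \eqref{pp 25} reads $\jnm{c}^{\N}$ alone, and move $\e^{-1}P^{-1}\br{q,\mh\ss_3+\mh\ss_4}$, $\brx{\nx\phi,\frac{\nx T}{2T^2}(\kappa\P+\sigma c)}$, $\e^{-1}\br{\nx\psi:\b,\llc[(\ik-\pk)[\re]]}$, and $\bbr{\nx\phi\cdot\a,\llc[\re]}$ to the right. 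The Stokes--Poisson regularity bounds \eqref{qq 17} and \eqref{qq 18} provide the uniform control $\nm{\psi}_{W^{4,\frac{\N}{\N-1}}}+\nm{q}_{W^{3,\frac{\N}{\N-1}}}+\nm{\phi}_{W^{2,\frac{\N}{\N-1}}}\ls\jnm{c}^{\N-1}$, which will be the workhorse throughout.

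Next I would bound the easy terms on the right of \eqref{pp 25} by H\"older's inequality combined with \eqref{qq 17}, \eqref{qq 18}, the trace theorem, and the a priori bounds already in place. The two $\gamma_-$ boundary terms $\bbrb{\nx\phi\cdot\a,h}{\gamma_-}$ and $\e^{-1}\bbrb{\nx\psi:\b,h}{\gamma_-}$ are controlled by Lemma \ref{h-estimate}, giving $\oot\e^{1-\al}\jnm{c}^{\N-1}$. The $\gamma_+$ term $\bbrb{\nx\phi\cdot\a,(1-\pp)[\re]}{\gamma_+}$ pairs the trace of $\nx\phi$ with $\jnms{\m^{\frac{1}{4}}(1-\pp)[\re]}{\gamma_+}$ and contributes $\jnms{\m^{\frac{1}{4}}(1-\pp)[\re]}{\gamma_+}^{\N}$. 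The volume terms $\br{v\cdot\nx(\nx\phi\cdot\a),(\ik-\bpk)[\re]}$ and $\e^{-1}\br{v\cdot\nx(\nx\psi:\b),\bd\cdot\a+(\ik-\bpk)[\re]}$ are handled by H\"older with one factor bounded by $\jnm{c}^{\N-1}$ and the other absorbed into $\jnm{(\ik-\bpk)[\re]}^{\N}$, using Lemma \ref{remark 01} to replace $\bd$ by $\be$ plus the Hodge correction. The $\frac{\nx T}{2T^2}$ term and the remaining $\mhh\ab\cdot\frac{\nx T}{2T^2}$ terms eat a factor $\oot$ from \eqref{assumption:boundary} and therefore feed into $\oot\xnm{\re}^{\N}$. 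Finally I would apply Young's inequality $ab\leq\oo a^{\N/(\N-1)}+b^\N$ to convert every estimate of the form (factor of $\jnm{c}^{\N-1}$) $\times$ (bound) into $\oo\jnm{c}^{\N}$ plus a term of the advertised form, and absorb the $\oo\jnm{c}^{\N}$ into the LHS.

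The remaining four families of terms, namely $-\e^{-1}P^{-1}\br{q,\mh\ss_3+\mh\ss_4}$, $\bbr{\nx\phi\cdot\a,\llc[\re]}-\e^{-1}\br{\nx\psi:\b,\llc[(\ik-\pk)[\re]]}$, $\br{\e^{-2}\psi\cdot v\mh,\ss_3+\sp}+\br{\e^{-1}\phi(\abs{v}^2-5T)\mh,\ss_3+\ss_4+\sp}$, and $\bbr{\nx\phi\cdot\a+\e^{-1}\nx\psi:\b,\bar\ss}$, together with the boundary loss $-\e^{-1}\bbrb{\nx\psi:\b,(1-\pp)[\re]}{\gamma_+}$, are kept verbatim on the RHS exactly as in \eqref{pp 28'}: each of them still carries a potentially fatal $\e^{-1}$ or singular boundary-layer content that cannot be closed by H\"older alone and will be addressed separately. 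The main obstacle is precisely the $\gamma_+$ boundary term $-\e^{-1}\bbrb{\nx\psi:\b,(1-\pp)[\re]}{\gamma_+}$, which carries a loss of $\e^{-1/2}$ after direct application of trace theorems and the $X$-norm; this is the term that motivates the compensation boundary layer $\gbg+\gg$ introduced in Step 5 of the methodology, and its treatment is deferred to a subsequent lemma. Similarly, the $\ss_3$ singular-boundary-layer contributions require Hardy's inequality together with the BV bound from Theorem \ref{boundary regularity}, and the $\llc$ terms require the orthogonality/oddness cancellation analogous to \eqref{qq 16}--\eqref{pp 81}; all of these are beyond the routine part and will be analyzed in the next subsections.
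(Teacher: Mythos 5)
Your overall route is the paper's: starting from \eqref{pp 25}, you keep the genuinely dangerous contributions (the $\ss_3,\ss_4,\sp,\bar\ss$ terms, the $\llc$ terms, and $-\e^{-1}\bbrb{\nx\psi:\b,(1-\pp)[\re]}{\gamma_+}$) verbatim on the right, and you control the remaining terms by H\"older, Young, trace estimates and the Stokes--Poisson regularity \eqref{qq 17}--\eqref{qq 18} together with Lemma \ref{h-estimate}, Proposition \ref{thm:energy} and Lemma \ref{lem:final 11}; this is exactly how the paper proceeds.

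There is, however, one concrete gap: your treatment of $\e^{-1}\br{v\cdot\nx\big(\nx\psi:\b\big),\bd\cdot\a}$ (and of the companion term $\e^{-1}\br{\nx\psi:\b,\left(\mhh\ab\cdot\frac{\nx T}{4T^2}\right)\big(\bd\cdot\a\big)}$) by ``replacing $\bd$ by $\be$ plus the Hodge correction via Lemma \ref{remark 01}'' does not close. Writing $\bd=\nx\xi+\be$, the $\be$ part is fine because $\e^{-1}\tnm{\be}$ is part of $\xnm{\re}$, but the $\nx\xi$ part carries the weight $\e^{-1}$, and the $X$-norm only controls $\e^{-\frac{1}{2}}\nm{\xi}_{H^2}$, not $\e^{-1}\tnm{\nx\xi}$ (this is precisely the point stressed in the remark following Theorem \ref{main}). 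Estimating $\e^{-1}\tnm{\nx\xi}$ through Lemma \ref{remark 01} gives at best $\e^{-1}\oot\tnm{\bb}\ls\oot\e^{-\frac{1}{2}}\xnm{\re}$, so after Young's inequality you would produce a term of size $\oot\e^{-\frac{\N}{2}}\xnm{\re}^{\N}$ rather than the $\oot\xnm{\re}^{\N}$ claimed in \eqref{pp 28'}, and such a loss cannot be absorbed in the final fixed-point argument where $\al=1$. The paper's fix is an integration by parts in $x$ for the $\nx\xi$ contribution, legitimate because $\xi\big|_{\p\Omega}=0$ and the test function is smooth: the derivative is moved onto $\nx\psi:\b$, the term is then bounded by $\e^{-1}\nm{\psi}_{H^3}\tnm{\xi}$, and $\e^{-1}\tnm{\xi}\ls\xnm{\re}$ is directly available from the $X$-norm while $\nm{\psi}_{H^3}\ls\oot\jnm{c}^{\N-1}$ is afforded by \eqref{qq 17}. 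With this single correction (applied also to the $\mhh\ab\cdot\frac{\nx T}{4T^2}$ term), your argument reproduces the paper's proof.
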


\begin{proof}
We need to estimate each term in \eqref{pp 25} except the source terms. 
Using \eqref{qq 18}, we have
\begin{align}
    \abs{\brx{\nx\phi,\frac{\nx T}{2T^2}\big(\kappa\P+\sigma c\big)}}\ls \oot\tnm{\nx\phi}\Big(\tnm{\P}+\tnm{c}\Big)\ls \oot\jnm{c}^{\N}+\oot\tnm{\P}^{\N}\ls\oot\xnm{\re}^{\N}.
\end{align}
Also, using \eqref{qq 18} and Lemma \ref{h-estimate}, we have
\begin{align}
&\abs{\bbrb{\nx\phi\cdot\a,h}{\gamma_-}}+\abs{\bbrb{\nx\phi\cdot\a,(1-\pp)[\re]}{\gamma_-}}\\
\ls& \oo\knms{\nx\phi}{\p\Omega}^{\frac{\N}{\N-1}}+\jnms{\int_{v\cdot n>0}(1-\pp)[\re]\a(v\cdot n)}{\p\Omega}^{\N}+\jnms{\int_{v\cdot n<0}h\a(v\cdot n)}{\p\Omega}^{\N}\no\\
\ls& \oo\jnm{c}^{\N}+\jnms{\m^{\frac{1}{4}}(1-\pp)[\re]}{\gamma_+}^{\N}+\jnms{h}{\gamma_-}^{\N}\ls \oo\jnm{c}^{\N}+\jnms{\m^{\frac{1}{4}}(1-\pp)[\re]}{\gamma_+}^{\N}+\oot\e^{\N}.\no
\end{align}
In addition, \eqref{qq 18}, H\"older's inequality and Young's inequality yield
\begin{align}
\abs{\br{v\cdot\nx\Big(\nx\phi\cdot\a\Big),(\ik-\bpk)[\re]}}
\ls&\oo\knm{\nx^2\phi}^{\frac{\N}{\N-1}}+\jnm{(\ik-\bpk)[\re]}^{\N}
\ls\oo\jnm{c}^{\N}+\jnm{(\ik-\bpk)[\re]}^{\N},
\end{align}
and
\begin{align}
    \abs{\br{\big(\nx\phi\cdot\a\big)\left(\mhh\ab\cdot\frac{\nx T}{2T^2}\right),(\ik-\bpk)[\re]}}
    \ls&\oot\knm{\nx\phi}^{\frac{\N}{\N-1}}+\oot\jnm{(\ik-\bpk)[\re]}^{\N}\\
    \ls&\oot\jnm{c}^{\N}+\oot\jnm{(\ik-\bpk)[\re]}^{\N}.\no
\end{align}
Further, \eqref{qq 17}\eqref{qq 18}, H\"older's inequality and Young's inequality (and further integration by parts for $\nx\xi$ in $\bd$ using $\xi\big|_{\p\Omega}=0$) yield
\begin{align}
&\abs{\e^{-1}\br{v\cdot\nx\Big(\nx\psi:\b\Big),\bd\cdot\a+(\ik-\bpk)[\re]}}\\
\leq&\abs{\e^{-1}\br{v\cdot\nx\Big(\nx\psi:\b\Big),\nx\xi\cdot\a}}+\abs{\e^{-1}\br{v\cdot\nx\Big(\nx\psi:\b\Big),\be\cdot\a+(\ik-\bpk)[\re]}}\no\\
\ls&\e^{-1}\nm{\psi}_{H^3}\Big(\tnm{\xi}+\tnm{\be}+\tnm{(\ik-\bpk)[\re]}\Big)
\ls\oot\jnm{c}^{\N}+\oot\e^{-\N}\tnm{\xi}^{\N}+\oot\e^{-\N}\tnm{\be}^{\N}+\oot\e^{-\N}\tnm{(\ik-\bpk)[\re]}^{\N},\no
\end{align}
and
\begin{align}
    &\abs{\e^{-1}\br{\nx\psi:\b,\left(\mhh\ab\cdot\frac{\nx T}{4T^2}\right)\Big(\bd\cdot\a+(\ik-\bpk)[\re]\Big)}}\\
    \leq&\abs{\e^{-1}\br{\nx\psi:\b,\left(\mhh\ab\cdot\frac{\nx T}{4T^2}\right)\Big(\nx\xi\cdot\a\Big)}}+\abs{\e^{-1}\br{\nx\psi:\b,\left(\mhh\ab\cdot\frac{\nx T}{4T^2}\right)\Big(\be\cdot\a+(\ik-\bpk)[\re]\Big)}}\no\\
    \ls&\oot\e^{-1}\nm{\psi}_{H^2}\Big(\tnm{\xi}+\tnm{\be}+\tnm{(\ik-\bpk)[\re]}\Big)
    \ls\oot\jnm{c}^{\N}+\oot\e^{-\N}\tnm{\xi}^{\N}+\oot\e^{-\N}\tnm{\be}^{\N}+\oot\e^{-\N}\tnm{(\ik-\bpk)[\re]}^{\N}.\no
\end{align}
Finally, from \eqref{qq 17} and Lemma \ref{h-estimate}, we know
\begin{align}
\abs{\e^{-1}\bbrb{\nx\psi:\b,h}{\gamma_-}}
\ls& \tnms{\nx\psi}{\p\Omega}^{\frac{\N}{\N-1}}+\e^{-\N}\tnms{h}{\gamma_-}^{\N}
\ls \oot\jnm{c}^{\N}+\oot.
\end{align}
In total, from \eqref{pp 25} and Proposition \ref{thm:energy} and Lemma \ref{lem:final 11}, we obtain the desired result.
\end{proof}

\subsection{Estimates of Source Terms}

In this subsection, we consider the source term contribution in \eqref{pp 28'}:
\begin{align}\label{pp 49}
    &-\e^{-1}P^{-1}\br{q,\mh\ss_3+\mh\ss_4}+\bbr{\nx\phi\cdot\a,\llc[\re]}+\e^{-1}\br{\nx\psi:\b,\llc\Big[(\ik-\pk)[\re]\Big]}\\
    &+\br{\e^{-2}\psi\cdot v\mh,\ss_3+\sp}+\br{\e^{-1}\phi\left(\abs{v}^2-5T\right)\mh,\ss_3+\ss_4+\sp}+\bbr{\nx\phi\cdot\a+\e^{-1}\nx\psi:\b,\sb}.\no
\end{align}

\begin{lemma}\label{lem:final 3}
Under the assumption \eqref{assumption:boundary}, we have
\begin{align}\label{pp 99''}
    &\abs{\e^{-1}P^{-1}\br{q,\mh\ss_3+\mh\ss_4}}+\abs{\bbr{\nx\phi\cdot\a,\llc[\re]}}+\abs{\e^{-1}\br{\nx\psi:\b,\llc\Big[(\ik-\pk)[\re]\Big]}}\\
    &+\abs{\br{\e^{-2}\psi\cdot v\mh,\ss_3+\sp}}+\abs{\br{\e^{-1}\phi\left(\abs{v}^2-5T\right)\mh,\ss_3+\ss_4+\sp}}+\abs{\bbr{\nx\phi\cdot\a+\e^{-1}\nx\psi:\b,\sb}}\no\\
    \ls
    &\oot\jnm{c}^{\N}+\oot\xnm{\re}^{\N}+\xnm{\re}^{2\N}+\oot.\no
\end{align}
\end{lemma}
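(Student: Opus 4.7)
The strategy is to bound each of the six brackets in \eqref{pp 49} separately. The elliptic regularity bounds \eqref{qq 17}--\eqref{qq 18} on $(\psi, q, \phi)$ furnish the basic control, the source inventory of Section \ref{sec:bs} provides the $\ss_i$ estimates, and Hardy's inequality in the normal direction (applied where $\psi|_{\p\Omega} = 0$ or $\phi|_{\p\Omega} = 0$) converts the singular prefactors $\e^{-1}$ and $\e^{-2}$ into harmless $\eta$-weights via the boundary-layer scaling $\mn = \e\eta$. Each contribution will then either be absorbed into $\oot\jnm{c}^{\N}$ by Young's inequality against $\nm{\phi}_{W^{2,\frac{\N}{\N-1}}}^{\frac{\N}{\N-1}}$ or $\nm{\psi}_{W^{4,\frac{\N}{\N-1}}}^{\frac{\N}{\N-1}}$, or matched to one of $\oot\xnm{\re}^{\N}$, $\e^{\al\N-\N}\xnm{\re}^{2\N}$, $\oot\e^{\N-\al\N}$.

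For the $\llc$-type brackets, $\bbr{\nx\phi\cdot\a,\llc[\re]}$ is handled as in Lemma \ref{ssl-estimate} after splitting $\re$ via \eqref{pp 01}: the $\P\mh$ and $c(\abs{v}^2-5T)\mh$ contributions vanish by oddness of $\a$ together with the isotropy identity \eqref{qq 16}, while the $\bb$, $\bd$, $(\ik-\bpk)[\re]$ pieces close against $\nm{\phi}_{W^{2,\frac{\N}{\N-1}}}$. The term $\e^{-1}\br{\nx\psi:\b,\llc[(\ik-\pk)[\re]]}$ carries only the non-kernel part since the kernel was absorbed into $\mathfrak{M}$ through the construction \eqref{pp 20}; H\"older with $\tnm{\nx\psi}$ from \eqref{qq 17}, Lemma \ref{ssl-estimate}, and the Hodge decomposition $\bd = \nx\xi + \be$ paired with Proposition \ref{thm:energy} close this bracket. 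The pressure term $\e^{-1}P^{-1}\br{q, \mh(\ss_3 + \ss_4)}$ is where the standard $L^{\N/(\N-1)}$-versus-$L^{\N}_x L^1_v$ pairing would fail by a power of $\e$; the crucial upgrade is that \eqref{qq 17} gives $\nm{q}_{W^{3,\frac{\N}{\N-1}}} \ls \oot\jnm{c}^{\N-1}$, and $W^{3,\frac{\N}{\N-1}} \hookrightarrow L^{\infty}$ holds in three dimensions for every $\N \in [2,6]$ since $\frac{\N}{\N-1} > 1$. Then $|\e^{-1}\br{q, \mh(\ss_3+\ss_4)}| \ls \e^{-1}\lnm{q}\big(\pnm{\ss_3}{1} + \pnm{\ss_4}{1}\big) \ls \oot\jnm{c}^{\N-1}\e^{1-\al}$ closes by Young with remainder $\oot\e^{\N-\N\al}$.

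The main technical obstacle is the two singular brackets $\br{\e^{-2}\psi\cdot v\mh, \ss_3}$ and $\br{\e^{-1}\phi(\abs{v}^2-5T)\mh, \ss_3}$, analogous to \eqref{pp 124} in the $\bb$-estimate but with an additional $\e^{-1}$ prefactor that cannot be absorbed by the direct Hardy argument of Section \ref{sec: boundary-analysis} alone. Using $\psi|_{\p\Omega} = 0$, write $\psi(\mn,\iota) = \int_0^\mn \p_\mn\psi$, factor out $\frac{1}{\mn}\int_0^\mn \p_\mn\psi$, and rescale $\mn = \e\eta$ to absorb the remaining $\e^{-1}$ against the $\eta$-weight of $\sy+\sz$. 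The key upgrade over \eqref{pp 124} is the Sobolev embedding $W^{4,\frac{\N}{\N-1}} \hookrightarrow W^{1,\infty}$ (again valid for all $\N \in [2,6]$), yielding $\nm{\frac{1}{\mn}\int_0^\mn \p_\mn\psi}_{L^\infty} \ls \lnm{\p_\mn\psi} \ls \oot\jnm{c}^{\N-1}$; pairing with $\pnm{\eta(\sy+\sz)}{1} \ls \oot\e^{2-\al}$ from Lemma \ref{ss3-estimate} gives $\oot\jnm{c}^{\N-1}\e^{2-\al}$, which closes with room to spare. For the singular $\sx$ piece, integration by parts in $\va$ as in \eqref{ss3-estimate6} shifts $\p_\va\fb_1$ onto $\nabla_v(\p_\mn\psi\cdot v\mh)$, and the BV estimate of Theorem \ref{boundary regularity} prevents the $\abs{\ln\e}$ loss. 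The $\e^{-1}\phi$ bracket is analogous and strictly easier since only one Hardy step is needed.

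The remaining contributions to $\bbr{\nx\phi\cdot\a + \e^{-1}\nx\psi:\b, \bar\ss}$ decompose term by term: $\ss_0$ and $\ss_5$ close via Lemma \ref{ss0-estimate} and Lemma \ref{ss5-estimate} with Young; $\ss_4$ and $\sp$ use Lemma \ref{ss4-estimate} and Lemma \ref{ssp-estimate} (choosing $m$ large in the latter to produce arbitrarily high $\e$-power); $\ss_1$ follows the same route as in the $\bb$-estimate by splitting $c = \z^R - \e^{-1}\xi + \z^S$ via Proposition \ref{prop:z-splitting}, applying Hardy to $\z^R$ and $\xi$ against the $\eta$-weight of $\fb_1$, and using the direct $L^2$-bound \eqref{qq 07} on $\z^S$; and $\ss_2$ is handled by the $L^3$-interpolation $\pnm{\bb}{3}, \pnm{\bd}{3} \ls \xnm{\re}$ exactly as in \eqref{pp 103''}, which produces the nonlinear term $\e^{\al\N-\N}\xnm{\re}^{2\N}$ under $\al \geq 1$. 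Summing all contributions delivers \eqref{pp 99''}.
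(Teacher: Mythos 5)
Most of your plan tracks the paper, and your handling of the most singular bracket $\br{\e^{-2}\psi\cdot v\mh,\ss_3}$ is a legitimate simplification: the paper expands $\psi$ to second order in $\mn$ (using $\psi(0)=0$, the trace term $\mn\p_{\mn}\psi(0)$, and an integral remainder with an $\eta^2$ weight), whereas you extract a single factor $\mn=\e\eta$ and pay with the embedding $W^{4,\frac{\N}{\N-1}}\hookrightarrow W^{1,\infty}$ to put $\frac{1}{\mn}\int_0^{\mn}\p_{\mn}\psi$ in $L^{\infty}$; pairing with $\pnm{\eta(\sy+\sz)}{1}\ls\oot\e^{2-\al}$ (and, after the $\va$-integration by parts, with $\eta\fb_1$ in $L^1$) indeed yields $\oot\e^{1-\al}\jnm{c}^{\N-1}$ and closes. (Your claimed vanishing of the $\P$ and $c$ parts in $\bbr{\nx\phi\cdot\a,\llc[\re]}$ is not quite right, since the odd components of $f_1$ survive against $\a$, but this is harmless: no cancellation is needed there, only the $\oot$ smallness of $f_1$.)

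There are, however, two genuine gaps. First, for $\e^{-1}\br{\nx\psi:\b,\llc\big[(\ik-\pk)[\re]\big]}$ your stated route (H\"older with $\tnm{\nx\psi}$, Lemma \ref{ssl-estimate}, Hodge decomposition, Proposition \ref{thm:energy}) fails on the $\nx\xi\cdot\a$ piece: it produces $\e^{-1}\tnm{\nx\xi}$, which is \emph{not} controlled by $\xnm{\re}$ — only $\e^{-\frac{1}{2}}\nm{\xi}_{H^2}$ is (see Remark \ref{rem:1.4}-type caveat and the definition \eqref{ss 00}) — so you lose $\e^{\frac{1}{2}}$ and obtain $\oot\e^{-\frac{\N}{2}}\xnm{\re}^{\N}$ instead of $\oot\xnm{\re}^{\N}$. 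The missing step is the integration by parts in $x$ using $\xi\big|_{\p\Omega}=0$, trading $\nx\xi$ for $\xi$ against $\nx^2\psi$, after which $\e^{-1}\tnm{\xi}\ls\xnm{\re}$ applies. Second, for the $\ss_2$ contribution you cannot argue ``exactly as in \eqref{pp 103''}'': there the bracket carries $\e^{\al-1}$, while here it carries $\e^{\al-2}$, and the $L^3$--$L^{\N}$ interpolation of \eqref{pp 103''} then leaves terms like $\e^{\al-2}\xnm{\re}\jnm{\bb}^{\N}$, i.e.\ an uncompensated $\e^{-1}$ that is fatal in the closing fixed-point argument ($\xnm{\re}$ is only $O(\oot)$, not $O(\e)$). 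The correct treatment (as in \eqref{pp 103'}) replaces interpolation by $\lnm{\nx\psi}\ls\jnm{c}^{\N-1}$ combined with the quantitative gains $\tnm{\bb},\tnm{\bd}\ls\e^{\frac{1}{2}}\xnm{\re}$ and $\um{(\ik-\bpk)[\re]}\ls\e\xnm{\re}$, and it is essential to invoke the parity cancellation \eqref{final 05} for the $\Gamma$ of the $\P\mh+c\left(\abs{v}^2-5T\right)\mh$ block, since $\tnm{c}$ carries no $\e$-gain and that quadratic term cannot be absorbed otherwise.
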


\begin{proof}
\ 
\paragraph{\underline{$\llc[\re]$ Contribution}}
Lemma \ref{ssl-estimate} and \eqref{qq 17} imply
\begin{align}\label{pp 53}
    \abs{\bbr{\nx\phi\cdot\a,\llc[\re]}}\ls& \oot \nm{\phi}_{H^{1}}\um{\re}
    \ls \oot\jnm{c}^{\N}+\oot\xnm{\re}^{\N}.
\end{align}
Similarly, with the help of integration by parts for $\nx\xi$ with $\xi\big|_{\p\Omega}=0$, we have
\begin{align}
    &\abs{\e^{-1}\br{\nx\psi:\b,\llc\Big[(\ik-\pk)[\re]\Big]}}\\
    \ls&\abs{\e^{-1}\br{\nx\psi:\b,\llc\Big[\nx\xi\cdot\a\Big]}}+\abs{\e^{-1}\br{\nx\psi:\b,\llc\Big[\be\cdot\a+(\ik-\bpk)[\re]\Big]}}\no\\
    =&\abs{\e^{-1}\br{\nx^2\psi:\b,\llc\Big[\xi\cdot\a\Big]}}+\abs{\e^{-1}\br{\nx\psi:\b,\llc\Big[\be\cdot\a+(\ik-\bpk)[\re]\Big]}}\no\\
    \ls& \e^{-1}\nm{\psi}_{H^{2}}\Big(\tnm{\xi}+\tnm{\be}+\um{(\ik-\bpk)[\re]}\Big)
    \ls\oot\jnm{c}^{\N}+\oot\e^{-\N}\tnm{\xi}^{\N}+\oot\e^{-\N}\tnm{\be}^{\N}+\oot\e^{-\N}\um{(\ik-\bpk)[\re]}^{\N}.\no
\end{align}
In summary, the $\llc$ terms in \eqref{pp 49} can be bounded by
\begin{align}
    \oot\jnm{c}^{\N}+\oot\e^{-\N}\tnm{\be}^{\N}+\oot\e^{-\N}\um{(\ik-\bpk)[\re]}^{\N}+\oot\xnm{\re}^{\N}.
\end{align}

\paragraph{\underline{$\ss_3$ Contribution}}
We will apply a similar argument as \eqref{oo 30}.
Using $\psi\big|_{\p\Omega}=0$ and \eqref{qq 17}, with the help of the Taylor expansion with integral remainder and Hardy's inequality, we obtain
\begin{align}\label{pp 101}
    &\abs{\br{\e^{-2}\psi\cdot v\mh,\ss_3}}\\
    =&\e^{-2}\abs{\br{\left(\psi(0)+\mn\p_{\mn}\psi(0)+\int_0^{\mn}(\mn-y)\p_{\mn\mn}\psi(y)\ud y\right)\cdot v\mh,\ss_3}}\no\\
    =&\e^{-2}\abs{\br{\left(\mn\p_{\mn}\psi(0)+\int_0^{\mn}(\mn-y)\p_{\mn\mn}\psi(y)\ud y\right)\cdot v\mh,\ss_3}}\no\\
    =&\e^{-2}\abs{\br{\p_{\mn}\psi(0)\cdot v\mh,\mn\big(\sy+\sz+\fb_1\big)}}+\e^{-2}\abs{\br{\left(\frac{1}{\mn}\int_0^{\mn}\frac{\mn-y}{\mn}\p_{\mn\mn}\psi(y)\ud y\right)\cdot \nabla_v(v\mh),\mn^2\big(\sy+\sz+\fb_1\big)}}\no\\
    =&\e^{-1}\abs{\br{\p_{\mn}\psi(0)\cdot v\mh,\eta\big(\sy+\sz+\fb_1\big)}}+\abs{\br{\left(\frac{1}{\mn}\int_0^{\mn}\frac{\mn-y}{\mn}\p_{\mn\mn}\psi(y)\ud y\right)\cdot \nabla_v(v\mh),\eta^2\big(\sy+\sz+\fb_1\big)}}\no\\
    \ls&\e^{-1}\nm{\p_{\mn}\psi(0)\cdot v\mh}_{L^2_xL^{\infty}_v}\nm{\eta\big(\sy+\sz+\fb_1\big)}_{L^2_{\iota_i}L^1_{\mn}L^1_v}+\tnm{\frac{1}{\mn}\int_0^{\mn}\p_{\mn\mn}\psi(y)\ud y}\nm{\eta^2\big(\sy+\sz+\fb_1\big)}_{L^2_xL^1_v}\no\\
    \ls&\e^{-1}\nm{\p_{\mn}\psi}_{W^{3,\frac{\N}{\N-1}}}\nm{\eta\big(\sy+\sz+\fb_1\big)}_{L^2_{\iota_i}L^1_{\mn}L^1_v}+\nm{\p_{\mn\mn}\psi}_{W^{2,\frac{\N}{\N-1}}}\nm{\eta^2\big(\sy+\sz+\fb_1\big)}_{L^2_xL^1_v}\no\\
    \ls& \oot \jnm{c}^{\N}+\e^{-\N}\nm{\eta\big(\sy+\sz\big)}_{L^2_{\iota_i}L^1_{\mn}L^1_v}^{\N}+\e^{-\N}\nm{\eta\fb_1}_{L^2_{\iota_i}L^1_{\mn}L^1_v}^{\N}+\nm{\eta^2\big(\sy+\sz\big)}_{L^2_xL^1_v}^{\N}+\nm{\eta^2\fb_1}_{L^2_xL^1_v}^{\N}.\no
\end{align}
Following a similar argument, the other $\ss_3$ contribution can be estimated
\begin{align}
    &\abs{\br{\e^{-1}\phi\left(\abs{v}^2-5T\right)\mh+\nx\phi\cdot\a,\ss_3}}
    =\e^{-1}\abs{\br{\left(\int_0^{\mn}\p_{\mn}\phi\right)\left(\abs{v}^2-5T\right)\mh,\ss_3}}+\abs{\bbr{\nx\phi\cdot\a,\ss_3}}\\
    \ls&\tnm{\nx\phi}\nm{\eta\left(\sy+\sz+\fb_1\right)}_{L^2_xL^1_v}\ls \oot \jnm{c}^{\N}+\nm{\eta\left(\sy+\sz\right)}_{L^2_xL^1_v}^{\N}+\nm{\eta\fb_1}_{L^2_xL^1_v}^{\N},\no
\end{align}
and
\begin{align}
    \abs{\bbr{\e^{-1}\nx\psi:\b,\ss_3}}=&\e^{-1}\abs{\bbr{\left(\nx\psi(0)+\int_0^{\mn}\p_{\mn}\nx\psi\right):\b,\ss_3}}\label{final 54}\\
    \ls&\e^{-1}\nm{\p_{\mn}\psi(0):\b}_{L^2_xL^{\infty}_v}\nm{\ss_3}_{L^2_{\iota_i}L^1_{\mn}L^1_v}+\tnm{\p_{\mn}\nx\psi}\nm{\eta\left(\sy+\sz+\fb_1\right)}_{L^2_xL^1_v}\no\\
    \ls&\e^{-1}\nm{\p_{\mn}\psi}_{W^{3,\frac{\N}{\N-1}}}\nm{\ss_3}_{L^2_{\iota_i}L^1_{\mn}L^1_v}+\nm{\p_{\mn}\nx\psi}_{W^{2,\frac{\N}{\N-1}}}\nm{\eta\left(\sy+\sz+\fb_1\right)}_{L^2_xL^1_v}\no\\
    \ls& \oot \jnm{c}^{\N}+\e^{-\N}\nm{\ss_3}_{L^2_{\iota_i}L^1_{\mn}L^1_v}^{\N}+\nm{\eta\left(\sy+\sz\right)}_{L^2_xL^1_v}^{\N}+\nm{\eta\fb_1}_{L^2_xL^1_v}^{\N}.\no
\end{align}
Considering the Sobolev embedding $\lnm{q}\ls\nm{q}_{W^{3,\frac{\N}{\N-1}}}\ls\oot\jnm{c}^{\N-1}$, H\"older's inequality, Young's inequality and \eqref{qq 17} and \eqref{qq 18} yield
\begin{align}
    \abs{\e^{-1}P^{-1}\br{q,\mh\ss_3}}\ls&\e^{-1}\lnm{q}\nm{\mh\ss_3}_{L^1}
    \ls \lnm{q}^{\frac{\N}{\N-1}}+\e^{-\N}\nm{\ss_3}_{L^1}^{\N}
    \ls \oot\jnm{c}^{\N}+\e^{-\N}\nm{\ss_3}_{L^1}^{\N}.
\end{align}
In summary, using Lemma \ref{ss3-estimate}, the $\ss_3$ terms in \eqref{pp 49} can be bounded by
\begin{align}
    \oot \jnm{c}^{\N}+\oot.
\end{align}

\paragraph{\underline{$\ss_1$ Contribution}}
Direct estimate yields
\begin{align}\label{pp 120.}
    &\abs{\bbr{\nx\phi\cdot\a,\ss_1}}
    \ls \tnm{\nx\phi}^{\frac{\N}{\N-1}}\lnmm{\fb_1}\um{\re}^{\N}
    \ls \oot\jnm{c}^{\N}+\oot\xnm{\re}^{\N}.
\end{align}
Also, similar to \eqref{pp 123}, we have
\begin{align}\label{pp 123=}
    \abs{\bbr{\e^{-1}\nx\psi:\b,\ss_1}}\ls&\abs{\br{\e^{-1}\nx\psi:\b,\Gamma\left[\fb_1,\P\mh+\bb\cdot\vv\mh+\bd\cdot\a+(\ik-\bpk)[\re]\right]}}\\
    &+\abs{\br{\e^{-1}\nx\psi:\b,\Gamma\left[\fb_1,c\left(\abs{v}^2-5T\right)\mh\right]}}.\no
\end{align}
Considering the Sobolev embedding $\nm{\psi}_{W^{1,\infty}}\ls\nm{\psi}_{W^{4,\frac{\N}{\N-1}}}\ls\jnm{c}^{\N-1}$, we have
\begin{align}
    \text{First Term in \eqref{pp 123=}}\ls& \e^{-1}\lnm{\nx\psi}\tnm{\fb_1}\Big(\tnm{p}+\tnm{\bb}+\tnm{\bd}+\um{(\ik-\bpk)[\re]}\Big)\\
    \ls&\oot\jnm{c}^{\N}+\oot\e^{-\frac{\N}{2}}\tnm{p}^{\N}+\oot\e^{-\frac{\N}{2}}\tnm{\bb}^{\N}+\oot\e^{-\frac{\N}{2}}\tnm{\bd}^{\N}+\oot\e^{-\frac{\N}{2}}\um{(\ik-\bpk)[\re]}^{\N}.\no
\end{align}
On the other hand, we know
\begin{align}
    \text{Second Term in \eqref{pp 123=}}\ls&\e^{-1}\abs{\br{\nx\psi:\b,\Gamma\left[\fb_1,\big(\z^R-\e^{-1}\xi\big)\left(\abs{v}^2-5T\right)\mh\right]}}\\
    &+\e^{-1}\abs{\br{\nx\psi:\b,\Gamma\left[\fb_1,\z^S\left(\abs{v}^2-5T\right)\mh\right]}}.\no
\end{align}
Similar to \eqref{pp 124}, using \eqref{qq 06}, we have
\begin{align}
    &\e^{-1}\abs{\br{\nx\psi:\b,\Gamma\left[\fb_1,\big(\z^R-\e^{-1}\xi\big)\left(\abs{v}^2-5T\right)\mh\right]}}\\
    \ls&\lnm{\nx\psi}\tnm{\eta\fb_1}\tnm{\nx\z^R-\e^{-1}\nx\xi}\ls \oot\jnm{c}^{\N}+\oot\Big(\e^{\frac{\N}{2}}\nm{\z^R}_{H^1}^{\N}+\e^{-\frac{\N}{2}}\nm{\xi}_{H^1}^{\N}\Big)\no\\
    \ls&\oot\jnm{c}^{\N}+\oot\e^{\frac{\N}{2}}\tnm{\P}^{\N}+\e^{-\frac{\N}{2}}\tnm{\bb}^{\N}+\oot\e^{\frac{\N}{2}}\tnm{c}^{\N}+\oot\e^{\frac{\N}{2}}\tnm{(\ik-\bpk)[\re]}^{\N}+\oot\e^{\frac{\N}{2}}\xnm{\re}^{\N}+\oot\e^{\frac{\N}{2}}\xnm{\re}^{2\N}+\oot.\no
\end{align}
Also, using \eqref{qq 07}, we have
\begin{align}
    &\e^{-1}\abs{\br{\nx\psi:\b,\Gamma\left[\fb_1,\z^S\left(\abs{v}^2-5T\right)\mh\right]}}\\
    \ls&\e^{-1}\lnm{\nx\psi}\tnm{\fb_1}\tnm{\z^S}\ls \oot\jnm{c}^{\N}+\oot\e^{-\frac{\N}{2}}\tnm{\z^S}^{\N}\no\\
    \ls&\oot\jnm{c}^{\N}+\oot\e^{-\frac{\N}{2}}\um{(\ik-\bpk)[\re]}^{\N}+\oot\e^{-\frac{\N}{2}}\tnms{(1-\pp)[\re]}{\gamma_+}^{\N}+\oot\e^{\frac{\N}{2}}.\no
\end{align}
In total, we have the $\ss_1$ terms in \eqref{pp 49} is bounded by
\begin{align}
    &\oot\jnm{c}^{\N}+\oot\xnm{\re}^{\N}+\oot\e^{\frac{\N}{2}}\xnm{\re}^{2\N}+\oot.
\end{align}

\paragraph{\underline{$\ss_2$ Contribution}}
Notice that 
\begin{align}\label{pp 103'}
    \e^{-1}\bbr{\nx\psi:\b,\ss_2}=&\e^{-1}\br{\nx\psi:\b,\Gamma\Big[\pk[\re]+\bd\cdot\a,\pk[\re]+\bd\cdot\a\Big]}\\
    &+\e^{-1}\br{\nx\psi:\b,\Gamma\Big[\pk[\re]+\bd\cdot\a,(\ik-\bpk)[\re]\Big]}+\e^{-1}\br{\nx\psi:\b,\Gamma\Big[(\ik-\bpk)[\re],(\ik-\bpk)[\re]\Big]}.\no
\end{align}
For the first term in \eqref{pp 103'}, by oddness, we have
\begin{align}\label{pp 104'}
    &\e^{-1}\br{\nx\psi:\b,\Gamma\Big[\pk[\re]+\bd\cdot\a,\pk[\re]+\bd\cdot\a\Big]}\\
    =&\e^{-1}\br{\nx\psi:\b,\Gamma\Big[\P\mh+c\left(\abs{v}^2-5T\right)\mh,\P\mh+c\left(\abs{v}^2-5T\right)\mh\Big]}\no\\
    &+\e^{-1}\br{\nx\psi:\b,\Gamma\Big[\bb\cdot v\mh,\bb\cdot v\mh\Big]}+\e^{-1}\br{\nx\psi:\b,\Gamma\Big[\bd\cdot \a,\bb\cdot v\mh\Big]}+\e^{-1}\br{\nx\psi:\b,\Gamma\Big[\bd\cdot \a,\bd\cdot \a\Big]}.\no
\end{align}
For using oddness, similar to \eqref{qq 16}, we know
\begin{align}\label{final 05}
    \e^{-1}\br{\nx\psi:\b,\Gamma\Big[\P\mh+c\left(\abs{v}^2-5T\right)\mh,\P\mh+c\left(\abs{v}^2-5T\right)\mh\Big]}=0.
\end{align}
Also, using $\tnm{\bb}\ls\e^{\frac{1}{2}}\xnm{\re}$, we know
\begin{align}
    \abs{\e^{-1}\br{\nx\psi:\b,\Gamma\Big[\bb\cdot v\mh,\bb\cdot v\mh\Big]}}\ls& \oot\e^{-1}\lnm{\nx\psi}\tnm{\bb}^2\\
    \ls&\oot\jnm{c}^{{\N-1}}\xnm{\re}^2
    \ls\oot\jnm{c}^{\N}+ \xnm{\re}^{2\N}.\no
\end{align}
Similarly, using $\tnm{\bd}\ls\e^{\frac{1}{2}}\xnm{\re}$, we have
\begin{align}
    \abs{\e^{-1}\br{\nx\psi:\b,\Gamma\Big[\bd\cdot \a,\bb\cdot v\mh\Big]}}\ls \oot\jnm{c}^{\N}+\xnm{\re}^{2\N},
\end{align}
and
\begin{align}
    \abs{\e^{-1}\br{\nx\psi:\b,\Gamma\Big[\bd\cdot \a,\bd\cdot \a\Big]}}\ls \oot\jnm{c}^{\N}+ \xnm{\re}^{2\N}.
\end{align}
Hence, in total, we know
\begin{align}
    \Babs{\text{First Term in \eqref{pp 103'}}}\ls \oot\jnm{c}^{\N}+\xnm{\re}^{2\N}.
\end{align}
On the other hand, using $\um{(\ik-\bpk)[\re]}\ls\e\xnm{\re}$ we have
\begin{align}
    \Babs{\text{Second Term in \eqref{pp 103'}}}\ls&\e^{-1}\lnm{\nx\psi}\um{\pk[\re]+\bd\cdot\a}\um{(\ik-\bpk)[\re]}
    \ls \oot\jnm{c}^{\N}+\xnm{\re}^{2\N}.
\end{align}
In a similar fashion, we obtain
\begin{align}
    \Babs{\text{Third Term in \eqref{pp 103'}}}\ls \oot\jnm{c}^{\N}+\xnm{\re}^{2\N}.
\end{align}
In total, \eqref{pp 103'} can be controlled
\begin{align}
\abs{\e^{-1}\bbr{\nx\psi:\b,\ss_2}}\ls \oot\jnm{c}^{\N}+ \xnm{\re}^{2\N}.
\end{align}
Similarly, we know
\begin{align}\label{pp 103=}
    \bbr{\nx\phi\cdot\a,\ss_2}=&\br{\nx\phi\cdot\a,\Gamma\Big[\pk[\re]+\bd\cdot\a,\pk[\re]+\bd\cdot\a\Big]}\\
    &+\br{\nx\phi\cdot\a,\Gamma\Big[\pk[\re]+\bd\cdot\a,(\ik-\bpk)[\re]\Big]}+\br{\nx\phi\cdot\a,\Gamma\Big[(\ik-\bpk)[\re],(\ik-\bpk)[\re]\Big]}.\no
\end{align}
For the first term in \eqref{pp 103=}, by oddness and orthogonality of $\a$, we have
\begin{align}\label{pp 104=}
    &\br{\nx\phi\cdot\a,\Gamma\Big[\pk[\re]+\bd\cdot\a,\pk[\re]+\bd\cdot\a\Big]}\\
    =&\br{\nx\phi\cdot\a,\Gamma\Big[c\left(\abs{v}^2-5T\right)\mh,\bb\cdot v\mh\Big]}+\br{\nx\phi\cdot\a,\Gamma\Big[c\left(\abs{v}^2-5T\right)\mh,\bd\cdot \a\Big]}.\no
\end{align}
Using $\lnm{c}\ls\lnmm{\re}\ls\e^{-\frac{1}{2}}\xnm{\re}$ and $\tnm{\bb}\ls\e^{\frac{1}{2}}\xnm{\re}$, we know
\begin{align}
    \abs{\br{\nx\phi\cdot\a,\Gamma\Big[c\left(\abs{v}^2-5T\right)\mh,\bb\cdot v\mh\Big]}}\ls& \tnm{\nx\phi}\lnm{c}\tnm{\bb}
    \ls \jnm{c}^{{\N-1}}\xnm{\re}^2\ls\oot\jnm{c}^{\N}+\xnm{\re}^{2\N}.
\end{align}
Similarly, we have
\begin{align}
    \abs{\br{\nx\phi\cdot\a,\Gamma\Big[c\left(\abs{v}^2-5T\right)\mh,\bd\cdot \a\Big]}}\ls \oot\jnm{c}^{\N}+\xnm{\re}^{2\N}.
\end{align}
Hence, in total, we know
\begin{align}
    \Babs{\text{First Term in \eqref{pp 103=}}}\ls \oot\jnm{c}^{\N}+\xnm{\re}^{2\N}.
\end{align}
Similarly, we have
\begin{align}
    \Babs{\text{Second Term in \eqref{pp 103=}}}\ls&\tnm{\nx\phi}\lnmm{\pk[\re]+\bd\cdot\a}\um{(\ik-\bpk)[\re]}
    \ls\oot \jnm{c}^{\N}+\xnm{\re}^{2\N},\\
    \Babs{\text{Third Term in \eqref{pp 103=}}}\ls& \oo \jnm{c}^{\N}+\xnm{\re}^{2\N}.
\end{align}
In total, we know that \eqref{pp 103=} can be controlled 
\begin{align}
\abs{\bbr{\nx\phi\cdot\a,\ss_2}}\ls\oot \jnm{c}^{\N}+\xnm{\re}^{2\N}.
\end{align}
In summary, the $\ss_2$ contribution in \eqref{pp 49} can be bounded by
\begin{align}
\oot \jnm{c}^{\N}+\xnm{\re}^{2\N}.
\end{align}

\paragraph{\underline{$\ss_4$ and $\sp$ Contribution}}
Based on Lemma \ref{ss4-estimate}, we have
\begin{align}
    \abs{\br{\e^{-1}\phi\left(\abs{v}^2-5T\right)\mh,\ss_4}}\ls& \e^{-1}\nm{\phi}_{W^{1,\frac{\N}{\N-1}}}\jnm{\ss_4}\ls \oot\jnm{c}^{\N}+\oot.
\end{align}
Similarly, we have
\begin{align}
    \abs{\bbr{\nx\phi\cdot\a+\e^{-1}\nx\psi:\b,\ss_4}}\ls& \oot\jnm{c}^{\N}+\oot,\\
    \abs{\e^{-1}P^{-1}\br{q,\mh\ss_4}}\ls& \oot\jnm{c}^{\N}+\oot.
\end{align}
Based on Lemma \ref{ssp-estimate}, we have
\begin{align}
    \abs{\br{\e^{-2}\psi\cdot v\mh,\sp}}\ls \e^{-2}\tnm{\psi}\tnm{\sp}\ls \oot\jnm{c}^{\N}+\oot.
\end{align}
Similarly, we have
\begin{align}
    \abs{\br{\e^{-1}\phi\big(\abs{v}^2-5T\big)\mh,\sp}+\bbr{\nx\phi\cdot\a+\e^{-1}\nx\psi:\b,\sp}}\ls \oot\jnm{c}^{\N}+\oot.
\end{align}
In summary, the $\ss_4$ and $\sp$ terms \eqref{pp 49} can be bounded by
\begin{align}
\oot\jnm{c}^{\N}+\oot.
\end{align}

\paragraph{\underline{$\ss_0$ and $\ss_5$ Contribution}}
Based on Lemma \ref{ss0-estimate}, we have
\begin{align}\label{pp 120}
    \abs{\bbr{\nx\phi\cdot\a+\e^{-1}\nx\psi:\b,\ss_0}}
    \ls& \oot\tnm{\nx\phi}^{\frac{\N}{\N-1}}+\oot\tnm{\nx\psi}^{\frac{\N}{\N-1}}+\oot\xnm{\re}^{\N}
    \ls \oot\jnm{c}^{\N}+\oot\xnm{\re}^{\N}.
\end{align}
Also, 
\begin{align}
    \abs{\bbr{\nx\phi\cdot\a+\e^{-1}\nx\psi:\b,\ss_5}}\ls&\abs{\bbr{\nx\phi\cdot\a+\e^{-1}\nx\psi:\b,\e\Gamma[\f_i,\f_j]+\Gamma[\fb_1,\f_i]}}.
\end{align}
Clearly, we have
\begin{align}
    \abs{\bbr{\nx\phi\cdot\a+\e^{-1}\nx\psi:\b, \e\Gamma[\f_i,\f_j]}}
    \ls& \oot\tnm{\nx\phi}^{\frac{\N}{\N-1}}+\oot\tnm{\nx\psi}^{\frac{\N}{\N-1}}+\oot
    \ls\oot\jnm{c}^{\N}+\oot.
\end{align}
Then following similar techniques as \eqref{pp 101} and \eqref{final 54}, we obtain
\begin{align}
    &\abs{\bbr{\nx\phi\cdot\a+\e^{-1}\nx\psi:\b, \Gamma[\fb_1,\f_i]}}\\
    \ls&\oot\tnm{\nx\phi}^{\frac{\N}{\N-1}}+\oot\tnm{\nx\psi}^{\frac{\N}{\N-1}}+\e^{-\N}\nm{\fb_1}_{L^2_{\iota_i}L^1_{\mn}L^2_v}^{\N}+\tnm{\eta\fb_1}^{\N}\ls\oot\jnm{c}^{\N}+\oot.\no
\end{align}
In summary, the $\ss_0$ and $\ss_5$ terms \eqref{pp 49} can be bounded by
\begin{align}
\oot\jnm{c}^{\N}+\oot.
\end{align}
We therefore deduce Lemma \ref{lem:final 3} by collecting the above estimates.
\end{proof}

Combining Lemma \ref{lem:final 3} and Lemma \ref{lem:final 4}, and applying Lemma \ref{h-estimate}, we conclude  
\begin{lemma}
Under the assumption \eqref{assumption:boundary}, we have
\begin{align}\label{pp 28}
\jnm{c}^{\N}\ls&-\e^{-1}\bbrb{\nx\psi:\b,(1-\pp)[\re]}{\gamma_+}\\
&+\jnm{(\ik-\bpk)[\re]}^{\N}+\jnms{\m^{\frac{1}{4}}(1-\pp)[\re]}{\gamma_+}^{\N}+\oot\xnm{\re}^{\N}+\xnm{\re}^{2\N}+\oot.\no
\end{align}
\end{lemma}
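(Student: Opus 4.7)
The plan is to combine the two preceding lemmas directly, taking \eqref{pp 28'} from Lemma \ref{lem:final 4} as the starting scaffold and then absorbing everything on its right-hand side using the source-term estimate \eqref{pp 99''} from Lemma \ref{lem:final 3}. The only term on the right of \eqref{pp 28'} that one cannot bring onto the left (or convert into quantities controlled by $\xnm{\re}$) is the boundary contribution $-\e^{-1}\bbrb{\nx\psi:\b,(1-\pp)[\re]}{\gamma_+}$, which carries a fatal loss of $\e^{-1/2}$ (since $(1-\pp)[\re]=O(\e^{1/2})$ while $\nx\psi:\b=O(1)$ from \eqref{qq 17}). Consequently, this term must be retained as is, to be compensated later in Step 5 (boundary layer $\gbg$ plus interior $\gg$ compensation).

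First I would recall \eqref{pp 28'} verbatim: the left side is $\jnm{c}^{\N}$ and the right side consists of (a) the problematic boundary term $-\e^{-1}\bbrb{\nx\psi:\b,(1-\pp)[\re]}{\gamma_+}$; (b) the ``good'' energy-type remainders $\jnm{(\ik-\bpk)[\re]}^{\N}+\jnms{\m^{\frac{1}{4}}(1-\pp)[\re]}{\gamma_+}^{\N}+\oot\xnm{\re}^{\N}+\e^{\al\N-\N}\xnm{\re}^{2\N}+\oot\e^{\N-\al\N}$; and (c) the block of source contributions $-\e^{-1}P^{-1}\br{q,\mh\ss_3+\mh\ss_4}+\bbr{\nx\phi\cdot\a,\llc[\re]}-\e^{-1}\br{\nx\psi:\b,\llc[(\ik-\pk)[\re]]}+\br{\e^{-2}\psi\cdot v\mh,\ss_3+\sp}+\br{\e^{-1}\phi(\abs{v}^2-5T)\mh,\ss_3+\ss_4+\sp}+\bbr{\nx\phi\cdot\a+\e^{-1}\nx\psi:\b,\bar\ss}$.

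Next I would invoke Lemma \ref{lem:final 3} to bound the absolute value of block (c) by $\oot\jnm{c}^{\N}+\oot\xnm{\re}^{\N}+\e^{\al\N-\N}\xnm{\re}^{2\N}+\oot\e^{\N-\al\N}$. The $\oot\jnm{c}^{\N}$ piece is a small multiple of the left side and may be absorbed by choosing $\e$ sufficiently small. The remaining pieces already match (up to multiplicative constants) the terms appearing in item (b), so they collapse into the stated right-hand side. At this stage the boundary contribution coming from Lemma \ref{h-estimate} (which enters through the $h$-terms buried in \eqref{pp 28'}) is of order $\oot\e^{\N-\al\N}$, which is compatible with the claim.

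The only substantive issue in this otherwise mechanical assembly is bookkeeping: one must verify that every constant appearing during absorption is uniformly small in $\e$, so that $\oot\jnm{c}^{\N}$ can indeed be transferred to the left without degrading any coefficient on the right. The dominant analytical difficulty has already been shouldered by Lemmas \ref{lem:final 4} and \ref{lem:final 3}; what remains here is solely algebraic consolidation, and no new cancellation or test-function choice is required. The outcome is exactly \eqref{pp 28}, with the uncompensated boundary loss left explicit so it can be handled by the $\gbg+\gg$ construction in the next section.
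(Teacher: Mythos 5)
Your proposal is correct and coincides with the paper's own proof: the paper derives \eqref{pp 28} exactly by combining \eqref{pp 28'} with the source estimate \eqref{pp 99''}, absorbing the $\oot\jnm{c}^{\N}$ piece into the left-hand side, and retaining the uncompensated boundary term for the later $\gbg+\gg$ construction. One minor correction: the absorption is legitimate because $\oot$ is small by the assumption \eqref{assumption:boundary} on $\nabla\tb$ (it is not a smallness in $\e$), but this does not change the argument.
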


Now the remaining difficulty is to control the boundary term $\ds\e^{-1}\bbrb{\nx\psi:\b,(1-\pp)[\re]}{\gamma_+}$.

\subsection{Auxiliary Function $\gb$}

In order to handle the troublesome boundary term $\ds\e^{-1}\bbrb{\nx\psi:\b,(1-\pp)[\re]}{\gamma_+}$, we plan to design an auxiliary function $\gb(x,v)$ satisfying
\begin{align}\label{pp 46}
    \gb\big|_{\gamma_+}=-\Big(\nx\psi:\b\Big)\Big|_{\gamma_+}+\gb_h\Big|_{\gamma_+},
\end{align}
and the almost zero mass-flux condition
\begin{align}\label{pp 116}
    \int_{\p\Omega\times\r^3}\gb\mh(v\cdot n)=\gb_m.
\end{align}
for some small $\gb_h$ and $\gb_m$ which will be specified later (see \eqref{final 61'} and \eqref{final 62'}). Taking the test function $\test=\gb$ in \eqref{weak formulation}, and considering that $\lc$ is self-adjoint, we obtain
\begin{align}\label{pp 26}
    &\int_{\gamma}(\gb R)(v\cdot n)+\br{-v\cdot\nx\gb+\left(\mhh\ab\cdot\frac{\nx T}{4T^2}\right)\gb+\e^{-1}\lc[\gb],\re}=\bbr{\gb,\ss}.
\end{align}
Noting that \eqref{pp 116} yields
\begin{align}
    \int_{\p\Omega\times\r^3}\Big(\gb\pp[\re]\Big)(v\cdot n)=\gb_m\int_{\p\Omega}\mhh\pp[\re],
\end{align}
with the help of \eqref{qq 109}, we know that \eqref{pp 26} becomes
\begin{align}\label{pp 27}
    -\bbrb{\nx\psi:\b,(1-\pp)[\re]}{\gamma_+}-\bbrb{\gb, h}{\gamma_-}+\bbrb{\gb_h,(1-\pp)[\re]}{\gamma_+}+\gb_m\int_{\p\Omega}\mhh\pp[\re]\\
    +\br{-v\cdot\nx\gb
    +\left(\mhh\ab\cdot\frac{\nx T}{4T^2}\right)\gb+\e^{-1}\lc[\gb],\re}&=\bbr{\gb,\ss}.\no
\end{align}
Adding $\e^{-1}\times$\eqref{pp 27} and \eqref{pp 28} to eliminate $\ds-\e^{-1}\bbrb{\nx\psi:\b,(1-\pp)[\re]}{\gamma_+}$, we obtain
\begin{align}\label{pp 29}
    \jnm{c}^{\N}\ls&-\e^{-1}\br{-v\cdot\nx\gb+\left(\mhh\ab\cdot\frac{\nx T}{4T^2}\right)\gb+\e^{-1}\lc[\gb],\re}\\
    &+\e^{-1}\bbr{\gb,\ss}+\e^{-1}\bbrb{\gb,h}{\gamma_-}-\e^{-1}\bbrb{\gb_h,(1-\pp)[\re]}{\gamma_+}-\e^{-1}\gb_m\int_{\p\Omega}\mhh\pp[\re]\no\\
    &+\jnm{(\ik-\bpk)[\re]}^{\N}+\jnms{\m^{\frac{1}{4}}(1-\pp)[\re]}{\gamma_+}^{\N}+\oot\xnm{\re}^{\N}+\xnm{\re}^{2\N}+\oot.\no
\end{align}
Now the key is to estimate the auxiliary terms in \eqref{pp 29}
\begin{align}\label{pp 30}
    \e^{-1}\br{-v\cdot\nx\gb+\left(\mhh\ab\cdot\frac{\nx T}{4T^2}\right)\gb+\e^{-1}\lc[\gb],\re}
\end{align}
and the interaction terms
\begin{align}\label{pp 51'}
    &\e^{-1}\bbr{\gb,\ss}+\e^{-1}\bbrb{\gb,h}{\gamma_-}-\e^{-1}\bbrb{\gb_h,(1-\pp)[\re]}{\gamma_+}-\e^{-1}\gb_m\int_{\p\Omega}\mhh\pp[\re].
\end{align}

\subsection{Construction of $\gb$}

We define $\gb:=\gbg+\gg$ where $\gbg$ is an auxiliary $\e$-cutoff boundary layer and $\gg$ is an auxiliary interior solution. 

\paragraph{\underline{$\gbg$ Construction}}
Using the substitution in Section \ref{sec:boundary-form}, define $\gbg$ as an $\e$-cutoff boundary layer similar to $\fb_1$:
\begin{align}\label{final 03}
    \gbg(\eta,\iota_1,\iota_2,\vvv):=\ch\left(\e^{-1}\va\right)\chi(\e\eta)\Big(\blg(\eta,\iota_1,\iota_2,\vvv)-\blg_{\infty}(\iota_1,\iota_2,\vvv)\Big):=\ch\left(\e^{-1}\va\right)\chi(\e\eta)\blgg(\eta,\iota_1,\iota_2,\vvv)
\end{align}
where
\begin{align}\label{pp 36'}
    \left\{
    \begin{array}{l}
    -\va\dfrac{\p\blg}{\p\eta}+\lc_w\big[\blg\big]=0\ \ \text{in}\ \ [0,\infty)\times\r^3,\\\rule{0ex}{1.0em}
    \blg(0,\iota_1,\iota_2,\vvv)=\fbf(\vvv):=-\Big(\nx\psi(0,\iota_1,\iota_2,):\b(0,\iota_1,\iota_2,\vvv)\Big)\ \ \text{for}\ \ \va<0,
    \end{array}
    \right.
\end{align}
satisfying (from the almost zero mass-flux condition \eqref{pp 116})
\begin{align}\label{pp 35}
    &\int_{\r^3}\blg(0,\vvv)\m_w^{\frac{1}{2}}(\vvv)\va=0,\quad
    \lim_{\eta\rt\infty}\blg(\eta,\iota_1,\iota_2,\vvv)=\blg_{\infty}(\iota_1,\iota_2,\vvv)\in\nk.
\end{align}

Let $\nm{\cdot}_{L^p_{\iota_1\iota_2}L^{\infty}_{\eta}L^{\infty}_{\varrho,\vartheta}}$ denote the $L^p$ norm in $\iota_1$ and $\iota_2$ and $L^{\infty}_{\varrho,\vartheta}$ norm in $\eta$ and $\vvv$. The similar notation also applies to $\nm{\cdot}_{L^p_{\iota_1\iota_2}L^{\infty}_{\gamma,\varrho,\vartheta}}$
\begin{lemma}\label{lem:final 5}
    Under the assumption \eqref{assumption:boundary}, there exists $\blg(\eta,\iota_1,\iota_2,\vvv)$ satisfying \eqref{pp 36'} and \eqref{pp 35} such that for some $K_0>0$
    \begin{align}\label{qq 24}
    \lnmm{\blg_{\infty}}+\lnmm{\ue^{K_0\eta}\blgg}\ls& \oot\jnm{c}^{\N-1},\\
    \label{qq 23'}
    \lnmm{\ue^{K_0\eta}\p_{\vb}\blgg}+\lnmm{\ue^{K_0\eta}\p_{\vc}\blgg}\ls& \oot\jnm{c}^{\N-1},
    \end{align}
    \begin{align}
    \label{qq 23}
    \lnmm{\p_{\iota_1}\blg_{\infty}}+\lnmm{\p_{\iota_2}\blg_{\infty}}+\nm{\ue^{K_0\eta}\p_{\iota_1}\blgg}_{L^2_{\iota_1\iota_2}L^{\infty}_{\eta}L^{\infty}_{\varrho,\vartheta}}+\nm{\ue^{K_0\eta}\p_{\iota_2}\blgg}_{L^2_{\iota_1\iota_2}L^{\infty}_{\eta}L^{\infty}_{\varrho,\vartheta}}
    \ls&\oot\jnm{c}^{\N-1},
    \end{align}
    \begin{align}\label{final 55}
        \bnm{\blgg}\ls\oot\jnm{c}^{\N-1}.
    \end{align}
    and for any $\eta$,
    \begin{align}\label{final 64}
    \int_{\r^3}\blg(\eta,\iota_1,\iota_2 ,\vvv)\m_w^{\frac{1}{2}}(\vvv)\va=0.
    \end{align}
    Then $\gbg$ can be defined as \eqref{final 03}.
\end{lemma}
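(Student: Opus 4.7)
The plan is to apply the Milne boundary layer machinery developed in Section \ref{sec: boundary-analysis} (Theorem \ref{boundary well-posedness} and Theorem \ref{boundary regularity}) to the specific incoming data $\fbf(\vvv) = -\nx\psi(0,\iota_1,\iota_2):\b(0,\iota_1,\iota_2,\vvv)$, and then translate the resulting Milne estimates into the required bounds via the Stokes-Poisson regularity \eqref{qq 17}--\eqref{qq 18}. A velocity reflection $\va \mapsto -\va$ converts \eqref{pp 36'} into the standard Milne form of \eqref{Milne}. The compatibility condition \eqref{pp 35} is precisely the zero mass-flux requirement that selects the otherwise-free $\vv\mh$-component of $\blg_\infty \in \nk$ along the normal direction, so existence and uniqueness are immediate from Theorem \ref{boundary well-posedness}.

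For \eqref{qq 24} and \eqref{qq 23'}, I would apply \eqref{final 41}--\eqref{final 43}, observing that the Burnett-type tensor $\b$ (and its $v$-derivatives) carries its own Gaussian decay that dominates the weight $\ue^{\varrho|v|^2/(2T_M)}$ whenever $\varrho < \tfrac{1}{2}$. Thus
\[
\lnmms{\fbf}{\gamma_-} + \lnmms{\nabla_\vvv\fbf}{\gamma_-} \ls \lnm{\nx\psi}\Big(\lnmms{\b}{\gamma_-} + \lnmms{\nabla_v\b}{\gamma_-}\Big) \ls \lnm{\nx\psi},
\]
and $\lnm{\nx\psi} \ls \nm{\psi}_{W^{4,\frac{\N}{\N-1}}} \ls \oot\,\jnm{c}^{\N-1}$ by 3D Sobolev embedding applied to \eqref{qq 17}--\eqref{qq 18}. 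The tangential derivative estimate \eqref{qq 23} follows from differentiating the Milne problem in $\iota_j$ and applying \eqref{final 44} with $\N=1$; the hybrid $L^2_{\iota_1\iota_2}L^\infty_\eta L^\infty_{\varrho,\vartheta}$ norm on $\blgg$ comes from trace theory, since the best tangential regularity of $\p_{\iota_j}\nx\psi|_{\p\Omega}$ extractable from $\psi\in W^{4,\frac{\N}{\N-1}}(\Omega)$ is an $L^2(\p\Omega)$-type bound after the $1/p$ trace loss, whereas $L^\infty_{\iota_1\iota_2}$ control of $\p_{\iota_j}\blg_\infty$ uses a higher Sobolev embedding on $\p\Omega$ for the $(\iota_1,\iota_2)$-dependence of the scalar asymptotic state.

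For the BV estimate \eqref{final 55}, I would invoke Theorem \ref{boundary regularity}, whose hypotheses require $\lnmms{\fbf}{\gamma_-}$ (already obtained) together with $\int_{\va>0}|\p_\va\fbf|\,\va\, d\vvv$ and $\int_{\va>0}|\fbf|\, d\vvv$; both integrals are finite and dominated by $\nm{\psi}_{W^{4,\frac{\N}{\N-1}}}$ because $\b$ and $\p_v\b$ decay faster than any exponential weight. Finally, \eqref{final 64} follows from multiplying \eqref{pp 36'} by $\mh$ and integrating in $\vvv$: the collision term vanishes by the orthogonality $\mh \in \nk_w$, giving $\p_\eta\int_{\r^3}\va\mh\blg\, d\vvv = 0$, so the flux is $\eta$-invariant and vanishes everywhere by virtue of \eqref{pp 35}.

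The main obstacle is the anisotropic nature of \eqref{qq 23}. Since \eqref{qq 17} only provides $W^{4,\frac{\N}{\N-1}}$ regularity for $\psi$ and trace theorems lose $1/p$ of a derivative, the tangential derivatives of $\nx\psi|_{\p\Omega}$ are only $L^2_{\iota_1\iota_2}$-controlled rather than $L^\infty_{\iota_1\iota_2}$-controlled, which forces the hybrid norm on the $\blgg$ part of \eqref{qq 23} and contrasts sharply with the pure $L^\infty$ estimate \eqref{qq 24}. The $L^\infty_{\iota_1\iota_2}$ bound for $\p_{\iota_j}\blg_\infty$ must then be recovered separately via a higher Sobolev embedding on $\p\Omega$ applied to the scalar asymptotic state after integration in $(\eta,\vvv)$. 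All downstream applications of this lemma in later sections must be written to accommodate this anisotropy rather than treating $\gbg$ as a uniformly $L^\infty_{\iota_1\iota_2}$ object.
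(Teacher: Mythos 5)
Your proposal follows essentially the same route as the paper: solve the Milne problem \eqref{pp 36'} via Theorem \ref{boundary well-posedness} (with its derivative estimates \eqref{final 41}--\eqref{final 44}) and Theorem \ref{boundary regularity} for the BV bound, control the incoming data $\fbf=-\nx\psi(0):\b(0)$ through $\lnms{\nx\psi}{\p\Omega}$ and $\tnms{\nx^2\psi}{\p\Omega}\ls\nm{\psi}_{H^3}\ls\nm{\psi}_{W^{4,\frac{\N}{\N-1}}}\ls\oot\jnm{c}^{\N-1}$ via \eqref{qq 17}--\eqref{qq 18}, and obtain \eqref{final 64} by integrating \eqref{pp 36'} in $\vvv$ together with \eqref{pp 35}. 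Your remark about recovering $\lnmm{\p_{\iota_j}\blg_{\infty}}$ by a higher embedding is the only point treated more optimistically than the available $L^2$-trace control of $\p_{\iota_j}\nx\psi$ warrants, but this is no stronger a claim than the paper's own estimate \eqref{qq 23} and does not change the argument.
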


\begin{proof}
    For fixed $(\iota_1,\iota_2)$, using Theorem \ref{boundary well-posedness}, we have
    \begin{align}\label{qq 24=}
    \lnmm{\blg_{\infty}}+\lnmm{\ue^{K_0\eta}\blgg}\ls& \lnmms{\fbf}{\gamma_+},\\
    \label{qq 23'=}
    \lnmm{\ue^{K_0\eta}\p_{\vb}\blgg}+\lnmm{\ue^{K_0\eta}\p_{\vc}\blgg}\ls& \lnmms{\fbf}{\gamma_+}+\lnmms{\p_{\vb}\fbf}{\gamma_+}+\lnmms{\p_{\vc}\fbf}{\gamma_+},
    \end{align}
    and
    \begin{align}
    \label{qq 23=}
    &\lnmm{\p_{\iota_1}\blg_{\infty}}+\lnmm{\p_{\iota_2}\blg_{\infty}}+\nm{\ue^{K_0\eta}\p_{\iota_1}\blgg}_{L^2_{\iota_1\iota_2}L^{\infty}_{\eta}L^{\infty}_{\varrho,\vartheta}}+\nm{\ue^{K_0\eta}\p_{\iota_2}\blgg}_{L^2_{\iota_1\iota_2}L^{\infty}_{\eta}L^{\infty}_{\varrho,\vartheta}}\\
    \ls& \nm{\fbf}_{L^2_{\iota_1\iota_2}L^{\infty}_{\gamma_+,\varrho,\vartheta}}+\nm{\p_{\iota_1}\fbf}_{L^2_{\iota_1\iota_2}L^{\infty}_{\gamma_+,\varrho,\vartheta}}+\nm{\p_{\iota_2}\fbf}_{L^2_{\iota_1\iota_2}L^{\infty}_{\gamma_+,\varrho,\vartheta}}.\no
    \end{align}
    Then for $2\leq \N\leq 6$, due to trace theorem and Sobolev embedding 
    \begin{align}
    \tnms{\nx^2\psi}{\p\Omega}\ls&\nm{\psi}_{H^3}\ls\nm{\psi}_{W^{4,\frac{\N}{\N-1}}}\ls\oot\jnm{c}^{\N-1},\\
    \lnms{\nx\psi}{\p\Omega}\ls&\nm{\psi}_{H^3}\ls\nm{\psi}_{W^{4,\frac{\N}{\N-1}}}\ls\oot\jnm{c}^{\N-1},
    \end{align}
    from the definition of $\fbf$ in \eqref{pp 36'}, we have the desired results. The estimate \eqref{final 55} follows from Theorem \ref{boundary regularity}. Also, \eqref{final 64} comes from direct integrating in \eqref{pp 36'}.
\end{proof}

\begin{lemma}\label{lem:final 6}
    Under the assumption \eqref{assumption:boundary}, we have that 
    $\blg$, $\blgg$ and $\blg_{\infty}$ defined in \eqref{pp 36'} are all odd in $\vb$ and $\vc$. In particular, we have
    \begin{align}\label{pp 39}
    \blg_{\infty}(\vvv)=\big(\vvv\cdot\bb_{\infty}\big)\m_w^{\frac{1}{2}}(\vvv)=\Big(\vb b_{\infty,\iota_1}+\vc b_{\infty,\iota_2}\Big)\m_w^{\frac{1}{2}}(\vvv),
    \end{align}
    for $\bb_{\infty}:=\Big(0,b_{\infty,\iota_1},b_{\infty,\iota_2}\Big)$ with some $b_{\infty,\iota_1}$ and $b_{\infty,\iota_2}$ only depending on $(\iota_1,\iota_2)$. In addition, $\gbg$ is odd in $\vb$ and $\vc$:
    \begin{align}\label{gb-oddness}
        \gbg(\vxx,\va,\vb,\vc)=-\gbg(\vxx,\va,-\vb,\vc),\quad \gbg(\vxx,\va,\vb,\vc)=-\gbg(\vxx,\va,\vb,-\vc).
    \end{align}
\end{lemma}
\begin{proof}
Since $\psi=(\psi_{\iota_1},\psi_{\iota_2},\psi_{\mn})=\od$ on $\p\Omega$, we know $\p_{\iota_1}\psi=\p_{\iota_2}\psi=0$. Also, due to $\nx\cdot\psi=0$, we know $\p_{\mn}\psi_n=0$. Hence,
\begin{align}
    \fbf(\vvv)=-\p_{\mn}\psi_{\iota_1}(0)\b_{\mn\iota_1}(0)-\p_{\mn}\psi_{\iota_2}(0)\b_{\mn\iota_2}(0).
\end{align}
Due to the oddness of $\fbf(\vvv)$ in $\vb$ and $\vc$ (since $\b_{\mn\iota_1}$ and $\b_{\mn\iota_2}$ are odd), using Theorem \ref{boundary well-posedness}, we know that the solution $\blg(\eta,\vvv)$ is also odd in $\vb$ and $\vc$. Hence, for any two-variable function $\mathcal{M}$, we know 
\begin{align}\label{final 01}
    \int_{\r^3}\mathcal{M}\big(\abs{v}^2,\va\big)\blg(\eta,\vvv)=0.
\end{align}
Denote
\begin{align}
    \pk[\blg](\eta,\vvv):=\bigg( \P_{\blg}(\eta )+\vvv\cdot \bb_{\blg}(\eta )+\left(\abs{\vvv}^2-5\right) c_{\blg}(\eta )\bigg)\m_w^{\frac{1}{2}}(\vvv).
\end{align}
The oddness dictates that $\P_{\blg}=c_{\blg}=0$. Also, for $\bb_{\blg}:=\Big(b_{\blg,\iota_1},b_{\blg,\iota_2},b_{\blg,\mn}\Big)$, \eqref{final 64} implies $b_{\blg,\mn}=0$. Hence, we have
\begin{align}
    \pk[\blg](\eta ,\vvv)=\Big(\vb b_{\blg,\iota_1}(\eta )+\vc b_{\blg,\iota_2}(\eta )\Big)\m_w^{\frac{1}{2}}(\vvv).
\end{align}
Similarly, considering
\begin{align}
    \blg_{\infty}(\vvv)=\pk[\blg_{\infty}](v):=\bigg( \P_{\infty}+\vvv\cdot \bb_{\infty}+\left(\abs{\vvv}^2-5\right) c_{\infty}\bigg)\m_w^{\frac{1}{2}}(\vvv),
\end{align}
we know $\bb_{\infty}=\Big(b_{\infty,\iota_1},b_{\infty,\iota_2},b_{\infty,\mn}\Big)$ satisfies $b_{\infty,\mn}=0$ from \eqref{pp 35}, and thus
\begin{align}
    \blg_{\infty}(\vvv)=\Big(\vb b_{\infty,\iota_1}+\vc b_{\infty,\iota_2}\Big)\m_w^{\frac{1}{2}}(\vvv).
\end{align}
Similar to \eqref{final 01}, using \eqref{final 03}, we have the oddness of $\gbg$, i.e. for any two-variable function $\mathcal{M}$, we know
\begin{align}\label{final 02}
    \int_{\r^3}\mathcal{M}\big(\abs{v}^2,\va\big)\gbg(\eta,\vvv)=0.
\end{align}
\end{proof}

\paragraph{\underline{$\gg$ Construction}}
Now we discuss the construction of $\gg$.
In order to fulfill \eqref{pp 46}, we need
\begin{align}
    \gg\Big|_{\p\Omega}=\blg_{\infty}=\big(\vvv\cdot \bb_{\infty}\big)\m_w^{\frac{1}{2}}.
\end{align}

\begin{lemma}\label{lem:final 7}
    Under the assumption \eqref{assumption:boundary}, there exists an extension
    $\bbq$ of $\bb_{\infty}$ satisfying
    \begin{align}\label{pp 47}
    \nx\cdot\bbq=0\ \text{in}\ \Omega,\quad
    \vv\cdot\bbq=\vvv\cdot\bb_{\infty}\ \text{on}\ \p\Omega,
    \end{align}
    such that 
    \begin{align}
        \nm{\bbq}_{H^1}\ls \oot\jnm{c}^{\N-1}.
    \end{align}
    Then $\gg$ can be defined as
    \begin{align}\label{final 10}
    \gg&=\mh(\vv)\big(\vv\cdot\bbq \big)
    \end{align}
\end{lemma}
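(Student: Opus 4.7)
The plan is to reduce the lemma to a standard tangential extension followed by a Bogovskii correction. The crucial observation, supplied by Lemma \ref{lem:final 6}, is that $b_{\infty,\mn}=0$, so the boundary condition $\vv\cdot\bbq = \vvv\cdot\bb_{\infty}$ on $\p\Omega$ reduces, after using $\vv = -\va\vn-\vb\vt_1-\vc\vt_2$, to the purely tangential prescription
\begin{align*}
\bbq\cdot\vn = 0,\qquad \bbq\cdot\vt_1 = -b_{\infty,\iota_1},\qquad \bbq\cdot\vt_2 = -b_{\infty,\iota_2}\quad \text{on}\ \p\Omega.
\end{align*}

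First I would construct a raw $H^1$ extension $\widetilde{\bbq}$. Using the normal chart of Section \ref{sec:geometric-setup}, extend $\vt_1,\vt_2$ and the scalar coefficients $b_{\infty,\iota_i}$ into a tubular neighborhood of $\p\Omega$ (for instance, constant along normal lines) and multiply by a smooth cutoff $\chi_0(\mn)$ supported near $\mn=0$ with $\chi_0(0)=1$ to obtain $\widetilde{\bbq} := -\chi_0(\mn)\big(b_{\infty,\iota_1}\vt_1 + b_{\infty,\iota_2}\vt_2\big)$ on $\Omega$. Extracting $b_{\infty,\iota_i}$ from $\blg_{\infty}$ by projecting the formula \eqref{pp 39} against the basis $\{\vb\m_w^{1/2},\vc\m_w^{1/2}\}$, and invoking \eqref{qq 24} together with \eqref{qq 23}, yields $W^{1,\infty}(\p\Omega)$ control $\nm{b_{\infty,\iota_i}}_{W^{1,\infty}(\p\Omega)}\lesssim \oot\jnm{c}^{\N-1}$, whence the cutoff extension satisfies $\nm{\widetilde{\bbq}}_{H^1(\Omega)}\lesssim \oot\jnm{c}^{\N-1}$.

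Next I would correct the divergence. Let $g := \nx\cdot\widetilde{\bbq}\in L^2(\Omega)$. Because $\widetilde{\bbq}\cdot\vn\equiv 0$ on $\p\Omega$ by construction, the divergence theorem yields $\int_\Omega g\,\ud\vx = \int_{\p\Omega}\widetilde{\bbq}\cdot\vn\,\ud S_x = 0$, which is exactly the compatibility condition required to solve the Bogovskii problem. By the classical Bogovskii theorem (see \cite{Bogovskii1979,Acosta.Duran2017}, already used in the proof of Proposition \ref{prop:p-bound}) there exists $w\in H^1_0(\Omega)$ with $\nx\cdot w = g$ in $\Omega$ and $\nm{w}_{H^1_0}\lesssim \tnm{g}\lesssim \nm{\widetilde{\bbq}}_{H^1}$. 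Setting $\bbq := \widetilde{\bbq} - w$ gives $\nx\cdot\bbq = 0$ in $\Omega$ and $\bbq|_{\p\Omega} = \widetilde{\bbq}|_{\p\Omega}$, which verifies \eqref{pp 47}; the two bounds combine to $\nm{\bbq}_{H^1(\Omega)}\lesssim \oot\jnm{c}^{\N-1}$. There is no genuine obstacle here: the only structural input beyond the standard extension-plus-Bogovskii recipe is the vanishing normal component $b_{\infty,\mn}=0$ established in Lemma \ref{lem:final 6}, without which the mass-balance compatibility $\int_\Omega g = 0$ would fail and the construction would not close.
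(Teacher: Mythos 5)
Your construction is essentially the paper's: both proofs split $\bbq$ into a boundary extension plus a divergence correction with zero boundary trace, with the compatibility condition $\int_{\Omega}\nx\cdot(\text{extension})=\int_{\p\Omega}\bb_{\infty}\cdot\vn=0$ supplied exactly by $b_{\infty,\mn}=0$ from Lemma \ref{lem:final 6}. The only differences are cosmetic: the paper takes an abstract Sobolev extension controlled by $\nm{\bb_{\infty}}_{H^{1/2}_{\p\Omega}}$ and corrects with a Stokes problem (Boyer--Fabrie), while you use an explicit collar extension and a Bogovskii correction in $H^1_0$; the two corrections solve the same divergence equation and give the same bound. One caveat: your claim of $W^{1,\infty}(\p\Omega)$ control of $b_{\infty,\iota_i}$ overstates what is actually available --- the tangential-derivative bounds behind \eqref{qq 23} come from traces of $\nx^2\psi$, which are only $L^2$-type in $(\iota_1,\iota_2)$, not $L^{\infty}$ --- but this is harmless, since $H^1(\p\Omega)$-level control of $b_{\infty,\iota_1},b_{\infty,\iota_2}$ (which \eqref{qq 24} and \eqref{qq 23} do provide) already gives $\nm{\widetilde{\bbq}}_{H^1(\Omega)}\ls\oot\jnm{c}^{\N-1}$ and closes the argument, just as the paper only needs the $H^{\frac{1}{2}}_{\p\Omega}$ norm.
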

\begin{remark}
    Since the change-of-variable $\vv\rt\vvv$ is orthogonal, we know $\mh$ is invariant. Then thanks to \eqref{aa 44}, the boundary condition in \eqref{pp 47} is valid by applying a linear orthogonal transformation. Hence, we deduce that $\bbq\cdot\vn=\bb_{\infty}\cdot\vn=0$. 
\end{remark}
\begin{proof}[Proof of Lemma \ref{lem:final 7}]
The construction of $\bbq$ consists of two steps:

\subparagraph{$\overline{\bb}_{\infty}$ Construction:}
Let $\overline{\bb}_{\infty}(x)$ for $x\in\Omega$ be a Sobolev extension of $\bb_{\infty}$ satisfying $\vv\cdot\overline{\bb}_{\infty}=\vvv\cdot\bb_{\infty}$ on $\p\Omega\times\r^3$ and
\begin{align}
    \nm{\overline{\bb}_{\infty}}_{H^1}\ls \nm{\bb_{\infty}}_{H^{\frac{1}{2}}_{\p\Omega}}.
\end{align}

\subparagraph{$\overline\bbq$ Construction:}
Since $\ds\int_{\Omega}\nx\cdot\overline{\bb}_{\infty}=\int_{\p\Omega}\overline{\bb}_{\infty}\cdot n=\int_{\p\Omega}\bb_{\infty,\mn}=0$, based on \cite[Theorem IV.5.2]{Boyer.Fabrie2013}, we know that there exists a unique solution $(\overline\bbq,\overline\ppq)\in H^1\times L^2$ (with $\overline\ppq$ zero average) to the Stokes problem
\begin{align}
    -\dx\overline\bbq+\nx\overline\ppq=0,\quad
    \nx\cdot\overline\bbq=-\nx\cdot\overline{\bb}_{\infty}\ \text{in}\ \Omega,\qquad
    \overline\bbq=0\ \text{on}\ \p\Omega,
\end{align}
satisfying
\begin{align}
    \nm{\overline\bbq}_{H^1}+\tnm{\overline\ppq}\ls \nm{\nx\cdot\overline\bb_{\infty}}_{L^2}\ls \nm{\overline{\bb}_{\infty}}_{H^1}\ls \nm{\bb_{\infty}}_{H^{\frac{1}{2}}_{\p\Omega}}.
\end{align}

\subparagraph{Summary of $\gg$ Construction:}
Hence, we know $\bbq=\overline\bbq+\overline{\bb}_{\infty}$ is the solution to \eqref{pp 47} and using \eqref{qq 23} it satisfies
\begin{align}
    \nm{\bbq}_{H^1}\ls \nm{ \bb_{\infty}}_{H^{\frac{1}{2}}_{\p\Omega}}\ls \oot\jnm{c}^{\N-1}.
\end{align}

\end{proof}

\paragraph{\underline{Summary of $\gb$ Construction}}
Therefore, we split \eqref{pp 30} based on $\gb=\gbg+\gg$.
Similar to the derivation of $\ss_3$ in \eqref{mm 00}, direct computation using the substitution in Section \ref{sec:boundary-form} reveals 
\begin{align}\label{pp 38}
    &\e^{-1}\br{-v\cdot\nx\gbg+\left(\mhh\ab\cdot\frac{\nx T}{4T^2}\right)\gbg+\e^{-1}\lc\left[\gbg\right],\re}\\
    =&-\e^{-1}\bbr{\dfrac{1}{R_1-\e\eta}\bigg(\vb^2\dfrac{\p }{\p\va}\bigg)\gbg+\dfrac{1}{R_2-\e\eta}\bigg(\vc^2\dfrac{\p }{\p\va}\bigg)\gbg,\re}\no\\
    &+\e^{-1}\bbr{\dfrac{1}{R_1-\e\eta}\bigg(\va\vb\dfrac{\p}{\p\vb}\bigg)\gbg+\dfrac{1}{R_2-\e\eta}\bigg(\va\vc\dfrac{\p }{\p\vc}\bigg)\gbg,\re}\no\\
    &-\e^{-1}\bbr{\dfrac{1}{\pl_1\pl_2}\left(-\dfrac{\p_{\iota_1\iota_1}\vr\cdot\p_{\iota_2}\vr}{\pl_1(\e\kk_1\eta-1)}\vb\vc
    +\dfrac{\p_{\iota_1\iota_2}\vr\cdot\p_{\iota_2}\vr}{\pl_2(\e\kk_2\eta-1)}\vc^2\right)\dfrac{\p\gbg }{\p\vb},\re}\no\\
    &-\e^{-1}\bbr{\dfrac{1}{\pl_1\pl_2}\left(\dfrac{\p_{\iota_2\iota_2}\vr\cdot\p_{\iota_1}\vr}{\pl_2(\e\kk_2\eta-1)}\vb\vc
    +\dfrac{\p_{\iota_1\iota_2}\vr\cdot\p_{\iota_1}\vr}{\pl_1(\e\kk_1\eta-1)}\vb^2\right)\dfrac{\p\gbg }{\p\vc},\re}\no\\
    &-\e^{-1}\bbr{\left(\dfrac{\vb}{\pl_1(\e\kk_1\eta-1)}\dfrac{\p }{\p\iota_1}+\dfrac{\vc}{\pl_2(\e\kk_2\eta-1)}\dfrac{\p }{\p\iota_2}\right)\gbg,\re}+\e^{-1}\br{\left(\mhh\ab\cdot\frac{\nx\tq}{4\tq^2}\right)\gbg,\re}-\e^{-2}\br{(v\cdot n)\frac{\p\chi(\e\eta)}{\p\eta}\blgg,\re}\no\\
    &-\e^{-2}\br{\chi(\e\eta)\bigg(\chi(\e^{-1}\va)K\Big[\blgg\Big]-K\Big[\chi(\e^{-1}\va)\blgg\Big]\bigg),\re}+\e^{-2}\br{\lc\left[\gbg\right]-\lc_w\left[\gbg\right],\re}.\no
\end{align}
Also, due to the direct computation
\begin{align}
    -\mh v\cdot\nx\Big(\mhh\gg\Big)=&-\mh v\cdot\left(\mhh\nx\gg-\frac{1}{2}\gg\m^{-\frac{3}{2}}\nx\m\right)\\
    =&-\mh v\cdot\left(\mhh\nx\gg-\gg\m^{-\frac{1}{2}}\left(\abs{v}^2-5\tq\right)\frac{\nx T}{4T^2}\right)
    =-v\cdot\nx\gg+\left(\mhh\ab\cdot\frac{\nx T}{4T^2}\right)\gg,\no
\end{align}
and $\gg\in\nk$ from \eqref{final 10}, we have
\begin{align}\label{pp 45}
    &\e^{-1}\br{-v\cdot\nx\gg+\left(\mhh\ab\cdot\frac{\nx T}{4T^2}\right)\gg+\e^{-1}\lc[\gg],\re}=-\e^{-1}\br{\mh v\cdot\nx\Big(\mhh\gg\Big),\re}.
\end{align}
Also, due to the cutoff $\ch\left(\e^{-1}\va\right)$ in $\gbg$ definition \eqref{pp 36'}, we have
\begin{align}\label{final 61'}
    \gb_h:=\chi\left(\e^{-1}\va\right)\fbf(\vvv),
\end{align}
with
\begin{align}\label{final 61}
    \tnms{\gb_h}{\gamma_+}\ls \oot\e\jnm{c}^{\N-1},
\end{align}
and
\begin{align}\label{final 62'}
    \gb_m:=&\int_{\r^3}\va\mbh\chi\left(\e^{-1}\va\right)\blgg(0,\vvv)\ud \vvv=\int_{\r^3}\va\mbh\chi\left(\e^{-1}\va\right)\fbf(0,\vvv)\ud \vvv,
\end{align}
with
\begin{align}\label{final 62}
    \abs{\gb_m}\ls\oot\e^2\jnm{c}^{\N-1}.
\end{align}

\subsection{Estimates of Auxiliary $\e$-Cutoff Boundary Layer $\gbg$ Terms}\label{sec:gb estimate}

\begin{lemma}\label{lemma:k1}
    Let $\hb(\eta,\iota_1,\iota_2,\vv)$ with $\eta=\e^{-1}\mn$ be a boundary-layer-type quantity satisfying 
    \begin{align}\label{final 65}
        \tnm{\hb}+\nm{\hb}_{L^2_{\iota_1\iota_2}L^2_{\eta}L^1_{v}}+\nm{\eta\hb}_{L^2_{\iota_1\iota_2}L^2_{\eta}L^1_{v}}\ls\oot\jnm{c}^{\N-1}.
    \end{align} 
    Then under the assumption \eqref{assumption:boundary}, we have
    \begin{align}
    \abs{\e^{-1}\bbr{\hb,\re}}\ls&\oot\jnm{c}^{\N}+\oot\xnm{\re}^{\N}+\xnm{\re}^{2\N}+\oot. 
    \end{align}
\end{lemma}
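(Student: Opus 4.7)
\textbf{Proof proposal for Lemma \ref{lemma:k1}.} The plan is to use the decomposition \eqref{pp 01}:
\begin{equation*}
\re = \P\mh + (\bb\cdot v)\mh + c(\abs{v}^2-5T)\mh + \nx\xi\cdot\a + \be\cdot\a + (\ik-\bpk)[\re],
\end{equation*}
and estimate the resulting six pieces of $\e^{-1}\br{\hb,\re}$ separately. The key observation is that scaling $\mn=\e\eta$ converts the assumed boundary-layer norm into the global one with a gain: from $\nm{\hb}_{L^2_{\iota_1\iota_2}L^2_{\eta}L^1_v}\ls\oot\jnm{c}^{\N-1}$ we obtain $\nm{\hb}_{L^2_xL^1_v}\ls\e^{1/2}\oot\jnm{c}^{\N-1}$ and similarly $\nm{\eta\hb}_{L^2_xL^1_v}\ls\e^{1/2}\oot\jnm{c}^{\N-1}$. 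These, paired with the $X$-norm bounds $\tnm{\P}\ls\e\xnm{\re}$, $\tnm{\bb}\ls\e^{1/2}\xnm{\re}$, $\tnm{\be}\ls\e\xnm{\re}$, $\um{(\ik-\bpk)[\re]}\ls\e\xnm{\re}$ from \eqref{ss 00}, will drive the proof.

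First, for $\P\mh$, $\be\cdot\a$, and $(\ik-\bpk)[\re]$, a direct Cauchy-Schwarz in $x$ and $v$ (pulling out $\mh^{1/2}$-type factors in $v$) combined with $\tnm{\hb}\ls\oot\jnm{c}^{\N-1}$ produces $\oot\jnm{c}^{\N-1}\xnm{\re}$, which Young's inequality with conjugate exponents $(\N,\N/(\N-1))$ converts to $\oot\jnm{c}^{\N}+\oot\xnm{\re}^{\N}$. For $(\bb\cdot v)\mh$, since $\tnm{\bb}$ only carries $\e^{1/2}$-smallness, I will instead pull out $\lnm{v\mh^{1/2}}\ls 1$ and use the refined bound $\nm{\hb}_{L^2_xL^1_v}\ls\e^{1/2}\oot\jnm{c}^{\N-1}$; the product yields the same target. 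For $\nx\xi\cdot\a$, I will invoke the Sobolev embedding $W^{2,6}(\Omega)\hookrightarrow W^{1,\infty}(\Omega)$ (valid in 3D) together with $\nm{\xi}_{W^{2,6}}\ls\e^{1/2}\xnm{\re}$ from \eqref{ss 00} to obtain $\lnm{\nx\xi}\ls\e^{1/2}\xnm{\re}$, and pair this with $\nm{\hb}_{L^1_xL^1_v}\leq|\Omega|^{1/2}\nm{\hb}_{L^2_xL^1_v}\ls\e^{1/2}\oot\jnm{c}^{\N-1}$, giving again $\oot\jnm{c}^{\N-1}\xnm{\re}$ after multiplication by $\e^{-1}$.

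The main obstacle is the $c(\abs{v}^2-5T)\mh$ term, which I will handle via the Hardy inequality technique from Step 6 of the methodology. Split $c=\z^R+\z^S-\e^{-1}\xi$ according to Proposition \ref{prop:z-splitting}. The $\z^S$ contribution is treated directly: $\e^{-1}\nm{\hb}_{L^2_xL^1_v}\tnm{\z^S}\ls\e^{-1/2}\oot\jnm{c}^{\N-1}\tnm{\z^S}$, into which \eqref{qq 07} inserts the desired $\xnm{\re}$, $\xnm{\re}^2$ and $\e^{2-\al}$ contributions (yielding via Young's inequality precisely $\oot\jnm{c}^{\N}+\oot\xnm{\re}^{\N}+\e^{\al\N-\N}\xnm{\re}^{2\N}+\oot\e^{\N-\N\al}$). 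For $\z^R$ and $\xi$, use the algebraic identities
\begin{equation*}
\e^{-1}\br{\hb,\z^R(\abs{v}^2-5T)\mh}=\br{\eta\hb,(\z^R/\mn)(\abs{v}^2-5T)\mh},
\end{equation*}
\begin{equation*}
-\e^{-2}\br{\hb,\xi(\abs{v}^2-5T)\mh}=-\e^{-1}\br{\eta\hb,(\xi/\mn)(\abs{v}^2-5T)\mh},
\end{equation*}
exploiting $\mn=\e\eta$. Since $\z^R\in H^1_0$ and $\xi\in H^2_0$ both vanish on $\p\Omega$, Hardy's inequality in the normal direction gives $\nm{\z^R/\mn}_{L^2}\ls\nm{\p_\mn\z^R}_{L^2}\ls\nm{\z^R}_{H^1_0}$ and $\nm{\xi/\mn}_{L^2}\ls\nm{\p_\mn\xi}_{L^2}$.

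For $\z^R$, Proposition \ref{prop:z-splitting} provides $\nm{\z^R}_{H^1_0}\ls\oot\xnm{\re}+\e^{\al-1}\xnm{\re}^2+\oot\e^{1-\al}$; pairing with $\nm{\eta\hb}_{L^2_xL^1_v}\ls\e^{1/2}\oot\jnm{c}^{\N-1}$ and applying Young's inequality yields the required bounds (the $\e^{\N/2}$ factors beat the advertised $\e$ powers). The truly delicate step is $\xi$: interpolating $\tnm{\xi}\ls\e\xnm{\re}$ with $\nm{\xi}_{H^2}\ls\e^{1/2}\xnm{\re}$ via $\nm{\nabla\xi}_{L^2}^2\leq\tnm{\xi}\nm{\xi}_{H^2}$ gives $\nm{\p_\mn\xi}_{L^2}\ls\e^{3/4}\xnm{\re}$. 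Multiplying the three factors produces $\e^{-1}\cdot\e^{1/2}\oot\jnm{c}^{\N-1}\cdot\e^{3/4}\xnm{\re}=\oot\e^{1/4}\jnm{c}^{\N-1}\xnm{\re}$, and Young's inequality finally delivers a contribution bounded by $\oot\jnm{c}^{\N}+\oot\e^{\N/4}\xnm{\re}^{\N}$. Summing all six pieces yields the stated inequality; the hard part is precisely this $\xi$ estimate, where one must balance the $\e^{-2}$ prefactor against the single $\e$ absorbed by $\mn=\e\eta$, compensated through Hardy together with the Hodge-interpolation gain.
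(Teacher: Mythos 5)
Your overall strategy coincides with the paper's proof of Lemma \ref{lemma:k1}: decompose $\re$ via \eqref{pp 01}, pair $\tnm{\hb}$ directly against the pieces that carry a full factor of $\e$ in the $X$-norm ($\P$, $\be$, $(\ik-\bpk)[\re]$), exploit the $\e^{1/2}$ gain from the change of variable $\mn=\e\eta$ (i.e. $\nm{\hb}_{L^2_xL^1_v}\ls\e^{1/2}\nm{\hb}_{L^2_{\iota_1\iota_2}L^2_{\eta}L^1_v}$) against $\bb$, and handle the $c$-contribution through the splitting $c=\big(\z^R-\e^{-1}\xi\big)+\z^S$ together with Hardy's inequality applied to $\eta\hb$. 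The paper does exactly this, grouping $\P$, $\bb$ and $\bd=\nx\xi+\be$ into one term bounded by $\e^{-1/2}\nm{\hb}_{L^2_{\iota_1\iota_2}L^2_{\eta}L^1_v}\big(\tnm{\P}+\tnm{\bb}+\tnm{\bd}\big)$, and treating $c$ exactly as you propose.

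There is, however, a genuine flaw in your treatment of the $\nx\xi\cdot\a$ piece. You claim $\nm{\xi}_{W^{2,6}}\ls\e^{1/2}\xnm{\re}$ ``from \eqref{ss 00}'', but in the $X$-norm the term $\nm{\xi}_{W^{2,6}}$ appears with weight $1$ (only $\nm{\xi}_{H^2}$ carries the weight $\e^{-1/2}$), so the embedding $W^{2,6}\hookrightarrow W^{1,\infty}$ only yields $\lnm{\nx\xi}\ls\xnm{\re}$ (Lemma \ref{remark 01} adds an $\oot$ but no power of $\e$). With the corrected bound, your estimate for this piece becomes $\e^{-1}\cdot\e^{1/2}\oot\jnm{c}^{\N-1}\cdot\xnm{\re}=\e^{-1/2}\oot\jnm{c}^{\N-1}\xnm{\re}$, and Young's inequality then produces a term of size $\e^{-\N/2}\xnm{\re}^{\N}$, which cannot be absorbed into the right-hand side of the lemma. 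The repair is within your own toolkit: estimate $\nx\xi$ in $L^2$ rather than $L^{\infty}$, either via the interpolation $\tnm{\nx\xi}^2\leq\tnm{\xi}\nm{\xi}_{H^2}\ls\e^{3/2}\xnm{\re}^2$ (the same one you use for $\nm{\p_{\mn}\xi}_{L^2}$), or via Lemma \ref{remark 01}, $\nm{\xi}_{H^2}\ls\oot\tnm{\bb}+\oot\e^{3/2-\al}\ls\oot\e^{1/2}\xnm{\re}+\oot\e^{3/2-\al}$, and pair it with $\nm{\hb}_{L^2_xL^1_v}\ls\e^{1/2}\oot\jnm{c}^{\N-1}$, exactly as you already do for the $\bb$-term and as the paper does by keeping $\nx\xi$ inside $\bd$ in its estimate of $I_2$. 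With that single change, the remaining steps (including your $\z^R$, $\z^S$ and $\e^{-1}\xi$ estimates, which match the paper's) close correctly.
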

\begin{remark}
    When applying Lemma \ref{lemma:k1}, we will let $\hb=\gbg$, or its derivatives. 
\end{remark}
\begin{proof}[Proof of Lemma \ref{lemma:k1}]
We decompose
\begin{align}\label{qq 40}
    \e^{-1}\bbr{\hb,\re}
    =&I_1+I_2+I_3\\
    :=&\e^{-1}\bbr{\hb,(\ik-\bpk)[\re]}+\e^{-1}\br{\hb,\P\mh+\bb\cdot v\mh+\bd\cdot\a}+\e^{-1}\br{\hb,c\left(\abs{v}^2-5T\right)\mh}.\no
\end{align}
We directly bound
\begin{align}
    \babs{I_{1}}\ls&\e^{-1}\tnm{\hb}\tnm{(\ik-\bpk)[\re]}\ls\oot\jnm{c}^{\N}+\oot\e^{-\N}\tnm{(\ik-\bpk)[\re]}^{\N}\ls \oot\jnm{c}^{\N}+\oot\xnm{\re}^{\N}.
\end{align}
Using the scaling $\eta=\e^{-1}\mn$, we can bound
\begin{align}\label{qq 41}
    \babs{I_{2}}\ls& \e^{-1}\nm{\hb}_{L^2_{\iota_1\iota_2}L^2_{\mn}L^1_{v}}\Big(\tnm{\P}+\tnm{\bb}+\tnm{\bd}\Big)
    \ls\e^{-\frac{1}{2}} \nm{\hb}_{L^2_{\iota_1\iota_2}L^2_{\eta}L^1_{v}}\Big(\tnm{\P}+\tnm{\bb}+\tnm{\bd}\Big)\\
    \ls& \oot\jnm{c}^{\N}+\oot\Big(\e^{-\frac{\N}{2}}\tnm{\P}^{\N}+\e^{-\frac{\N}{2}}\tnm{\bb}^{\N}+\e^{-\frac{\N}{2}}\tnm{\bd}^{\N}\Big)\ls \oot\jnm{c}^{\N}+\oot\xnm{\re}^{\N}.\no
\end{align}
For $I_{3}$, notice that
\begin{align}\label{pp 41}
    c=Z-\e^{-1}\xi=\Big(\z^R-\e^{-1}\xi\Big)+\z^S:=& K_1+K_2.
\end{align}
We define a natural extension of $\z^R$ and $\xi$ with zero value for $\mn>1$ (for the convenience of Hardy's inequality). Then using \eqref{final 65}, Hardy's inequality, Lemma \ref{remark 01} and \eqref{qq 06}, we have
\begin{align}\label{pp 96=}
    \babs{I_{3, K_1}}\ls&\e^{-1}\br{\hb,\int_0^{\mn}\big(\p_{\mn}\z^R-\e^{-1}\p_{\mn}\xi\big)}
    \ls\e^{-1}\nm{\mn\hb}_{L^2_{\iota_1\iota_2}L^2_{\mn}L^1_{v}}\tnm{\frac{1}{\mn}\int_0^{\mn}\big(\p_{\mn}\z^R-\e^{-1}\p_{\mn}\xi\big)}\\
    \ls&\nm{\eta\hb }_{L^2_{\iota_1\iota_2}L^2_{\mn}L^1_{v}}\Big(\tnm{\p_{\mn}\z^R}+\e^{-1}\tnm{\p_{\mn}\xi}\Big)\ls\e^{\frac{1}{2}} \nm{\eta\hb}_{L^2_{\iota_1\iota_2}L^2_{\eta}L^1_{v}}\Big(\nm{\z^R}_{H_0^1}+\e^{-1}\nm{\xi}_{H_0^1}\Big)\no\\
    \ls& \oot\jnm{c}^{\N}+\oot\e^{\frac{\N}{2}}\xnm{\re}^{\N}+\e^{\frac{\N}{2}}\xnm{\re}^{2\N}+\oot\e^{\frac{\N}{2}}.\no
\end{align}
Similar to the estimate of $I_{1}$, using \eqref{qq 07}, we have
\begin{align}\label{pp 96='}
    \babs{I_{3, K_2}}\ls& \e^{-1}\nm{\hb}_{L^2_{\iota_1\iota_2}L^2_{\eta}L^1_{v}}\tnm{\z^S}\ls\e^{-\frac{1}{2}} \nm{\hb}_{L^2_{\iota_1\iota_2}L^2_{\eta}L^1_{v}}\tnm{\z^S}
    \ls \jnm{c}^{\N}+\oot\xnm{\re}^{\N}+\xnm{\re}^{2\N}+\oot.
\end{align}
We deduce Lemma \ref{lemma:k1} by collecting the above estimates.
\end{proof}

\begin{lemma}
    Under the assumption \eqref{assumption:boundary}, we have
    \begin{align}\label{pp 44'}
    \babs{\text{\eqref{pp 38}}}\ls&\oot\jnm{c}^{\N}+\oot\xnm{\re}^{\N}+\xnm{\re}^{2\N}+\oot.
\end{align}
\end{lemma}

\begin{proof}
Comparing the first term in \eqref{qq 40} with \eqref{mm 00}, we know
\begin{align}
    M:=\dfrac{1}{R_1-\e\eta}\bigg(\vb^2\dfrac{\p }{\p\va}\bigg)\gbg+\dfrac{1}{R_2-\e\eta}\bigg(\vc^2\dfrac{\p }{\p\va}\bigg)\gbg
\end{align}
is essentially $\sx$ with $\fb_1$ replaced by $\gbg$ and $\blff$ replaced by $\blgg$. Hence, using the proof of Lemma \ref{ss3-estimate}, we have
\begin{align}
    \pnm{M}{\N}&\ls\oot\e^{\frac{2}{\N}-1}\jnm{c}^{\N-1}.\label{gg4}
\end{align}
Similar to \eqref{qq 40}, we split
\begin{align}\label{final 56}
\\
    \text{First Term in \eqref{pp 38}}
    =&\e^{-1}\bbr{M,(\ik-\bpk)[\re]}+\e^{-1}\bbr{M,\P\mh+\bb\cdot v\mh+\bd\cdot\a}
    +
    \e^{-1}\bbr{M,c\left(\abs{v}^2-5T\right)\mh}.\no
\end{align}
For the first term in \eqref{final 56}, using \eqref{gg4}, we bound
\begin{align}
    \abs{\e^{-1}\bbr{M,(\ik-\bpk)[\re]}}\ls&\e^{-1}\tnm{M}\tnm{(\ik-\bpk)[\re]}\ls\oot\jnm{c}^{\N}+\oot\e^{-\N}\tnm{(\ik-\bpk)[\re]}^{\N}\\
    \ls&\oot\jnm{c}^{\N}+\oot\xnm{\re}^{\N}.\no
\end{align}
For the second term in \eqref{final 56}, we first integrate by parts with respect to $\va$, and then use Lemma \ref{lem:final 5} with a similar argument as \eqref{qq 41} to obtain
\begin{align}
    &\abs{\e^{-1}\bbr{M,\P\mh+\bb\cdot v\mh+\bd\cdot\a}}\ls\e^{-1}\abs{\bbr{\gbg,\P\nabla_v\mh+\bb\cdot \nabla_v(v\mh)+\bd\cdot\nabla_v\a}} \\
    \ls&\e^{-\frac{1}{2}}\nm{\gbg}_{L^2_{\iota_1\iota_2}L^2_{\eta}L^1_{v}}\Big(\tnm{\P}+\tnm{\bb}+\tnm{\bd}\Big)
    \ls\oot\jnm{c}^{\N}+\oot\xnm{\re}^{\N}.\no
\end{align}
For the third term in \eqref{final 56}, we first integrate by parts with respect to $\va$ and then apply the splitting in \eqref{pp 41}. A similar argument as \eqref{pp 96=} and \eqref{pp 96='} with Lemma \ref{lem:final 5} imply
\begin{align}
    \abs{\e^{-1}\bbr{M,c\left(\abs{v}^2-5T\right)\mh}}\ls&\oot\jnm{c}^{\N}+\oot\xnm{\re}^{\N}+\xnm{\re}^{2\N}+\oot.
\end{align}
Hence, we have shown that
\begin{align}
    &\Babs{\text{First Term in \eqref{pp 38}}}
   \ls\oot\jnm{c}^{\N}+\oot\e^{\N}\xnm{\re}^{\N}+\xnm{\re}^{2\N}+\oot.
\end{align}
Note that in the definition of $\gbg$ \eqref{final 03}, the cutoff is only in the normal direction, so $\gbg$ should satisfy the same tangential estimates as $\blgg$ in \eqref{qq 23} and \eqref{qq 23'}. From Lemma \ref{lem:final 5}, we have for $\hb=\gbg,\p_{\vb}\gbg,\p_{\vc}\gbg,\p_{\iota_1}\gbg,\p_{\iota_2}\gbg, \blgg$
\begin{align}
    \tnm{\hb}+\nm{\hb}_{L^2_{\iota_1\iota_2}L^2_{\eta}L^1_{v}}+\nm{\eta\hb}_{L^2_{\iota_1\iota_2}L^2_{\eta}L^1_{v}}\ls &\jnm{c}^{\N-1}.
\end{align}
Hence, applying Lemma \ref{lemma:k1} with $\hb=\gbg,\p_{\vb}\gbg,\p_{\vc}\gbg,\p_{\iota_1}\gbg,\p_{\iota_2}\gbg, \blgg$ and Lemma \ref{lem:final 5}, we have
\begin{align}
    &\Babs{\text{Second Term 
    to Sixth Term in \eqref{pp 38}}}\ls\oot\jnm{c}^{\N}+\oot\xnm{\re}^{\N}+\xnm{\re}^{2\N}+\oot.
\end{align}
In addition, noticing that $\lnm{\frac{\p\chi(\e\eta)}{\p\eta}}\ls \e$, applying Lemma \ref{lemma:k1} with $\hb=\e^{-1}\frac{\p\chi(\e\eta)}{\p\eta}\blgg$ and Lemma \ref{lem:final 5}, we have
\begin{align}
    &\Babs{\text{Seventh Term in \eqref{pp 38}}}\ls\oot\jnm{c}^{\N}+\oot\xnm{\re}^{\N}+\xnm{\re}^{2\N}+\oot.
\end{align}
Using the same idea as in the proof of Lemma \ref{ss3-estimate} and Lemma \ref{lem:final 5}, we obtain
\begin{align}
    \tnm{\chi(\e^{-1}\va)K\Big[\blgg\Big]}+\tnm{K\Big[\chi(\e^{-1}\va)\blgg\Big]}\ls&\e\jnm{c}^{\N-1},\\
    \nm{\chi(\e^{-1}\va)K\Big[\blgg\Big]}_{L^2_{\mn}L^1_v}+\nm{K\Big[\chi(\e^{-1}\va)\blgg\Big]}_{L^2_{\mn}L^1_v}\ls&\e^{\frac{3}{2}}\jnm{c}^{\N-1}.
\end{align}
Hence, using Lemma \ref{lemma:k1} with $\hb=\chi\left(\e^{-1}\va\right)K\left[\blgg\right],K\left[\chi\left(\e^{-1}\va\right)\blgg\right]$, we obtain that
\begin{align}
    &\Babs{\text{Eighth Term in \eqref{pp 38}}}\ls\oot\jnm{c}^{\N}+\oot\xnm{\re}^{\N}+\xnm{\re}^{2\N}+\oot.
\end{align}
Using Corollary \ref{m-estimate.}, Corollary \ref{m-estimate..} and Lemma \ref{lem:final 5}, we know 
\begin{align}
    &\Babs{\text{Ninth Term in \eqref{pp 38}}}\ls\oot\jnm{c}^{\N}+\oot\xnm{\re}^{\N}+\xnm{\re}^{2\N}+\oot.
\end{align}
\end{proof}

\subsection{Estimates of Auxiliary Interior Solution $\gg$ Terms}

\begin{lemma}
    Under the assumption \eqref{assumption:boundary}, we have
    \begin{align}\label{pp 44''}
    \babs{\text{\eqref{pp 45}}}\ls&\oot\jnm{c}^{\N}+\oot\xnm{\re}^{\N}.
    \end{align}
\end{lemma}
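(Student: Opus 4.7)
The plan is to exploit that $\gg = \mh(v\cdot\bbq)$ lies entirely in $\nk$, so the $\lc[\gg]$ contribution vanishes identically and only the streaming term $-\e^{-1}\br{\mh v\cdot\nx(\mhh\gg),\re}$ on the right of \eqref{pp 45} has to be controlled. The key structural observation will be that the divergence-free condition on $\bbq$ forces this streaming term into the traceless viscous-stress subspace of $\nnk$, which is orthogonal to the entire hydrodynamic-plus-heat-flux part of $\re$.

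First I would compute, using $\mhh\gg = v\cdot\bbq$, that
\begin{equation*}
\mh v\cdot\nx(\mhh\gg) = \sum_{i,j}(\p_i\bbq_j)\,v_iv_j\,\mh.
\end{equation*}
Invoking $\nx\cdot\bbq = 0$ from \eqref{pp 47}, this rewrites as the traceless tensor contraction
\begin{equation*}
\sum_{i,j}(\p_i\bbq_j)\bigl(v_iv_j - \tfrac{1}{3}\delta_{ij}\abs{v}^2\bigr)\mh.
\end{equation*}
A short moment computation against the basis $\{\mh, v_k\mh, (\abs{v}^2 - 3T)\mh\}$ of $\nk$ listed in \eqref{att 32} shows that this tensor is orthogonal in $L^2_v$ to $\nk$ (the only nontrivial check is the vanishing $\int\bigl(v_i^2-\tfrac{1}{3}|v|^2\bigr)(|v|^2-3T)\m\,\ud v = 0$ which follows from $\int|v|^4\m = 15PT$ and $\int|v|^2\m = 3P$). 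In addition, oddness in $v$ eliminates the contribution from the odd component $\bd\cdot\a$ of $(\ik-\pk)[\re]$. Therefore only $(\ik-\bpk)[\re]$ survives in the inner product.

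Cauchy--Schwarz then gives
\begin{equation*}
\abs{\text{the quantity in }\eqref{pp 45}} \ls \e^{-1}\,\tnm{\nx\bbq}\,\tnm{(\ik-\bpk)[\re]}.
\end{equation*}
By Lemma \ref{lem:final 7} we have $\nm{\bbq}_{H^1} \ls \oot\,\jnm{c}^{\N-1}$, and from the definition \eqref{ss 00} of the $X$-norm we have $\tnm{(\ik-\bpk)[\re]} \ls \e\,\xnm{\re}$, so the product is bounded by $\oot\,\jnm{c}^{\N-1}\xnm{\re}$. Finally, Young's inequality with conjugate exponents $\N/(\N-1)$ and $\N$, splitting the small prefactor as $\oot = \oot^{(\N-1)/\N}\cdot\oot^{1/\N}$, yields the claimed bound $\oot\,\jnm{c}^{\N} + \oot\,\xnm{\re}^{\N}$.

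The only delicate point is the orthogonality check that $(v_iv_j - \tfrac{1}{3}\delta_{ij}|v|^2)\mh \perp \nk$, specifically the cancellation against the energy basis element $(|v|^2 - 3T)\mh$; once that computation is in hand the rest of the estimate is routine $L^2$ duality plus Young's inequality, so I do not anticipate any substantive obstacle beyond that single parity/moment check.
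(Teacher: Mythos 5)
Your proposal is correct and follows essentially the same route as the paper: the paper's crucial cancellation \eqref{final 04} is exactly your trace-subtraction step in disguise (only the diagonal terms $v_i^2\p_i\bbq_i$ survive by parity and sum to $\tfrac13\abs{v}^2(\nx\cdot\bbq)=0$, which is the same as saying the test function reduces to the traceless tensor $\bbb:\nx\bbq\perp\nk$), and oddness likewise removes $\bd\cdot\a$. The remaining $(\ik-\bpk)[\re]$ term is then handled exactly as in the paper, via Cauchy--Schwarz with $\nm{\bbq}_{H^1}\ls\oot\jnm{c}^{\N-1}$ from Lemma \ref{lem:final 7}, the $X$-norm control $\e^{-1}\um{(\ik-\bpk)[\re]}\ls\xnm{\re}$, and Young's inequality.
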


\begin{proof}
Oddness guarantees that in \eqref{pp 45}
\begin{align}
    &\e^{-1}\br{\mh v\cdot\nx\Big(\mhh\gg\Big),\re}=\e^{-1}\br{\mh v\cdot\nx\big(\bbq\cdot v\big),\re}\\
    =&\e^{-1}\br{\mh v\cdot\nx\Big(\bbq\cdot v\Big),\P\mh+\bb\cdot v\mh+c\left(\abs{v}^2-5T\right) \mh+\bd\cdot\a+(\ik-\bpk)[\re]}\no\\
    =&\e^{-1}\br{\mh v\cdot\nx\Big(\bbq\cdot v\Big),\P\mh+c\left(\abs{v}^2-5T\right) \mh+(\ik-\bpk)[\re]}.\no
\end{align}
Here, thanks to the oddness/parity of \eqref{pp 39} and the construction \eqref{pp 47} which yields the divergence-free $\nx\cdot\bbq=0$ for $\bbq:=(\bbq_{1},\bbq_{2},\bbq_{3})$, we observe the crucial cancellation of the $\P\mh+c\left(\abs{v}^2-5T\right) \mh$ contribution
\begin{align}\label{final 04}
    &\e^{-1}\br{\mh v\cdot\nx\Big(\bbq\cdot v\Big),\P\mh+c\left(\abs{v}^2-5T\right) \mh}\\
    =&\e^{-1}\br{\mh\Big(v_1^2\p_1\bbq_{1}+v_2^2\p_2\bbq_{2}+v_3^2\p_3\bbq_{3}\Big),\P\mh+c\left(\abs{v}^2-5T\right) \mh}
    =\frac{\e^{-1}}{3}\br{\mh \abs{v}^2\big(\nx\cdot\bbq\big),\P\mh+c\left(\abs{v}^2-5T\right) \mh}=0.\no
\end{align}
Also, using Lemma \ref{lem:final 7}, the $(\ik-\bpk)[\re]$ contribution can be controlled
\begin{align}
    \abs{\e^{-1}\br{\mh v\cdot\nx\big(\bbq\cdot v\big),(\ik-\bpk)[\re]}}
    \ls&\e^{-1}\nm{\bbq}_{H^1}\tnm{(\ik-\bpk)[\re]}
    \ls\oot\jnm{c}^{\N}+\oot\e^{-\N}\tnm{(\ik-\bpk)[\re]}^{\N}.
\end{align}
\end{proof}

\subsection{Estimates of Interaction Terms}

In this subsection, we consider the interaction terms in \eqref{pp 51'}.

\begin{lemma}
    Under the assumption \eqref{assumption:boundary} and $\xnm{\re}\ll 1$, we have
    \begin{align}\label{pp 99'''}
    \babs{\text{\eqref{pp 51'}}}
    \ls&\oot\jnm{c}^{\N}+\oot\xnm{\re}^{\N}+\xnm{\re}^{2\N}+\xnm{\re}^{3\N}+\oot.
    \end{align}
\end{lemma}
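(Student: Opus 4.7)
The plan is to decompose \eqref{pp 51'} into its four natural contributions, use the splitting $\gb = \gbg + \gg$ provided by Lemmas \ref{lem:final 5} and \ref{lem:final 7}, and split $\ss = \llc[\re] + \sum_{j=0}^{6}\ss_j + \sp$ so that each pairing $\e^{-1}\bbr{\gb, \ss_j}$ can be matched against the source estimates of Section \ref{sec:bs}. For the interior piece $\gg = \mh(v\cdot\bbq)\in\nk$ with $\nm{\bbq}_{H^1}\ls\oot\jnm{c}^{\N-1}$, the $\Gamma$--$\nk$ orthogonality kills the macroscopic part of every bilinear source, so the surviving contributions are controlled by pure $L^2$ pairings. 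For the boundary layer piece $\gbg$, the rescaling $\eta = \e^{-1}\mn$ yields $\tnm{\gbg}\ls\e^{1/2}\jnm{c}^{\N-1}$ and more generally $\pnm{\gbg}{p}\ls\e^{1/p}\jnm{c}^{\N-1}$, plus the tangential-derivative and BV bounds of Lemma \ref{lem:final 5} and Theorem \ref{boundary regularity}. In every pairing I would then apply Young's inequality so that one factor $\jnm{c}^{\N-1}$ is absorbed into $\jnm{c}^{\N}$ with a small constant, leaving an $\e$-weighted power of $\xnm{\re}$ or $\e^{\N-\N\al}$.

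The most delicate pieces are $\e^{-1}\bbr{\gbg,\ss_2}$ and $\e^{-1}\bbr{\gbg,\ss_3}$. For $\ss_2$ I would copy the decomposition of \eqref{pp 103'} and \eqref{pp 103=}: the purely macroscopic block $\Gamma\left[\P\mh+c(\abs{v}^2-5T)\mh,\cdot\right]$ vanishes by the parity cancellation \eqref{final 05}, while the remaining blocks are estimated via $\lnm{\re}\ls\e^{-1/2}\xnm{\re}$ and, crucially, by substituting $c = (\z^R - \e^{-1}\xi) + \z^S$ from Proposition \ref{prop:z-splitting}. Since $\tnm{\z^S}\ls\e^{\al-1/2}\xnm{\re}^2$ is itself quadratic in $\xnm{\re}$, multiplying the quadratic $\ss_2$ by this $c$--component produces a cubic $\xnm{\re}^3$, and Young's inequality then generates precisely the claimed term $\e^{2\al\N-2\N}\xnm{\re}^{3\N}$; this is the origin of the cubic power on the right-hand side. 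For $\ss_3$ I would repeat the argument of Section \ref{sec:gb estimate}, namely integrate by parts in $\va$ to move the normal derivative off $\fb_1$ and close the remaining loss through the BV estimate of Theorem \ref{boundary regularity}. The contributions of $\llc[\re], \ss_0, \ss_1, \ss_4, \ss_5, \sp$ are small by Lemmas \ref{ssl-estimate}--\ref{ssp-estimate} and yield only the $\oot\jnm{c}^{\N}+\oot\xnm{\re}^{\N}+\oot\e^{\N-\N\al}$ parts of the bound.

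The three boundary contributions are comparatively direct: $\e^{-1}\bbrb{\gb,h}{\gamma_-}$ combines $\lnmms{\gb}{\gamma}\ls\oot\jnm{c}^{\N-1}$ from Lemma \ref{lem:final 5} with $\tnms{h}{\gamma_-}\ls\oot\e^{2-\al}$ from Lemma \ref{h-estimate}; $\e^{-1}\bbrb{\gb_h,(1-\pp)[\re]}{\gamma_+}$ exploits the $\e$-gain $\tnms{\gb_h}{\gamma_+}\ls\oot\e\jnm{c}^{\N-1}$ of \eqref{final 61} paired against $\tnms{(1-\pp)[\re]}{\gamma_+}\ls\e^{1/2}\xnm{\re}$; and $\e^{-1}\gb_m\int_{\p\Omega}\mhh\pp[\re]$ uses the double gain $\abs{\gb_m}\ls\oot\e^2\jnm{c}^{\N-1}$ of \eqref{final 62} together with Proposition \ref{lem:final 12}. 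Each produces only $\oot\jnm{c}^{\N}$ plus acceptable remainders after Young's inequality. The main obstacle throughout is the bookkeeping of $\e$-exponents: one must use $\al\geq 1$ and $\xnm{\re}\ll 1$ to absorb the quadratic and cubic self-interactions, and one must avoid any $\abs{\ln\e}$-loss from the singular boundary layer, which is exactly what the BV bound of Theorem \ref{boundary regularity} supplies. Aggregating the four groups of estimates yields \eqref{pp 99'''}.
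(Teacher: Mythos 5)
Your proposal follows essentially the same route as the paper: split \eqref{pp 51'} into the three boundary pairings plus $\e^{-1}\bbr{\gb,\ss}$, use $\gb=\gbg+\gg$, kill every $\Gamma$-type source against $\gg\in\nk$ by orthogonality (Lemma \ref{lem:final 7}), use the parity cancellation of $\gbg$ for the purely macroscopic block of $\ss_2$, handle the $c$-component via $c=(\z^R-\e^{-1}\xi)+\z^S$ with Hardy's inequality (Proposition \ref{prop:z-splitting}), and exploit the $\e$-gains of $\gb_h$, $\gb_m$ together with Lemma \ref{h-estimate}. The identification of the cubic term as coming from the quadratic pieces of the $c$-splitting times the $\bb$ (or $\bd$) factor in $\ss_2$ matches \eqref{qq 03}--\eqref{qq 09}, except that both $\z^R$ (via \eqref{qq 06}) and $\z^S$ (via \eqref{qq 07}) contribute it, not only $\z^S$; and the cancellation you want is \eqref{final 06}, which rests on the oddness of $\gbg$ in $(\vb,\vc)$ from Lemma \ref{lem:final 6}, rather than \eqref{final 05}, whose mechanism is the trace-free structure of $\nx\psi:\b$.

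Two points need tightening. First, for the surviving interior pairings your claim that they are "controlled by pure $L^2$ pairings" fails for $\e^{-1}\bbr{\gg,\ss_3}$: since $\tnm{\ss_3}\ls\oot\e^{1-\al}$ by \eqref{ss3-estimate2} and $\tnm{\gg}\ls\oot\jnm{c}^{\N-1}$, a plain Cauchy--Schwarz gives $\oot\e^{-\al}\jnm{c}^{\N-1}$, hence $\oot\e^{-\al\N}$ after Young --- worse than the admissible $\oot\e^{\N-\al\N}$ by a factor $\e^{\N}$. The paper instead writes $\gg=\gg(0)+\int_0^{\mn}\p_{\mn}\gg$, pairs $\gg(0)$ against the thin-layer norm $\nm{\ss_3}_{L^2_{\iota_i}L^1_{\mn}L^1_v}\ls\oot\e^{2-\al}$ (see \eqref{ss3-estimate4}) and the remainder, via Hardy and $\nm{\gg}_{H^1}$, against $\eta$-weighted norms of $\sy+\sz+\fb_1$; you need this (or an equivalent mixed-norm argument) here, and it is the same machinery you already invoke for the $\gbg$ terms. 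Second, for $\e^{-1}\bbr{\gbg,\ss_3}$ the paper does not integrate by parts in $\va$ at all: it bounds it directly as $\e^{-1}\lnm{\gbg}\pnm{\ss_3}{1}$ as in \eqref{final 36}, the BV gain already being encoded in the $L^1$ bound of $\ss_3$ (Lemma \ref{ss3-estimate}, via Theorem \ref{boundary regularity}). Your integration-by-parts variant can be made to work, but then the derivative lands on $\gbg$, so the relevant BV/$L^1$ control is the one for $\blgg$ in \eqref{final 55} (plus the $O(\e^{-1})$ cutoff derivative, harmless in $L^1_{\va}$), not the BV estimate for $\fb_1$ as you state; as written that sentence conflates the two. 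With these corrections the aggregation you describe does give \eqref{pp 99'''}.
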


\begin{proof}
\ 
\paragraph{\underline{Boundary Terms Contribution}}
Naturally, using Lemma \ref{h-estimate}, the boundary term in \eqref{pp 51'} is controlled as
\begin{align}
    \abs{\e^{-1}\bbrb{\gb ,h}{\gamma_-}}\leq&\abs{\e^{-1}\bbrb{\gbg ,h}{\gamma_-}}+\abs{\e^{-1}\bbrb{\gg ,h}{\gamma_-}}
    \ls\e^{-1}\tnms{\fbf}{\gamma_-}\tnms{h}{\gamma_-}
    \ls \oot\jnm{c}^{\N}+\e^{-\N}\tnms{h}{\gamma_-}^{\N}\\
    \ls&\oot\jnm{c}^{\N}+\oot.\no
\end{align}
Also, using \eqref{final 61}, since $\gb_h$ is restricted to $\abs{\va}\ls\e$, we have
\begin{align}
    \abs{\e^{-1}\bbrb{\gb_h,(1-\pp)[\re]}{\gamma_+}}\ls& \e^{-1}\tnms{\gb_h}{\gamma_+}\tnms{(1-\pp)[\re]}{\gamma_+}\ls \oot\jnm{c}^{\N-1}\tnms{(1-\pp)[\re]}{\gamma_+}\\
    \ls&\oot\jnm{c}^{\N}+\oot\e^{\frac{\N}{2}}\xnm{\re}^{\N}+\e^{\frac{\N}{2}}\xnm{\re}^{2\N}+\oot\e^{\frac{\N}{2}}.\no
\end{align}
Using \eqref{final 62}, we know
\begin{align}
    \abs{\e^{-1}\gb_m\int_{\p\Omega}\mhh\pp[\re]}\ls \e^{-1}\abs{\gb_m}\tnms{\pp[\re]}{\gamma}\ls\oot\e\jnm{c}^{\N-1}\tnms{\pp[\re]}{\gamma}\ls \oot\jnm{c}^{\N}+\oot\e^{\N}\xnm{\re}^{\N}.
\end{align}

\paragraph{\underline{$\e^{-1}\bbr{\gb,\llc[\re]}$ Contribution}}
We can decompose
\begin{align}\label{pp 97}
    \e^{-1}\bbr{\gb,\llc[\re]}=&\e^{-1}\br{\gbg,\mhh Q^{\ast}\left[\mh f_1,\mh\re\right]}+\e^{-1}\br{\gg,\mhh Q^{\ast}\left[\mh f_1,\mh\re\right]}.
\end{align}
For the first term in \eqref{pp 97}, using $\lnm{\f_1}\ls\oot$, we can follow a similar argument as $\gbg$ estimate in Section \ref{sec:gb estimate} to split $\re$ and use Lemma \ref{lem:final 5} to bound
\begin{align}
    \abs{\e^{-1}\br{\gbg,\mhh Q^{\ast}\left[\mh f_1,\mh\re\right]}}
    \ls&\oot\jnm{c}^{\N}+\oot\xnm{\re}^{\N}+\xnm{\re}^{2\N}+\oot.
\end{align}
Since $\gg\in\nk$, for the second term in \eqref{pp 97}, from Lemma \ref{lem:final 7}, we deduce from the orthogonality of $Q$
\begin{align}
    \e^{-1}\br{\gg,\mhh Q^{\ast}\left[\mh f_1,\mh\re\right]}=0.
\end{align}
In total, we know 
\begin{align}
    \abs{\e^{-1}\bbr{\gb,\llc[\re]}}
    \ls&\oot\jnm{c}^{\N}+\oot\xnm{\re}^{\N}+\xnm{\re}^{2\N}+\oot.
\end{align}

\paragraph{\underline{$\e^{-1}\bbr{\gbg,\ss_0+\ss_5}$ Contribution}}
Using Lemma \ref{lem:final 5}, Lemma \ref{ss0-estimate} and Lemma \ref{ss5-estimate}, we have
\begin{align}
    \abs{\e^{-1}\bbr{\gbg,\ss_0+\ss_5}}\ls& \e^{-1}\tnm{\gbg}\Big(\e\um{\re}+\oot\e^{\frac{1}{2}}\Big)\ls\oot\e^{-\frac{1}{2}}\jnm{c}^{\N-1}\Big(\e\um{\re}+\oot\e^{\frac{1}{2}}\Big)\\
    \ls& \oot\jnm{c}^{\N}+\oot\e^{\frac{\N}{2}}\xnm{\re}^{\N}+\oot.\no
\end{align}

\paragraph{\underline{$\e^{-1}\bbr{\gbg,\ss_4+\sp}$ Contribution}}
Using Lemma \ref{lem:final 5}, Lemma \ref{ss4-estimate} and Lemma \ref{ssp-estimate}, we have
\begin{align}
    \abs{\e^{-1}\bbr{\gbg,\ss_4+\sp}}\ls&\e^{-1}\tnm{\gbg}\Big(\tnm{\ss_4}+\tnm{\sp}\Big)\ls\oot\e^{-\frac{1}{2}}\jnm{c}^{\N-1}\Big(\tnm{\ss_4}+\tnm{\sp}\Big)\\
    \ls& \oot\jnm{c}^{\N}+\oot\e^{\frac{\N}{2}}.\no
\end{align}

\paragraph{\underline{$\e^{-1}\bbr{\gbg,\ss_1}$ Contribution}}
Using $\lnm{\fb_1}\ls\oot$, we can follow a similar argument as $\gbg$ estimate in Section \ref{sec:gb estimate} to split $\re$ and use  Lemma \ref{lem:final 5} to bound
\begin{align}
    \abs{\e^{-1}\bbr{\gbg,\ss_1}}
    \ls&\oot\jnm{c}^{\N}+\oot\xnm{\re}^{\N}+\xnm{\re}^{2\N}+\oot.
\end{align}

\paragraph{\underline{$\e^{-1}\bbr{\gbg,\ss_3}$ Contribution}}
Using Lemma \ref{lem:final 5}, Lemma \ref{ss3-estimate}, we have
\begin{align}\label{final 36}
    \abs{\e^{-1}\br{\gbg,\ss_3}}\ls&\e^{-1}\nm{\gbg}_{L^{\infty}}\nm{\ss_3}_{L^1}
    \ls\oot\jnm{c}^{\N}+\oot\e^{-\N}\nm{\ss_3}_{L^1}^{\N}\ls\oot\jnm{c}^{\N}+\oot.
\end{align}

\paragraph{\underline{$\e^{-1}\bbr{\gbg,\ss_2}$ Contribution}}
We finally come to the most difficult term. Since $\gbg$ is odd in $\vb,\vc$, the estimate of $\abs{\e^{-1}\bbr{\gbg,\ss_2}}$ follows a similar argument as estimating \eqref{pp 103'}. We split
\begin{align}
    \e^{-1}\bbr{\gbg,\ss_2}=&\e^{-1}\br{\gbg,\Gamma\Big[\pk[\re]+\bd\cdot\a,\pk[\re]+\bd\cdot\a\Big]}\\
    &+\e^{-1}\br{\gbg,\Gamma\Big[\pk[\re]+\bd\cdot\a,(\ik-\bpk)[\re]\Big]}
    +\e^{-1}\br{\gbg,\Gamma\Big[(\ik-\bpk)[\re],(\ik-\bpk)[\re]\Big]},\no
\end{align}
and
\begin{align}
    &\e^{-1}\br{\gbg,\Gamma\Big[\pk[\re]+\bd\cdot\a,\pk[\re]+\bd\cdot\a\Big]}\\
    =&\e^{-1}\br{\gbg,\Gamma\Big[\P\mh+c\left(\abs{v}^2-5T\right)\mh,\P\mh+c\left(\abs{v}^2-5T\right)\mh\Big]}+\e^{-1}\br{\gbg,\Gamma\Big[\bb\cdot v\mh+\bd\cdot\a, \P\mh+c\left(\abs{v}^2-5T\right)\mh\Big]}\no\\
    &+\e^{-1}\br{\gbg,\Gamma\Big[\bb\cdot v\mh,\bb\cdot v\mh\Big]}+\e^{-1}\br{\gbg,\Gamma\Big[\bd\cdot \a,\bb\cdot v\mh\Big]}+\e^{-1}\br{\gbg,\Gamma\Big[\bd\cdot \a,\bd\cdot \a\Big]}.\no
\end{align}
Based on the oddness of $\gbg$ in \eqref{gb-oddness}, we have the crucial cancellation 
\begin{align}\label{final 06}
    \e^{-1}\br{\gbg,\Gamma\Big[\P\mh+c\left(\abs{v}^2-5T\right)\mh,\P\mh+c\left(\abs{v}^2-5T\right)\mh\Big]}=0.
\end{align}
Then using a similar argument as estimating \eqref{pp 103'} and Lemma \ref{lem:final 5}, we know 
\begin{align}
    &\abs{\e^{-1}\br{\gbg,\Gamma\Big[\bb\cdot v\mh+\bd\cdot\a, \P\mh\Big]}}+\abs{\e^{-1}\br{\gbg,\Gamma\Big[\bb\cdot v\mh,\bb\cdot v\mh\Big]}}\\
    &+\abs{\e^{-1}\br{\gbg,\Gamma\Big[\bd\cdot \a,\bb\cdot v\mh\Big]}}+\abs{\e^{-1}\br{\gbg,\Gamma\Big[\bd\cdot \a,\bd\cdot \a\Big]}}\no\\
    &+\abs{\e^{-1}\br{\gbg,\Gamma\Big[\pk[\re]+\bd\cdot\a,(\ik-\bpk)[\re]\Big]}}
    +\abs{\e^{-1}\br{\gbg,\Gamma\Big[(\ik-\bpk)[\re],(\ik-\bpk)[\re]\Big]}}\no\\
    \ls&\lnmm{\gbg}\Big(\e^{-\frac{1}{2}}\tnm{\P}+\e^{-\frac{1}{2}}\tnm{\bb}+\e^{-\frac{1}{2}}\tnm{\bd}+\e^{-\frac{1}{2}}\tnm{(\ik-\bpk)[\re]}\Big)^2\no\\
    \ls& \oot \jnm{c}^{\N}+\oot\xnm{\re}^{\N}+\xnm{\re}^{2\N}+\oot.\no
\end{align}
Then all terms in $\abs{\e^{-1}\bbr{\gbg,\ss_2}}$ have been bounded except
\begin{align}\e^{-12}\br{\gbg,\Gamma\Big[\bb\cdot v\mh,c\left(\abs{v}^2-5T\right)\mh\Big]}+\e^{-1}\br{\gbg,\Gamma\Big[\bd\cdot \a,c\left(\abs{v}^2-5T\right)\mh\Big]}.
\end{align}
Note that
\begin{align}\label{qq 01}
    &\e^{-1}\br{\gbg,\Gamma\Big[\bb\cdot v\mh,c\left(\abs{v}^2-5T\right)\mh\Big]}=\e^{-1}\brx{\brv{\gbg,\Gamma\Big[v\mh,\left(\abs{v}^2-5T\right)\mh\Big]},\bb c}\\
    =&-\e^{-1}\brx{\brv{\gbg,\Gamma\Big[v\mh,\left(\abs{v}^2-5T\right)\mh\Big]},\bb(\z^R-\e^{-1}\xi)}
    +\e^{-1}\brx{\brv{\gbg,\Gamma\Big[v\mh,\left(\abs{v}^2-5T\right)\mh\Big]},\bb\z^S}.\no
\end{align}
For the first term in \eqref{qq 01}, noting \eqref{qq 24} and $\tnm{\bb}\ls\e^{\frac{1}{2}}\xnm{\re}$ and using Hardy's inequality, \eqref{qq 06} and  Lemma \ref{lem:final 5}, we have
\begin{align}\label{qq 03}
    &\abs{\e^{-1}\brx{\brv{\gbg,\Gamma\Big[v\mh,\left(\abs{v}^2-5T\right)\mh\Big]},\bb(\z^R-\e^{-1}\xi)}}\\
    =&\e^{-1}\abs{\brx{\brv{\gbg,\Gamma\Big[v\mh,\left(\abs{v}^2-5T\right)\mh\Big]},\bb\int_0^{\mn}\p_{\mn}(\z^R-\e^{-1}\xi)}}\no\\
    =&\e^{-1}\abs{\brx{\brv{\mn\gbg,\Gamma\Big[v\mh,\left(\abs{v}^2-5T\right)\mh\Big]},\bb\frac{1}{\mn}\int_0^{\mn}\p_{\mn}(\z^R-\e^{-1}\xi)}}\no\\
    =&\abs{\brx{\brv{\eta\gbg,\Gamma\Big[v\mh,\left(\abs{v}^2-5T\right)\mh\Big]},\bb\frac{1}{\mn}\int_0^{\mn}\p_{\mn}(\z^R-\e^{-1}\xi)}}\no\\
    \ls&\lnm{\eta\gbg}\tnm{\bb}\tnm{\frac{1}{\mn}\int_0^{\mn}\p_{\mn}(\z^R-\e^{-1}\xi)}\ls\e^{\frac{1}{2}}\lnms{\fbf}{\gamma_-}\xnm{\re}\Big(\nm{\z^R}_{H^1}+\e^{-1}\nm{\xi}\Big)\no\\
    \ls&\oot\jnm{c}^{\N}+\xnm{\re}^{\N}\Big(\oot\e^{\frac{\N}{2}}\xnm{\re}^{\N}+\e^{\frac{\N}{2}}\xnm{\re}^{2\N}+\oot\e^{\frac{\N}{2}}\Big).\no
\end{align}
For the second term in \eqref{qq 01}, using  Lemma \ref{lem:final 5} and \eqref{qq 07}, we have
\begin{align}\label{qq 09}
    &\abs{\e^{-1}\brx{\brv{\gbg,\Gamma\Big[v\mh,\left(\abs{v}^2-5T\right)\mh\Big]},\bb \z^S}}\\
    \ls&\e^{-1}\lnm{\gbg}\tnm{\bb}\tnm{\z^S}\ls\e^{-\frac{1}{2}}\lnms{\fbf}{\gamma_-}\xnm{\re}\tnm{\z^S}
    \ls\oot\jnm{c}^{\N}+\xnm{\re}^{\N}\Big(\oot\xnm{\re}^{\N}+\xnm{\re}^{2\N}+\oot\Big).\no
\end{align}
Inserting \eqref{qq 03} and \eqref{qq 09} into \eqref{qq 01}, we arrive at
\begin{align}
    \abs{\e^{-1}\br{\gbg,\Gamma\Big[\bb\cdot v\mh,c\left(\abs{v}^2-5T\right)\mh\Big]}}
    \ls&\oot\jnm{c}^{\N}+\xnm{\re}^{\N}\Big(\oot\xnm{\re}^{\N}+\xnm{\re}^{2\N}+\oot\Big).
\end{align}
A similar argument justifies that
\begin{align}
    &\abs{\e^{-1}\br{\gbg,\Gamma\Big[\bd\cdot \a,c\left(\abs{v}^2-5T\right)\mh\Big]}}\ls
    \oot\jnm{c}^{\N}+\xnm{\re}^{\N}\Big(\oot\xnm{\re}^{\N}+\xnm{\re}^{2\N}+\oot\Big).
\end{align}
In summary, using Lemma \ref{remark 01}, we know
\begin{align}
    &\abs{\e^{-1}\bbr{\gbg,\ss_2}}\ls\oot \jnm{c}^{\N}+\oot\xnm{\re}^{\N}+\xnm{\re}^{2\N}+\xnm{\re}^{3\N}+\oot.
\end{align}

\paragraph{\underline{$\e^{-1}\bbr{\gg,\bar\ss}$ Contribution}}
Based on Lemma \ref{lem:final 7}, due to orthogonality, we have
\begin{align}
    \e^{-1}\bbr{\gg,\bar\ss}=\e^{-1}\bbr{\gg,\ss_3}+\e^{-1}\bbr{\gg,\ss_4}+\e^{-1}\bbr{\gg,\sp}.
\end{align}
Using Lemma \ref{lem:final 7}, Lemma \ref{ss4-estimate} and Lemma \ref{ssp-estimate}, we have
\begin{align}
    \abs{\e^{-1}\bbr{\gg,\ss_4+\sp}}\ls&\e^{-1}\tnm{\gg}\Big(\tnm{\ss_4}+\tnm{\sp}\Big)\ls\oot\e^{-1}\jnm{c}^{\N-1}\Big(\tnm{\ss_4}+\tnm{\sp}\Big)\\
    \ls& \oot\jnm{c}^{\N}+\e^{-\N}\tnm{\ss_4}^{\N}+\e^{-\N}\tnm{\sp}^{\N}\ls \oot \jnm{c}^{\N}+\oot.\no
\end{align}
Considering the rescaling $\eta=\e^{-1}\mn$, we have
\begin{align}
    \abs{\e^{-1}\bbr{\gg,\ss_3}}\ls&\e^{-1}\abs{\bbr{\gg(0)+\int_0^{\mn}\p_{\mn}\gg,\ss_3}}
    \ls\e^{-1}\abs{\bbr{\gg(0),\ss_3}}+\e^{-1}\abs{\bbr{\frac{1}{\mn}\int_0^{\mn}\p_{\mn}\gg(y)\ud y,\mn\ss_3}}\\
    \ls&\e^{-1}\tnm{\gg(0)}\nm{\ss_3}_{L^2_{\iota_i}L^1_{\mn}L^1_v}+\tnm{\frac{1}{\mn}\int_0^{\mn}\p_{\mn}\gg(y)\ud y}\nm{\eta\big(\sy+\sz+\fb_1\big)}_{L^2_xL^1_v}\no\\
    \ls&\e^{-1}\tnm{\gg(0)}\nm{\ss_3}_{L^2_{\iota_i}L^1_{\mn}L^1_v}+\nm{\gg}_{H^1}\nm{\eta\big(\sy+\sz+\fb_1\big)}_{L^2_xL^1_v}\no\\
    \ls& \oot \jnm{c}^{\N}+\e^{-\N}\nm{\ss_3}_{L^2_{\iota_i}L^1_{\mn}L^1_v}^{\N}+\nm{\eta\big(\sy+\sz\big)}_{L^2_xL^1_v}^{\N}+\nm{\eta\fb_1}_{L^2_xL^1_v}^{\N}\ls \oot \jnm{c}^{\N}+\oot.\no
\end{align}
We conclude that
\begin{align}
    \abs{\e^{-1}\bbr{\gg,\bar\ss}}
    \ls&\oot\jnm{c}^{\N}+\oot.
\end{align}
Collecting the above estimates, we arrive at \eqref{pp 99'''}.
\end{proof}

\begin{proof}[Proof of Proposition \ref{prop:c-bound}]
Inserting \eqref{pp 44'}\eqref{pp 44''}\eqref{pp 99'''} into \eqref{pp 29}, we have the desired result.
\end{proof}

\section{\texorpdfstring{$L^{\infty}$}{} Estimates}\label{sec:linfty estimate}

The goal of this section is to bootstrap the $L^2-L^6$
bounds into a $L^\infty$ bound. Following \cite{Guo.Jang.Jiang2010}, to circumvent the
well-known analytical difficulty of a cubic velocity growth from
non-homogeneous $\mu$, we re-formulate the basic linear problem \eqref{ll 01}
via a new weight from the global Maxwellian $\rem$ as follows.

Based on \cite{Guo.Jang.Jiang2010},  
we can decompose $K_M=K_M^m+K_M^c$, where
\begin{align}
    K_M^m[\rem]:=&\iint_{\r^3\times\s^2}q(\omega,\abs{\vuu-\vv})\chi_m(\abs{\vuu-\vv})\mu_M^{\frac{1}{2}}(\vuu)\m(\vv)\mu_M^{-\frac{1}{2}}(\vv)\rem(\vuu)\ud\omega\ud\vuu\\
    &-\iint_{\r^3\times\s^2}q(\omega,\abs{\vuu-\vv})\chi_m(\abs{\vuu-\vv})\mu_M^{\frac{1}{2}}(\vuu^{\ast})\m(\vv^{\ast})\mu_M^{-\frac{1}{2}}(\vv)\rem(\vv^{\ast})\ud\omega\ud\vuu\no\\
    &-\iint_{\r^3\times\s^2}q(\omega,\abs{\vuu-\vv})\chi_m(\abs{\vuu-\vv})\mu_M^{\frac{1}{2}}(\vv^{\ast})\m(\vuu^{\ast})\mu_M^{-\frac{1}{2}}(\vv)\rem(\vuu^{\ast})\ud\omega\ud\vuu,\no
\end{align}
and $K_M^c:=K_M-K_M^m$. Here $\chi_m=\chi(my)$ is a smooth cutoff function (see Section \ref{sec:boundary-form}) for any $m>0$. Based on \cite[Lemma 2.3]{Guo.Jang.Jiang2010}, when $m$ is sufficiently small, $K_M^m$ satisfies
\begin{align}\label{ll 04}
    \abs{K_M^m[\rem]}\ls \oo\nu_M\lnm{\rem},
\end{align}
and $K_M^c$ has kernel $l_M(\vuu,\vv)$
\begin{align}
    K_M^c[\rem]=\int_{\r^3}l_M(\vuu,\vv)\rem(\vuu)\ud\vuu.
\end{align}

Define a weight function scaled with parameters $0\leq\vrh<\frac{1}{2}$ and $\vth\geq0$,
\begin{align}\label{ktt 56}
\vh(\vv):=\bv.
\end{align}
Denote the weighted solution
\begin{align}
\remm(\vx,\vv):=&\vh(\vv) \rem(\vx,\vv),
\end{align}
and the weighted non-local operator
\begin{align}\label{final 66}
K^c_{M,\vh(\vv)}\left[\remm\right](\vv):=&\vh(\vv)K^c_M\left[\frac{\remm}{\vh}\right](\vv)=\int_{\r^3}l_{M,\vh(\vv)}(\vuu,\vv)\remm(\vuu)\ud{\vuu},
\end{align}
where 
\begin{align}
l_{M,\vh(\vv)}(\vuu,\vv):=l_M(\vuu,\vv)\frac{\vh(\vv)}{\vh(\vuu)}.
\end{align}
Multiplying $\e\vh(\vv)$ on both sides of \eqref{ll 01}, we have
\begin{align}\label{ktt 64}
\left\{
\begin{array}{l}
\e\vv\cdot\nx \remm+\nu_M \remm=K^c_{M,\vh}\left[\remm\right](\vx,\vv)+\vh(\vv)\widetilde S_M(\vx,\vv)\ \ \text{in}\ \ \Omega\times\r^3,\\\rule{0ex}{1.5em}
\remm(\vx_0,\vv)=\ds\vh(\vv)\mmss(\vx_0,\vv)\int_{\vuu\cdot\vn>0}\tvh(\vuu)\remm(\vx_0,\vuu)\ud\vuu+\vh(\vv) h_M(\vx_0,\vv)\ \ \text{for}\ \ \vx_0\in\p\Omega\ \
\text{and}\ \ \vv\cdot\vn<0,
\end{array}
\right.
\end{align}
where
\begin{align}
    \widetilde S_M(x,v)=\e S_M(x,v)+K_M^m[\rem].
\end{align}

Our main $L^\infty$ estimate is as follows:

\begin{proposition}\label{thm:infinity}
    Let $\re$ be a solution to \eqref{remainder}. Under the assumption \eqref{assumption:boundary}, we have
    \begin{align}\label{ss 06}
    \e^{\frac{1}{2}}\lnmm{\re}+\e^{\frac{1}{2}}\lnmms{\re}{\gamma}\ls&\oot\xnm{\re}+\xnm{\re}^2+\oot.
    \end{align}
\end{proposition}

Due to our choice of $\mm$, we have
for any $a<1$
\begin{align}\label{ll 02}
    \mm\ls\m\ls\mm^{a}.
\end{align}
From the first inequality in \eqref{ll 02}, we further know
\begin{align}\label{ll 03}
    \mmhh\mh\gs1.
\end{align}

The proof of Proposition \ref{thm:infinity} follows the
combination of two methods in \cite{Guo.Jang.Jiang2009} and \cite{Guo.Jang.Jiang2010}, as well as in \cite{Esposito.Guo.Kim.Marra2015} and
\cite{Guo2010}, provided the following key modification (see \cite{Guo2010}).

\begin{definition}[Exit Time and Position]\label{exit}
For any $(\vx,\vv)\in\overline\Omega\times\r^3$ with $\vv\neq\od$ and $(\vx,\vv)\notin\gamma_0$, define the backward exit time $t_b(\vx,\vv):=\inf\{t>0:\vx-\e t\vv\notin\Omega\}$. Also, define the backward exit position $\vx_b(\vx,\vv):=\vx-\e t_b(\vx,\vv)\vv\in\p\Omega$.
\end{definition}

\begin{definition}[Stochastic Cycle]
For any $(\vx,\vv)\in\overline\Omega\times\r^3$ with $\vv\neq\od$ and $(\vx,\vv)\notin\gamma_0$, let $(t_0,\vx_0,\vv_0)=(0,\vx,\vv)$. Define the first stochastic triple
\begin{align}
(t_1,\vx_1,\vv_1):=\Big(t_b(\vx_0,\vv_0),\vx_b(\vx_0,\vv_0),\vv_1\Big),
\end{align}
for some $\vv_1$ satisfying $\vv_1\cdot\vn(\vx_1)>0$.

Inductively, assume we know the $k^{th}$ stochastic triple $(t_k,\vx_k,\vv_k)$. Define the $(k+1)^{th}$ stochastic triple
\begin{align}
(t_{k+1},\vx_{k+1},\vv_{k+1}):=\Big(t_k+t_b(\vx_k,\vv_k),\vx_b(\vx_k,\vv_k),\vv_{k+1}\Big),
\end{align}
for some $\vv_{k+1}$ satisfying $\vv_{k+1}\cdot\vn(\vx_{k+1})>0$.
\end{definition}

\begin{definition}[Diffuse Reflection Integral]
Define $\nn_{k}=\{\vv\in \r^3:\vv\cdot\vn(\vx_{k})>0\}$, so the stochastic cycle must satisfy $\vv_k\in\nn_k$. Let the iterated integral for $k\geq2$ be defined as
\begin{align}
\int_{\prod_{j=1}^{k-1}\nn_j}\prod_{j=1}^{k-1}\ud{\sigma_j}:=\int_{\nn_1}\ldots\bigg(\int_{\nn_{k-1}}\ud{\sigma_{k-1}}\bigg)\ldots\ud{\sigma_1}
\end{align}
where $\ud{\sigma_j}:=\mm(\vv_j)\abs{\vv_j\cdot\vn(\vx_j)}\ud{\vv_j}$.
\end{definition}

The following is a key novel result our proof relies on (compared with that in \cite{Guo.Jang.Jiang2010}):
\begin{lemma}\label{ktt lemma 2}
For $\to>0$ sufficiently large, there exist constants $C_1,C_2>0$ independent of $\to$, such that for $k=C_1\to^{\frac{5}{4}}$, and
$(\vx,\vv)\in\times\overline\Omega\times\r^3$,
\begin{align}\label{ktt 59}
\int_{\Pi_{j=1}^{k-1}\nn_j}\id_{\{t_k(\vx,\vv,\vv_1,\ldots,\vv_{k-1})<\frac{\to}{\e}\}}\prod_{j=1}^{k-1}\ud{\sigma_j}\leq
\left(\frac{1}{2}\right)^{C_2\to^{\frac{5}{4}}}.
\end{align}
\end{lemma}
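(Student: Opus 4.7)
This is the classical escape/finite-bounce estimate for the stochastic cycle, first proved in Guo's $L^{\infty}$ framework \cite{Guo2010} and reused in \cite{Esposito.Guo.Kim.Marra2013,Esposito.Guo.Kim.Marra2015,BB002}. The proof I propose is a verbatim adaptation of that argument; the only cosmetic change is the $\varepsilon$ factor inside $t_b$, which amounts to a rescaling since $\{t_k<\tau/\varepsilon\}$ coincides with $\{\sum_{j=0}^{k-1}\varepsilon\, t_b(x_j,v_j)<\tau\}$ in physical time units, reducing the claim to the $\varepsilon$-free statement with total time budget $\tau$.

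Fix a small parameter $\delta=\delta(\tau)>0$ to be chosen. For each $j\geq 1$ and each $x_j\in\p\Omega$, partition $\nn_j$ into the \emph{good} set
\begin{align*}
    \nn_j^{\mathrm{g}}(\delta):=\bigl\{v_j\in\nn_j\,:\, |v_j|\leq 1/\delta\ \text{and}\ |v_j\cdot n(x_j)|\geq \delta\bigr\}
\end{align*}
and its complement $\nn_j^{\mathrm{b}}(\delta)$. A direct Gaussian computation using the explicit form of $\mu_M$ yields the uniform measure bound $\int_{\nn_j^{\mathrm{b}}}d\sigma_j\leq C\delta$, whereas $\int_{\nn_j}d\sigma_j=1$. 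A standard chord-length estimate in the bounded $C^3$ domain $\Omega$ provides, for $v_j\in\nn_j^{\mathrm{g}}(\delta)$, a geometric lower bound of the form $\varepsilon\, t_b(x_j,v_j)\geq c\delta^{\beta}$ for some domain-dependent exponent $\beta>0$.

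Consequently, on the event $\{t_k<\tau/\varepsilon\}$, the number of indices $j\in\{1,\dots,k-1\}$ with $v_j\in\nn_j^{\mathrm{g}}$ cannot exceed $\tau/(c\delta^\beta)$, so the number of bad indices is at least $k-1-\tau/(c\delta^\beta)$. Summing over the choice of bad indices,
\begin{align*}
    \int_{\prod_{j=1}^{k-1}\nn_j}\id_{\{t_k<\tau/\varepsilon\}}\prod_{j=1}^{k-1} d\sigma_j
    \leq \sum_{m\geq k-1-\tau/(c\delta^\beta)} \binom{k-1}{m}(C\delta)^{m}
    \leq 2^{k}(C\delta)^{(k-1)/2},
\end{align*}
provided $k$ is chosen so that $\tau/(c\delta^\beta)\leq (k-1)/2$. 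Setting $\delta=c_0\tau^{-1/(4\beta)}$ and $k=C_1\tau^{5/4}$ satisfies this constraint and makes the final right-hand side $\exp\!\bigl[(k-1)\bigl(\log 2+\tfrac{1}{2}\log(Cc_0)-\tfrac{1}{8\beta}\log\tau\bigr)\bigr]\leq(1/2)^{C_2\tau^{5/4}}$ for $\tau$ sufficiently large; the characteristic exponent $5/4$ is forced by the balance $k\asymp\tau/\delta^\beta$ combined with the requirement $\delta\to 0$ as $\tau\to\infty$.

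The principal obstacle is the chord-length lower bound in a non-convex $C^3$ domain: near-grazing rays and tangential contact of higher order can yield arbitrarily short travel times. For convex $\Omega$ the inscribed-sphere argument is immediate. For general bounded $C^3$ domains, the bound follows from a velocity-lemma / signed-distance argument in the spirit of \cite{Guo.Kim.Tonon.Trescases2013,Kim2011}, showing that the offending near-tangential directions form a set of $\sigma_j$-measure $O(\delta)$ that is absorbed into $\nn_j^{\mathrm{b}}$. Once that geometric input is in place, the remainder is a routine binomial count combined with the $\delta$--$k$ balancing above.
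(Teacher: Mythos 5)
Your proposal is, in structure, exactly the paper's argument (itself a rescaled version of Lemma 4.1 of \cite{Esposito.Guo.Kim.Marra2013}): split each $\nn_j$ into a non-grazing, bounded-speed ``good'' set and a complement of $\sigma_j$-measure $O(\delta)$, observe that each good bounce consumes a definite amount of time so at most $O(\to/\delta^{\beta})$ of the $k-1$ bounces can be good, and close with the binomial count and the balancing $\delta\sim\to^{-1/(4\beta)}$, $k\sim\to^{5/4}$; with $\beta=3$ this is precisely the paper's choice $\zf\sim\to^{-1/12}$ (its $N=15$).

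The one place where your write-up is shaky is the per-bounce time bound, which is the real content of the lemma and which you defer to a ``velocity-lemma / signed-distance argument'' for non-convex domains, suggesting that offending near-tangential directions be ``absorbed into $\nn_j^{\mathrm b}$''. That remark is off-target: the good set already excludes near-tangential directions by definition, and if bad directions genuinely survived inside it your counting would break, not be rescued by enlarging the bad set. In fact no convexity or velocity-lemma input is needed. For any compact $C^2$ boundary one has $\abs{(x_{j+1}-x_j)\cdot n(x_j)}\le C_\Omega\abs{x_{j+1}-x_j}^2$ for all $x_j,x_{j+1}\in\p\Omega$ (locally by the graph representation with bounded second fundamental form, globally because $\abs{x_{j+1}-x_j}$ is bounded below once the points leave a local chart); since $x_{j+1}-x_j=-\e(t_{j+1}-t_j)v_j$, this gives $\e(t_{j+1}-t_j)\ge \abs{v_j\cdot n(x_j)}/(C_\Omega\abs{v_j}^2)\ge \delta^3/C_\Omega$ on the good set, i.e.\ $\beta=3$ uniformly in the non-convex setting — exactly what the paper uses to bound the number of good bounces by $[C_\Omega\to/\zf^3]+1$. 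Two smaller points: in this paper $\ud\sigma_j=\mm(\vv_j)\abs{\vv_j\cdot\vn(\vx_j)}\ud\vv_j$ is built from the global Maxwellian and is only a constant multiple of a probability measure ($\int_{\nn_j}\ud\sigma_j=P(2\pi\tm)^{-1/2}$, not $1$), so this constant to the power $k$ must be tracked or absorbed, as the paper does explicitly; and the exponent $5/4$ is not ``forced'' by the balance — any superlinear power would do — it is simply the choice fixed in the statement, which the paper realizes via $N=15$.
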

\begin{proof}
This is a rescaled version of \cite[Lemma 4.1]{Esposito.Guo.Kim.Marra2013}. Choosing $\zf>0$ sufficiently small, we further define the non-grazing set for $1\leq j\leq k-1$ as 
\begin{align}
    \nn_j^{\zf}=\Big\{v\in\nn_j: v\cdot n(x_j)>\zf\Big\}\bigcup \Big\{v\in\nn_j: \abs{v}<\frac{1}{\zf}\Big\}.
\end{align}
Then clearly, we have for some general $C>0$ independent of $j$, $\ds\int_{\nn_j\backslash\nn_j^{\zf}}\leq C\zf$.
From the stochastic cycle, we know for $v\in\nn_j^{\zf}$, if $t_k(\vx,\vv,\vv_1,\ldots,\vv_{k-1})<\frac{\to}{\e}$, then there can be at most $\left[\dfrac{C_{\Omega}\to}{\zf^3}\right]+1$ number of $v_j\in\nn_j^{\zf}$. Hence, we have
\begin{align}
    \int_{\Pi_{j=1}^{k-1}\nn_j}\id_{\{t_k(\vx,\vv,\vv_1,\ldots,\vv_{k-1})<\frac{\to}{\e}\}}\prod_{j=1}^{k-1}\ud{\sigma_j}
    \leq&\sum_{m=1}^{\left[\frac{C_{\Omega}\to}{\zf^3}\right]+1}\begin{pmatrix}k-1\\m\end{pmatrix}\abs{\sup_j\int_{\nn_j^{\zf}}\ud\sigma_j}^m\abs{\sup_j\int_{\nn_j\backslash\nn_j^{\zf}}\ud\sigma_j}^{k-m-1}.
\end{align}
Note that $\ud\sigma_j$ is not exactly a probability measure, but the $(2\pi T_M)^{-\frac{1}{2}}$ multiple of a probability measure. Hence, we have
\begin{align}
    \abs{\sup_j\int_{\nn_j\backslash\nn_j^{\zf}}\ud\sigma_j}^{k-m-1}
    \leq& \abs{\sup_j\int_{\nn_j\backslash\nn_j^{\zf}}\ud\sigma_j}^{k-\left[\frac{C_{\Omega}\to}{\zf^3}\right]-2}(2\pi T_M)^{-\frac{k}{2}-\frac{1}{2}\left[\frac{C_{\Omega}\to}{\zf^3}\right]-1}\\
    \leq&\big(C\zf\big)^{k-\left[\frac{C_{\Omega}\to}{\zf^3}\right]-2}(2\pi T_M)^{-\frac{k}{2}-\frac{1}{2}\left[\frac{C_{\Omega}\to}{\zf^3}\right]-1}.\no
\end{align}
Hence, considering that $\begin{pmatrix}k-1\\m\end{pmatrix}\leq\big(k-1\big)^m\leq\big(k-1\big)^{\left[\frac{C_{\Omega}\to}{\zf^3}\right]+1}$,
we have
\begin{align}
    \int_{\Pi_{j=1}^{k-1}\nn_j}\id_{\{t_k(\vx,\vv,\vv_1,\ldots,\vv_{k-1})<\frac{\to}{\e}\}}\prod_{j=1}^{k-1}\ud{\sigma_j}
    \leq&\left(\left[\frac{C_{\Omega}\to}{\zf^3}\right]+1\right)\big(k-1\big)^{\left[\frac{C_{\Omega}\to}{\zf^3}\right]+1}\big(C\zf\big)^{k-\left[\frac{C_{\Omega}\to}{\zf^3}\right]-2}(2\pi T_M)^{-\frac{k}{2}}.
\end{align}
Let $k-2=N\left(\left[\frac{C_{\Omega}\to}{\zf^3}\right]+1\right)$ for some $N$ to be determined later. Then if $\left[\frac{C_{\Omega}\to}{\zf^3}\right]\geq1$, we have for given $T_M$
\begin{align}
    \int_{\Pi_{j=1}^{k-1}\nn_j}\id_{\{t_k(\vx,\vv,\vv_1,\ldots,\vv_{k-1})<\frac{\to}{\e}\}}\prod_{j=1}^{k-1}\ud{\sigma_j}
    \leq&\left\{N\left(\left[\frac{C_{\Omega}\to}{\zf^3}\right]+1\right)\big(C\zf\big)^N(2\pi T_M)^{-\frac{N}{2}}\right\}^{\left[\frac{C_{\Omega}\to}{\zf^3}\right]+1}
    \leq \left\{C_{N,\Omega}\to\zf^{N-3}\right\}^{\left[\frac{C_{\Omega}\to}{\zf^3}\right]+1}.
\end{align}
We choose $\zf=\left(\dfrac{1}{2C_{N,\Omega}\to}\right)^{\frac{1}{N-3}}$, which yields $C_{N,\Omega}\to\zf^{N-3}=\dfrac{1}{2}$. Then we know for $\to$ sufficiently large $\left[\dfrac{C_{\Omega}\to}{\zf^3}\right]+1\approx C_{N,\Omega}\to^{1+\frac{3}{N-3}}$
and $\dfrac{C_{\Omega}\to}{\zf^3}\geq2$.

Finally, we choose $N=15$. Then our result follows.
\end{proof}

\section{A Priori Remainder Estimates}

\begin{theorem}\label{thm:apriori}
    Let $\re$ be a solution to \eqref{remainder}. Under the assumption \eqref{assumption:boundary}, \eqref{final 31}, with $\xnm{\re}\ll1$, for $h$ and $S$ defined in \eqref{aa 32} and \eqref{aa 31}, we have
    \begin{align}
        \xnm{\re}\ls\oot.
    \end{align}
\end{theorem}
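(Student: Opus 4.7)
The plan is to combine the a priori bounds from Proposition~\ref{thm:energy}, Proposition~\ref{prop:p-bound}, Proposition~\ref{prop:z-bound}, Proposition~\ref{prop:b-bound}, Proposition~\ref{prop:c-bound}, Proposition~\ref{thm:infinity}, together with Lemma~\ref{lem:final 11}, Lemma~\ref{remark 01}, and Proposition~\ref{lem:final 12}---all specialized to $\al=1$---into a single closed inequality for the full norm $\xnm{\re}$ defined in \eqref{ss 00}. Each of these results is already of the schematic form
\begin{equation*}
(\text{one piece of }\xnm{\re})\;\ls\;\oot\,\xnm{\re} + \xnm{\re}^2 + \xnm{\re}^3 + \oot,
\end{equation*}
where the cubic term appears only through the coefficient $\e^{2\al-2}$ in Proposition~\ref{prop:c-bound}, which equals $1$ at $\al=1$; all other estimates are at worst quadratic in $\xnm{\re}$.

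Concretely, each term in \eqref{ss 00} is matched to an earlier estimate. The bulk macroscopic pieces $\e^{-1}\tnm{\P}$, $\pnm{\P}{6}$, $\e^{-\frac{1}{2}}\tnm{\bb}$, $\pnm{\bb}{6}$, $\pnm{c}{6}$ come from Propositions~\ref{prop:p-bound}, \ref{prop:b-bound} and \ref{prop:c-bound} respectively, and the remaining $\pnm{c}{2}$ follows from $\pnm{c}{6}$ by H\"older's inequality. The microscopic pieces $\e^{-\frac{1}{2}}\tnms{(1-\pp)[\re]}{\gamma_+}$, $\e^{-1}\tnm{\be}$, $\e^{-1}\um{(\ik-\bpk)[\re]}$ are provided by the reduced energy estimate of Proposition~\ref{thm:energy}, and their higher-integrability counterparts $\pnm{\be}{6}$, $\pnm{(\ik-\bpk)[\re]}{6}$ and $\pnms{\m^{\frac{1}{4}}(1-\pp)[\re]}{4}{\gamma_+}$ by the interpolation Lemma~\ref{lem:final 11}. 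For the Hodge potential $\xi$ arising from $\bd=\nx\xi+\be$, the Sobolev pieces $\e^{-\frac{1}{2}}\nm{\xi}_{H^2}$, $\nm{\xi}_{W^{2,6}}$, and the trace $\e^{-\frac{1}{2}}\tnms{\nx\xi}{\p\Omega}$ come from Lemma~\ref{remark 01} applied at $\N=2$ and $\N=6$; the remaining $\e^{-1}\tnm{\xi}$ and $\e^{-1}\pnm{\xi}{6}$ follow from the identity $\e^{-1}\xi=\z-c$ combined with Propositions~\ref{prop:z-bound} and \ref{prop:c-bound}. The boundary piece $\tnms{\pp[\re]}{\gamma}$ is Proposition~\ref{lem:final 12} (in turn controlled by previously-bounded pieces), and the weighted $L^\infty$ pieces $\e^{\frac{1}{2}}\lnmm{\rem}$, $\e^{\frac{1}{2}}\lnmms{\rem}{\gamma}$ are Proposition~\ref{thm:infinity}.

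Summing these contributions yields an inequality of the form
\begin{equation*}
\xnm{\re}\;\leq\;C\oot\,\xnm{\re} + C\,\xnm{\re}^2 + C\,\xnm{\re}^3 + C\oot.
\end{equation*}
The final step is a standard absorption argument. Using the hypothesis $\xnm{\re}\ll 1$ to bound $C\,\xnm{\re}^3 \leq C\,\xnm{\re}^2$, and then choosing the (already-small) quantities $\xnm{\re}$ and $\oot$ small enough that $C\,\xnm{\re}^2\leq\tfrac{1}{4}\xnm{\re}$ and $C\oot\leq\tfrac{1}{4}$, we absorb the nonlinear and small-linear terms into the left-hand side to reach $\tfrac{1}{2}\xnm{\re}\leq C\oot$, i.e., $\xnm{\re}\ls\oot$, which is the claim. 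No genuinely new analytic input is needed at this stage; the only real obstacle is the careful bookkeeping required to confirm that every single term in the rich composite norm \eqref{ss 00} is indeed covered with the correct power of $\e$ and the correct algebraic dependence on $\xnm{\re}$---and this is precisely why the previous sections were designed so that each intermediate estimate already carries that common right-hand side.
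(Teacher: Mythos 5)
Your proposal is correct and follows essentially the same route as the paper's own proof: the paper likewise assembles Proposition~\ref{thm:energy}, Propositions~\ref{prop:p-bound}--\ref{prop:c-bound}, Proposition~\ref{thm:infinity}, Lemma~\ref{remark 01}, Lemma~\ref{lem:final 11} and Proposition~\ref{lem:final 12} at $\al=1$ into the single closed inequality $\xnm{\re}\ls\oot\xnm{\re}+\xnm{\re}^2+\xnm{\re}^3+\oot$ and then closes by absorption/iteration using the smallness of $\oot$ and $\xnm{\re}$. Your term-by-term matching of the $X$-norm pieces (including $\e^{-1}\tnm{\xi}$, $\e^{-1}\pnm{\xi}{6}$ via $\e^{-1}\xi=\z-c$ and $\pnm{c}{2}$ via H\"older) is exactly the bookkeeping implicit in the paper's combination of \eqref{ss 01'}--\eqref{qq 32'}.
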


\begin{proof}
Based on Lemma \ref{remark 01}, Proposition \ref{thm:energy} and Lemma \ref{final 11}, we have
\begin{align}\label{ss 01'}
    \e^{-1}\tnm{\be}+\e^{-1}\um{(\ik-\bpk)[\re]}+\pnm{\be}{6}+\pnm{(\ik-\bpk)[\re]}{6}+\e^{-\frac{1}{2}}\nm{\xi}_{H^2}&\\
    +\e^{-\frac{1}{2}}\tnms{(1-\pp)[\re]}{\gamma_+}+\pnms{\m^{\frac{1}{4}}(1-\pp)[\re]}{4}{\gamma_+}+\e^{-\frac{1}{2}}\tnms{\nx\xi}{\p\Omega}
    \ls&\oot\xnm{\re}+\xnm{\re}^2+\oot.\no
\end{align}
Collecting estimates from Proposition \ref{prop:p-bound}, Proposition \ref{prop:z-bound}, Proposition \ref{prop:b-bound} and Proposition \ref{prop:c-bound}, we have
\begin{align}\label{ss 02'}
    \e^{-1}\tnm{\P}+\pnm{\P}{6}+\e^{-\frac{1}{2}}\tnm{\bb}+\pnm{\bb}{6}+\tnm{\z}+\tnm{c}+\pnm{c}{6}
    \ls\oot\xnm{\re}+\xnm{\re}^{2}+\xnm{\re}^{3}+\oot.
\end{align}
Based on Proposition \ref{thm:infinity}, we have
\begin{align}\label{ss 06'}
\e^{\frac{1}{2}}\lnmm{\re}+\e^{\frac{1}{2}}\lnmms{\re}{\gamma}\ls&\oot\xnm{\re}+\xnm{\re}^2+\oot.
\end{align}
Based on Proposition \ref{lem:final 12}, using \eqref{ss 02'},  we have
\begin{align}\label{qq 32'}
    \tnms{\pp[\re]}{\gamma}
    \ls&\oot\xnm{\re}+\xnm{\re}^{2}+\xnm{\re}^{3}+\oot.
\end{align}
Combining \eqref{ss 01'}, \eqref{ss 02'}, \eqref{ss 06'} and \eqref{qq 32'}, we obtain that
\begin{align}\label{final 32}
    \xnm{\re}\ls \oot\xnm{\re}+\xnm{\re}^2+\xnm{\re}^{3}+\oot.
\end{align}
Hence, we have
\begin{align}
    \xnm{\re}\ls \xnm{\re}^2+\xnm{\re}^{3}+\oot.
\end{align}
Then our desired result follows from an iteration/fixed-point argument, for $\oot$ and $\re$ small.
\end{proof}

\section{Well-Posedness of Linear Remainder Equation}\label{sec:linear remainder}

In this section, we consider the well-posedness of the linear remainder equation \eqref{remainder} for given $\ss$ and $h$.

\begin{proposition}\label{prop:linear}
Assume $S$ and $h$ are given a priori satisfying \eqref{compatibility} and independent of $\re$. Under the assumption \eqref{assumption:boundary}, for fixed $\e>0$, there exists a unique solution $\re$ to the linear remainder equation \eqref{remainder} satisfying
    \begin{align}
    \xnm{\re}\ls_{\e} \xnm{\ss}+\lnmms{h}{\gamma_-},
\end{align}
and
\begin{align}
    \iint_{\Omega\times\r^3}\abs{v}^2\mh\re=3P\int_{\Omega}\P(\vx)\ud\vx=0.
\end{align}
\end{proposition}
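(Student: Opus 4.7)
\textbf{Proof plan for Proposition \ref{prop:linear}.} The strategy is to combine the a priori estimates derived throughout Sections 4--8 (which for the linear equation drop the $\Gamma[\re,\re]$ contribution in $\ss_2$ and thus yield $\xnm{\re} \lesssim_{\e} \xnm{\ss} + \lnmms{h}{-}$) with a Fredholm-type argument carried out in the global-Maxwellian reformulation \eqref{ll 01}. The uniqueness half is essentially immediate from the a priori estimates: if $\re_1, \re_2$ are two solutions satisfying \eqref{final 31} with the same $(\ss, h)$, their difference solves the homogeneous linear equation with zero source and boundary data and zero density average, so the simplified chain \eqref{ss 01'}--\eqref{qq 32'} with the nonlinear $\xnm{\re}^2$ terms absent gives $\xnm{\re_1 - \re_2} = 0$.

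For existence, the plan is to work with $\rem = \mmhh \mh \re$ as in \eqref{ll 01}, which avoids the cubic velocity growth from $\mhh \ab \cdot \frac{\nx \tq}{4\tq^2}$ that would preclude any reasonable $L^{\infty}$ framework, and to introduce the penalized linear equation
\begin{align*}
\e v\cdot\nx \rem + \nu_M \rem - K_M[\rem] + \lambda\e^{-1}\pk_M[\rem] &= \widetilde{\ss}_M \quad \text{in } \Omega\times\r^3,\\
\rem(\vx_0,\vv) &= \mathcal{P}_M[\rem](\vx_0,\vv) + h_M(\vx_0,\vv) \quad \text{on } \gamma_-,
\end{align*}
where $\pk_M$ is the $L^2(\mm)$-projection onto the five-dimensional kernel of $\lc_M$ and $\lambda>0$ is a large parameter. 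The huge artificial $\lambda\e^{-1}\pk_M[\rem]$ term is inserted to control the severe boundary interaction losses that would otherwise prevent closure of the fixed-point estimate at fixed $\e$; since $\pk_M$ has finite rank in velocity, this term is a compact perturbation and its $\e^{-1}$ size does not affect compactness.

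The penalized problem is solved by a direct contraction argument: invert $\e v\cdot\nx + \nu_M$ with the diffuse reflection boundary condition using the Duhamel representation \eqref{ktt 65} along the stochastic cycle of Section \ref{sec:linfty estimate}, and treat $K_M - \lambda\e^{-1}\pk_M$ as a relatively compact perturbation. For large enough $\lambda$ (depending on $\e$), the resulting mapping is a contraction on the weighted $L^{\infty}$ space, producing a unique solution $\rem^\lambda$ to the penalized problem. To pass from the penalized equation back to \eqref{ll 01} (that is, to send $\lambda\to 0$), one appeals to the Fredholm alternative: the operator $\rem\mapsto \e v\cdot\nx\rem + \nu_M\rem - K_M[\rem]$ with diffuse boundary condition is Fredholm of index zero in the weighted $L^{\infty}$ space thanks to the compactness of $K_M$ and $\mathcal{P}_M$, and the a priori estimate rules out nontrivial kernel elements once \eqref{final 31} is imposed. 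The compatibility condition \eqref{compatibility}, together with the zero-density constraint \eqref{final 31}, provides exactly the five scalar orthogonality conditions matching the five-dimensional cokernel, yielding existence.

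The main obstacle is establishing the Fredholm property directly in the $L^{\infty}$ framework, since the classical Hilbert-space theory is not directly available and the boundary condition is nonlocal in velocity. The plan is to bootstrap: first obtain an $L^2$ solution by a Galerkin/Lax--Milgram argument on the penalized problem using the coercivity of $\lc_M + \lambda\e^{-1}\pk_M$ (this succeeds at fixed $\e$ because we give up all uniformity in $\e$), then upgrade to $L^6$ via the macroscopic estimates of Sections 5--7 in their linear version, and finally to $L^{\infty}$ via the double-Duhamel stochastic-cycle estimate \eqref{final 74}, exactly mirroring the $L^2$--$L^6$--$L^{\infty}$ passage used for the a priori bound. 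The zero-density condition $\int_{\Omega} \P(x)\,dx = 0$ is enforced by subtracting the appropriate multiple of the $\mh$ element from $\rem$, which is consistent by mass conservation since $\ss$ and $h$ satisfy the compatibility condition.
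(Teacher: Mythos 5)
Your skeleton (global-Maxwellian reformulation of \eqref{remainder}, an artificial $\e^{-1}\pk_M$ term, a Fredholm alternative, an $L^2$--$L^6$--$L^\infty$ upgrade, uniqueness from the linearized a priori estimates) is the same as the paper's, but two load-bearing steps do not work as you state them. First, the compactness you invoke is unsupported: neither $K_M$ nor $\lambda\e^{-1}\pk_M$ is a compact (or relatively compact) perturbation on $L^2$ or weighted $L^\infty$ of $\Omega\times\r^3$, because "finite rank in velocity" leaves the $x$-dependence completely untouched. The compactness that actually drives the Fredholm alternative is that of the composed operator $\pk_M\lc_{\e}^{-1}$, i.e. the kernel projection applied to the solution operator of transport $+$ collision $+$ projection, and proving it is the bulk of the paper's Section \ref{sec:linear remainder}: one writes the mild formulation along characteristics, restricts to $|v|\leq N$, $t_1-s\geq\d$, changes variables $v\mapsto y=x-\e(t_1-s)v$ (Jacobian $\gtrsim\e^3\d^3$), and converts weak $L^2$ compactness of the macroscopic coefficients into strong $L^\infty$ convergence — a velocity-averaging argument. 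Without this, "Fredholm of index zero thanks to the compactness of $K_M$ and $\mathcal{P}_M$" is simply asserted. Second, your penalization cannot be removed the way you propose: if you add $\lambda\e^{-1}\pk_M[\rem]$ to the equation and solve for $\rem^\lambda$, recovering \eqref{ll 01} as $\lambda\to0$ requires $\lambda\e^{-1}\pk_M[\rem^\lambda]\to0$, i.e. control of the macroscopic part uniform in $\lambda$ — exactly the quantity the contraction gives you no handle on (it is the hard part of the whole problem). The paper never takes such a limit: it adds $\e^{-1}\pk_M[\rem]$ to \emph{both} sides (see \eqref{linear remainder'}), so the equation is unchanged; $\lc_{\e}$ (with the projection included, hence coercive on all of $L^2_v$) is inverted by a contraction that also absorbs the small operators $\mathfrak{A}$ and the boundary defect $\mathfrak{B}$ (using a grazing-set cutoff to recover $\mathcal{P}_M$ control), and then $\rem=\lc_{\e}^{-1}[\ss_M]+\e^{-1}\lc_{\e}^{-1}\pk_M[\rem]$ is solved by the Fredholm alternative, injectivity coming from Theorem \ref{thm:apriori}. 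The only vanishing parameter is the mild absorption $\theta|v|^2$, introduced so that the auxiliary transport problem \eqref{oo 32} homogenizing the boundary datum $h_M$ is solvable, and the $\theta\to0$ passage uses bounds uniform in $\theta$.

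There is also a conceptual error in your solvability count: \eqref{compatibility} is a single scalar identity (Lemma \ref{lemma:final 1}) and \eqref{final 31} a single normalization, so they cannot furnish "five orthogonality conditions," and the steady transport operator with diffuse reflection has no five-dimensional cokernel — the wall exchanges momentum and energy, and only mass is conserved. The five-dimensional space $\nk_M$ is the kernel of the velocity operator $\lc_M$ alone; in the paper it enters only through the finite-dimensional-in-$v$ fixed-point equation for $\pk_M[\rem]$, and surjectivity is obtained from "identity minus compact plus injective," not from matching data against a cokernel. Your uniqueness argument (linearized chain of estimates applied to the difference of two solutions satisfying \eqref{final 31}) is fine and matches the paper; the normalization $\iint_{\Omega\times\r^3}|v|^2\mh\re=0$ should then be derived, as in the paper, by integrating the equation and using the zero-mass structure of the data rather than by subtracting a multiple of $\mh$ a posteriori.
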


Note that for fixed $\e>0$, $X$ norm is equivalent to $\widetilde X$ norm
\begin{align}
    \xnm{\re}\approx\xxnm{\re}:=\e^{-1}\nm{\rem}_{L^2_{\nu_M}}+\e^{-\frac{1}{2}}\tnms{\rem}{\gamma}+\e^{\frac{1}{2}}\lnmm{\rem}+\e^{\frac{1}{2}}\lnmms{\rem}{\gamma}.
\end{align}
Our goal is to justify the well-posedness of \eqref{remainder} in $\widetilde X$. \\

\paragraph{\underline{Reformulation with Global Maxwellian}}
Using the global Maxwellian \eqref{final 13}, we can rewrite \eqref{remainder} as
\begin{align}\label{global remainder}
\left\{
\begin{array}{l}
\vv\cdot\nx\widetilde{\rem}+\e^{-1}\mathfrak{A}\left[\widetilde{\rem}\right]+\e^{-1}\widetilde{\lc_M}\left[\widetilde{\rem}\right]=\widetilde{\ss_M}\ \ \text{in}\ \
\Omega\times\r^3,\\\rule{0ex}{1.0em} \widetilde{\rem}(\vx_0,\vv)=\widetilde{\mathcal{P}_M}\left[\widetilde{\rem}\right](\vx_0,\vv)+\widetilde{h_M}(\vx_0,\vv) \ \ \text{for}\ \ \vx_0\in\p\Omega\ \ \text{and}\ \ \vv\cdot\vn(\vx_0)<0,
\end{array}
\right.
\end{align}
where $\widetilde{\rem}:=\mmhh\mh\re$, $\widetilde{\ss_M}:=\mmhh\mh\ss$ and $\widetilde{h_M}:=\mmhh\mh h$
\begin{align}
    \widetilde{\lc_M}\left[\widetilde{\rem}\right]:=&-2\mmhh Q^{\ast}\left[\mm,\mmh \widetilde{\rem}\right]:=\widetilde{\nu_M}\widetilde{\rem}-\widetilde{K_M}\left[\widetilde{\rem}\right],\\
\label{oo 31}
    \mathfrak{A}\left[\widetilde{\rem}\right]:=&2\mmhh Q^{\ast}\left[\mm-\m,\mmh \widetilde{\rem}\right],\\
    \widetilde{\mathcal{P}_M}\left[\widetilde{\rem}\right](\vx_0,\vv):=&\mmss(\vx_0,\vv)\displaystyle\int_{\vuu\cdot\vn(\vx_0)>0}
    \mmh\widetilde{\rem}(\vx_0,\vuu)\abs{\vuu\cdot\vn(\vx_0)}\ud{\vuu},
\end{align}
for $\mmss:=\mmhh\mh(\vx_0,\vv)\mss(\vx_0,\vv)$.
Note that 
\begin{align}
    \iint_{\Omega\times\r^3}\abs{v}^2\mmh\widetilde{\rem}=\iint_{\Omega\times\r^3}\abs{v}^2\mh\re.
\end{align}
Notice that the notation here is slightly different that in \eqref{ll 01}. We will focus on the penalized version for $\theta>0$:
\begin{align}\label{linear remainder==}
\left\{
\begin{array}{l}
\theta\abs{v}^2\widetilde{\rem}+\vv\cdot\nx\widetilde{\rem}+\e^{-1}\mathfrak{A}\left[\widetilde{\rem}\right]+\e^{-1}\widetilde{\lc_M}\left[\widetilde{\rem}\right]=\widetilde{\ss_M}\ \ \text{in}\ \
\Omega\times\r^3,\\\rule{0ex}{1.0em} \widetilde{\rem}(\vx_0,\vv)=\widetilde{\mathcal{P}_M}\left[\widetilde{\rem}\right](\vx_0,\vv)+\widetilde{h_M}(\vx_0,\vv) \ \ \text{for}\ \ \vx_0\in\p\Omega\ \ \text{and}\ \ \vv\cdot\vn(\vx_0)<0,
\end{array}
\right.
\end{align}
and then take limit $\theta\rt0$. \\

\paragraph{\underline{Non-Homogeneous Boundary Data}}
Due to the presence of $\widetilde{h_M}\neq0$, the solution set for $\widetilde{\rem}$ is not a Banach space. We will first try to eliminate this $\widetilde{h_M}$ term. We introduce an auxiliary function $\widetilde{h}$ satisfying
\begin{align}\label{oo 32}
\left\{
\begin{array}{l}
\theta\abs{v}^2\widetilde{h}+\vv\cdot\nx\widetilde{h}=0\ \ \text{in}\ \
\Omega\times\r^3,\\\rule{0ex}{1.0em} \widetilde{h}(\vx_0,\vv)=\widetilde{\mathcal{P}_M}\left[\widetilde{h}\right](\vx_0,\vv)+\widetilde{h_M}(\vx_0,\vv) \ \ \text{for}\ \ \vx_0\in\p\Omega\ \ \text{and}\ \ \vv\cdot\vn(\vx_0)<0,
\end{array}
\right.
\end{align}
There is no non-local term in \eqref{oo 32}, and thus we can show that there exists a unique $\widetilde{h}\in\widetilde{X}$ satisfying
\begin{align}
    \xxnm{\widetilde{h}}\ls\lnmms{\widetilde{h_M}}{\gamma_-}\ls_{\e}\oot.
\end{align}
and (using Lemma \ref{lemma:final 2})
\begin{align}
    \iint_{\Omega\times\r^3}\abs{v}^2\mmh\widetilde{h}=\int_{\gamma_-}\mmh\widetilde{h_M}\ud\gamma=\int_{\gamma_-}\mh h\ud\gamma=0.
\end{align}
Now under the substitution $\widetilde\rem\rt\rem:=\widetilde\rem-\widetilde h$, \eqref{linear remainder==} becomes 
\begin{align}\label{linear remainder.}
\left\{
\begin{array}{l}
\theta\abs{v}^2\rem+\vv\cdot\nx{\rem}+\e^{-1}\mathfrak{A}\left[{\rem}\right]+\e^{-1}\widetilde{\lc_M}\left[{\rem}\right]=\ss_M:=\widetilde{\ss_M}+\e^{-1}\mathfrak{A}\left[\widetilde h\right]+\e^{-1}\widetilde{\lc_M}\left[\widetilde h\right]\ \ \text{in}\ \
\Omega\times\r^3,\\\rule{0ex}{1.0em} \rem(\vx_0,\vv)=\widetilde{\mathcal{P}_M}[\rem](\vx_0,\vv) \ \ \text{for}\ \ \vx_0\in\p\Omega\ \ \text{and}\ \ \vv\cdot\vn(\vx_0)<0.
\end{array}
\right.
\end{align}
In particular, the boundary condition becomes homogeneous.
Hence, in order to study the well-posedness of \eqref{remainder} for $\re$ in $\widetilde X$, it suffices to consider \eqref{linear remainder.} for $\rem$ in $\widetilde X$.\\

\paragraph{\underline{Setup}}
For \eqref{linear remainder.}, using Lemma \ref{lemma:final 2},
we can directly verify that
\begin{align}
    \iint_{\Omega\times\r^3}\mmh\ss_M=\iint_{\Omega\times\r^3}\mmh\widetilde{\ss_M}=\iint_{\Omega\times\r^3}\mh\ss=0,
\end{align}
and thus direct integration in \eqref{linear remainder.} yields
\begin{align}
    \iint_{\Omega\times\r^3}\abs{v}^2\mmh\rem=\iint_{\Omega\times\r^3}\abs{v}^2\mh\re=3P\int_{\Omega}\P=0.
\end{align}
Note that $\widetilde{\mathcal{P}_M}$ is not the natural diffuse-reflection boundary condition associated with $\mm$, and thus we do not have the desired energy structure. Hence, we need to split
\begin{align}
    \widetilde{\mathcal{P}_M}[\rem]=\mathcal{P}_M[\rem]+\mathfrak{B}[\rem],
\end{align}
where
\begin{align}
    \mathcal{P}_M[\rem]:=&P^{-1}\big(2\pi\tm\big)^{\frac{1}{2}}\mmh\int_{\vuu\cdot\vn(\vx_0)>0}
    \mmh{\rem}(\vx_0,\vuu)\abs{\vuu\cdot\vn(\vx_0)}\ud{\vuu},\\
   \mathfrak{B}[\rem]:=& \left(\mmhh\mh(\vx_0,\vv)\mss(\vx_0,\vv)-P^{-1}\big(2\pi\tm\big)^{\frac{1}{2}}\mmh\right)\int_{\vuu\cdot\vn(\vx_0)>0}
    \mmh{\rem}(\vx_0,\vuu)\abs{\vuu\cdot\vn(\vx_0)}\ud{\vuu}.
\end{align}
For the convenience of energy estimates, we further rewrite \eqref{linear remainder.} as
\begin{align}\label{linear remainder'}
\left\{
\begin{array}{l}
\theta\abs{v}^2\rem+v\cdot\nx\rem+\e^{-1}\mathfrak{A}\left[\rem\right]+\e^{-1}\widetilde{\lc_M}[\rem]+\e^{-1}\pk_M[\rem]=\ss_M+\e^{-1}\pk_M[\rem]\ \ \text{in}\ \
\Omega\times\r^3,\\\rule{0ex}{1.0em} \rem(\vx_0,\vv)=\mathcal{P}_M[\rem](\vx_0,\vv)+\mathfrak{B}[\rem](\vx_0,\vv) \ \ \text{for}\ \ \vx_0\in\p\Omega\ \ \text{and}\ \ \vv\cdot\vn(\vx_0)<0.
\end{array}
\right.
\end{align}
Here, $\pk_M$ and $\ik-\pk_M$ denote the projections onto $\nk_M$ -- the nullspace  of $\lc_M$, and its orthogonal complement $\nnk_M$.

Denote the linear operator
\begin{align}
    \lc_{\e}[\rem]:=\theta\abs{v}^2\rem+v\cdot\nx\rem+\e^{-1}\mathfrak{A}\left[\rem\right]+\e^{-1}\widetilde{\lc_M}[\rem]+\e^{-1}\pk_M[\rem].
\end{align}

\begin{lemma}
    $\lc_{\e}^{-1}$ is a bounded operator $\widetilde X\rt\widetilde X$ with $\nm{\lc_{\e}^{-1}}_{\widetilde X\rt\widetilde X}\ls\e$.
\end{lemma}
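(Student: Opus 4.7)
The plan is to establish the a priori bound $\xxnm{\rem} \ls \e\,\xxnm{g}$ for any $\rem$ satisfying $\lc_\e[\rem] = g$ together with the homogenized diffuse-type boundary condition $\rem|_{\gamma_-} = \mathcal{P}_M[\rem] + \mathfrak{B}[\rem]$ from \eqref{linear remainder'}, and then promote it to invertibility by an iteration. The key structural point making the scaling $\ls\e$ possible is that the artificial rank-five penalization $\e^{-1}\pk_M$ upgrades the spectral gap of $\widetilde{\lc_M}$ on $\nnk_M$ into full coercivity on $L^2_{\nu_M}$:
\[
\br{\rem,\, \widetilde{\lc_M}[\rem] + \pk_M[\rem]} \;\geq\; \d_0\,\um{\rem}^2.
\]
This closes the energy method without any separate treatment of the fluid modes, which in turn permits the optimal $\e$-scaling of the inverse.

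First, I would test the equation against $\rem$ and apply Green's identity (Lemma~\ref{remainder lemma 2}). The splitting $\widetilde{\mathcal{P}_M} = \mathcal{P}_M + \mathfrak{B}$ produces the clean positive boundary contribution $\tfrac12\tnms{(1-\mathcal{P}_M)[\rem]}{\gamma_+}^2$, while the $\mathfrak{B}$-part is dominated by $\oot\,\tnms{\rem}{\gamma}^2$ thanks to $|\m_w - \mm| \ls \oot$ (Corollary~\ref{m-estimate.}). The perturbation $\e^{-1}\br{\rem,\mathfrak{A}[\rem]}$ is absorbed by the same mechanism, since Corollary~\ref{m-estimate..} and $|\mm - \m| \ls \oot$ yield $|\br{\rem,\mathfrak{A}[\rem]}| \ls \oot\,\um{\rem}^2$. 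The term $\theta\int\abs{v}^2\rem^2$ is non-negative and harmless. Collecting,
\[
\e^{-1}\um{\rem}^2 + \e^{-1}\tnms{(1-\mathcal{P}_M)[\rem]}{\gamma_+}^2 \;\ls\; \um{\rem}\,\um{g},
\]
which gives $\um{\rem} \ls \e\,\um{g}$. Since $\um{g} \ls \e\,\xxnm{g}$ by the definition of $\widetilde X$, this produces exactly the $\widetilde X$-contribution $\e^{-1}\nm{\rem}_{L^2_{\nu_M}} \ls \e\,\xxnm{g}$. The trace piece $\tnms{\rem}{\gamma}$ is then recovered by an Ukai-type argument patterned after Proposition~\ref{lem:final 12}.

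For the $L^\infty$ component, I would apply the stochastic-cycle framework of Section~\ref{sec:linfty estimate} to the rewritten equation $v\cdot\nx\rem + \e^{-1}\widetilde{\lc_M}[\rem] = g - \theta\abs{v}^2\rem - \e^{-1}\mathfrak{A}[\rem] - \e^{-1}\pk_M[\rem]$. The extra source $\e^{-1}\pk_M[\rem]$ is a finite-rank operator, smooth and rapidly decaying in $v$, so its weighted $L^\infty$ norm is controlled by its $L^2$ norm and hence by the output of the energy step; analogously $\e^{-1}\mathfrak{A}[\rem]$ carries an automatic $\oot$ smallness. The result is $\lnmm{\rem}+\lnmms{\rem}{\gamma} \ls \e^{-1/2}\pnm{\rem}{6} + \e\,\lnmm{\nu_M^{-1}\vh g}$, and $L^6$ interpolation between $L^2$ and $L^\infty$ closes the estimate as $\e^{1/2}(\lnmm{\rem}+\lnmms{\rem}{\gamma}) \ls \e\,\xxnm{g}$. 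Existence is then obtained by decomposing $\lc_\e = \mathcal{T} - \mathcal{K}$ with $\mathcal{T}[\rem] := \theta\abs{v}^2\rem + v\cdot\nx\rem + \e^{-1}\widetilde{\nu_M}\rem$ and $\mathcal{K} := \e^{-1}\widetilde{K_M} - \e^{-1}\mathfrak{A} - \e^{-1}\pk_M$; the transport-multiplication operator $\mathcal{T}$ with diffuse boundary $\mathcal{P}_M$ is invertible by integration along characteristics (the $\theta\abs{v}^2$ factor providing enhanced exponential damping), and the a priori bound converts the Neumann-type iteration $\rem^{n+1} = \mathcal{T}^{-1}(g + \mathcal{K}[\rem^n])$ into a contraction in $\widetilde X$. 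Uniqueness follows from the same a priori bound applied to a difference.

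The main obstacle is that the boundary operator $\widetilde{\mathcal{P}_M}$ is \emph{not} the diffuse reflection naturally associated with the global Maxwellian $\mm$. The correction $\mathfrak{B}$ generated by the splitting $\widetilde{\mathcal{P}_M} = \mathcal{P}_M + \mathfrak{B}$ sits in front of a $\e^{-1}$-size mass-flux integral and superficially seems to destroy the scaling; it can only be tamed because $|\m_w - \mm|$ is uniformly of size $\oot$. This is exactly why the artificial compensation $\e^{-1}\pk_M$ is indispensable: without it, the spectral gap of $\widetilde{\lc_M}$ on $\nnk_M$ would absorb the $\nnk_M$-part of $\mathfrak{B}$, but the fluid-mode part of $\mathfrak{B}$ would remain uncontrolled, preventing the $\e$-scaling and thus the eventual Fredholm argument in Proposition~\ref{prop:linear}.
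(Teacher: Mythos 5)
Your structural reading is correct as far as it goes (the artificial $\e^{-1}\pk_M$ yields full coercivity in $L^2_{\nu_M}$, the perturbations $\mathfrak{A}$ and $\mathfrak{B}$ carry $\oot$ smallness, and the target scaling is $\ls\e$), but the central step is not established. In your energy estimate the boundary error coming from $\mathfrak{B}$ is of size $\oot\tnms{\mathcal{P}_M[\rem]}{\gamma}^2$, i.e. $\oot$ times the \emph{hydrodynamic} outgoing flux, while Green's identity leaves on the left only $\tnms{(1-\mathcal{P}_M)[\rem]}{\gamma_+}^2$ plus the interior coercivity; nothing on your left-hand side controls $\tnms{\mathcal{P}_M[\rem]}{\gamma}$, so the displayed inequality obtained by "collecting" does not follow, and postponing the trace to an Ukai-type argument "patterned after Proposition~\ref{lem:final 12}" after the energy step is circular as presented. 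This is exactly the "severe boundary error" the paper's proof is mostly about: it lags $\mathfrak{A}$ and $\mathfrak{B}$ in the iteration \eqref{oo 61}, and then controls $\tnms{\mathcal{P}_M[g^{n+1}]}{\gamma}$ by a \emph{second, localized} energy estimate with the non-grazing near-boundary cutoff $\widetilde{\chi}=\chi_0\big(\d^{-1}(v\cdot n)\big)\chi(\d^{-1}\mn)$ in \eqref{oo 55}--\eqref{oo 56}, which has no $\gamma_-$ contribution and yields \eqref{final 68}, $\tnms{\mathcal{P}_M[g^{n+1}]}{\gamma}^2\ls\d\big(\e^{-1}\tnm{g^{n+1}}^2+\tnms{g^{n+1}}{\gamma}^2\big)+\e\tnm{f}^2$; only after this gain of $\d$ can the full coercivity absorb the boundary error and the scheme close (as a contraction in the combined norm of \eqref{final 67}--\eqref{final 68}), giving $\e^{-\frac12}\tnms{g}{\gamma}+\e^{-1}\nm{g}_{L^2_{\nu_M}}\ls\tnm{f}$. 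If you want to bypass the cutoff, the trace estimate (e.g. via Lemma~\ref{remainder lemma 1}) must be run \emph{inside} the energy step with the $\e$-bookkeeping checked; as written, your key estimate is missing precisely the ingredient the paper's proof supplies.

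Two further points. First, your existence argument fails: with $\mathcal{K}\supset\e^{-1}\widetilde{K_M}+\e^{-1}\pk_M$, inverting $\theta\abs{v}^2+v\cdot\nx+\e^{-1}\nu_M$ along characteristics gains only a factor of order $\e\nu_M^{-1}$, so $\mathcal{T}^{-1}\circ\e^{-1}\widetilde{K_M}$ is an $O(1)$ operator and the Neumann iteration is not a contraction; an a priori bound for $\lc_\e$ gives uniqueness and stability but does not make that particular map contract. The paper instead keeps the whole of $\widetilde{\lc_M}+\pk_M$ with the $\mathcal{P}_M$ boundary condition on the left at every step and iterates only on the $\oot$-small pieces $\mathfrak{A}$, $\mathfrak{B}$. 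Second, in the $L^\infty$ step you may not treat $\e^{-1}\pk_M[\rem]$ as a source "controlled by its $L^2$ norm": bounding it pointwise requires $\sup_x\nm{\rem(x,\cdot)}_{L^2_v}$, which the energy estimate does not provide. The paper treats $\pk_M$ exactly like $\widetilde{K_M}$ inside the double-Duhamel argument, where the change of variables as in \eqref{ktt 84} (used with $L^2$ in place of $L^6$, at the price $\e^{-\frac32}$) converts it into $\tnm{g}$, giving $\lnmm{g}+\lnmms{g}{\gamma}\ls\e^{-\frac32}\tnm{g}+\e\lnmm{\nu_M^{-1}f}$, which is what closes $\xxnm{g}\ls\e\xxnm{f}$.
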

\begin{proof}
Consider $\lc_{\e}^{-1}[f]=g$, which is equivalent to 
\begin{align}\label{oo 54}
\left\{
\begin{array}{l}
\theta\abs{v}^2 g+v\cdot\nx g+\e^{-1}\mathfrak{A}\left[g\right]+\e^{-1}\widetilde{\lc_M}[g]+\e^{-1}\pk_M[g]=f\ \ \text{in}\ \
\Omega\times\r^3,\\\rule{0ex}{1.5em} g(\vx_0,\vv)=\mathcal{P}_M[g](\vx_0,\vv)+\mathfrak{B}[g](\vx_0,\vv) \ \ \text{for}\ \ \vx_0\in\p\Omega\ \ \text{and}\ \ \vv\cdot\vn(\vx_0)<0.
\end{array}
\right.
\end{align}
We design an iteration with $g^0=0$ and
\begin{align}\label{oo 61}
\left\{
\begin{array}{l}
\theta\abs{v}^2 g^{n+1}+v\cdot\nx g^{n+1}+\e^{-1}\widetilde{\lc_M}[g^{n+1}]+\e^{-1}\pk_M[g^{n+1}]=f-\e^{-1}\mathfrak{A}\left[g^{n}\right]\ \ \text{in}\ \
\Omega\times\r^3,\\\rule{0ex}{1.5em} g^{n+1}(\vx_0,\vv)=\mathcal{P}_M[g^{n+1}](\vx_0,\vv)+\mathfrak{B}[g^{n}](\vx_0,\vv) \ \ \text{for}\ \ \vx_0\in\p\Omega\ \ \text{and}\ \ \vv\cdot\vn(\vx_0)<0.
\end{array}
\right.
\end{align}
We can show that $g^{n}$ is well-posed in $\widetilde X$. In the following, we will focus on showing that $g^n\rt g$ in $\widetilde X$ by contraction mapping theorem.\\

\paragraph{\underline{$L^2$-Estimate}}
Note that
\begin{align}
    \lnmms{\mmhh\mh(\vx_0,\vv)\mss(\vx_0,\vv)-P^{-1}\big(2\pi\tm\big)^{\frac{1}{2}}\mmh}{\gamma_-}\ls&\oot,\\
    \int_{\gamma_-}\mmh\Big(\mmhh\mh(\vx_0,\vv)\mss(\vx_0,\vv)-P^{-1}\big(2\pi\tm\big)^{\frac{1}{2}}\mmh\Big)\ud\gamma=&0.
\end{align}
Also, since $\abs{\tm-\tq}\ls\oot$, using Lemma \ref{m-estimate}, we have
\begin{align}
    \abs{\mm-\m}\ls \oot\left(\abs{\vv}^2\ue^{\oot\abs{\vv}^2}\right)\mm.
\end{align}
Standard energy estimate in \eqref{oo 61} implies
\begin{align}\label{final 67}
    \e^{-1}\tnms{(1-\mathcal{P}_M)[g^{n+1}]}{\gamma_+}^2+\e^{-2}\nm{g^{n+1}}_{L^2_{\nu_M}}^2\ls\oot\e^{-1}\tnms{\mathcal{P}_M[g^{n}]}{\gamma_+}^2+\oot\e^{-2}\um{g^{n}}^2+\tnm{f}^2.
\end{align}
Next, we will try to bound $\tnms{\mathcal{P}_M[g^{n}]}{\gamma_+}$. Consider the cutoff function $\widetilde{\chi}(x,v):=\chi_0\big(\d^{-1}(v\cdot n)\big)\chi(\d^{-1}\mn)$
for some $0<\d\ll1$, where $\chi$ are defined as in Section \ref{sec:boundary-form}, and $\chi_0\in C^{\infty}$ satisfying $\chi_0(y)=1$ if $y\geq1$ and $\chi_0(y)=0$ if $y\leq0$. 

Multiplying $\widetilde{\chi}$ on both sides of \eqref{oo 61} yields
\begin{align}\label{oo 55}
\left\{
\begin{array}{l}
\theta\abs{v}^2 \widetilde{\chi} g^{n+1}+v\cdot\nx \big(\widetilde{\chi} g^{n+1}\big)+\e^{-1}\widetilde{\chi}\widetilde{\lc_M}[g^{n+1}]+\e^{-1}\widetilde{\chi}\pk_M[g^{n+1}]=f-\e^{-1}\widetilde{\chi}\mathfrak{A}\left[g^{n}\right]+v\cdot\nx\widetilde{\chi} g^{n+1}\ \ \text{in}\ \
\Omega\times\r^3,\\\rule{0ex}{1.5em} \widetilde{\chi} g^{n+1}(\vx_0,\vv)=\widetilde{\chi}\mathcal{P}_M[g^{n+1}](\vx_0,\vv)+\widetilde{\chi}\mathfrak{B}[g^{n}](\vx_0,\vv) \ \ \text{for}\ \ \vx_0\in\p\Omega\ \ \text{and}\ \ \vv\cdot\vn(\vx_0)<0.
\end{array}
\right.
\end{align}
Notice that
\begin{align}
    \widetilde{\chi}\widetilde{\lc_M}[g^{n+1}]=\widetilde{\lc_M}[\widetilde{\chi} g^{n+1}]-(1-\widetilde{\chi})\widetilde{K_M}[g^{n+1}]+\widetilde{K_M}[(1-\widetilde{\chi})g^{n+1}]
\end{align}
and
\begin{align}
   \widetilde{\chi}\pk_M[g^{n+1}]= \pk_M[\widetilde{\chi} g^{n+1}]-(1-\widetilde{\chi})\pk_M[g^{n+1}]+\pk_M[(1-\widetilde{\chi})g^{n+1}].
\end{align}
Hence, \eqref{oo 55} is equivalent to
\begin{align}\label{oo 56}
\left\{
\begin{array}{l}
\theta\abs{v}^2 \widetilde{\chi} g^{n+1}+v\cdot\nx \big(\widetilde{\chi} g^{n+1}\big)+\e^{-1}\widetilde{\lc_M}[\widetilde{\chi} g^{n+1}]+\e^{-1}\pk_M[\widetilde{\chi} g^{n+1}]=f+\sss\ \ \text{in}\ \
\Omega\times\r^3,\\\rule{0ex}{1.5em} \widetilde{\chi} g^{n+1}(\vx_0,\vv)=\widetilde{\chi}\mathcal{P}_M[g^{n+1}](\vx_0,\vv)+\widetilde{\chi}\mathfrak{B}[g^{n}](\vx_0,\vv) \ \ \text{for}\ \ \vx_0\in\p\Omega\ \ \text{and}\ \ \vv\cdot\vn(\vx_0)<0.
\end{array}
\right.
\end{align}
where
\begin{align}
    \sss:=&-\e^{-1}\widetilde{\chi}\mathfrak{A}\left[g^{n}\right]+v\cdot\nx\widetilde{\chi} g^{n+1}+\e^{-1}(1-\widetilde{\chi})\widetilde{K_M}[g^{n+1}]-\e^{-1}\widetilde{K_M}[(1-\widetilde{\chi})g^{n+1}]\\
    &+\e^{-1}(1-\widetilde{\chi})\pk_M[g^{n+1}]-\e^{-1}\pk_M[(1-\widetilde{\chi})g^{n+1}].\no
\end{align}
Since the overlapping region of $\widetilde{\chi}$ and $1-\widetilde{\chi}$ is very small, and $\nx\widetilde{\chi}\ls\d^{-1}$ in a small region, we know when $\e\ll\d$
\begin{align}
    \br{\sss,\e^{-1}\widetilde{\chi} g^{n+1}}\ls \big(\e^{-2}\d+\e^{-1}\big)\tnm{g^{n+1}}\ls \e^{-2}\d\tnm{g^{n+1}}.
\end{align}
Then standard energy estimate in \eqref{oo 56} yields (noticing that there is no $\gamma_-$ contribution due to the cutoff)
\begin{align}
    \e^{-1}\tnms{\widetilde{\chi} g^{n+1}}{\gamma_+}^2+\e^{-2}\tnm{\widetilde{\chi} g^{n+1}}^2\ls \tnm{f}^2+\e^{-2}\d\tnm{g^{n+1}}^2.
\end{align}
Then noticing that 
\begin{align}
    \mathcal{P}_M[g^{n+1}]\simeq&\int_{v'\cdot n>0}g^{n+1}(v')\mmh(v')(v'\cdot n)\ud v'\\
    =&\int_{v'\cdot n>2\d}g^{n+1}(v')\mmh(v')(v'\cdot n)\ud v'+\int_{0<v'\cdot n<2\d}g^{n+1}(v')\mmh(v')(v'\cdot n)\ud v',\no
\end{align}
we have
\begin{align}\label{final 68}
    \tnms{\mathcal{P}_M[g^{n+1}]}{\gamma}^2\ls&\tnms{\int_{v'\cdot n>\d}g^{n+1}(v')\mmh(v')(v'\cdot n)\ud v'}{\gamma_+}^2+\tnms{\int_{0<v'\cdot n<\d}g^{n+1}(v')\mmh(v')(v'\cdot n)\ud v'}{\gamma_+}^2\\
    \ls&\tnms{\widetilde{\chi} g^{n+1}}{\gamma_+}^2+\d\tnms{g^{n+1}}{\gamma}^2\ls\d\Big(\e^{-1}\tnm{g^{n+1}}^2+\tnms{g^{n+1}}{\gamma}^2\Big)+\e\tnm{f}^2.\no
\end{align}
Hence, combining \eqref{final 67} and \eqref{final 68}, we know
\begin{align}
    \e^{-1}\tnms{\mathcal{P}_M[g^{n+1}]}{\gamma}^2+\e^{-1}\tnms{(1-\mathcal{P}_M)[g^{n+1}]}{\gamma_+}^2+\e^{-2}\nm{g^{n+1}}_{L^2_{\nu_M}}^2\ls\oot\e^{-1}\tnms{\mathcal{P}_M[g^{n}]}{\gamma}^2+\oot\e^{-2}\nm{g^{n}}_{L^2_{\nu_M}}^2+\tnm{f}^2.
\end{align}
Then for fixed $\e$, when $\d$ and $\oot$ are small, by contraction mapping theorem, we know the iteration \eqref{oo 61} is convergent $g^n\rt g$ under the norm
\begin{align}
    \e^{-1}\tnms{\mathcal{P}_M[\ \cdot\ ]}{\gamma}^2+\e^{-1}\tnms{(1-\mathcal{P}_M)[\ \cdot\ ]}{\gamma_+}^2+\e^{-2}\nm{\ \cdot\ }_{L^2_{\nu_M}}^2.
\end{align}
Also, we have the uniform bound (for all $g^n$ and $g$)
\begin{align}
    \e^{-1}\tnms{\mathcal{P}_M[g]}{\gamma}^2+\e^{-1}\tnms{(1-\mathcal{P}_M)[g]}{\gamma_+}^2+\e^{-2}\nm{g}_{L^2_{\nu_M}}^2\ls\tnm{f}^2,
\end{align}
which yields
\begin{align}\label{final 71}
    \e^{-\frac{1}{2}}\tnms{g}{\gamma}+\e^{-1}\nm{g}_{L^2_{\nu_M}}\ls\tnm{f}.
\end{align}

\paragraph{\underline{$L^{\infty}$-Estimate}}
Noticing that $\pk_M$ is also a non-local operator similar to $\widetilde{K_M}$, we can follow the standard $L^{\infty}$ estimate in the proof of Proposition \ref{thm:infinity} 
to bound
\begin{align}\label{final 72}
    \lnmm{g}+\lnmms{g}{\gamma}\ls\e^{-\frac{3}{2}}\tnm{g}+\e\lnmm{\nu_M^{-1}f}\ls\e^{-\frac{1}{2}}\tnm{f}+\e\lnmm{\nu_M^{-1}f}.
\end{align}

\paragraph{\underline{Synthesis}}
Summarizing \eqref{final 71} and \eqref{final 72}, we know
\begin{align}\label{final 73}
    \xxnm{g}\ls\tnm{f}+\e^{\frac{3}{2}}\lnmm{f}\ls\e\xxnm{f}.
\end{align}
Hence, $\lc_{\e}^{-1}$ is a bounded operator $\widetilde X\rt\widetilde X$.
\end{proof}

Now we consider the solvability of \eqref{linear remainder.} or equivalently \eqref{linear remainder'}. Note that $\e^{-1}\lc_{\e}^{-1}\pk_M[\rem]$ is $O(1)$ from \eqref{final 73}. Hence, to prove the existence, we need to apply the Fredholm alternative instead of the contraction mapping principle. 

\begin{lemma}
    There exists a unique solution $\rem\in\widetilde{X}$ to \eqref{linear remainder'} satisfying
    \begin{align}
        \xxnm{\rem}\ls\xxnm{\ss_M}.
    \end{align}
\end{lemma}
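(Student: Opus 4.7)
\medskip

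\noindent\textbf{Proof proposal.} The starting point is to rewrite \eqref{linear remainder'} as an abstract operator equation. Since the preceding lemma establishes that $\lc_\e^{-1}: \widetilde X \to \widetilde X$ is bounded with norm $\ls\e$, applying $\lc_\e^{-1}$ to both sides of \eqref{linear remainder'} yields the equivalent problem
\begin{align}
\big(I - \e^{-1}\lc_\e^{-1}\pk_M\big)[\rem] = \lc_\e^{-1}[\ss_M] \quad \text{in}\ \widetilde X.
\end{align}
Define $\mathcal T := \e^{-1}\lc_\e^{-1}\pk_M$. Note that even though $\lc_\e^{-1}$ carries an $O(\e)$ norm bound, the prefactor $\e^{-1}$ brings $\mathcal T$ back to $O(1)$, so a direct contraction mapping is impossible. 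Instead, the plan is to invoke the Fredholm alternative in $\widetilde X$: once $\mathcal T$ is shown to be compact, existence for arbitrary $\ss_M\in\widetilde X$ follows from uniqueness of the homogeneous problem.

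The main technical step is establishing compactness of $\mathcal T$. The key observation is that $\pk_M$ projects onto the five-dimensional velocity subspace spanned by $\mmh, v\mmh, |v|^2\mmh$, so $\pk_M[\rem](x,v) = \vec\alpha[\rem](x)\cdot \vec e(v)\mmh(v)$ where the coefficients $\vec\alpha[\rem]$ are obtained by velocity integration against bounded weights. Thus $\pk_M:\widetilde X\to L^\infty_xL^\infty_{\varrho,\vartheta}$ is bounded, and $\vec\alpha[\rem]$ is merely an $L^\infty$ function on $\Omega$. Then $g := \lc_\e^{-1}[\pk_M[\rem]]$ solves a transport equation with a smooth-in-$v$ right-hand side of the prescribed structure. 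Given a bounded sequence $\rem_n \in \widetilde X$, the corresponding sources $\pk_M[\rem_n]$ are bounded and, by extracting a weakly convergent subsequence in the $x$-coefficients plus invoking velocity averaging (together with the $L^\infty$ bound \eqref{final 73} which gives uniform weighted decay in $v$), one obtains strong convergence of $\lc_\e^{-1}[\pk_M[\rem_n]]$ in $\widetilde X$. More concretely, the transport-equation representation along characteristics combined with the $L^\infty$-to-$L^6$-type smoothing via Duhamel (as in Section \ref{sec:linfty estimate}, especially \eqref{ktt 84}) yields a small equicontinuous gain that compensates for the loss of $\e^{-1}$. This is the main obstacle: one must carefully verify that the $\e^{-1}$ factor, while spoiling any smallness, does not destroy compactness, because compactness is a qualitative property that depends only on velocity-averaging structure and Ukai-type boundary regularity, independent of the $\e$-scaling.

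With compactness of $\mathcal T$ in hand, by Fredholm alternative existence reduces to uniqueness. Assume $\rem\in\widetilde X$ solves \eqref{linear remainder'} with $\ss_M = 0$, equivalently \eqref{linear remainder.} with $\ss_M = 0$. Apply the $L^2$ energy estimate by testing against $\rem$: the term $\theta\|\,|v|\rem\,\|_{L^2}^2$ contributes nonnegatively, the linearized operator $\widetilde{\lc_M}$ gives coercivity $\gs \e^{-1}\|(\ik-\pk_M)[\rem]\|_{L^2_{\nu_M}}^2$, the perturbation $\e^{-1}\mathfrak{A}[\rem]$ is controlled by $\oot\e^{-1}\|\rem\|_{L^2_{\nu_M}}^2$ via Corollary \ref{m-estimate..}, and the boundary contribution splits as in the proof of the preceding lemma via the cutoff $\widetilde\chi$ argument, using that $\mathfrak{B}$ is of order $\oot$ in $L^\infty$. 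This yields control of $(\ik-\pk_M)[\rem]$ and the boundary $(1-\mathcal P_M)[\rem]$. The macroscopic part $\pk_M[\rem]$ is then controlled by adapting the test-function arguments of Propositions \ref{prop:p-bound}, \ref{prop:b-bound}, and \ref{prop:c-bound}, which are linear and remain valid in the present simpler setting with $\ss=0$ and homogeneous boundary data. Summing these, one deduces $\xxnm{\rem}\ls \oot\,\xxnm{\rem}$, hence $\rem\equiv 0$ for $\oot$ small.

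Finally, the same chain of estimates applied to the inhomogeneous problem yields the quantitative bound $\xxnm{\rem}\ls \xxnm{\ss_M}$. The principal hurdle in executing this plan is the compactness verification for $\mathcal T$: one must design the argument so that the singular $\e^{-1}$ factor appears only in the norm-boundedness constant but not in the qualitative precompactness criterion, which requires a careful use of the $L^2$-to-$L^6$-to-$L^\infty$ bootstrap restricted to the explicit low-dimensional range of $\pk_M$.
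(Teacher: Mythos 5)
Your overall strategy matches the paper's: apply $\lc_{\e}^{-1}$, treat the $\e^{-1}\pk_M$ term by the Fredholm alternative, get injectivity from the a priori estimates, and prove compactness through the mild (Duhamel) formulation along characteristics plus a velocity-averaging change of variables. However, there is a genuine gap in the compactness step. You ask for compactness of $\mathcal T=\e^{-1}\lc_{\e}^{-1}\pk_M$ on all of $\widetilde X$, i.e.\ strong convergence of the \emph{full} kinetic output $\lc_{\e}^{-1}\big[\pk_M[\re_{M,n}]\big]$ in $\widetilde X$, whose norm contains the weighted $L^{\infty}_{x,v}$ component. The averaging mechanism does not give this: in the Duhamel representation the source contributes a direct transport term of the form $\int_0^{t_1}\ue^{-\nu_M(t_1-s)}\,\vh(v)\,r_n\big(x-\e(t_1-s)v\big)q(v)\,\ud s$, which is a \emph{line} average of the macroscopic coefficient $r_n$. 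Weak convergence of $r_n$ (all you can extract from a bounded sequence in $\widetilde X$) does not make line averages converge uniformly: e.g.\ $r_n(x)=\sin(nx_2)$ is weakly null but along characteristics parallel to the $x_1$-axis the above expression has $L^{\infty}$ norm bounded below, so no subsequence converges in the sup norm. Hence $\mathcal T$ itself need not be compact on $\widetilde X$, and the claimed "strong convergence of $\lc_\e^{-1}[\pk_M[\rem_n]]$ in $\widetilde X$" is unjustified and likely false.

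What rescues the argument — and what the paper actually does — is to put the velocity average \emph{after} the solve: project the fixed-point identity onto $\nk_M$ and work with the operator $\e^{-1}\pk_M\lc_{\e}^{-1}$ on $\widetilde X\cap\nk_M$ (equation \eqref{temp 14}); the microscopic part is then recovered from \eqref{temp 14'} without any further fixed point. With the outer $\pk_M$ in place, the $v$-integration converts the line averages into the $y$-integrals \eqref{oo 21}--\eqref{oo 22} against fixed smooth kernels, so weak $L^2$ convergence of $f$ (resp.\ $r$) upgrades to strong, uniform convergence of the averaged quantity — this is exactly where compactness is gained. Alternatively you could keep your operator $\mathcal T$ but invoke Riesz theory for an operator with a compact power, noting $\mathcal T^2=\e^{-2}\lc_{\e}^{-1}\big(\pk_M\lc_{\e}^{-1}\big)\pk_M$ is compact once $\pk_M\lc_{\e}^{-1}$ is; but some such device is needed, and your proposal as written does not supply it. The remaining ingredients (injectivity of $I-\e^{-1}\pk_M\lc_{\e}^{-1}$ from the a priori estimate of Theorem \ref{thm:apriori}, and the quantitative bound $\xxnm{\rem}\ls\xxnm{\ss_M}$ from the bounded inverse) are consistent with the paper.
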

\begin{proof}
Apply $\lc_{\e}^{-1}$ on both sides of \eqref{linear remainder'}, we obtain
\begin{align}\label{temp 13}
    \rem=\lc_{\e}^{-1}[\ss_M]+\e^{-1}\lc_{\e}^{-1}\pk_M[\rem].
\end{align}
To solve \eqref{temp 13}, it suffices to solve
\begin{align}\label{temp 14}
    \pk_M[\rem]=\pk_M\lc_{\e}^{-1}[\ss_M]+\e^{-1}\pk_M\lc_{\e}^{-1}\pk_M[\rem],
\end{align}
and 
\begin{align}\label{temp 14'}
    (\ik-\pk_M)[\rem]=(\ik-\pk_M)\lc_{\e}^{-1}[\ss_M]+\e^{-1}(\ik-\pk_M)\lc_{\e}^{-1}\pk_M[\rem].
\end{align}

\paragraph{\underline{Solvability of $\pk_M[\rem]$ and $(\ik-\pk_M)[\rem]$}}
Rewrite \eqref{temp 14} as
\begin{align}\label{oo 09}
    \Big(I-\e^{-1}\pk_M\lc_{\e}^{-1}\Big)\pk_M[\rem]=\pk_M\lc_{\e}^{-1}[\ss_M].
\end{align}
Direct estimate using \eqref{final 73} reveals that
\begin{align}
    \xxnm{\e^{-1}\pk_M\lc_{\e}^{-1}[\ss_M]}&\ls\xxnm{\ss_M}.
\end{align}
Hence, it suffices to show the compactness of operator $\pk_M\lc_{\e}^{-1}: \widetilde X\cap\nk_M\rt\widetilde X\cap\nk_M$.

Our argument relies on the application of the Fredholm alternative. Considering that the remainder estimates in Theorem \ref{thm:apriori} implies the injection, as long as the compactness is guaranteed, we know the operator 
\begin{align}
    I-\e^{-1}\pk_M\lc_{\e}^{-1}: \widetilde X\cap\nk_M\rt \widetilde X\cap\nk_M
\end{align}
has a bounded inverse, and thus
there exists a unique solution $\pk_M[\re]\in\widetilde X\cap\nk_M$ to \eqref{oo 09} or equivalently \eqref{temp 14} satisfying 
\begin{align}
    \xxnm{\pk_M[\rem]}\ls\xxnm{\ss_M}.
\end{align}
Also, from invertibility, there exists a unique solution $(\ik-\pk_M)[\rem]\in\widetilde X\cap\nnk_M$ to \eqref{temp 14'} satisfying
\begin{align}
    \xxnm{(\ik-\pk_M)[\rem]}\ls\xxnm{\ss_M}.
\end{align}
In the following, we will focus on proving the compactness.\\

\paragraph{\underline{Mild Formulation}}
Assume $\lc_{\e}^{-1}[g]=f$ for $g\in\widetilde X\cap\nk_M$. Notice that $g(x,v)$ is a linear combination of $\mathcal{N}^{\ast}:=\Big\{1,\vv,\abs{\vv}^2-5T\Big\}$, and thus due to linearity of $\lc_{\e}^{-1}$, it suffices to consider the case $g(x,v)= r(x)q(v)$ for some $r(x)$ and $q(v)\in\mathcal{N}^{\ast}$, which means
\begin{align}\label{oo 14}
    \theta\abs{v}^2 f+v\cdot\nx f+\e^{-1}\mathfrak{A}[f]+\e^{-1}\widetilde{\lc_M}[f]+\e^{-1}\pk_M[f]=g=r(x)q(v).
\end{align}
Notice that both $\mathfrak{A}$ and $\pk_M$ are non-local operators. We abuse the notation and let
\begin{align}
    \lc_M:=\widetilde{\lc_M}+\mathfrak{A}+\e^{-1}\pk_M:=\nu_M-K_M.
\end{align}
For the non-local integral operator $K_M$, denote its kernel as $k_M$. Denote the weighted solution
\begin{align}
f_w(\vx,\vv):=&\vh(\vv) f(\vx,\vv),
\end{align}
for $\vh$ defined in \eqref{ktt 56} and the weighted non-local operator
\begin{align}
K_{M,w}[f_w](\vv):=&\vh(\vv)K_M\left[\frac{f_w}{\vh}\right](\vv)=\int_{\r^3}k_{M,w}(\vuu,\vv)f_w(\vuu)\ud{\vuu},
\end{align}
where $k_{M,w}(\vuu,\vv):=k_M(\vuu,\vv)\frac{\vh(\vv)}{\vh(\vuu)}$.
Multiplying $\e\vh$ on both sides of \eqref{oo 14}, we have
\begin{align}\label{oo 24}
\e\vv\cdot\nx f_w+\nu_M f_w=K_{M,w}[f_w]+\e\vh r q.
\end{align}
Similar to the derivation in Section \ref{sec:linfty estimate}, by Duhamel's principle, we can track along the characteristics in \eqref{oo 24} to rewrite $f_w$ in terms of $K_{M,w}[f_w]$ and $wr\tilde q$
\begin{align}\label{oo 17}
    f_w(\vx,\vv)=& \e\int_{0}^{t_{1}} w(v)r\Big(\vx-\e(t_1-s)\vv\Big)q\big(\vv\big)\ue^{-\int_s^{t_1}\nu_M(\vv)
    }\ud{s}\\
    &+\int_{0}^{t_{1}}\int_{\r^3}k_{M,w}(v,v')f_w\Big(\vx-\e(t_1-s)\vv,\vuu\Big)\ue^{-\int_s^{t_1}\nu_M(\vv)
    }\ud v'\ud{s}+\frac{\ue^{-\int_0^{t_1}\nu_M(\vv)
    }}{\tvhh(\vv)}\int_{\nn_1}f_w(\vx_1,\vv_1)\tvh(\vv_1)\ud{\sigma_1}.\no
\end{align}

\paragraph{\underline{Reflection Term Estimates}}
Similar to the proof of Proposition \ref{thm:infinity}, we can deduce that the boundary reflection term (i.e. the third term on the RHS of \eqref{oo 17})
\begin{align}\label{oo 25}
    \frac{\ue^{-\int_0^{t_1}\nu_M(\vv)
    }}{\tvhh(\vv)}\int_{\nn_1}f_w(\vx_1,\vv_1)\tvh(\vv_1)\ud{\sigma_1}
    =&\frac{\ue^{-\int_0^{t_1}\nu_M(\vv)}}{\tvhh(\vv)}\sum_{\ell=1}^{k-1}\int_{\prod_{j=1}^{\ell}\nn_j}\Big(G_{\ell}[\vx,\vv]+H_{\ell}[\vx,\vv]\Big)\tvh(\vv_\ell)
    \bigg(\prod_{j=1}^{\ell}\ue^{-\int_{t_j}^{t_{j+1}}\nu_M(\vv_j)}\ud{\sigma_j}\bigg)\\
    &+\frac{\ue^{-\int_0^{t_1}\nu_M(\vv)}}{\tvhh(\vv)}\int_{\prod_{j=1}^{k}\nn_j}f_w(\vx_k,\vv_k)\tvh(\vv_{k})
    \bigg(\prod_{j=1}^{k}\ue^{-\int_{t_j}^{t_{j+1}}\nu_M(\vv_{j})}\ud{\sigma_j}\bigg),\no
\end{align}
where
\begin{align}
G_{\ell}[\vx,\vv]:=&\e\int_{t_{\ell}}^{t_{\ell+1}}
\vh(\vv_\ell)r\Big(\vx_{\ell}-\e(t_{\ell+1}-s)\vv_{\ell}\Big)\tilde q\Big(\vx_{\ell}-\e(t_{\ell+1}-s)\vv_{\ell},\vv_{\ell}\Big)\ue^{\int_0^{s}\nu_M(\vv_\ell)}\ud{s}\\
H_{\ell}[\vx,\vv]:=&\int_{t_{\ell}}^{t_{\ell+1}}\int_{r^3}k_{M,w}(\vv_{\ell},\vv_{\ell}')f_w\Big(\vx_{\ell}-\e(t_{\ell+1}-s)\vv_{\ell},\vv_{\ell}'\Big)\ue^{\int_0^{s}\nu_M(\vv_{\ell})}\ud \vv_{\ell}'\ud{s}.
\end{align}
Based on the similar proof of Proposition \ref{thm:infinity}, we know for sufficiently large $\ell$
\begin{align}
    \lnm{\frac{\ue^{-\int_0^{t_1}\nu_M(\vv)}}{\tvhh(\vv)}\int_{\prod_{j=1}^{k}\nn_j}f_w(\vx_k,\vv_k)\tvh(\vv_{k})
    \bigg(\prod_{j=1}^{k}\ue^{-\int_{t_j}^{t_{j+1}}\nu_M(\vv_{j})}\ud{\sigma_j}\bigg)}\ls\d\lnm{f_w}.
\end{align}
Also, note that $G_{\ell}$ and $H_{\ell}$ have the similar structure as the first and second terms on the RHS of \eqref{oo 17}, and thus can be handled in a similar fashion. Hence, we just need to consider the bound of the first and second terms on the RHS of \eqref{oo 17}.\\

\paragraph{\underline{Non-Local Term Estimates}}
For $k_M$ term (i.e. the second term on the RHS of \eqref{oo 17})
\begin{align}\label{oo 26}
    \int_{0}^{t_{1}}\int_{\r^3}k_{M,w}(v,v')f_w\Big(\vx-\e(t_1-s)\vv,\vuu\Big)\ue^{-\int_s^{t_1}\nu_M(\vv)
    }\ud v'\ud{s}
\end{align}
we can
introduce the truncated kernel $k_{M,N}(\vv,\vuu)$ which is smooth and has compactly supported range such that
\begin{align}
\sup_{\abs{\vv}\leq N}\int_{\abs{\vuu}\leq
N}\abs{k_{M,N}(\vv,\vuu)-k_{M,w}(\vv,\vuu)}\ud{\vuu}\leq\frac{1}{N}.
\end{align}
Similar to the proof of Proposition \ref{thm:infinity}, we deduce that 
\begin{align}
    \lnm{\int_{0}^{t_{b}}\int_{\r^3}\id_{\{\abs{v}\geq N\}}k_{M,w}(v,v')f_w\Big(\vx-\e(t_1-s)\vv,\vuu\Big)\ue^{-\int_s^{t_1}\nu_M(\vv)
    }\ud v'\ud{s}}\ls\frac{1}{N}\lnm{f_w},
\end{align}
\begin{align}
    \lnm{\int_{0}^{t_{b}}\int_{\r^3}\id_{\{\abs{v}\leq N\}}\id_{\{\abs{v'}\geq N\}}k_{M,w}(v,v')f_w\Big(\vx-\e(t_1-s)\vv,\vuu\Big)\ue^{-\int_s^{t_1}\nu_M(\vv)
    }\ud v'\ud{s}}\ls\frac{1}{N}\lnm{f_w},
\end{align}
\begin{align}
    \lnm{\int_{0}^{t_{1}}\int_{\r^3}\id_{\{\abs{v}\leq N\}}\id_{\{\abs{v'}\leq N\}}\id_{\{t_1-s\leq\d\}}k_{M,w}(v,v')f_w\Big(\vx-\e(t_1-s)\vv,\vuu\Big)\ue^{-\int_s^{t_1}\nu_M(\vv)
    }\ud v'\ud{s}}\ls\d\lnm{f_w},
\end{align}
and
\begin{align}
    \lnm{\int_{0}^{t_{1}}\int_{\r^3}\id_{\{\abs{v}\leq N\}}\id_{\{\abs{v'}\leq N\}}\id_{\{t_1-s\geq\d\}}\abs{k_{M,N}(\vv,\vuu)-k_{M,w}(\vv,\vuu)}f_w\Big(\vx-\e(t_1-s)\vv,\vuu\Big)\ue^{-\int_s^{t_1}\nu_M(\vv)
    }\ud v'\ud{s}}\ls\frac{1}{N}\lnm{f_w}.
\end{align}
When $N$ is large, all of these terms will only contribute $o(1)\lnm{f_w}$, which will not essentially change the compactness (which can be shown via contradiction). Hence, it suffices to consider 
\begin{align}
    \lnm{\int_{0}^{t_{1}}\int_{\r^3}\id_{\{\abs{v}\leq N\}}\id_{\{\abs{v'}\leq N\}}\id_{\{t_1-s\geq\d\}}k_{M,N}(\vv,\vuu)f_w\Big(\vx-\e(t_1-s)\vv,\vuu\Big)\ue^{-\int_s^{t_1}\nu_M(\vv)
    }\ud v'\ud{s}}.
\end{align}

\paragraph{\underline{Source Term Estimates}}
Consider the source term (i.e. the first term on the RHS of \eqref{oo 17})
\begin{align}
    \e\int_{0}^{t_{1}} w(v)r\Big(\vx-\e(t_1-s)\vv\Big)\tilde q\big(\vv\big)\ue^{-\int_s^{t_1}\nu_M(\vv)
    }\ud{s}.
\end{align}
Similar to the proof of Proposition \ref{thm:infinity}, we deduce that 
\begin{align}
    \lnm{\int_{0}^{t_{1}}\id_{\{\abs{v}\geq N\}} w(v)r\Big(\vx-\e(t_1-s)\vv\Big) q\big(\vv\big)\ue^{-\int_s^{t_1}\nu_M(\vv)
    }\ud{s}}\ls \frac{1}{N}\lnm{r}
\end{align}
and
\begin{align}
    \lnm{\int_{0}^{t_{1}}\id_{\{\abs{v}\leq N\}}\id_{\{t_1-s\leq\d\}} w(v)r\Big(\vx-\e(t_1-s)\vv\Big) q\big(\vv\big)\ue^{-\int_s^{t_1}\nu_M(\vv)
    }\ud{s}}\ls \d\lnm{r}.
\end{align}
When $N$ is large, all of these terms will only contribute $o(1)\lnm{r}$, which will not essentially change the compactness (which can be shown via contradiction). Hence, it suffices to consider 
\begin{align}
    \lnm{\int_{0}^{t_{1}}\id_{\{\abs{v}\leq N\}}\id_{\{t_1-s\geq\d\}} w(v)r\Big(\vx-\e(t_1-s)\vv\Big) q\big(\vv\big)\ue^{-\int_s^{t_1}\nu_M(\vv)
    }\ud{s}}\ls \d\lnm{r}.
\end{align}

\paragraph{\underline{Summary of Simplification}}
Summarizing the analysis all above, after applying $\pk_M$ (i.e. multiplying $q(v)$ and integrating over $\r^3$) in \eqref{oo 17}, it suffices to prove the compactness of 
\begin{align}\label{oo 19}
    \int_{0}^{t_{1}}\int_{\r^3}\int_{\r^3}\id_{\{\abs{v}\leq N\}}\id_{\{\abs{v'}\leq N\}}\id_{\{t_1-s\geq\d\}}k_{M,N}(\vv,\vuu)q(v)f_w\Big(\vx-\e(t_1-s)\vv,\vuu\Big)\ue^{-\int_s^{t_1}\nu_M(\vv)
    }\ud v'\ud v\ud{s}
\end{align}
\begin{align}\label{oo 20}
    \int_{0}^{t_{1}}\int_{\r^3}\id_{\{\abs{v}\leq N\}}\id_{\{t_1-s\geq\d\}}w(v)q(v)r\Big(\vx-\e(t_1-s)\vv\Big)\tilde q\Big(\vx-\e(t_1-s)\vv,\vv\Big)\ue^{-\int_s^{t_1}\nu_M(\vv)
    }\ud v\ud{s}.
\end{align}

\paragraph{\underline{Compactness of \eqref{oo 19}}}
For \eqref{oo 19}, by substitution $v\rt y=x-\e(t_1-s)v$ (or equivalently $v=\e^{-1}\frac{x-y}{t_1-s}$), the integral reduces to
\begin{align}\label{oo 21}
&\int_{\Omega}\int_{\abs{\vuu}\leq N}\int_{0}^{\max\{t_1-\d,0\}} q(\e^{-1}\tfrac{x-y}{t_1-s})k_{M,N}(\e^{-1}\tfrac{x-y}{t_1-s},v')w(v')f(y,v')\ue^{-\int_{s}^{t_1}\nu_M(\e^{-1}\tfrac{x-y}{t_1-s})
}\ud{s}\ud{\vuu}\ud{y}.
\end{align}

On the one hand, \eqref{oo 14} naturally implies the $L^2$ bound $\tnm{f}\ls\xxnm{g}$,
which yields weak compactness of $f$ in $L^2$. 

On the other hand, note that $q$, $k_{M,N}$, $w$ terms are all smooth and uniformly bounded with respect to $(t,x)$ in $\abs{v'}\leq N$ and $t_1-s\geq\d$. Hence, after the substitution, for fixed $(t,x)$
\begin{align}
    \int_{\abs{\vuu}\leq N}\int_{0}^{\max\{t_1-\d,0\}}q(\e^{-1}\tfrac{x-y}{t_1-s})k_{M,N}(\e^{-1}\tfrac{x-y}{t_1-s},v')w(v')\ue^{-\int_{s}^{t_1}\nu_M(\e^{-1}\tfrac{x-y}{t_1-s})}\ud{s}\ud{\vuu}
\end{align}
is a smooth $L^2$ function of $y$, and its $L^2$ norm is uniformly bounded with respect to $(t,x)$.

Therefore, for the weakly $L^2$ convergent sequence in terms of $f$, we can pass to the limit in the $y$ integral of \eqref{oo 21} to get strongly convergent sequence (uniformly in $(t,x)$). Hence, we have shown the strong compactness of \eqref{oo 19} in $L^{\infty}$.\\

\paragraph{\underline{Compactness of \eqref{oo 20}}}
Similarly, for \eqref{oo 20}, using substitution $v\rt y=x-\e(t_1-s)v$ (or equivalently $v=\e^{-1}\frac{x-y}{t_1-s}$), the integral reduces to 
\begin{align}\label{oo 22}
    \int_{\Omega}\int_0^{\max\{t_1-\d,0\}}w(\e^{-1}\tfrac{x-y}{t_1-s})q(\e^{-1}\tfrac{x-y}{t_1-s})\ue^{-\int_s^t\nu_M(\e^{-1}\frac{x-y}{t_1-s})}r(y) q(\e^{-1}\tfrac{x-y}{t_1-s})\ud s\ud y.
\end{align}

On the one hand, $g\in\widetilde X$ implies $\tnm{r}\ls\tnm{g}\ls\xxnm{g}$, 
which yields weak compactness of $r$ in $L^2$. 

On the other hand,note that $q$, $w$ terms are all smooth and uniformly bounded with respect to $(t,x)$ in $\abs{v}\leq N$ and $t_1-s\geq\d$. Hence, after the substitution, for fixed $(t,x)$
\begin{align}
    \int_0^{\max\{t_1-\d,0\}}w(\e^{-1}\tfrac{x-y}{t_1-s})q(\e^{-1}\tfrac{x-y}{t_1-s})\ue^{-\int_s^t\nu_M(\e^{-1}\frac{x-y}{t_1-s})} q(\e^{-1}\tfrac{x-y}{t_b-s})\ud s
\end{align}
is a smooth $L^2$ function of $y$, and its $L^2$ norm is uniformly bounded with respect to $(t,x)$. 

Therefore, for the weakly $L^2$ convergent sequence in terms of $r$, we can pass to the limit in the $y$ integral of \eqref{oo 22} to get strongly convergent sequence (uniformly in $(t,x)$). Hence, we have shown the strong compactness of \eqref{oo 20} in $L^{\infty}$.\\

\paragraph{\underline{Summary of Compactness}}
In summary, we have shown that compactness of $f_w$ in $L^{\infty}$. Considering the weight function $w$ and \eqref{oo 14}, this is equivalent to the compactness of
$\pk_M\lc_{\e}^{-1}: \widetilde X\cap\nk_M\rt \widetilde X\cap\nk_M$ in $\widetilde X$.
\end{proof}

\begin{proof}[Proof of Proposition \ref{prop:linear}]
For given $\theta$, we have shown that there exists a unique solution $\re_{M,\theta}\in\widetilde X$ (we add subscript $\theta$ to highlight the dependence on $\theta$) to \eqref{linear remainder.} or equivalently \eqref{linear remainder'} satisfying
\begin{align}\label{oo 08'}
    \xxnm{\re_{M,\theta}}\ls_{\e} \oot.
\end{align}
Note that this estimate is uniform in $\theta$.
In addition, direct integration on both sides of \eqref{linear remainder'} with multiplier $\mh$ yields that $\widetilde\re_{\theta}$ satisfies
\begin{align}\label{oo 07'}
    \iint_{\Omega\times\r^3}\abs{v}^2\mmh(v)\re_{M,\theta}(x,v)\ud x\ud v=0.
\end{align}
Further we can recover the solution $\widetilde\rem$ to \eqref{linear remainder==} by adding $\widetilde h$ such that
\begin{align}\label{oo 08}
    \xxnm{\widetilde{\re_{M,\theta}}}\ls_{\e}\oot,
\end{align}
\begin{align}\label{oo 07}
    \iint_{\Omega\times\r^3}\abs{v}^2\mmh(v)\widetilde{\re_{M,\theta}}(x,v)\ud x\ud v=0.
\end{align}
Since the estimates \eqref{oo 08} is uniform in $\theta>0$, we can find a subsequence that is weak-$\ast$ convergent in weighted $L^{\infty}$ and weak convergent in $L^2$ as $\theta_i\rt0$. By weak lower semi-continuity, we have $\widetilde{\re_{M,\theta_j}}\rt \widetilde{\rem}$ in $\widetilde X$,
and
\begin{align}
    \xxnm{\widetilde{\rem}}\leq\liminf_{j\rt\infty}\xxnm{\widetilde{\re_{M,\theta_j}}}\ls_{\e} \oot.
\end{align}
Also, by passing to the limit in the weak formulation, we know $\widetilde\rem$ or equivalently $\re$ is a solution to the remainder equation \eqref{remainder} satisfying
\begin{align}
    \xxnm{\re}\ls_{\e} \oot,
\end{align}
\begin{align}
    \iint_{\Omega\times\r^3}\abs{v}^2\mh(v)\re(x,v)\ud x\ud v=\iint_{\Omega\times\r^3}\abs{v}^2\mmh(v)\widetilde\rem(x,v)\ud x\ud v=0.
\end{align}
By the a priori remainder estimates in Theorem \ref{thm:apriori}, we know that such $\re$ is unique.
\end{proof}

\section{Proof of Main Theorem}

\subsection{Well-Posedness and Positivity}\label{sec:well-posedness}

\subsubsection{Well-Posedness of the Auxiliary Equation}

\paragraph{\underline{Setup of Iteration}}
Based on \eqref{remainder} and \eqref{wt 02}, we design an iterative scheme with $\re^0=0$ and
\begin{align}\label{wt 01}
    &\vv\cdot\nx \left(\mh\re^{n+1}\right)-\e^{-1}\mh\lc[\re^{n+1}]\\
    =&\ \mathscr{\ss}+\e^{-1}\bigg(Q^{\ast}\left[\mh\re^n,\ffe\right]+Q^{\ast}\left[\ffe,\mh\re^n\right]+\e Q^{\ast}\left[\mh\re^n,\mh\re^n\right]\bigg)\no\\
    &+\e^{-2}\Big(Q_{\text{gain}}\left[\ff+\e\mh\re^n,\ff+\e\mh\re^n\right]-Q_{\text{gain}}\left[(\ff+\e\mh\re^n)_+,(\ff+\e\mh\re^n)_+\right]\Big)\no\\
    &+\e^{-2}\mathfrak{z}\ds\iint_{\Omega\times\r^3}\bigg(Q_{\text{gain}}\left[\ff+\e\mh\re^n,\ff+\e\mh\re^n\right]-Q_{\text{gain}}\left[(\ff+\e\mh\re^n)_+,(\ff+\e\mh\re^n)_+\right]\bigg).\no
\end{align}
This is equivalent to 
\begin{align}\label{remainder final}
\left\{
\begin{array}{l}
\vv\cdot\nx\left(\mh\re^{n+1}\right)+\e^{-1}\mh\lc\left[\re^{n+1}\right]=\mh\ss^n\ \ \text{in}\ \
\Omega\times\r^3,\\\rule{0ex}{1.0em} \re^{n+1}(\vx_0,\vv)=\pp\left[\re^{n+1}\right](\vx_0,\vv)+h(\vx_0,\vv) \ \ \text{for}\ \ \vx_0\in\p\Omega\ \ \text{and}\ \ \vv\cdot\vn(\vx_0)<0.
\end{array}
\right.
\end{align}
Here $\ss^n$ denotes $\ss$ with $\re$ substituted by $\re^n$. The sequence $\big\{\re^n\big\}_{n=1}^{\infty}\subset X$ is guaranteed by Proposition \ref{prop:linear}.\\

\paragraph{\underline{Convergence}}
Applying the a priori remainder estimates in Theorem \ref{thm:apriori}, there exists $C_0>0$ such that letting $\re=\re^n$ on the RHS of \eqref{final 32}, we have
\begin{align}
    \xnm{\re^{n+1}}\leq \frac{1}{2}\xnm{\re^n}+C_0\left(\xnm{\re^n}^2+\xnm{\re^n}^{3}+\oot\right).
\end{align} 
Under the assumptions $\xnm{\re^n}\leq 3C_0\oot$,
for $\oot$ sufficiently small, by induction we obtain
\begin{align}
    \xnm{\re^{n+1}}\leq 3C_0\oot.
\end{align}
Hence, we obtain that the sequence $\big\{\re^n\big\}_{n=1}^{\infty}$ is uniformly bounded. Also, based on Proposition \ref{prop:linear}, we have
\begin{align}
    \iint_{\Omega\times\r^3}\abs{v}^2\mh(v)\re^{n+1}(x,v)\ud x\ud v=0.
\end{align}
By a similar proof, we know that this sequence is a contraction $\xnm{\re^{n+1}-\re^n}\ls\oot\xnm{\re^n-\re^{n-1}}$.

\paragraph{\underline{Existence and Uniqueness}}
Using contraction mapping theorem, we know that $\re^n\rt\re$ in $X$, and $\re$ is a solution to \eqref{remainder} satisfying $\xnm{\re}\ls\oot$
and $\ds\iint_{\Omega\times\r^3}\abs{v}^2\mh(v)\re(x,v)\ud x\ud v=0$.

The uniqueness follows from the a priori remainder estimates in Theorem \ref{thm:apriori}. Assume $\re_1$ and $\re_2$ are two distinct solutions to \eqref{remainder}. Then $\overline{\re}=\re_1-\re_2$ satisfies 
\begin{align}
\left\{
\begin{array}{l}
\vv\cdot\nx\left(\mh\overline{\re}\right)+\e^{-1}\mh\lc\left[\overline{\re}\right]=\mh\big(\ss^1-\ss^2\big)\ \ \text{in}\ \
\Omega\times\r^3,\\\rule{0ex}{1.0em} \overline{\re}(\vx_0,\vv)=\pp\left[\overline{\re}\right](\vx_0,\vv) \ \ \text{for}\ \ \vx_0\in\p\Omega\ \ \text{and}\ \ \vv\cdot\vn(\vx_0)<0.
\end{array}
\right.
\end{align}
Here $\ss^k$ for $k=1,2$ denotes $\ss$ with $\re$ substituted by $\re^k$. Then using Theorem \ref{thm:apriori}, we obtain
\begin{align}
    \xnm{\overline{\re}}\ls \oot\xnm{\overline{\re}},
\end{align}
which naturally yields $\xnm{\overline{\re}}=0$ and thus $\overline{\re}=0$.

\subsubsection{Positivity of the Auxiliary Equation}

\paragraph{\underline{Mild Formulation of the Auxiliary Equation}} Now we focus on \eqref{auxiliary system},
which can be rewritten as
\begin{align}
    \vv\cdot\nx \fs+\e^{-1}\nu(\fs)\fs=\e^{-1}Q_{\text{gain}}[\fs_+,\fs_+]+\mathfrak{z}\ds\iint_{\Omega\times\r^3}\e^{-1}\Big(Q_{\text{gain}}[\fs,\fs]-Q_{\text{gain}}[\fs_+,\fs_+]\Big).
\end{align}
Hence, using mild formulation to track the characteristics backward, we have
\begin{align}\label{oo 15}
    \fs(x,v)=&\ue^{-\e^{-1}\nu t_b}\fs(x_b,v)+\int_0^{t_b}\ue^{-\e^{-1}\nu (t_b-s)}\left(\e^{-1}Q_{\text{gain}}[\fs_+,\fs_+]+\mathfrak{z}\ds\iint_{\Omega\times\r^3}\e^{-1}\Big(Q_{\text{gain}}[\fs,\fs]-Q_{\text{gain}}[\fs_+,\fs_+]\Big)\right)(s)\ud s,
\end{align}
for $t_b$ and $x_b$ defined in Definition \ref{exit}.\\

\paragraph{\underline{Boundary Term in the Mild Formulation}}
We know
\begin{align}
    \int_{\gamma_+}\fs(x_b,v)(v\cdot n)\ud v=\int_{\gamma_+}\Big(\ff(x_b,v)+\e\mh\re(x_b,v)\Big)(v\cdot n)\ud v.
\end{align}
By a similar argument as the $\sp$ estimates, we know that this integral is positive (since the negative part is only effective when $v$ is very large). Hence, $\pp[\fs]>0$ and thus $\fs\big|_{\gamma_-}\geq C\mu>0$. In \eqref{oo 15}, the characteristic will hit $\gamma_-$ when tracking backward, and thus we know $\ue^{-\e^{-1}\nu t_b}\fs(x_b,v)\gs0$.\\

\paragraph{\underline{Bulk Term in the Mild Formulation}}
Using the similar technique as the $\sp$ estimates, we have
\begin{align}
    \ds\mathfrak{z}\iint_{\Omega\times\r^3}\e^{-1}\Big(Q_{\text{gain}}[\fs,\fs]-Q_{\text{gain}}[\fs_+,\fs_+]\Big)\gs -\e^{m-1}\mathfrak{z}.
\end{align}
Also, we know that $\fs\gs \m$ for $\abs{v}\ls \abs{\ln(\e)}^{\frac{1}{2}}$ and thus
\begin{align}
    \e^{-1}Q_{\text{gain}}[\fs_+,\fs_+]=&\e^{-1}\int_{\r^3}\int_{\s^2}q(\vo,\abs{\vuu-\vv})\fs_+(\vuu_{\ast})\fs_+(\vv_{\ast})\ud{\vo}\ud{\vuu}\\
    \geq&\e^{-1}\int_{\r^3}\int_{\s^2}\id_{\{\abs{\vuu_{\ast}}\leq 1\}}\id_{\{\abs{\vv_{\ast}}\leq 1\}}q(\vo,\abs{\vuu-\vv})\fs_+(\vuu_{\ast})\fs_+(\vv_{\ast})\ud{\vo}\ud{\vuu}\gs \e^{-1}.\no
\end{align}
Hence, considering that $\mathfrak{z}$ is compactly supported in $\{\abs{v}\leq 1\}$, we have
\begin{align}
    \int_0^{t_b}\ue^{-\e^{-1}\nu (t_b-s)}\left(\e^{-1}Q_{\text{gain}}[\fs_+,\fs_+]+\mathfrak{z}\ds\iint_{\Omega\times\r^3}\e^{-1}\Big(Q_{\text{gain}}[\fs,\fs]-Q_{\text{gain}}[\fs_+,\fs_+]\Big)\right)(s)\ud s\gs \e^{-1}-\e^{m-1}\mathfrak{z}\geq0.
\end{align}

\paragraph{\underline{Summary of Positivity}}
Combining the above results, when $\e$ is sufficiently small, we know that $\fs(x,v)\geq0$.
In this case, \eqref{auxiliary system} reduces to \eqref{large system-}, and thus we know that the solution $\fs$ to \eqref{large system-} satisfies 
\begin{align}
    \xnm{\re}\ls\oot,
\end{align}
\begin{align}
    \iint_{\Omega\times\r^3}\abs{v}^2\mh(v)\re(x,v)\ud x\ud v=0.
\end{align}

\subsection{Ghost Effects}

Based on \eqref{aa 08}, we have $\mhh\fs-\mhh\f-\mhh\fb-\e\re=0$,
which yields
\begin{align}
    \tnm{\mhh\fs-\mh-\e\f_1-\e\mhh\mbh\fb_1-\e\re}=\tnm{\e^2\f_2}\ls\oot\e^2.
\end{align}
Due to the rescaling, we have $\tnm{\e\mhh\mbh\fb_1}\ls\oot\e^{\frac{3}{2}}$,
and thus we know
\begin{align}\label{aa 09}
    \tnm{\mhh\fs-\mh-\e\f_1-\e\re}\ls\oot\e^{\frac{3}{2}}.
\end{align}
Let $\pk_{\mh}$, $\pk_{v\mh}$, $\pk_{\abs{v}^2\mh}$, $\pk_{\a}$ denote the $L^2$ projection onto the subspaces spanned by $\mh$, $v\mh$, $\abs{v}^2\mh$ or $\a$ (via taking inner product with these functions).

Applying these operators (denoted by $\pk_{\cdot}$) as well as $\ik-\bpk$ to $\mhh\fs-\mh-\e\f_1-\e\re$ and using \eqref{aa 09}, we have
\begin{align}
    \tnm{\pk_{\cdot}\Big[\mhh\fs-\mh-\e\f_1-\e\re\Big]}\ls&\oot\e^{\frac{3}{2}},\quad
    \tnm{(\ik-\bpk)\Big[\mhh\fs-\mh-\e\f_1-\e\re\Big]}\ls\oot\e^{\frac{3}{2}}.
\end{align}
Then a detailed computation reveals that
\begin{align}
    \tnm{\pk_{\mh}\left[\mhh\fs\right]-\Big(\rq+\e\rq_1+\e(\P-2Tc)\Big)\mh}\ls&\oot\e^{\frac{3}{2}},\\
    \tnm{\pk_{v\mh}\left[\mhh\fs\right]-\Big(\e\uq_1+\e\bb\Big)\cdot v\mh}\ls&\oot\e^{\frac{3}{2}},\\
    \tnm{\pk_{\abs{v}^2\mh}\left[\mhh\fs\right]-3\rq\tq\Big(1+\e\rq_1+\e\tq\tq_1+\e p\Big)\abs{v}^2\mh}\ls&\oot\e^{\frac{3}{2}},\\
    \tnm{\pk_{\a}\left[\mhh\fs\right]-\left(\e\left(-\frac{\nx\tq}{2\tq^2}\right)-\e \bd\right)\cdot\a}\ls&\oot\e^{\frac{3}{2}},\\
    \tnm{(\ik-\bpk)\left[\mhh\fs\right]-(\ik-\bpk)[\e\re]}\ls&\oot\e^{\frac{3}{2}}.
\end{align}
Since $\xnm{\re}\ls\oot$, we have $\e^{-\frac{1}{2}}\tnm{\bb}+\e^{-\frac{1}{2}}\tnm{\bd}+\e^{-1}\um{(\ik-\bpk)[\re]}\ls\oot$ and hence
\begin{align}
    \tnm{\pk_{\mh}\left[\mhh\fs\right]-\rq\mh}\ls&\oot\e,\quad \tnm{\pk_{\abs{v}^2\mh}\left[\mhh\fs\right]-3\rq\tq\abs{v}^2\mh}\ls\oot\e,\\
    \tnm{\pk_{v\mh}\left[\mhh\fs\right]-\e\uq_1\cdot v\mh}\ls&\oot\e^{\frac{3}{2}},\\
    \tnm{\pk_{\a}\left[\mhh\fs\right]+\e\left(\frac{\nx\tq}{2\tq^2}\right)\cdot \a}\ls&\oot\e^{\frac{3}{2}},
    \tnm{(\ik-\bpk)\left[\mhh\fs\right]}\ls\oot\e^{\frac{3}{2}}.
\end{align}
Therefore, we know that $(\rq,\e\uq_1,\tq)$ indeed captures the leading-order terms of $\mhh\fs$ in $L^2$ as $\e\rt0$.

\appendix

\makeatletter
\renewcommand \theequation {%
A.%
\ifnum\c@subsection>\z@\@arabic\c@subsection.%
\fi
\@arabic\c@equation} \@addtoreset{equation}{section}
\@addtoreset{equation}{subsection} \makeatother

\section{Notation}

\subsection{Asymptotic Analysis}\label{sec:asymptotic}

The details about the construction of the asymptotic expansion can be found in the companion paper \cite{AA024}.

\subsubsection{Interior Solution}

Following the analysis in the companion paper \cite{AA024}, we have
\begin{align}
\m(\vx,\vv):=\frac{\rq(\vx)}{\big(2\pi\tq(\vx)\big)^{\frac{3}{2}}}
\exp\bigg(-\frac{\abs{\vv}^2}{2\tq(\vx)}\bigg),
\end{align}
and
\begin{align}
    \f_1=&-\a\cdot\frac{\nx\tq}{2\tq^2}+\mh\bigg(\frac{\rq_1}{\rq}+\frac{\uq_1\cdot\vv}{\tq}+\frac{\tq_1\big(\abs{\vv}^2-3\tq\big)}{2\tq^2}\bigg),\label{final 34}\\
    \f_2=&-\li\Big[\mhh\left(\vv\cdot\nx\left(\mh\f_1\right)\right)\Big]+\li\Big[\Gamma[\f_1,\f_1]\Big]
    +\mh\bigg(\frac{\rq_2}{\rq}+\frac{\uq_2\cdot\vv}{\tq}+\frac{\tq_2\big(\abs{\vv}^2-3\tq\big)}{2\tq^2}\bigg),\label{final 35}
\end{align}
where $(\rho,0,T)$, $(\rho_1,u_1,T_1)$ and $(\rho_2,u_2,T_2)$ satisfy the ghost-effect equations \eqref{fluid system-} and an additional requirement
\begin{align}\label{fluid system'}
    \nx P_1=\nx\big(T\rho_1+\rho T_1\big)=&0.
\end{align}
Here $u_k:=(u_{k,1},u_{k,2},u_{k,3})$, $\mathfrak{p}:=T\rho_2+\rho T_2$.

\subsubsection{$\e$-Cutoff Boundary Layer}\label{sec:boundary-form}

Under the substitution $(x,v)\rt (\eta,\iota_1,\iota_2,\vvv)$ defined in Section \ref{sec:geometric-setup}, the transport operator becomes
\begin{align}
\vv\cdot\nx=&\frac{1}{\e}\va\dfrac{\p }{\p\eta}-\dfrac{1}{R_1-\e\eta}\bigg(\vb^2\dfrac{\p }{\p\va}-\va\vb\dfrac{\p}{\p\vb}\bigg)
-\dfrac{1}{R_2-\e\eta}\bigg(\vc^2\dfrac{\p }{\p\va}-\va\vc\dfrac{\p }{\p\vc}\bigg)\\
&+\frac{1}{\pl_1\pl_2}\Bigg(\frac{R_1\p_{\iota_1\iota_1}\vr\cdot\p_{\iota_2}\vr}{\pl_1(R_1-\e\eta)}\vb\vc
+\frac{R_2\p_{\iota_1\iota_2}\vr\cdot\p_{\iota_2}\vr}{\pl_2(R_2-\e\eta)}\vc^2\Bigg)\frac{\p}{\p\vb}
+\frac{1}{\pl_1\pl_2}\Bigg(\frac{R_2\p_{\iota_2\iota_2}\vr\cdot\p_{\iota_1}\vr}{\pl_2(R_2-\e\eta)}\vb\vc
+\frac{R_1\p_{\iota_1\iota_2}\vr\cdot\p_{\iota_1}\vr}{\pl_1(R_1-\e\eta)}\vb^2\Bigg)\frac{\p}{\p\vc}\no\\
&+\bigg(\frac{R_1\vb}{\pl_1(R_1-\e\eta)}\frac{\p}{\p\iota_1}+\frac{R_2\vc}{\pl_2(R_2-\e\eta)}\frac{\p}{\p\iota_2}\bigg),\no
\end{align}
where $R_i=\kk_i^{-1}$ represents the radius of principal curvature. 

Set $\lc_w[f]:=-2\m_w^{-\frac{1}{2}} Q^{\ast}\left[\m_w,\m_w^{\frac{1}{2}} f\right]:=\nu_w f-K_w[f]$.
Define $\blf(\eta,\vvv)$ for $(\eta,\vvv)\in[0,\infty)\times\r^3$ satisfying the Milne problem
\begin{align}
    \va\dfrac{\p\blf}{\p\eta}+\nu_w \blf-K_w\left[\blf \right]=0,\quad \blf (0,\vvv)=-\a\cdot\frac{\nx T}{2T^2}\ \ \text{for}\ \ \va>0,
\end{align}
and the zero mass-flux condition
\begin{align}\label{aa 05}
    \int_{\r^3}\va\mbh(\vvv)\blf (0,\vvv)\ud \vvv=0.
\end{align}
Based on Theorem \ref{boundary well-posedness}, we know that there exists 
\begin{align}
    \blf_{\infty}(\vvv):=\mbh\bigg(\frac{\rq^B}{\rq}+\frac{\uq^B\cdot\vv}{\tq}+\frac{\tq^B(\abs{\vv}^2-3\tq)}{2\tq^2}\bigg)\in\nk,
\end{align}
such that $\ds\int_{\r^3}\va\mbh(\vvv)\blf_{\infty}(\vvv)\ud \vvv=0$,
and for some $K_0>0$, $\blff(\eta,\vvv):=\blf(\eta,\vvv)-\blf_{\infty}(\vvv)$
satisfies
\begin{align}
    \abs{\blf_{\infty}}{}+\lnmm{\ue^{K_0\eta}\blff}\ls& \oot.
\end{align}
In addition, using Theorem \ref{boundary well-posedness} and Theorem \ref{boundary regularity}, we know
\begin{align}
    \lnmm{\ue^{K_0\eta}\p_{\iota_1}\blff}+\lnmm{\ue^{K_0\eta}\p_{\iota_2}\blff}+\lnmm{\ue^{K_0\eta}\p_{\vb}\blff}+\lnmm{\ue^{K_0\eta}\p_{\vc}\blff}+\bnm{\ue^{K_0}\nu\blff}\ls& \oot.
\end{align}
Let $\chi(y)\in C^{\infty}(\r)$
and $\ch(y)=1-\chi(y)$ be smooth cut-off functions satisfying $\chi(y)=1$ if $\abs{y}\leq1$ and $\chi(y)=0$ if $\abs{y}\geq2$.
Define the $\e$-cutoff boundary layer 
\begin{align}\label{final 14}
    \fb_1(\eta,\vvv)=\ch\left(\e^{-1}\va\right)\chi(\e\eta)\blff (\eta,\vvv).
\end{align}

\subsubsection{Matching Procedure}\label{sec:matching}

Based on the analysis in our companion paper \cite{AA024}, we derive that $(\rq,\uq_1,\tq)$ must satisfy the boundary conditions \eqref{boundary condition}. The well-posedness of the ghost-effect equation is well-studied in \cite{AA024} (see Theorem \ref{thm:ghost}).

Define $T_1$ satisfying the boundary condition $T_1\Big|_{\p\Omega}=T^B$ and
construct a Sobolev extension such that
\begin{align}
    \nm{T_1}_{W^{4+\frac{1}{\NN},\NN}}\ls\oot.
\end{align}
We choose constant $P_1=0$,
which implies
\begin{align}\label{final 07}
    \iint_{\Omega\times\r^3}\abs{v}^2\Big(\m+\e\mh\f_1+\e^2\mh\f_2+\e\mbh\fb_1\Big)=\e\iint_{\Omega\times\r^3}\abs{v}^2\mbh\fb_1.
\end{align}
Then based on \eqref{fluid system'}, we have $\rho_1=-T^{-1}\big(\rho T_1\big)$,
and thus
\begin{align}
    \nm{\rq_1}_{W^{4+\frac{1}{\NN},\NN}}\ls\oot.
\end{align}
Note that $\rho_1$ is not necessarily equal to $\rho^B$ on $\p\Omega$.

We will choose $u_2$ such that on the boundary $\p\Omega$, 
\begin{align}\label{aa 38}
    u_2\cdot n=-\e^{-1}P^{-1}\int_{\r^3}\va\mbh(\vvv)\fb_1(0,\vvv)\ud \vvv,
\end{align}
which yields 
\begin{align}\label{final 08}
    \int_{\r^3}\Big(\mh+\e\f_1+\e^2\f_2+\e\fb_1\Big)\mbh(v\cdot n)=0.
\end{align}
Then we can construct a Sobolev extension such that
{\begin{align}
    \nm{u_2}_{W^{4,\NN}}\ls\oot.
\end{align}}
We are free to take $\rho_2=0$ in $\Omega$, and thus $T_2$ is determined and satisfies
\begin{align}
    \nm{T_2}_{W^{2,\NN}}\ls\oot.
\end{align}

\begin{remark}
    The construction of $(\rq_1,\tq_1)$ and $(\rq_2,\uq_2,\tq_2)$ are not uniquely determined in our construction. 
\end{remark}

\subsection{Linearized Boltzmann Operator}

Based on \cite[Chapter 7]{Cercignani.Illner.Pulvirenti1994} and \cite[Chapter 1\&3]{Glassey1996}, define the symmetrized version of $Q$
\begin{align}\label{qstar}
Q^{\ast}[F,G]:=&\frac{1}{2}\iint_{\r^3\times\s^2}q(\vo,\abs{\vuu-\vv})\Big(F(\vuu_{\ast})G(\vv_{\ast})+F(\vv_{\ast})G(\vuu_{\ast})-F(\vuu)G(\vv)-F(\vv)G(\vuu)\Big)\ud{\vo}\ud{\vuu}.
\end{align}
Clearly, $Q[F,F]=Q^{\ast}[F,F]$.
Denote the linearized Boltzmann operator $\lc$
\begin{align}\label{att 11}
\lc[f]:=&-2\m^{-\frac{1}{2}}Q^{\ast}\big[\m,\m^{\frac{1}{2}}f\big]:=\nu f-K[f],
\end{align}
where
for some kernels $k(\vuu,\vv)=k_2(\vuu,\vv)-k_1(\vuu,\vv)$,
\begin{align}
\nu(x,\vv)=&\int_{\r^3}\int_{\s^2}q(\vo,\abs{\vuu-\vv})\m(\vuu)\ud{\vo}\ud{\vuu},\quad
K[f](x,\vv)=K_2[f]-K_1[f]=\int_{\r^3}k(\vuu,\vv)f(\vuu)\ud{\vuu},\\
K_1[f](x,\vv)=&\m^{\frac{1}{2}}(\vv)\int_{\r^3}\int_{\s^2}q(\vo,\abs{\vuu-\vv})\m^{\frac{1}{2}}(\vuu)f(\vuu)\ud{\vo}\ud{\vuu}=\int_{\r^3}k_1(\vuu,\vv)f(\vuu)\ud{\vuu},\\
K_2[f](x,\vv)=&\int_{\r^3}\int_{\s^2}q(\vo,\abs{\vuu-\vv})\m^{\frac{1}{2}}(\vuu)\bigg(\m^{\frac{1}{2}}(\vv_{\ast})f(\vuu_{\ast})
+\m^{\frac{1}{2}}(\vuu_{\ast})f(\vv_{\ast})\bigg)\ud{\vo}\ud{\vuu}=\int_{\r^3}k_2(\vuu,\vv)f(\vuu)\ud{\vuu}.
\end{align}
Note that $\lc$ is self-adjoint in $L^2_{\nu}(\r^3)$ satisfying the coercivity property 
\begin{align}\label{coercivity}
    \int_{\r^3}f\lc[f]\gs \um{(\ik-\pk)[f]}^2.
\end{align} 
Denote $\li: \nnk\rt\nnk$ the quasi-inverse of $\lc$.
Also, denote the nonlinear Boltzmann operator $\Gamma$
\begin{align}\label{gamma}
\Gamma[f,g]:=\mhh Q^{\ast}\left[\mh f,\mh g\right]\in\nnk.
\end{align}

\subsection{Remainder Estimates}

The boundary term is given by
\begin{align}\label{aa 32}
    h:=&\e^{-1}\Big(\pp\left[\mhh\ff\right]-\mhh\ff\Big)\\
    =&\e\left(\mss\displaystyle\int_{\vuu\cdot\vn>0}
    \mbh(v')\f_2(v')\abs{\vuu\cdot\vn}\ud{\vuu}-\f_2\Big|_{v\cdot n<0}\right)-\left(\mss\displaystyle\int_{\vuu\cdot\vn>0}
    \mbh\chi\left(\e^{-1}\va\right)\blff\abs{\vuu\cdot\vn}\ud{\vuu}-\mbh\chi\left(\e^{-1}\va\right)\blff\Big|_{v\cdot n<0}\right).\no
\end{align}
Also, the source term is given by
\begin{align}
    \ss:=&\ \mhh\mathscr{\ss}+\e^{-1}\mhh\bigg(2Q^{\ast}\left[\ffe,\mh\re\right]+\e Q^{\ast}\left[\mh\re,\mh\re\right]\bigg)\\
    &-\e^{-1}\mhh\bigg(\e^{-1}Q_{\text{gain}}\left[\ff+\e\mh\re,\ff+\e\mh\re\right]-\e^{-1}Q_{\text{gain}}\left[\left(\ff+\e\mh\re\right)_+,\left(\ff+\e\mh\re\right)_+\right]\bigg)\no\\
    &+\e^{-1}\mathfrak{z}\mhh\iint_{\Omega\times\r^3}\bigg(\e^{-1}Q_{\text{gain}}\left[\ff+\e\mh\re,\ff+\e\mh\re\right]-\e^{-1}Q_{\text{gain}}\left[\left(\ff+\e\mh\re\right)_+,\left(\ff+\e\mh\re\right)_+\right]\bigg),\no
\end{align}
where $\mathscr{\ss}:=-\e^{-1}v\cdot\nx\ff+\e^{-2}Q^{\ast}\left[\ff,\ff\right]$.
We write the detailed expression
\begin{align}\label{aa 31}
    \ss=-\llc[\re]+\sb=-\llc[\re]+\ss_0+\ss_1+\ss_2+\ss_3+\ss_4+\ss_5+\sp,
\end{align}
where
\begin{align}\label{final 51}
    \llc[\re]:=&-2\e^{-1}\mhh Q^{\ast}\left[\mh\Big(\e\f_1\Big),\mh\re\right]=-2\Gamma[\f_1,\re],\\
    \ss_0:=&2\e^{-1}\mhh Q^{\ast}\left[\mh\Big(\e^2\f_2\Big),\mh\re\right]=2\e\Gamma[f_2,\re],\label{def-s0}\\
    \ss_1:=&2\e^{-1}\mhh Q^{\ast}\left[\mh_w\Big(\e\fb_1\Big),\mh\re\right]=2\Gamma\left[\mhh\mh_w\fb_1,\re\right],\label{def-s1}\qquad\qquad\qquad\qquad\qquad\qquad\qquad\qquad\\
    \ss_2:=&\mhh Q^{\ast}\left[\mh\re,\mh\re\right]=\Gamma[\re,\re],\label{def-s2}
\end{align}
\begin{align}\label{mm 00}
    \ss_3:=&\mhh\dfrac{1}{R_1-\e\eta}\bigg(\vb^2\dfrac{\p }{\p\va}-\va\vb\dfrac{\p}{\p\vb}\bigg)\left(\mbh\fb_1\right)+\mhh\dfrac{1}{R_2-\e\eta}\bigg(\vc^2\dfrac{\p }{\p\va}-\va\vc\dfrac{\p }{\p\vc}\bigg)\left(\mbh\fb_1\right)\\
    &-\mhh\dfrac{1}{\pl_1\pl_2}\left(\dfrac{R_1\p_{\iota_1\iota_1}\vr\cdot\p_{\iota_2}\vr}{\pl_1(R_1-\e\eta)}\vb\vc
    +\dfrac{R_2\p_{\iota_1\iota_2}\vr\cdot\p_{\iota_2}\vr}{\pl_2(R_2-\e\eta)}\vc^2\right)\dfrac{\p }{\p\vb}\left(\mbh\fb_1\right)\no\\
    &-\mhh\dfrac{1}{\pl_1\pl_2}\left(\dfrac{R_2\p_{\iota_2\iota_2}\vr\cdot\p_{\iota_1}\vr}{\pl_2(R_2-\e\eta)}\vb\vc
    +\dfrac{R_1\p_{\iota_1\iota_2}\vr\cdot\p_{\iota_1}\vr}{\pl_1(R_1-\e\eta)}\vb^2\right)\dfrac{\p }{\p\vc}\left(\mbh\fb_1\right)\no\\
    &-\mhh\left(\dfrac{R_1\vb}{\pl_1(R_1-\e\eta)}\dfrac{\p }{\p\iota_1}+\dfrac{R_2\vc}{\pl_2(R_2-\e\eta)}\dfrac{\p }{\p\iota_2}\right)\left(\mbh\fb_1\right)\no\\
    &+\e^{-1}\mhh\va\ch(\e^{-1}\va)\frac{\p\chi(\e\eta)}{\p\eta}\left(\mbh\blff\right)+\e^{-1}\mhh\mbh\chi(\e\eta)\bigg(\ch(\e^{-1}\va)K_w\Big[\blff\Big]-K_w\left[\ch(\e^{-1}\va)\blff\right]\bigg),\no
\end{align}
which can be further split into $\ss_3=\sx+\sy+\sz$
\begin{align}
    \sx:=&\mhh\dfrac{1}{R_1-\e\eta}\bigg(\vb^2\dfrac{\p }{\p\va}\bigg)\left(\mbh\fb_1\right)+\mhh\dfrac{1}{R_2-\e\eta}\bigg(\vc^2\dfrac{\p }{\p\va}\bigg)\left(\mbh\fb_1\right),\\
    \sy:=&-\mhh\dfrac{1}{R_1-\e\eta}\bigg(\va\vb\dfrac{\p}{\p\vb}\bigg)\left(\mbh\fb_1\right)-\mhh\dfrac{1}{R_2-\e\eta}\bigg(\va\vc\dfrac{\p }{\p\vc}\bigg)\left(\mbh\fb_1\right)\\
    &-\mhh\dfrac{1}{\pl_1\pl_2}\left(\dfrac{R_1\p_{\iota_1\iota_1}\vr\cdot\p_{\iota_2}\vr}{\pl_1(R_1-\e\eta)}\vb\vc
    +\dfrac{R_2\p_{\iota_1\iota_2}\vr\cdot\p_{\iota_2}\vr}{\pl_2(R_2-\e\eta)}\vc^2\right)\dfrac{\p }{\p\vb}\left(\mbh\fb_1\right)\no\\
    &-\mhh\dfrac{1}{\pl_1\pl_2}\left(\dfrac{R_2\p_{\iota_2\iota_2}\vr\cdot\p_{\iota_1}\vr}{\pl_2(R_2-\e\eta)}\vb\vc
    +\dfrac{R_1\p_{\iota_1\iota_2}\vr\cdot\p_{\iota_1}\vr}{\pl_1(R_1-\e\eta)}\vb^2\right)\dfrac{\p }{\p\vc}\left(\mbh\fb_1\right)\no\\
    &-\mhh\left(\dfrac{R_1\vb}{\pl_1(R_1-\e\eta)}\dfrac{\p }{\p\iota_1}+\dfrac{R_2\vc}{\pl_2(R_2-\e\eta)}\dfrac{\p }{\p\iota_2}\right)\left(\mbh\fb_1\right)+\e^{-1}\mhh\va\ch(\e^{-1}\va)\frac{\p\chi(\e\eta)}{\p\eta}\left(\mbh\blff\right),\no\\
    \sz:=&\e^{-1}\mhh\mbh\chi(\e\eta)\bigg(\ch(\e^{-1}\va)K_w\Big[\blff\Big]-K_w\Big[\ch(\e^{-1}\va)\blff\Big]\bigg),
\end{align}
and
\begin{align}
    \ss_4:=&-\e^{-1}\mhh\left(\vv\cdot\nx\left(\mh\left(\e^2\f_2\right)\right)\right)=-\e\mhh\left(\vv\cdot\nx\left(\mh\f_2\right)\right),\label{def-s4}\\
    \ss_5:=&\e^{2}\mhh Q^{\ast}\left[\mh\f_2,\mh\f_2\right]+2\e\mhh Q^{\ast}\left[\mh\f_2,\mh\f_1\right]+2\e\mhh Q^{\ast}\left[\mh\f_2,\mbh\fb_1\right]\label{def-s5}\\
    &+2\mhh Q^{\ast}\left[\mbh\fb_1,\mh\f_1\right]+\mhh Q^{\ast}\left[\mbh\fb_1,\mbh\fb_1\right]+\e^{-1}\mhh Q^{\ast}\left[\m-\mb,\mbh\fb_1\right]\no\\
    =&\e^{2}\Gamma\left[\f_2,\f_2\right]+2\e\Gamma\left[\f_2,\f_1\right]+2\e\Gamma\left[\f_2,\mhh\mbh\fb_1\right]\no\\
    &+2\Gamma\left[\mhh\mbh\fb_1,\f_1\right]+\Gamma\left[\mhh\mbh\fb_1,\mhh\mbh\fb_1\right]+\e^{-1}\Gamma\left[\mhh(\m-\mb),\mhh\mbh\fb_1\right]\no,
\end{align}
\begin{align}
\label{def-sp}
    \sp:=&-\e^{-1}\mhh\bigg(\e^{-1} Q_{\text{gain}}\left[\ff+\e\mh\re,\ff+\e\mh\re\right]-\e^{-1} Q_{\text{gain}}\left[\left(\ff+\e\mh\re\right)_+,\left(\ff+\e\mh\re\right)_+\right]\bigg)\\
    &+\e^{-1}\mathfrak{z}\mhh\ds\iint_{\Omega\times\r^3}\bigg(\e^{-1} Q_{\text{gain}}\left[\ff+\e\mh\re,\ff+\e\mh\re\right]-\e^{-1} Q_{\text{gain}}\left[\left(\ff+\e\mh\re\right)_+,\left(\ff+\e\mh\re\right)_+\right]\bigg).\no
\end{align}

\subsection{Inner Products and Norms}\label{sec:norms}

Let $\brv{\ \cdot\ ,\ \cdot\ }$ denote the inner product for $v\in\r^3$, $\brx{\ \cdot\ ,\ \cdot\ }$ the inner product for $x\in\Omega$, and $\br{\ \cdot\ ,\ \cdot\ }$ the inner product for $(x,v)\in\Omega\times\r^3$. Also, let $\br{\ \cdot\ ,\ \cdot\ }_{\gamma_{\pm}}$ denote the inner product on $\gamma_{\pm}$ with measure $\ud\gamma:=\abs{v\cdot n}\ud v\ud S_x$. 

Denote the bulk and boundary norms
\begin{align}
    \pnm{f}{r}:=\left(\iint_{\Omega\times\r^3}\abs{f(x,v)}^r\ud v\ud x\right)^{\frac{1}{r}},\quad \pnms{f}{r}{\gamma_{\pm}}:=\left(\int_{\gamma_{\pm}}\abs{f(x,v)}^r\abs{v\cdot n}\ud v\ud x\right)^{\frac{1}{r}}.
\end{align}
Define the weighted $L^{\infty}$ norms for properly chosen $\tm>0$, $0\leq\varrho<\dfrac{1}{2}$ and $\vartheta\geq0$
\begin{align}
    \lnmm{f}:=\esssup_{(x,v)\in\Omega\times\r^3}\bigg(\br{v}^{\vartheta}\ue^{\varrho\frac{\abs{v}^2}{2\tm}}\abs{f(x,v)}\bigg),\quad
    \lnmms{f}{\gamma_{\pm}}:=\esssup_{(x,v)\in\gamma_{\pm}}\bigg(\br{v}^{\vartheta}\ue^{\varrho\frac{\abs{v}^2}{2\tm}}\abs{f(x,v)}\bigg).
\end{align}
Denote the $\nu$-norm: $\ds\um{f}:=\left(\iint_{\Omega\times\r^3}\nu(x,v)\abs{f(x,v)}^2\ud v\ud x\right)^{\frac{1}{2}}$.
Let $\nm{\cdot}_{W^{k,p}}$ denote the usual Sobolev norm for $x\in\Omega$ and  $\abs{\cdot}_{W^{k,p}}$ for $x\in\p\Omega$, and $\nm{\cdot}_{W^{k,p}L^q}$ denote $W^{k,p}$ norm for $x\in\Omega$ and $L^q$ norm for $v\in\r^3$. The similar notation also applies when we replace $L^q$ by $L^{\infty}_{\varrho,\vartheta}$ or $L^q_{\gamma}$.


We also define
\begin{align}
\label{final 23}
    &\k\id:=\int_{\r^3}\left(\a\otimes\ab\right)\ud v,\quad \sigma\id:=\int_{\r^3}\left(\abs{v}^2-5T\right)\left(\a\otimes\ab\right)\ud v,\quad \lambda:=\frac{1}{\tq}\int_{\r^3}\b_{ij}\bbb_{ij}\ \ \text{for}\ \ i\neq j.
\end{align}

\section*{Acknowledgement}

The authors would like to thank Ian Tice for providing useful references.

\bibliographystyle{siam}
\bibliography{Reference}

\begin{thebibliography}{100}

\bibitem{Acosta.Duran2017}
{\sc G.~Acosta and R.~G. Duran}, {\em Divergence operator and related inequalities}, Springer, New York, 2017.

\bibitem{Akhlaghi.Roohi.Stefanov2022}
{\sc H.~Akhlaghi, E.~Roohi, and S.~Stefanov}, {\em A comprehensive review on micro- and nano-scale gas flow effects: Slip-jump phenomena, {Knudsen} paradox, thermally-driven flows, and {Knudsen} pumps}, Physics Reports, 997 (2022), pp.~1--60.

\bibitem{Aoki.Golse.Kosuge2015}
{\sc K.~Aoki, F.~Golse, and S.~Kosuge}, {\em The steady {Boltzmann} and {Navier-Stokes} equations}, Bull. Inst. Math. Acad. Sin. (N.S.), 10 (2015), pp.~205--257.

\bibitem{Arkeryd.Esposito.Marra.Nouri2010}
{\sc L.~Arkeryd, R.~Esposito, R.~Marra, and A.~Nouri}, {\em Stability for {Rayleigh}-{Benard} convective solutions of the {Boltzmann} equation}, Arch. Ration. Mech. Anal., 198 (2010), pp.~125--187.

\bibitem{Arkeryd.Esposito.Marra.Nouri2011}
\leavevmode\vrule height 2pt depth -1.6pt width 23pt, {\em Ghost effect by curvature in planar {Couette} flow}, Kinet. Relat. Models, 4 (2011), pp.~109--138.

\bibitem{Arkeryd.Nouri2000}
{\sc L.~Arkeryd and A.~Nouri}, {\em {$L^1$} solutions to the stationary {Boltzmann} equation in a slab}, Ann. Fac. Sci. Toulouse Math., 9 (2000), pp.~375--413.

\bibitem{Arkeryd.Nouri2007}
\leavevmode\vrule height 2pt depth -1.6pt width 23pt, {\em Asymptotic techniques for kinetic problems of {Boltzmann} type}, Riv. Mat. Univ. Parma, 6 (2007), pp.~1--74.

\bibitem{Arsenio2012}
{\sc D.~Ars\'enio}, {\em From {Boltzmann}'s equation to the incompressible {Navier-Stokes-Fourier} system with long-range interactions}, Arch. Ration. Mech. Anal., 206 (2012), pp.~367--488.

\bibitem{Asano.Ukai1983}
{\sc K.~Asano and S.~Ukai}, {\em The {Euler} limit and the initial layer of the nonlinear {Boltzmann} equation}, Hokkaido Math. J., 12 (1983), pp.~303--324.

\bibitem{Bardos.Caflisch.Nicolaenko1986}
{\sc C.~Bardos, R.~E. Caflisch, and B.~Nicolaenko}, {\em The {Milne} and {Kramers} problems for the {Boltzmann} equation of a hard sphere gas}, Comm. Pure Appl. Math., 39 (1986), pp.~323--352.

\bibitem{Bardos.Golse.Levermore1991}
{\sc C.~Bardos, F.~Golse, and D.~Levermore}, {\em Fluid dynamical limits of kinetic equations {I}: formal derivations}, J. Statist. Phys., 63 (1991), pp.~323--344.

\bibitem{Bardos.Golse.Levermore1993}
\leavevmode\vrule height 2pt depth -1.6pt width 23pt, {\em Fluid dynamical limits of kinetic equations {II}: convergence proofs for the {Boltzmann} equation}, Comm. Pure Appl. Math., 46 (1993), pp.~667--753.

\bibitem{Bardos.Golse.Levermore1998}
\leavevmode\vrule height 2pt depth -1.6pt width 23pt, {\em Acoustic and {Stokes} limits for the {Boltzmann} equation}, C. R Acad. Sci. Paris, Serie 1 Math, 327 (1998), pp.~323--328.

\bibitem{Bardos.Golse.Levermore2000}
\leavevmode\vrule height 2pt depth -1.6pt width 23pt, {\em The acoustic limit for the {Boltzmann} equation}, Arch. Rational Mech. Anal., 153 (2000), pp.~177--204.

\bibitem{Bardos.Golse.Paillard2012}
{\sc C.~Bardos, F.~Golse, and L.~Paillard}, {\em The incompressible {Euler} limit of the {Boltzmann} equation with accommodation boundary condition}, Commun. Math. Sci., 10 (2012), pp.~159--190.

\bibitem{Bardos.Levermore.Ukai.Yang2008}
{\sc C.~Bardos, C.~D. Levermore, S.~Ukai, and T.~Yang}, {\em Kinetic equations: fluid dynamical limits and viscous heating}, Bull. Inst. Math. Acad. Sin. (N.S.), 3 (2008), pp.~1--49.

\bibitem{Bardos.Ukai1991}
{\sc C.~Bardos and S.~Ukai}, {\em The classical incompressible {Navier-Stokes} limit of the {Boltzmann} equation}, Math. Models Methods Appl. Sci., 1 (1991), pp.~235--257.

\bibitem{Bobylev1995}
{\sc A.~V. Bobylev}, {\em Quasistationary hydrodynamics for the {Boltzmann} equation}, J. Statist. Phys., 80 (1995), pp.~1063--1083.

\bibitem{Bogovskii1979}
{\sc M.~Bogovskii}, {\em Solution of the first boundary value problem for an equation of continuity of an incompressible medium}, Dokl. Akad. Nauk SSSR, 248 (1979), pp.~1037--1040.

\bibitem{Boyer.Fabrie2013}
{\sc F.~Boyer and P.~Fabrie}, {\em Mathematical tools for the study of the incompressible {Navier-Stokes} equations and related models}, Springer, New York, 2013.

\bibitem{Briant.Guo2016}
{\sc M.~Briant and Y.~Guo}, {\em Asymptotic stability of the {Boltzmann} equation with {Maxwell} boundary conditions}, J. Differential Equations, 261 (2016), pp.~7000--7079.

\bibitem{Briant.Merino-Aceituno.Mouhot2019}
{\sc M.~Briant, S.~Merino-Aceituno, and C.~Mouhot}, {\em From {Boltzmann} to incompressible {Navier-Stokes} in {Sobolev} spaces with polynomial weight}, Anal. Appl., 17 (2019), pp.~85--116.

\bibitem{Brull2008(=)}
{\sc S.~Brull}, {\em Problem of evaporation-condensation for a two component gas in the slab}, Kinet. Relat. Models, 1 (2008), pp.~185--221.

\bibitem{Brull2008}
\leavevmode\vrule height 2pt depth -1.6pt width 23pt, {\em The stationary {Boltzmann} equation for a two-component gas in the slab}, Math. Methods Appl. Sci., 31 (2008), pp.~153--178.

\bibitem{Caflisch1980}
{\sc R.~E. Caflisch}, {\em The fluid dynamic limit of the nonlinear {Boltzmann} equation}, Comm. Pure Appl. Math., 33 (1980), pp.~651--666.

\bibitem{Cao.Jang.Kim2021}
{\sc Y.~Cao, J.~Jang, and C.~Kim}, {\em Passage from the {Boltzmann} equation with diffuse boundary to the incompressible {Euler} equation with heat convection}, J. Differential Equations, 366 (2023), pp.~565--644.

\bibitem{Carlen.Carvalho1994}
{\sc E.~A. Carlen and M.~C. Carvalho}, {\em Entropy production estimates for {Boltzmann} equations with physically realistic collision kernels}, J. Statist. Phys., 74 (1994), pp.~743--782.

\bibitem{Cattabriga1961}
{\sc L.~Cattabriga}, {\em Su un problema al contorno relativo al sistema di equazioni di {Stokes}}, Rend. Sem. Mat. Univ. Padova, 31 (1961), pp.~308--340.

\bibitem{Cercignani.Illner.Pulvirenti1994}
{\sc C.~Cercignani, R.~Illner, and M.~Pulvirenti}, {\em The mathematical theory of dilute gases}, Springer-Verlag, New York, 1994.

\bibitem{Chaturvedi.Luk.Nguyen2022}
{\sc S.~Chaturvedi, J.~Luk, and T.~T. Nguyen}, {\em The {Vlasov-Poisson-Landau} system in the weakly collisional regime}, To appear in J. Amer. Math. Soc.,  (2022).

\bibitem{Chen.Chen.Liu.Sone2007}
{\sc C.-C. Chen, I.-K. Chen, T.-P. Liu, and Y.~Sone}, {\em Thermal transpiration for the linearized {Boltzmann} equation}, Comm. Pure Appl. Math., 60 (2007), pp.~147--163.

\bibitem{Masi.Esposito.Lebowitz1989}
{\sc A.~De~Masi, R.~Esposito, and J.~L. Lebowitz}, {\em Incompressible {Navier-Stokes} and {Euler} limits of the {Boltzmann} equation}, Comm. Pure and Appl. Math., 42 (1989), pp.~1189--1214.

\bibitem{Desvillettes.Villani2001}
{\sc L.~Desvillettes and C.~Villani}, {\em On the trend to global equilibrium in spatially inhomogeneous entropy-dissipating systems: the linear {Fokker-Planck} equation}, Comm. Pure Appl. Math., 54 (2001), pp.~1--42.

\bibitem{Desvillettes.Villani2005}
\leavevmode\vrule height 2pt depth -1.6pt width 23pt, {\em On the trend to global equilibrium for spatially inhomogeneous kinetic systems: the {Boltzmann} equation}, Invent. Math., 159 (2005), pp.~245--316.

\bibitem{Duan.Liu.Yang.Zhang2022}
{\sc R.~Duan, S.~Liu, T.~Yang, and Z.~Zhang}, {\em Heat transfer problem for the {Boltzmann} equation in a channel with diffusive boundary condition}, Chinese Ann. Math. Ser. B, 43 (2022), pp.~1071--1100.

\bibitem{Esposito.Guo.Kim.Marra2013}
{\sc R.~Esposito, Y.~Guo, C.~Kim, and R.~Marra}, {\em Non-isothermal boundary in the {Boltzmann} theory and {Fourier} law}, Comm. Math. Phys., 323 (2013), pp.~177--239.

\bibitem{Esposito.Guo.Kim.Marra2015}
\leavevmode\vrule height 2pt depth -1.6pt width 23pt, {\em Stationary solutions to the {Boltzmann} equation in the hydrodynamic limit}, Ann. PDE, 4 (2018), pp.~1--119.

\bibitem{Esposito.Guo.Marra2018}
{\sc R.~Esposito, Y.~Guo, and R.~Marra}, {\em Hydrodynamic limit of a kinetic gas flow past an obstacle}, Comm. Math. Phys., 364 (2018), pp.~765--823.

\bibitem{AA029}
{\sc R.~Esposito, Y.~Guo, R.~Marra, and L.~Wu}, {\em Dynamical stability of ghost effect}, In Preparation,  (2023).

\bibitem{AA024}
\leavevmode\vrule height 2pt depth -1.6pt width 23pt, {\em Ghost effect from {Boltzmann} theory: Expansion with remainder}, arXiv:2301.09560,  (2023).

\bibitem{Esposito.Lebowitz.Marra1994}
{\sc R.~Esposito, J.~L. Lebowitz, and R.~Marra}, {\em Hydrodynamic limit of the stationary {Boltzmann} equation in a slab}, Comm. Math. Phys., 160 (1994), pp.~49--80.

\bibitem{Esposito.Lebowitz.Marra1995}
\leavevmode\vrule height 2pt depth -1.6pt width 23pt, {\em The {Navier-Stokes} limit of stationary solutions of the nonlinear {Boltzmann} equation}, J. Statist. Phys., 78 (1995), pp.~389--412.

\bibitem{Esposito.Marra2020}
{\sc R.~Esposito and R.~Marra}, {\em Stationary non equilibrium states in kinetic theory}, J. Statist. Phys., 180 (2020), pp.~773--809.

\bibitem{Esposito.Marra2023}
\leavevmode\vrule height 2pt depth -1.6pt width 23pt, {\em On the derivation of new non-classical hydrodynamic equations for {Hamiltonian} particle systems}, arXiv:2305.06304,  (2023).

\bibitem{Gallagher.Tristani2020}
{\sc I.~Gallagher and I.~Tristani}, {\em On the convergence of smooth solutions from {Boltzmann} to {Navier-Stokes}}, Ann. H. Lebesgue, 3 (2020), pp.~561--614.

\bibitem{Glassey1996}
{\sc R.~T. Glassey}, {\em The {Cauchy} problem in kinetic theory}, Society for Industrial and Applied Mathematics (SIAM), Philadelphia, PA, 1996.

\bibitem{Golse2013}
{\sc F.~Golse}, {\em From the {Boltzmann} equation to the {Euler} equations in the presence of boundaries}, Comput. Math. Appl., 65 (2013), pp.~815--830.

\bibitem{Golse.Saint-Raymond2004}
{\sc F.~Golse and L.~Saint-Raymond}, {\em The {Navier-Stokes} limit of the {Boltzmann} equation for bounded collision kernels}, Invent. Math, 155 (2004), pp.~81--161.

\bibitem{Golse.Saint-Raymond2009}
\leavevmode\vrule height 2pt depth -1.6pt width 23pt, {\em The incompressible {Navier-Stokes} limit of the {Boltzmann} equation for hard cutoff potentials}, J. Math. Pures Appl., 91 (2009), pp.~508--552.

\bibitem{Gressman.Strain2011}
{\sc P.~T. Gressman and R.~M. Strain}, {\em Global classical solutions of the {Boltzmann} equation without angular cut-off}, J. Amer. Math. Soc., 24 (2011), pp.~771--847.

\bibitem{Guo2003}
{\sc Y.~Guo}, {\em The {Vlasov-Maxwell-Boltzmann} system near {Maxwellians}}, Invent. Math., 153 (2003), pp.~593--630.

\bibitem{Guo2006}
\leavevmode\vrule height 2pt depth -1.6pt width 23pt, {\em {Boltzmann} diffusive limit beyond the {Navier-Stokes} approximation}, Comm. Pure Appl. Math., 59 (2006), pp.~626--68.

\bibitem{Guo2010}
\leavevmode\vrule height 2pt depth -1.6pt width 23pt, {\em Decay and continuity of the {Boltzmann} equation in bounded domains}, Arch. Ration. Mech. Anal., 197 (2010), pp.~713--809.

\bibitem{Guo2012}
\leavevmode\vrule height 2pt depth -1.6pt width 23pt, {\em The {Vlasov-Poisson-Landau} system in a periodic box}, J. Amer. Math. Soc., 25 (2012), pp.~759--812.

\bibitem{Guo.Huang.Wang2021}
{\sc Y.~Guo, F.~Huang, and Y.~Wang}, {\em {Hilbert} expansion of the {Boltzmann} equation with specular boundary condition in half-space}, Arch. Ration. Mech. Anal., 241 (2021), pp.~231--309.

\bibitem{Guo.Jang2010}
{\sc Y.~Guo and J.~Jang}, {\em Global {Hilbert} expansion for the {Vlasov-Poisson-Boltzmann} system}, Comm. Math. Phys., 299 (2010), pp.~469--501.

\bibitem{Guo.Jang.Jiang2009}
{\sc Y.~Guo, J.~Jang, and N.~Jiang}, {\em Local {Hilbert} expansion for the {Boltzmann} equation}, Kinet. Relat. Models, 2 (2009), pp.~205--214.

\bibitem{Guo.Jang.Jiang2010}
\leavevmode\vrule height 2pt depth -1.6pt width 23pt, {\em Acoustic limit for the {Boltzmann} equation in optimal scaling}, Comm. Pure Appl. Math., 63 (2010), pp.~337--361.

\bibitem{Guo.Kim.Tonon.Trescases2016}
{\sc Y.~Guo, C.~Kim, D.~Tonon, and A.~Trescases}, {\em {BV}-regularity of the boltzmann equation in non-convex domains}, Arch. Ration. Mech. Anal., 220 (2016), pp.~1045--1093.

\bibitem{Guo.Kim.Tonon.Trescases2013}
\leavevmode\vrule height 2pt depth -1.6pt width 23pt, {\em Regularity of the {Boltzmann} equation in convex domain}, Invent. Math., 207 (2016), pp.~115--290.

\bibitem{AA007}
{\sc Y.~Guo and L.~Wu}, {\em Geometric correction in diffusive limit of neutron transport equation in {2D} convex domains}, Arch. Rational Mech. Anal., 226 (2017), pp.~321--403.

\bibitem{AA009}
\leavevmode\vrule height 2pt depth -1.6pt width 23pt, {\em Regularity of {Milne} problem with geometric correction in {3D}}, Math. Models Methods Appl. Sci., 27 (2017), pp.~453--524.

\bibitem{AA020}
\leavevmode\vrule height 2pt depth -1.6pt width 23pt, {\em ${L}^2$ diffusive expansion for neutron transport equation}, arXiv:2301.11996,  (2023).

\bibitem{Hardy1920}
{\sc G.~H. Hardy}, {\em Note on a theorem of {Hilbert}}, Mathematische Zeitschrift, 6 (1920), pp.~314--317.

\bibitem{Hilbert1900}
{\sc D.~Hilbert}, {\em Mathematical problems}, Gottinges Nachrichten,  (1900), pp.~253--297.

\bibitem{Hilbert1916}
\leavevmode\vrule height 2pt depth -1.6pt width 23pt, {\em Begrundung der kinetischen gastheorie}, Math. Ann., 72 (1912), pp.~331--407.

\bibitem{Hilbert1953}
\leavevmode\vrule height 2pt depth -1.6pt width 23pt, {\em Grundzugeiner allgemeinen Theorie der linearen Integralgleichungen}, Chelsea, New York, 1953.

\bibitem{Huang.Wang.Wang.Yang2016}
{\sc F.~Huang, Y.~Wang, Y.~Wang, and T.~Yang}, {\em Justification of limit for the {Boltzmann} equation related to {Korteweg} theory}, Quart. Appl. Math., 74 (2016), pp.~719--764.

\bibitem{Huang.Wang.Yang2010(=)}
{\sc F.~Huang, Y.~Wang, and T.~Yang}, {\em Fluid dynamic limit to the {Riemann} solutions of {Euler} equations: {I.} superposition of rarefaction waves and contact discontinuity}, Kinet. Relat. Models, 3 (2010), pp.~685--728.

\bibitem{Huang.Wang.Yang2010}
\leavevmode\vrule height 2pt depth -1.6pt width 23pt, {\em Hydrodynamic limit of the {Boltzmann} equation with contact discontinuities}, Comm. Math. Phys., 295 (2010), pp.~293--326.

\bibitem{Huang.Wang.Yang2013}
\leavevmode\vrule height 2pt depth -1.6pt width 23pt, {\em The limit of the {Boltzmann} equation to the {Euler} equations for {Riemann} problems}, SIAM J. Math. Anal., 45 (2013), pp.~1741--1811.

\bibitem{Jang2009}
{\sc J.~Jang}, {\em {Vlasov-Maxwell-Boltzmann} diffusive limit}, Arch. Ration. Mech. Anal., 194 (2009), pp.~531--584.

\bibitem{Jang.Kim2021}
{\sc J.~Jang and C.~Kim}, {\em Incompressible {Euler} limit from {Boltzmann} equation with diffuse boundary condition for analytic data}, Ann. PDE, 7 (2021), p.~103.

\bibitem{Jiang.Levermore.Masmoudi2010}
{\sc N.~Jiang, C.~D. Levermore, and N.~Masmoudi}, {\em Remarks on the acoustic limit for the {Boltzmann} equation}, Comm. Partial Differential Equations, 35 (2010), pp.~1590--1609.

\bibitem{Jiang.Masmoudi2016}
{\sc N.~Jiang and N.~Masmoudi}, {\em Boundary layers and incompressible {Navier-Stokes-Fourier} limit of the {Boltzmann} equation in bounded domain {I}}, Comm. Pure Appl. Math., 70 (2016), pp.~90--171.

\bibitem{Jiang.Masmoudi2022}
\leavevmode\vrule height 2pt depth -1.6pt width 23pt, {\em Low {Mach} number limits and acoustic waves}, Springer, Cham, 2018.

\bibitem{Kim2011}
{\sc C.~Kim}, {\em Formation and propagation of discontinuity for {Boltzmann} equation in non-convex domains}, Comm. Math. Phys., 308 (2011), pp.~641--701.

\bibitem{Kim2014}
\leavevmode\vrule height 2pt depth -1.6pt width 23pt, {\em {Boltzmann} equation with a large potential in a periodic box}, Comm. Partial Differential Equations, 39 (2014), pp.~1393--1423.

\bibitem{Kim.La2022}
{\sc C.~Kim and J.~La}, {\em Vorticity convergence from {Boltzmann} to {2D} incompressible {Euler} equations below {Yudovich} class}, arXiv:2206.00543,  (2022).

\bibitem{Kogan1958}
{\sc M.~Kogan}, {\em On the equations of motion of a rarefied gas}, Appl. Math. Mech., 22 (1958), pp.~597--607.

\bibitem{Kogan.Galkin.Fridlender1976}
{\sc M.~Kogan, V.~Galkin, and O.~Fridlender}, {\em Stresses produced in gases by temperature and concentration inhomogeneities. new types of free convection}, Soy. Phys. Usp., 19 (1976), pp.~420--428.

\bibitem{Lachowicz1987}
{\sc M.~Lachowicz}, {\em On the initial layer and the existence theorem for the nonlinear {Boltzmann} equation}, Math. Methods Appl. Sci., 9 (1987), pp.~342--366.

\bibitem{Lions.Masmoudi2001}
{\sc P.-L. Lions and N.~Masmoudi}, {\em From the {Boltzmann} equations to the equations of incompressible fluid mechanics. {I, II.}}, Arch. Ration. Mech. Anal., 158 (2001), pp.~173--193,195--211.

\bibitem{Liu.Yu2013}
{\sc T.-P. Liu and S.-H. Yu}, {\em Invariant manifolds for steady {Boltzmann} flows and applications}, Arch. Ration. Mech. Anal., 209 (2013), pp.~869--997.

\bibitem{Masmoudi2011}
{\sc N.~Masmoudi}, {\em About the {Hardy} inequality}, in Schleicher, D., Lackmann, M. (eds) An Invitation to Mathematics, Berlin, Heidelberg, 2011, Springer.

\bibitem{Masmoudi.Saint-Raymond2003}
{\sc N.~Masmoudi and L.~Saint-Raymond}, {\em From the {Boltzmann} equation to the {Stokes-Fourier} system in a bounded domain}, Comm. Pure and Appl. Math., 56 (2003), pp.~1263--1293.

\bibitem{Maxwell1879}
{\sc J.~C. Maxwell}, {\em On stresses in rarified gases arising from inequalities of temperature}, Philosophical Transactions of the Royal Society of London, 170 (1879), pp.~231--256.

\bibitem{Mouhot.Villani2015}
{\sc C.~Mouhot and C.~Villani}, {\em Kinetic theory}, in Princeton Companion to Applied Mathematics, Princeton University Press, 2015.

\bibitem{Nishida1978}
{\sc T.~Nishida}, {\em Fluid dynamical limit of the nonlinear {Boltzmann} equation to the level of the compressible {Euler} equation}, Comm. Math. Phys., 61 (1978), pp.~119--148.

\bibitem{Saint-Raymond2003}
{\sc L.~Saint-Raymond}, {\em Convergence of solutions to the {Boltzmann} equation in the incompressible euler limit}, Arch. Ration. Mech. Anal., 166 (2003), pp.~47--80.

\bibitem{Saint-Raymond2008}
\leavevmode\vrule height 2pt depth -1.6pt width 23pt, {\em From {Boltzmann}'s kinetic theory to {Euler}'s equations}, Phys. D, 237 (2008), pp.~2028--2036.

\bibitem{Saint-Raymond2009}
\leavevmode\vrule height 2pt depth -1.6pt width 23pt, {\em Hydrodynamic limits of the {Boltzmann} equation}, Springer-Verlag, Berlin, 2009.

\bibitem{Sone1972}
{\sc Y.~Sone}, {\em Flow induced by thermal stress in rarefied gas}, Phys. Fluids, 15 (1972), pp.~1418--1423.

\bibitem{Sone2002}
\leavevmode\vrule height 2pt depth -1.6pt width 23pt, {\em Kinetic theory and fluid dynamics.}, Birkhauser Boston, Inc., Boston, MA, 2002.

\bibitem{Sone2007}
\leavevmode\vrule height 2pt depth -1.6pt width 23pt, {\em Molecular gas dynamics. Theory, techniques, and applications.}, Birkhauser Boston, Inc., Boston, MA, 2007.

\bibitem{Sone.Aoki.Doi1992}
{\sc Y.~Sone, K.~Aoki, and T.~Doi}, {\em Kinetic theory analysis of gas flows condensing on a plane condensed phase: Case of a mixture of a vapor and noncondensable gas.}, Transp. Theory Stat. Phys., 21 (1992), pp.~297--328.

\bibitem{Sone.Aoki.Takata.Sugimoto.Bobylev1996}
{\sc Y.~Sone, K.~Aoki, S.~Takata, H.~Sugimoto, and A.~V. Bobylev}, {\em Inappropriateness of the heat-conduction equation for description of a temperature field of a stationary gas in the continuum limit: examination by asymptotic analysis and numerical computation of the {Boltzmann} equation}, Phys. Fluids, 8 (1996), pp.~628--638.

\bibitem{Speck.Strain2011}
{\sc J.~Speck and R.~M. Strain}, {\em {Hilbert} expansion from the {Boltzmann} equation to relativistic fluids}, Comm. Math. Phys., 304 (2011), pp.~229--280.

\bibitem{Villani2002}
{\sc C.~Villani}, {\em A review of mathematical topics in collisional kinetic theory}, Handbook of Mathematical Fluid Dynamics, Vol.{I} (2002), pp.~71--305.

\bibitem{AA004}
{\sc L.~Wu}, {\em Hydrodynamic limit with geometric correction of stationary {Boltzmann} equation}, J. Differential Equations, 260 (2016), pp.~7152--7249.

\bibitem{AA013}
\leavevmode\vrule height 2pt depth -1.6pt width 23pt, {\em Boundary layer of {Boltzmann} equation in {2D} convex domains}, Analysis\&PDE, 14 (2021), pp.~1363--1428.

\bibitem{AA003}
{\sc L.~Wu and Y.~Guo}, {\em Geometric correction for diffusive expansion of steady neutron transport equation}, Comm. Math. Phys., 336 (2015), pp.~1473--1553.

\bibitem{BB002}
{\sc L.~Wu and Z.~Ouyang}, {\em Asymptotic analysis of {Boltzmann} equation in bounded domains}, arXiv:2008.10507,  (2020).

\bibitem{Yu2005}
{\sc S.-H. Yu}, {\em Hydrodynamic limits with shock waves of the {Boltzmann} equation}, Comm. Pure Appl. Math., 58 (2005), pp.~409--443.

\end{thebibliography}

\end{document}